\tikzset{vertex/.style = {shape=circle,fill,inner sep=0pt,minimum size=2mm}}
\newtheorem{theorem}{Theorem}[section]
\newtheorem*{theorem*}{Theorem}
\newtheorem{corollary}[theorem]{Corollary}
\newtheorem{lem}[theorem]{Lemma}
\newtheorem{lemma}[theorem]{Lemma}
\newtheorem*{prop*}{Proposition}
\newtheorem{proposition}[theorem]{Proposition}
\newtheorem{conjecture}[theorem]{Conjecture}
\newtheorem{question}[theorem]{Question}
\newtheorem*{conjecture*}{Conjecture}
\newtheorem{definition}[theorem]{Definition}
\newtheorem*{definition*}{Definition}
\theoremstyle{definition}
\newtheorem{remark}[theorem]{\textbf{Remark}}
\newtheorem*{remark*}{Remark}
\newtheorem*{fact*}{Fact}
\newcommand{\vol}{\textrm{Vol}}
\newcommand{\norm}[1]{\left\Vert#1\right\Vert}
\newcommand{\snorm}[1]{\Vert#1\Vert}
\newcommand{\abs}[1]{\left\vert#1\right\vert}
\newcommand{\set}[1]{\left\{#1\right\}}
\newcommand{\brac}[1]{\left(#1\right)}
\newcommand{\scalar}[1]{\left \langle #1 \right \rangle}
\newcommand{\sscalar}[1]{\langle #1 \rangle}
\newcommand{\Y}{\mathbf{Y}}
\newcommand{\T}{\mathbf{T}}
\newcommand{\R}{\mathbb{R}}
\newcommand{\GG}{\mathbb{G}}
\newcommand{\G}{\mathcal{G}}
\newcommand{\N}{\mathbf{N}}
\newcommand{\Complex}{\mathbb{C}}
\newcommand{\I}{\mathcal{I}}
\newcommand{\II}{\mathrm{I\!I}}
\newcommand{\Id}{\mathrm{Id}}
\newcommand{\sym}{\text{sym}}
\newcommand{\MM}{\mathbb{M}}
\newcommand{\M}{\mathbf{M}}
\renewcommand{\H}{\mathcal{H}}
\newcommand{\HH}{\bar{\mathcal{H}}}
\newcommand{\eps}{\epsilon}
\renewcommand{\SS}{\mathbf{S}}
\newcommand{\Ric}{\text{\rm Ric}}
\newcommand{\tr}{\text{\rm tr}}
\newcommand{\D}{\mathcal{D}}
\renewcommand{\S}{\mathbb{S}}
\newcommand{\B}{\mathbb{B}}
\renewcommand{\div}{\text{\rm div}}
\newcommand{\n}{\mathfrak{n}}
\renewcommand{\c}{\mathbf{c}}
\newcommand{\C}{\mathbf{C}}
\newcommand{\F}{\mathbf{F}}
\renewcommand{\k}{\mathbf{k}}
\renewcommand{\P}{\mathbf{P}}
\newcommand{\tang}{\mathbf{t}}
\newcommand{\cyclic}{\mathcal{C}}
\newcommand{\Tr}{\mathcal{T}}
\newcommand{\NN}{\mathcal{N}}
\newcommand{\simplex}{\Delta}
\newcommand{\sspan}{\text{span}}
\renewcommand{\L}{\text{L}}
\newcommand{\per}{A}
\newcommand{\pot}{W}
\let\Im\relax
\DeclareMathOperator{\Im}{Im}
\DeclareMathOperator{\arank}{affine-rank}
\DeclareMathOperator*{\argmin}{arg\,min}
\DeclareMathOperator{\interior}{int}
\DeclareMathOperator{\Jac}{Jac}
\newlength{\defbaselineskip}
\numberwithin{equation}{section}
\begin{document}

\title{Plateau Bubbles and the Quintuple Bubble Theorem on $\S^n$} 
\date{}

\author{Emanuel Milman\textsuperscript{1} and Joe Neeman\textsuperscript{2}}

\footnotetext[1]{Department of Mathematics, Technion - Israel
Institute of Technology, Haifa, Israel. Email: emilman@tx.technion.ac.il.}
\footnotetext[2]{Department of Mathematics, University of Texas at Austin. Email: joeneeman@gmail.com.\\
The research leading to these results is part of a project that has received funding from the European Research Council (ERC) under the European Union's Horizon 2020 research and innovation programme (grant agreement No 101001677). 
This material is also based upon work supported by the National Science Foundation under Grant Nos. 2145800 and 2204449. Any opinions, findings, and conclusions or recommendations expressed in this material are those of the authors and do not necessarily reflect the views of the National Science Foundation.}

\begingroup    \renewcommand{\thefootnote}{}    \footnotetext{2020 Mathematics Subject Classification: 49Q20, 49Q10, 53A10, 51B10.}
    \footnotetext{Keywords: Isoperimetric inequalities, multi-bubbles, area minimizers, Jacobi operator, Plateau laws.}
\endgroup

\maketitle

\begin{abstract}
Sullivan's multi-bubble isoperimetric conjectures in $n$-dimensional Euclidean and spherical spaces from the 1990's assert that standard bubbles uniquely minimize total perimeter among all $q-1$ bubbles enclosing prescribed volume, for any $q \leq n+2$. The double-bubble conjecture on $\R^3$ was confirmed by Hutchings--Morgan--Ritor\'e--Ros (and later extended to $\R^n$). The double-bubble conjecture on $\S^n$ ($n \geq 2$) and the triple- and quadruple- bubble conjectures on $\R^n$ and $\S^n$ (for $n \geq 3$ and $n \geq 4$, respectively) were recently confirmed in our previous work, but the approach employed there does not seem to allow extending these results further. 

In this work, we confirm the quintuple-bubble conjecture on $\S^n$ ($n \geq 5$), and as a consequence, by approximation, also the quintuple-bubble conjecture on $\R^n$ ($n \geq 5$) but without the uniqueness assertion. Moreover, we resolve the conjectures on $\S^n$ and on $\R^n$ (without uniqueness) for all $q \leq n+1$, \emph{conditioned} on the assumption that the singularities which appear at the meeting locus of several bubbles obey a higher-dimensional analogue of Plateau's laws. Another scenario we can deal with is when the bubbles are full-dimensional (``in general position"), or arrange in some good lower-dimensional configurations. 
We are able to verify that a minimizing configuration must satisfy one of the above good scenarios only when $q \leq 6$, yielding new proofs for the previously established double, triple and quadruple cases, as well as the new quintuple case. 

To this end, we develop the spectral theory of the corresponding Jacobi operator (finding analogies with the quantum-graph formalism), and a new method for deforming the bubbles into a favorable configuration. As a by-product of our analysis, we show that for all $q \leq n+1$, the Jacobi operator on a minimizing configuration always has index precisely $q-1$ and hence the corresponding isoperimetric profile is concave, answering a question of Heppes. Several compelling conjectures of PDE flavor are proposed, which would allow extending our results to all $q \leq n+1$ unconditionally. 
\end{abstract}

\section{Introduction}

A weighted Riemannian manifold $(M^n,g,\mu)$ consists of a smooth complete $n$-dimensional Riemannian manifold $(M^n,g)$ endowed with a measure $\mu$ with $C^\infty$ smooth positive density $\Psi$ with respect to the Riemannian volume measure $\vol_g$. The metric $g$ induces a geodesic distance on $M^n$, and the corresponding $k$-dimensional Hausdorff measure is denoted by $\H^k$. Let $\mu^{k} := \Psi \H^{k}$, and set $V_\mu := \mu$, the $\mu$-weighted volume. The $\mu$-weighted perimeter of a Borel subset $U \subset M^n$ of locally finite perimeter is defined as $\per_\mu(U) := \mu^{n-1}(\partial^* U)$, where $\partial^* U$ is the reduced boundary of $U$ (see Section \ref{sec:prelim} for definitions).

A \emph{$q$-cluster} $\Omega = (\Omega_1, \ldots, \Omega_q)$ is a $q$-tuple of Borel subsets with locally finite perimeter $\Omega_i \subset M^n$ called cells, such that $\set{\Omega_i}$ are pairwise disjoint, $V_\mu(M^n \setminus \bigcup_{i=1}^q \Omega_i) = 0$, $\per_\mu(\Omega_i) < \infty$ for each $i$, and $V_\mu(\Omega_i) < \infty$ for all $i=1,\ldots,q-1$. Note that when $V_\mu(M^n) = \infty$, then necessarily $V_\mu(\Omega_q) = \infty$. Also note that the cells are not required to be connected. The $\mu$-weighted volume of $\Omega$ is defined as:
\[
  V_\mu(\Omega) := (V_\mu(\Omega_1), \ldots, V_\mu(\Omega_q)) \in \simplex^{(q-1)}_{V_\mu(M^n)} ,
\]
where $\simplex^{(q-1)}_T := \{v \in \R^{q}_+  \; ; \; \sum_{i=1}^q v_i = T\}$ if $T < \infty$ and 
$\simplex^{(q-1)}_\infty := \R^{q-1}_+ \times \{ \infty\}$. 
The $\mu$-weighted total perimeter of a cluster is defined as:
\[
\per_\mu(\Omega) := \frac{1}{2} \sum_{i=1}^q \per_\mu(\Omega_i) = \sum_{1 \leq i < j \leq q} \mu^{n-1}(\Sigma_{ij}) ,
\]
where $\Sigma_{ij} := \partial^* \Omega_i \cap \partial^* \Omega_j$ denotes the $(n-1)$-dimensional interface between cells $i$ and $j$.

The isoperimetric problem for $q$-clusters consists of identifying those clusters $\Omega$ of prescribed volume $\mu(\Omega) = v$ which minimize the total perimeter $\per_\mu(\Omega)$. Note that for a $2$-cluster $\Omega = (\Omega_1,\Omega_2)$, $\per_\mu(\Omega) = \per_\mu(\Omega_1) = \per_\mu(\Omega_2)$, and so the case $q=2$ corresponds to the classical isoperimetric setup of minimizing the perimeter of a single set of prescribed volume. Consequently, the case $q=2$ is referred to as the ``single-bubble" case (with the bubble being $\Omega_1$ and having complement $\Omega_2$). Accordingly, the case $q=3$ is called the ``double-bubble" problem, the case $q=4$ is the ``triple-bubble" problem, etc... The case of general $q$ is referred to as the ``multi-bubble" problem. 

\bigskip

In this work, we will focus on (arguably) the two most natural and important settings: 
\begin{itemize}
\item Euclidean space $\R^n$, namely $(\R^n,|\cdot|)$ endowed with the Lebesgue measure $\mu$. 
\item Spherical space $\S^n$, namely the unit-sphere $(\S^n,g)$ in its canonical embedding in $\R^{n+1}$, endowed with its Haar measure $\mu$ (normalized to have total mass $1$).  
\end{itemize}
In both cases we will abbreviate $V = V_\mu$ and $\per = \per_\mu$. 
The following definition and corresponding conjectures were put forth by J.~Sullivan in the 90's \cite[Problem 2]{OpenProblemsInSoapBubbles96}. 
Recall that the (open) Voronoi cells of distinct $\set{x_1,\ldots,x_q} \subset \R^N$ are defined as:
\[
\Omega_i = \interior \set{ x \in \R^N \;;\; i \in \argmin_{j=1,\ldots,q} \abs{x-x_j} } = \set{ x \in \R^N \;;\; \argmin_{j=1,\ldots,q} \abs{x-x_j} = \{i\} } ~,~  i=1,\ldots, q ~,
\]
where $\interior$ denotes the (relative) interior operation and $\argmin$ denotes the subset of indices on which the corresponding minimum is attained.

\begin{definition*}[Standard Bubble on $\R^n$ and $\S^n$]
Given $2 \leq q \leq n+2$, the equal-volume standard $(q-1)$-bubble on $\S^n$ is the $q$-cluster $\Omega$ obtained by intersecting $\S^n$ with the Voronoi cells of $q$ equidistant points in $\S^n \subset \R^{n+1}$. \\
A standard $(q-1)$-bubble on $\R^n$ is a stereographic projection of the equal-volume standard $(q-1)$-bubble $\Omega$ on $\S^n$ with respect to some North pole in (the open) $\Omega_q$. \\
A standard $(q-1)$-bubble on $\S^n$ is a stereographic projection of a standard $(q-1)$-bubble on $\R^n$. 
\end{definition*}

\begin{figure}
    \begin{center}
            \raisebox{-0.1\height}{\includegraphics[scale=0.1]{triple-bubble.png}}
        \hspace{20pt}
        \begin{tikzpicture}[scale=1.2]
            \input{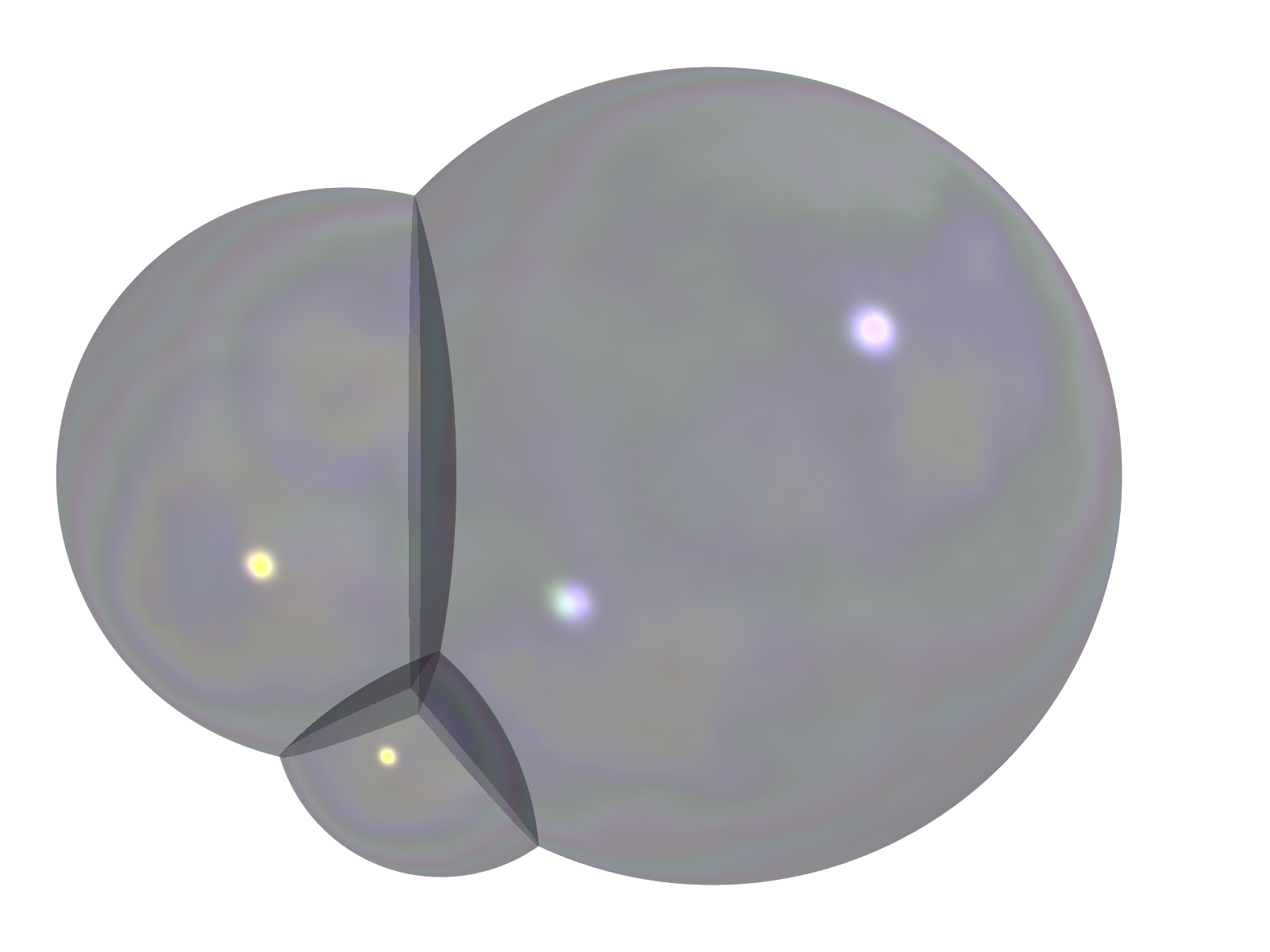}
        \end{tikzpicture}
     \end{center}
     \caption{
         \label{fig:triple-bubble}
        Left: a standard triple-bubble in $\R^3$. Right: the $2$D cross-section through its plane of symmetry. 
     }
\end{figure}

\begin{figure}
    \begin{center}
        \includegraphics[scale=0.09]{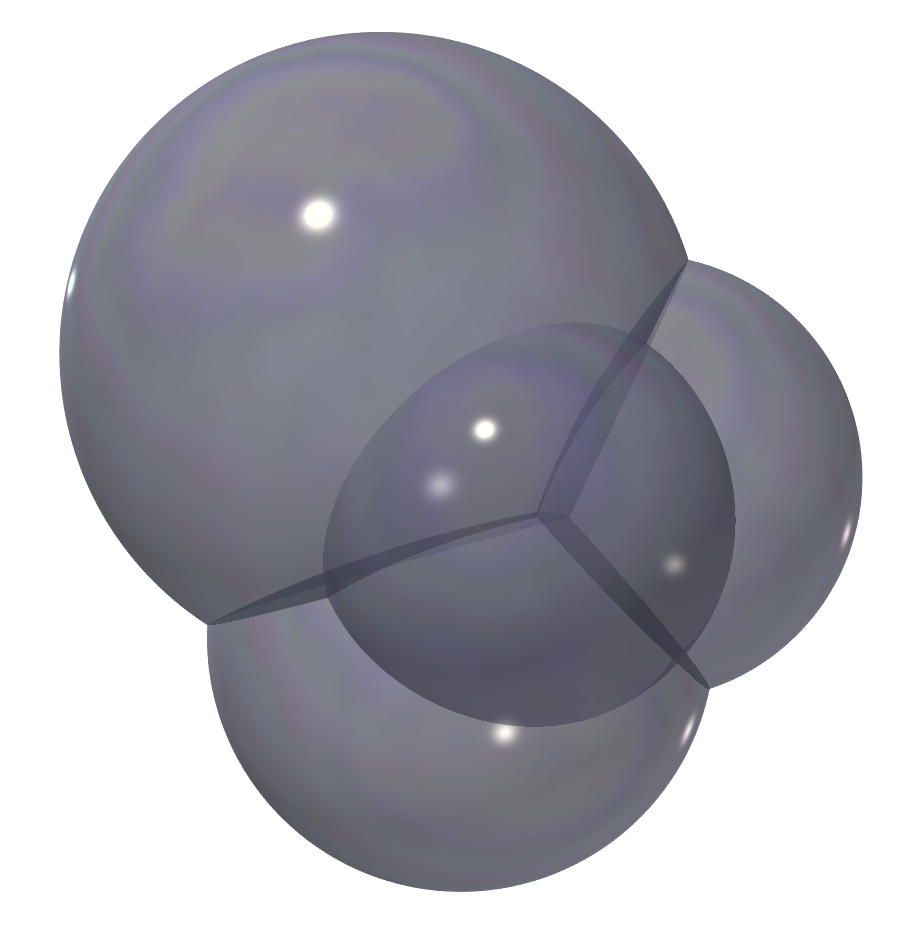}
        \includegraphics[scale=0.09]{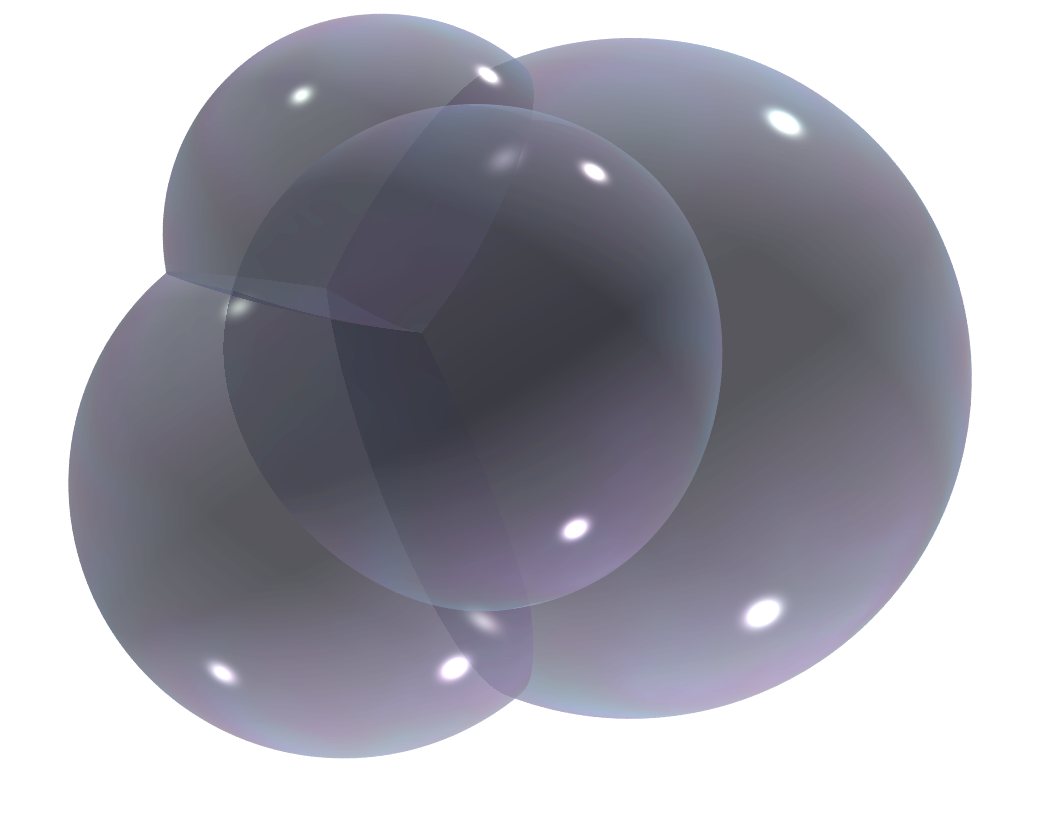}
        \includegraphics[scale=0.09]{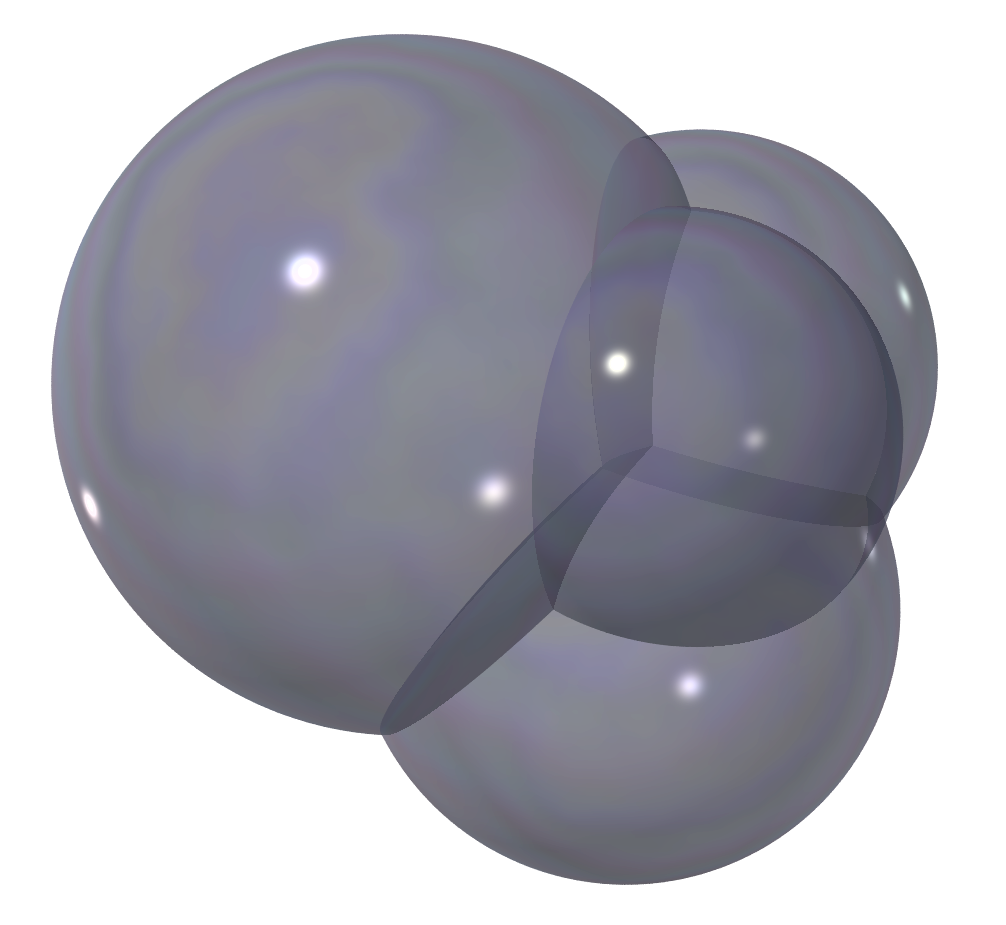}
        \includegraphics[scale=0.09]{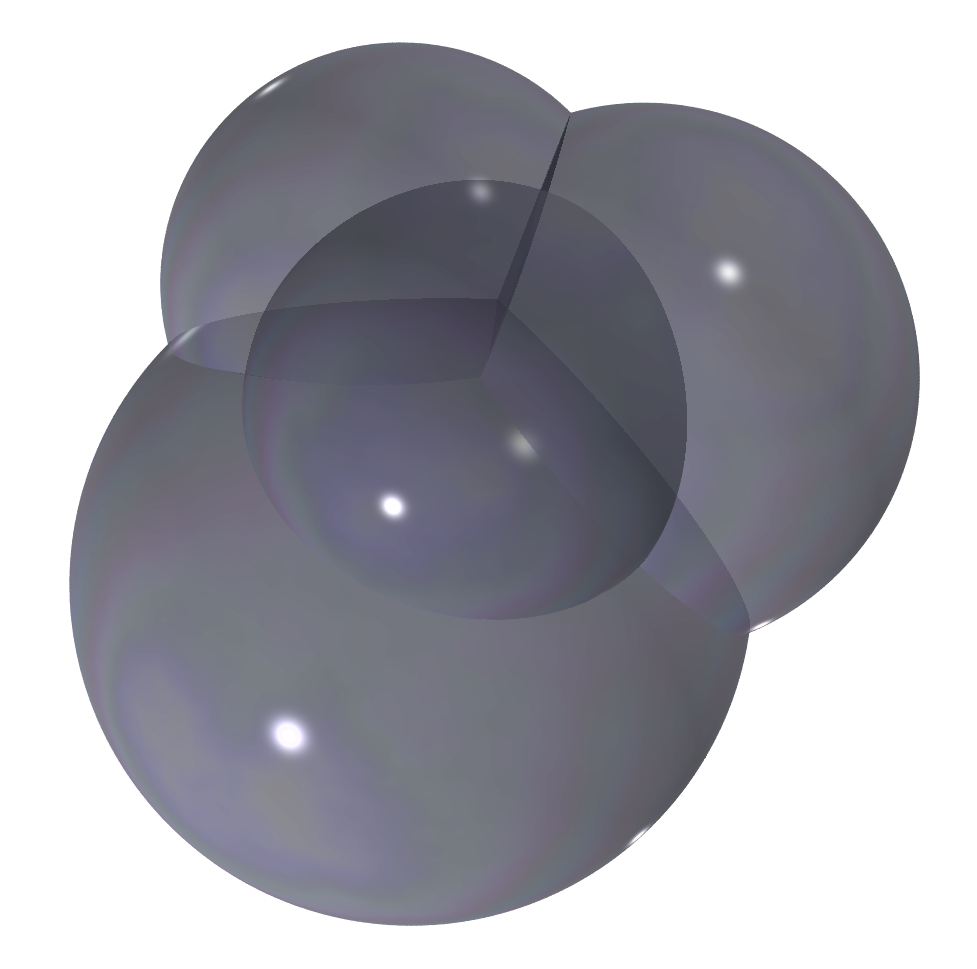}
     \end{center}
     \caption{
         \label{fig:quad-bubble-R3}
        A standard quadruple-bubble in $\R^3$ (also, the cross-section of a standard quadruple-bubble in $\R^4$ through its hyperplane of symmetry) from different angles. 
     }
\end{figure}

\begin{figure}
    \begin{center}
        \includegraphics[scale=0.17]{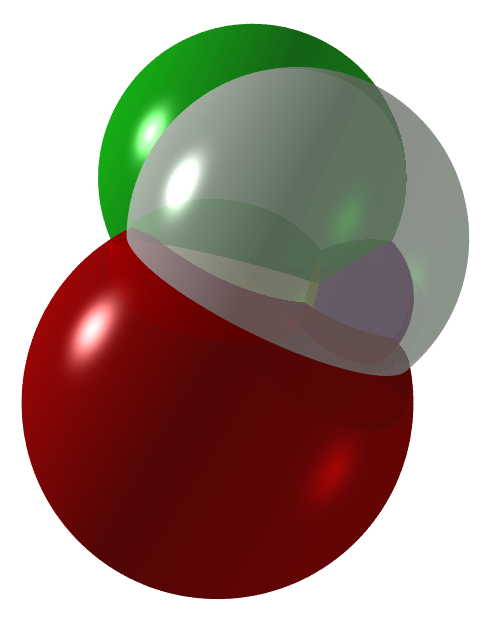}
        \includegraphics[scale=0.23]{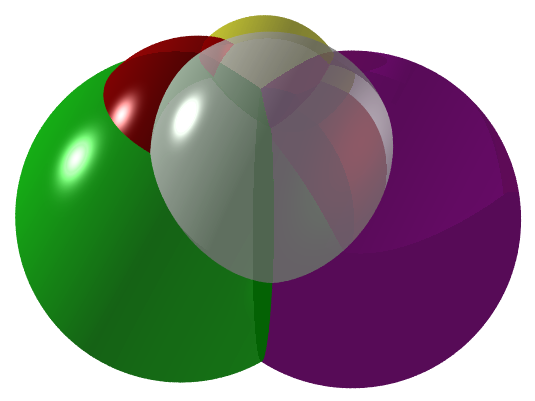}
        \includegraphics[scale=0.23]{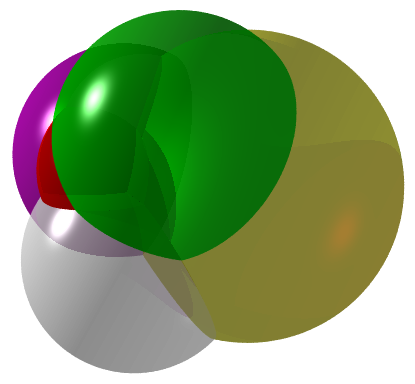}
        \includegraphics[scale=0.28]{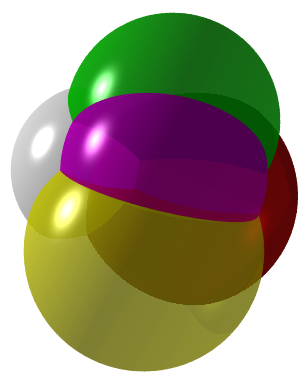}
     \end{center}
     \caption{
         \label{fig:quintuple-bubble-R3}
        Some $3$D cross-sections of various standard quintuple-bubbles in $\R^5$ (hence the $120^{\circ}$ angles of incidence at triple-points get destroyed). 
     }
\end{figure}

\begin{figure}
    \begin{center}
            \includegraphics[scale=0.1]{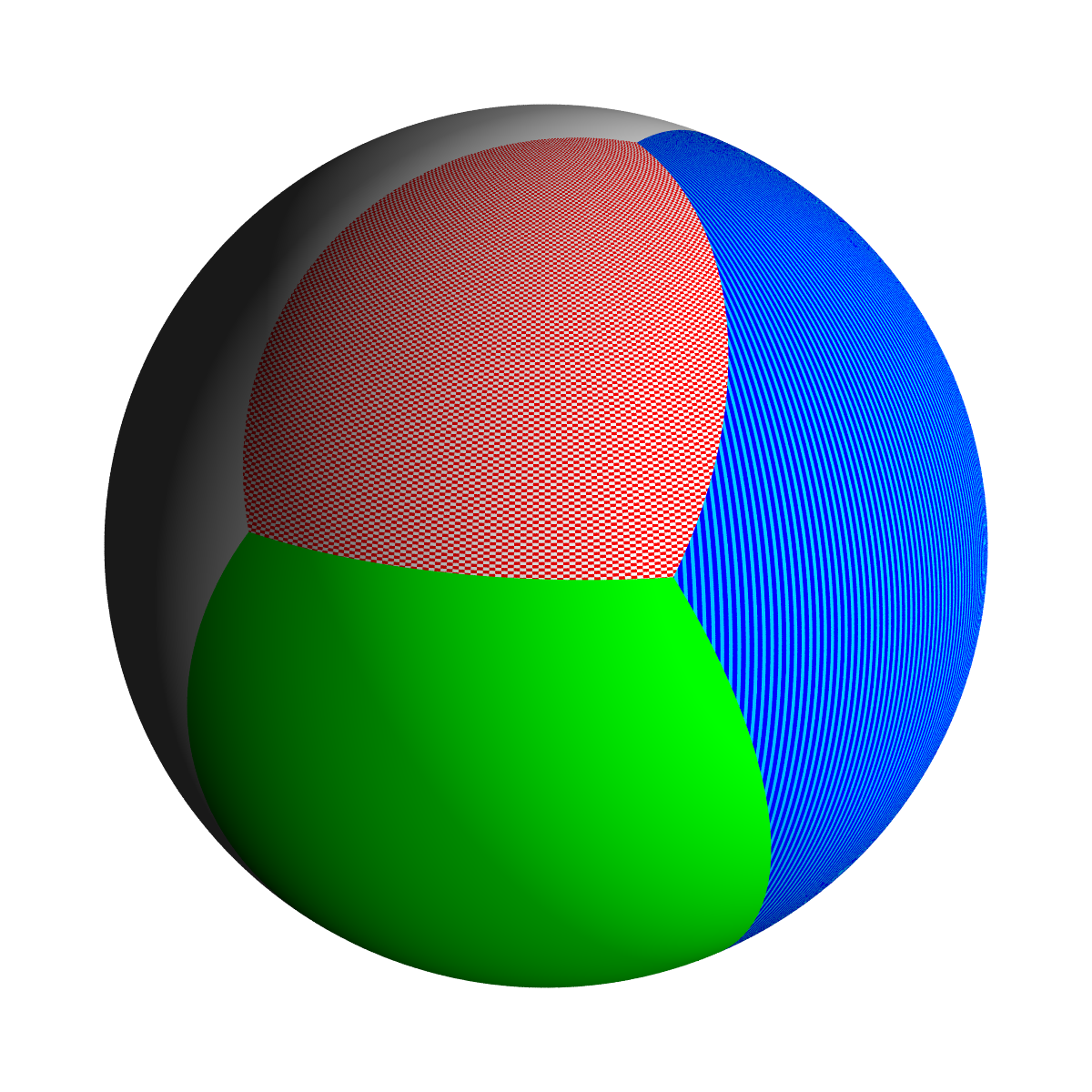}
        \raisebox{0.12\height}{\includegraphics[scale=0.122]{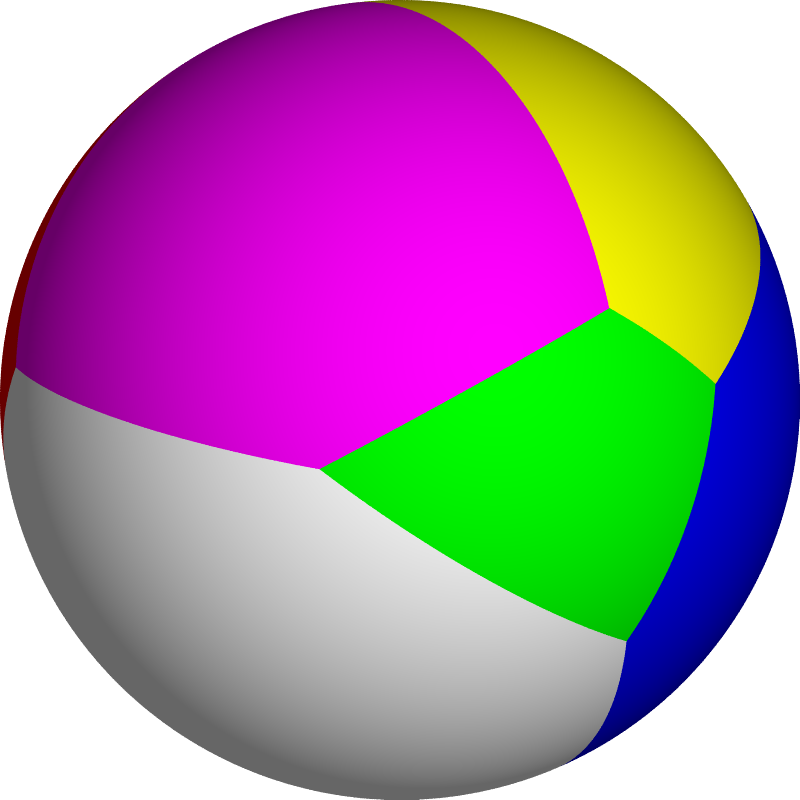}} \hspace{10pt}
         \raisebox{0.12\height}{\includegraphics[scale=0.122]{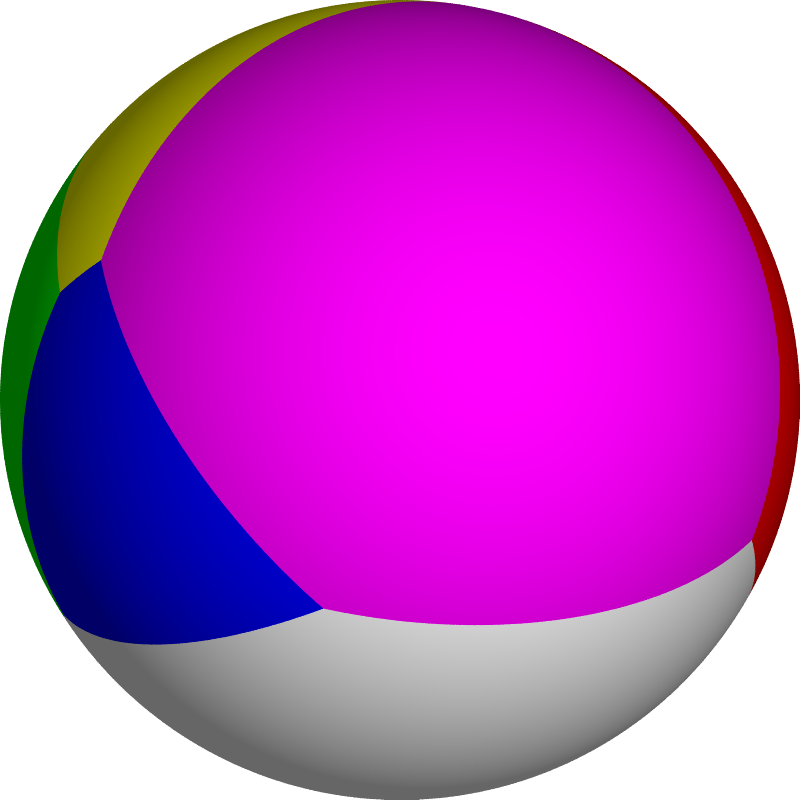}}
     \end{center}
     \caption{
         \label{fig:triple-bubble-S2}
        Left: a standard triple-bubble in $\S^2$; also, the cross-section of a standard triple-bubble in $\S^3$ through its hyperplane of symmetry. Middle and right: two $\S^2$ cross-sections of a standard quintuple-bubble in $\S^4$ (hence the $120^{\circ}$ angles of incidence at triple-points get destroyed). 
     }
\end{figure}

Note that the above construction ensures that a standard $(q-1)$-bubble $\Omega$ on $\R^n$ has a single unbounded cell $\Omega_q$, yielding a legal cluster whose boundary 
can be thought of as a soap film enclosing $q-1$ bounded (connected) air bubbles; we will henceforth omit the $q-1$ index when referring to standard bubbles. Also note that the equal-volume standard-bubble on $\S^n$ has flat interfaces lying on great spheres, and that whenever three of these interfaces meet, they do so at $120^\circ$-degree angles. As stereographic projection is conformal and preserves (generalized) spheres (of possibly vanishing curvature, i.e.~totally geodesic hypersurfaces), it follows that all standard bubbles on $\R^n$ and $\S^n$ have (generalized) spherical interfaces which meet in threes at $120^\circ$-degree angles (see Figures \ref{fig:triple-bubble}, \ref{fig:quad-bubble-R3}, \ref{fig:quintuple-bubble-R3} and \ref{fig:triple-bubble-S2}). The latter angles of incidence are a well-known necessary condition for any isoperimetric minimizing cluster candidate (see Lemma \ref{lem:boundary-normal-sum}).

\begin{conjecture*}[Multi-Bubble Isoperimetric Conjecture on $\R^n$]
For all $2 \leq q \leq n+2$, a standard bubble uniquely minimizes total perimeter among all $q$-clusters $\Omega$ on $\R^n$ of prescribed volume $V(\Omega) = v \in \interior \Delta^{(q-1)}_{\infty}$. 
\end{conjecture*}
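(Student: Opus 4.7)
The plan is to first solve the problem on $\S^n$ and transfer to $\R^n$ by a blow-down limit: any minimizing $q$-cluster on $\R^n$ with prescribed volumes can be approximated, after rescaling, by the corresponding minimizing clusters on $\S^n$, so a complete classification on the sphere would yield the ``standard-ness" part in $\R^n$ immediately (uniqueness being a secondary issue handled by an additional rigidity argument in the limit). On $\S^n$, compactness and lower semicontinuity in BV yield a minimizer $\Omega^\star$, and the real goal becomes showing that any such $\Omega^\star$ is standard.

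I would begin with the first-order structure of $\Omega^\star$. Regularity theory forces each non-empty interface $\Sigma_{ij} = \partial^* \Omega_i \cap \partial^* \Omega_j$ to be a smooth constant-mean-curvature hypersurface away from a singular set, with three-sheet $120^\circ$ Plateau junctions along $(n-2)$-dimensional strata. I would posit as a structural hypothesis on $\Omega^\star$ that higher-codimension singular strata obey the appropriate higher-dimensional analogue of Plateau's laws, with the understanding that in full generality this remains one of the PDE conjectures noted in the abstract.

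Next comes the second-order analysis. Linearizing the perimeter-volume functional produces a Jacobi operator $J$ on the skeleton $\Sigma := \bigcup_{i<j} \Sigma_{ij}$, with vertex conditions coupling sheets through each Plateau junction in a quantum-graph manner. Stability together with the $q-1$ volume constraints bounds the Morse index of $J$ by $q-1$; explicit deformations coming from ambient conformal Killing fields of $\S^n$ produce $q-1$ linearly independent unstable directions for the standard bubble, suggesting that the index is in fact \emph{exactly} $q-1$ for any minimizer. Using the resulting rich low-eigenvalue spectrum of $J$, I would then construct a continuous family of infinitesimal normal deformations driving $\Omega^\star$ toward a configuration whose interfaces all lie on great subspheres; combined with the rigidity of CMC hypersurfaces in $\S^n$ meeting at $120^\circ$, this forces the limit to be the Voronoi decomposition of $q$ equidistant points, i.e.\ a standard bubble.

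The principal obstacle is the deformation step: one must drive $\Omega^\star$ into a ``favorable" configuration (either full-dimensional in $\S^n$, or lying inside a controlled lower-dimensional subsphere) without getting stuck at some non-standard stationary configuration, and then identify that favorable configuration as standard. The combinatorics of Plateau-admissible singular strata is tractable only for $q \leq 6$, which is why the paper can unconditionally reach the quintuple case but no further; beyond $q = 6$ one additionally needs the conjectured PDE structure of higher singular strata, together with a more robust version of the deformation-to-standard argument, in order to close the loop for general $n$ and $q \leq n+2$.
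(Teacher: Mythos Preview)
The statement you are attempting to prove is labeled a \emph{Conjecture} in the paper, and the paper does not prove it in full generality; it establishes only the cases $q \leq 6$ on $\S^n$ (and hence on $\R^n$ without uniqueness), and conditionally for $q \leq n+1$ under the extra Plateau or PCF hypotheses. Your proposal correctly identifies this limitation at the end, so it is really a strategy sketch rather than a proof, and as such should be compared to the paper's strategy.

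That comparison reveals a genuine divergence. The paper's engine is \emph{not} a deformation of the minimizer toward a standard configuration via ``driving interfaces onto great subspheres'' and invoking CMC rigidity. Rather, the paper derives a fully nonlinear elliptic PDE for the model profile $\I_m^{(q-1)}$ on the simplex (Proposition~\ref{prop:intro-profile}), and then attempts to show that the actual profile $\I^{(q-1)}$ satisfies the same PDE as an inequality in a viscosity sense, after which a maximum principle (Proposition~\ref{prop:max-principle}) forces $\I^{(q-1)} = \I_m^{(q-1)}$. The second-variation information you allude to enters through the construction of \emph{conformal Jacobi fields} $f^a \in \D_{con}$ solving $L_{Jac} f^a_{ij} = (n-1) a_{ij}$, and the crux is a trace identity $\tr(\F(\Id/2 + \k\otimes\k)) = \HH^{n-1}(\Sigma)$ for the associated conformal-to-volume operator $\F$ (Conjecture~\ref{conj:trace-id}). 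The Gram perturbation of Section~\ref{sec:Gram} does deform the cluster, but only to make it full-dimensional while \emph{preserving} volume and perimeter, not to drive it toward a standard bubble.

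Your deformation-to-great-spheres step is therefore a different idea with no counterpart in the paper, and as stated it has a real gap: there is no mechanism offered for why infinitesimal deformations built from low Jacobi eigenfunctions should flow a general stable minimizer to a flat (equal-volume) configuration, nor any rigidity statement for CMC hypersurfaces in $\S^n$ meeting at $120^\circ$ that would finish the job. The paper sidesteps exactly this difficulty by working at the level of the isoperimetric profile rather than the cluster itself.
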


\begin{conjecture*}[Multi-Bubble Isoperimetric Conjecture on $\S^n$]
For all $2 \leq q \leq n+2$, a standard bubble uniquely minimizes total perimeter among all $q$-clusters $\Omega$ on $\S^n$ of prescribed volume $V(\Omega) = v \in \interior \Delta^{(q-1)}_{1}$. 
\end{conjecture*}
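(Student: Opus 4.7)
The plan is to prove the conjecture by induction on $q$ (and, secondarily, on $n$), combining the existence and regularity theory of isoperimetric clusters with a spectral analysis of the Jacobi operator and a symmetry-extraction principle derived from it. By Almgren's existence theorem and the regularity theory for minimizing clusters, for every $v \in \interior \simplex^{(q-1)}_1$ there is a minimizing $q$-cluster $\Omega$ on $\S^n$, whose interfaces $\Sigma_{ij}$ are smooth constant-mean-curvature hypersurfaces away from a closed singular set of Hausdorff codimension at least $2$; along the codimension-one strata three interfaces meet at $120^{\circ}$ by Lemma \ref{lem:boundary-normal-sum}. I first assume a higher-dimensional analogue of Plateau's laws at the codimension-two stratum; this conditional assumption is removed at the end by an inductive bootstrap.

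The heart of the argument is the identification of hidden symmetry. On the stratified interface $\Sigma = \bigcup_{i<j} \Sigma_{ij}$, the second variation of perimeter is governed by the Jacobi operator $\mathcal{J}$, whose natural domain is a quantum-graph-type function space with transmission conditions at the triple junctions $\Sigma_{ijk}$. Stability along the $(q-1)$-dimensional volume constraint gives $\mathrm{ind}(\mathcal{J}) \leq q-1$; the matching lower bound is obtained from $q-1$ explicit test functions built cell-by-cell using indicator-type sections together with the logarithmic derivative of the mean curvatures. With $\mathrm{ind}(\mathcal{J}) = q - 1$ established, the normal component on $\Sigma$ of every Killing field of $\S^n$ lies in $\ker(\mathcal{J})$, so the induced map $\mathfrak{so}(n+1) \to \ker(\mathcal{J})$ has kernel of dimension at least $\binom{n+1}{2} - (q - 1) - \dim \ker \mathcal{J}$. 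A careful dimension count combined with the volume constraint forces a subgroup conjugate to $O(n+2-q)$ to stabilize $\Omega$ setwise, reducing the problem to a cluster supported on a totally geodesic $\S^{q-1} \subset \S^n$ with essentially no residual symmetry.

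At this point $\Omega$ has spherical interfaces meeting at $120^{\circ}$ and is invariant under a large isometry group; the remaining step is a rigidity classification. I would combine a combinatorial enumeration of admissible bubble graphs on $q$ cells (Matzke-type data) with a Heintze--Karcher-style integral identity relating the interface mean curvatures $H_{ij}$ to the enclosed volumes $V(\Omega_i)$, to show that for each $v \in \interior \simplex^{(q-1)}_1$ there is a unique admissible candidate up to isometry: Sullivan's standard bubble. Strict minimality and uniqueness then follow from positive semidefiniteness of $\mathcal{J}$ on the orthogonal complement of the Killing and volume-constraint directions, together with a second-order Taylor expansion of the perimeter functional along the deformation connecting any competitor to the standard bubble.

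The main obstacle is removing the Plateau hypothesis at codimension-two singularities for large $q$: the full classification of minimizing tangent cones of cluster type is known unconditionally only in low ambient dimension. My plan to bypass this is via blow-up and dimension reduction: any tangent cone at a singular point is itself a minimizing cluster in $\R^n$ with either strictly fewer effective cells or a non-trivial splitting direction, so the Plateau law can be bootstrapped from unconditional base cases (the quintuple theorem of the paper and below) using the inductive hypothesis in $q$. Making this induction airtight---in particular excluding exotic tangent cones that simultaneously preserve both $q$ and $n$, and controlling the passage from $\S^n$-minimizers to their Euclidean blow-ups---is where I expect the heaviest analytic lifting, and is the point at which the paper's conditional results become unconditional only for $q$ up to the boundary of the range where the Plateau classification is available.
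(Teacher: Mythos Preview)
This statement is a conjecture; the paper proves it only for $q \leq 6$ (Theorem~\ref{thm:intro-quintuple}) and conditionally for $q \leq n+1$ under a Plateau or PCF hypothesis (Theorem~\ref{thm:intro-conditional}). Your proposal therefore cannot close as written, and the two decisive gaps are the following. The symmetry-extraction step does not work: the identity $\mathrm{ind}(L_{Jac}) = q-1$ constrains only the number of positive eigenvalues and says nothing about $\dim \ker L_{Jac}$, so your lower bound on the kernel of the map $\mathfrak{so}(n+1) \to \ker L_{Jac}$ is vacuous without independent control of that kernel---which is precisely the (open) equivariant non-degeneracy problem discussed just before Proposition~\ref{prop:Fredholm}. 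The structural fact that minimizers are perpendicular spherical Voronoi clusters with $\S^{n+1-q}$-symmetry (Theorem~\ref{thm:intro-structure}) is imported from \cite{EMilmanNeeman-TripleAndQuadruple} and is not obtained by such a dimension count. Second, your Plateau bootstrap via blow-up and dimension reduction does not close: the assertion that every tangent cone either has strictly fewer effective cells or splits off a line is exactly what fails, since for $m \geq 4$ there exist non-simplicial area-minimizing cones (e.g.\ the cone over a hypercube \cite{Brakke-MinimalConesOnCubes}), and nothing in your argument rules these out as tangent cones of a minimizing $q$-cluster. This is the obstacle the paper explicitly names as the reason its unconditional argument stops at $q=6$.

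The paper's route is also structurally different from yours. Rather than a Killing-field symmetry count followed by adjacency combinatorics and a Heintze--Karcher identity (the graph enumeration is what the paper explains cannot scale past $q=5$), it shows that the model profile $\I_m^{(q-1)}$ satisfies the elliptic PDE (\ref{eq:intro-PDE}), establishes the matching differential inequality for the actual profile $\I^{(q-1)}$ in a viscosity sense via conformal Jacobi fields, and concludes by a maximum principle (Proposition~\ref{prop:max-principle}). The missing ingredient for general $q$ is a trace identity for the conformal-to-volume operator $\F$ (Conjecture~\ref{conj:trace-id}); the paper verifies it on pseudo-conformally-flat clusters and reduces Plateau clusters to that case by a volume- and perimeter-preserving Gram perturbation (Section~\ref{sec:Gram}).
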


To avoid constantly writing ``up to null sets" above and throughout this work, we will always modify an isoperimetric minimizing cluster on a null set so that its cells $\Omega_i$ are open and $\overline{\partial^* \Omega_i} = \partial \Omega_i$ (this is always possible by Theorem \ref{thm:Almgren}). Note that the single-bubble cases $q=2$ above corresponds to the classical isoperimetric problems on $\R^n$ and $\S^n$, where geodesic balls are well-known to be the unique isoperimetric minimizers \cite{BuragoZalgallerBook}; this case was included in the formulation of the conjectures for completeness. That a standard bubble in $\R^n$ exists and is unique (up to isometries) for all volumes $v \in \interior \Delta^{(q-1)}_\infty$ and $2 \leq q \leq n+2$ was proved by Montesinos-Amilibia \cite{MontesinosStandardBubbleE!}. An analogous statement equally holds on $\S^n$ for all $v \in \interior \Delta^{(q-1)}_1$ (see Lemma \ref{lem:standard-volume} proved in \cite{EMilmanNeeman-TripleAndQuadruple}). 

\subsection{Previously known and related results}

Let us describe some of the previously known results regarding the multi-bubble conjectures on $\R^n$ and $\S^n$, and refer to the excellent book by F.~Morgan \cite[Chapters 13,14,18,19]{MorganBook5Ed} for more information. Some additional related results in hyperbolic space or more general homogeneous spaces, as well as regarding locally isoperimetric partitions involving more than one cell with infinite volume, clusters with infinitely many cells, clusters with convex cells or different interface weights may be found in \cite{CottonFreeman-DoubleBubbleInSandH,NPST-ClustersViaConcentrationCompactness,ABV-LensClusters, NPT-LocallyIsoperimetricPartitions, NPST-ClustersWithInfinitelyManyCells,DeRosaTione-ConvexStationaryBubbles,BronsardNovak-DifferentTensions}. 

\begin{itemize}
\item On $\R^n$ -- 
Long believed to be true, but appearing explicitly as a conjecture in an undergraduate thesis by J.~Foisy in 1991 \cite{Foisy-UGThesis}, 
the Euclidean double-bubble problem in $\R^n$ was considered in the 1990's by various authors. In the Euclidean plane $\R^2$, the conjecture was confirmed in \cite{SMALL93} (see also \cite{MorganWichiramala-StablePlanarDoubleBubble,LawlorEtAl-PlanarDoubleBubbleViaMetacalibration,CLM-StabilityInPlanarDoubleBubble}). In $\R^3$, the equal volumes case $v_1 = v_2$ was settled in \cite{HHS95,HassSchlafly-EqualDoubleBubbles}, and the structure of general double-bubbles was further studied in \cite{Hutchings-StructureOfDoubleBubbles}. This culminated in the work of Hutchings--Morgan--Ritor\'e--Ros \cite{DoubleBubbleInR3-Announcement,DoubleBubbleInR3}, who confirmed the double-bubble conjecture in $\R^3$; their method was subsequently extended by Reichardt-et-al to resolve the conjecture in $\R^4$ and $\R^n$ \cite{SMALL03,Reichardt-DoubleBubbleInRn} (see also Lawlor \cite{Lawlor-DoubleBubbleInRn} for an alternative proof using ``unification" which applies to arbitrary interface weights). The triple-bubble case $q=4$ in the Euclidean plane $\R^2$ was confirmed by Wichiramala in \cite{Wichiramala-TripleBubbleInR2} (see also \cite{Lawlor-TripleBubbleInR2AndS2}). The triple-bubble conjecture on $\R^n$ for $n \geq 3$ and quadruple-bubble conjecture for $n \geq 4$ were recently confirmed in our previous work \cite{EMilmanNeeman-TripleAndQuadruple} (along with a new proof of the double-bubble case for $n \geq 2$). We've learned from Gary Lawlor (personal communication) that he too has managed to confirm the equal-volume case $v_1 = v_2 = v_3$ of the triple-bubble conjecture on $\R^3$.
\item On $\S^n$ --
The double-bubble and triple-bubble conjectures were resolved on $\S^2$ by Masters \cite{Masters-DoubleBubbleInS2} and Lawlor \cite{Lawlor-TripleBubbleInR2AndS2}, respectively, but on $\S^n$ for $n\geq 3$ only partial results for the double-bubble problem were known until recently \cite{CottonFreeman-DoubleBubbleInSandH, CorneliHoffmanEtAl-DoubleBubbleIn3D,CorneliCorwinEtAl-DoubleBubbleInSandG}. The double-bubble conjecture on $\S^n$ for $n \geq 2$, the triple-bubble conjecture for $n \geq 3$ and quadruple-bubble conjecture for $n \geq 4$ were recently confirmed in our previous work \cite{EMilmanNeeman-TripleAndQuadruple}. 
\end{itemize} 

\noindent
One can say a bit more in the equal-volume cases:
\begin{itemize}
\item While this falls outside the scope of Sullivan's conjecture, we mention that Paolini, Tamagnini and Tortorelli \cite{PaoliniTamagnini-PlanarQuadraupleBubbleEqualAreas,PaoliniTortorelli-PlanarQuadrupleEqualAreas} have identified a unique (up to reordering cells, scaling and isometries) minimizing quadruple-bubble of equal volumes $v_1 = v_2 = v_3 = v_4$ in the plane $\R^2$.
\item The equal-volume case $v_1 = \ldots = v_q = \frac{1}{q}$ of the multi-bubble conjecture on $\S^n$ for $2 \leq q \leq n+2$ follows immediately from the equal-volume case on $\GG^{n+1}$, described next.
\end{itemize}

The last point is a good segue for adding to our discussion Gaussian space $\GG^n$, consisting of Euclidean space $(\R^n,|\cdot|)$ endowed with the standard Gaussian probability measure $\gamma^n = (2 \pi)^{-\frac{n}{2}} \exp(-|x|^2/2) dx$. The following conjecture was confirmed in our previous work \cite{EMilmanNeeman-GaussianMultiBubble}:

\begin{theorem*}[Multi-Bubble Isoperimetric Conjecture on $\GG^n$]
For all $2 \leq q \leq n+1$, the unique Gaussian-weighted isoperimetric minimizers on $\GG^n$ of prescribed Gaussian measure $v \in \interior \simplex^{(q-1)}_1$ are simplicial clusters, obtained as the Voronoi cells of $q$ equidistant points in $\R^n$ (appropriately translated). 
\end{theorem*}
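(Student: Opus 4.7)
The plan is to extend the heat-semigroup proof of Bobkov's Gaussian isoperimetric inequality to the multi-bubble setting. As a preliminary step, existence of minimizers follows from standard BV compactness under $\gamma^n$ together with lower semi-continuity of perimeter; smoothness of interfaces off a small singular set and the first-order Plateau condition ($120^\circ$ angles at triple junctions) follow from Almgren's regularity theory.

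The central object is the multi-bubble Gaussian isoperimetric profile $J : \Delta^{(q-1)}_1 \to \R_{\geq 0}$, defined by $J(v) := \per_{\gamma^{q-1}}(\Omega^{\star}(v))$, where $\Omega^{\star}(v)$ is the simplicial cluster on $\R^{q-1}$ with prescribed Gaussian volumes $v$. The restriction $q \leq n+1$ ensures such a simplicial cluster can be realized on $\R^n$ (there are at most $n+1$ equidistant points in $\R^n$). Given an arbitrary $q$-cluster $\Omega$ on $\R^n$, let $(P_t)_{t \geq 0}$ denote the Ornstein--Uhlenbeck semigroup and set $u^t(x) := (P_t 1_{\Omega_1}(x), \ldots, P_t 1_{\Omega_q}(x))$, a smooth map from $\R^n$ to $\Delta^{(q-1)}_1$. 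I would seek a functional $\Xi : \Delta^{(q-1)}_1 \times (\R^n)^q \to \R_{\geq 0}$ with $\Xi(v, 0) = J(v)$, such that
\[
F(t) := \int_{\R^n} \Xi\!\left( u^t(x),\, (\nabla u_i^t(x))_{i=1}^q \right) d\gamma^n(x)
\]
satisfies (i) $\lim_{t \to 0^+} F(t) = \per_{\gamma^n}(\Omega)$ via the distributional identification of $\nabla 1_{\Omega_i}$ as a vector-valued measure concentrated on $\partial^* \Omega_i$, (ii) $\lim_{t \to \infty} F(t) = J(v)$ by ergodicity of $(P_t)$, and (iii) $F'(t) \geq 0$ for all $t > 0$. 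Chaining these bounds yields the desired inequality $\per_{\gamma^n}(\Omega) \geq J(v)$.

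The main obstacle is establishing (iii), which would reduce --- via the heat equation $\partial_t u_i^t = \tfrac{1}{2}\Delta u_i^t - \tfrac{1}{2} x \cdot \nabla u_i^t$ and integration by parts against $\gamma^n$ --- to a pointwise inequality for $\Xi$ on $\interior \Delta^{(q-1)}_1$. Unlike Bobkov's scalar function $B = \varphi \circ \Phi^{-1}$ satisfying the explicit ODE $B B'' = -1$, the profile $J$ has no closed form: it is implicit in the nonlinear parametrization of $\Omega^{\star}(v)$ by $v$. I would attack the inequality by computing $\nabla J$ and $\nabla^2 J$ using the first- and second-variation formulas for the simplicial cluster (encoding stationarity of its interfaces on $\R^{q-1}$ under the Gaussian weight, together with Plateau), and then verify that the resulting matrix inequality holds on $\Delta^{(q-1)}_1$.

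Finally, uniqueness of minimizers (up to permutation of cells and orthogonal motions of the equidistant points) would follow from a rigidity analysis of the equality case of the pointwise PDE inequality: tracing it back would force each $u_i^t$, for every $t > 0$, to have a rigid affine-like $x$-dependence, and then running the Ornstein--Uhlenbeck flow backward identifies $\Omega$ itself as a simplicial cluster.
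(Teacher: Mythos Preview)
The paper does not prove this theorem; it is quoted from the authors' earlier work \cite{EMilmanNeeman-GaussianMultiBubble}, and the present paper only describes the method used there in passing. That method is \emph{not} a semigroup interpolation \`a la Bobkov. Rather, it is the PDE/maximum-principle approach outlined in Section~\ref{sec:profile}: one shows that the model profile $\I_{\GG^n,m}$ satisfies the elliptic PDE $\tr((-\nabla^2 \I_{\GG^n,m})^{-1}) = 2\I_{\GG^n,m}$, and then, for an arbitrary minimizer, uses the second-variation (Index-Form) analysis to produce test fields witnessing the same PDE for the actual profile $\I_{\GG^n}$ in a viscosity sense. The maximum principle forces $\I_{\GG^n} = \I_{\GG^n,m}$, and a separate rigidity argument on the equality case identifies minimizers as simplicial clusters. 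The crucial simplification in the Gaussian setting (compared to the present spherical paper) is that the interfaces of a minimizer are \emph{flat}, so piecewise-constant scalar fields are genuine eigenfunctions of $L_{Jac}$ and the relevant operator $\F$ can be computed explicitly.

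Your semigroup proposal is a natural thing to try, but as written it has a genuine gap at step (iii), which you yourself flag as ``the main obstacle.'' The issue is not merely that $J$ lacks a closed form: the difficulty is that you have not specified the functional $\Xi$, and the existence of \emph{any} $\Xi$ with the three properties you list is precisely the content of the theorem. In Bobkov's scalar case the miracle is the specific choice $\Xi(v,p) = \sqrt{B(v)^2 + |p|^2}$, whose monotonicity under the Ornstein--Uhlenbeck flow reduces to a two-point (or pointwise Hessian) inequality that can be checked by hand. For $q \geq 3$ there is no known analogue: the ``natural'' candidates for $\Xi$ on $\Delta^{(q-1)} \times (\R^n)^q$ do not simultaneously satisfy (i) and (iii), and your plan to ``compute $\nabla J$, $\nabla^2 J$ via first- and second-variation and verify the matrix inequality'' is exactly where every attempt along these lines has stalled. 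The variational PDE for $J$ (namely $\tr((-\nabla^2 J)^{-1}) = 2J$) is necessary but far from sufficient for the pointwise monotonicity you need; the semigroup argument demands a convexity-type inequality for $\Xi$ jointly in $(v,P)$, not just a relation among the derivatives of $J$ alone. Absent a concrete $\Xi$ and a verification of (iii), the proposal is a strategy rather than a proof.
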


\noindent When $q=2$, the cells of a simplicial cluster are precisely halfspaces, and the single-bubble conjecture on $\GG^n$ holds by the classical Gaussian isoperimetric inequality \cite{SudakovTsirelson,Borell-GaussianIsoperimetry} and its equality cases \cite{EhrhardGaussianIsopEqualityCases, CarlenKerceEqualityInGaussianIsop}; this case was included in the formulation above for completeness. Prior partial results regarding the double-bubble conjecture on $\GG^n$ for $n \geq 2$ were obtained in \cite{CorneliCorwinEtAl-DoubleBubbleInSandG}. 
Note that the interfaces of these Gaussian simplicial clusters are all flat, as opposed to the spherical interfaces of the standard bubbles on $\R^n$ and $\S^n$; this flattening when passing from $\S^N$ to $\GG^n$ is a well-known phenomenon, due to the need to rescale $\S^N$ so as to match the ``curvature" of $\GG^n$. We can now explain the remark about the equal volume case on $\S^n$ -- both measure and perimeter on $\S^n$ and $\GG^{n+1}$ coincide for \emph{centered} cones, and the unique equal-volumes minimizer on $\GG^{n+1}$ for all $2 \leq q \leq (n+1) + 1$ is the centered simplicial cluster (whose cells are centered cones), whose intersection with $\S^n$ is exactly the equal-volume standard-bubble. 

\medskip

\subsection{Confirmation of quintuple-bubble conjecture}

Our first main result in this work is the following:

\begin{theorem} \label{thm:intro-quintuple}
The quintuple-bubble conjecture (case $q=6$) holds on $\S^n$ for all $n \geq 5$. 
\end{theorem}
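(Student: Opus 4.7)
The plan is to extend the spectral and deformation framework developed in our previous work on the double-, triple- and quadruple-bubble cases to the setting $q=6$. First, I would invoke Almgren-type existence and regularity: a perimeter-minimizing $6$-cluster $\Omega$ on $\S^n$ exists for any prescribed $v \in \interior \Delta^{(5)}_1$, and its interfaces $\Sigma_{ij}$ are smooth constant-mean-curvature hypersurfaces meeting in threes along an $(n-2)$-dimensional singular stratum at $120^\circ$ (by Lemma \ref{lem:boundary-normal-sum}), with a further stratification of higher-codimension singularities. The mean curvatures satisfy $H_{ij} = \lambda_i - \lambda_j$ for a well-defined pressure vector $\lambda \in \R^6$.

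Second, the core of the argument is spectral analysis of the Jacobi operator $L$ on the interface network $\Sigma$. I would show that $L$ has Morse index exactly $q-1 = 5$ on the space of volume-preserving normal perturbations. The lower bound is obtained by exhibiting $q-1$ linearly independent Möbius-conformal perturbations of $\S^n$ which are volume-linear and strictly perimeter-decreasing to second order. The upper bound is the more delicate half, proved via a quantum-graph type decomposition of $L$-eigenfunctions across the interface network, using the Plateau boundary conditions along triple-edges to couple eigenmodes on adjacent faces.

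Third, I would use the deformation method to move $\Omega$ into a \emph{favorable configuration}, meaning one of: (i) the singular set obeys the higher-dimensional analogue of Plateau's laws; (ii) $\Omega$ is full-dimensional in $\S^n$; or (iii) $\Omega$ sits in one of a prescribed short list of lower-dimensional arrangements. The reduction uses the sharp index computation from the previous step to control the dimensions of the various volume-preserving, perimeter-decreasing deformation spaces. For $q \leq 6$ the combinatorial types of possible singular tangent cones are sufficiently restricted that an explicit case analysis forces at least one of the good scenarios to hold. In each such scenario, a rigidity argument --- matching the solution of the linearized problem against the standard $5$-bubble model, and invoking uniqueness of the standard bubble on $\S^n$ at given volumes (the spherical analogue of \cite{MontesinosStandardBubbleE!}) --- concludes that $\Omega$ must be a standard bubble.

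The main obstacle will be the case analysis in the third step. For $q=6$, new singular cone types and new low-dimensional configurations appear which did not occur for $q \leq 5$; ruling each of them out (equivalently, verifying the higher-dimensional Plateau laws unconditionally) amounts to a classification of area-minimizing cones with at most six chambers in low ambient dimension. This classification is still tractable at $q=6$ but genuinely breaks at $q=7$, which is exactly the reason that the authors' method stops at the quintuple bubble and for $q \geq 7$ only yields the conditional result stated in the abstract. The index-$(q-1)$ equality and the reduction-to-favorable-configuration step, by contrast, are expected to go through uniformly for all $q \leq n+1$.
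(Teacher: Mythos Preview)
Your proposal misidentifies the mechanism that makes $q=6$ work. You describe an ``explicit case analysis'' of singular tangent cones, claiming it is ``still tractable at $q=6$ but genuinely breaks at $q=7$.'' This is precisely the approach of the paper's \emph{predecessor} \cite{EMilmanNeeman-TripleAndQuadruple}, and the present paper explains in Subsection~1.4 why that method is already \emph{limited to $q\leq 5$}: a minimizer is only known to be $3$-Plateau (via Taylor's classification), and no classification of area-minimizing cones in dimension $\geq 4$ is available to push further. The paper does \emph{not} classify or rule out any new cone types for $q=6$; it bypasses this entirely.

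The actual engine is a PDE/maximum-principle argument for the multi-bubble isoperimetric profile (Section~\ref{sec:profile}), which you do not mention. One shows that the model profile $\I_m$ satisfies the elliptic PDE~(\ref{eq:intro-PDE}), and that the true profile $\I$ satisfies the corresponding differential \emph{inequality} in a viscosity sense, via a quadratic form $\F$ (the conformal-to-volume operator) built from solutions to $L_{Jac}f_{ij}=(n-1)a_{ij}$ with conformal boundary conditions (Section~\ref{sec:conformal-Jacobi}). The obstacle is a trace identity $\tr(\F(\Id/2+\k\otimes\k))=\HH^{n-1}(\Sigma)$, which the paper verifies on pseudo-conformally-flat (PCF) clusters. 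The crucial reduction to PCF is \emph{not} a case analysis of cones but the elementary Lemma~\ref{lem:q-3-Plateau}: a $(q-3)$-Plateau cluster is either fully Plateau or PCF. Since $3$-Plateau $=(q-3)$-Plateau exactly when $q=6$, every minimizer automatically satisfies the hypothesis of Theorem~\ref{thm:intro-conditional}. If the cluster is fully Plateau, a novel \emph{Gram perturbation} (Section~\ref{sec:Gram}) deforms it---with volumes and perimeter unchanged---into a full-dimensional, hence PCF, minimizer. The index-$(q-1)$ result you highlight (Theorem~\ref{thm:intro-minimizer-q-1-positive}) is a byproduct of this machinery, not the driver of the reduction; your proposed use of it to ``control the dimensions of deformation spaces'' is not how the argument proceeds.
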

By scale-invariance, approximately embedding a small cluster in $\R^n$ into $\S^n$ and applying Theorem \ref{thm:intro-quintuple}, it follows that a standard quintuple-bubble in $\R^n$ for $n \geq 5$ is indeed an isoperimetric minimizer, confirming the quintuple-bubble isoperimetric \emph{inequality} on $\R^n$ (see Subsection \ref{subsec:degenerate-elliptic}). However, uniqueness is lost in the approximation procedure, and so contrary to the $\S^n$ case, we cannot exclude the existence of additional quintuple-bubble minimizers on $\R^n$.
\begin{corollary} \label{cor:intro-quintuple-R}
The quintuple-bubble conjecture (case $q=6$) holds on $\R^n$, possibly without uniqueness, for all $n \geq 5$. 
\end{corollary}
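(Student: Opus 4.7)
The plan is to transfer the Euclidean problem to $\S^n$ via inverse stereographic projection, apply Theorem \ref{thm:intro-quintuple} there, and pass to a limit. By the scale-invariance of the isoperimetric problem on $\R^n$ (under $x\mapsto\lambda x$ perimeters scale by $\lambda^{n-1}$ and volumes by $\lambda^n$), it suffices to prove that for every admissible $v=(v_1,\dots,v_5,\infty)\in\interior\Delta^{(5)}_\infty$ and every $6$-cluster $\Omega$ on $\R^n$ with $V(\Omega)=v$, one has $\per(\Omega)\geq\per(\Omega^{\mathrm{std}}_v)$, where $\Omega^{\mathrm{std}}_v$ denotes a (necessarily existing, by \cite{MontesinosStandardBubbleE!}) standard bubble on $\R^n$ with these volumes.

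Fix such $\Omega$, and for a small parameter $\lambda>0$ consider $\lambda\Omega$, whose volumes are $\lambda^n v_i$ (for $i\leq 5$) and whose perimeter is $\lambda^{n-1}\per(\Omega)$. Let $\sigma:\S^n\setminus\{N\}\to\R^n$ denote stereographic projection from the north pole, and set $\tilde\Omega_\lambda := \sigma^{-1}(\lambda\Omega)$, with the unbounded cell closed up by adjoining $\{N\}$. Since $\sigma^{-1}$ is conformal and $\lambda\Omega$ is supported in a ball whose radius tends to $0$ as $\lambda\to 0^+$, the conformal factor tends to a constant on the support, and a direct computation yields the asymptotics
\[
V_{\S^n}(\tilde\Omega_\lambda)_i = c_n\lambda^n v_i(1+o(1)), \quad i\leq 5, \qquad \per_{\S^n}(\tilde\Omega_\lambda) = c_n'\lambda^{n-1}\per(\Omega)(1+o(1)),
\]
for explicit constants $c_n,c_n'>0$ depending only on $n$, with $V_{\S^n}(\tilde\Omega_\lambda)_6 = 1 - c_n\lambda^n\sum_{i\leq 5} v_i + o(\lambda^n)$.

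Applying Theorem \ref{thm:intro-quintuple} to $\tilde\Omega_\lambda$ furnishes a standard bubble $\Omega^{*}_\lambda$ on $\S^n$ with $V_{\S^n}(\Omega^{*}_\lambda)=V_{\S^n}(\tilde\Omega_\lambda)$ and $\per_{\S^n}(\Omega^{*}_\lambda)\leq \per_{\S^n}(\tilde\Omega_\lambda)$. By the very definition of standard bubble on $\R^n$, $\sigma(\Omega^{*}_\lambda)$ is a standard bubble in $\R^n$; rescaling by $1/\lambda$ yields a standard bubble $\hat\Omega_\lambda$ whose Euclidean volume vector converges to $v$ and whose perimeter satisfies $\per(\hat\Omega_\lambda)\leq \per(\Omega)+o(1)$ as $\lambda\to 0^+$. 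The existence and continuous dependence of the standard-bubble family on the volume parameter (see \cite{MontesinosStandardBubbleE!} together with Lemma \ref{lem:standard-volume}) allow extracting a subsequential limit (up to translations) which is itself a standard bubble $\Omega^{\mathrm{std}}_v$, yielding the inequality $\per(\Omega^{\mathrm{std}}_v)\leq \per(\Omega)$, as required.

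The main technical ingredients are the careful expansion of the conformal factor of $\sigma^{-1}$ near the south pole (to simultaneously control the volumes of all cells and the total perimeter) and the continuity of the standard-bubble family in $\R^n$ with respect to its volume vector, which is needed to identify the limit. Uniqueness is genuinely lost because the argument only produces the inequality $\per(\Omega)\geq\per(\Omega^{\mathrm{std}}_v)$ without any quantitative gap: a non-standard $\Omega$ achieving equality would not be detected, since the $\S^n$-uniqueness from Theorem \ref{thm:intro-quintuple} is used only on the comparison bubble $\Omega^{*}_\lambda$, not on the approximant $\tilde\Omega_\lambda$.
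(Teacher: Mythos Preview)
Your approach is exactly the one the paper sketches in the paragraph preceding the corollary: rescale, push into $\S^n$ via inverse stereographic projection, invoke Theorem~\ref{thm:intro-quintuple}, and take $\lambda\to 0$; the paper gives no further details beyond that sentence. One small caveat: your claim that ``$\lambda\Omega$ is supported in a ball whose radius tends to $0$'' presumes the finite-volume cells of $\Omega$ (and the interfaces) are bounded, which need not hold for an arbitrary competitor---this is handled by a routine truncation (using coarea to find radii $R_k\to\infty$ with $\H^{n-1}\!\big(\bigcup_{i\le 5}\Omega_i\cap\partial B_{R_k}\big)\to 0$), or simply by restricting to minimizers, which are bounded.
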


In fact, we are able to resolve the general multi-bubble case on $\S^n$ for $q \leq n+1$, and hence by approximation the one on $\R^n$ (but without uniqueness) as well, \emph{conditioned} on either of two possible assumptions (we refer to Section \ref{sec:Voronoi} for precise definitions):
\begin{itemize}
\item
One favorable scenario for us is when a minimizing cluster is ``full-dimensional", which roughly means that the bubbles comprising the cluster are in general position, so that their centers do not lie on a linear subspace of dimension strictly smaller than $q-1$. More generally, we can handle clusters which we call ``pseudo conformally flat" (or PCF), a generalized notion of conformal flatness, which does allow particular lower-dimensional configurations. For example, a cluster $\Omega$ so that $\cap_{i=1}^q \overline{\Omega_i} \neq \emptyset$ is always PCF. 
\item 
A second favorable scenario pertains to the type of singularities a minimizing cluster can have at the meeting locus of $k$ cells. It was noted by Physicist Joseph Plateau already in the 19th century that soap bubbles (in $\R^3$) meet in threes and in fours like the cones over a regular triangle and tetrahedron, respectively. This was rigorously confirmed in $\R^3$ by Jean Taylor in \cite{Taylor-SoapBubbleRegularityInR3} for Almgren's $(\M,\eps,\delta)$ surfaces, and in particular for area minimizers. An extension of Taylor's results to certain $(\M,\eps,\delta)$ sets in $\R^n$ (and general weighted Riemannian manifolds), which in particular include the boundary of minimizing $q$-clusters, was obtained in \cite{CES-RegularityOfMinimalSurfacesNearCones}. The assumption we require to extend our results to all $q \leq n+1$ is  that a minimizing cluster satisfies a higher-order extension of Plateau's laws -- whenever $k$ cells meet in an $m$-dimensional cone (after modding out $\R^{n-m}$) for $m\leq \ell$, the cone must be a regular $m$-dimensional simplex and necessarily $k=m+1$. 
We shall call such a cluster an \emph{``$\ell$-Plateau cluster"}; when this holds for any $\ell$ we will simply say that the cluster is \emph{``Plateau"} (see Subsection \ref{subsec:Plateau} for a precise formulation). By the above results, every minimizing cluster is $3$-Plateau.
\end{itemize}

Unfortunately, we do not know how to a-priori verify that a minimizing $q$-cluster is necessarily either pseudo conformally flat or Plateau, unless $q \leq 6$, which suffices to cover the previously established double, triple and quadruple cases, and fortunately also includes the new quintuple case, yielding the (unconditional) Theorem \ref{thm:intro-quintuple}. 

\medskip

In order to explain why the method we employed in \cite{EMilmanNeeman-TripleAndQuadruple} for resolving the triple and quadruple bubble conjectures cannot be used (to the best of our understanding) to resolve the quintuple case, and in order to describe our conditional result more precisely, let us recall the main structural results for isoperimetric minimizers on $\S^n$ which we obtained in \cite{EMilmanNeeman-TripleAndQuadruple}.

\subsection{Spherical Voronoi clusters}

\begin{definition}[Spherical Voronoi Cluster] \label{def:intro-spherical-Voronoi}
A $q$-cluster $\Omega$ on $\S^n$ is called a spherical Voronoi cluster if there exist $\{ \c_i \}_{i=1,\ldots,q} \subset \R^{n+1}$ and $\{ \k_i \}_{i=1,\ldots,q} \subset \R$ so that for all $i=1,\ldots,q$:
\begin{enumerate}[(1)]
\item \label{it:intro-spherical-Voronoi}
For every non-empty interface $\Sigma_{ij} \neq \emptyset$, $\Sigma_{ij}$ is a subset of a geodesic sphere $S_{ij}$ with quasi-center $\c_{ij} = \c_i - \c_j$ and curvature $\k_{ij} = \k_i - \k_j$. \\
The quasi-center $\c$ of a geodesic sphere $S$ is the vector $\c := \n - \k p$ at any of its points $p \in S$ (where $\k$ is the curvature with respect to the unit normal $\n$). 
\item \label{it:intro-spherical-Voronoi-2}
The following Voronoi representation holds:
\[ \Omega_i  = \interior \; \set{ p \in \S^n \; ; \; \argmin_{j=1,\ldots,q} \scalar{\c_j , p} + \k_j \ni  i } =  \interior \; \bigcap_{j \neq i} \; \set{ p \in \S^n \; ;\; \scalar{\c_{ij},p} + \k_{ij} \leq  0 } .
\] \end{enumerate}
As the parameters $\{\c_i\}$ and $\{\k_i\}$ are defined up to translation, we will always employ the convention that $\sum_{i=1}^q \c_i = 0$ and $\sum_{i=1}^q \k_i = 0$. 
\end{definition}

\begin{definition}[Perpendicular Spherical Voronoi Cluster]  \label{def:intro-perpendicular}
A spherical Voronoi cluster $\Omega$ on $\S^n$ is called perpendicular if there exists $N \in \S^n$ so that $\c_i \perp N$ for all $i=1,\ldots,q$. In particular, the cells of a perpendicular spherical Voronoi cluster are invariant under reflection about $N^{\perp}$. $N$ is called a North pole of $\Omega$ and $\S^n \cap N^{\perp}$ is called an equator. A perpendicular spherical Voronoi cluster is said to have equatorial cells if all of its (open) cells intersect an equator, or equivalently, if they are all non-empty and connected.  
\end{definition}

\begin{remark} \label{rem:intro-standard-bubbles}
All standard bubbles (with $q \leq n+2$) on $\S^n$ are Plateau spherical Voronoi clusters, and when $q \leq n+1$ they are in addition perpendicular (Corollary \ref{cor:standard-Voronoi}); they are precisely characterized as those spherical Voronoi clusters for which the interfaces $\Sigma_{ij}$ are non-empty for all $1 \leq i < j \leq q$ -- see Proposition \ref{prop:standard-char}. 
\end{remark}

\begin{figure}
    \begin{center}
                 \includegraphics[scale=0.108]{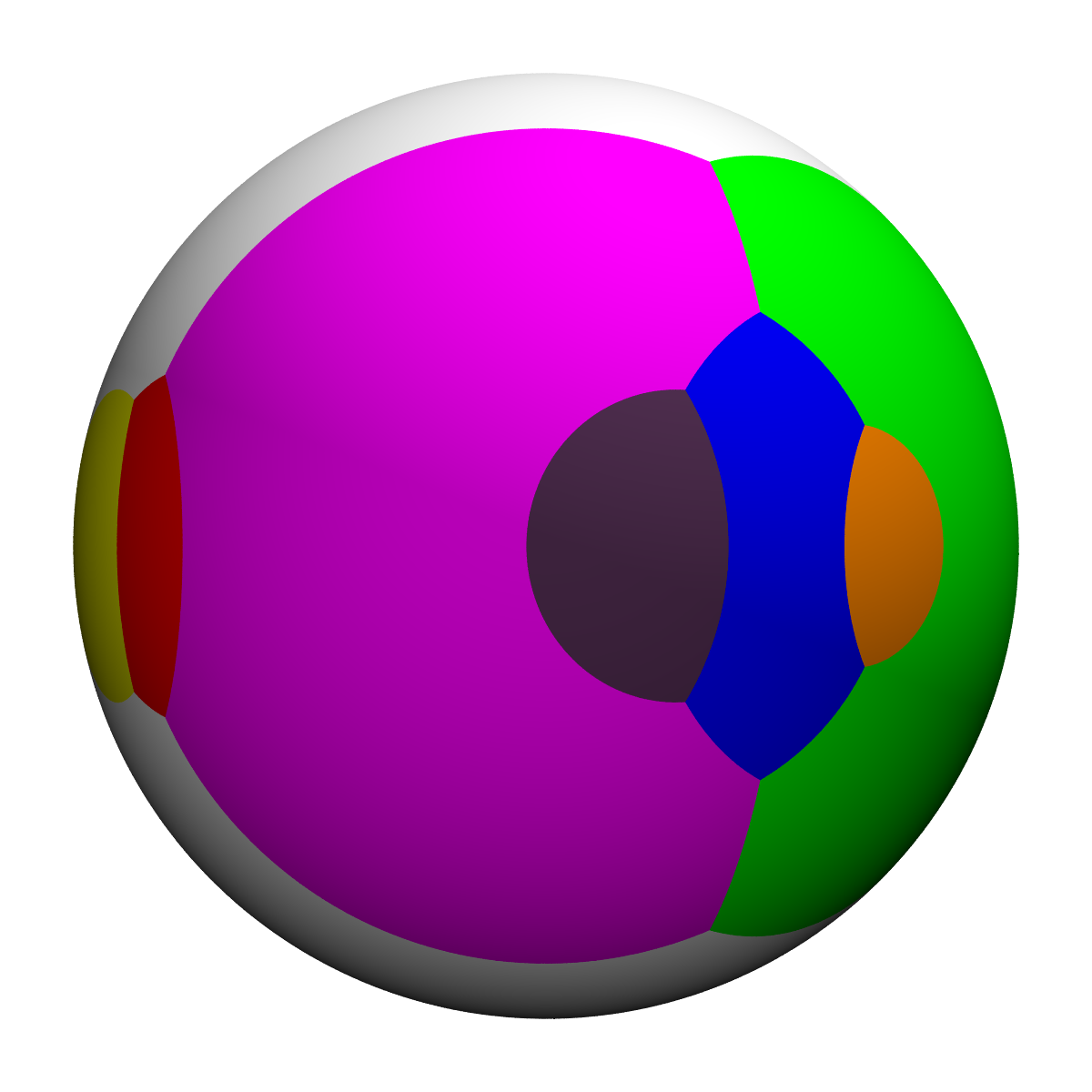}
        \includegraphics[scale=0.108]{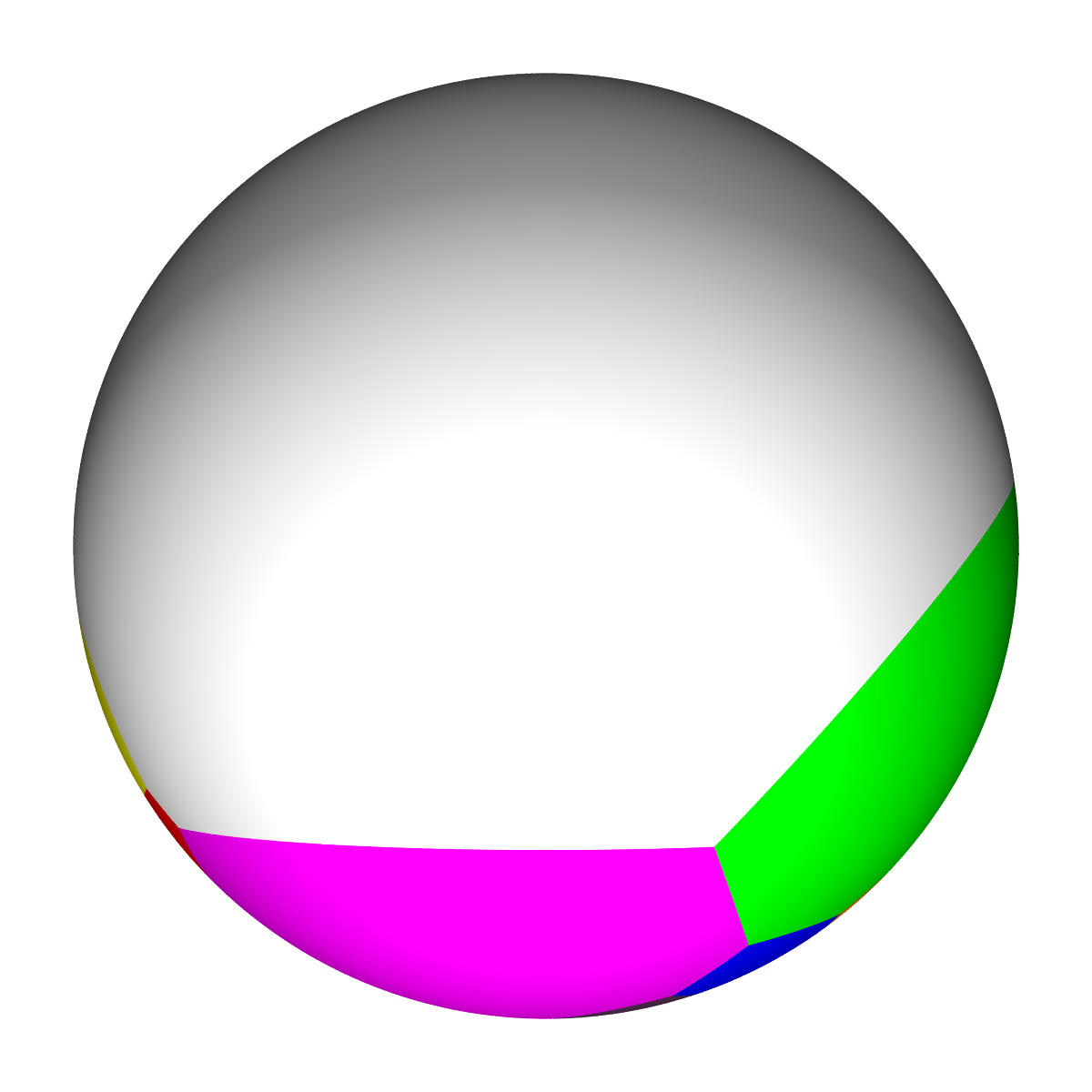}
                \raisebox{0.07\height}{
        \begin{tikzpicture}[scale=1.97]
            \input{cluster-brep-color}
        \end{tikzpicture}
        }
     \end{center}
     \caption{
         \label{fig:intro-spherical-Voronoi}
         A Plateau perpendicular spherical Voronoi cluster with equatorial cells on $\S^2$, as seen from its equatorial plane (left), from its North pole (middle), and after projection to the equatorial plane (right, colors lightened for better contrast), highlighting its convex polyhedral cells. 
     }
\end{figure}

The following structural result was established in \cite{EMilmanNeeman-TripleAndQuadruple}:

\begin{theorem}[\cite{EMilmanNeeman-TripleAndQuadruple}] \label{thm:intro-structure}
Let $\Omega$ be an isoperimetric minimizing $q$-cluster on $\S^n$, and assume that $q \leq n+1$ and $V(\Omega) \in \interior \Delta^{(q-1)}_1$. Then $\Omega$ is a perpendicular spherical Voronoi cluster with equatorial (and hence connected) cells. Moreover, $\Omega$ is regular, stable and $3$-Plateau. 
\end{theorem}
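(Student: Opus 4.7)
The proof plan splits into three parts: establishing the regularity-type properties (regular, stable, $3$-Plateau); extracting the spherical Voronoi structure from stationarity; and finally handling the perpendicularity and equatorial conditions.

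\textbf{Regularity, stability, and the $3$-Plateau property.} My first step would be to invoke standard results from geometric measure theory. Almgren's regularity theorem for minimizing clusters gives smoothness of the interfaces $\Sigma_{ij}$ away from a singular set of Hausdorff codimension at least $2$; the classification results of Taylor in $\R^3$, extended to higher dimensions by Colombo--Edelen--Spolaor, show that on the codimension-$1$ and codimension-$2$ strata of singularities, tangent cones are necessarily cones over a regular $1$- or $2$-dimensional simplex, respectively. This yields both the regularity of $\Omega$ and the $3$-Plateau condition. Stability is immediate from minimality: any compactly supported volume-preserving variation produces a first-order zero of the perimeter and a non-negative second-order contribution.

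\textbf{Spherical Voronoi structure.} From the first-order stationarity of $\Omega$ under volume-preserving deformations, one obtains Lagrange multipliers $\k_1, \ldots, \k_q$ (normalized so $\sum_i \k_i = 0$) such that each regular interface $\Sigma_{ij}$ has constant scalar mean curvature $\k_{ij} = \k_i - \k_j$. On $\S^n$, a connected CMC hypersurface is a portion of a geodesic sphere, so the quasi-center $\c_{ij} := \n_{ij}(p) - \k_{ij}\, p$ is constant along $\Sigma_{ij}$ and characterizes the sphere $S_{ij}$ on which the interface lies. Plateau's $120^\circ$ law at regular triple junctions forces the closed-loop relations $\c_{ij} + \c_{jk} + \c_{ki} = 0$ (and similarly for curvatures); using connectedness of the adjacency structure of the cells, these cocycle conditions integrate to produce consistent per-cell quasi-centers $\c_i$ (normalized by $\sum_i \c_i = 0$) satisfying $\c_{ij} = \c_i - \c_j$. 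The Voronoi representation $\Omega_i = \interior \bigcap_{j \neq i} \{ p \in \S^n : \scalar{\c_{ij}, p} + \k_{ij} \leq 0 \}$ is then verified by noting that the inequality holds with equality on $\Sigma_{ij}$ by construction, and that any proper inclusion could be improved by a local mass-competitor, contradicting minimality.

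\textbf{Perpendicularity.} Since $\sum_i \c_i = 0$, the $q$ quasi-centers span a linear subspace $V := \sspan\{\c_1, \ldots, \c_q\} \subset \R^{n+1}$ of dimension at most $q-1 \leq n$. Hence $V^\perp \cap \S^n$ is non-empty, and any unit vector $N$ in it satisfies $\c_i \perp N$ for every $i$; the defining affine functionals $p \mapsto \scalar{\c_i, p} + \k_i$ are invariant under the reflection $p \mapsto p - 2\scalar{p, N} N$, so all cells are invariant under reflection across $N^\perp$, proving that $N$ is a North pole.

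\textbf{Equatorial cells --- the main obstacle.} This is the most delicate step, since reflective symmetry alone does not force each cell to meet the equator $\S^n \cap N^\perp$: a cell could a priori be replicated by reflection into two disjoint hemispherical pieces. The plan is to exploit the freedom to rotate $N$ inside $V^\perp$ (a sphere of dimension at least $n-q+1 \geq 0$) together with a continuity/deformation argument to locate a North pole under which every cell is equatorial. If no such choice of $N$ existed, I would argue that some cell $\Omega_i$ splits across the equator for every admissible $N$, and then construct a Steiner-symmetrization-type competitor across $N^\perp$ that preserves the volume vector and strictly reduces perimeter, contradicting minimality. Once every cell meets the equator, connectedness is automatic: in an appropriate stereographic chart sending the North pole to infinity, each reflection-invariant cell is the intersection of affine halfspaces, hence convex and connected.
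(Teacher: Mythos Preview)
Your argument has a genuine gap in the second step. The claim ``On $\S^n$, a connected CMC hypersurface is a portion of a geodesic sphere'' is false: already in $\S^3$ there are Clifford tori and many other non-umbilical CMC hypersurfaces, and no Alexandrov-type rigidity holds on the sphere without substantial extra hypotheses. Stationarity therefore does not by itself force the interfaces $\Sigma_{ij}$ to lie on geodesic spheres, and the rest of your spherical-Voronoi derivation (constant quasi-centers $\c_{ij}$, cocycle integration to $\c_i$, the Voronoi representation) collapses without this.

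The actual route in the cited work proceeds in the opposite logical order. One first establishes reflection symmetry of the minimizer (this is where $q\le n+1$ enters, to guarantee a direction of symmetry), and then uses \emph{stability together with that symmetry} (plus connectedness of $\Sigma$) to force the interfaces to be umbilical, hence spherical, and to obtain the full Voronoi representation. The paper itself signals this when it notes that the conclusion holds ``for any stable $q$-cluster with reflection-symmetry so that $\Sigma$ is connected.'' Your perpendicularity paragraph is fine once the Voronoi structure is in hand, but it cannot be reached the way you propose. Your equatorial-cells sketch is also too vague: a generic Steiner-symmetrization competitor across $N^\perp$ need not preserve the full volume vector of a $q$-cluster, and you have not said how to fix this; in the actual argument, the equatorial property is again extracted from stability (via the skew scalar-fields $a_{ij}\langle p,N\rangle$) rather than from a symmetrization competitor.
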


We defer the definition of regularity and stability to Section \ref{sec:prelim}. 
An analogous result on $\R^n$ was also established in \cite{EMilmanNeeman-TripleAndQuadruple}. 
In fact, it was shown that the above holds for any stable $q$-cluster with reflection-symmetry so that $\Sigma = \cup_{i=1}^q \partial \Omega_i$ is connected and satisfies some regularity assumptions. 

\subsection{The challenge}

In view of Theorem \ref{thm:intro-structure} and Remark \ref{rem:intro-standard-bubbles}, in order to confirm the multi-bubble conjecture on $\S^n$ for $q \leq n+1$, it remains to establish that the interfaces $\Sigma_{ij}$ of a minimizing $q$-cluster are non-empty for all $1 \leq i < j \leq q$. This was achieved in \cite{EMilmanNeeman-TripleAndQuadruple} in the double, triple and quadruple cases ($q=3,4,5$) by analyzing the interface adjacency graph of a minimizing cluster, and excluding all possible graphs but the complete graph on $q$ vertices. To this end, it was shown that if the adjacency graph had missing edges, it would be possible to rotate a cell (using a local isometry) without altering the volumes of the cells nor the total perimeter, until the cell touches some additional ones, thereby violating Plateau's laws. Since a minimizer is only known to be $3$-Plateau,
this argument is limited to $q \leq 5$. The problem in extending this to $q \geq 6$ is three-fold: firstly, the number of possible (isomorphism classes of) graphs on $q$ vertices grows super exponentially fast in $q$, making a case-by-case analysis essentially impossible; secondly, we will not always be able to find a ``loose" cell (or a group of cells) which we can rotate while keeping the others in place until they make contact (such a ``bubble-ring" scenario occured in \cite{EMilmanNeeman-TripleAndQuadruple}  even in the quadruple-case); and thirdly, it is known that there are non-standard area minimizing cones in $\R^m$ for $m \geq 4$, which are not cones over a regular simplex, such as the cone over a hypercube \cite{Brakke-MinimalConesOnCubes}, and so it is not clear whether the higher-order Plateau's laws should hold for a minimizing cluster. In particular, we cannot simply apply the method from \cite{EMilmanNeeman-TripleAndQuadruple} to the quintuple case, and require a different approach. 

\smallskip

To this end, we apply the PDE approach for the isoperimetric profile, which we used in \cite{EMilmanNeeman-GaussianMultiBubble} to resolve the Gaussian multi-bubble conjecture. We henceforth abbreviate $\Delta^{(q-1)} = \Delta^{(q-1)}_1$. 
Let $\I^{(q-1)} : \simplex^{(q-1)} \to \R_+$ denote the isoperimetric profile for $q$-clusters on $\S^n$, defined as:
\begin{equation} \label{eq:intro-I}
  \I^{(q-1)}(v) := \inf\{\per(\Omega) \; ; \; \text{$\Omega$ is a $q$-cluster with $V(\Omega) = v$}\}.
\end{equation}
Our goal will be to show that $\I^{(q-1)} = \I^{(q-1)}_m$ on $\simplex^{(q-1)}$, where $\I^{(q-1)}_m : \simplex^{(q-1)} \to \R_+$ for $2 \leq q \leq n+2$ denotes the multi-bubble \emph{model} profile on $\S^n$, defined for $v \in \interior \simplex^{(q-1)}$ as:
\begin{equation} \label{eq:intro-Im}
  \I^{(q-1)}_m(v) := \per(\Omega^m) \text{ where $\Omega^m$ is a standard $(q-1)$-bubble with $V(\Omega^m) = v$} 
\end{equation}
(see Lemma \ref{lem:standard-volume} to recall why this is well-defined); for $v \in \partial \Delta^{(q-1)}$ we define recursively $\I^{(q-1)}_m(v) := \I^{(q-2)}_m(v_{-i})$ if $v_i = 0$, where $v_{-i}$ denotes erasing the $i$-th coordinate from $v$ (as there is at least one empty cell). Clearly $\I^{(q-1)} \leq \I^{(q-1)}_m$, and by induction on $q$ we may assume that $\I^{(q-1)} = \I^{(q-1)}_m$ on $\partial \Delta^{(q-1)}$. We observe the following:

\begin{proposition} \label{prop:intro-profile}
Let $2 \leq q \leq n+2$. The model multi-bubble isoperimetric profile $\I_m = \I^{(q-1)}_m$ on $\S^n$ satisfies the following PDE on $\Delta^{(q-1)}$:
\begin{equation} \label{eq:intro-PDE}
\tr((-\nabla^2 \I_m)^{-1} (\Id+ \frac{2}{(n-1)^2} \nabla \I_m \otimes \nabla \I_m)) = \frac{2}{n-1} \I_m  .
\end{equation}
Moreover, $-\nabla^2 \I_m > 0$ on $T \Delta^{(q-1)}$, and hence this PDE is elliptic (albeit fully non-linear). 
\end{proposition}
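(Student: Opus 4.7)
The plan is to compute $\nabla \I_m$ and $\nabla^2 \I_m$ explicitly via the spherical Voronoi parameterization of standard bubbles, and then verify the PDE as a geometric identity on the interfaces. Smoothness of $\I_m$ on $\interior\Delta^{(q-1)}$ follows from Lemma~\ref{lem:standard-volume}. Parameterize $\Omega^m_v$ by quasi-centers $(\c_i)$ and curvature parameters $(\k_i)$ with $\sum_i \c_i = 0 = \sum_i \k_i$, subject to the compatibility conditions $|\c_i - \c_j|^2 - (\k_i - \k_j)^2 = 1$ for every pair (all interfaces are non-empty by Proposition~\ref{prop:standard-char}). Under a moduli-preserving deformation $(\dot\c, \dot\k)$ (satisfying linearized compatibility $\langle \c_{ij}, \dot\c_{ij}\rangle = \k_{ij}\dot\k_{ij}$), each $\Sigma_{ij}$ moves with normal velocity $\psi(p) = -\langle\dot\c_{ij}, p\rangle - \dot\k_{ij}$ along the unit normal $\n_{ij} = \c_{ij} + \k_{ij}p$, which satisfies $\langle\c_{ij}, \n_{ij}\rangle = 1$ by compatibility.

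Using the mean curvature $H_{ij} = (n-1)\k_{ij}$ of a spherical cap of curvature $\k_{ij}$ in $\S^n$ and summing by parts the first-variation formulas
\[
\dot{\per} = \sum_{i<j} H_{ij} \int_{\Sigma_{ij}} \psi\,d\mu^{n-1}, \qquad \dot v_i = \sum_{j\neq i} \int_{\Sigma_{ij}} \psi\,d\mu^{n-1},
\]
one finds $\dot{\per} = (n-1)\langle\k, \dot v\rangle$, so that $\nabla \I_m = (n-1)\k$ as a vector in $T\Delta^{(q-1)}$, and thus $\nabla^2 \I_m = (n-1)\partial_v\k$. The Jacobian $\partial_\k v$ along moduli-preserving deformations decomposes as $-L + R$, where $L$ is the positive weighted graph Laplacian with edge weights $A_{ij} = \mu^{n-1}(\Sigma_{ij})$ (the contribution of the $\dot\k$-only motion of the interfaces) and $R$ is a correction induced by the simultaneous tangential motion $\dot\c$ required to preserve compatibility, expressible in terms of the interface moments $\vec B_{ij} = \int_{\Sigma_{ij}} p\,d\mu^{n-1}$. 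At $\k = 0$ the correction $R$ vanishes by compatibility, leaving $\partial_\k v = -L$, which is negative definite on $T\Delta^{(q-1)}$; the explicit form of $R$ for general $\k$, together with continuity in $v$, yields the strict concavity $-\nabla^2 \I_m > 0$ claimed in the proposition.

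Substituting $\nabla \I_m = (n-1)\k$ and $\nabla^2\I_m = (n-1)\partial_v\k$ converts the PDE into the scalar identity
\[
\tr(\partial_\k v) + 2\langle(\partial_\k v)\k, \k\rangle = -2\I_m.
\]
The naive term $-L$ supplies $\tr(-L) = -2\sum_{i<j} A_{ij} = -2\I_m$ and $2\langle -L\k, \k\rangle = -2\sum_{i<j} A_{ij}\k_{ij}^2$, so the PDE is equivalent to
\[
\tr(R) + 2\langle R\k,\k\rangle = 2\sum_{i<j} A_{ij}\k_{ij}^2.
\]
This compensation should follow from applying the divergence theorem on $\S^n$ to the tangential vector fields $X_{ij}(p) = \c_{ij} - \langle\c_{ij}, p\rangle p$ over each cell, using the eigenvalue identity $\Delta_{\S^n}\langle\c_{ij}, p\rangle = -n\langle\c_{ij}, p\rangle$ and the compatibility $|\c_{ij}|^2 = 1 + \k_{ij}^2$; the factor $1 + \k_{ij}^2$ appearing here is precisely the spherical analogue of the potential in the Jacobi operator $-\Delta_\Sigma - (n-1)(1+\k^2)$ acting on each interface, foreshadowing the spectral analysis advertised in the abstract.

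The main obstacle is the precise bookkeeping for the correction $R$ and the identification of the divergence identity that produces the coefficient $\frac{2}{(n-1)^2}$ in front of $\nabla\I_m \otimes \nabla\I_m$. A cleaner alternative route is to verify the PDE first at the equal-volume standard bubble, where $S_q$-symmetry forces $\nabla\I_m = 0$ and $\nabla^2\I_m$ to be a scalar multiple of the identity on $T\Delta^{(q-1)}$, so that the PDE reduces to the easily-checked scalar equation $\tr(L)/(n-1) = 2\I_m/(n-1)$; one can then propagate the identity to all $v \in \interior\Delta^{(q-1)}$ along the family of standard bubbles obtained from the equal-volume case by Möbius transformations of $\S^n$, exploiting the explicit transformation rules for volume and perimeter under conformal maps.
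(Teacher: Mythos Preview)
Your proposal identifies the correct first step, $\nabla \I_m = (n-1)\k$, but the second-order computation is left incomplete. The decomposition $\partial_\k v = -L + R$ is plausible, yet you never actually compute $R$ or verify the compensating identity $\tr(R) + 2\langle R\k,\k\rangle = 2\sum_{i<j} A_{ij}\k_{ij}^2$; ``should follow from the divergence theorem'' is not a proof, and the particular divergence identity you point to (involving $\Delta_{\S^n}\langle\c_{ij},p\rangle$) does not obviously produce the right hand side. The concavity argument is also broken: observing $R=0$ at $\k=0$ and appealing to ``continuity in $v$'' cannot yield strict concavity away from the equal-volume point --- an open condition does not propagate by continuity. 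Your alternative Möbius route is closer to a genuine argument, but ``propagate the identity along Möbius transformations'' is still a slogan rather than a mechanism.

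The paper's proof (Proposition~\ref{prop:Im-PDE-proof}) bypasses the explicit computation of $\partial_\k v$ entirely. At an arbitrary $v_0$, one varies the standard bubble $\Omega$ along the Möbius field $W_\theta$. The key point is that the flow $\Phi_t$ of $W_\theta$ is a conformal automorphism of $\S^n$, so $\Phi_t(\Omega)$ \emph{remains} a standard bubble and the equality $\per(\Phi_t(\Omega)) = \I_m(V(\Phi_t(\Omega)))$ holds for \emph{all} $t$, not just $t=0$. Differentiating twice yields $(\delta^1_{W_\theta}V)^T \nabla^2\I_m\,\delta^1_{W_\theta}V = Q(W_\theta)$ as an \emph{equality}. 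Writing $a=\C\theta$ and using that $W_\theta^\n$ is a conformal Jacobi field with conformal parameter $a$ (Lemma~\ref{lem:Mobius-Jacobi}), both sides are expressed via the conformal-to-volume operator $\F_m$: one gets $\F_m\nabla^2\I_m\F_m = -(n-1)\F_m$, hence $\nabla^2\I_m = -(n-1)\F_m^{-1}$. Strict concavity is then immediate once $\F_m>0$, which holds because the standard bubble is conformally flat (so $\F_m = L_{\Psi_\xi}$ with $\Psi_\xi>0$). The PDE itself is the trace identity $\tr(\F_m(\Id/2+\k\otimes\k)) = \HH^{n-1}(\Sigma)$, proved in one line (Proposition~\ref{prop:PCF-trace}) by the self-adjointness of $L_{Jac}$ on non-oriented functions: $\sum_{i<j}\int_{\Sigma_{ij}}(1+\k_{ij}^2)\Psi_\xi = \frac{1}{n-1}\int_{\Sigma^1} L_{Jac}(1)\,\Psi_\xi = \frac{1}{n-1}\int_{\Sigma^1} 1\cdot L_{Jac}(\Psi_\xi) = \int_{\Sigma^1} 1$. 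This avoids any decomposition $-L+R$ and any bookkeeping of interface moments.
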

\noindent See Section \ref{sec:profile} for a comparison to the ODE satisfied in the single-bubble case, and to the simpler PDE satisfied by the Gaussian multi-bubble isoperimetric profile (which does not have any first-order terms). We do not know how to obtain an analogous (strictly) elliptic PDE for the model isoperimetric profile on $\R^n$, only a degenerate one (see Subsection \ref{subsec:degenerate-elliptic}),
which explains why we are forced to deduce our results on $\R^n$ by approximation from $\S^n$ (losing uniqueness in the process). 

\smallskip

If we could show that the actual profile $\I^{(q-1)}$ on $\S^n$ satisfies (\ref{eq:intro-PDE}), in fact just as an inequality $\leq$ in an appropriate viscosity sense, then by an application of the maximum principle we would be able to deduce that $\I^{(q-1)} \geq \I^{(q-1)}_m$, thereby concluding that in fact there is equality -- see Section \ref{sec:profile} for an overview of this strategy. 
\smallskip

Upper bounds on $\nabla^2 \I^{(q-1)}$ may be obtained by perturbing the minimizing cluster $\Omega$ using a vector-field $X$ and computing the corresponding second variation, yielding the quadratic Index-Form $Q(X)$. The latter depends on the Jacobi operator $L_{Jac}$ defined on the cluster's interfaces $\Sigma^1 = \cup_{i<j} \Sigma_{ij}$, which turns out to be self-adjoint on an appropriate domain $\D_{con}$ satisfying \emph{conformal} boundary conditions whenever $3$ interfaces meet. In the Gaussian case, after showing that all interfaces of $\Omega$ must be flat, we were able to identify the entire $(q-1)$-dimensional eigenspace of eigenfunctions of $L_{Jac}$ with positive eigenvalue (equal to $1$), simply given by piecewise-constant fields. Unfortunately, things are more complicated on $\S^n$ since the interfaces will no longer be flat in general (recall that $\Sigma_{ij}$ of a spherical Voronoi cluster has curvature $\k_{ij}$). Consequently, we are forced to delve deeper into the spectral theory of $L_{Jac}$ on spherical Voronoi clusters in $\S^n$, finding analogies with the quantum-graph formalism (see Section \ref{sec:spectral}). 

\medskip

While we are not able to provide an explicit description of the subspace corresponding to positive eigenvalues of $L_{Jac}$ on $\S^n$, we are able to show that $L_{Jac}$ does have exactly $q-1$ positive eigenvalues, or equivalently, that $Q$ is of index $q-1$, which is of independent interest. 
\begin{theorem} \label{thm:intro-minimizer-q-1-positive}
Let $q \leq n+1$. Then for any minimizing $q$-cluster $\Omega$ on $\S^n$ with $V(\Omega) \in \interior \Delta^{(q-1)}$, $(L_{Jac},\D_{con})$ has exactly $q-1$ positive eigenvalues.
\end{theorem}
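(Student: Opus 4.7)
The plan is to prove matching bounds $N_+ \leq q-1$ and $N_+ \geq q-1$, where $N_+$ denotes the number of positive eigenvalues of $(L_{Jac},\D_{con})$ counted with multiplicity. First I would establish the upper bound by a stability argument. Since $\Omega$ is an isoperimetric minimizer, the index form $Q$ associated to $L_{Jac}$ is non-negative on the subspace $\D_{con}^0 \subset \D_{con}$ of perturbations preserving the volume vector $V(\Omega) \in \interior \simplex^{(q-1)}$. The linearized volume map $J:\D_{con} \to \R^{q-1}$, $J(X) = \frac{d}{dt}V(\Omega_t)|_{t=0}$, is surjective (by compactly supported normal perturbations on the interfaces, away from triple junctions and the Plateau singular set), so $\D_{con}^0 = \ker J$ has codimension exactly $q-1$. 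Under the sign convention (consistent with the Gaussian treatment in \cite{EMilmanNeeman-GaussianMultiBubble}) in which positive eigenvalues of $L_{Jac}$ correspond to $Q<0$ directions, the positive eigenspace of $L_{Jac}$ must meet $\D_{con}^0$ only at $0$, yielding $N_+ \leq \mathrm{codim}(\D_{con}^0) = q-1$.

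The lower bound $N_+ \geq q-1$ is the substantive part, and the plan is to exhibit a $(q-1)$-dimensional subspace of $\D_{con}$ on which $L_{Jac}$ is strictly positive (equivalently $Q<0$). By Theorem~\ref{thm:intro-structure}, $\Omega$ is a perpendicular spherical Voronoi cluster with parameters $\{(\c_i,\k_i)\}$ satisfying $\sum \c_i = \sum \k_i = 0$, so it sits inside the canonical $(q-1)$-parameter family $\Omega_{\k + t\delta\k}$ obtained by varying $\delta\k \in \{\sum \delta\k_i = 0\}$ while holding the $\c_i$ fixed. A short computation using $\sscalar{\c_{ij},\nu}=1$ on $\Sigma_{ij}$ shows that each such variation induces a piecewise-constant normal field $X_{\delta\k}$ on $\Sigma^1$ with value proportional to $\delta\k_i - \delta\k_j$ on $\Sigma_{ij}$; this field automatically lies in $\D_{con}$ since $\Omega_{\k + t\delta\k}$ is a spherical Voronoi cluster for all $t$ and hence satisfies the conformal boundary conditions. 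Because $\Omega_{\k + t\delta\k}$ is at each $t$ a critical point of perimeter given its own volumes, with $\k_i$ acting as Lagrange multipliers, I would derive an identity expressing $Q(X_{\delta\k},X_{\delta\k'})$ through the Jacobian $\partial V/\partial \k$; the required negative-definiteness of $Q$ would then reduce to the positive-definiteness of $\partial V/\partial \k$ on $\{\sum \delta\k_i = 0\}$ -- the exact analogue, for a general minimizer, of the strict concavity $-\nabla^2 \I_m > 0$ established for the model profile in Proposition~\ref{prop:intro-profile}.

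I expect the main obstacle to lie in the lower bound, and more precisely in verifying positive-definiteness of $\partial V/\partial \k$ for a general spherical Voronoi minimizer (not just a standard bubble). Unlike the Gaussian case, where all interfaces are flat and constants are exact eigenfunctions of $L_{Jac}$ with a uniform positive eigenvalue, here each $\Sigma_{ij}$ carries curvature $\k_{ij}$, so $L_{Jac}$ acts on $\Sigma_{ij}$ as $-\Delta - (n\k_{ij}^2 + (n-1))$ and piecewise constants are no longer exact eigenfunctions; the conformal boundary conditions at triple junctions moreover couple them non-trivially across interfaces. A further subtlety is that we do not a priori know that all interfaces $\Sigma_{ij}$ are non-empty (this is precisely what the overall conjecture asserts): if some are absent, the $\k$-parameters admit a non-trivial stabilizer and one must either quotient this gauge out or supplement the $\k$-variations with suitable $\c_i$-variations to recover a genuine $(q-1)$-dimensional family of perturbations. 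I expect both issues to be resolved by the spectral theory of $L_{Jac}$ on spherical Voronoi clusters developed in Section~\ref{sec:spectral} via the quantum-graph analogy, with the model PDE \eqref{eq:intro-PDE} providing the essential ellipticity/positivity input.
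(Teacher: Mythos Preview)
Your upper bound is correct and matches the paper's argument (Corollary~\ref{cor:at-most-positive-eigenvalues}). The lower bound, however, has a genuine gap. The claim that $\Omega_{\k+t\delta\k}$ with $\c$ fixed ``is a spherical Voronoi cluster for all $t$'' is false: the structure constraint $|\c_{ij}|^2 = 1 + \k_{ij}^2$ (Remark~\ref{rem:quasi-centers}) is violated the moment $\k$ moves while $\c$ stays put, so the deformed cluster is only affine Voronoi and not stationary. Correspondingly, piecewise-constant fields $(a_{ij})$ are \emph{not} in $\D_{con}$ in general: since $\nabla_{\n_{\partial ij}} a_{ij}=0$, the Robin condition requires $\bar\II^{\partial ij} a_{ij} = \frac{1}{\sqrt 3}(\k_{ik}+\k_{jk})a_{ij}$ to be independent of $(i,j)\in\cyclic(u,v,w)$, which fails whenever the $\k_i$ are not all equal (take $\k=(1,0,-1)$, $a=(1,0,-1)$ on a triple point). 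So the candidate subspace you write down does not even lie in the domain of the operator, and there is no reason for $Q^0$ restricted to it to compute $\scalar{L_{Jac}\,\cdot,\cdot}$. You correctly sense later that ``piecewise constants are no longer exact eigenfunctions'' and that ``the conformal boundary conditions couple them non-trivially'', but this is fatal to the construction, not a technicality.

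The paper's route to the lower bound (Theorem~\ref{thm:q-1-positive}) is different and bypasses both obstacles you flag. Instead of piecewise constants, it uses \emph{conformal Jacobi fields} $f^a\in\D_{con}$ solving $L_{Jac}f^a=(n-1)a$; their existence for every $a\in E^{(q-1)}$ is the content of Theorem~\ref{thm:conformally-complete}, proved via the Fredholm alternative together with \emph{volumetric non-degeneracy} (Theorem~\ref{thm:volumetric non-degeneracy}). The latter is where perpendicularity and stability actually enter: one compares a putative Jacobi field $f$ having $\delta^1_fV\neq 0$ against the truncated skew field $h_{ij}=b_{ij}|\scalar{p,N}|$ (which has $Q^0(h)=0$, Proposition~\ref{prop:skew-fields}) and derives a smoothness contradiction via Lemma~\ref{lem:elliptic-regularity}. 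The $(q-1)$-dimensional negative subspace is then built from the $f^a$ on $\Im\F$, while on $\ker\F$ one exploits that $Q^0$ is \emph{unbounded below} on $\D_{con}[V]$ (Lemma~\ref{lem:bounded-below}) to manufacture directions with $Q^0$ as negative as needed. In particular, no analogue of ``$\partial V/\partial\k>0$'' is required, and the possibility of empty interfaces is handled automatically by the $\ker\F$/$\Im\F$ decomposition rather than by a gauge quotient.
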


In fact, this holds for any regular, stable, perpendicular spherical Voronoi cluster with equatorial cells -- see Theorem \ref{thm:q-1-positive}. This is already enough to deduce the following, which was explicitly asked by Heppes in \cite[Problem 4]{OpenProblemsInSoapBubbles96} (for the case of $\R^n$, which follows modulo strictness from that of $\S^n$ by a scaling argument -- see Subsection \ref{subsec:degenerate-elliptic}):
\begin{theorem} \label{thm:intro-profile-concave}
For all $q \leq n+1$, the multi-bubble isoperimetric profile $\I^{(q-1)}$ on $\S^n$ is (strictly) concave on $\Delta^{(q-1)}$. 
 \end{theorem}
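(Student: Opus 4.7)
The idea is to convert the index-$q-1$ assertion of Theorem~\ref{thm:intro-minimizer-q-1-positive} into a strictly negative second-variation upper bound on $\I^{(q-1)}$ at every interior volume. Fix $v \in \interior \Delta^{(q-1)}$ and $\xi \in T_v \Delta^{(q-1)} \setminus \{0\}$, and let $\Omega$ be a minimizing cluster with $V(\Omega) = v$; existence follows by standard compactness on the compact manifold $\S^n$. By Theorem~\ref{thm:intro-structure}, $\Omega$ is a regular, stable, perpendicular spherical Voronoi cluster with equatorial cells, so Theorem~\ref{thm:intro-minimizer-q-1-positive} applies. The plan is to build a variation of $\Omega$ that realizes the prescribed volume direction $\xi$ at first order and has strictly negative second variation of perimeter, and then compare with $\I^{(q-1)}$ via $\I^{(q-1)}(V(\Omega_t)) \leq \per(\Omega_t)$.

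Let $E_+ \subset \D_{con}$ be the $(q-1)$-dimensional positive eigenspace of $L_{Jac}$ provided by Theorem~\ref{thm:intro-minimizer-q-1-positive}. For any $u \in E_+ \setminus \{0\}$ and any ambient vector field $X_u$ with $X_u \cdot \nu = u$ along $\Sigma^1$ and satisfying the conformal boundary conditions at the triple-junctions, the index form satisfies $Q(X_u) = -\langle u, L_{Jac} u\rangle < 0$. Introduce the volume-displacement map $\mathcal{V}_\Omega : E_+ \to T_v \Delta^{(q-1)}$ by $(\mathcal{V}_\Omega u)_i := \sum_{j \neq i} \int_{\Sigma_{ij}} u \, d\H^{n-1}$ (with the sign convention that $u>0$ points outward from $\Omega_i$). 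Both spaces are $(q-1)$-dimensional, and $\mathcal{V}_\Omega|_{E_+}$ is injective: any non-zero element of its kernel would be a volume-preserving perturbation with $Q<0$, contradicting the stability assertion of Theorem~\ref{thm:intro-structure}. Hence $\mathcal{V}_\Omega|_{E_+}$ is a linear isomorphism.

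Given $\xi$, set $u_\xi := (\mathcal{V}_\Omega|_{E_+})^{-1}(\xi) \in E_+$, extend it to an ambient vector field $X$, and consider $\Omega_t := \Phi^X_t(\Omega)$. After a higher-order adjustment of $X$ (via the implicit function theorem applied to the volume map in the smooth part of $\Omega$), we may assume $V(\Omega_t) = v + t\xi + O(t^3)$. The first variation then yields $\per(\Omega_t) = \per(\Omega) + t\langle \lambda, \xi\rangle + \tfrac{t^2}{2} Q(X) + o(t^2)$, where $\lambda$ is the Lagrange-multiplier vector encoded by the mean curvatures of $\Omega$. Summing the inequality $\I^{(q-1)}(V(\Omega_{\pm t})) \leq \per(\Omega_{\pm t})$ over $\pm t$ cancels the first-order terms and yields the strict midpoint concavity estimate
\[
\I^{(q-1)}(v+t\xi) + \I^{(q-1)}(v-t\xi) - 2 \I^{(q-1)}(v) \leq t^2 Q(X) + o(t^2) < 0
\]
for all sufficiently small $t>0$. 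Combined with the continuity of $\I^{(q-1)}$ on $\Delta^{(q-1)}$, this upgrades to strict concavity on $\interior \Delta^{(q-1)}$; concavity on the full simplex $\Delta^{(q-1)}$ then follows by continuity, with each boundary face handled inductively by the same argument applied to the lower multi-bubble profile (base case $q=2$ being the classical single-bubble profile on $\S^n$).

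The main technical obstacle will be the construction of the ambient vector field $X$ whose normal trace on the singular locus $\Sigma^1$ matches the prescribed Jacobi eigenfunction $u_\xi \in \D_{con}$, especially at the codimension-$2$ triple junctions where the conformal boundary conditions must be reconciled with the ambient smooth structure; a secondary point is arranging the exact first-order volume change $t\xi$ with higher-order remainder, which we expect to follow from surjectivity of the infinitesimal volume map on the regular part of $\Omega$ via a standard implicit function theorem argument.
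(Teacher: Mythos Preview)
Your approach is sound and close in spirit to the paper's, though the route differs somewhat. The paper does not use the positive eigenspace $E_+$ directly; instead it works with \emph{conformal Jacobi fields} $f^a$ (solutions to $L_{Jac}f=(n-1)a$ with $a\in E^{(q-1)}$ constant) and the associated conformal-to-volume operator $\F$, showing $\F\ge 0$ and then using the regularization $\F_\eps=\F+\eps\,\P_{\ker\F}>0$ to obtain $\F_\eps^T\nabla^2\I\,\F_\eps\le -(n-1)\F_\eps$ in the viscosity sense for every $\eps>0$. Your observation that the restriction of the volume map to $E_+$ is injective (by stability), hence an isomorphism onto $E^{(q-1)}$, is a clean and legitimate alternative that delivers a negative second variation in every volume direction; this is essentially what the paper extracts implicitly in the course of proving Theorem~\ref{thm:intro-minimizer-q-1-positive}.

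There is, however, one genuine gap. You cannot in general realize an eigenfunction $u_\xi\in\D_{con}$ as the \emph{exact} normal component $X^\n$ of a smooth ambient field; near the singular set $\Sigma^{\ge 3}$ there may be no smooth $X$ with $X^{\n_{ij}}=u_\xi|_{\Sigma_{ij}}$ simultaneously for all interfaces. The paper handles this via an approximation theorem (Theorem~\ref{thm:approximation}): for any $u\in\D_Q$ and $\eps>0$ there is a $C_c^\infty$ field $Z$ with $\delta^1_Z V=\delta^1_u V$ and $|Q(Z)-Q^0(u)|\le\eps$. This is exactly what you need; since $Q^0(u_\xi)=-\langle L_{Jac}u_\xi,u_\xi\rangle<0$, choosing $\eps$ small gives $Q(Z)<0$ with $\delta^1_Z V=\xi$. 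A secondary simplification: you do not need to arrange $V(\Omega_t)=v+t\xi+O(t^3)$ via the implicit function theorem. Rather, use directly that $Q(X)=\delta^2_X A-\langle\lambda,\delta^2_X V\rangle$; then the comparison $\I(V(F_t(\Omega)))\le \per(F_t(\Omega))$ at second order yields $(\delta^1_X V)^T\nabla^2\I\,(\delta^1_X V)\le Q(X)$ once $\nabla\I=\lambda$ is identified, with the $\delta^2_X V$ terms canceling automatically (cf.\ the derivation of \eqref{eq:compare-third}). This avoids both the reparametrization and any a priori Lipschitz assumption on $\I$.
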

 
 However, we fall just short of confirming the PDE (\ref{eq:intro-PDE}) for $\I^{(q-1)}$ in general. What we are missing is a certain trace-identity for a symmetric operator $\F = \F(\Omega) : \R^{q} \rightarrow \R^q$ constructed from solutions to the PDE $L_{Jac} f_{ij} = (n-1) (a_i - a_j)$ on $\Sigma_{ij}$, $f \in \D_{con}$, which are minimizers of the Index-Form $Q$ under a volume constraint -- see Section \ref{sec:conformal-Jacobi} and Conjecture \ref{conj:trace-id}. We are able to verify the trace-identity for a certain relaxation $\F_0$ of $\F$, constructed in Section \ref{sec:perturbation} as the limit $\lim_{t \rightarrow 0} \F_t$ where $\F_t$ are the corresponding operators associated to a one-parameter family of conformally perturbed clusters $\Omega_t$ -- see Figure \ref{fig:conformal-perturbation}; however, we could not verify that $\F = \F_0$, i.e.~that $\F(\lim_{t \rightarrow 0} \Omega_t) = \lim_{t \rightarrow 0} \F(\Omega_t)$, which would follow if $\F(\Omega)$ were continuous in $\Omega$ (!) -- see Conjecture \ref{conj:F=F0} and Remark \ref{rem:F=F0}. A confirmation of this innocent-looking PDE question would immediately allow us to extend our results from the quintuple case to general $q \leq n+1$. 

\begin{figure}
    \begin{center}
                \includegraphics[scale=0.15]{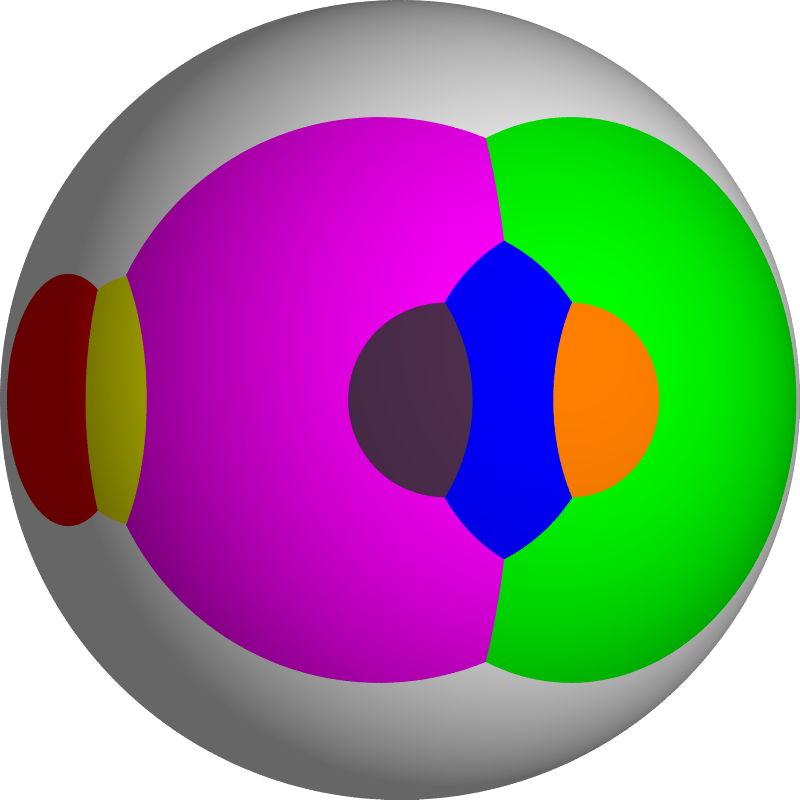}     \includegraphics[scale=0.15]{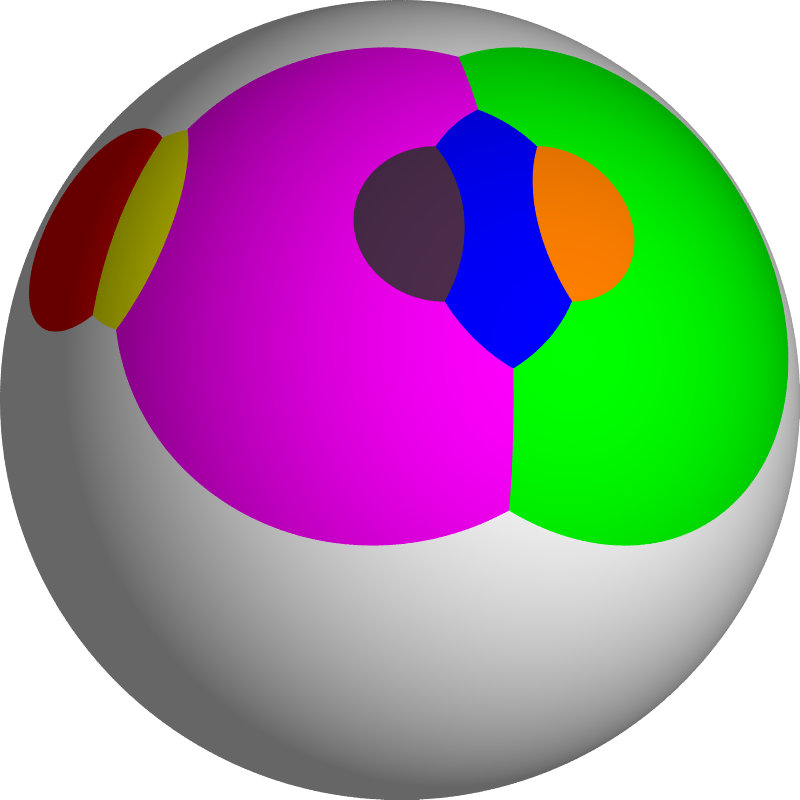}         \includegraphics[scale=0.15]{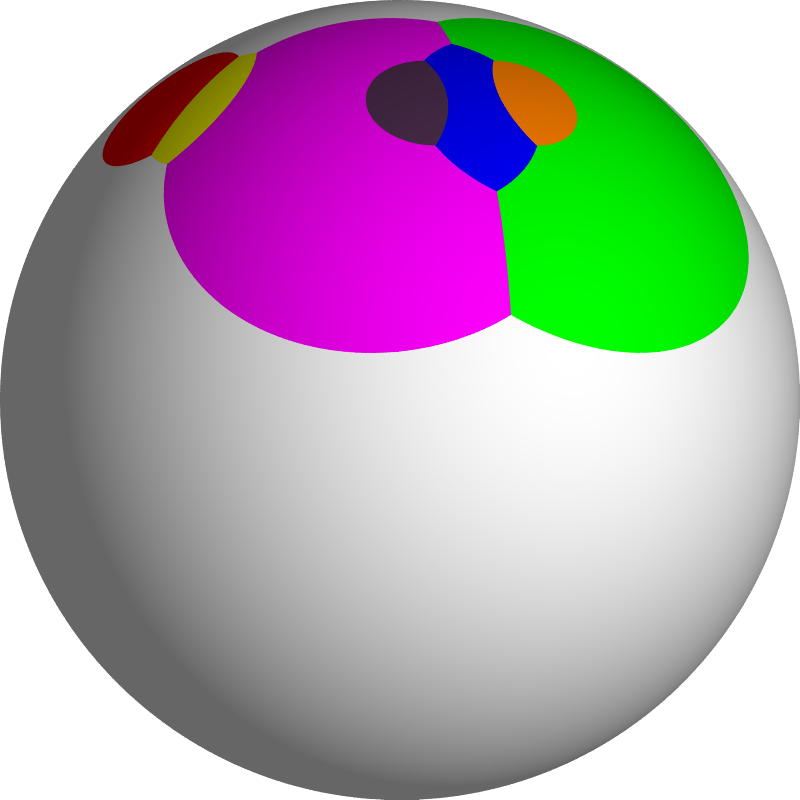}              \end{center}
     \caption{
         \label{fig:conformal-perturbation}
         A one-parameter family of conformally perturbed spherical Voronoi clusters $\{\Omega_t\}_{t \in \R}$ (original perpendicular cluster at $t=0$ on the left, followed by its perturbations at $t=0.5$ and $t=1$). The clusters are being conformally mapped towards the North pole as $t \rightarrow \infty$, and so their volumes and perimeter are changing. We do not know (!) whether $\F(\lim_{t \rightarrow 0} \Omega_t) = \lim_{t \rightarrow 0} \F(\Omega_t)$.
     }
\end{figure}

\subsection{Conditional confirmation of multi-bubble conjectures}

What we are able to show in Section \ref{sec:conditional} is the following conditional statement: 

\begin{theorem} \label{thm:intro-conditional}
Let $2 \leq q \leq n+1$, and assume that the multi-bubble conjecture for $p$-clusters holds on $\S^n$ for all $2 \leq p \leq q-1$. 
Assume that for every $v_0 \in \interior \Delta^{(q-1)}$, any minimizing $q$-cluster $\Omega$ on $\S^n$ with $V(\Omega) = v_0$ satisfies that either:
\begin{itemize}
\item $\Omega$ is pseudo conformally flat (PCF); or
\item $\Omega$ is $(q-3)$-Plateau. 
\end{itemize}
Then the multi-bubble conjecture for $q$-clusters holds on $\S^n$. 
\end{theorem}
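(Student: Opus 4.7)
The plan is to execute the PDE-comparison program sketched in Section \ref{sec:profile}. Arguing by induction on $q$, the assumed validity of the conjecture for all $p$-clusters with $p \leq q-1$ yields $\I := \I^{(q-1)} = \I_m := \I_m^{(q-1)}$ on $\partial \simplex^{(q-1)}$, while $\I \leq \I_m$ on the interior since standard bubbles are admissible competitors. Proposition \ref{prop:intro-profile} shows that $\I_m$ is a classical, strictly concave solution of the elliptic equation (\ref{eq:intro-PDE}). A standard comparison principle for fully non-linear elliptic equations would then deliver $\I = \I_m$ on all of $\simplex^{(q-1)}$ as soon as one establishes that $\I$ satisfies (\ref{eq:intro-PDE}) as a viscosity supersolution on $\interior \simplex^{(q-1)}$.

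To produce this viscosity inequality at a given $v_0 \in \interior \simplex^{(q-1)}$, I would select a minimizing $q$-cluster $\Omega$ with $V(\Omega) = v_0$. By Theorem \ref{thm:intro-structure}, $\Omega$ is regular, stable, perpendicular spherical Voronoi with equatorial cells, and $3$-Plateau. Second-variation analysis along infinitesimally volume-preserving deformations of $\Omega$, carried out as in Section \ref{sec:conformal-Jacobi}, reduces the desired viscosity inequality at $v_0$ to a trace identity for the symmetric operator $\F = \F(\Omega) : \R^q \to \R^q$ built from the constrained minimizers $f_{ij}$ of the Index-Form $Q$, which solve $L_{\Jac} f_{ij} = (n-1)(a_i - a_j)$ on $\Sigma_{ij}$ with $f \in \D_{con}$. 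The viscosity inequality at $v_0$ is thereby equivalent to the trace identity of Conjecture \ref{conj:trace-id} for $\F(\Omega)$, so everything now hinges on verifying this conjecture for the specific $\Omega$ at hand.

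This is where the two hypothesized scenarios enter, each handled separately. Under the PCF hypothesis, I would pass to the conformally perturbed family $\{\Omega_t\}$ of Section \ref{sec:perturbation} and argue that pseudo conformal flatness of $\Omega = \Omega_0$ is precisely the structural property that rules out the potential discontinuity $\F(\Omega) \neq \lim_{t \to 0} \F(\Omega_t) = \F_0(\Omega)$ anticipated in Conjecture \ref{conj:F=F0}; the known trace identity for $\F_0$ would then transfer to $\F$. Under the $(q-3)$-Plateau hypothesis, every singular stratum where up to $q-2$ cells meet is locally a cone over a regular simplex of lower dimension, and this rigidity provides the boundary regularity and transmission conditions needed to carry out the integration-by-parts underlying the trace identity directly for $\F(\Omega)$ itself, bypassing the perturbation altogether.

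Once the trace identity for $\F(\Omega)$ is verified under either hypothesis, the viscosity supersolution property of $\I$ holds at every interior point; the comparison principle then yields $\I = \I_m$, and uniqueness of the minimizer follows from the strict concavity of $\I$ (Theorem \ref{thm:intro-profile-concave}) combined with the characterization of standard bubbles as the spherical Voronoi clusters whose pairwise interfaces $\Sigma_{ij}$ are all non-empty (Remark \ref{rem:intro-standard-bubbles}). I expect the main obstacle to be the verification of Conjecture \ref{conj:trace-id} in the $(q-3)$-Plateau case: controlling the contributions of singular strata of codimensions $3, 4, \ldots, q-2$ to the Jacobi quadratic form requires a delicate stratified analysis going meaningfully beyond the $3$-Plateau regularity already furnished by Theorem \ref{thm:intro-structure}.
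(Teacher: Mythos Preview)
Your overall PDE-comparison architecture is correct and matches the paper: reduce to the viscosity PDI at each $v_0$, identify the obstruction as the trace identity for $\F(\Omega)$, and handle the PCF case by showing $\F = \F_0$ (the paper does this explicitly in Proposition~\ref{prop:PCF-F=F0}, via a bounded choice of conformal Jacobi fields rather than by an abstract continuity argument, but the spirit is the same).

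The genuine gap is in your treatment of the $(q-3)$-Plateau case. The paper does \emph{not} establish the trace identity for such $\Omega$ by a stratified integration-by-parts analysis on the singular set; in fact, the whole point of the paper is that no direct verification of Conjecture~\ref{conj:trace-id} is available outside the PCF class. What actually happens is a two-step reduction. First, Lemma~\ref{lem:q-3-Plateau} shows that a $(q-3)$-Plateau cluster is \emph{either already PCF or fully Plateau}: if $\arank(\C) = q-1$ it is full-dimensional hence PCF, if $\arank(\C) \leq q-3$ the hypothesis forces it to be Plateau, and the borderline $\arank(\C) = q-2$ splits according to whether $\cap_i \overline{\Omega_i}$ is nonempty. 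Second, a \emph{fully Plateau} cluster is handled by the Gram perturbation of Section~\ref{sec:Gram}: one deforms the Gram matrix $\C\C^T$ toward $\frac{1}{2}\Id + \k\k^T$, producing a one-parameter family $\Omega(t)$ which for $t>0$ is full-dimensional (hence PCF), remains Plateau, acquires no new interfaces, and---crucially---has unchanged cell volumes and total perimeter (Proposition~\ref{prop:Gram-perturbation}). Thus $\Omega(t)$ is still a minimizer, now PCF, and one invokes the PCF case. The Plateau hypothesis is used not for boundary regularity in an integration by parts, but to guarantee that cells not sharing an interface are actually at positive distance, so that the perturbation creates no new interfaces (Corollary~\ref{cor:Plateau-interfaces}).

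Your uniqueness sketch is also too loose. Strict concavity of $\I$ alone does not force a minimizer to be a standard bubble; the paper's argument (Proposition~\ref{prop:equality}) compares $\F(\Omega)$ with $\F_m$ for the standard bubble of the same volume via $\nabla^2 \I = \nabla^2 \I_m$, deduces $\F = \F_m$, and reads off from the explicit form $\F_0 = n L_{\langle p,N\rangle^2}$ that every $\Sigma_{ij}$ must be nonempty. A separate contradiction via Gram perturbation (Theorem~\ref{thm:final-equality}) rules out the possibility that the minimizer is fully Plateau but not PCF.
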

See also Theorem \ref{thm:final} for a slightly more general statement, where only the existence of a minimizer of either of the above two forms is required in order to establish that standard bubbles are minimizers (possibly without uniqueness). The crucial feature of being $(q-3)$-Plateau is that necessarily either the cluster is fully Plateau, or it is PCF -- see Lemma \ref{lem:q-3-Plateau}. If the cluster is PCF we are able to show that $\F = \F_0$ and establish the trace-identity directly -- see Section \ref{sec:conformal-Jacobi}. Alternatively, if it is fully Plateau, we are able to slightly perturb it into a full-dimensional (and hence PCF) cluster without creating any new interfaces. Contrary to the aforementioned conformal perturbation from Figure \ref{fig:conformal-perturbation}, and contrary to the isometric rotations which were used in \cite{EMilmanNeeman-TripleAndQuadruple}, the deformation we employ here (which we call ``Gram perturbation") is not comprised of any isometries, yet nevertheless is guaranteed not to alter the cells' volume nor total perimeter (!) -- see Figure \ref{fig:Gram-perturbation} and Sections \ref{sec:LSE} and \ref{sec:Gram}. 

\begin{figure}
    \begin{center}
                \includegraphics[scale=0.12]{pokeball2.png}     \includegraphics[scale=0.12]{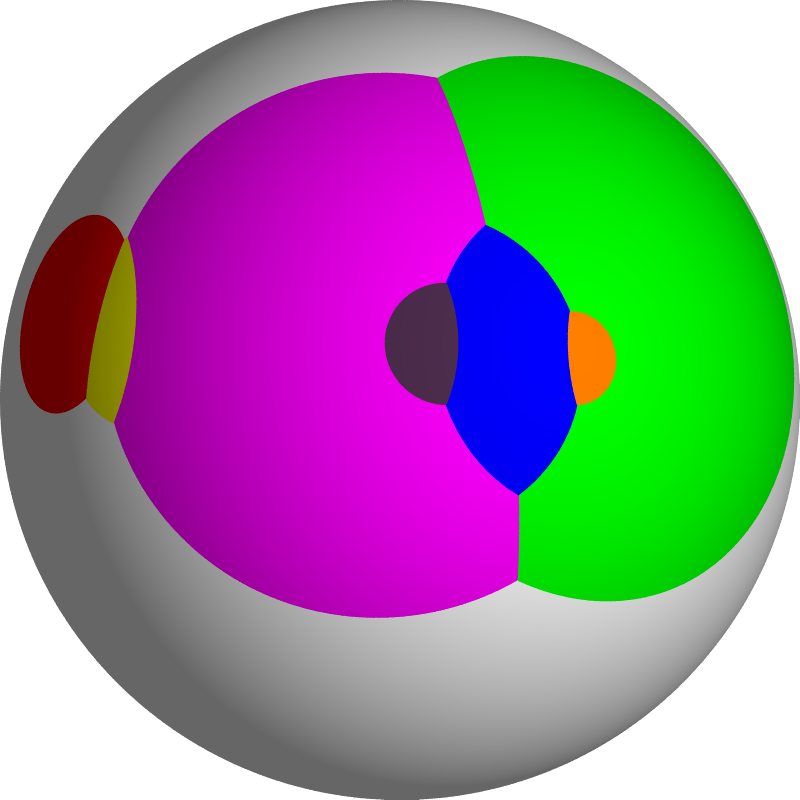}         \includegraphics[scale=0.12]{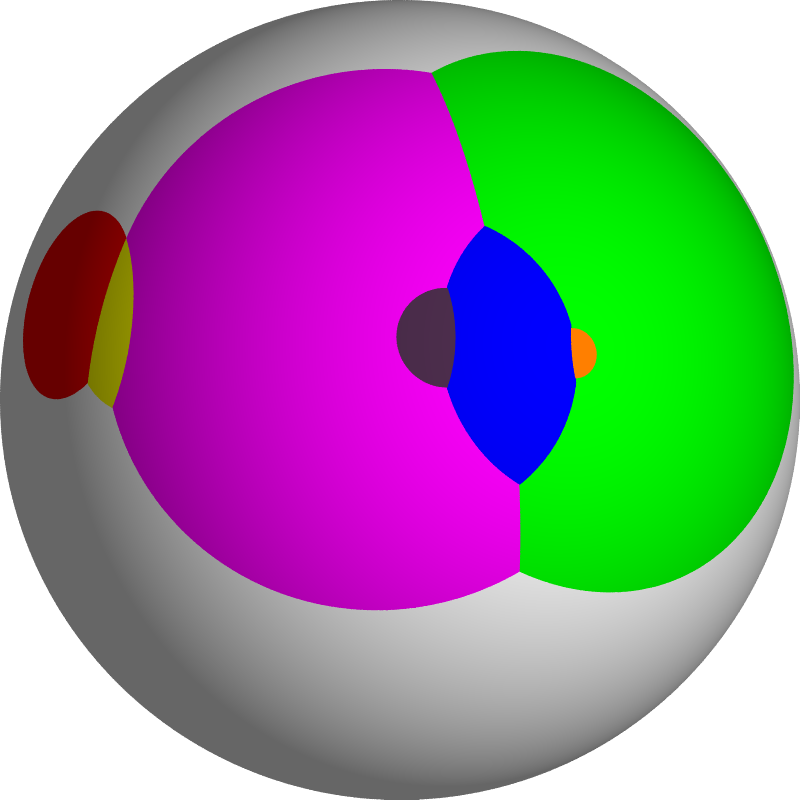}         \includegraphics[scale=0.12]{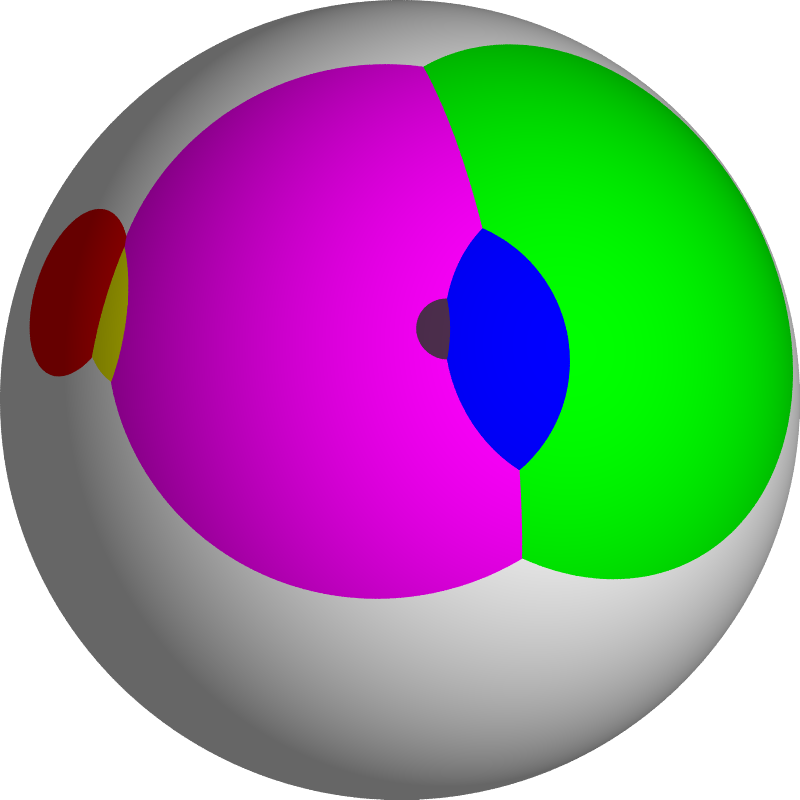}
             \end{center}
     \caption{
         \label{fig:Gram-perturbation}
         Cross-sections of a one-parameter family of Gram-perturbed spherical Voronoi clusters $\{\Omega_t\}_{t \in [0,1]}$ (original perpendicular Plateau cluster at $t=0$ on the left, followed by its perturbations at $t=0.1$, $t=0.12$ and $t=0.15$). The clusters lie on $\S^6$ but we are showing their cross-section through a fixed $\S^2$. The original cluster $\Omega_0$ has all of its cells aligned on a $2$-dimensional plane, but its Gram-perturbation $\Omega_t$ instantaneously becomes full-dimensional on $\S^6$, and there exists a time interval $t \in [0,t_0]$ so that no additional interfaces are created, the clusters remain stationary and Plateau, and their volumes and total-perimeter are guaranteed to remain fixed. However, on the $\S^2$ cross-sections, the angles of incidence at triple points are no longer $120^{\circ}$ and the volumes and perimeters appear to change. Note that by time $t=0.15$ a new interface has been created and one of the cells disappears from the cross-section.}      
\end{figure}

\smallskip

Finally, since every minimizing $q$-cluster is $3$-Plateau, we are able to apply induction on $q$ and verify the multi-bubble conjectures for all $q \leq \min(6,n+1)$, thus obtaining the new quintuple-bubble Theorem \ref{thm:intro-quintuple}, as well as a different proof of the double, triple and quadruple-bubble conjectures on $\S^n$ for $n\geq 2$, $n \geq 3$ and $n \geq 4$, respectively. Uniqueness of minimizers is handled in Section \ref{sec:uniqueness}. 

\begin{remark}
If one is able to show that for a minimizing perpendicular spherical Voronoi $7$-cluster on $\S^n$, $6$ cells cannot meet in a $4$-dimensional cone, Theorem \ref{thm:intro-conditional} would yield a confirmation of the sixtuple-bubble conjecture on $\S^n$ (and hence on $\R^n$, possibly without uniqueness) for $n \geq 6$. And similarly for higher-order multi-bubbles. 
\end{remark}

We refer to Section \ref{sec:LSE} for a more general variant of the PCF condition that we can handle in Theorem \ref{thm:intro-conditional}, which is intimately related to the joint-and-bar framework and infinitesimal rigidity of polyhedra considered by Cauchy, Alexandrov, Dehn, Whiteley and others. In Section \ref{sec:conclude} we provide some concluding remarks, including 
a compelling Locality Conjecture \ref{conj:locality} and other conjectures having a PDE flavor, which would establish that $\F = \F_0$, and thus yield the PDE (\ref{eq:intro-PDE}) for $\I^{(q-1)}$ and establish the spherical multi-bubble conjecture for all $q \leq n+1$. 

\bigskip

\noindent
\textbf{Acknowledgments.} We thank Francesco Maggi and Frank ``Chip" Morgan for their constant support.

\tableofcontents

\section{Isoperimetric minimizing clusters} \label{sec:prelim}

In this section, we recall some of the results developed in \cite{EMilmanNeeman-GaussianMultiBubble,EMilmanNeeman-TripleAndQuadruple} regarding isoperimetric minimizing clusters which will be required in this work. We formulate all results on general weighted Riemannian manifolds, as this does not really pose a greater generality over the case of $\S^n$ we are primarily interested in. However, to make the presentation more concise, we make the following simplifying assumptions on the minimizing $q$-cluster, which do hold on $\S^n$ whenever $q \leq n+1$ in view of Theorem \ref{thm:intro-structure}:
\begin{itemize}
\item The minimizing cluster's finite volume cells are all bounded (trivially in $\S^n$). Consequently, we will only work with compactly supported fields, but note it is possible to extend the theory to the case when the cells are unbounded by using appropriately admissible fields, such as in the Gaussian setting of \cite{EMilmanNeeman-GaussianMultiBubble}. 
\item The minimizing cluster has locally bounded curvature. In general, it is only known that curvature is square integrable on a minimizing cluster's codimension-one interfaces, and integrable on the codimension-two interfaces \cite[Proposition 5.7]{EMilmanNeeman-GaussianMultiBubble}, \cite[Proposition 2.23]{EMilmanNeeman-TripleAndQuadruple}. The bounded curvature assumption makes it possible to compute the effects of perturbing the cluster using general compactly supported fields, without assuming they are bounded away from the singular set $\Sigma^4$ (defined below). 
\item The measure $\mu$ is a probability measure; consequently the existence of minimizing clusters is a non-issue. 
\end{itemize}
We refer to \cite[Sections 2-3,5-6]{EMilmanNeeman-TripleAndQuadruple} and the references therein for proofs of the subsequent statements in this section.

\begin{definition}[Weighted Riemannian Manifold]
A smooth complete $n$-dimensional Riemannian manifold $(M^n,g)$ endowed with a measure $\mu$ with $C^\infty$ smooth positive density $\exp(-W)$ with respect to the Riemannian volume measure $\vol_g$ is called a weighted Riemannian manifold $(M^n,g,\mu)$. 
\end{definition}

The Levi-Civita connection on $(M,g)$ is denoted by $\nabla$. The Riemannian metric $g$ will often be denoted by $\scalar{\cdot,\cdot}$. It induces a geodesic distance on $(M,g)$, and we denote by $B(x,r)$ an open geodesic ball of radius $r >0$ in $(M,g)$ centered at $x \in M$. Recall that $\mu^k = e^{-W} \H^k$, where $\H^k$ denotes the $k$-dimensional Hausdorff measure, and that $V_\mu = \mu$ denotes the $\mu$-weighted volume.

Recall that $\simplex^{(q-1)}_T = \{ v \in \R^q_+ \; ; \; \sum_{i=1}^q v_i = T \}$ when $T < \infty$ and $\simplex^{(q-1)}_\infty := \R^{q-1}_+ \times \{ \infty\}$. We denote by $\simplex^{(q-1)} = \simplex^{(q-1)}_1$ the $(q-1)$-dimensional probability simplex. Its tangent space is denoted by $E^{(q-1)} := \{ x \in \R^q \; ; \; \sum_{i=1}^q x_i = 0 \}$. For consistency, we define $\Delta^{(0)} = \{1\}$ and $E^{(0)} = \{ 0 \}$.

Throughout this work we will often use the convention that $a_{ij}$ denotes $a_i - a_j$ whenever the individual objects $a_i$ are defined. For instance, if $\{e_i\}_{i=1,\ldots,q}$ denotes the standard basis
in $\R^q$, then $e_{ij} = e_i - e_j \in E^{(q-1)}$. Denoting by $\delta^k_i$ the delta function $\textbf{1}_{k=i}$, we also have $\delta^k_{ij} = \delta^k_i - \delta^k_j$. 

Given distinct $i,j,k \in \{1,\ldots,q\}$, we define the set of cyclically ordered pairs in $\{i,j,k\}$:
\[
\cyclic(i,j,k) := \{ (i,j) , (j,k) , (k,i) \}  .
\]

\subsection{Weighted divergence and mean-curvature}

We write $\div X$ to denote divergence of a smooth vector-field $X$, and $\div_\mu X$ to denote its weighted divergence:
\begin{equation} \label{eq:weighted-div}
\div_{\mu} X := \div (X e^{-W}) e^{+W} = \div X - \nabla_X \pot . 
\end{equation}
For a smooth hypersurface $\Sigma \subset M^n$ co-oriented by a unit-normal field $\n$, let $H_\Sigma$ denote its mean-curvature, defined as the trace of its second fundamental form $\II_{\Sigma}$. The weighted mean-curvature $H_{\Sigma,\mu}$ is defined as:
\[
H_{\Sigma,\mu} := H_{\Sigma} - \nabla_\n \pot .
\]
We write $\div_\Sigma X$ for the surface divergence of a vector-field $X$ defined on $\Sigma$, i.e. $\sum_{i=1}^{n-1} \scalar{\tang_i,\nabla_{\tang_i} X}$ where $\{\tang_i\}$ is a local orthonormal frame on $\Sigma$; this coincides with $\div X - \scalar{\n,\nabla_\n X}$ for any smooth extension of $X$ to a neighborhood of $\Sigma$. 
The weighted surface divergence $\div_{\Sigma,\mu}$ is defined as:
\[
\div_{\Sigma,\mu} X = \div_{\Sigma} X - \nabla_X \pot,
\]
so that $\div_{\Sigma,\mu} X = \div_{\Sigma} (X e^{-\pot}) e^{+\pot}$ if $X$ is tangential to $\Sigma$.  
Note that $\div_{\Sigma} \n = H_{\Sigma}$ and $\div_{\Sigma,\mu} \n = H_{\Sigma,\mu}$.  We will also abbreviate $\scalar{X,\n}$ by $X^\n$, and we will write $X^\tang$ for the tangential part of $X$, i.e. $X - X^{\n} \n$. 

Note that the above definitions ensure the following weighted version of Stokes' theorem: if $\Sigma$ is a smooth $(n-1)$-dimensional manifold with $C^1$ boundary, denoted $\partial \Sigma$, (completeness of $\Sigma \cup \partial \Sigma$ is not required), and $X$ is a smooth vector-field on $\Sigma$, continuous up to $\partial \Sigma$, with compact support in $\Sigma \cup \partial \Sigma$, then since:
\[
\div_{\Sigma,\mu} X = \div_{\Sigma,\mu} (X^\n \n) + \div_{\Sigma,\mu} X^{\tang} = H_{\Sigma,\mu} X^{\n} + \div_{\Sigma,\mu} X^{\tang} ,
\]
then:
\begin{equation} \label{eq:Stokes-classical}
\int_\Sigma \div_{\Sigma,\mu} X d\mu^{n-1} = \int_{\Sigma} H_{\Sigma,\mu} X^{\n} d\mu^{n-1} + \int_{\partial \Sigma} X^{\n_{\partial}} d\mu^{n-2} ,
\end{equation}
where $\n_{\partial}$ denotes the exterior unit-normal to $\partial \Sigma$.

Finally, we denote the surface Laplacian of a smooth function $f$ on $\Sigma$ by $\Delta_{\Sigma} f := \div_{\Sigma} \nabla^\tang f$, which coincides with $\sum_{i=1}^{n-1} \nabla^2_{\tang_i,\tang_i} f - H_{\Sigma} \nabla_{\n} f$ for any smooth extension of $f$ to a neighborhood of $\Sigma$ in $M$. The weighted surface Laplacian is defined as:
\[
 \Delta_{\Sigma,\mu} f := \div_{\Sigma,\mu} \nabla^\tang f = \Delta_{\Sigma} f - \scalar{\nabla^\tang f, \nabla^\tang \pot} . 
 \]

\subsection{Reduced boundary and perimeter}

Given a Borel set $U \subset \R^n$ with locally-finite perimeter, its reduced boundary $\partial^* U$ is defined as the subset of $\partial U$ for which there is a uniquely defined outer unit normal vector to $U$ in a measure theoretic sense (see \cite[Chapter 15]{MaggiBook} for a precise definition). The definition of reduced boundary canonically extends to the Riemannian setting by using a local chart, as it is known that $T(\partial^* U) = \partial^* T(U)$ for any smooth diffeomorphism $T$ (see \cite[Lemma A.1]{KMS-LimitOfCapillarity}). It is known that $\partial^* U$ is a Borel subset of $\partial U$, and that modifying $U$ on a null-set does not alter $\partial^* U$. If $U$ is an open set with $C^1$ smooth boundary, it holds that $\partial^* U = \partial U$  (e.g. \cite[Remark 15.1]{MaggiBook}). Recall that the $\mu$-weighted perimeter of $U$ is defined as:
\[
\per_\mu(U) = \mu^{n-1}(\partial^* U). 
\]

\subsection{Cluster interfaces}

Let $\Omega = (\Omega_1, \dots, \Omega_q)$ denote a $q$-cluster on $(M^n,g,\mu)$. 
Recall that the cells $\set{\Omega_i}_{i=1,\ldots,q}$
of a $q$-cluster $\Omega$ are assumed to be pairwise disjoint Borel subsets of $M^n$ so that $V_\mu(M^n \setminus \cup_{i=1}^q \Omega_i) = 0$ and $V_\mu(\Omega) = (V_\mu(\Omega_1),\ldots,V_\mu(\Omega_q)) \in \Delta^{(q-1)}_{V_\mu(M^n)}$. In addition, they are assumed to have locally-finite perimeter, and moreover, finite $\mu$-weighted perimeter $\per_\mu(\Omega_i) < \infty$.

We define the interface between cells $i$ and $j$ (for $i \ne j$) as:
\[
    \Sigma_{ij} = \Sigma_{ij}(\Omega) := \partial^* \Omega_i \cap \partial^* \Omega_j ,
\]
and set:
\begin{equation} \label{eq:Sigma-Sigma1}
    \Sigma := \cup_{i} \partial \Omega_i ~,~  \Sigma^1 := \cup_{i < j} \Sigma_{ij} .
\end{equation}
 It is standard to show (see \cite[Exercise 29.7, (29.8)]{MaggiBook}) that for any $S \subset \set{1,\ldots,q}$:
\begin{equation} \label{eq:nothing-lost-many}
\H^{n-1}\brac{\partial^*(\cup_{i \in S} \Omega_i) \setminus \cup_{i \in S , j \notin S} \Sigma_{ij}} = 0 .
\end{equation}
In particular: \begin{equation} \label{eq:nothing-lost}
\H^{n-1} \brac{ \partial^* \Omega_i  \setminus \cup_{j \neq i} \Sigma_{ij} } = 0 \;\;\; \forall i=1,\ldots,q ,
\end{equation}
and hence:
\[
\per_\mu(\Omega_i) = \sum_{j \neq i} \mu^{n-1}(\Sigma_{ij}) , 
\]
and:
\[
\per_\mu(\Omega) = \frac{1}{2} \sum_{i=1}^q \per_\mu(\Omega_i) = \sum_{i < j} \mu^{n-1}(\Sigma_{ij})  = \mu^{n-1}(\Sigma^1). 
\]
In addition, it follows (see \cite[(3.6)]{EMilmanNeeman-GaussianMultiBubble}) that:
\begin{equation} \label{eq:top-nothing-lost}
\forall i \;\;\; \overline{\partial^* \Omega_i} = \overline{\cup_{j \neq i} \Sigma_{ij}} .
\end{equation}

\subsection{Existence and Interface-regularity} \label{subsec:prelim-minimizing}

The following theorem is due to Almgren \cite{AlmgrenMemoirs} (see also~\cite[Chapter 13]{MorganBook5Ed} and~\cite[Chapters 29-30]{MaggiBook}). 

\begin{theorem}[Almgren] \label{thm:Almgren}
\hfill
    \begin{enumerate}[(i)]
\item \label{it:Almgren-i}
   If $\mu$ is a probability measure, then for any prescribed $v \in \simplex^{(q-1)}$, an isoperimetric $\mu$-minimizing $q$-cluster $\Omega$ satisfying $V_\mu(\Omega) = v$ exists. 
   \end{enumerate}
    For every isoperimetric minimizing cluster $\Omega$ on $(M^n,g,\mu)$:
    \begin{enumerate}[(i)]
    \setcounter{enumi}{1} 
\item \label{it:Almgren-ii}
  $\Omega$ may and will be modified on a $\mu$-null set (thereby not altering $\set{\partial^* \Omega_i}$) so that all of its cells are open, and so that for every $i$, 
$\overline{\partial^* \Omega_i} = \partial \Omega_i$  and $\mu^{n-1}(\partial \Omega_i \setminus \partial^* \Omega_i) = 0$. 
\item \label{it:Almgren-iii} 
    For all $i \neq j$ the interfaces $\Sigma_{ij} = \Sigma_{ij}(\Omega)$ are a locally-finite union of embedded
     $(n-1)$-dimensional $C^\infty$ manifolds, relatively open in $\Sigma$, and for every $x \in
    \Sigma_{ij}$ there exists $\epsilon > 0$ such that $B(x,\epsilon) \cap
    \Omega_k = \emptyset$ for all $k \neq i,j$ and $B(x,\epsilon) \cap \Sigma_{ij}$ is an embedded
     $(n-1)$-dimensional $C^\infty$ manifold.
\item \label{it:Almgren-density}
    For any compact set $K$ in $M$, there exist constants $\Lambda_K,r_K > 0$ so that:
    \begin{equation} \label{eq:Almgren-density}
    \mu^{n-1}(\Sigma \cap B(x,r)) \leq \Lambda_K r^{n-1} \;\;\; \forall x \in \Sigma \cap K \;\;\; \forall r \in (0,r_K) . 
    \end{equation}
\end{enumerate}
\end{theorem}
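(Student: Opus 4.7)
For part (i), I would use the direct method of the calculus of variations. Given a minimizing sequence $\Omega^k$ of $q$-clusters with $V_\mu(\Omega^k) = v$, the uniform perimeter bound $\per_\mu(\Omega^k) \leq C$ together with the positivity and local boundedness of the density $\exp(-W)$ gives $L^1_\mu$-precompactness of the indicator functions $\mathbf{1}_{\Omega^k_i}$ via the standard BV compactness theorem applied cell by cell. After a diagonal extraction I obtain a limit cluster $\Omega^\infty$; because $\mu$ is a probability measure, no mass escapes and $V_\mu(\Omega^\infty) = v$. Lower semicontinuity of weighted perimeter under $L^1_\mu$ convergence finishes (i). Disjointness and the coverage $\mu(M^n \setminus \cup_i \Omega^\infty_i) = 0$ are preserved by the $L^1$-limit.

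For part (ii), I would pick the canonical representative of each cell given by its set of Lebesgue-density-one points (with respect to $\mu$), which leaves $\partial^* \Omega_i$ and $\per_\mu(\Omega_i)$ unchanged. This representative is open (the density is lower semicontinuous for sets of finite perimeter), and the density dichotomy theorem of De Giorgi--Federer gives $\partial \Omega_i = \overline{\partial^* \Omega_i}$ and $\H^{n-1}(\partial \Omega_i \setminus \partial^* \Omega_i) = 0$, hence also $\mu^{n-1}(\partial \Omega_i \setminus \partial^* \Omega_i) = 0$. Performing this modification simultaneously in all cells preserves the cluster structure because the densities of the $q$ cells sum to $1$ $\mu$-a.e.

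Part (iii) is the heart of the theorem and the main obstacle. The strategy is the classical one. First, I would show that each $\Omega_i$ is an almost-minimizer of $\mu$-weighted perimeter in the sense of Almgren: any local modification $\Omega_i \cup K$ or $\Omega_i \setminus K$ supported in $B(x,r)$ is penalized at most by a term $\Lambda r^n$, with the Lagrange multipliers associated to the volume constraints absorbed into $\Lambda$; the smoothness of $\exp(-W)$ gives local uniformity of these constants. Next, given $x \in \Sigma_{ij}$, I would use Almgren's density estimates on the singular set (the ``auxiliary'' content of (iv)) to find $\epsilon>0$ such that no third cell $\Omega_k$ has positive $\mu$-density in $B(x,\epsilon)$; inside this ball the problem collapses to a one-phase almost-minimizer between $\Omega_i$ and $\Omega_j$, for which De Giorgi's excess-decay argument yields $C^{1,\alpha}$ regularity of the interface. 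Elliptic bootstrapping applied to the constant-weighted-mean-curvature Euler--Lagrange equation (with coefficients depending smoothly on $W$) upgrades this to $C^\infty$. The local finiteness of the $C^\infty$ stratum inside $\Sigma_{ij}$ follows from the upper density bound of (iv) combined with countable rectifiability.

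Finally, for part (iv), the density bound follows from a comparison argument: for $x \in \Sigma \cap K$ and small $r$, one can build a competitor by replacing $\Omega \cap B(x,r)$ with the cone from $x$ over $\Omega \cap \partial B(x,r)$, whose $\mu^{n-1}$-surface area is controlled by $r \cdot \mu^{n-2}(\Sigma \cap \partial B(x,r))$ up to smooth-weight distortion. Almost-minimality then yields a differential inequality for the quantity $f(r) := \mu^{n-1}(\Sigma \cap B(x,r))$ of the form $f(r) \leq C r f'(r) + C r^n$, whose integration from $0$ to $r_K$ gives the asserted Ahlfors-regularity $f(r) \leq \Lambda_K r^{n-1}$. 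The most delicate step is actually (iii), where one must verify that the Almgren constants are truly uniform in a neighborhood of each compact $K \subset M^n$; this is a routine but lengthy compactness argument that I would invoke from the references \cite{AlmgrenMemoirs, MaggiBook, MorganBook5Ed}.
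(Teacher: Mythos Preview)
The paper does not prove this theorem; it is stated with attribution to Almgren \cite{AlmgrenMemoirs} and pointers to \cite[Chapter 13]{MorganBook5Ed} and \cite[Chapters 29--30]{MaggiBook}, and is used as a black box. So there is no ``paper's own proof'' to compare against; your sketch is being measured against the standard literature arguments.

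Your outline is broadly in the right spirit, but two steps are not correct as written. In part (ii) you claim that the set of density-one points of a finite-perimeter set is open because ``the density is lower semicontinuous for sets of finite perimeter''. This is false in general: for an arbitrary set of finite perimeter the Lebesgue-density function need not be lower semicontinuous, and the density-one representative need not be open. Openness here is a consequence of the \emph{minimality} (via the lower density estimates for almost-minimizers), not of finite perimeter alone; so (ii) logically depends on the almost-minimality machinery you only introduce later for (iii)--(iv). In part (iv) your cone-comparison and the differential inequality $f(r)\le Cr f'(r)+Cr^n$ do not yield the upper bound: integrating $f(r)\le Cr f'(r)$ gives a \emph{lower} bound on $f$, not an upper one (this is exactly the monotonicity-formula direction). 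The correct upper density argument is simpler: build a competitor by assigning all of $B(x,r)$ to a single cell, so the added perimeter is at most $\mu^{n-1}(\partial B(x,r))\le C r^{n-1}$, and absorb the $O(r^n)$ volume discrepancy with a volume-fixing variation away from $B(x,r)$; almost-minimality then gives $\mu^{n-1}(\Sigma\cap B(x,r))\le \Lambda_K r^{n-1}$ directly.
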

Whenever referring to the cells of a minimizing cluster or their topological boundary (and in particular $\Sigma$) in this work, we will always choose a representative such as in Theorem \ref{thm:Almgren} \ref{it:Almgren-ii}. 

\begin{definition} \label{def:bounded}
A cluster $\Omega$ is called bounded if its boundary $\Sigma = \cup_{i=1}^q \partial \Omega_i$ is a bounded set. 
\end{definition}

\begin{definition}[Interface--regular cluster] \label{def:interface-regular}
A cluster $\Omega$ satisfying parts \ref{it:Almgren-ii} and \ref{it:Almgren-iii} of Theorem \ref{thm:Almgren}
is called interface--regular. 
\end{definition}

The definition of interface-regular cluster should not be confused with the stronger definition of regular cluster, introduced below in Definition \ref{def:regular}.  Given an interface--regular cluster, let $\n_{ij}$ be the (smooth) unit normal field along $\Sigma_{ij}$ that points from $\Omega_i$ to $\Omega_j$. We use $\n_{ij}$ to co-orient $\Sigma_{ij}$, and since $\n_{ij} = -\n_{ji}$, note that $\Sigma_{ij}$ and $\Sigma_{ji}$ have opposite orientations. We denote by $\II^{ij}$ the second fundamental form on $\Sigma_{ij}$. 
When $i$ and $j$ are clear from the context, we will simply write $\n$ and $\II = \II_{\Sigma^1}$. We will typically abbreviate $H_{\Sigma_{ij}}$ and $H_{\Sigma_{ij},\mu}$ by $H_{ij}$ and $H_{ij,\mu}$, respectively. 

\begin{definition}[Cluster with locally bounded curvature]
We say that an interface-regular cluster $\Omega$ has locally bounded curvature if $\II_{\Sigma^1}$ is bounded on every compact set. 
\end{definition}

The following density estimate is due to Leonardi \cite[Theorem 3.1]{Leonardi-Infiltration} (cf. \cite[Lemma 2.6]{EMilmanNeeman-TripleAndQuadruple}). 
Recall that the (lower) density of a measurable set $A \subset (M^n,g)$ at a point $p \in M^n$ is defined as:
\[
\Theta(A,p) := \liminf_{r \rightarrow 0+} \frac{\H^n(A \cap B(p,r))}{\H^n(B(p,r))} . 
\]

\begin{lemma}[Leonardi's Infiltration Lemma] \label{lem:infiltration}
Let $\Omega$ be an isoperimetric $\mu$-minimizing $q$-cluster on $(M^n,g,\mu)$, and recall our convention from Theorem \ref{thm:Almgren} \ref{it:Almgren-ii}. There exists a constant $\eps > 0$, depending solely on $n$, so that for any $p \in M^n$, $i=1,\ldots,q$ and connected component $\Omega_i^\ell$ of $\Omega_i$: 
\begin{equation} \label{eq:density}
\Theta(\Omega^\ell_i,p) < \eps \;\; \Rightarrow \;\; p \notin \overline{\Omega^\ell_i} . 
\end{equation}
\end{lemma}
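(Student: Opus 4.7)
The plan is to follow Leonardi's classical infiltration argument \cite{Leonardi-Infiltration}, adapted to the weighted Riemannian setting by working in geodesic normal coordinates around $p$, in which both the metric $g$ and the weight $\Psi = e^{-W}$ are uniformly close to their Euclidean constant counterparts on small scales. Suppose for contradiction that $p \in \overline{\Omega_i^\ell}$ yet $\Theta(\Omega_i^\ell, p) < \eps$, and set $m(r) := V_\mu(\Omega_i^\ell \cap B(p, r))$. Since $\Omega_i^\ell$ is open (by our convention from Theorem \ref{thm:Almgren} \ref{it:Almgren-ii}) and $p$ lies in its closure, $m(r) > 0$ for every $r > 0$, while the density hypothesis provides arbitrarily small $r > 0$ with $m(r) < \eps\, \omega_n r^n$, where $\omega_n$ denotes the Euclidean ball volume constant.

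The main step is to construct, for each such small $r$, a competitor cluster $\Omega'$ by removing $\Omega_i^\ell \cap B(p, r)$ from $\Omega_i$ and reassigning it to a fixed neighboring cell, then restoring the original cell volumes by a compactly supported smooth perturbation of the interfaces performed well away from $p$. The resulting perimeter change consists of: the complete elimination of the interior interfaces of $\Omega_i^\ell$ inside $B(p, r)$ (saving $\per_\mu(\Omega_i^\ell; B(p, r))$), the creation of new interface on $\partial B(p, r) \cap \Omega_i^\ell$ of $\mu^{n-1}$-mass $(1 + o(1))\, m'(r)$ for a.e.\ $r$ by the coarea formula, and a volume-compensation cost $O(m(r))$. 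Combining minimality of $\Omega$ with the relative isoperimetric inequality in $B(p, r)$—which yields $c_n\, m(r)^{(n-1)/n} \leq \per_\mu(\Omega_i^\ell; B(p, r))$ provided $m(r) \leq \tfrac{1}{2} \omega_n r^n$, as guaranteed by our choice of $\eps$—produces the differential inequality
\[
c_n'\, m(r)^{(n-1)/n} \leq m'(r) \quad \text{for a.e.\ small } r > 0.
\]

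Integrating this ODE from $0$, using $m(0^+) = 0$, yields $m(r) \geq (c_n'/n)^n\, r^n$ for all sufficiently small $r$, contradicting the existence of arbitrarily small $r$ with $m(r) < \eps\, \omega_n r^n$ as soon as $\eps < (c_n'/n)^n / \omega_n$, a quantity depending only on $n$. The main technical delicacy is verifying that the volume-compensation step can be carried out with perimeter cost truly linear in $m(r)$; this is standard but requires choosing the compensation location once and for all (e.g.\ near a fixed regular interface point away from $p$), after which the resulting constant—though possibly depending on $\Omega$—is sub-dominant to the $m(r)^{(n-1)/n}$ term at the vanishing scales relevant to the ODE, so the critical constant $\eps$ in the statement is indeed purely dimensional.
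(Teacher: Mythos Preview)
The paper does not supply its own proof of this lemma; it merely attributes the result to Leonardi \cite[Theorem~3.1]{Leonardi-Infiltration} (with a pointer to \cite[Lemma~2.6]{EMilmanNeeman-TripleAndQuadruple} for the weighted Riemannian adaptation), so your sketch is being compared against the cited external argument rather than anything in the present paper. Your outline correctly captures the architecture of the infiltration method: the competitor in a small ball, the relative isoperimetric inequality, the resulting differential inequality for $m(r)=V_\mu(\Omega_i^\ell\cap B(p,r))$, and its integration.

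There is, however, a genuine slip in the perimeter bookkeeping. When you move $\Omega_i^\ell\cap B(p,r)$ into a \emph{single} neighbouring cell $\Omega_{j_0}$, the cluster perimeter does not drop by the full $\per_\mu(\Omega_i^\ell;B(p,r))$: the interfaces $\Sigma_{ik}$ with $k\neq i,j_0$ simply become $\Sigma_{j_0 k}$ interfaces and remain in the total, so the actual saving is only $\mu^{n-1}(\Sigma_{ij_0}\cap B(p,r))$. Feeding this into your ODE (after choosing the best $j_0$ or summing over all $j_0$) yields a threshold $\eps$ depending on $q$ as well as $n$, not ``solely on $n$'' as the lemma asserts. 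In the present paper's regime $q\le n+2$ this distinction is harmless, but to obtain the purely dimensional constant one needs Leonardi's sharper competitor, which distributes the excised piece among \emph{all} neighbouring cells simultaneously so that every interface of $\Omega_i^\ell$ inside $B(p,r)$ is absorbed at once---this simultaneous ``infiltration'' of neighbours is precisely what gives the lemma its name and removes the $q$-dependence.
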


\subsection{First variation information -- stationary clusters}

Given a $C_c^\infty$ vector-field $X$ on $M$, let $F_t$ the associated stationary flow along $X$, defined as the family of $C^\infty$ diffeomorphisms $\set{F_t : M^n \to M^n}_{t \in \R}$ solving the following ODE:
\[ \frac{d}{dt} F_t(x) = X \circ F_t(x) ~,~ F_0 = \Id .
\] Clearly, $F_t(\Omega) = (F_t(\Omega_1), \ldots,F_t(\Omega_q))$ remains a cluster for all $t$. 
Recall that $V_\mu(M^n)$ is assumed to be finite. 
We define the $k$-th variations of weighted volume and perimeter of $\Omega$, $k \geq 1$, as:
\begin{align*}
  \delta_X^k V(\Omega)_i = \delta_X^k V(\Omega_i) & := \left. \frac{d^k}{(dt)^k}\right|_{t=0} V_\mu(F_t(\Omega_i)) \;\;  , \;\;  i = 1,\ldots,q ~,\\
  \delta_X^k A(\Omega) & := \left. \frac{d^k}{(dt)^k}\right|_{t=0} \per_\mu(F_t(\Omega)) .
\end{align*}
By \cite[Lemma 3.3]{EMilmanNeeman-GaussianMultiBubble}, $t \mapsto V_\mu(F_t(\Omega_i))$ ($i=1,\ldots,q$) and $t \mapsto \per_\mu(F_t(\Omega))$ are $C^\infty$ functions in some open neighborhood of $t=0$, and so in particular the above variations are well-defined and finite. 
 Note that $\delta_X^k V(\Omega) \in E^{(q-1)}$. When $\Omega$ is clear from the context, we will simply write $\delta_X^k V$ and $\delta_X^k A$. 

\bigskip

By testing the first-variation of a minimizing cluster, one obtains the following well-known information: 

\begin{lemma}[First-order conditions] \label{lem:first-order-conditions}
  For any isoperimetric minimizing $q$-cluster $\Omega$ on $(M^n,g,\mu)$:
  \begin{enumerate}[(i)]
    \item \label{it:first-order-constant} 
    On each $\Sigma_{ij}$, $H_{ij,\mu}$ is constant. 
    \item \label{it:first-order-cyclic}
    There exists $\lambda \in E^{(q-1)}$ such that $H_{ij,\mu} = \lambda_i - \lambda_j$ for all $i \neq j$; moreover, $\lambda \in E^{(q-1)}$ is unique whenever $V_\mu(\Omega) \in \interior \simplex^{(q-1)}_{V_\mu(M^n)}$. 
        \item \label{it:weak-angles}
    $\Sigma^1$ does not have a boundary in the distributional sense -- for every $C_c^\infty$ vector-field $X$:
      \begin{equation} \label{eq:no-boundary}
        \sum_{i<j} \int_{\Sigma_{ij}} \div_{\Sigma,\mu} X^\tang\, d\mu^{n-1} = 0.
      \end{equation}
        \end{enumerate}
\end{lemma}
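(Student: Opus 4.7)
My plan is to apply Lagrange multipliers to the constrained minimization of perimeter at fixed volumes, using the first-variation formulas for $V$ and $\per$. For $X \in C_c^\infty(M,TM)$, decomposing $\partial^* \Omega_i$ into the interfaces $\Sigma_{ij}$ via (\ref{eq:nothing-lost}) gives $\delta_X V(\Omega_i) = \sum_{j \neq i} \int_{\Sigma_{ij}} X^{\n_{ij}}\, d\mu^{n-1}$, while Almgren's smoothness (Theorem \ref{thm:Almgren}\ref{it:Almgren-iii}) together with the surface-divergence decomposition $\div_{\Sigma,\mu} X = H_{ij,\mu} X^{\n_{ij}} + \div_{\Sigma,\mu} X^\tang$ yields $\delta_X A(\Omega) = \sum_{i<j} \int_{\Sigma_{ij}} \bigl(H_{ij,\mu} X^{\n_{ij}} + \div_{\Sigma,\mu} X^\tang\bigr) d\mu^{n-1}$.

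Since $\sum_i V_\mu(\Omega_i) = V_\mu(M^n)$ is fixed, the binding volume constraint takes values in $E^{(q-1)}$. I would verify the Lagrange-multiplier constraint qualification by establishing that the volume-variation map $X \mapsto (\delta_X V(\Omega_i))_i$ surjects onto $E^{(q-1)}$. This reduces to connectedness of the interface-adjacency graph on cells of positive $\mu$-measure, which holds under $V_\mu(\Omega) \in \interior \Delta^{(q-1)}$: a normal bump supported in a small ball around $p \in \Sigma_{ij}$ (disjoint from other interfaces by Almgren's regularity) produces $\delta_X V$ proportional to $e_i - e_j$, and any disconnection of the adjacency graph would partition $M$ into disjoint positive-measure sets with no common reduced boundary, contradicting connectedness of $M$. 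Lagrange multipliers then produce $\lambda \in E^{(q-1)}$, unique in the connected-graph case, with $\delta_X A(\Omega) = \sum_i \lambda_i \delta_X V(\Omega_i)$ for all $X$; rearranging $\sum_i \lambda_i \delta_X V(\Omega_i) = \sum_{i<j}(\lambda_i - \lambda_j) \int_{\Sigma_{ij}} X^{\n_{ij}} d\mu^{n-1}$ reduces the minimizer identity to
\[
\sum_{i<j} \int_{\Sigma_{ij}} \bigl[H_{ij,\mu} - (\lambda_i - \lambda_j)\bigr] X^{\n_{ij}}\, d\mu^{n-1} + \sum_{i<j} \int_{\Sigma_{ij}} \div_{\Sigma,\mu} X^\tang\, d\mu^{n-1} = 0 \quad (\star).
\]

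To extract the three conclusions, I would first test $(\star)$ against $X$ supported in a small ball $B$ around $p \in \Sigma_{ij}$ meeting no other interface and normal to $\Sigma_{ij}$: the tangential sum vanishes, and the fundamental lemma of calculus of variations applied to the arbitrary scalar $X^{\n_{ij}} \in C_c^\infty(\Sigma_{ij} \cap B)$ forces $H_{ij,\mu} \equiv \lambda_i - \lambda_j$ pointwise on $\Sigma_{ij}$, establishing both (i) (constancy) and (ii) (cyclic structure) simultaneously. Substituting the cyclic identity back into $(\star)$ collapses the normal sum for every $X$, leaving $\sum_{i<j} \int_{\Sigma_{ij}} \div_{\Sigma,\mu} X^\tang\, d\mu^{n-1} = 0$ for all $X$, which is exactly (iii). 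The main obstacle I anticipate is the careful implementation of Lagrange multipliers in this infinite-dimensional setting, specifically the existence-and-uniqueness step via connectedness of the adjacency graph; differentiability of the volume and perimeter along the flow $F_t$, needed for the variations to be well-defined, is already handled by \cite[Lemma 3.3]{EMilmanNeeman-GaussianMultiBubble}.
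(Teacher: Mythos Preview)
Your proposal is correct and follows the standard approach; the paper does not give its own proof here but defers to \cite{EMilmanNeeman-GaussianMultiBubble,EMilmanNeeman-TripleAndQuadruple}, where essentially the same argument appears (first-variation formula $\delta_X A = \sum_{i<j}\int_{\Sigma_{ij}}\div_{\Sigma,\mu}X\,d\mu^{n-1}$, volume-adjusting bumps on individual interfaces yielding surjectivity onto $E^{(q-1)}$ when all cells are nonempty, implicit-function-theorem construction of a volume-preserving competitor to implement the Lagrange multiplier, then localization to extract (i)--(iii)). The step you flag as the main obstacle---making the Lagrange-multiplier argument rigorous in infinite dimensions---is handled there exactly as you suggest, by combining the flow of $X$ with flows of finitely many volume-adjusting fields $X_1,\dots,X_{q-1}$ and solving for the correction via the inverse function theorem; your connectedness argument for the adjacency graph is the content of Lemma~\ref{lem:LA-connected}. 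One minor point: your existence argument as written assumes $V_\mu(\Omega)\in\interior\Delta^{(q-1)}$, whereas the lemma asserts existence unconditionally---this is harmless, since if some cells are $\mu$-null one simply runs the argument on the subcluster of nonempty cells and extends $\lambda$ arbitrarily on the empty indices (which carry no interfaces), recovering existence but not uniqueness.
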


\begin{remark}
The physical interpretation of $\lambda_i$ is that of air pressure inside cell $\Omega_i$; the weighted mean-curvature $H_{ij,\mu}$ is thus the pressure difference across $\Sigma_{ij}$. 
\end{remark}
 
\begin{definition}[Stationary Cluster]
An interface-regular $q$-cluster $\Omega$ satisfying the three conclusions of Lemma \ref{lem:first-order-conditions} is called stationary (with Lagrange multiplier $\lambda \in E^{(q-1)}$). \end{definition}

\noindent The following lemma provides an interpretation of $\lambda \in E^{(q-1)}$ as a Lagrange multiplier for the isoperimetric constrained minimization problem:

\begin{lemma} \label{lem:Lagrange}      
Let $\Omega$ be stationary $q$-cluster with Lagrange multiplier $\lambda \in E^{(q-1)}$.
    Then for every $C_c^\infty$ vector-field $X$: 
    \begin{align}
    \nonumber
     \delta^1_X V_i &= \sum_{j \neq i} \int_{\Sigma_{ij}} X^{\n_{ij}}\, d\mu^{n-1} \;\;\; \forall i = 1,\ldots,q \;\; , \\
     \label{eq:1st-var-area}
      \delta^1_X A &= \scalar{\lambda,\delta^1_X V} = \sum_{i<j} H_{ij,\mu} \int_{\Sigma_{ij}} X^{\n_{ij}}\, d\mu^{n-1} .
    \end{align} 
  \end{lemma}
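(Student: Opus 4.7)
\medskip

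\noindent\textbf{Proof proposal.} The plan is to derive both identities from the standard first-variation formulas for weighted volume and perimeter, and then use the stationarity hypotheses (constancy of $H_{ij,\mu}$, the cyclic structure $H_{ij,\mu}=\lambda_i-\lambda_j$, and the no-boundary condition \eqref{eq:no-boundary}) to rearrange the boundary integrals into the desired forms.

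First I would establish the volume identity. Since each cell $\Omega_i$ has finite weighted perimeter, the classical first-variation formula for a set of locally finite perimeter under a $C_c^\infty$ flow gives
\[
\delta^1_X V(\Omega_i) = \int_{\partial^* \Omega_i} X^{\n_i}\, d\mu^{n-1},
\]
where $\n_i$ is the measure-theoretic outer unit normal to $\Omega_i$. On each interface $\Sigma_{ij}\subset \partial^*\Omega_i\cap\partial^*\Omega_j$ the two outer normals satisfy $\n_i=\n_{ij}=-\n_j$ by definition. Combining this with the ``nothing lost" identity \eqref{eq:nothing-lost}, which ensures $\H^{n-1}(\partial^*\Omega_i\setminus\bigcup_{j\neq i}\Sigma_{ij})=0$, the integral over $\partial^*\Omega_i$ decomposes as a disjoint sum over the interfaces $\Sigma_{ij}$ with $j\neq i$, yielding the asserted formula for $\delta^1_X V_i$.

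Next I would compute the first variation of perimeter. On each smooth embedded piece $\Sigma_{ij}$ (guaranteed by interface-regularity, Theorem~\ref{thm:Almgren}\ref{it:Almgren-iii}), the flow $F_t$ gives
\[
\left.\tfrac{d}{dt}\right|_{t=0}\mu^{n-1}(F_t(\Sigma_{ij}))=\int_{\Sigma_{ij}}\div_{\Sigma,\mu} X\, d\mu^{n-1},
\]
which is justified either by a local coarea/area-formula computation on the $C^\infty$ graph piece, or by passing to a chart and quoting the standard first-variation of weighted area. Decomposing $X = X^{\n}\n + X^{\tang}$ and using $\div_{\Sigma,\mu}(X^{\n}\n) = H_{ij,\mu} X^{\n_{ij}}$, summing over $i<j$ gives
\[
\delta^1_X A \;=\; \sum_{i<j}\int_{\Sigma_{ij}} H_{ij,\mu}\, X^{\n_{ij}}\,d\mu^{n-1} \;+\; \sum_{i<j}\int_{\Sigma_{ij}} \div_{\Sigma,\mu} X^{\tang}\, d\mu^{n-1}.
\]
The second sum vanishes by the stationarity hypothesis \eqref{eq:no-boundary}, which is the precise content of the ``no tangential boundary'' condition that makes the tangential divergence integrate to zero.

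Finally, I would identify $\delta^1_X A$ with $\langle\lambda,\delta^1_X V\rangle$. Substituting $H_{ij,\mu}=\lambda_i-\lambda_j$ into the surviving first sum gives $\sum_{i<j}(\lambda_i-\lambda_j)\int_{\Sigma_{ij}} X^{\n_{ij}}\, d\mu^{n-1}$. Starting from the other end,
\[
\langle\lambda,\delta^1_X V\rangle = \sum_{i=1}^q \lambda_i \sum_{j\neq i}\int_{\Sigma_{ij}} X^{\n_{ij}}\, d\mu^{n-1}.
\]
Splitting the inner sum into $j>i$ and $j<i$, relabeling indices in the second piece, and using $\Sigma_{ij}=\Sigma_{ji}$ together with $\n_{ij}=-\n_{ji}$, the double sum collapses to $\sum_{i<j}(\lambda_i-\lambda_j)\int_{\Sigma_{ij}} X^{\n_{ij}}\, d\mu^{n-1}$, matching the expression for $\delta^1_X A$.

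I don't expect any genuinely hard step here: the volume formula is the classical definition of reduced boundary, the perimeter formula is the classical first-variation of area, and everything else is bookkeeping enabled by stationarity. The only point requiring a touch of care is justifying interchangeability of $\frac{d}{dt}$ and the surface integral for $\mu^{n-1}(F_t(\Sigma_{ij}))$ under a merely $C_c^\infty$ field on a cluster with only interface-regularity (not full regularity across $\Sigma\setminus\Sigma^1$); this is covered by \cite[Lemma 3.3]{EMilmanNeeman-GaussianMultiBubble}, whose $C^\infty$ smoothness in $t$ of $t\mapsto\per_\mu(F_t(\Omega))$ and $t\mapsto V_\mu(F_t(\Omega_i))$ is precisely what legitimizes differentiating under the integral and isolating the local contributions interface by interface.
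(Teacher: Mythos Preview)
Your proof is correct and follows the standard approach. The paper itself does not give a proof of this lemma---it is stated among the preliminary results in Section~\ref{sec:prelim} with the blanket reference ``We refer to \cite[Sections 2--3, 5--6]{EMilmanNeeman-TripleAndQuadruple} and the references therein for proofs of the subsequent statements in this section''---and your argument is precisely the expected derivation: first variation of sets of finite perimeter for the volume, tangential/normal splitting of $\div_{\Sigma,\mu} X$ for the area, the no-boundary condition \eqref{eq:no-boundary} to kill the tangential piece, and straightforward index bookkeeping for the pairing with $\lambda$.
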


\subsection{Higher codimension regularity} \label{subsec:regularity}

Given a minimizing cluster $\Omega$, recall (\ref{eq:Sigma-Sigma1})
 and observe that $\Sigma = \overline{\Sigma^1}$ by (\ref{eq:top-nothing-lost}) and our convention from Theorem \ref{thm:Almgren} \ref{it:Almgren-ii}. We will require additional information on the higher codimensional structure of $\Sigma$. To this end, define two special cones:
\begin{align*}
    \Y &:= \{x \in E^{(2)}\; ; \; \text{ there exist $i \ne j \in \{1,2,3\}$ with $x_i = x_j = \max_{k \in \{1,2,3\}} x_k$}\} , \\
    \T &:= \{x \in E^{(3)}\; ; \; \text{ there exist $i \ne j \in \{1,2,3,4\}$ with $x_i = x_j = \max_{k \in \{1,2,3,4\}} x_k$}\}.
\end{align*}
Note that $\Y$ consists of $3$ half-lines meeting at the origin in $120^\circ$ angles, and that $\T$ consists of $6$ two-dimensional sectors meeting in threes at $120^{\circ}$ angles along $4$ half-lines, which in turn all meet at the origin in $\cos^{-1}(-1/3) \simeq 109^{\circ}$ angles. The next theorem asserts that on the codimension-$2$ and codimension-$3$ parts of a minimizing cluster, $\Sigma$ locally looks like $\Y \times \R^{n-2}$ and $\T \times \R^{n-3}$, respectively.

\begin{theorem}[Taylor, White, Colombo--Edelen--Spolaor] \label{thm:regularity}
    Let $\Omega$ be a minimizing cluster on $(M^n,g,\mu)$, with $\mu = \exp(-W) \vol_g$ and $W \in C^\infty(M)$. Then there exist $\alpha > 0$
          and sets $\Sigma^2, \Sigma^3, \Sigma^4 \subset \Sigma$ such that:
    \begin{enumerate}[(i)]
        \item \label{it:regularity-union}
        $\Sigma$ is the disjoint union of $\Sigma^1,\Sigma^2,\Sigma^3,\Sigma^4$;             
        \item \label{it:regularity-Sigma2}         
        $\Sigma^2$ is a locally-finite union of embedded $(n-2)$-dimensional $C^{\infty}$ manifolds, and for every $p \in \Sigma^2$ there is a $C^\infty$ diffeomorphism mapping a neighborhood of $p$ in $M$ to a neighborhood of the origin in $E^{(2)} \times \R^{n-2}$, so that $p$ is mapped to the origin and $\Sigma$ is locally mapped to $\Y \times \R^{n-2}$ ; 
        \item \label{it:regularity-Sigma3}
         $\Sigma^3$ is a locally-finite union of embedded $(n-3)$-dimensional $C^{1,\alpha}$ manifolds, and for every $p \in \Sigma^3$ there is a $C^{1,\alpha}$ diffeomorphism mapping a neighborhood of $p$ in $M$ to a neighborhood of the origin in $E^{(3)} \times \R^{n-3}$, so that $p$ is mapped to the origin and $\Sigma$ is locally mapped to $\T \times \R^{n-3}$ ;
         \item \label{it:regularity-dimension}
         $\Sigma^4$ is closed and $\dim_{\H}(\Sigma^4) \leq n-4$.             
    \end{enumerate}
\end{theorem}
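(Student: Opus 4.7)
\emph{Proof plan.} The strategy is to reduce the problem, point by point, to the classification and regularity of area-minimizing cluster cones in flat Euclidean space, then patch the local descriptions together using the results of Taylor, White, and Colombo--Edelen--Spolaor. At each $p \in \Sigma$, I would pass to normal coordinates at $p$ and blow up the cluster at scales $r \to 0$. Since $\exp(-W)$ is smooth and positive, it is a higher-order perturbation of the unweighted perimeter functional under rescaling, so Almgren's monotonicity formula (in its weighted version) guarantees that any subsequential limit is an area-minimizing cluster cone in $\R^n$ with respect to the flat Euclidean metric. The local structure of $\Sigma$ near $p$ is therefore governed by the classification of minimizing cluster cones whose spine has dimension $\geq n-3$, namely the hyperplane, the Y-cone $\Y \times \R^{n-2}$, and the T-cone $\T \times \R^{n-3}$. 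A Federer-style dimension-reduction argument then bounds the Hausdorff dimension of the set where no such model tangent cone appears by $n-4$; this is the closed set we designate as $\Sigma^4$, closedness following from the openness of the complement, which is in turn a consequence of the $\eps$-regularity theorems described next.

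\emph{Regularity at $\Y$ and $\T$ tangent cones.} At points with tangent cone $\Y \times \R^{n-2}$, B.~White's $\eps$-regularity theorem promotes the blow-up information into a $C^\infty$ local diffeomorphism sending a neighborhood of $p$ in $M$ onto a neighborhood of the origin in $E^{(2)} \times \R^{n-2}$ and carrying $\Sigma$ onto $\Y \times \R^{n-2}$; the smoothness of $W$ only perturbs the Euler--Lagrange system by smooth coefficients, so $C^\infty$ smoothness survives the transition from the Euclidean to the weighted Riemannian setting. At points with tangent cone $\T \times \R^{n-3}$, the corresponding $\eps$-regularity, in the sharp $C^{1,\alpha}$ form with $\T \times \R^{n-3}$ as a universal model, is the content of the Colombo--Edelen--Spolaor theorem. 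Combined with Almgren's interface regularity (Theorem~\ref{thm:Almgren}~\ref{it:Almgren-iii}), this yields the stratified decomposition $\Sigma = \Sigma^1 \sqcup \Sigma^2 \sqcup \Sigma^3 \sqcup \Sigma^4$, with local finiteness following from the uniform density bound of Theorem~\ref{thm:Almgren}~\ref{it:Almgren-density} via a standard covering argument.

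\emph{Main obstacle.} The deep input is the $C^{1,\alpha}$ regularity at $\T$-type singularities in dimensions $n \geq 4$, due to Colombo--Edelen--Spolaor: this is what lifts $\Sigma^3$ from a mere set of finite $(n-3)$-Hausdorff measure to an embedded $C^{1,\alpha}$ submanifold locally modeled on $\T \times \R^{n-3}$, and it replaces the dimension-$3$-specific argument of Jean Taylor. The remaining ingredients --- Almgren's interior regularity, the classification of minimizing cluster cones of low codimension, White's $\Y$-regularity, and Federer dimension reduction for $\Sigma^4$ --- are by now classical. The only additional bookkeeping is to verify that each of these results transfers from unweighted Euclidean space to $(M,g,\mu)$ with $\mu = e^{-W}\vol_g$ and $W \in C^\infty(M)$; this is routine since the density $e^{-W}$ is smooth, positive, and locally bounded away from $0$ and $\infty$, so at every scale it is a controlled perturbation of the standard Euclidean problem.
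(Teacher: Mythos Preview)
Your outline is correct and matches the approach the paper attributes to the cited literature; note that the paper does not actually prove this theorem but records it as a citation, with two remarks explaining provenance and the extension to the weighted Riemannian setting. One small refinement: the $C^\infty$ regularity in part~\ref{it:regularity-Sigma2} is not delivered directly by the $\eps$-regularity theorems (which give $C^{1,\alpha}$), but requires a separate bootstrap via the Kinderlehrer--Nirenberg--Spruck reflection argument applied to the quasi-linear constant-mean-curvature system --- the paper flags this explicitly in Remark~\ref{rem:extend-to-Riemannian}, whereas your sketch absorbs it into the phrase ``smoothness survives.''
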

\begin{remark}
In the classical unweighted Euclidean setting (when $\Omega$ is a minimizing cluster with respect to the Lebesgue measure in $\R^n$), the case $n=2$ was shown by F.~Morgan in \cite{MorganSoapBubblesInR2} building upon the work of Almgren \cite{AlmgrenMemoirs}, and also follows from the results of  J.~Taylor~\cite{Taylor-SoapBubbleRegularityInR3}. The case $n=3$  was established by Taylor \cite{Taylor-SoapBubbleRegularityInR3} for general $(\M,\eps,\delta)$ sets in the sense of Almgren. 
 When $n \geq 4$, Theorem \ref{thm:regularity} was announced by B.~White \cite{White-AusyAnnouncementOfClusterRegularity,White-SoapBubbleRegularityInRn} for general $(\M,\eps,\delta)$ sets. Theorem \ref{thm:regularity} with part \ref{it:regularity-Sigma3} replaced by $\dim_{\H}(\Sigma^3) \leq n-3$ follows from the work of L.~Simon \cite{Simon-Codimension2Regularity}.
A version of Theorem \ref{thm:regularity} for multiplicity-one integral varifolds in an open set $U \subset \R^n$ having associated cycle structure, no boundary in $U$, bounded mean-curvature and whose support is $(\M,\eps,\delta)$ minimizing, was established by M.~Colombo, N.~Edelen and L.~Spolaor  \cite[Theorem~1.3, Remark~1.4, Theorem~3.10]{CES-RegularityOfMinimalSurfacesNearCones}; in particular, this applies to isoperimetric minimizing clusters in $\R^n$ \cite[Theorem~3.8]{CES-RegularityOfMinimalSurfacesNearCones}. By working in smooth charts and inserting their effect as well as that of the smooth positive density into the excess function, the latter extends to the smooth weighted Riemannian setting --  see the proof of \cite[Theorem~5.1]{EMilmanNeeman-GaussianMultiBubble} for a verification. The work of Naber and Valtorta \cite{NaberValtorta-MinimizingHarmonicMaps} implies that $\Sigma^4$ is actually $\H^{n-4}$-rectifiable and has locally-finite $\H^{n-4}$ measure, but we will not require this here. 
\end{remark}
\begin{remark} \label{rem:extend-to-Riemannian}
 In the aforementioned references, \ref{it:regularity-Sigma2} is established with only $C^{1,\alpha}$ regularity, but elliptic regularity for systems of PDEs and a classical reflection argument of Kinderlehrer--Nirenberg--Spruck \cite{KNS} allows to upgrade this to the stated $C^\infty$ regularity -- see \cite[Corollary 5.6 and Appendix F]{EMilmanNeeman-GaussianMultiBubble} for a proof on $(\R^n,|\cdot|,\mu)$. The latter argument extends to the Riemannian setting by working in a smooth local chart and inserting the effect of the Riemannian metric into the system of PDEs for the constant (weighted) mean curvatures of $\Sigma_{ij}$, $\Sigma_{jk}$ and $\Sigma_{ki}$, which are already in quasi-linear form. 
\end{remark}

 \begin{definition}[Regular Cluster] \label{def:regular}
An interface-regular cluster $\Omega$ (recall Definition \ref{def:interface-regular}) satisfying the conclusion of Theorem \ref{thm:regularity} and in addition the density estimates (\ref{eq:Almgren-density}) and (\ref{eq:density}) for some $\eps = \eps(\Omega) > 0$ (possibly depending on $\Omega$) is called regular. 
 \end{definition}
 
 \begin{remark}
In \cite{EMilmanNeeman-TripleAndQuadruple}, the definition of regularity also included the assumption that $V_\mu(\Omega) \in \interior \Delta^{(q-1)}_{\mu(M)}$, namely that all (open) cells are non-empty. This assumption was added in order to avoid pathological cases during the proofs. 
However, since all empty cells may always be removed (by reducing $q$), all of the results quoted below from \cite{EMilmanNeeman-TripleAndQuadruple} remain valid without this assumption. 
 \end{remark}

It follows that for a regular cluster, every point in $\Sigma^2$ (called the \emph{triple-point set}) belongs to the closure of exactly three cells, as well as to the closure of exactly three interfaces. Given distinct $i,j,k$, we will write $\Sigma_{ijk}$ for the subset of $\Sigma^2$ which belongs to the closure of $\Omega_i$, $\Omega_j$ and $\Omega_k$, or equivalently, to the closure of $\Sigma_{ij}$, $\Sigma_{jk}$ and $\Sigma_{ki}$. It follows that $\Sigma^2$ is the disjoint union of $\Sigma_{ijk}$ over all $i < j < k$. 
We will extend the normal fields $\n_{ij}$ to $\Sigma_{ijk}$ by continuity (thanks to $C^1$ regularity). 

\smallskip

\begin{lemma} \label{lem:Sigma2}
Let $\Omega$ be a regular cluster on $(M,g,\mu)$ with locally bounded curvature. Then for any compact subset $K \subset M$:
\begin{equation} \label{eq:Sigma2-finite}
 \mu^{n-2}(\Sigma^2 \cap K) < \infty .
 \end{equation}
\end{lemma}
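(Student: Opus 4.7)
My plan is to combine the local structural description of $\Sigma^2$ provided by Theorem~\ref{thm:regularity} with the locally-bounded-curvature hypothesis to reduce the estimate to finitely many smooth $(n-2)$-sheets on each of which $\H^{n-2}$ can be controlled.

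First, I would observe that $\Sigma = \bigcup_i \partial \Omega_i$ is closed in $M$ as a finite union of closed sets, and that $\Sigma^1$ is relatively open in $\Sigma$ by Theorem~\ref{thm:Almgren}\ref{it:Almgren-iii}; hence $\Sigma^2 \cup \Sigma^3 \cup \Sigma^4 = \Sigma \setminus \Sigma^1$ is relatively closed in $\Sigma$, and therefore closed in $M$. In particular $(\Sigma^2 \cup \Sigma^3 \cup \Sigma^4) \cap K$ is a compact subset of $M$. Since Theorem~\ref{thm:regularity} yields $\dim_{\H}(\Sigma^3) \leq n-3$ and $\dim_{\H}(\Sigma^4) \leq n-4$, we have $\H^{n-2}(\Sigma^3 \cup \Sigma^4) = 0$, so it is enough to bound $\H^{n-2}(\Sigma^2 \cap K)$.

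Next, I would invoke the locally-finite union assertion of Theorem~\ref{thm:regularity}\ref{it:regularity-Sigma2}: write $\Sigma^2 = \bigcup_{\alpha \in A} M_\alpha$ where each $M_\alpha$ is an embedded $(n-2)$-dimensional $C^\infty$ manifold and every compact subset of $M$ meets only finitely many $M_\alpha$. Applied to a compact neighborhood $K' \supset K$, this gives $\Sigma^2 \cap K \subset \bigcup_{j=1}^N M_{\alpha_j}$ for some finite collection. At any point in $M_{\alpha_j} \cap \Sigma^2$, the chart to $\Y \times \R^{n-2}$ together with the three interfaces $\Sigma_{ij}, \Sigma_{jk}, \Sigma_{ki}$ meeting at constant $120^\circ$ expresses the intrinsic and extrinsic geometry of $M_{\alpha_j}$ algebraically in terms of $\II_{\Sigma_{ij}}, \II_{\Sigma_{jk}}, \II_{\Sigma_{ki}}$; the locally-bounded-curvature hypothesis thus furnishes a uniform pointwise bound on the second fundamental form of $M_{\alpha_j}$ on $K'$. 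A standard volume-comparison argument -- realizing $M_{\alpha_j}$ locally as a graph over its tangent hyperplane with uniformly controlled Lipschitz constant -- then yields $\H^{n-2}(M_{\alpha_j} \cap K) < \infty$. Multiplying by $\sup_{K} e^{-W} < \infty$ and summing over $j=1, \ldots, N$ gives the desired bound $\mu^{n-2}(\Sigma^2 \cap K) < \infty$.

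The main obstacle I anticipate is controlling the behavior of the $M_{\alpha_j}$'s where they approach the unstructured singular set $\Sigma^4$, about which Theorem~\ref{thm:regularity} provides only the Hausdorff-dimension bound. A fallback strategy is to cover $\Sigma^4 \cap K$ by a Vitali family of balls whose total $\H^{n-2}$-mass is arbitrarily small (possible since $\dim_{\H}(\Sigma^4) \leq n-4$), and use the density estimate (\ref{eq:Almgren-density}) together with the constant angle condition to rule out any concentration of $\H^{n-2}$-mass of $\Sigma^2$ inside small tubular neighborhoods of $\Sigma^4$.
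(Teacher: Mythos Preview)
Your main step contains a genuine gap. The claim that bounded second fundamental form of $M_{\alpha_j}$ yields $\H^{n-2}(M_{\alpha_j} \cap K) < \infty$ via ``realizing $M_{\alpha_j}$ locally as a graph over its tangent hyperplane with uniformly controlled Lipschitz constant'' is not valid. Bounded extrinsic curvature of an embedded submanifold does \emph{not} by itself bound its measure in a compact set: distinct parts of the submanifold can accumulate. A concrete counterexample is the planar spiral $\gamma(\theta) = (1 + e^{-\theta})(\cos\theta,\sin\theta)$, $\theta \in [0,\infty)$, which is an embedded curve with curvature uniformly bounded (tending to $1$), contained in the disk of radius $2$, yet of infinite length. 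Your local-graph argument only controls $\H^{n-2}$ in a single chart of uniform radius $r_0$, but gives no bound on how many such charts are needed to cover $M_{\alpha_j} \cap K$; the ``locally finite union'' clause in Theorem~\ref{thm:regularity}\ref{it:regularity-Sigma2} bounds the number of manifolds $M_\alpha$, not the measure of each.

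What is missing is precisely the density estimate (\ref{eq:Almgren-density}), which you invoke only as a fallback near $\Sigma^4$ but which is in fact the heart of the argument. The correct mechanism is that each point $p \in \Sigma^2$ is the edge of three half-sheets of $\Sigma^1$ meeting at $120^\circ$; bounded curvature guarantees a uniform radius $r_0$ on which these half-sheets remain quantitatively transverse, so that $\mu^{n-1}(\Sigma^1 \cap B(p,r_0)) \geq c\, r_0^{\,n-1}$. Combined with (\ref{eq:Almgren-density}), this forces a uniform density bound $\mu^{n-2}(\Sigma^2 \cap B(p,r_0)) \leq C r_0^{\,n-2}$ for all $p \in \Sigma^2 \cap K$, after which a Besicovitch covering of the compact set $\overline{\Sigma^2 \cap K}$ yields finiteness. (The paper itself does not give the proof here, referring instead to \cite[Sections 2--3, 5--6]{EMilmanNeeman-TripleAndQuadruple}, but this is the mechanism used there.) Your anticipated obstacle near $\Sigma^4$ is real but secondary; the primary gap is that without the density estimate, the spiral phenomenon cannot be excluded even far from the singular set.
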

\noindent The analogous estimate to (\ref{eq:Sigma2-finite}) for $\Sigma^1$ holds on any regular cluster by definition. 

\smallskip

Let us also denote:
\begin{equation} \label{eq:partial-Sigmaij}
\partial \Sigma_{ij} := \bigcup_{k \neq i,j} \Sigma_{ijk}.
\end{equation}
Note that $\Sigma_{ij} \cup \partial \Sigma_{ij}$ is a (possibly incomplete) $C^\infty$ manifold with boundary.
We define $\n_{\partial ij}$ on $\partial \Sigma_{ij}$ to be the outward-pointing unit boundary-normal to $\Sigma_{ij}$. We may extend the second fundamental form $\II^{ij}$ by continuity to $\partial \Sigma_{ij}$. 
We will abbreviate $\II^{ij}_{\partial \partial}$ for $\II^{ij}(\n_{\partial ij}, \n_{\partial ij})$ on $\partial \Sigma_{ij}$.
When $i$ and $j$ are clear from the context, we will write $\n_\partial$ for $\n_{\partial ij}$.

\subsection{Angles at triple-points} 

\begin{lemma}\label{lem:boundary-normal-sum}
    For any stationary regular cluster:
    \begin{equation} \label{eq:sum-n-zero}
    \sum_{(i,j) \in \cyclic(u,v,w)} \n_{ij} = 0 ~,~ \sum_{(i,j) \in \cyclic(u,v,w)} \n_{\partial ij} = 0  \;\;\;\;\; \forall p \in \Sigma_{uvw} . 
    \end{equation}
      In other words, $\Sigma_{uv}$, $\Sigma_{vw}$ and $\Sigma_{wu}$ meet at $\Sigma_{uvw}$ in $120^{\circ}$ angles. 
\end{lemma}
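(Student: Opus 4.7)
The plan is to combine the first-variation identity (\ref{eq:no-boundary}) with Stokes' theorem (\ref{eq:Stokes-classical}) on each interface to produce an integrated identity for the boundary normals along the triple-point set, then extract the pointwise conclusion by localization; the interior-normal identity follows from the boundary one via a $90^\circ$ rotation in the normal $2$-plane to $\Sigma_{uvw}$.

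First I would fix an arbitrary $C_c^\infty$ vector field $X$ on $M$ and, for each interface $\Sigma_{ij}$, apply (\ref{eq:Stokes-classical}) to its tangential component $X^\tang$ (tangent to $\Sigma_{ij}$). Since $X^\tang \perp \n_{ij}$, the interior term $H_{ij,\mu} (X^\tang)^{\n_{ij}}$ vanishes identically; and since $\n_{\partial ij}$ is tangent to $\Sigma_{ij}$, the boundary integrand $(X^\tang)^{\n_{\partial ij}}$ equals $\scalar{X,\n_{\partial ij}}$. This gives
\begin{equation*}
\int_{\Sigma_{ij}} \div_{\Sigma,\mu} X^\tang \, d\mu^{n-1} \;=\; \int_{\partial \Sigma_{ij}} \scalar{X,\n_{\partial ij}}\, d\mu^{n-2}.
\end{equation*}
Summing over $i<j$ and invoking (\ref{eq:no-boundary}), the left-hand side totals to $0$. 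Decomposing each $\partial \Sigma_{ij}$ according to (\ref{eq:partial-Sigmaij}) and regrouping by the triple $\{u,v,w\}$ (noting $\n_{\partial ij} = \n_{\partial ji}$) yields
\begin{equation*}
\sum_{u<v<w} \int_{\Sigma_{uvw}} \bigl\langle X, \; \n_{\partial uv} + \n_{\partial vw} + \n_{\partial wu}\bigr\rangle \, d\mu^{n-2} = 0.
\end{equation*}
Localizing $X$ to a small neighborhood of an arbitrary $p \in \Sigma_{uvw}$ and varying $X$ over a basis of $T_p M$, together with the smoothness of $\Sigma_{uvw}$ and the continuity of the extended $\n_{\partial ij}$ from Theorem \ref{thm:regularity}\ref{it:regularity-Sigma2}, would yield the second identity in (\ref{eq:sum-n-zero}).

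For the first identity I would argue geometrically in the normal $2$-plane. At $p \in \Sigma_{uvw}$, Theorem \ref{thm:regularity}\ref{it:regularity-Sigma2} trivializes $\Sigma$ locally as $\Y \times \R^{n-2}$, so the three interfaces $\Sigma_{uv}, \Sigma_{vw}, \Sigma_{wu}$ meet transversely along $\Sigma_{uvw}$, and their cross-sections in $V := (T_p \Sigma_{uvw})^\perp$ are three half-lines with directions $\n_{\partial uv}, \n_{\partial vw}, \n_{\partial wu}$. The three $\n_{ij}$ for cyclic $(i,j)$ also lie in $V$ and are each orthogonal to the corresponding $\n_{\partial ij}$. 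Fixing the cyclic arrangement of cells $u, v, w$ around $\Sigma_{uvw}$ and letting $J$ denote the induced $90^\circ$ rotation of $V$, one checks that $J \n_{\partial ij} = \n_{ij}$ for every $(i,j) \in \cyclic(u,v,w)$, since $\n_{ij}$ points from the cell lying clockwise of the half-line $\n_{\partial ij}$ to the cell lying counterclockwise of it. Applying $J$ to the boundary identity thus yields the first identity in (\ref{eq:sum-n-zero}), and the $120^\circ$ angle conclusion follows from the elementary observation that three unit vectors summing to zero in a $2$-plane are pairwise at $120^\circ$.

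The main obstacle I would expect is the rigorous justification of Stokes' theorem on $\Sigma_{ij}$, since $\partial \Sigma_{ij}$ is only $C^{1,\alpha}$ along $\Sigma^3$ and has no a priori manifold structure on $\Sigma^4$. This is handled by a standard cutoff-and-limit argument: multiplying $X^\tang$ by a cutoff vanishing on an $\eps$-tubular neighborhood of $\Sigma^3 \cup \Sigma^4$, applying (\ref{eq:Stokes-classical}) to the resulting smooth, compactly supported field (whose boundary is contained in the smooth part of $\Sigma^2$), and passing to the limit $\eps \to 0$ using $\dim_{\H}(\Sigma^3 \cup \Sigma^4) \le n-3$ together with Lemma \ref{lem:Sigma2} and local boundedness of $\II$ to show that the truncation contributes negligibly.
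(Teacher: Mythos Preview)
Your argument is correct and follows the standard approach that the paper defers to its references \cite{EMilmanNeeman-GaussianMultiBubble,EMilmanNeeman-TripleAndQuadruple}: apply Stokes' theorem (\ref{eq:Stokes-classical}) to the tangential part $X^\tang$ on each interface, sum, invoke the no-boundary condition (\ref{eq:no-boundary}) from stationarity, and localize to extract the pointwise identity for the $\n_{\partial ij}$; then transfer to the $\n_{ij}$ via the $90^\circ$ rotation in the $2$-plane normal to $\Sigma_{uvw}$. The cutoff justification near $\Sigma^{\geq 3}$ is likewise the standard one, and under the paper's standing assumption of locally bounded curvature (see the opening of Section~\ref{sec:prelim}) your appeal to Lemma~\ref{lem:Sigma2} is legitimate.
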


In fact, we have:
\begin{lemma} \label{lem:equivalent-stationary}
Let $\Omega$ be a regular $q$-cluster with locally bounded curvature on $(M^n,g,\mu)$. Then $\Omega$ is stationary with Lagrange multiplier $\lambda \in E^{(q-1)}$ if and only if the following two properties hold:
\begin{enumerate}[(1)]
\item \label{it:stationarity-k}
For all $i < j$, $H_{\Sigma_{ij},\mu}$ is constant and equal to $\lambda_i - \lambda_j$.
\item \label{it:stationarity-n} 
$\sum_{(i,j) \in \cyclic(u,v,w)} \n_{ij} = 0$ for all $p \in \Sigma_{uvw}$ and $u < v < w$. 
\end{enumerate}
\end{lemma}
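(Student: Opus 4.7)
The forward direction is immediate: if $\Omega$ is stationary with Lagrange multiplier $\lambda$, then condition (1) is precisely parts \ref{it:first-order-constant} and \ref{it:first-order-cyclic} of Lemma~\ref{lem:first-order-conditions}, while condition (2) is Lemma~\ref{lem:boundary-normal-sum}.

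For the converse, assume (1) and (2). To show $\Omega$ is stationary with multiplier $\lambda$, I must verify parts \ref{it:first-order-constant}, \ref{it:first-order-cyclic}, and \ref{it:weak-angles} of Lemma~\ref{lem:first-order-conditions}; the first two are exactly (1), so only the ``no-boundary'' condition (\ref{eq:no-boundary}) remains. My plan is to fix an arbitrary $C_c^\infty$ vector field $X$ and apply the weighted Stokes' theorem (\ref{eq:Stokes-classical}) on each $\Sigma_{ij}$. Since $X^\tang$ is tangent to $\Sigma_{ij}$ and $\n_{\partial ij}$ is likewise tangent to $\Sigma_{ij}$, Stokes yields
\[
\int_{\Sigma_{ij}} \div_{\Sigma,\mu} X^\tang \, d\mu^{n-1} = \int_{\partial \Sigma_{ij}} \scalar{X, \n_{\partial ij}} \, d\mu^{n-2} .
\]
Summing over $i<j$ and regrouping by triple-point sets via $\partial \Sigma_{ij} = \bigcup_{k \neq i,j} \Sigma_{ijk}$, each $\Sigma_{uvw}$ contributes $\int_{\Sigma_{uvw}} \scalar{X, \sum_{(i,j)\in\cyclic(u,v,w)} \n_{\partial ij}} \, d\mu^{n-2}$.

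The key geometric observation is that condition (2) forces the three interfaces at a triple-point to meet at $120^\circ$ angles, since three unit vectors in a plane summing to zero must be mutually $120^\circ$ apart. Within the $2$-plane normal to $T_p\Sigma_{uvw}$, each boundary normal $\n_{\partial ij}$ is a $90^\circ$ rotation of the corresponding interface normal $\n_{ij}$ with a consistent sign of rotation (as one checks by a direct computation at a single Y-junction), so $\sum_{(i,j)\in\cyclic(u,v,w)} \n_{\partial ij} = 0$ as well. Hence every $\Sigma_{uvw}$-contribution vanishes, establishing (\ref{eq:no-boundary}) and thus stationarity with multiplier $\lambda$.

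The main technical step is to justify Stokes' theorem on $\Sigma_{ij}$, whose classical boundary contains the higher-codimension singular set $\Sigma^3 \cup \Sigma^4$ in addition to the $(n-2)$-dimensional set $\partial\Sigma_{ij}$ captured by (\ref{eq:partial-Sigmaij}). The plan is a standard cutoff argument: multiply $X$ by $\eta_\eps \in C_c^\infty$ that vanishes in an $\eps$-neighborhood of $\Sigma^3 \cup \Sigma^4$ and equals $1$ outside a $2\eps$-neighborhood, with $|\nabla\eta_\eps| \leq C/\eps$; apply Stokes on the resulting regular region; and send $\eps \to 0$. The locally-bounded-curvature hypothesis ensures $\div_{\Sigma,\mu} X^\tang$ is locally bounded, while the dimension bound $\dim_\H \Sigma^4 \leq n-4$ together with the $(n-3)$-dimensional $C^{1,\alpha}$ structure of $\Sigma^3$ from Theorem~\ref{thm:regularity}, the density estimate (\ref{eq:Almgren-density}), and the finiteness $\mu^{n-2}(\Sigma^2 \cap K) < \infty$ from Lemma~\ref{lem:Sigma2}, combine to ensure the cutoff errors vanish in the limit.
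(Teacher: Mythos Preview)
Your proof is correct and follows the natural approach; the paper itself defers the proof to the authors' prior work \cite{EMilmanNeeman-TripleAndQuadruple}, where the argument is precisely the one you outline --- Stokes on each $\Sigma_{ij}$ reduces the no-boundary condition to $\sum_{\cyclic}\n_{\partial ij}=0$, and the integration-by-parts is justified via a cutoff around $\Sigma^{\geq 3}$ (this is \cite[Lemma~2.25]{EMilmanNeeman-TripleAndQuadruple}, invoked in the present paper in the proof of Lemma~\ref{lem:LJac-non-oriented}). Your geometric observation that $\n_{\partial ij}$ is a consistently-oriented $90^\circ$ rotation of $\n_{ij}$ in the $2$-plane normal to $T_p\Sigma_{uvw}$ is correct and is the clean way to pass from condition~(2) to $\sum_{\cyclic}\n_{\partial ij}=0$.
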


We will frequently use that for all $p \in \Sigma_{ijk}$:
\begin{equation} \label{eq:sqrt3}
\n_{\partial ij} = \frac{\n_{ik} + \n_{jk}}{\sqrt{3}} .
\end{equation}
We also denote:
\begin{equation} \label{eq:def-II-partial}
\bar \II^{\partial ij} := \frac{\II^{ik}(\n_{\partial ik}, \n_{\partial ik}) + \II^{jk}(\n_{\partial jk}, \n_{\partial jk})}{\sqrt{3}} = \frac{ \II^{ik}_{\partial \partial} + \II^{jk}_{\partial \partial}}{\sqrt{3}} . 
\end{equation}
As usual, when integrating on $\partial \Sigma_{ij}$, we may abbreviate $\bar \II^{\partial} = \bar \II^{\partial ij}$.

\subsection{Second variation information -- stable clusters and index-form} \label{subsec:Q}

\begin{definition}[Index-Form $Q$] 
The Index Form $Q$ associated to a stationary $q$-cluster $\Omega$ on $(M,g,\mu)$ with Lagrange multiplier $\lambda \in E^{(q-1)}$ is defined as the following quadratic form on $C_c^\infty$ vector-fields $X$ on $(M,g)$:
\[
Q(X) := \delta^2_X A - \scalar{\lambda,\delta^2_X V} .
\]
\end{definition}

\begin{lemma}[Stability]\label{lem:unstable}
For any isoperimetric minimizing cluster $\Omega$ and $C_c^\infty$ vector-field $X$:
    \begin{equation}  \label{eq:Q}
      \delta^1_X V = 0 \;\;\; \Rightarrow \;\;\; Q(X) \ge 0 .
     \end{equation}
\end{lemma}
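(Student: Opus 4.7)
The plan is to reduce constrained minimality to an unconstrained second-order inequality via a Lagrange-multiplier-style correction of the flow along $X$. Since $\Omega$ is a minimizer, it is in particular stationary, so Lemmas \ref{lem:first-order-conditions} and \ref{lem:Lagrange} apply and furnish the Lagrange multiplier $\lambda \in E^{(q-1)}$ satisfying $\delta^1_Y A = \scalar{\lambda, \delta^1_Y V}$ for every compactly supported smooth vector field $Y$.

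First, I would construct auxiliary $C_c^\infty$ vector fields $Y_1, \ldots, Y_{q-1}$ whose volume derivatives $\set{\delta^1_{Y_i} V}_{i=1}^{q-1}$ form a basis of $E^{(q-1)}$. Such fields exist by Theorem \ref{thm:Almgren} \ref{it:Almgren-iii}: for each $i < q$ pick a point in the open interface $\Sigma_{i,q}$ and choose $Y_i$ to be a smooth bump field transverse to $\Sigma_{i,q}$ supported in a small neighborhood where no other interface is present, so that $\delta^1_{Y_i} V = c_i(e_i - e_q)$ with $c_i \neq 0$. Next, for $(t,a) \in \R \times \R^{q-1}$ near the origin, let $G_{t,a}$ denote the $C^\infty$ diffeomorphism obtained by the time-$1$ flow of the compactly supported field $t X + \sum_{i=1}^{q-1} a_i Y_i$ (or, equivalently, by composing the individual flows at appropriate times). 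By the smoothness of the volume functional along such flows \cite[Lemma 3.3]{EMilmanNeeman-GaussianMultiBubble}, the map
\[
\Phi(t,a) := V_\mu(G_{t,a}(\Omega)) - V_\mu(\Omega) \in E^{(q-1)}
\]
is $C^\infty$ near the origin, with $\Phi(0,0) = 0$ and $\partial_{a_i} \Phi(0,0) = \delta^1_{Y_i} V$. Since these span $E^{(q-1)}$, the implicit function theorem yields a $C^\infty$ curve $a(t)$, defined for $t$ near $0$ with $a(0) = 0$, such that $\Phi(t, a(t)) = 0$ for all small $t$. Moreover, differentiating once at $t=0$ and using the hypothesis $\delta^1_X V = 0$ gives
\[
0 = \delta^1_X V + \sum_{i=1}^{q-1} a_i'(0)\, \delta^1_{Y_i} V,
\]
so by the linear independence of the $\delta^1_{Y_i} V$ we conclude $a'(0) = 0$.

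Now $\tilde{\Omega}_t := G_{t, a(t)}(\Omega)$ is a cluster of the same weighted volume as $\Omega$, so by the minimality of $\Omega$,
\[
\per_\mu(\tilde{\Omega}_t) \geq \per_\mu(\Omega) \quad \text{for all small } t.
\]
The function $t \mapsto \per_\mu(\tilde{\Omega}_t)$ is $C^\infty$ near $0$ and attains a minimum at $t=0$, so its second derivative is non-negative. Differentiating twice at $t=0$ and invoking $a(0) = 0$ and $a'(0) = 0$ yields
\[
0 \;\leq\; \frac{d^2}{dt^2}\bigg|_{t=0} \per_\mu(\tilde{\Omega}_t) \;=\; \delta^2_X A + \sum_{i=1}^{q-1} a_i''(0)\, \delta^1_{Y_i} A.
\]
The analogous computation for $V_\mu$ (which is constantly $V_\mu(\Omega)$ along $\tilde{\Omega}_t$) gives
\[
0 \;=\; \delta^2_X V + \sum_{i=1}^{q-1} a_i''(0)\, \delta^1_{Y_i} V.
\]
Taking the inner product of the second identity with $\lambda$ and using stationarity ($\delta^1_{Y_i} A = \scalar{\lambda, \delta^1_{Y_i} V}$) to substitute into the first, we obtain
\[
0 \;\leq\; \delta^2_X A - \scalar{\lambda, \delta^2_X V} = Q(X),
\]
as desired. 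The main subtlety, which is standard but must be confirmed, is the $C^\infty$ joint dependence of $V_\mu(G_{t,a}(\Omega))$ and $\per_\mu(G_{t,a}(\Omega))$ on the parameters $(t,a)$; this follows from the fact that $G_{t,a}$ depends smoothly on $(t,a)$ as a composition of flows of compactly supported $C^\infty$ fields, together with the smoothness results referenced above. No regularity of $\Omega$ is needed beyond the universal existence and first-variation properties used above.
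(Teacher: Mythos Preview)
Your argument is correct and follows the standard Lagrange-multiplier route via the implicit function theorem; the paper does not prove this lemma here but cites \cite{EMilmanNeeman-TripleAndQuadruple}, where essentially the same argument is carried out. Two minor corrections are in order. First, your construction of the spanning fields $Y_i$ assumes $\Sigma_{i,q}\neq\emptyset$ for every $i<q$, which need not hold; instead invoke the connectivity of the interface adjacency graph (Lemma~\ref{lem:LA-connected}, which uses $V_\mu(\Omega)\in\interior\Delta^{(q-1)}$) and place bump fields across non-empty interfaces along a spanning tree so that the resulting $\delta^1_{Y_i}V$ span $E^{(q-1)}$. Second, the parenthetical ``equivalently, by composing the individual flows'' is not literally correct unless the fields commute, but either construction yields the same Taylor expansion to second order at the origin, so the computation of $P''(0)$ and the conclusion $Q(X)\geq 0$ are unaffected.
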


\begin{definition}[Stable Cluster]
A stationary cluster satisfying the conclusion of Lemma \ref{lem:unstable} is called stable. \end{definition}

It will be convenient to polarize $Q(X) = Q(X,X)$ as a symmetric bilinear form $Q(X,Y)$.  
The following formula for $Q(X,Y)$ may be derived under favorable conditions. 

\begin{definition}[Weighted Ricci Curvature]
The weighted Ricci curvature on $(M,g,\mu = \exp(-W) d\vol_g)$ is defined as the following symmetric $2$-tensor:
\[
\Ric_{g,\mu} := \Ric_g + \nabla^2 W . 
\]
\end{definition}

\begin{theorem} \label{thm:Q-Sigma4}
Let $\Omega$ be a stationary regular cluster on $(M^n,g,\mu)$ with locally bounded curvature. Then for any $C_c^\infty$ vector-fields $X,Y$ on $M$, $Q(X) = Q(X,X)$, where $Q(X,Y)$ denotes the following symmetric bilinear Index-Form for vector-fields:
\begin{align}
\nonumber Q(X,Y) := \sum_{i<j} \Big [ &  \int_{\Sigma_{ij}} \brac{ \scalar{\nabla^\tang X^\n,\nabla^\tang Y^\n} -(\Ric_{g,\mu}(\n,\n) + \|\II\|_2^2)  X^\n Y^\n} d\mu^{n-1}  \\
\label{eq:Q1} & - \int_{\partial \Sigma_{ij}} X^\n Y^\n \bar \II^{\partial ij} \, d\mu^{n-2}\Big] .
\end{align}
\end{theorem}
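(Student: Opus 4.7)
The plan is to compute $\delta^2_X A$ and $\delta^2_X V$ separately on each smooth interface $\Sigma_{ij}$ and then assemble the pieces using the regularity structure from Theorem~\ref{thm:regularity} together with the stationarity information in Lemma~\ref{lem:first-order-conditions} and Lemma~\ref{lem:boundary-normal-sum}. On a single smooth oriented hypersurface-with-boundary $\Sigma_{ij}$ in a weighted Riemannian manifold, the second variation of weighted area under the flow of a $C_c^\infty$ field $X$ is a classical computation that produces an interior term
\[
\int_{\Sigma_{ij}} \bigl[|\nabla^\tang X^\n|^2 - (\Ric_{g,\mu}(\n,\n) + \|\II\|_2^2)(X^\n)^2 + H_{ij,\mu}\, R_{ij}(X)\bigr]\, d\mu^{n-1}
\]
together with a boundary contribution $B_{ij}(X)$ supported on $\partial \Sigma_{ij}$. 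Here $R_{ij}(X)$ depends on the full field $X$ (not just $X^\n$) and arises from second-order tangential motion along $\Sigma_{ij}$; structurally, it is the same expression that appears when computing $\delta_X^2 V_\mu(F_t(\Omega_i))$ via the divergence theorem. Both formulas are standard Riccati/tube computations, with $\Ric$ replaced by $\Ric_{g,\mu}$ and $\Delta$ by $\Delta_\mu$ to accommodate the density $e^{-\pot}$.

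Summing over $i<j$ and using stationarity $H_{ij,\mu}=\lambda_i-\lambda_j$, together with the first-order identity \eqref{eq:no-boundary}, one checks that
\[
\sum_{i<j} \int_{\Sigma_{ij}} H_{ij,\mu}\, R_{ij}(X)\, d\mu^{n-1} \;=\; \scalar{\lambda,\,\delta^2_X V} \;+\; (\text{boundary corrections}),
\]
so these interior terms cancel exactly in $Q(X,X)=\delta^2_X A - \scalar{\lambda,\delta^2_X V}$, up to lower-dimensional boundary contributions which combine with the $B_{ij}$. This cancellation is precisely what renders $Q$ dependent only on the normal components $X^\n$ at the interfaces --- tangential deformations are invisible for stationary clusters --- and polarization then yields the claimed symmetric bilinear form in $(X,Y)$. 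It remains to analyze the boundary contribution at each triple-point stratum $\Sigma_{ijk}$, where three interfaces meet at $120^\circ$ angles; a careful bookkeeping using \eqref{eq:sum-n-zero} and \eqref{eq:sqrt3} --- which in particular allows expressing any one of $X^{\n_{ij}}, X^{\n_{jk}}, X^{\n_{ki}}$ at $\Sigma_{ijk}$ in terms of the other two --- collapses the pieces of $B_{ij}, B_{jk}, B_{ki}$ near $\Sigma_{ijk}$ into the claimed form $-\int X^\n Y^\n \bar\II^{\partial ij}\,d\mu^{n-2}$ with $\bar\II^{\partial ij}$ as in \eqref{eq:def-II-partial}, while the tangential-derivative contributions at $\Sigma_{ijk}$ cancel via the $120^\circ$ angle condition applied to a suitable auxiliary extension of the field.

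The main obstacle is justifying all of the above in the presence of the singular sets $\Sigma^3$ and $\Sigma^4$, since $X$ and $Y$ are only assumed to be $C_c^\infty$ on $M$ and need not vanish near the singularities. The locally bounded curvature hypothesis ensures that $\|\II\|_2^2$ is locally integrable against $\mu^{n-1}$ on $\Sigma^1$, and Lemma~\ref{lem:Sigma2} gives $\mu^{n-2}(\Sigma^2 \cap K) < \infty$ on compact sets, so every integral appearing in \eqref{eq:Q1} is absolutely convergent on $\supp X \cup \supp Y$. To legitimately derive the identity, I would introduce cutoffs $\eta_\epsilon$ vanishing in a tubular $\epsilon$-neighborhood of $\Sigma^4$, apply the smooth calculation above to $\eta_\epsilon X$ and $\eta_\epsilon Y$, and let $\epsilon \to 0$. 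Because $\dim_{\H}(\Sigma^4)\leq n-4$, Minkowski-content estimates (together with the curvature bound, needed to control $\|\II\|_2^2$ against the shrinking support of $1-\eta_\epsilon$) force all error terms --- both bulk integrals over $\Sigma^1$ and boundary integrals over $\Sigma^2$ near $\Sigma^4$ --- to vanish in the limit. The mere $C^{1,\alpha}$ regularity along $\Sigma^3$ is enough for the contribution of $\partial(\partial \Sigma_{ij})$ near $\Sigma^3$ to be negligible on dimensional grounds, giving \eqref{eq:Q1}.
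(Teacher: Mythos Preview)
The paper does not actually prove this theorem; Section~\ref{sec:prelim} opens by stating that all results therein are quoted from \cite[Sections 2--3, 5--6]{EMilmanNeeman-TripleAndQuadruple}, and Theorem~\ref{thm:Q-Sigma4} is simply recorded without argument. So there is no ``paper's own proof'' to compare against in this document.

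That said, your sketch is the right strategy and is essentially what is carried out in the cited reference: compute the second variation of area and volume on each smooth interface, observe that the $H_{ij,\mu} R_{ij}(X)$ terms cancel against $\langle\lambda,\delta^2_X V\rangle$ by stationarity, collapse the triple-point boundary terms via the $120^\circ$ relations \eqref{eq:sum-n-zero}--\eqref{eq:sqrt3}, and justify everything through a cutoff near $\Sigma^{\geq 3}$. Two points where your write-up is thinner than the actual argument: first, the cutoff near $\Sigma^4$ requires controlling $|\nabla\eta_\epsilon|$ and $|\nabla^2\eta_\epsilon|$ against the $\mu^{n-1}$-measure of thin tubes around a set of Hausdorff dimension $\leq n-4$, and this is where the density bound \eqref{eq:Almgren-density} and the locally bounded curvature hypothesis are genuinely used (not just for integrability of $\|\II\|_2^2$); second, your treatment of $\Sigma^3$ (``negligible on dimensional grounds'') elides that one must also cut off near $\Sigma^3$ and verify that the resulting boundary integrals on $\partial\Sigma_{ij}$ near $\Sigma^3$ vanish in the limit, which again uses the codimension-$3$ estimate together with bounded curvature. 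These are not gaps in the approach, but they are where the work lies.
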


\subsection{Jacobi operator $L_{Jac}$}

\begin{definition}[Jacobi operator $L_{Jac}$]
Let $\Sigma$ be a smooth hypersurface on a weighted Riemannian manifold $(M,g,\mu)$ with unit-normal $\n$. The associated Jacobi operator $L_{Jac}$ acting on smooth functions $f \in C^\infty(\Sigma)$ is defined as:
\begin{equation} \label{eq:def-LJac}
L_{Jac} f := \Delta_{\Sigma,\mu} f + (\Ric_{g,\mu}(\n,\n) + \|\II\|_2^2) f . 
\end{equation}
\end{definition}

\begin{theorem} \label{thm:Q-LJac}
Let $\Omega$ be a stationary regular cluster on $(M,g,\mu)$ with locally bounded curvature.
Then for any $C_c^\infty$ vector-fields $X,Y$ on $M$:
\begin{equation} \label{eq:Q1-LJac}
Q(X,Y) = \sum_{i<j} \Big[ - \int_{\Sigma_{ij}} X^{\n} L_{Jac} Y^{\n} d\mu^{n-1} + \int_{\partial \Sigma_{ij}} X^{\n} \brac{\nabla_{\n_{\partial}} Y^{\n} -  \bar \II^{\partial} Y^{\n}  } d\mu^{n-2} \Big] .
\end{equation}
\end{theorem}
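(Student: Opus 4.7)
The plan is to obtain the formula \eqref{eq:Q1-LJac} from the expression \eqref{eq:Q1} in Theorem \ref{thm:Q-Sigma4} by applying the weighted Stokes theorem (the surface version recalled in \eqref{eq:Stokes-classical}) to the gradient term $\scalar{\nabla^\tang X^\n,\nabla^\tang Y^\n}$ on each interface $\Sigma_{ij}$. Once the integration by parts is justified, the calculation is immediate: by definition of $\Delta_{\Sigma,\mu}$ one has
\begin{equation*}
\div_{\Sigma,\mu}(X^\n \nabla^\tang Y^\n) = \scalar{\nabla^\tang X^\n,\nabla^\tang Y^\n} + X^\n \Delta_{\Sigma,\mu} Y^\n,
\end{equation*}
so formally
\begin{equation*}
\int_{\Sigma_{ij}} \scalar{\nabla^\tang X^\n,\nabla^\tang Y^\n}\, d\mu^{n-1} = -\int_{\Sigma_{ij}} X^\n \Delta_{\Sigma,\mu} Y^\n\, d\mu^{n-1} + \int_{\partial \Sigma_{ij}} X^\n \nabla_{\n_\partial} Y^\n\, d\mu^{n-2}.
\end{equation*}
Combining the bulk remainder with the $-(\Ric_{g,\mu}(\n,\n) + \|\II\|_2^2) X^\n Y^\n$ term from \eqref{eq:Q1} reconstitutes $-X^\n L_{Jac} Y^\n$ by the definition \eqref{eq:def-LJac}, while the boundary term $X^\n \nabla_{\n_\partial} Y^\n\, d\mu^{n-2}$ pairs with $-X^\n Y^\n \bar \II^{\partial ij}\, d\mu^{n-2}$ from \eqref{eq:Q1} to form the integrand in \eqref{eq:Q1-LJac}.

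The main obstacle, and the only non-formal step, is justifying Stokes' theorem on $\Sigma_{ij}$. The relation \eqref{eq:Stokes-classical} requires a $C^1$ boundary and a compactly supported vector-field on the relatively open manifold $\Sigma_{ij} \cup \partial \Sigma_{ij}$, whereas here $X^\n \nabla^\tang Y^\n$ is supported on $\overline{\Sigma_{ij}} \cap \supp(X) \cap \supp(Y)$, which generally touches the higher codimension singular strata $\Sigma^3 \cup \Sigma^4$. To deal with this, I would introduce a family of smooth cutoffs $\set{\eta_\epsilon}_{\epsilon > 0}$ with $\eta_\epsilon \equiv 0$ in an $\epsilon$-neighborhood of $\Sigma^3 \cup \Sigma^4$, $\eta_\epsilon \equiv 1$ away from a $2\epsilon$-neighborhood, and $|\nabla \eta_\epsilon| \lesssim 1/\epsilon$; such cutoffs can be built from the distance to $\Sigma^3 \cup \Sigma^4$ and mollification. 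Apply \eqref{eq:Stokes-classical} to $\eta_\epsilon X^\n \nabla^\tang Y^\n$, which is compactly supported in the smooth manifold-with-boundary $\Sigma_{ij} \cup \partial \Sigma_{ij}$, and let $\epsilon \downarrow 0$.

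Two error terms arise from this cutoff. On the bulk, differentiating $\eta_\epsilon$ produces a contribution bounded by $\|X^\n\|_\infty \|\nabla^\tang Y^\n\|_\infty \cdot \epsilon^{-1} \cdot \mu^{n-1}(\Sigma^1 \cap \set{\mathrm{dist}(\cdot,\Sigma^3\cup \Sigma^4) \leq 2\epsilon} \cap \supp X)$; using the locally bounded curvature hypothesis together with the Hausdorff dimension estimate $\dim_\H(\Sigma^4) \leq n-4$ from Theorem \ref{thm:regularity}\ref{it:regularity-dimension} and the fact that $\Sigma^3$ is a locally finite union of $(n-3)$-dimensional $C^{1,\alpha}$ manifolds, this measure is $O(\epsilon^2)$ and the error vanishes. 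On the boundary $\partial \Sigma_{ij} = \cup_{k\ne i,j} \Sigma_{ijk}$, the extra term is supported where $\eta_\epsilon$ differs from $1$, i.e.\ near $\partial \Sigma_{ijk} \subset \Sigma^3 \cup \Sigma^4$, whose $\mu^{n-2}$-measure near the singular strata vanishes as $\epsilon \downarrow 0$ by Lemma \ref{lem:Sigma2} and the same dimension count. Thus both error contributions vanish in the limit, the integration by parts identity holds, and substituting it into \eqref{eq:Q1} yields \eqref{eq:Q1-LJac}.
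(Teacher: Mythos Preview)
Your approach is correct and is exactly the standard one: pass from \eqref{eq:Q1} to \eqref{eq:Q1-LJac} by integrating by parts on each $\Sigma_{ij}$, using a cutoff near the singular strata $\Sigma^3 \cup \Sigma^4$ to justify Stokes' theorem. The paper itself does not supply a proof of this theorem; it is recorded in Section~\ref{sec:prelim} as background from \cite{EMilmanNeeman-TripleAndQuadruple}, and the justification of integration by parts there (cf.\ the proof of Lemma~\ref{lem:LJac-non-oriented}, which invokes \cite[Lemma~2.25]{EMilmanNeeman-TripleAndQuadruple}) proceeds along the same lines as what you wrote.

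One small technical caveat: your assertion that the $\mu^{n-1}$-measure of the $2\epsilon$-tube about $\Sigma^4$ inside $\Sigma^1$ is $O(\epsilon^2)$ does not follow directly from the bare Hausdorff dimension bound $\dim_{\H}(\Sigma^4)\le n-4$; Hausdorff dimension alone controls coverings, not Minkowski content. The clean way to close this is to use a covering-based cutoff rather than a distance-based one: since $\H^{n-2}(\Sigma^4 \cap K)=0$, for any $\epsilon'>0$ cover $\Sigma^4\cap K$ by balls $B(x_i,r_i)$ with $\sum r_i^{n-2}<\epsilon'$, build $\eta$ from bumps on these balls with $|\nabla\eta|\lesssim r_i^{-1}$ on $B(x_i,2r_i)$, and invoke the density estimate \eqref{eq:Almgren-density} to bound the error by $C\sum r_i^{n-2}<C\epsilon'$. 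The $\Sigma^3$ contribution is handled as you said, since $\Sigma^3$ is a locally finite union of $C^{1,\alpha}$ $(n-3)$-manifolds. With this adjustment your argument goes through.
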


\begin{remark}
It is well-known and classical in the unweighted setting that the first variation of mean-curvature of a hypersurface $\Sigma$ in the normal direction is given by the unweighted Jacobi operator, and this easily extends to the weighted setting:
\[
\delta^1_{\varphi \n} H_{\Sigma,\mu} = - L_{Jac} \; \varphi . 
\]
Consequently, on a hypersufrace $\Sigma$ with constant weighted mean-curvature $H_{\Sigma,\mu}$, we have:
\begin{equation} \label{eq:LJac-deltaH}
\delta^1_X H_{\Sigma_{ij},\mu} = -L_{Jac} X^\n + \delta^1_{X^\tang} H_{\Sigma_{ij},\mu} =  -L_{Jac} X^\n . 
\end{equation}
Using this, it is easy to obtain a heuristic derivation of (\ref{eq:Q1-LJac}) by differentiating inside the integral in the formula for the first variation of area (\ref{eq:1st-var-area}). However, as explained in \cite[Section 1]{EMilmanNeeman-GaussianMultiBubble}, a rigorous derivation without a-priori assuming that $\Omega$ has locally bounded curvature is much more challenging. 
\end{remark}

\subsection{On non-oriented functions}

On some occasions, we will want to apply $L_{Jac}$ to functions $\Psi,\Phi$ defined on the entire $M$, which do not arise as some oriented scalar-fields $f_{ij} = -f_{ji}$. In that case, it is useful to note that:
\begin{lemma} \label{lem:LJac-non-oriented}
Let $\Omega$ be a stationary regular cluster on $(M,g,\mu)$ with locally bounded curvature. Then for all smooth functions $\Psi,\Phi$ on $M$:
\[
\int_{\Sigma^1}\Phi L_{Jac} \Psi d\mu^{n-1} = \int_{\Sigma^1} \Psi L_{Jac} \Phi d\mu^{n-1} , 
\]
and:
\[
\int_{\Sigma^1} \Delta_{\Sigma,\mu} \Phi d\mu^{n-1} = 0 . 
\]
\end{lemma}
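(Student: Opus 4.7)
\medskip

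\noindent\textbf{Proof proposal for Lemma \ref{lem:LJac-non-oriented}.}

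The plan is to apply the weighted Stokes' theorem (\ref{eq:Stokes-classical}) on each smooth piece $\Sigma_{ij}$ to the tangential vector-fields $\Phi\nabla^\tang \Psi$ and $\nabla^\tang \Phi$, and then exploit the fact that, unlike oriented scalar fields $f_{ij}=-f_{ji}$, the globally defined $\Phi,\Psi$ produce boundary contributions at $\Sigma^2$ that cancel thanks to Lemma \ref{lem:boundary-normal-sum}.

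First I would note the pointwise identity $\div_{\Sigma,\mu}(\Phi\nabla^\tang \Psi)=\Phi\,\Delta_{\Sigma,\mu}\Psi+\langle\nabla^\tang\Phi,\nabla^\tang\Psi\rangle$. Since $\Phi\nabla^\tang\Psi$ is tangential to $\Sigma_{ij}$ (so its normal component vanishes), Stokes (\ref{eq:Stokes-classical}) gives
\[
\int_{\Sigma_{ij}}\!\Phi\,\Delta_{\Sigma,\mu}\Psi\,d\mu^{n-1}+\int_{\Sigma_{ij}}\!\langle\nabla^\tang\Phi,\nabla^\tang\Psi\rangle\,d\mu^{n-1}=\int_{\partial\Sigma_{ij}}\!\Phi\,\nabla_{\n_{\partial ij}}\Psi\,d\mu^{n-2}.
\]
Subtracting the analogous formula with $\Phi,\Psi$ swapped kills the symmetric middle term. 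Because the zeroth-order coefficient $\Ric_{g,\mu}(\n,\n)+\|\II\|^2_2$ in $L_{Jac}$ is orientation-invariant and hence the same from either side of $\Sigma_{ij}$, it cancels when passing from $\Delta_{\Sigma,\mu}$ to $L_{Jac}$, yielding
\[
\int_{\Sigma_{ij}}\!\bigl(\Phi\,L_{Jac}\Psi-\Psi\,L_{Jac}\Phi\bigr)d\mu^{n-1}=\int_{\partial\Sigma_{ij}}\!\bigl(\Phi\,\nabla_{\n_{\partial ij}}\Psi-\Psi\,\nabla_{\n_{\partial ij}}\Phi\bigr)d\mu^{n-2}.
\]
Summing over $i<j$, the boundary decomposes as $\partial\Sigma_{ij}=\bigcup_{k\ne i,j}\Sigma_{ijk}$ (by (\ref{eq:partial-Sigmaij})), so each triple-point piece $\Sigma_{uvw}$ (with $u<v<w$) is visited by exactly three interfaces with outward conormals $\n_{\partial uv},\n_{\partial vw},\n_{\partial uw}$. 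Here is the crucial use of the ``non-oriented'' hypothesis: since $\Psi$ is a single smooth function on $M$, its directional derivative is linear in the direction, so
\[
\sum_{(i,j)\in\cyclic(u,v,w)}\nabla_{\n_{\partial ij}}\Psi=\nabla_{\sum_{(i,j)\in\cyclic(u,v,w)}\n_{\partial ij}}\Psi=0,
\]
by Lemma \ref{lem:boundary-normal-sum}. Multiplying by the well-defined value of $\Phi$ at $\Sigma_{uvw}$ and integrating, the boundary terms cancel piece-by-piece, giving the first identity. The second identity is obtained identically by applying Stokes' theorem to $\nabla^\tang\Phi$ alone on each $\Sigma_{ij}$ and then summing, using $\sum_{(i,j)\in\cyclic(u,v,w)}\nabla_{\n_{\partial ij}}\Phi=0$ at each triple locus.

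The main technical obstacle I foresee is justifying the application of (\ref{eq:Stokes-classical}), which as stated requires a $C^1$ boundary, whereas $\partial\Sigma_{ij}$ in fact has its own lower-dimensional ``edges'' lying in $\Sigma^3\cup\Sigma^4$. I would handle this by a standard excision argument: remove an open $\rho$-tubular neighborhood $U_\rho$ of $\Sigma^3\cup\Sigma^4$, apply Stokes on the resulting manifold-with-boundary, and send $\rho\downarrow 0$. The contribution along $\partial U_\rho\cap\Sigma_{ij}$ is controlled by the local boundedness of $\II$ together with $\mu^{n-2}(\Sigma^2\cap K)<\infty$ (Lemma \ref{lem:Sigma2}) for the $\Sigma^3$ part, and by $\dim_{\H}(\Sigma^4)\le n-4$ (Theorem \ref{thm:regularity}\ref{it:regularity-dimension}) together with the density bound (\ref{eq:Almgren-density}) for the $\Sigma^4$ part; both go to zero with $\rho$. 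The simplifying assumptions from the start of Section \ref{sec:prelim} (bounded cells, locally bounded curvature) ensure everything takes place on a compact set so no issue at infinity arises.
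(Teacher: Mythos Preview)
Your argument is essentially the same as the paper's: integrate by parts on each $\Sigma_{ij}$ and observe that the resulting boundary integrand $\Phi\nabla_{\n_{\partial ij}}\Psi-\Psi\nabla_{\n_{\partial ij}}\Phi$ (respectively $\nabla_{\n_{\partial ij}}\Phi$) sums to zero over $(i,j)\in\cyclic(u,v,w)$ by Lemma~\ref{lem:boundary-normal-sum}. The only difference is in the technical justification: the paper does not carry out the excision argument inline but instead invokes \cite[Lemma~2.25]{EMilmanNeeman-TripleAndQuadruple} applied to $Z_{ij}=X^\tang$ for $X\in\{\nabla\Phi,\nabla\Psi\}$, checking that $\div_{\Sigma,\mu}X^\tang$ is bounded (since $H_{\Sigma,\mu}$ is constant by stationarity) and that $\int_{\partial\Sigma_{ij}}|X^{\n_\partial}|\,d\mu^{n-2}<\infty$ by (\ref{eq:Sigma2-finite}). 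Your excision sketch is in the right spirit, but note a small slip: controlling the contribution near $\Sigma^3$ requires the $(n-3)$-dimensional finiteness/regularity of $\Sigma^3$ from Theorem~\ref{thm:regularity}\ref{it:regularity-Sigma3}, not the $\mu^{n-2}$-finiteness of $\Sigma^2$ from Lemma~\ref{lem:Sigma2}.
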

\begin{proof}
Both claims are clear if we are allowed to freely integrate by parts, since then things boil down to showing:
\[
\sum_{u<v<w} \int_{\Sigma_{uvw}} \sum_{(i,j) \in \cyclic(u,v,w)} (\Phi \nabla_{\n_{\partial ij}} \Psi - \Psi \nabla_{\n_{\partial ij}} \Phi) d\mu^{n-2} = 0 ,
\]
and:
\[
\sum_{u<v<w} \int_{\Sigma_{uvw}} \sum_{(i,j) \in \cyclic(u,v,w)} \nabla_{\n_{\partial ij}} \Phi d\mu^{n-2} = 0 ,
\]
respectively, which both hold since $\sum_{(i,j) \in \cyclic(u,v,w)} \n_{\partial ij} = 0$ by (\ref{eq:sum-n-zero}). To justify integration by parts, we refer to \cite[Lemma 2.25]{EMilmanNeeman-TripleAndQuadruple} applied to
the vector-field $Z_{ij} = X^\tang$ on $\Sigma_{ij}$ for $X \in \{\nabla \Phi, \nabla \Psi\}$. Note that $\div_{\Sigma,\mu} X^\tang = \div_{\mu} X - \scalar{\n, \nabla_{\n} X} - X^\n H_{\Sigma,\mu}$, and so as $H_{\Sigma,\mu}$ is constant by stationarity, the latter expression is bounded and thus $\mu$-integrable on $\Sigma_{ij}$; also note that 
$\int_{\partial \Sigma_{ij}} |X^{\n_\partial}| d\mu^{n-2} < \infty$, which is a consequence of the locally bounded curvature and (\ref{eq:Sigma2-finite}). 
\end{proof}

\subsection{Almost piecewise-constant fields}

We denote for brevity $\Sigma^{\ge 3} = \Sigma^3 \cup \Sigma^4$. 

\begin{lemma}\label{lem:inward-fields}
    Let $\Omega$ be a stationary regular $q$-cluster on $(M,g,\mu)$.
    There is a family of $C^\infty$ vector-fields $Z_1, \dots, Z_q$ defined 
    on $M \setminus \Sigma^{\ge 3}$ such that for every $k = 1, \ldots, q$, for every $i \ne j \in \{ 1 , \ldots, q\}$,
    and for every $p \in \Sigma_{ij}$,
    \[
        |Z_k(p)| \le \sqrt {3/2} \text{ and } \scalar{Z_k(p),\n_{ij}(p)} = \delta^k_{ij} .
    \]
\end{lemma}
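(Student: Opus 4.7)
The plan is to construct $Z_k$ via a partition of unity, defining it locally near each stratum of $\Sigma^{\le 2} := \Sigma^1 \cup \Sigma^2$ and trivially away from $\Sigma$. I cover $M \setminus \Sigma^{\ge 3}$ by three types of open sets: (a) for each $p \in \Sigma^1$, a small ball $U^{(a)}_\alpha$ meeting only the unique interface $\Sigma_{i_\alpha j_\alpha}$ containing $p$; (b) for each $p \in \Sigma^2$, a small ball $U^{(b)}_\beta$ meeting only the three interfaces $\Sigma_{u_\beta v_\beta}, \Sigma_{v_\beta w_\beta}, \Sigma_{w_\beta u_\beta}$ sharing the triple-point locus $\Sigma_{u_\beta v_\beta w_\beta} \ni p$; and (c) an open set $W$ with $\overline W \cap \Sigma = \emptyset$. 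The existence of such covers is guaranteed by Theorem~\ref{thm:regularity}. Let $\{\rho_\lambda\}$ be a smooth partition of unity subordinate to this cover.

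On each $U^{(a)}_\alpha$, I smoothly extend $\n_{i_\alpha j_\alpha}$ (for instance, by parallel transport along the normal geodesic flow out of $\Sigma_{i_\alpha j_\alpha}$) to a vector field $\tilde{\n}^\alpha$, and set $Z_k^\alpha := (\delta^k_{i_\alpha} - \delta^k_{j_\alpha}) \tilde{\n}^\alpha$; this satisfies $|Z_k^\alpha| \le 1$ and $Z_k^\alpha \cdot \n_{i_\alpha j_\alpha} = \delta^k_{i_\alpha j_\alpha}$ on $\Sigma_{i_\alpha j_\alpha}$. On each $U^{(b)}_\beta$ with triple set $\Sigma_{uvw}$, stationarity (Lemma~\ref{lem:boundary-normal-sum}) forces $\n_{uv} + \n_{vw} + \n_{wu} = 0$ with pairwise $120^\circ$ angles at every spine point $q \in \Sigma_{uvw}$. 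Combined with the matching identity $\sum_{ab} \delta^k_{ab} = 0$, this means there is a unique vector $v_k(q)$ in the normal $2$-plane with $v_k(q) \cdot \n_{ab}(q) = \delta^k_{ab}$ for each $ab \in \{uv,vw,wu\}$, and writing $v_k = \alpha \n_{uv} + \beta \n_{vw}$ and solving the resulting $2 \times 2$ linear system (using $\n_{uv} \cdot \n_{vw} = -\tfrac{1}{2}$) yields
\[
|v_k(q)|^2 = \tfrac{4}{3}\bigl((\delta^k_{uv})^2 + \delta^k_{uv}\delta^k_{vw} + (\delta^k_{vw})^2\bigr) \le \tfrac{4}{3}.
\]
The assignment $q \mapsto v_k(q)$ depends smoothly on $q$, so I extend it to a smooth vector field $Z_k^\beta$ on $U^{(b)}_\beta$ satisfying $Z_k^\beta \cdot \n_{ab} = \delta^k_{ab}$ on each $\Sigma_{ab} \cap U^{(b)}_\beta$ with $|Z_k^\beta| \le 2/\sqrt{3}$ on each such interface. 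On $W$, set $Z_k^W := 0$. Finally define $Z_k := \sum_\lambda \rho_\lambda Z_k^\lambda$.

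For any $p \in \Sigma_{ij}$, every local field $Z_k^\lambda$ with $\rho_\lambda(p) > 0$ satisfies $Z_k^\lambda(p) \cdot \n_{ij}(p) = \delta^k_{ij}$ by construction, so the convex combination inherits the identity. Likewise, each contributing $|Z_k^\lambda(p)| \le 2/\sqrt{3} < \sqrt{3/2}$, whence $|Z_k(p)| \le 2/\sqrt{3} < \sqrt{3/2}$ by the triangle inequality together with $\sum_\lambda \rho_\lambda \equiv 1$.

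\textbf{Main obstacle.} The delicate step is the smooth extension of $v_k$ from the spine $\Sigma_{uvw} \cap U^{(b)}_\beta$ to a vector field on $U^{(b)}_\beta$ satisfying all three interface constraints with the required norm bound, since the identity $\sum_{ab} \n_{ab} = 0$ holds only on the spine, so that no single formula can enforce the three constraints throughout $U^{(b)}_\beta$. The construction proceeds in two stages: first extend $v_k$ onto each individual half-plane $\Sigma_{ab} \cap U^{(b)}_\beta$ as $\delta^k_{ab} \n_{ab}$ plus a smooth tangential correction that matches $v_k$ at the spine and tapers away; then verify compatibility of the three extensions along the shared spine and smoothly extend from the codim-$1$ union of the three half-planes into $U^{(b)}_\beta$, leveraging the $C^\infty$ chart supplied by Theorem~\ref{thm:regularity}\ref{it:regularity-Sigma2} combined with Remark~\ref{rem:extend-to-Riemannian}.
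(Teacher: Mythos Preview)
Your partition-of-unity strategy and the computation of $v_k$ at triple points with $|v_k|^2=4/3$ are exactly right, and this is indeed the same local approach that the paper's proof (which is only a citation to \cite[Section~7]{EMilmanNeeman-GaussianMultiBubble}) implicitly invokes.

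There is, however, a real gap in your resolution of the ``Main obstacle''. Agreement of the three half-plane fields $Y_k^{ab}=\delta^k_{ab}\n_{ab}+T_{ab}$ on the spine is \emph{not} enough for a smooth extension from the $\Y$-cone into $U^{(b)}_\beta$. A smooth ambient field $Z$ restricted to the three half-planes must satisfy jet compatibility at the spine: for instance, since $\sum_{(a,b)}\n_{\partial ab}=0$ there, any ambient $Z$ obeys $\sum_{(a,b)}\nabla_{\n_{\partial ab}}Z=0$, forcing $\sum_{(a,b)}\nabla_{\n_{\partial ab}}Y_k^{ab}=0$; your independently chosen tangential corrections $T_{ab}$ will generically violate this and all the analogous higher-order conditions. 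Equivalently, a smooth function on the ball that vanishes on two of the half-planes must vanish to second order at the spine (its Taylor polynomial of degree $1$ is forced to zero by the two directional constraints at $120^\circ$), so a first-order error on one half-plane cannot be smoothly extended to vanish on the other two.

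The cleaner route is to reverse the order: first build a smooth ambient field $V_k$ on $U^{(b)}_\beta$ extending $v_k$ from the spine (this is unobstructed since the spine is a genuine submanifold), and then correct $V_k$ to enforce $Z_k^\beta\cdot\n_{ab}=\delta^k_{ab}$ on each half-plane. The error $e_{ab}:=\delta^k_{ab}-V_k\cdot\n_{ab}$ vanishes on the spine; the freedom in the transverse jet of $V_k$ at the spine (a $2\times n$ block at first order, more at higher orders) lets you kill the leading Taylor coefficients of all three $e_{ab}$'s simultaneously, after which the residual corrections can be localized. This is presumably what the referenced Gaussian paper does, but neither that argument nor any alternative appears in the present paper's one-line proof.
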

\begin{proof}[Proof of Lemma \ref{lem:inward-fields}]
Lemma \ref{lem:inward-fields} was proved in \cite[Section 7]{EMilmanNeeman-GaussianMultiBubble} for stationary regular clusters in Euclidean space $(\R^n,|\cdot|,\mu)$. The sign convention employed there was opposite to the one we use in this work, and so these fields were called ``inward fields" in \cite{EMilmanNeeman-GaussianMultiBubble}. 
Since the construction of the approximate inward fields in \cite{EMilmanNeeman-GaussianMultiBubble} is entirely local, by using a partition of unity and working in charts on the smooth manifold $M$, the exact same proof carried over to the Riemannian setting. \end{proof}

\subsection{Linear Algebra}

\begin{lemma}[Graph connectedness]\label{lem:LA-connected}
Let $\Omega$ be a interface-regular $q$-cluster on $(M^n,g,\mu)$ with $V_\mu(\Omega) \in \interior \simplex^{(q-1)}_{V_\mu(M^n)}$. Consider the undirected graph $G$ with vertices $\{1, \dots, q\}$ and an edge between $i$ and $j$ iff $\Sigma_{ij} \neq \emptyset$. 
Then the graph $G$ is connected.
\end{lemma}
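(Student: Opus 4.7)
The plan is to argue by contradiction: if $G$ were disconnected, the union of cells in one connected component of $G$ would be a set of zero $\mu$-weighted perimeter with both itself and its complement of positive volume, contradicting the indecomposability of sets of finite perimeter on a connected manifold.

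Suppose for contradiction that $G$ is disconnected, so that there is a partition $\{1,\ldots,q\} = S \sqcup S^c$ with $S, S^c$ both nonempty and $\Sigma_{ij} = \emptyset$ for every $i \in S$ and $j \in S^c$. Set $U := \bigcup_{i \in S} \Omega_i$. First I would apply (\ref{eq:nothing-lost-many}) to obtain
\[
\H^{n-1}\!\brac{\partial^* U \setminus \bigcup_{i \in S,\, j \in S^c} \Sigma_{ij}} = 0,
\]
and since the right-hand side union is empty by assumption, this forces $\mu^{n-1}(\partial^* U) = 0$, i.e.\ $\per_\mu(U) = 0$.

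Next I would invoke the standard indecomposability fact for sets of locally finite perimeter on a smooth connected Riemannian manifold: if $U$ has $\per_\mu(U) = 0$, then either $V_\mu(U) = 0$ or $V_\mu(M^n \setminus U) = 0$. (This is a consequence of De Giorgi's structure theorem in local charts together with connectedness of $M^n$; equivalently, the characteristic function $\mathbf{1}_U$ is a BV function with vanishing distributional derivative, hence $\mu$-a.e.\ constant on the connected manifold.) However, $V_\mu(U) = \sum_{i \in S} V_\mu(\Omega_i) > 0$ and $V_\mu(M^n \setminus U) = \sum_{j \in S^c} V_\mu(\Omega_j) > 0$, both by the assumption $V_\mu(\Omega) \in \interior \simplex^{(q-1)}_{V_\mu(M^n)}$ (so that every cell has strictly positive, and every bounded cell has strictly less than total, weighted volume). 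This contradiction completes the argument.

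The only potential subtlety is ensuring one may invoke the indecomposability statement in the Riemannian setting; this is routine by working in local charts and using a partition of unity, but if one wanted to avoid the general fact one could alternatively argue directly from Leonardi's infiltration lemma (Lemma \ref{lem:infiltration}) on a minimizing cluster, though that would require additional assumptions not present in the statement. The clean contradiction via zero perimeter plus connectedness of $M^n$ seems the most economical route.
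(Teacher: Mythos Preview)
Your argument is correct and is precisely the standard one. The paper does not include its own proof of this lemma here; it is recalled from \cite{EMilmanNeeman-TripleAndQuadruple} (as stated at the beginning of Section~\ref{sec:prelim}), where the same contradiction via \eqref{eq:nothing-lost-many} and the connectedness of $M^n$ is used. Your observation that connectedness of $M^n$ is needed is accurate; it is implicit in the paper's standing assumption that $g$ induces a geodesic distance on $M^n$.
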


\begin{definition}[Quadratic form $\L_A$] \label{def:LA}
Given real-valued weights $A = \{ A^{ij} \}_{i,j =1,\ldots,q}$ which are non-oriented (i.e. satisfy $A^{ij} = A^{ji}$), define the following $q$ by $q$ symmetric matrix called the discrete (weighted) Laplacian:
\begin{equation} \label{eq:L_A}
        \L_A := \sum_{1 \leq i<j \leq q} A^{ij} e_{ij} \otimes e_{ij} .
\end{equation}
We will mostly consider $\L_A$ as a quadratic form on $E^{(q-1)}$. \\
Given an interface-regular $q$-cluster $\Omega$ on $(M^n,g,\mu)$ and a family $f = \{f_{ij}\}_{i,j = 1,\ldots,q}$ of non-oriented integrable functions $f_{ij} = f_{ji}$ on $\Sigma_{ij}$, we define $\L_f := \L_A$ for $A^{ij} = \int_{\Sigma_{ij}} f_{ij} d\mu^{n-1}$. In particular, $\L_1 := \L_{\{\mu^{n-1}(\Sigma_{ij})\}}$. \end{definition}

Given an undirected graph $G$ on $\{1,\ldots,q\}$, we will say that the weights $A = \{ A^{ij} \}_{i,j=1,\ldots,q}$ are supported along the edges of $G$, and write $A = \{ A^{ij} \}_{i \sim j}$, if $A^{ij} = 0$ whenever there is no edge in $G$ between vertices $i$ and $j$. 

\begin{lemma}[$L_A$ is positive-definite] \label{lem:LA-positive}
 For all strictly positive non-oriented weights $A = \{ A^{ij} > 0\}_{i \sim j}$ supported along the edges of a connected graph $G$, $\L_A$ is positive-definite as a quadratic form on $E^{(q-1)}$ (in particular, it has full-rank $q-1$). 
\end{lemma}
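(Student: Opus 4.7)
The plan is to recognize $L_A$ as (a restriction of) a standard weighted graph Laplacian and use the well-known fact that such Laplacians have one-dimensional kernel, spanned by the all-ones vector, when the underlying graph is connected. Once the kernel is identified, the positive-definiteness on $E^{(q-1)}$ follows immediately because $E^{(q-1)}$ is complementary to the all-ones line.

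Concretely, I would first unpack the quadratic form. For any $x \in \R^q$,
\[
\langle \L_A x, x \rangle = \sum_{i < j} A^{ij} \langle e_{ij}, x \rangle^2 = \sum_{\{i,j\} \in E(G)} A^{ij}\,(x_i - x_j)^2,
\]
using that $A^{ij} = 0$ for non-edges. Since the weights $A^{ij}$ are strictly positive along the edges of $G$, this expression is manifestly non-negative, which shows $\L_A$ is positive semi-definite on all of $\R^q$.

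The next step is to characterize the kernel. If $\langle \L_A x, x \rangle = 0$, then $x_i = x_j$ for every edge $\{i,j\}$ of $G$. Since $G$ is assumed connected (the hypothesis of the lemma), walking along edges shows that $x$ must be constant on $\{1,\ldots,q\}$, i.e., $x \in \sspan\{(1,\ldots,1)\}$. Restricted to $E^{(q-1)} = \{x \in \R^q : \sum_i x_i = 0\}$, the only constant vector is $x = 0$. Therefore $\L_A$ is strictly positive-definite on $E^{(q-1)}$, and since $\dim E^{(q-1)} = q-1$, it has full rank $q-1$ there.

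There is essentially no obstacle here; the only thing that could go wrong is if the graph were disconnected, in which case the kernel on $\R^q$ would be spanned by the indicator vectors of the connected components and could nontrivially intersect $E^{(q-1)}$. This is precisely the case the hypothesis rules out via Lemma \ref{lem:LA-connected}, so the argument is clean.
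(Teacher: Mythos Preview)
Your proof is correct and is exactly the standard weighted graph Laplacian argument; the paper does not include its own proof of this lemma (it refers to \cite{EMilmanNeeman-TripleAndQuadruple} for the proofs in this subsection), but your argument is the canonical one. One small quibble: in your final remark, connectedness of $G$ is a \emph{hypothesis} of Lemma~\ref{lem:LA-positive} itself, not something supplied by Lemma~\ref{lem:LA-connected}; the latter is only invoked elsewhere in the paper to verify that the interface adjacency graph satisfies this hypothesis.
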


\section{Spherical Voronoi clusters} \label{sec:Voronoi}

\subsection{Definitions}

\begin{definition}[Quasi-center $\c$]
Given an oriented smooth hypersurface $\Sigma$ on $\S^n \subset \R^{n+1}$, we define the quasi-center vector $\c \in \R^{n+1}$ at $p \in \Sigma$, as:
\begin{equation} \label{eq:cnk}
\c := \n - \k p ,
\end{equation}
where $\k = \frac{H_{\Sigma}}{n-1}$ is the normalized mean-curvature with respect to the unit-normal $\n$ at $p$. 
\end{definition}

\begin{remark} \label{rem:quasi-center}
Whenever $\Sigma$ is a subset of a geodesic sphere $S$ (of curvature $\k$ with respect to $\n$), we have $S = \S^n \cap E(\c,\k)$ where $E(\c,\k) := \{ p \in \R^{n+1} \; ; \; \scalar{\c,p} + \k = 0\}$. Note that this representation of the affine hyperplane $E(\c,\k)$ is not unique; however, it is uniquely determined (up to orientation) by the property that $|\c|^2 = |\n - \k p|^2 = 1 + \k^2$, and hence the signed distance of $E(\c,\k)$ from the origin is $\k / \sqrt{1 + \k^2}$. In that case, $\c$ remains constant on $S$, and we call $\c$ the quasi-center of $S$. 
\end{remark}

\begin{definition}[Affine Voronoi Cluster] \label{def:generated-Voronoi}
Given $\{ \c_i \}_{i=1,\ldots,q} \subset \R^{n+1}$ and $\{ \k_i \}_{i=1,\ldots,q} \subset \R$ 
so that $\{(\c_i,\k_i)\}_{i=1,\ldots,q}$ are distinct, the affine Voronoi $q$-cluster $\Omega$ on $\S^n$ generated by the aforementioned parameters is given by:
\begin{equation} \label{eq:Voronoi-rep}
\Omega_i  := \interior \underline \Omega_i ~,~ \underline \Omega_i := \set{ p \in \S^n \; ; \; \argmin_{j=1,\ldots,q} \scalar{\c_j , p} + \k_j \ni i } =  \bigcap_{j \neq i} \; \set{ p \in \S^n \; ;\; \scalar{\c_{ij},p} + \k_{ij} \leq 0 } .
\end{equation}
As this definition is invariant under translation of $\{ \c_i \}$ and of $\{ \k_i \}$, we will always employ the convention that $\sum_{i=1}^q \c_i = 0$ and $\sum_{i=1}^q \k_i = 0$. Consequently, $\k \in E^{(q-1)}$, and we denote:
\begin{equation} \label{eq:C}
 \C := \sum_{i=1}^q e_i \otimes \c_i : \R^{n+1} \rightarrow E^{(q-1)} . 
\end{equation}
In matrix form, $\C$ is the matrix whose rows are given by $\{ \c_i \}_{i=1,\ldots,q}$. 
\end{definition}

Recall our notation $\c_{ij} = \c_i - \c_j$, $\k_{ij} = \k_i - \k_j$, used above. Clearly an affine Voronoi cluster $\Omega$ is a legal cluster: $\Omega_i$ and $\Omega_j$ for distinct $i,j$ are pairwise disjoint, lying on either side of the (possibly degenerate, possibly empty) geodesic sphere
\begin{equation} \label{eq:Sij}
S_{ij} := \{ p \in \S^n \; ; \; \scalar{\c_{ij},p} + \k_{ij} = 0 \} ,
\end{equation}
and their union covers the entire $\S^n$ up to a null-set since $\S^n \setminus \cup_{i} \Omega_i \subset \cup_{i<j} S_{ij}$
and $(\c_{ij},\k_{ij}) \neq (0,0)$ as $\{(\c_i,\k_i)\}$ are distinct. In particular $\cup_i \overline{\Omega_i} = \S^n$. As usual, the cluster's interfaces are defined as $\Sigma_{ij} = \partial^* \Omega_i \cap \partial^* \Omega_j$ for distinct $i,j \in \{1,\ldots,q\}$, and we set $\Sigma^1 = \cup_{i<j} \Sigma_{ij}$. 

Note that a non-empty interface $\Sigma_{ij}$ of an affine Voronoi cluster lies on the geodesic sphere $S_{ij}$ given by (\ref{eq:Sij}). However, as eluded to in Remark \ref{rem:quasi-center}, there is a-priori no guarantee that $\k_{ij}$ is precisely the curvature of $S_{ij}$ (in particular, it could be that $S_{ij}$ is a degenerate sphere consisting of a single point). Adding this requirement, we obtain the definition of a spherical Voronoi cluster from the Introduction:

\begin{definition}[Spherical Voronoi Cluster (again)] \label{def:spherical-Voronoi}
An affine Voronoi cluster $\Omega$ on $\S^n$ generated by the parameters $\{ \c_i \}_{i=1,\ldots,q} \subset \R^{n+1}$ and $\{ \k_i \}_{i=1,\ldots,q} \subset \R$ is called a spherical Voronoi cluster if
\begin{enumerate}[(*)]
\item \label{it:spherical-Voronoi} 
for every non-empty interface $\Sigma_{ij} \neq \emptyset$, $\Sigma_{ij}$ is a subset of the geodesic sphere $S_{ij}$ having curvature $\k_{ij} = \k_i - \k_j$ and quasi-center $\c_{ij} = \c_i - \c_j$. 
\end{enumerate}
The vectors $\{ \c_i \}_{i=1,\ldots,q}$ and scalars $\{ \k_i \}_{i=1,\ldots,q}$ are called the cluster's quasi-center and curvature parameters, respectively, $\C : \R^{n+1} \rightarrow E^{(q-1)}$ in (\ref{eq:C}) is called its quasi-center operator, and $\k \in E^{(q-1)}$ is called its curvature vector. 
\end{definition}

\begin{remark} 
In \cite{EMilmanNeeman-TripleAndQuadruple}, a $q$-cluster on $\S^n$ was called spherical Voronoi if there exist parameters $\{ \c_i \}_{i=1,\ldots,q} \subset \R^{n+1}$ and $\{ \k_i \}_{i=1,\ldots,q} \subset \R$ so that \ref{it:spherical-Voronoi} holds after setting:
\begin{equation} \label{eq:old-def}
\Omega_i  := \set{ p \in \S^n \; ; \; \argmin_{j=1,\ldots,q} \scalar{\c_j , p} + \k_j = \{i\} } =  \bigcap_{j \neq i} \; \set{ p \in \S^n \; ;\; \scalar{\c_{ij},p} + \k_{ij} < 0 } ,
\end{equation}
employing a strict inequality on the right-hand-side of (\ref{eq:old-def}) instead of setting $\Omega_i$ to be the interior of $\underline \Omega_i$ as in (\ref{eq:Voronoi-rep}). Furthermore, the definition in  \cite{EMilmanNeeman-TripleAndQuadruple} did not a-priori require that the parameters $\{ (\c_i,\k_i) \}_{i=1,\ldots,q}$ be distinct as in Definition \ref{def:generated-Voronoi}; however, repeated parameters will just yield additional empty cells according to (\ref{eq:old-def}), and those may be removed. Clearly, the cluster given by (\ref{eq:old-def}) is contained (cell-wise) in the one given by (\ref{eq:Voronoi-rep}), and since both are $q$-clusters (by assumption and by definition, respectively), their difference must be a null-set. 
In particular, since both variants yield open cells, a cell is empty according to one variant iff it is empty according to the other. Moreover, it is easy to see that the two variants coincide whenever all cells are non-empty (according to either definition) -- see the proof of \cite[Proposition 8.14]{EMilmanNeeman-TripleAndQuadruple}. 
Consequently, the formulation of Theorem \ref{thm:intro-structure} from \cite{EMilmanNeeman-TripleAndQuadruple} which assumes that all cells are non-empty (as $V(\Omega) \in \interior \Delta^{(q-1)}$) remains valid with the definition of spherical Voronoi cluster which we employ in this work. 
\end{remark}

\begin{remark} \label{rem:quasi-centers}
It follows from Remark \ref{rem:quasi-center} that condition \ref{it:spherical-Voronoi} is equivalent to the requirement that $|\c_{ij}|^2 = 1 + \k_{ij}^2$ for all $i < j$ such that $\Sigma_{ij} \neq \emptyset$.
\end{remark}

In view of Remark \ref{rem:quasi-centers}, we record:
\begin{lemma} \label{lem:CCT}
An affine Voronoi cluster is spherical Voronoi if and only if:
\[
\scalar{\C \C^T, e_{ij} \otimes e_{ij}} = \scalar{\frac{1}{2} \Id_{E^{(q-1)}} + \k \k^T , e_{ij} \otimes e_{ij}} 
\]
for all $1 \leq i < j \leq q$ so that $\Sigma_{ij} \neq \emptyset$. 
\end{lemma}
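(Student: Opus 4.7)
The plan is to unwind both sides of the claimed identity and reduce it to the characterization recorded in Remark \ref{rem:quasi-centers}, namely that an affine Voronoi cluster is spherical Voronoi precisely when $|\c_{ij}|^2 = 1 + \k_{ij}^2$ for every $i<j$ with $\Sigma_{ij}\neq\emptyset$.

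The key observation is that $\C^T: E^{(q-1)} \to \R^{n+1}$ sends $e_{ij} = e_i - e_j$ to $\c_i - \c_j = \c_{ij}$, by the very definition $\C = \sum_{i=1}^q e_i \otimes \c_i$. So the left-hand side, interpreted via the Frobenius pairing $\scalar{A, v\otimes v} = \scalar{v, Av}$ on operators on $E^{(q-1)}$, computes as
\[
\scalar{\C\C^T, e_{ij}\otimes e_{ij}} = \scalar{e_{ij}, \C\C^T e_{ij}} = |\C^T e_{ij}|^2 = |\c_{ij}|^2.
\]
For the right-hand side, $\scalar{\Id_{E^{(q-1)}}, e_{ij}\otimes e_{ij}} = |e_{ij}|^2 = 2$, so the identity contribution is $\tfrac{1}{2}\cdot 2 = 1$; and $\scalar{\k \k^T, e_{ij}\otimes e_{ij}} = \scalar{\k, e_{ij}}^2 = (\k_i - \k_j)^2 = \k_{ij}^2$. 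Thus
\[
\scalar{\tfrac{1}{2}\Id_{E^{(q-1)}} + \k\k^T, e_{ij}\otimes e_{ij}} = 1 + \k_{ij}^2.
\]
Combining, the claimed equality is literally equivalent to $|\c_{ij}|^2 = 1 + \k_{ij}^2$ for each pair $i<j$ with $\Sigma_{ij}\neq\emptyset$, which by Remark \ref{rem:quasi-centers} is exactly the defining condition \ref{it:spherical-Voronoi} of a spherical Voronoi cluster.

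There is no real obstacle here — the lemma is a pure linear-algebraic rewriting of the geometric characterization already established for quasi-centers. The only mildly delicate point is being explicit about the convention for the pairing $\scalar{A,v\otimes v}$, and noting that $e_{ij}\in E^{(q-1)}$ (since its coordinates sum to zero), so the restriction of $\C\C^T$ and of the right-hand side operators to $E^{(q-1)}$ is well-defined and the pairings are computed intrinsically in $E^{(q-1)}$.
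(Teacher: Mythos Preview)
Your proof is correct and is exactly the argument the paper has in mind: the lemma is stated immediately after Remark \ref{rem:quasi-centers} with the phrase ``In view of Remark \ref{rem:quasi-centers}, we record,'' and no separate proof is given. You have simply made explicit the linear-algebraic unpacking of the pairings that reduces the statement to $|\c_{ij}|^2 = 1 + \k_{ij}^2$.
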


Let $E^I := \{ x \in \R^I \; ; \; \sum_{i \in I} x_i = 0\}$, extending our notation for $E^{(q-1)} = E^{\{1,\ldots,q\}}$. 
We use $\arank(A)$ to denote the affine row rank of a matrix $A$, or of a linear operator $A : V \rightarrow E^{I}$.

\begin{definition}[Full-dimensional cluster]
A spherical Voronoi $q$-cluster $\Omega$ on $\S^n$ is called full-dimensional if:
\[
\arank(\C) = \min(q-1,n+1) . 
\]
\end{definition}

\begin{definition}[Perpendicular Spherical Voronoi Cluster, North Pole, Equator, Equatorial Cells] 
See Definition \ref{def:intro-perpendicular} from the Introduction.
\end{definition}

\begin{remark}
Note that being perpendicular, namely the existence of a North pole $N \in \S^n$ perpendicular to the affine span of $\{\c_i\}_{i=1,\ldots,q}$, is equivalent to having $\arank(\C) < n+1$. Consequently, a spherical Voronoi $q$-cluster with $q \leq n+1$ is always perpendicular. Also note that an equatorial cell is necessarily non-empty. 
\end{remark}

\begin{definition}[$\S^m$-symmetry]
A cluster $\Omega$ on $\S^n$ is said to have $\S^m$-symmetry ($m \in \{ 0, \ldots,n-1\}$), if there exists a totally-geodesic $\S^{n-1-m} \subset \S^n$ so that each cell $\Omega_i$ is invariant under all isometries of $\S^n$ which fix the points of $\S^{n-1-m}$. In particular, $\S^0$-symmetry means invariance of all cells under reflection about some hyperplane $N^{\perp}$.
\end{definition}

\begin{remark} \label{rem:symmetry}
A spherical Voronoi cluster $\Omega$ on $\S^n$ is clearly invariant under all isometries of $\S^n$ which fix the affine span of $\{\c_i\}_{i=1,\ldots,q}$. Consequently, if $r = \arank(\C) < n+1$ then $\Omega$ has $\S^m$-symmetry with $m = n-r$; in particular, when $q \leq n+1$, $\Omega$ has $\S^{n+1-q}$-symmetry. Note that being perpendicular implies having $\S^0$-symmetry, but not vice versa. 
\end{remark}

\subsection{Continuity in generating parameters} 

Recall the definition of the closed pre-cells $\underline \Omega_i$ from (\ref{eq:Voronoi-rep}), and observe that:
\begin{lemma} \label{lem:isolated}
Let $\Omega$ be an affine Voronoi cluster, and assume that $\Omega_i \neq \emptyset$. Then $\underline \Omega_i \setminus \overline \Omega_i$ consists of a finite number (possibly zero) of isolated points of $\underline \Omega_i$. In particular, $\underline \Omega_i = \overline \Omega_i$ if and only if $\underline \Omega_i$ has no isolated points. 
\end{lemma}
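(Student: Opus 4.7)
The plan is to analyze $\underline{\Omega_i}$ at each point via the exponential map, exploiting the polyhedral representation $\underline{\Omega_i} = K \cap \S^n$ where $K = \bigcap_{j \neq i}\{x \in \R^{n+1} : \langle \c_{ij}, x\rangle + \k_{ij} \leq 0\}$ is a convex polyhedron. Parametrizing spherical points near $p$ as $q = \exp_p(w) = \cos(|w|)\, p + \sin(|w|)\,w/|w|$ for $w \in T_p\S^n$ small, constraints inactive at $p$ remain strictly satisfied near $p$, while each constraint $j$ active at $p$ (i.e., $\langle \c_{ij}, p\rangle + \k_{ij} = 0$) becomes locally
\[
\langle \pi_p(\c_{ij}), w\rangle + \k_{ij}\,g(|w|) \leq 0, \qquad g(r) := r(1-\cos r)/\sin r = r^2/2 + O(r^4),
\]
where $\pi_p$ denotes orthogonal projection onto $T_p\S^n$. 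Thus $\underline{\Omega_i}$ is locally cut out by finitely many ``curved'' half-space inequalities, and $\Omega_i = \interior_{\S^n}\underline{\Omega_i}$ corresponds locally to simultaneously strict inequalities.

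The core step is the dichotomy: every $p \in \underline{\Omega_i}$ is either isolated in $\underline{\Omega_i}$ or belongs to $\overline{\Omega_i}$. Assuming $p$ is non-isolated, take $w_k \to 0$ in $T_p \S^n$ with $w_k \neq 0$ and $w_k$ satisfying the above inequalities; extract a subsequential unit limit $v := \lim_k w_k/|w_k|$. Dividing by $|w_k|$ gives $\langle \pi_p(\c_{ij}), v\rangle \leq 0$ for every active $j$, and a second-order division (in the equality cases) forces $\k_{ij} \leq 0$ for those active $j$ attaining first-order equality. To conclude $p \in \overline{\Omega_i}$, I would combine $v$ with the tangential direction $v_{p_0} := p_0 - \langle p_0, p\rangle p \in T_p\S^n$ pointing toward a fixed $p_0 \in \Omega_i$ (which exists by hypothesis). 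The identity $\langle \c_{ij}, v_{p_0}\rangle < \k_{ij}(\langle p_0, p\rangle - 1)$, derived from strict admissibility of $p_0$ together with the active-constraint equation $\langle \c_{ij}, p\rangle = -\k_{ij}$, gives $\langle \c_{ij}, v_{p_0}\rangle < 0$ whenever $\k_{ij} \geq 0$; the remaining cases $\k_{ij} < 0$ are handled by the favorable second-order ``outward'' curvature. A Farkas/Gordan-type mixing $v_\varepsilon := v + \varepsilon v_{p_0}$ of $v$ and $v_{p_0}$, with carefully chosen scales, yields a trajectory $\exp_p(t v_\varepsilon)$ along which all active constraints become strict for some $t > 0$, producing nearby points in $\Omega_i$ and hence $p \in \overline{\Omega_i}$.

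For finiteness, observe that an isolated $p \in \underline{\Omega_i}$ forces the minimal face $F(p)$ of $K$ containing $p$ to satisfy $F(p) \cap \S^n = \{p\}$ locally: otherwise nearby points of $F(p) \subset K$ would already lie on $\S^n$, contradicting isolation. This means $\mathrm{aff}(F(p)) \subset T_p^{\mathrm{aff}}\S^n = \{x : \langle x, p\rangle = 1\}$, i.e., $\mathrm{aff}(F(p))$ is tangent to $\S^n$ at $p$. Any single affine subspace of $\R^{n+1}$ is tangent to $\S^n$ at no more than one point, and $K$ has only finitely many faces, so only finitely many such isolated points can arise. The ``in particular'' clause is immediate from the dichotomy, since a point is isolated in $\underline{\Omega_i}$ if and only if it lies in $\underline{\Omega_i} \setminus \overline{\Omega_i}$: hence $\underline{\Omega_i} = \overline{\Omega_i}$ iff $\underline{\Omega_i} \setminus \overline{\Omega_i}$ is empty iff $\underline{\Omega_i}$ has no isolated points.

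The main obstacle I anticipate is engineering the perturbation step cleanly. The subtle case is when an active constraint has $\k_{ij} < 0$ and $\langle \c_{ij}, v_{p_0}\rangle > 0$: moving directly from $p$ toward $p_0$ would violate that constraint at first order, so one must select $t$ of order $\sqrt{\varepsilon}$ so that the unfavorable first-order contribution $t\varepsilon\langle\c_{ij},v_{p_0}\rangle$ is dominated by the favorable second-order contribution $t^2 \k_{ij}/2$. Orchestrating this scaling uniformly across all active constraints, and simultaneously disposing of the harmless cases where the active constraint is vacuous ($\c_{ij} = 0$, $\k_{ij} = 0$) or tangent to $\S^n$ from outside at $p$, is where the technical work of the proof concentrates.
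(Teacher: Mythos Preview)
Your approach is genuinely different from the paper's and considerably more intricate. The paper never leaves the ambient $\R^{n+1}$: writing $\underline\Omega_i = P_i\cap\S^n$ with $P_i$ convex polyhedral, it first notes that $\Omega_i\neq\emptyset$ forces $P_i$ and $P_i\cap\B^{n+1}$ to be full-dimensional, so $P_i$ contains some open ball $B(x_0,\eps)\subset\interior\B^{n+1}$. Then, given $p\in\underline\Omega_i\setminus\overline\Omega_i$, it argues that $P_i\cap(p+T_p\S^n)=\{p\}$: if $P_i$ contained any $p+v$ with $v\in T_p\S^n\setminus\{0\}$, the full-dimensional convex hull of $[p,p+v]$ with $B(x_0,\eps)$ would lie in $P_i$ and intersect $\S^n$ in a set accumulating at $p$, forcing $p\in\overline\Omega_i$. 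This single convex-hull step replaces your entire constraint-by-constraint perturbation analysis: there is no need to classify active constraints by the sign of $\k_{ij}$, no Farkas/Gordan mixing, and no delicate scaling $t\sim\sqrt\eps$. Having established $P_i\cap(p+T_p\S^n)=\{p\}$, the paper concludes that $p$ is a vertex of $P_i$, giving finiteness immediately.

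Two specific points in your proposal need attention. First, your ``second-order division'' step claiming that first-order equality $\langle\pi_p(\c_{ij}),v\rangle=0$ forces $\k_{ij}\le 0$ is not justified: the sequence $w_k/|w_k|$ may converge to $v$ too slowly for $\langle\pi_p(\c_{ij}),w_k\rangle$ to be $O(|w_k|^2)$, so dividing by $|w_k|^2$ need not isolate the $\k_{ij}$ term. Second, in your finiteness argument the implication ``$F(p)\cap\S^n=\{p\}$ locally $\Rightarrow$ $\mathrm{aff}(F(p))\subset T_p^{\mathrm{aff}}\S^n$'' fails as stated when $\dim F(p)=1$ and the edge is transversal to $\S^n$ (a line crossing the sphere meets it only at $p$ locally without being tangent). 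It is true that such transversal edges cannot actually carry isolated points once $P_i$ is full-dimensional, but this requires an additional argument; the paper sidesteps this by proving directly that $p$ must be a vertex.
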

\begin{proof}
If $p \in \underline \Omega_i \setminus \overline \Omega_i$, we will show that $p$ is an isolated point in  $\underline \Omega_i$ (the converse direction is trivial). 
Recall that $\underline \Omega_i = \S^n \cap P_i$ where $P_i$ is a closed convex polyhedron in $\R^{n+1}$. Since $\Omega_i = \interior \underline \Omega_i$ is assumed non-empty, $P_i$ must have non-empty interior (otherwise $\S^n \cap P_i$ will have empty relative interior $\Omega_i$ in $\S^n$, as $\S^n$ is strictly positively curved). 

Let $\B^{n+1}$ denote the closed Euclidean unit-ball in $\R^{n+1}$. It cannot be that $P_i \cap \B^{n+1} = \{p\}$ since then $\underline \Omega_i = \{p\}$ and $\Omega_i  = \emptyset$. Consequently, $P_i \cap \B^{n+1}$ is a closed, convex set which must have non-empty interior (again, as $\S^n$ is strictly positively curved) -- assume that  $P_i \cap \B^{n+1}$  contains a small open ball $B(x_0,\eps)$. It is impossible for $P_i$ to contain a point $p+v$ for some $v \in T_p \S^n \setminus \{0\}$, since otherwise the (full-dimensional) convex-hull of $[p,p+v]$ with $B(x_0,\eps)$ would lie in $P_i$ (by convexity), intersect $\S^n$, and bear witness to having $p \in \overline \Omega_i$. Consequently, $P_i \cap (p + T_p \S^n) = \{p\}$, and so $p$ must be one of the finitely many vertices of $P_i$, having inward tangents forming a strictly obtuse angle with $p$, and hence $p$ must be an isolated point of $\underline \Omega_i$. 
\end{proof}

\begin{corollary} \label{cor:isolated}
Let $\Omega$ be an affine Voronoi cluster. Then the condition $\overline \Omega_i = \underline \Omega_i$ is open as a function of the generating parameters $\{\c_i\}_{i=1,\ldots,q}$ and $\{\k_i\}_{i=1,\ldots,q}$ of $\Omega$.
\end{corollary}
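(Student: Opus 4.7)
The plan is to reduce to Lemma \ref{lem:isolated} and then argue by contradiction using a convex-hull construction. By Lemma \ref{lem:isolated}, the condition $\overline \Omega_i = \underline \Omega_i$ is equivalent to: either (a) $\underline \Omega_i = \emptyset$, or (b) $\Omega_i \neq \emptyset$ and $\underline \Omega_i$ has no isolated points. Case (a) is openly preserved by compactness of $\S^n$ and persistence of uniformly strict inequalities: if every $p \in \S^n$ strictly violates some defining inequality at $(\c,\k)$, then a uniform margin survives small perturbations. The condition $\Omega_i \neq \emptyset$ is likewise open since a single point satisfying strict inequalities persists. The substantive point is therefore to show, under the assumption $\Omega_i \neq \emptyset$, that the absence of isolated points of $\underline \Omega_i$ is openly preserved.

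Suppose for contradiction that $(\c^{(\ell)}, \k^{(\ell)}) \to (\c, \k)$ while $\underline \Omega_i^{(\ell)}$ admits an isolated point $p^\ell$ for each $\ell$. By Lemma \ref{lem:isolated}, $p^\ell$ is a vertex of $P_i^{(\ell)}$ with $P_i^{(\ell)} \cap (p^\ell + T_{p^\ell}\S^n) = \{p^\ell\}$. Extracting a subsequence, $p^\ell \to p$, and by upper semi-continuity of the closed defining inequalities we get $p \in \underline \Omega_i$, hence by hypothesis $p \in \overline \Omega_i$. Choose $q^m \in \Omega_i$ converging to $p$ along a direction $v \in T_p \S^n$ lying in the strict interior of the tangent cone $C^*(p) := \{w \in T_p \S^n : \langle \c_{ij}, w\rangle \leq 0\; \forall j \in J^*(p)\}$, where $J^*(p)$ records the constraints active at $p$. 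Writing $q^m = \exp_p(\alpha_m v)$ with $\alpha_m \to 0$, a first-order computation using $\langle \c_{ij}, q^m - p\rangle = \alpha_m \langle \c_{ij}, v\rangle + O(\alpha_m^2)$ shows that the $\R^{n+1}$-ball $B(q^m, \delta_m) \subset \interior P_i$ can be chosen with $\delta_m = \Theta(\alpha_m)$, since for each $j \in J^*(p)$ the distance from $q^m$ to the active face $\{z : \langle \c_{ij}, z\rangle + \k_{ij} = 0\}$ is $\sim \alpha_m|\langle \c_{ij}, v\rangle|$ with $\langle \c_{ij}, v\rangle < 0$, and non-active faces contribute bounded-below distances. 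Persistence of the strict inequalities then gives $B(q^m, \delta_m/2) \subset P_i^{(\ell)}$ for $\ell$ large (depending on $m$), and convexity of $P_i^{(\ell)}$ produces $\mathrm{conv}(\{p^\ell\} \cup B(q^m, \delta_m/2)) \subset P_i^{(\ell)}$.

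The intersection of this convex hull with the tangent plane $p^\ell + T_{p^\ell}\S^n$ equals the planar convex hull of $\{p^\ell\}$ with $B(q^m, \delta_m/2) \cap (p^\ell + T_{p^\ell}\S^n)$, and is non-trivial precisely when the $\R^{n+1}$-distance from $q^m$ to the tangent plane, equal to $1 - \langle q^m, p^\ell\rangle$, is less than $\delta_m/2$. Since $1 - \langle q^m, p^\ell\rangle = O(d(q^m,p)^2 + d(p^\ell,p)^2) = O(\alpha_m^2) + O(d(p^\ell,p)^2)$, whereas $\delta_m = \Theta(\alpha_m)$, the estimate holds for $m$ large and then $\ell$ large, yielding a non-trivial segment in $P_i^{(\ell)} \cap (p^\ell + T_{p^\ell}\S^n)$ emanating from $p^\ell$ and contradicting isolation. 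The delicate step in this plan is guaranteeing the existence of a strict-interior direction $v \in C^*(p)$: I expect this to follow from $p \in \overline \Omega_i$ and $\Omega_i \neq \emptyset$ by tangentially projecting an interior point of $\Omega_i$ close to $p$. The potential degenerate case where the strict interior of $C^*(p)$ in $T_p \S^n$ is empty should be ruled out via a direct second-order expansion on $\S^n$ showing it would force $p \notin \overline \Omega_i$, contradicting the hypothesis; this degenerate sub-case is the hardest part of the argument and the main obstacle I foresee.
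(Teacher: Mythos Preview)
Your approach is considerably more elaborate than the paper's, which simply observes that isolated points must be among the finitely many vertices of $P_i$ and declares openness without further justification. Your convex-hull argument is a reasonable strategy, but the gap you flag in the degenerate case is genuine and cannot be filled, because the corollary as stated is actually \emph{false}. Take $n=1$ and let $P_1\subset\R^2$ be the triangle cut out by $x\le 1$, $-y\le 0$, $y-x\le 0$ (realizable as $\langle\c_{1j},x\rangle+\k_{1j}\le 0$ for an affine Voronoi $4$-cluster). Then $\underline\Omega_1=\{(\cos\theta,\sin\theta):\theta\in[0,\pi/4]\}$ is a connected arc, so $\overline\Omega_1=\underline\Omega_1$. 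At the vertex $p=(1,0)$ one edge direction $(0,1)$ is tangent to $\S^1$ and the other $(-1,0)$ is strictly obtuse; here $\c_{12}=(1,0)=p$, so the active normal is parallel to $p$ and the strict interior of $C^*(p)$ in $T_p\S^1$ is empty --- precisely your degenerate case. Now perturb $\c_{12}$ to $(1,\epsilon)$ while keeping $\k_{12}=-1$: the constraint becomes $x+\epsilon y\le 1$, the vertex at the intersection with $\{y=0\}$ remains $(1,0)\in\S^1$, but the formerly tangent edge direction becomes $(-\epsilon,1)$ with $\langle(-\epsilon,1),p\rangle=-\epsilon<0$, so both edges are now strictly obtuse and $(1,0)$ is an isolated point of $\underline\Omega_1'$. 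Indeed $\cos\theta+\epsilon\sin\theta>1$ for $\theta\in(0,\theta_0(\epsilon))$ with $\theta_0(\epsilon)\approx 2\epsilon$, so $\underline\Omega_1'=\{(1,0)\}\cup\{\theta\in[\theta_0(\epsilon),\pi/4]\}$ and $\overline\Omega_1'\ne\underline\Omega_1'$.

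In particular your proposed resolution --- a second-order expansion forcing $p\notin\overline\Omega_i$ when the strict interior of $C^*(p)$ is empty --- cannot succeed: in the example $p=(1,0)\in\overline\Omega_1$. The paper's one-line argument equally glosses over this case. The statement does hold, however, in the only setting where the paper invokes it (the proof of Lemma~\ref{lem:cont}): when the \emph{initial} cluster is perpendicular, $P_i$ is translation-invariant along $N$ and hence has no vertices at all, and any vertices created by a small perturbation arise from $n+1$ hyperplanes whose normals were linearly dependent at the unperturbed parameters, so the vertex escapes to infinity as the perturbation vanishes and stays off $\S^n$. You may wish to either restrict the statement to a neighborhood of a perpendicular cluster, or record that the general assertion is too strong.
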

\begin{proof}
Recall that $\underline \Omega_i = \S^n \cap P_i$ with $P_i := \cap_{j \neq i} \{ x \in \R^{n+1} \; ; \; \scalar{x,\c_{ij}} + \k_{ij} \leq 0\}$. If $\Omega_i = \emptyset$ then $\overline \Omega_i = \underline \Omega_i$ implies that $\underline \Omega_i = \emptyset$, which is clearly an open condition. Otherwise, $\Omega_i \neq \emptyset$, which is clearly open as well. In that case, by the previous lemma, the condition $\overline \Omega_i = \underline \Omega_i$ is equivalent to $\underline \Omega_i$ having no isolated points, and since each isolated point must arise from the intersection of $\S^n$ with one of $P_i$'s finite number of vertices, it follows that this is an open condition. 
\end{proof}

For perpendicular clusters, $\underline \Omega_i$ cannot have isolated points: 

\begin{lemma} \label{lem:perp}
For a perpendicular affine Voronoi cluster $\Omega$ on $\S^n$, $\Omega_i \neq \emptyset$ implies $\overline \Omega_i = \underline \Omega_i$.
\end{lemma}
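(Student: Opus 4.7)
The plan is to argue by contradiction. Suppose $\Omega_i \neq \emptyset$ but $\overline{\Omega_i} \neq \underline{\Omega_i}$. By Lemma \ref{lem:isolated}, there exists an isolated point $p$ of $\underline{\Omega_i}$, and inspecting that lemma's proof shows the tangent cone $T := T_p P_i$ of the convex polyhedron $P_i = \bigcap_{j \neq i}\{x : \scalar{\c_{ij},x} + \k_{ij} \le 0\}$ at $p$ satisfies the ``strictly obtuse vertex'' condition $T \cap T_p \S^n = \{0\}$, where $T_p\S^n = p^\perp$. Moreover, $T$ has non-empty interior in $\R^{n+1}$ since $\Omega_i \neq \emptyset$ forces $P_i$ to have non-empty interior. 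My goal is to exhibit a nonzero element of $T \cap p^\perp$, contradicting this.

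The key structural observation coming from perpendicularity is that $T$ is invariant under translations along $N$. Indeed, since $\c_i \perp N$ for all $i$ (hence $\c_{ij} \perp N$ for all $i,j$), each defining half-space $\{v : \scalar{\c_{ij}, v} \le 0\}$ of $T$ is invariant under $v \mapsto v + sN$ for every $s \in \R$. Consequently $T + \R N = T$, and in particular $\R N \subset T$.

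From here I would split into two cases. If $p \in N^\perp$, then $N \in p^\perp$, so $N$ itself is a nonzero element of $T \cap p^\perp$ and we are done. If $p \notin N^\perp$, then $a := \scalar{p,N} \neq 0$; using that $\interior T$ is non-empty, open and $\R N$-invariant, its intersection with $N^\perp$ is a non-empty open cone in $N^\perp$, so I can choose $v' \in \interior T \cap N^\perp$ with $v' \neq 0$. Setting $s := -\scalar{v',p}/a$ and $v := v' + sN$, the $\R N$-invariance of $T$ gives $v \in T$, the decomposition $\R^{n+1} = N^\perp \oplus \R N$ together with $v' \neq 0$ gives $v \neq 0$, and the choice of $s$ gives $\scalar{v,p}=0$. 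In either case we produce a nonzero vector of $T \cap p^\perp$ contradicting $T \cap T_p \S^n = \{0\}$, so $\underline{\Omega_i}$ has no isolated points and Lemma \ref{lem:isolated} delivers $\overline{\Omega_i} = \underline{\Omega_i}$.

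There is no serious obstacle here: the proof reduces entirely to the observation that perpendicularity turns $N$ into a ``free axis'' inside every tangent cone of $P_i$, which is incompatible with an isolated vertex on $\S^n$. The only minor care point is the second case, where one must use the full non-empty interior of $T$ (not merely $\R N \subset T$) to extract a nonzero element of $T \cap N^\perp$; this is precisely where the hypothesis $\Omega_i \neq \emptyset$ enters in an essential way, as opposed to merely $\underline{\Omega_i} \neq \emptyset$.
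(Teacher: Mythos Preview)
Your proof is correct, but it takes a genuinely different route from the paper's. The paper argues globally via projection: letting $\Pi$ be the orthogonal projection of $\S^n$ onto $N^\perp$ with image $\B^n$, perpendicularity gives $P_i = P_i + \R N$, so $K_i := \Pi\,\underline\Omega_i = P_i \cap \B^n$ is a closed convex subset of $N^\perp$ with non-empty interior (since $\Omega_i \neq \emptyset$), hence has no isolated points; pulling back via $\underline\Omega_i = \Pi^{-1}K_i$ shows $\underline\Omega_i$ has no isolated points either, and Lemma~\ref{lem:isolated} finishes. You instead work locally at the putative isolated vertex, extracting from the proof of Lemma~\ref{lem:isolated} the pointed condition $T_pP_i \cap p^\perp = \{0\}$ and contradicting it via the $\R N$-invariance of $T_pP_i$. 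The paper's projection argument is shorter and more geometric once one sees it; your tangent-cone argument is a bit longer but has the virtue of isolating precisely the mechanism---a full line in every tangent cone is incompatible with a strictly obtuse vertex on the sphere---and would transfer verbatim to any situation where the defining half-spaces share a common null direction.
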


\begin{proof}
Let $N \in \S^n$ be a North pole, and consider the orthogonal projection $\Pi$ of $\S^n$ onto $N^{\perp}$; denote by $\B^n = \Pi \S^n$ the closed Euclidean unit-ball. Being perpendicular implies that $\underline \Omega_i = \S^n \cap P_i$ with $P_i$ a closed convex polyhedron perpendicular to $N^{\perp}$. If $\Omega_i \neq \emptyset$ then $K_i = \Pi \underline \Omega_i = P_i \cap \B^n$ is closed convex, and has non-empty interior (because $\Omega_i$ is open and non-empty). It follows that $K_i$ has no isolated points, and so neither does $\underline \Omega_i = \Pi^{-1} K_i$, and the assertion follows by Lemma \ref{lem:isolated}. 
\end{proof}

We finally conclude:

\begin{lemma} \label{lem:cont}
For a perpendicular affine Voronoi cluster, the closures of the non-empty cells $\overline{\Omega_i}$ and interfaces $\overline{\Sigma_{ij}}$ are locally continuous in the Hausdorff metric as a function of the generating parameters $\{\c_i\}_{i=1,\ldots,q}$ and $\{\k_i\}_{i=1,\ldots,q}$. In addition, the volumes $V(\Omega_i)$ and perimeters $\H^{n-1}(\Sigma_{ij})$ of the non-empty cells and interfaces are locally Lipschitz continuous as a function of the generating parameters. 
\end{lemma}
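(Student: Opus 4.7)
The plan is to exploit the explicit representation provided by Lemma \ref{lem:perp}: for a perpendicular affine Voronoi cluster with $\Omega_i\neq\emptyset$ one has
\[
\overline{\Omega_i}=\underline{\Omega_i}=\S^n\cap P_i,\qquad P_i:=\bigcap_{j\neq i}H_{ij}(\c,\k),\qquad H_{ij}(\c,\k):=\{x\in\R^{n+1}:\scalar{\c_{ij},x}+\k_{ij}\le 0\}.
\]
By Corollary \ref{cor:isolated}, the identity $\overline{\Omega_i}=\underline{\Omega_i}$ is an open condition in the generating parameters, so it persists throughout a neighbourhood of the original perpendicular cluster (even if perpendicularity itself is lost after perturbation, and even if some other cells become empty). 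Working inside this neighbourhood, the closures of the non-empty cells and of their interfaces are given by explicit polyhedral formulas, and the proof reduces to elementary convex-geometric continuity.

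First, the continuity of cells. For each pair $i\neq j$ the truncated half-space $H_{ij}(\c,\k)\cap\B^{n+1}$ depends continuously in the Hausdorff metric on its defining coefficients $(\c_{ij},\k_{ij})$; finite intersections of uniformly bounded closed convex sets are continuous in the Hausdorff metric, hence $(\c,\k)\mapsto P_i(\c,\k)\cap\B^{n+1}$ is Hausdorff-continuous. Since $\Omega_i$ is open and non-empty, the non-empty cell $\overline{\Omega_i}$ has non-empty relative interior in $\S^n$; this property is open in the parameters as well (Corollary \ref{cor:isolated}), so in a small neighbourhood the map $(\c,\k)\mapsto \S^n\cap P_i(\c,\k)=\overline{\Omega_i}$ is Hausdorff-continuous (the transversality of $\S^n$ and each bounding hyperplane along the interior of the cap prevents any Hausdorff-discontinuous jump).

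Second, the continuity of interfaces. The spherical-Voronoi condition (Definition \ref{def:spherical-Voronoi}) ensures that whenever $\Sigma_{ij}\neq\emptyset$, one has $\overline{\Sigma_{ij}}=\overline{\Omega_i}\cap\overline{\Omega_j}=\S^n\cap P_i\cap P_j$, which is the spherical trace of the common facet of the polyhedra $P_i$ and $P_j$. Non-emptiness of this facet with non-empty relative interior in $S_{ij}$ is again an open condition, and therefore the same Hausdorff-continuity argument of the previous step applies verbatim.

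Third, the Lipschitz continuity of volumes and perimeters. The symmetric difference
\[
\overline{\Omega_i}(\c',\k')\,\triangle\,\overline{\Omega_i}(\c,\k)
\]
is contained in the union, over $j\neq i$, of the spherical slabs
\[
\bigl\{x\in\S^n:\bigl|\scalar{\c_{ij},x}+\k_{ij}\bigr|\le C_{ij}\bigl(|\c_{ij}-\c_{ij}'|+|\k_{ij}-\k_{ij}'|\bigr)\bigr\},
\]
whose $\H^n$-measure is linear in the parameter discrepancy (the constant $C_{ij}$ depending only on a uniform upper bound on $|\c|$, $|\k|$ in the neighbourhood under consideration). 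A completely parallel slab argument, now inside the fixed $(n-1)$-dimensional sphere $S_{ij}$, controls $\H^{n-1}(\Sigma_{ij})$ Lipschitz-continuously, since $\overline{\Sigma_{ij}}$ is cut out from $S_{ij}$ by finitely many great-subsphere half-spaces whose coefficients depend smoothly on the generating parameters.

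The main obstacle I foresee is purely book-keeping: one has to verify that the polyhedral identity $\overline{\Sigma_{ij}}=\S^n\cap P_i\cap P_j$ is genuinely stable under small perturbations, i.e.\ that interfaces neither spontaneously appear from nowhere (forbidden by the polyhedral formula combined with non-empty-interior considerations) nor disappear via vanishing facets (this is precisely the openness guaranteed by Corollary \ref{cor:isolated}, applied simultaneously to the cells $\Omega_i,\Omega_j$ and to the relative interior of their common face inside $S_{ij}$). Once this stability is recorded, the Hausdorff continuity of closures and the Lipschitz continuity of the $\H^{n-1}$-and-$\H^{n}$-masses both reduce to standard estimates on how intersections of half-spaces with $\S^n$ depend on the defining coefficients.
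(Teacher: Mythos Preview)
Your proof follows essentially the same route as the paper's: invoke Lemma \ref{lem:perp} and Corollary \ref{cor:isolated} to obtain $\overline{\Omega_i}=\underline{\Omega_i}=\S^n\cap P_i$ stably in a neighbourhood, then reduce both Hausdorff and Lipschitz continuity to the explicit polyhedral description, and handle interfaces via $\overline{\Sigma_{ij}}=\underline{\Omega_i}\cap\underline{\Omega_j}=\S^n\cap P_i\cap P_j$. The paper's proof is much terser (it simply says the assertion is ``immediate'' from the polyhedral representation), whereas you spell out the slab estimate for Lipschitz continuity; but the underlying argument is the same. One minor slip: you cite ``the spherical-Voronoi condition (Definition \ref{def:spherical-Voronoi})'' to justify $\overline{\Sigma_{ij}}=\S^n\cap P_i\cap P_j$, but the lemma is stated for \emph{affine} Voronoi clusters, and this identity follows already from $\overline{\Omega_i}=\underline{\Omega_i}$ (perpendicularity) without the extra curvature normalization.
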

\begin{proof}
Since for every non-empty cell, $\overline \Omega_i = \underline \Omega_i$ in a neighborhood of $\Omega$ by Lemma \ref{lem:perp} and Corollary \ref{cor:isolated}, the assertion regarding the cells is immediate after recalling that $\underline \Omega_i$ is defined as the intersection with $\S^n$ of the closed convex polyhedron $P_i := \cap_{j \neq i} \{ x \in \R^{n+1} \; ; \; \scalar{x,\c_{ij}} + \k_{ij} \leq 0\}$. The assertion regarding the interfaces is proved identically since under our assumptions a non-empty interface satisfies $\overline{\Sigma_{ij}} = \underline \Omega_i \cap \underline \Omega_j$, and since
$\underline \Omega_i \cap \underline \Omega_j = \S^n \cap P_i \cap P_j$. 
\end{proof}

\begin{remark}
By taking a polyhedron $P_i$ having a vertex at $p \in \S^n$ with inward tangents forming a strictly obtuse angle with $p$, one may check that the claim in Lemma \ref{lem:cont} about Hausdorff continuity is false without the perpendicularity assumption. 
\end{remark}

\subsection{Non-degenerate clusters}

In principle, we will only be interested in regular spherical Voronoi clusters, as these are the only ones which can arise as isoperimetric minimizers by the results of Section \ref{sec:prelim}. However, in order to make the regularity property more tangible for spherical Voronoi clusters, let us see what can go wrong. 

In the class of affine Voronoi clusters, it is easy to construct an example which is not even interface-regular (recall Definition \ref{def:interface-regular}). Indeed, consider the (perpendicular) affine Voronoi $3$-cluster on $\S^2$ with $\c_1 = (-1,1,0)$, $\c_2 = (1,1,0)$, $\c_3 = (0,0,0)$ and $\k_1 = 1$, $\k_2 = 1$ and $\k_3 = 0$, depicted in Figure \ref{fig:cushion}. Observe that the cluster's orthogonal projection to the equatorial plane $\{ x_3 = 0\}$ is a $90^{\circ}$-degree angled wedge, and so $\Omega_1$ on the left and $\Omega_2$ on the right meet tangentially at a single point $p_0 := (0,-1,0)$, with $\Omega_3$ wedged in between. Consequently, while $\Sigma_{13}$ and $\Sigma_{23}$ are $1$-spheres, $\Sigma_{12} = \partial^* \Omega_1 \cap \partial^* \Omega_2 = \{p_0\}$ is a singleton, and so $\Omega$ is not interface-regular (which requires every non-empty interface to be a smooth $(n-1)$-dimensional manifold). Note  that $|\c_{ij}|^2 = 1 + \k_{ij}^2$ for $(i,j) \in \{(1,3), (2,3)\}$, but not for $(i,j) = (1,2)$, and so strictly speaking this is not a spherical Voronoi cluster. 

\begin{figure}
    \begin{center}
            \includegraphics[scale=0.2]{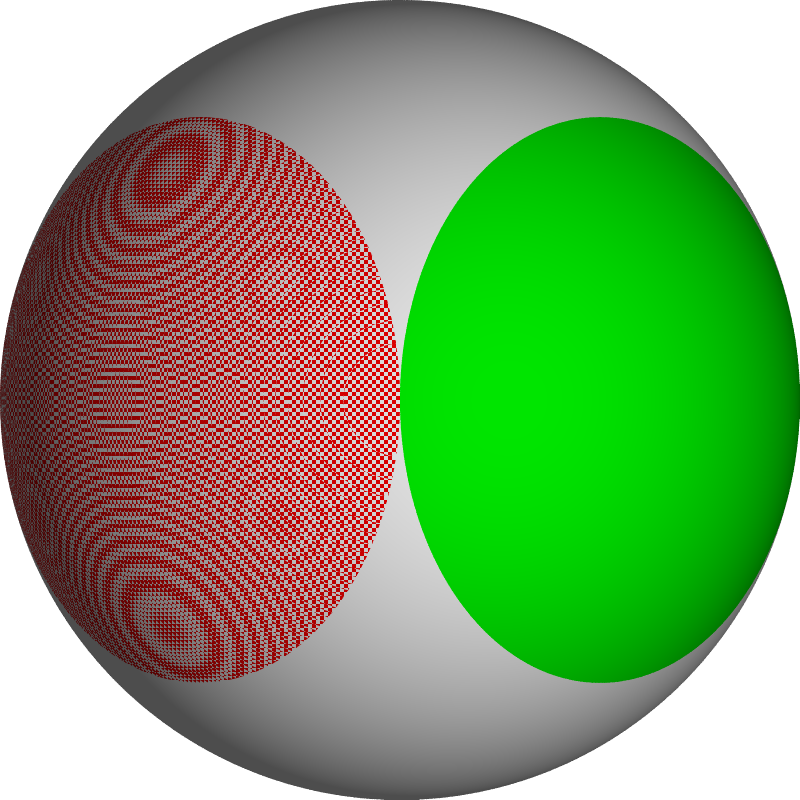}
     \end{center}
     \caption{
         \label{fig:cushion}
        A degenerate affine (yet non-spherical) Voronoi cluster on $\S^2$: the interface between the red and green cells consists of a single point, and the grey cell disappears in the blow-up around the common meeting point. 
     }
\end{figure}

Another feature of the above example pertains to the blow-up at the meeting point. For a set $A \subset M^n$ and a point $p \in M^n$, we say that $\tilde A \subset T_p M^n$ is the blow-up limit of
$A$ at $p$ if for some neighborhood $U_p$ of $p$ on which $\exp_p: T_p M^n \to M^n$ is a diffeomorphism,
\[
    \frac{\exp_p^{-1} (A \cap U_p)}{r} \to \tilde A \;\;  \text{as} \;\;  r \to 0+
\]
in $L^1_{\text{loc}}$ (note that we do not use a subsequence of $r$'s in our definition). For a cluster $\tilde \Omega^p$ in $T_p M^n$, we say that $\tilde \Omega^p$ is the blow-up limit of $\Omega$ at $p$ if each $\tilde \Omega^p_i$ is the blow-up limit of $\Omega_i$ at $p$. Clearly, if a blow-up limit exists, it is unique up to null-sets. 
\smallskip

Let $\Omega$ be an affine Voronoi cluster on $\S^n$; in particular, it has bounded curvature. It is therefore clear that for any $p \in \S^n$ the blow-up limit cluster $\tilde \Omega^p$ always exists, and that this cluster's cells are centered cones with flat interfaces -- we will call this cluster the blow-up cone. Denote:
\[
I_p :=  \{i \in \{1, \dots, q\} \; ; \; p \in \overline{\Omega_i}\} \supset \tilde I_p := \{ i \in \{1,\ldots,q\} \; ; \; \tilde \Omega^p_i \neq \emptyset \} . 
\]
Clearly $\tilde I_p \subset I_p$, but the converse containment
need not always hold. Indeed, in the previous example, we had $I_{p_0} = \{ 1,2,3\}$ and yet $\tilde \Omega^{p_0}_3 = \emptyset$ in the blow-up cone. To see that this can also happen for spherical Voronoi clusters, consider the $5$-cluster with the following parameters: $\c_1 = (0,0,0)$, $\k_1 = 0$; $\c_2 = (1,1,0)$, $\k_2 = 1$; $\c_3 = (-1,1,0)$, $\k_3=1$; $\c_4 = (1/2,y,\sqrt{3}/2)$, $\k_4 = y$; $\c_5 = (-1/2,y,\sqrt{3}/2)$, $\k_5 = y$; $y > 0$ can be arbitrary, but we'll use the value $y=2$ in Figure \ref{fig:degenerate-spherical}. Then $|\c_{ij}|^2 = 1 + \k_{ij}^2$ for all $(i,j) \in \{ (1,2),(1,3),(1,4),(1,5),(2,4),(3,5), (4,5) \}$, the indices of the 7 non-empty interfaces $\Sigma_{ij}$, so this is a legal spherical Voronoi cluster. At the meeting point $p_0 := \cup_{i=1}^5 \overline{\Omega_i}$, we have $I_{p_0} = \{1,2,3,4,5\}$ and yet $\tilde \Omega^{p_0}_1 = \emptyset$ -- see Figure \ref{fig:degenerate-spherical}. 

\begin{figure}
    \begin{center}
            \includegraphics[scale=0.2]{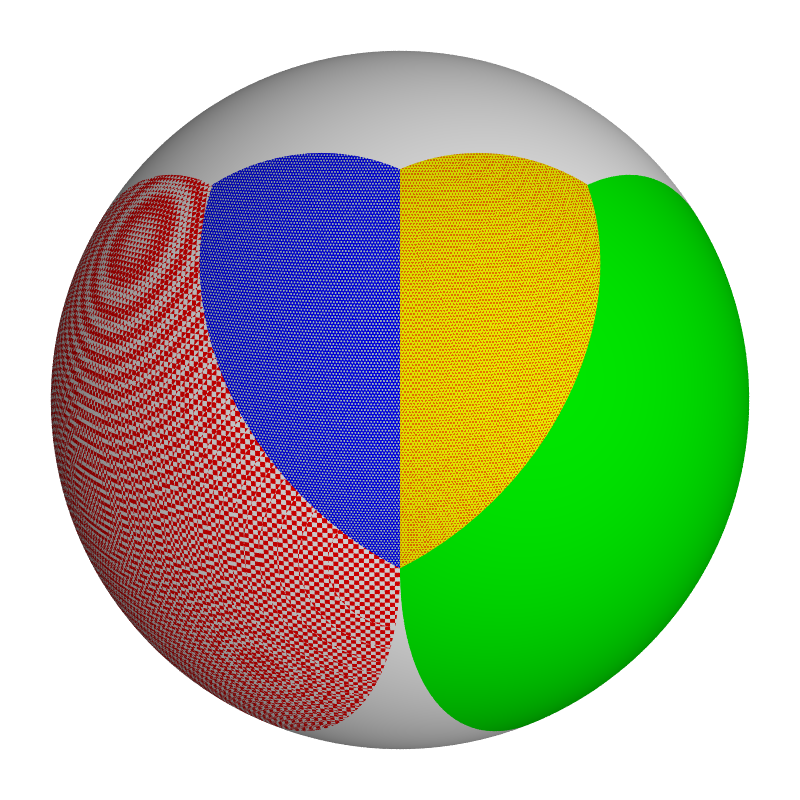}
     \end{center}
     \caption{
         \label{fig:degenerate-spherical}
        A degenerate spherical Voronoi cluster on $\S^2$. All interfaces are smooth curves, but the grey cell disappears in the blow-up around the common meeting point. 
     }
\end{figure}

\medskip

 To exclude the above pathologies, we introduce the following:
\begin{definition} \label{def:non-degenerate}
We will say that the spherical Voronoi cluster $\Omega$ on $\S^n$ is non-degenerate if:
\begin{enumerate}[(i)]
\item \label{it:nd1} For any $p \in \S^n$, $I_p = \tilde I_p$. In other words, $p \in \overline{\Omega_i}$ iff $\tilde \Omega^p_i \neq \emptyset$; and
\item \label{it:nd2} Every non-empty interface $\Sigma_{ij}$ is a \textbf{relatively open} subset of the geodesic sphere $S_{ij}$ given by (\ref{eq:Sij}). 
\end{enumerate}
\end{definition}

\noindent Recall that by definition of a spherical Voronoi cluster, if $\Sigma_{ij} \neq \emptyset$ then $S_{ij}$ is an $(n-1)$-dimensional  (in particular, non-empty and non-singleton) geodesic sphere in $\S^n$ having curvature $\k_{ij}$ and quasi-center $\c_{ij}$. Consequently, for non-degenerate spherical Voronoi clusters, the outer unit-normal $\n_{ij}$ of $\Sigma_{ij}$ pointing from $\Omega_i$ to $\Omega_j$ is well-defined, and in view of (\ref{eq:cnk}), is equal to $\n_{ij} = \c_{ij} + \k_{ij} p$. 

\begin{lemma} \label{lem:non-degenerate}
An interface-regular spherical Voronoi cluster satisfying the density estimate (\ref{eq:density}) for some $\eps = \eps(\Omega) > 0$ is non-degenerate. In particular, a regular spherical Voronoi cluster is non-degenerate. 
\end{lemma}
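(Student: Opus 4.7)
My plan is to verify the two properties (i) and (ii) in Definition \ref{def:non-degenerate} separately, invoking interface-regularity for (ii) and the density estimate together with the polyhedral structure of spherical Voronoi cells for (i). Condition (ii) reduces to a dimension count: given $p \in \Sigma_{ij}$, Theorem \ref{thm:Almgren}\ref{it:Almgren-iii} supplies $\epsilon > 0$ so that $B(p,\epsilon) \cap \Sigma_{ij}$ is an embedded $(n-1)$-dimensional $C^\infty$ submanifold of $\S^n$. The spherical Voronoi hypothesis forces $\Sigma_{ij} \subset S_{ij}$, where $S_{ij}$ is itself a smooth $(n-1)$-dimensional submanifold of $\S^n$, so by invariance of domain (applied in local charts on $S_{ij}$), $B(p,\epsilon) \cap \Sigma_{ij}$ is relatively open in $S_{ij}$. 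Since $p \in \Sigma_{ij}$ was arbitrary, (ii) follows.

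For condition (i), the inclusion $\tilde I_p \subset I_p$ is immediate: a blow-up cell of positive measure forces points of $\Omega_i$ to accumulate at $p$. The substantive direction is $I_p \subset \tilde I_p$: assuming $p \in \overline{\Omega_i}$, I want to show $\tilde \Omega^p_i$ has positive $\H^n$-measure. Here the polyhedral representation $\underline{\Omega}_i = P_i \cap \S^n$ with $P_i := \bigcap_{j \ne i} \{x \in \R^{n+1} : \scalar{\c_{ij}, x} + \k_{ij} \le 0\}$ is essential. A short local computation via the exponential map $\exp_p : T_p \S^n \to \S^n$ will show that the blow-up of $P_i \cap \S^n$ at any $p \in P_i \cap \S^n$ exists in $L^1_{\mathrm{loc}}(T_p \S^n)$ and equals $C_p \cap T_p \S^n$, where $C_p := T_p P_i$ is the tangent cone of $P_i$ at $p$ (i.e., the intersection of those halfspaces in the definition of $P_i$ that are active at $p$, translated to the origin). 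Consequently, modulo null sets, $\tilde \Omega^p_i$ coincides with the relative interior of this polyhedral cone in $T_p \S^n$, and it suffices to prove that $C_p \cap T_p \S^n$ has full dimension $n$.

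This is where the density estimate enters. The open cell $\Omega_i = \interior(P_i \cap \S^n)$ has only finitely many connected components $\{\Omega_i^\ell\}$, a consequence of the finite facet structure of $P_i$ (alternatively, of the semi-algebraicity of $\underline{\Omega}_i$). Taking a sequence $p_k \in \Omega_i$ with $p_k \to p$ and pigeonholing by component, a subsequence must lie in a single component $\Omega_i^{\ell_0}$, and thus $p \in \overline{\Omega_i^{\ell_0}}$. The hypothesis (\ref{eq:density}) then forces $\Theta(\Omega_i^{\ell_0}, p) \ge \eps$, and a fortiori $\Theta(\Omega_i, p) \ge \eps > 0$. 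Since $L^1_{\mathrm{loc}}$ blow-up convergence together with the scaling invariance of cones identifies $\Theta(\Omega_i, p)$ with $\H^n(\tilde \Omega^p_i \cap B(0,1)) / \H^n(B(0,1))$, this positive density forces $\tilde \Omega^p_i$ to have positive $\H^n$-measure, establishing $i \in \tilde I_p$ and completing (i). The concluding assertion of the lemma is immediate, since a regular spherical Voronoi cluster satisfies (\ref{eq:density}) by Definition \ref{def:regular}.

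The main point requiring explicit verification is the local finiteness of connected components of $\Omega_i$ near $p$, which underpins the pigeonholing step above. This is morally clear from the finite combinatorial structure of $P_i$---there are only finitely many facet-configurations and hence at most finitely many full-dimensional ``chambers'' of $\underline{\Omega}_i = P_i \cap \S^n$---but should be recorded formally. The rest of the argument is routine: a dimension count for (ii), a standard exponential-map calculation for the blow-up in (i), and the textbook identification of density with normalized measure of the blow-up cone.
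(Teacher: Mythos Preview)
Your proof is correct and follows essentially the same route as the paper's (very terse) two-line argument, which simply asserts that property \ref{it:nd1} follows from the density estimate \eqref{eq:density} and property \ref{it:nd2} follows from interface-regularity (Theorem~\ref{thm:Almgren}\ref{it:Almgren-iii}). You have made explicit the steps the paper leaves implicit---invariance of domain for \ref{it:nd2}, and for \ref{it:nd1} the finiteness of connected components (via semi-algebraicity) needed to pigeonhole into a single component and apply \eqref{eq:density}, together with the identification of $\Theta(\Omega_i,p)$ with the normalized measure of the blow-up cone---all of which are correct.
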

\begin{proof}
Property \ref{it:nd1} of non-degeneracy follows directly from the density estimate (\ref{eq:density}). Property \ref{it:nd2} of non-degeneracy follows from interface-regularity (property \ref{it:Almgren-iii} of Theorem \ref{thm:Almgren}).
\end{proof}

\noindent
Consequently, we shall only consider non-degenerate spherical Voronoi clusters from here on, as these are the only ones which can arise as isoperimetric minimizers by regularity. 

\medskip

It may be worthwhile to note that the second requirement in Definition \ref{def:non-degenerate} actually follows from the first. 

\begin{lemma} \label{lem:Sigmaij-char}
Let $\Omega$ be a spherical Voronoi cluster satisfying property \ref{it:nd1} of Definition \ref{def:non-degenerate}. Then for every $i \neq j$:
\begin{equation} \label{eq:bonus}
\Sigma_{ij} = \overline{\Omega_i} \cap \overline{\Omega_j} \setminus \cup_{k \neq i,j} \overline{\Omega_k}  .
\end{equation}
In particular, 
 $\Omega$ satisfies property \ref{it:nd2} of Definition \ref{def:non-degenerate}. 
\end{lemma}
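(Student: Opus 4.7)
The plan is to establish (\ref{eq:bonus}) by a blow-up argument, and then to extract property \ref{it:nd2} from it by a simple open-set argument.

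For the inclusion $\subset$: given $p \in \Sigma_{ij} = \partial^* \Omega_i \cap \partial^* \Omega_j$, by De Giorgi's characterization the blow-up $\tilde \Omega^p_i$ is a half-space of $T_p \S^n$, and so is $\tilde \Omega^p_j$. Since the cells of $\Omega$ are pairwise disjoint, so are their blow-up cones $\tilde \Omega^p_k$, and since the half-spaces $\tilde \Omega^p_i$ and $\tilde \Omega^p_j$ already exhaust $T_p \S^n$ up to a null set, we must have $\tilde \Omega^p_k = \emptyset$ for every $k \neq i,j$. Invoking assumption \ref{it:nd1} of Definition \ref{def:non-degenerate} (i.e. $I_p = \tilde I_p$), this translates to $p \notin \overline{\Omega_k}$ for all $k \neq i,j$, as required.

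For the inclusion $\supset$: suppose $p \in \overline{\Omega_i} \cap \overline{\Omega_j} \setminus \cup_{k \neq i,j} \overline{\Omega_k}$. By property \ref{it:nd1}, the blow-up cluster $\tilde \Omega^p$ in $T_p \S^n$ has $\tilde I_p = \{i,j\}$. Since $\Omega$ is an affine Voronoi cluster, each pre-cell $\underline \Omega_k$ is the intersection of $\S^n$ with a closed convex polyhedron, so each non-empty cell in the blow-up $\tilde \Omega^p_k$ is a convex cone (intersection of half-spaces of $T_p \S^n$ passing through the origin). The two convex cones $\tilde \Omega^p_i$ and $\tilde \Omega^p_j$ are disjoint and together cover $T_p \S^n$ up to a null set; the only way this is possible is if each is a closed half-space and the two half-spaces are complementary. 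In particular the measure-theoretic outer normal to $\Omega_i$ exists at $p$, so $p \in \partial^* \Omega_i$, and similarly $p \in \partial^* \Omega_j$, whence $p \in \Sigma_{ij}$.

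Finally, property \ref{it:nd2} is immediate from (\ref{eq:bonus}): if $\Sigma_{ij} \neq \emptyset$, fix $p \in \Sigma_{ij}$. By definition of a spherical Voronoi cluster, $\Sigma_{ij} \subset S_{ij}$. Since each $\overline{\Omega_k}$ ($k \neq i,j$) is closed, there is an open neighborhood $U$ of $p$ in $\S^n$ disjoint from $\cup_{k \neq i,j} \overline{\Omega_k}$. Inside $U$, only the half-space constraint defining the competition between cells $i$ and $j$ is active, so $U \cap S_{ij}$ lies in $\overline{\Omega_i} \cap \overline{\Omega_j}$ and by (\ref{eq:bonus}) in $\Sigma_{ij}$. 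This shows $\Sigma_{ij}$ is relatively open in $S_{ij}$.

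The main delicate point is the blow-up identification in the $\supset$ direction, namely the fact that two disjoint convex cones which together cover the ambient tangent space must be complementary half-spaces; once this is in hand, the rest is straightforward bookkeeping using assumption \ref{it:nd1}.
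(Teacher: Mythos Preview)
Your argument is correct. The $\subset$ direction and the deduction of property~\ref{it:nd2} match the paper's proof essentially verbatim. For the $\supset$ direction you take a somewhat different route: you invoke property~\ref{it:nd1} to ensure both blow-up cones $\tilde\Omega^p_i,\tilde\Omega^p_j$ are non-empty, then use the convexity of these polyhedral cones together with a separation argument to conclude they are complementary half-spaces. The paper instead argues directly from the Voronoi representation: in a neighborhood $U_p$ disjoint from $\cup_{k\neq i,j}\overline{\Omega_k}$ one has $\Omega_i\cap U_p=\{x\in U_p:\langle\c_{ij},x\rangle+\k_{ij}<0\}$ and similarly for $\Omega_j$, so $p$ is a smooth boundary point and hence in $\partial^*\Omega_i\cap\partial^*\Omega_j$. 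This is more elementary and, notably, does \emph{not} require property~\ref{it:nd1} for this inclusion --- the paper explicitly remarks that property~\ref{it:nd1} is needed only for the $\subset$ direction.

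One small point to tighten in your version: the implication ``$L^1_{\mathrm{loc}}$ blow-up is a half-space $\Rightarrow p\in\partial^*\Omega_i$'' is not the definition of reduced boundary and does not hold for arbitrary finite-perimeter sets. In the present polyhedral setting it is easy to complete: the tangent cone of $\underline\Omega_i=P_i\cap\S^n$ at $p$ being a half-space forces exactly one facet constraint of $P_i$ to be active at $p$, so $\partial\Omega_i$ is locally a smooth hypersurface through $p$, whence $p\in\partial^*\Omega_i$.
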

\begin{proof}
Denote the right-hand-side of (\ref{eq:bonus}) by $\hat \Sigma_{ij}$. 
Let $p \in \hat \Sigma_{ij}$, then there exists a small neighborhood $U_p \subset \S^n$ of $p$ so that $U_p \subset \overline{\Omega_i} \cup \overline{\Omega_j} \setminus \cup_{k \neq i,j} \overline{\Omega_k}$. Since $\partial \Omega_i \cap \partial \Omega_j \subset S_{ij}$, it follows that $\Omega_i \cap U_p = \{ p \in U_p \; ; \; \scalar{\c_{ij} , p} + \k_{ij} < 0 \} $ and $\Omega_j \cap U_p = \{ p \in U_p \; ; \; \scalar{\c_{ij} , p} + \k_{ij} > 0 \}$; since neither of these sets can be empty (as this would contradict the definition of $p$), it follows that both $\Omega_i$ and $\Omega_j$ have a well-defined unique unit-normal at $p$, and so $p \in \partial^* \Omega_i \cap \partial^* \Omega_j = \Sigma_{ij}$. This also shows that $\hat \Sigma_{ij}$ is a relatively open subset of $S_{ij}$. 

For the other direction, we need to invoke property \ref{it:nd1} of Definition \ref{def:non-degenerate}. Let $p \in \partial^* \Omega_i \cap \partial^* \Omega_j$, and consider the blow-up conical cluster $\tilde \Omega^p$ at $p$. By \cite[Theorem 15.5]{MaggiBook} (or simply \cite[Corollary 15.8]{MaggiBook}), the conical cells $\tilde \Omega^p_i$ and $\tilde \Omega^p_j$ must be open half-planes (since $\Omega_i$ and $\Omega_j$ have a well-defined unique unit-normal at $p$), and since they are disjoint, this means that the other (open) conical cells $\tilde \Omega^p_k$, $k \neq i,j$, must be empty. 
By  property \ref{it:nd1} of Definition \ref{def:non-degenerate}, it follows that $I_p = \{i,j\}$, which means that $p \in \overline{\Omega_i} \cap \overline{\Omega_j} \setminus \cup_{k \neq i,j} \overline{\Omega_k} = \hat \Sigma_{ij}$. 
\end{proof}

As a corollary, we conclude that Lemma \ref{lem:non-degenerate} is in fact an equivalence.
\begin{corollary} \label{cor:non-degenerate-regular}
A non-degenerate spherical Voronoi cluster $\Omega$ is interface-regular and satisfies the density estimate (\ref{eq:density}) for some $\eps = \eps(\Omega) > 0$ depending on $\Omega$. 
\end{corollary}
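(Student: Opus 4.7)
The plan is to verify the three requirements of Definition \ref{def:regular}: parts \ref{it:Almgren-iii} and \ref{it:Almgren-ii} of Theorem \ref{thm:Almgren}, and the density estimate (\ref{eq:density}). All three will exploit the characterization (\ref{eq:bonus}) granted by Lemma \ref{lem:Sigmaij-char} (applicable since property \ref{it:nd1} of non-degeneracy holds) and the piecewise polyhedral structure $\Omega_i=\interior(\S^n\cap P_i)$ underlying $\underline{\Omega}_i$.

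For part \ref{it:Almgren-iii} of Theorem \ref{thm:Almgren}, property \ref{it:nd2} of Definition \ref{def:non-degenerate} directly gives that each non-empty $\Sigma_{ij}$ is a relatively open subset of the smooth $(n-1)$-dimensional geodesic sphere $S_{ij}$, hence is itself an embedded $(n-1)$-dimensional $C^\infty$ manifold; local finiteness is automatic as there are at most $\binom{q}{2}$ distinct interfaces. For the local structure at $p\in\Sigma_{ij}$, (\ref{eq:bonus}) provides a ball $B(p,\eps)$ disjoint from $\overline{\Omega_k}$ for every $k\neq i,j$; then $B(p,\eps)\cap\Sigma_{ij}=B(p,\eps)\cap S_{ij}$ is smooth and $B(p,\eps)\cap\Omega_k=\emptyset$ for $k\neq i,j$. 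For part \ref{it:Almgren-ii}, the cells are open by Definition \ref{def:generated-Voronoi}; the null-measure condition $\mu^{n-1}(\partial\Omega_i\setminus\partial^*\Omega_i)=0$ follows because $\partial\Omega_i\setminus\bigcup_{j\neq i}\Sigma_{ij}$ is contained in $\bigcup_{j,k\neq i,\,j\neq k}(S_{ij}\cap S_{ik})$, a finite union of sets of Hausdorff codimension $\geq 2$ in $\S^n$.

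The core of part \ref{it:Almgren-ii} is the identity $\overline{\partial^*\Omega_i}=\partial\Omega_i$, whose non-trivial inclusion rests on the claim that every $p\in\partial\Omega_i$ satisfies $|I_p|\geq 2$. Indeed, if $I_p=\{i\}$ then, by closedness of the $\overline{\Omega_k}$, some open neighborhood $U_p$ is disjoint from all of them for $k\neq i$; hence $\Omega_i$ has full $\mu$-measure in $U_p$. Combined with the closedness of $\underline{\Omega}_i$ (forcing $\overline{\Omega_i}\subset\underline{\Omega}_i$), this yields
\[
p\in\interior(U_p\cap\overline{\Omega_i})\subset\interior\overline{\Omega_i}\subset\interior\underline{\Omega}_i=\Omega_i,
\]
contradicting $p\in\partial\Omega_i$. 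With $|I_p|\geq 2$ in hand, pick $j\in I_p\setminus\{i\}$; non-degeneracy gives $\tilde\Omega^p_j\neq\emptyset$, so the two non-empty open convex cones $\tilde\Omega^p_i,\tilde\Omega^p_j\subset T_p\S^n$ must share an $(n-1)$-dimensional portion of relative-interior common boundary, and rescaling via $\exp_p$ produces a sequence in $\Sigma_{ij}$ converging to $p$. Hence $p\in\overline{\Sigma_{ij}}\subset\overline{\partial^*\Omega_i}$.

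For the density estimate, at any $p\in\overline{\Omega^\ell_i}$, the component $\Omega^\ell_i$ locally coincides with $\Omega_i$ in a sufficiently small neighborhood of $p$ (by convexity of the active-constraint cone), so its blow-up sub-cone $\tilde\Omega^{p,\ell}_i\subset T_p\S^n$ is precisely the non-empty open convex cone $\tilde\Omega^p_i$, which has strictly positive solid-angle density at the origin. Since the cluster is determined by the finite data $\{\c_i\},\{\k_i\}$, the blow-up cones realized over all $p\in\S^n$ and all connected components belong to only finitely many combinatorial types (indexed by the subset of locally active halfspace constraints), and within each combinatorial stratum the solid-angle density varies continuously in $p$; compactness of $\S^n$ then produces a uniform lower bound $\eps=\eps(\Omega)>0$, as required. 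The step I expect to be most delicate is the verification that $\Omega^\ell_i$ locally coincides with $\Omega_i$ near each boundary point of the component (so that the connected-component sub-cone does not degenerate); this is where convexity of the active-constraint cone is used together with property \ref{it:nd1} of non-degeneracy, ruling out pinch-off behavior analogous to the grey cell in Figure \ref{fig:degenerate-spherical}.
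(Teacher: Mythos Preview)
Your overall strategy matches the paper's, and your treatment of interface-regularity is in fact more detailed than the paper's (which dismisses it as ``immediate from Definitions \ref{def:non-degenerate} and \ref{def:interface-regular} and the representation (\ref{eq:bonus})''). Two points deserve attention, however.

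First, a minor gap in the verification of $\overline{\partial^*\Omega_i}=\partial\Omega_i$: after establishing $|I_p|\geq 2$, you pick an \emph{arbitrary} $j\in I_p\setminus\{i\}$ and claim that the two non-empty cones $\tilde\Omega^p_i,\tilde\Omega^p_j$ must share an $(n-1)$-dimensional common boundary. This is false for general conical Voronoi clusters (e.g.\ opposite cells of a four-cell planar partition with normals at the vertices of a square share no interface). The fix is easy: since $\tilde\Omega^p_i$ is a non-empty proper open convex polyhedral cone, $\partial\tilde\Omega^p_i$ has positive $\H^{n-1}$-measure, and (\ref{eq:nothing-lost}) forces $\tilde\Sigma^p_{ij}\neq\emptyset$ for \emph{some} $j$; then Lemma \ref{lem:blow-up-cones}(iii) gives $p\in\overline{\Sigma_{ij}}$.

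Second, and more substantively, your density argument via ``finitely many combinatorial types plus continuity within strata plus compactness of $\S^n$'' is incomplete as stated. The strata $\{p:I_p=J\}$ are only locally closed, not compact, so continuity of the solid-angle density on each stratum does not by itself yield a uniform lower bound --- one must still control the behavior across stratum boundaries (where $I_p$ can only enlarge and the cone can only shrink, so a separate argument is needed to ensure the infimum is not zero). The paper sidesteps this by a cleaner route: it observes $\Theta(\Omega_i,p)=\Theta(\underline\Omega_i,p)$ via Lemma \ref{lem:isolated}, then reduces to lower semi-continuity of
\[
P_i\setminus\{0\}\ni x\;\mapsto\;\liminf_{r\to 0}\frac{\H^n\bigl(P_i\cap(x+x^\perp)\cap B(x,r)\bigr)}{r^n},
\]
which follows directly from convexity of the polyhedron $P_i$. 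Combined with the pointwise positivity (from non-degeneracy) and compactness, this immediately gives the uniform $\eps(\Omega)>0$. Your stratification idea can be made to work, but would need an explicit induction on the partial order of strata to close the argument.
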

\begin{proof}
The interface regularity is immediate from Definitions \ref{def:non-degenerate} and \ref{def:interface-regular} and the representation (\ref{eq:bonus}). To see the density estimate (\ref{eq:density}) for some $\eps(\Omega) > 0$, note that non-degeneracy implies that $\Theta(\Omega_i, p) > 0$ for all $p \in \overline{\Omega_i}$ (since otherwise the open blow-up cone $\tilde \Omega^p_i$ would necessarily be empty, contradicting non-degeneracy). Furthermore, we may assume that $\Omega_i \neq \emptyset$ (otherwise there is nothing to prove). Consequently, to establish the claim, it remains to show that $\overline{\Omega_i} \ni p \mapsto \Theta(\Omega_i, p)$ is lower semi-continuous. While this is in general certainly false, recall that $\underline \Omega_i = \S^n \cap P_i$ for a closed convex polyhedron $P_i$ in $\R^{n+1}$.  By Lemma \ref{lem:isolated}, we know that $\underline \Omega_i \setminus \overline \Omega_i$ consists of a finite number of isolated points and hence $\Theta(\Omega_i,p) = \Theta(\overline \Omega_i,p) = \Theta(\underline \Omega_i,p)$, and so it is enough to show that 
\[
P_i \setminus \{0\} \ni x \mapsto \liminf_{r \rightarrow 0} \frac{\H^n(P_i \cap (x + x^{\perp}) \cap B(x,r))}{r^n} 
\]
is lower semi-continuous. This follows from the convexity of $P_i$, as the intersection of a finite number of closed half-planes. 
\end{proof}

\begin{remark}
A non-degenerate spherical Voronoi cluster, all of whose (open) cells are non-empty, uniquely determines its quasi-center and curvature parameters. Indeed, the cluster is interface-regular by the previous corollary, and so $\k_{ij}$ are the (unique, well-defined) curvatures of the non-empty interfaces $\Sigma_{ij}$. Since the adjacency graph is connected by Lemma \ref{lem:LA-connected}, this determines the individual $\{ \k_i \}_{i=1,\ldots,q}$ up to an additive constant; but since we employ the convention that $\sum_{i=1}^{q} \k_i = 0$, this uniquely determines $\{ \k_i \}$. The argument for the quasi-centers is identical (coordinate-wise). 
\end{remark}

\subsection{Plateau clusters} \label{subsec:Plateau}

When $\Omega$ is a non-degenerate spherical Voronoi cluster and $p \in \overline{\Sigma_{ij}}$, then the normal vector-field $\n_{ij}$ on $\Sigma_{ij}$ is continuous up to the singularity $p$ -- we denote the limiting value by $\n_{ij}(p)$. It was observed in \cite[Lemma 11.1]{EMilmanNeeman-TripleAndQuadruple} that the blow-up cone at $p$ is itself a ``conical Voronoi cluster" whose flat interfaces are given by the normals $\{ \n_{ij}(p) ; \overline{\Sigma_{ij}} \ni p \}$.

\begin{definition}[Conical Voronoi Cluster]
A cluster $\tilde \Omega = (\tilde \Omega_i)_{i \in I}$ on Euclidean space $\R^n$ is called a conical Voronoi cluster if there exist $\{ \tilde \n_i \}_{i\in I} \subset \R^{n}$ (referred to as ``normal parameters") so that:
\begin{enumerate}[(1)]
\item \label{it:conical-Voronoi}
Every non-empty interface $\tilde \Sigma_{ij} \neq \emptyset$ lies in a hyperplane $\{x \in \R^n \; ; \; \scalar{x , \tilde \n_{ij}} =0\}$ with outer unit-normal $\tilde \n_{ij} = \tilde \n_i - \tilde \n_j$. 
\item \label{it:conical-Voronoi-2}
The following Voronoi representation holds:
\[ \tilde \Omega_i  = \interior \set{ x \in \R^n \; ; \; \argmin_{j\in I} \scalar{\tilde \n_j , x} \ni i } =  \interior \; \bigcap_{j \in I \setminus \{ i\} } \; \set{ x \in \R^n \; ;\; \scalar{\tilde \n_{ij},x}  \leq 0 } .
\] \end{enumerate}
As the normal parameters $\{\tilde \n_i\}_{i \in I}$ are defined up to translation, we will always employ the convention that $\sum_{i\in I} \tilde \n_i = 0$. 
\end{definition}

\begin{lemma}\label{lem:blow-up-cones}
    Let $\Omega$ be a spherical Voronoi $q$-cluster on $\S^n$, and let $p \in \S^n$. Then:
    \begin{enumerate}[(i)]
    \item The blow-up cone $\tilde \Omega^p = (\tilde \Omega^p_i)_{i \in I_p}$ is a conical Voronoi cluster on $T_p \S^n$ (identified with $\R^n$) with normal parameters denoted by $\{ \tilde \n_i[p]\}_{i \in I_p}$. 
    \item For all $i \in I_p$ we have $\tilde \n_i[p] = \n_i(p) - \frac{1}{|I_p|} \sum_{j \in I_p} \n_j(p)$, where $\n_i(p) := \c_i - \scalar{\c_i,p} p$ is the orthogonal projection of $\c_i$ onto $T_p \S^n$. 
            \item If $\tilde \Sigma^p_{ij} \neq \emptyset$ then $p \in \overline{\Sigma_{ij}}$. 
         \item
    Denoting:     \[ 
    \N_p := \sum_{i\in I_p} e_i \otimes \tilde \n_i[p] : T_p \S^n \rightarrow E^{I_p} ,
    \]
    we have:
        \begin{equation} \label{eq:affine-ranks}      d = \arank(\N_p) \leq \arank(\{\c_i\}_{i \in I_p}) \leq |I_p|-1 \leq q-1,
    \end{equation}
     and $\tilde \Omega^p$ is isometric to $\tilde {\underline \Omega}^p \times \R^{n-d}$ for a conical Voronoi cluster $\tilde {\underline \Omega}^p$ in $\R^d$.
    \end{enumerate}
    If in addition $\Omega$ is non-degenerate:
    \begin{enumerate}[(i)] \setcounter{enumi}{4}
    \item By definition, $I_p = \tilde I_p$, namely $\tilde \Omega^p_i \neq \emptyset$ for all $i \in I_p$. 
    \item For all $i,j \in I_p$, $\tilde \Sigma^p_{ij} \neq \emptyset$ iff $p \in \overline{\Sigma_{ij}}$.
            \item In particular, $\tilde \n_{ij}[p] = \n_{ij}(p)$ for all $i,j \in I_p$ so that $p \in \overline{\Sigma_{ij}}$.  
        \item $\text{affine-span} \{ \tilde \n_i[p] \}_{i \in I_p} = \sspan \{ \tilde \n_{ij}[p] \; ; \; \overline{\Sigma_{ij}} \ni p \}$. 
    \end{enumerate}
\end{lemma}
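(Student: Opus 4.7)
For (i)--(ii), I would work in geodesic normal coordinates around $p$. Each cell equals $\Omega_i = \interior(\S^n \cap P_i)$, where $P_i = \bigcap_{j \neq i} \{x \in \R^{n+1} : \scalar{\c_{ij},x} + \k_{ij} \leq 0\}$. For $j \in I_p$, both inequalities $\scalar{\c_{ij}, \cdot} + \k_{ij} \leq 0$ (from $p \in \underline \Omega_i$) and $\scalar{\c_{ji}, \cdot} + \k_{ji} \leq 0$ (from $p \in \underline \Omega_j$) must be saturated, so $\scalar{\c_{ij},p} + \k_{ij} = 0$; for the remaining $j$ the constraint is strict at $p$ and drops out in the blow-up. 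Writing $x = \exp_p(rv)$ with $v \in T_p \S^n$ and Taylor expanding, each tight constraint becomes $r\scalar{\c_{ij},v} + O(r^2) \leq 0$; since $\scalar{v,p}=0$, we may replace $\c_{ij}$ by its orthogonal projection $\n_i(p) - \n_j(p)$ onto $T_p\S^n$. Passing to the $L^1_{\text{loc}}$ limit as $r \downarrow 0$ then yields
\[
\tilde \Omega_i^p = \interior \bigcap_{j \in I_p \setminus \{i\}} \{v \in T_p\S^n : \scalar{\n_i(p) - \n_j(p), v} \leq 0\},
\]
which is a conical Voronoi cluster. Subtracting the average of $\{\n_j(p)\}_{j \in I_p}$ from each $\n_i(p)$ produces the normalized parameters $\tilde \n_i[p]$ of (ii) satisfying $\sum_{i \in I_p} \tilde \n_i[p] = 0$.

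For (iii), I would pick $v_0$ in the relative interior of $\tilde \Sigma_{ij}^p$ inside the hyperplane $H_{ij} := \{v : \scalar{\tilde \n_{ij}[p],v} = 0\}$; by the conical Voronoi structure, $\scalar{\tilde \n_{ik}[p], v_0} < 0$ and $\scalar{\tilde \n_{jk}[p], v_0} < 0$ strictly for all $k \in I_p \setminus \{i,j\}$. For small $r > 0$ the constraints $\scalar{\c_{ik},\cdot} + \k_{ik} < 0$ and $\scalar{\c_{jk}, \cdot} + \k_{jk} < 0$ then remain strict in a neighborhood of $\exp_p(r v_0)$, while only the $\c_{ij}$-constraint is nearly tight. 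A short continuity/transversality argument on the smooth sphere $S_{ij}$ yields a point $p_r \in S_{ij}$ near $\exp_p(r v_0)$ lying on the common boundary of $\Omega_i$ and $\Omega_j$, hence in $\Sigma_{ij}$; letting $r \downarrow 0$ gives $p \in \overline{\Sigma_{ij}}$. For (iv), the formula in (ii) gives $\tilde \n_{ij}[p] = \n_{ij}(p) = \c_{ij} - \scalar{\c_{ij},p} p$, the orthogonal projection of $\c_{ij}$ onto $T_p \S^n$; hence $\arank(\N_p) = \dim \sspan\{\n_{ij}(p)\}_{i,j \in I_p} \leq \dim \sspan\{\c_{ij}\}_{i,j \in I_p} = \arank(\{\c_i\}_{i \in I_p}) \leq |I_p|-1$. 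The isometric splitting $\tilde \Omega^p \cong \tilde{\underline\Omega}^p \times \R^{n-d}$ is immediate because all defining inequalities involve $v$ only through its projection onto $V := \sspan\{\tilde\n_i[p]\}_{i\in I_p}$, leaving $V^\perp \cong \R^{n-d}$ as a free direction.

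Under non-degeneracy, (v) is by definition. For (vi), the converse of (iii), I would apply Lemma \ref{lem:Sigmaij-char} (equation (\ref{eq:bonus})) to obtain, whenever $p \in \overline{\Sigma_{ij}}$, a sequence $\Sigma_{ij} \ni p_n \to p$; rescaling $p_n$ by its geodesic distance to $p$ produces a limit direction in $\tilde \Sigma_{ij}^p$, which is therefore non-empty. Part (vii) is immediate from (ii) since the averaging term cancels in the difference $\tilde\n_i[p] - \tilde\n_j[p]$. For (viii), I would apply Lemma \ref{lem:LA-connected} to the blow-up conical cluster $\tilde \Omega^p$: by (v) its $|I_p|$ cells are all non-empty and conical, hence of infinite Lebesgue measure, so the adjacency graph with edges $\{(i,j) : \tilde\Sigma_{ij}^p \neq \emptyset\}$ -- which by (vi) equals $\{(i,j) : p \in \overline{\Sigma_{ij}}\}$ -- is connected. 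Consequently
\[
\sspan\{\tilde\n_{ij}[p] : p \in \overline{\Sigma_{ij}}\} = \sspan\{\tilde\n_{ij}[p] : i,j \in I_p\} = \sspan\{\tilde\n_i[p]\}_{i \in I_p},
\]
and this coincides with the affine span of $\{\tilde\n_i[p]\}_{i \in I_p}$ since $\sum_{i \in I_p} \tilde\n_i[p] = 0$.

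The main technical care lies in (i)--(ii), where the $L^1_{\text{loc}}$ blow-up must be carried out respecting the interior operation and the potential (degenerate) presence of tight constraints $\scalar{\c_{ij},\cdot}+\k_{ij} = 0$ with $j \notin I_p$ (cf.~Figure \ref{fig:degenerate-spherical}), showing that such phantom constraints do not contribute additional half-spaces relevant to the cells indexed by $I_p$; and in (iii), where the transverse (rather than tangential) intersection of $S_{ij}$ with $\S^n$ guaranteed by the spherical Voronoi condition $|\c_{ij}|^2 = 1 + \k_{ij}^2$ is needed to upgrade the infinitesimal conical interface into an actual sequence of interface points of $\Omega$.
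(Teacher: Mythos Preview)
The paper does not give a proof of this lemma; it is cited from the authors' earlier work \cite[Lemma 11.1]{EMilmanNeeman-TripleAndQuadruple}. Your plan follows the natural route and is essentially correct, but three technical points deserve attention.

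\smallskip
\textbf{(i)--(ii), phantom constraints.} You correctly flag but do not resolve the possibility that for some $i\in I_p$ and $j\notin I_p$ the constraint $\scalar{\c_{ij},\cdot}+\k_{ij}\le 0$ is tight at $p$. The clean resolution is a covering argument: since $\bigcup_{i\in I_p}\overline{\Omega_i}$ covers a neighborhood of $p$, the blow-ups $\{\tilde\Omega_i^p\}_{i\in I_p}$ cover $T_p\S^n$ up to a null set; on the other hand each $\tilde\Omega_i^p$ is contained in the conical Voronoi cell $\tilde V_i$ determined by $\{\tilde\n_\ell[p]\}_{\ell\in I_p}$ alone, and these $\tilde V_i$ also partition $T_p\S^n$. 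Comparing the two partitions forces $\tilde\Omega_i^p=\tilde V_i$ up to null sets, so the phantom half-spaces are redundant.

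\smallskip
\textbf{(iii), circularity.} Your appeal to the spherical Voronoi condition $|\c_{ij}|^2=1+\k_{ij}^2$ for the transversality of $S_{ij}$ with $\S^n$ is circular: that condition is only assumed when $\Sigma_{ij}\ne\emptyset$, which is precisely what (iii) asserts. What you actually need is $\tilde\n_{ij}[p]\ne 0$, and this follows directly from $\tilde\Sigma_{ij}^p\ne\emptyset$ and the Voronoi representation (if $\tilde\n_{ij}[p]=0$ the two conical cells coincide as sets and cannot share a codimension-one interface). Once $\tilde\n_{ij}[p]\ne 0$, the affine hyperplane $\{\scalar{\c_{ij},\cdot}+\k_{ij}=0\}$ meets $\S^n$ transversally at $p$, and your argument goes through. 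In fact this also closes the loop for the unit-normal clause in the definition of conical Voronoi cluster: once (iii) gives $\Sigma_{ij}\ne\emptyset$, the spherical Voronoi hypothesis yields $|\c_{ij}|^2=1+\k_{ij}^2$, and since $\scalar{\c_{ij},p}=-\k_{ij}$ one computes $|\tilde\n_{ij}[p]|^2=|\c_{ij}|^2-\k_{ij}^2=1$.

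\smallskip
\textbf{(viii), applicability of Lemma \ref{lem:LA-connected}.} As stated, that lemma requires $V_\mu(\tilde\Omega^p)\in\interior\Delta^{(|I_p|-1)}_{V_\mu(\R^n)}$, but for a conical cluster on $\R^n$ every non-empty cell has infinite Lebesgue measure, so the hypothesis fails verbatim. The fix is immediate: either restrict $\tilde\Omega^p$ to the unit ball before invoking the lemma, or argue directly that a disconnected adjacency graph would produce, via \eqref{eq:nothing-lost-many}, an open set $U=\bigcup_{i\in A}\tilde\Omega_i^p$ with $\per(U)=0$ yet $0<|U\cap B|<|B|$ for some ball $B$, contradicting the relative isoperimetric inequality.
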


We now observe the following simple:

\begin{lemma} \label{lem:Plateau-at-p}
Let $\Omega$ be a spherical Voronoi cluster on $\S^n$. Then the following statements are equivalent for a given $p \in \Sigma$:
\begin{enumerate}[(a)]
\item \label{it:Plateau-1} $\arank(\N_p) = |I_p| - 1$, i.e. $\{\tilde \n_i[p]\}_{i \in I_p}$ are affinely independent. 
\item \label{it:Plateau-2} For all $i,j \in I_p$ with $i < j$, $\tilde \Sigma^p_{ij} \neq \emptyset$. 
\item \label{it:Plateau-4} $\N_p \N_p^T = \frac{1}{2} \Id_{E^{I_p}}$. 
\item \label{it:Plateau-5} $\{ \tilde \n_i[p] \}_{i \in I_p}$ are the vertices of a centered regular unit-simplex. 
\item \label{it:Plateau-6} $\tilde \Sigma^p = \partial \tilde \Omega^p$ is isometric to the product of $\R^{n-d}$ with the cone over the $(d-2)$-dimensional skeleton of a centered regular $d$-dimensional simplex, where $d = |I_p|-1$. 
\end{enumerate}
Any (and hence all) of these statements imply that $I_p = \tilde I_p$. \\
If $\Omega$ is in addition non-degenerate, any (and hence all) of these statements are equivalent to:
\begin{enumerate}[(a)] \setcounter{enumi}{5}
\item \label{it:Plateau-3} For all $i,j \in I_p$ with $i < j$, $p \in \overline{\Sigma_{ij}}$ .
\end{enumerate}
\end{lemma}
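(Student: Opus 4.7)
The workhorse of the entire lemma is a unit-length identity for the blow-up normals. For any $p \in \overline{\Sigma_{ij}}$ we have $p \in S_{ij}$, so $\scalar{\c_{ij},p} = -\k_{ij}$ by (\ref{eq:Sij}); combining this with Lemma \ref{lem:blow-up-cones}(ii) (which gives $\tilde\n_{ij}[p] = \c_{ij} - \scalar{\c_{ij},p} p$) and with Remark \ref{rem:quasi-centers} (which gives $|\c_{ij}|^2 = 1 + \k_{ij}^2$ since $\Sigma_{ij} \neq \emptyset$) produces
\[
|\tilde\n_{ij}[p]|^2 = |\c_{ij}|^2 + \k_{ij}^2 - 2\k_{ij}^2 = 1.
\]
Since Lemma \ref{lem:blow-up-cones}(iii) tells us that $\tilde\Sigma^p_{ij} \neq \emptyset$ forces $p \in \overline{\Sigma_{ij}}$, this identity applies whenever an interface of the blow-up cone is non-empty. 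This single computation is what upgrades the weak hypotheses (a), (b) to the full simplex structure in (c), (d), (e).

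My plan is to close the loop (a) $\Rightarrow$ (b) $\Rightarrow$ (d) $\Rightarrow$ (a), and to separately verify (c) $\Leftrightarrow$ (d) $\Leftrightarrow$ (e). For (b) $\Rightarrow$ (d): the identity above forces $|\tilde\n_{ij}[p]|=1$ for every $i<j$ in $I_p$; together with the centering $\sum_{i \in I_p}\tilde\n_i[p]=0$ inherent to $\N_p$, pairwise unit Euclidean distances plus barycenter at the origin forces $\{\tilde\n_i[p]\}_{i \in I_p}$ to be the vertices of a centered regular unit-simplex (a two-line Gram computation yielding $\scalar{\tilde\n_i,\tilde\n_i}=\tfrac{|I_p|-1}{2|I_p|}$ and $\scalar{\tilde\n_i,\tilde\n_j}=-\tfrac{1}{2|I_p|}$). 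The implication (d) $\Rightarrow$ (a) is immediate, and (a) $\Rightarrow$ (b) follows from Lemma \ref{lem:blow-up-cones}(iv): once $d:=\arank(\N_p) = |I_p|-1$, after peeling off the trivial $\R^{n-d}$ factor, $\tilde{\underline\Omega}^p$ is the conical Voronoi cluster on $\R^d$ of $d+1$ affinely independent points, whose Delaunay triangulation is the single $d$-simplex they span; by Voronoi-Delaunay duality every pair of vertices is a Delaunay edge, hence every pair of Voronoi cells is adjacent along the perpendicular bisector of that edge, giving (b). The equivalence (c) $\Leftrightarrow$ (d) is then pure linear algebra via $e_{ij}^T \N_p\N_p^T e_{ij} = |\tilde\n_{ij}[p]|^2$ compared with $e_{ij}^T(\tfrac{1}{2}\Id_{E^{I_p}})e_{ij} = 1$.

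Finally, (d) $\Leftrightarrow$ (e) is geometric: when $\{\tilde\n_i[p]\}$ are vertices of a centered regular unit-simplex in $\R^d$, the boundary of the conical Voronoi cluster is the origin-cone over the Voronoi partition of $\S^{d-1}$ induced by those $d+1$ vertices, which is exactly the cone over the $(d-2)$-skeleton appearing in (e), and the $\R^{n-d}$ factor is restored via Lemma \ref{lem:blow-up-cones}(iv). That any of (a)-(e) forces $I_p = \tilde I_p$ then follows from (b): each $\tilde\Omega^p_i$ sits on one side of a non-empty interface, hence is itself non-empty, yielding $I_p \subset \tilde I_p$, while the reverse containment is automatic. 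Under the non-degeneracy assumption, Lemma \ref{lem:blow-up-cones}(vi) gives precisely (b) $\Leftrightarrow$ (f). The main point requiring care is the Voronoi-Delaunay duality step in (a) $\Rightarrow$ (b); everything else reduces either to linear algebra or to the unit-length identity boxed above.
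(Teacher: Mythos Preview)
Your proof is correct and follows essentially the same cycle of implications as the paper's. Two minor remarks: (i) the unit-length identity $|\tilde\n_{ij}[p]|=1$ is available for free from the definition of conical Voronoi cluster (property (1) requires $\tilde\n_{ij}$ to be the outer \emph{unit} normal whenever $\tilde\Sigma^p_{ij}\neq\emptyset$), which is how the paper gets it---your detour through Lemma~\ref{lem:blow-up-cones}(iii) and Remark~\ref{rem:quasi-centers} works but is unnecessary; (ii) the Voronoi--Delaunay language for (a)$\Rightarrow$(b) is slightly imprecise, since the conical Voronoi cluster uses $\argmin_j\scalar{\tilde\n_j,x}$ rather than Euclidean nearest-neighbor (these only coincide once all $|\tilde\n_j|$ are equal, which you do not yet know at step~(a)). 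The claim is nevertheless immediate: affine independence makes the linear functionals $\scalar{\tilde\n_{ik},\cdot}$ for $k\neq i$ linearly independent, so for any fixed pair $i,j$ one can choose $x$ with $\scalar{\tilde\n_{ij},x}=0$ and $\scalar{\tilde\n_{ik},x}>0$ for all $k\neq i,j$, yielding $x\in\tilde\Sigma^p_{ij}$.
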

For consistency, the $-1$-dimensional skeleton of a regular $1$-dimensional simplex is defined as $\{0\}$. 
Note that necessarily $|I_p| \geq 2$ if $p \in \Sigma$. 

\begin{proof}[Proof of Lemma \ref{lem:Plateau-at-p}]
If \ref{it:Plateau-1} holds then $\tilde \Omega^p$ is the conical Voronoi cluster generated by the affinely-independent $\{ \tilde n_i[p] \}_{i \in I_p}$, and so all of its $|I_p|$ conical cells must be non-empty and we have $I_p = \tilde I_p$. Affine-independence also implies that all of its interfaces $\tilde \Sigma^p_{ij}$ are non-empty, yielding \ref{it:Plateau-2}. 
By definition of conical Voronoi cluster:
\[
\scalar{\N_p \N_p^T  , e_{ij} \otimes e_{ij}} = |\tilde \n_{ij}[p]|^2 = 1 = \scalar{\frac{1}{2} \Id_{E^{I_p}}  , e_{ij} \otimes e_{ij}} \;\;\; \;\;\; \forall i < j \text{ so that } \tilde \Sigma^p_{ij} \neq \emptyset ,
\]
and since $\{e_{ij} \otimes e_{ij}\}_{i,j \in I_p, i < j}$ span the space of all symmetric bilinear forms on $E^{I_p}$ (by comparing dimensions), we see that \ref{it:Plateau-2} implies \ref{it:Plateau-4}. 

Now, \ref{it:Plateau-4} holds iff the rows of $\N_p$ are the vertices of a regular unit-simplex (having distance $1$ between every pair of vertices), and since the rows of $\N_p$ sum to zero by our convention, the simplex must be centered and the equivalence with \ref{it:Plateau-5} is established. Recalling the definition of a conical Voronoi cluster and using that $|\tilde \n_j[p]|$ does not depend on $j \in I_p$, \ref{it:Plateau-5} implies that $\tilde \Omega^p$ is a Voronoi cluster over a centered regular  simplex:
\[
\tilde \Omega^p_i :=  \interior \set{ x \in T_p \S^n \; ; \; \argmin_{j\in I_p} |x + \tilde \n_j[p] |^2 \ni  i } \;\;\; \forall i \in I_p ,
\]
yielding \ref{it:Plateau-6}. Lastly, \ref{it:Plateau-6} implies that $\{ \tilde \n_i[p] \}_{i \in I_p}$ are affinely independent, yielding \ref{it:Plateau-1} and closing the cycle of equivalences. 
Finally, if $\Omega$ is non-degenerate, \ref{it:Plateau-2} and \ref{it:Plateau-3} are equivalent by Lemma \ref{lem:blow-up-cones}.
\end{proof}

\begin{definition}[$\ell$-Plateau cluster]
Let $\Omega$ be a non-degenerate spherical Voronoi cluster on $\S^n$. 
\begin{enumerate}[(i)]
\item We shall say that $\Omega$ is ``Plateau at $p \in \Sigma$" if any (all) of the equivalent statements \ref{it:Plateau-1} through \ref{it:Plateau-3} given by Lemma \ref{lem:Plateau-at-p} hold for $p$.  
\item We shall say that $\Omega$ is Plateau if it is Plateau at $p$ for all $p \in \Sigma$.
\item We shall say that $\Omega$ is $\ell$-Plateau if it is Plateau at $p$ for any $p \in \Sigma$ so that $\arank(\N_p) \leq \ell$. 
\end{enumerate}
Note that an $\ell$-Plateau cluster is by definition non-degenerate, for any value of $\ell$ (also negative). 
\end{definition}

\begin{lemma} \label{lem:full-Plateau}
The following are equivalent for a spherical Voronoi cluster $\Omega$ on $\S^n$: 
\begin{enumerate}[(i)]
\item \label{it:full-Plateau-1} $\Omega$ is Plateau.
\item \label{it:full-Plateau-2} $\Omega$ is $\ell$-Plateau with $\ell = \min(n,\arank(\C))$. 
\item \label{it:full-Plateau-3} For every $p \in \Sigma$ (or $p \in \S^n$), $\arank(\N_p) = |I_p|-1$. 
\end{enumerate}
\end{lemma}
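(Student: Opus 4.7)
The plan is to deduce the three-way equivalence essentially by unwinding definitions together with the inequality~(\ref{eq:affine-ranks}), so the argument should be short.

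First, by Lemma~\ref{lem:Plateau-at-p}, being Plateau at a point $p$ is \emph{synonymous} with $\arank(\N_p) = |I_p|-1$; hence (i) is equivalent to this equality holding for every $p \in \Sigma$. The parenthetical extension to all $p \in \S^n$ in~(iii) is vacuous: since $\Sigma = \bigcup_i \partial\Omega_i$ and the open cells $\Omega_i$ are pairwise disjoint, any $p \in \S^n \setminus \Sigma$ lies in exactly one open cell, so $I_p$ is a singleton, $E^{I_p} = \{0\}$, and $\arank(\N_p) = 0 = |I_p|-1$ trivially. This yields (i)~$\Leftrightarrow$~(iii).

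Next, (i)~$\Rightarrow$~(ii) is immediate, since being Plateau at every $p \in \Sigma$ is stronger than being Plateau only at those $p$ with $\arank(\N_p) \leq \ell$. For the converse (ii)~$\Rightarrow$~(i), it suffices to show that \emph{every} $p \in \Sigma$ satisfies
\[
\arank(\N_p) \;\leq\; \ell \;=\; \min(n, \arank(\C)),
\]
since then the $\ell$-Plateau hypothesis forces $\Omega$ to be Plateau at every $p \in \Sigma$. The bound $\arank(\N_p) \leq n$ is immediate, as the rows $\tilde\n_i[p]$ of $\N_p$ are vectors in $T_p\S^n$, a space of dimension $n$. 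The bound $\arank(\N_p) \leq \arank(\C)$ is already (almost) the content of~(\ref{eq:affine-ranks}), which gives $\arank(\N_p) \leq \arank(\{\c_i\}_{i \in I_p})$; and trivially $\arank(\{\c_i\}_{i \in I_p}) \leq \arank(\{\c_i\}_{i=1,\ldots,q}) = \arank(\C)$, because appending additional rows cannot decrease affine rank. Combining these gives the claimed upper bound by $\ell$.

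The proof is essentially bookkeeping and I do not foresee any genuine obstacle; the two subtleties are (a)~handling the parenthetical ``or $p \in \S^n$'' in~(iii), which reduces to observing that $I_p$ is a singleton for $p$ in the interior of a cell, and (b)~making sure to extract from~(\ref{eq:affine-ranks}) both the dimensional bound from $T_p\S^n$ and the bound via $\arank(\C)$, which together yield the minimum $\ell = \min(n, \arank(\C))$ appearing in~(ii).
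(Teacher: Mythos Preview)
Your argument is essentially correct and follows the same ideas as the paper. One omission is worth flagging: the notions ``Plateau'' and ``$\ell$-Plateau'' are defined in the paper only for \emph{non-degenerate} spherical Voronoi clusters (see the opening line of the $\ell$-Plateau definition and the remark that ``an $\ell$-Plateau cluster is by definition non-degenerate''). Since Lemma~\ref{lem:full-Plateau} is stated for an arbitrary spherical Voronoi cluster, asserting~(i) implicitly includes non-degeneracy. Thus in your implication (iii)~$\Rightarrow$~(i), you must also check that having $\arank(\N_p) = |I_p|-1$ at every $p$ forces $\Omega$ to be non-degenerate. The paper does this explicitly: by the last line of Lemma~\ref{lem:Plateau-at-p}, condition~(a) at each $p\in\Sigma$ implies $I_p = \tilde I_p$ there (and you already handled $p \notin \Sigma$), whence $\Omega$ is non-degenerate by Lemma~\ref{lem:Sigmaij-char}. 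With this added, your proof and the paper's coincide in content; the paper simply organizes it as the cycle (i)~$\Rightarrow$~(ii)~$\Rightarrow$~(iii)~$\Rightarrow$~(i) rather than as two direct equivalences.
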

\begin{proof}
\ref{it:full-Plateau-1} implies \ref{it:full-Plateau-2} trivially, which in turn implies \ref{it:full-Plateau-3} by (\ref{eq:affine-ranks}). Finally, \ref{it:full-Plateau-3} implies by Lemma \ref{lem:Plateau-at-p} that $I_p = \tilde I_p$ for every $p \in \Sigma$ (and hence for every $p \in \S^n$), and so $\Omega$ must be non-degenerate by Lemma \ref{lem:Sigmaij-char}; it then follows by definition that $\Omega$ is Plateau. 
\end{proof}

One crucial feature of being fully Plateau stems from the following fact:
\begin{lemma}
Let $\Omega$ be a spherical Voronoi cluster on $\S^n$, and assume that $\Omega$ is Plateau (in particular, non-degenerate). Then for all $i \neq j$:
\[
\Sigma_{ij} = \emptyset \;\;\; \Leftrightarrow \;\;\; \overline{\Omega_i} \cap \overline{\Omega_j} = \emptyset . 
\]
In other words, cells which do not share an interface do not touch. 
\end{lemma}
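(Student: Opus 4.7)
\medskip

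\noindent\textbf{Proof proposal.} The direction $(\Leftarrow)$ is immediate: if $\overline{\Omega_i} \cap \overline{\Omega_j} = \emptyset$, then certainly $\Sigma_{ij} = \partial^* \Omega_i \cap \partial^* \Omega_j \subset \overline{\Omega_i} \cap \overline{\Omega_j}$ is empty. The nontrivial direction is $(\Rightarrow)$, which I will prove by contrapositive: assuming $\overline{\Omega_i} \cap \overline{\Omega_j} \neq \emptyset$, I will produce a point of $\Sigma_{ij}$.

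So suppose there exists $p \in \overline{\Omega_i} \cap \overline{\Omega_j}$. By definition of the index set $I_p$, this means both $i \in I_p$ and $j \in I_p$. Since $\Omega$ is Plateau (in particular Plateau at $p$) and is non-degenerate, Lemma \ref{lem:Plateau-at-p} applies: conditions \ref{it:Plateau-1}--\ref{it:Plateau-3} are all equivalent at $p$. Using the equivalence with condition \ref{it:Plateau-3}, which states that $p \in \overline{\Sigma_{k\ell}}$ for \emph{every} pair $k,\ell \in I_p$ with $k \neq \ell$, we conclude in particular that $p \in \overline{\Sigma_{ij}}$.

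Since the closure of the empty set is empty, $\overline{\Sigma_{ij}} \neq \emptyset$ forces $\Sigma_{ij} \neq \emptyset$, completing the proof.

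The key point is that the whole statement is essentially a direct corollary of the equivalence $\ref{it:Plateau-1} \Leftrightarrow \ref{it:Plateau-3}$ in Lemma \ref{lem:Plateau-at-p}, which packages precisely the content that cells meeting at a common point in a Plateau cluster all share pairwise interfaces accumulating at that point. No further obstacle is anticipated.
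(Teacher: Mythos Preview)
Your proof is correct and essentially identical to the paper's: both argue the contrapositive by taking $p \in \overline{\Omega_i} \cap \overline{\Omega_j}$, noting $\{i,j\} \subset I_p$, and invoking the Plateau property via Lemma~\ref{lem:Plateau-at-p}~\ref{it:Plateau-3} to conclude $p \in \overline{\Sigma_{ij}}$, hence $\Sigma_{ij} \neq \emptyset$.
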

\begin{proof}
If $p \in \overline{\Omega_i} \cap \overline{\Omega_j}$, then $\{i,j\} \subset I_p$ and hence by the Plateau property and Lemma \ref{lem:Plateau-at-p} \ref{it:Plateau-3}, we would have $p \in \overline{\Sigma_{ij}}$, and in particular $\Sigma_{ij} \neq \emptyset$. The other direction is trivial. 
\end{proof}

\begin{corollary}[Same non-empty interfaces] \label{cor:Plateau-interfaces}
Let $\Omega$ be a spherical Voronoi cluster on $\S^n$ with quasi-center and curvature parameters $\{ \c_i \}_{i=1,\ldots,q}$ and $\{ \k_i \}_{i=1,\ldots,q}$, respectively. Assume that all cells are non-empty, and that $\Omega$ is perpendicular and Plateau (in particular, non-degenerate). Then there exists a neighborhood $U_\C$ of $\C : \R^{n+1} \rightarrow E^{(q-1)}$ and $U_\k$ of $\k \in E^{(q-1)}$, so that for all $\C' \in U_\C$ and $\k' \in U_\k$, denoting by $\{\c'_i\}_{i=1,\ldots,q}$ the rows of $\C'$, the affine Voronoi cluster $\Omega'$ on $\S^n$ generated by $\{\c'_i\}_{i=1,\ldots,q}$ and $\{\k'_i\}_{i=1,\ldots,q}$ has the same non-empty interfaces as $\Omega$:
\[
\Sigma_{ij} \neq \emptyset \; \; \Leftrightarrow \;\; \Sigma'_{ij} \neq \emptyset . 
\] 
\end{corollary}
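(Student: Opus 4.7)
The plan is to treat separately the two types of index pairs $(i,j)$ -- those with $\Sigma_{ij} \neq \emptyset$ and those with $\Sigma_{ij} = \emptyset$ -- and to find, for each pair, a neighborhood of $(\C,\k)$ on which the non-emptiness status of $\Sigma'_{ij}$ is preserved. Since there are only finitely many pairs, intersecting these neighborhoods yields the desired $U_\C \times U_\k$.

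The first step is to set up the continuity tools globally on a neighborhood of $(\C,\k)$. Because $\Omega$ is perpendicular with all cells non-empty, Lemma \ref{lem:perp} yields $\overline{\Omega_i} = \underline{\Omega_i}$ for every $i$. By Corollary \ref{cor:isolated}, the condition $\overline{\Omega_i} = \underline{\Omega_i}$ is open in the generating parameters, and non-emptiness of the open cell $\Omega_i$ (which contains a Euclidean ball witnessing strict inequality in all defining half-space constraints) is likewise open. Hence on a sufficiently small common neighborhood of $(\C,\k)$, every affine Voronoi cluster $\Omega'$ has all cells non-empty and satisfies $\overline{\Omega'_i} = \underline{\Omega'_i}$, so the local Hausdorff continuity of $\overline{\Omega'_i}$ and $\overline{\Sigma'_{ij}}$, and the local Lipschitz continuity of $V(\Omega'_i)$ and $\H^{n-1}(\Sigma'_{ij})$, provided by (the proof of) Lemma \ref{lem:cont} apply throughout this neighborhood -- even though nearby $\Omega'$ need not itself be perpendicular.

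For pairs with $\Sigma_{ij} \neq \emptyset$: by non-degeneracy (which is a consequence of the Plateau hypothesis) and Definition \ref{def:non-degenerate}\ref{it:nd2}, $\Sigma_{ij}$ is a non-empty relatively open subset of the $(n-1)$-dimensional geodesic sphere $S_{ij}$, hence $\H^{n-1}(\Sigma_{ij}) > 0$. The Lipschitz continuity of $(\C',\k') \mapsto \H^{n-1}(\Sigma'_{ij})$ then keeps this quantity positive on a neighborhood, so $\Sigma'_{ij} \neq \emptyset$ there.

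For pairs with $\Sigma_{ij} = \emptyset$: here we exploit the Plateau hypothesis via the lemma immediately preceding the corollary, which gives $\overline{\Omega_i} \cap \overline{\Omega_j} = \emptyset$. These are disjoint compact subsets of $\S^n$, so at positive geodesic distance; by Hausdorff continuity of $\overline{\Omega'_i}$ and $\overline{\Omega'_j}$, the sets $\overline{\Omega'_i}$ and $\overline{\Omega'_j}$ remain disjoint on a small neighborhood. Since $\Sigma'_{ij} = \partial^* \Omega'_i \cap \partial^* \Omega'_j \subset \overline{\Omega'_i} \cap \overline{\Omega'_j}$, this forces $\Sigma'_{ij} = \emptyset$. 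Intersecting over the finite collection of pairs $(i,j)$ produces $U_\C$ and $U_\k$ as required. The only mildly delicate point is the one handled in the first step, namely transferring the continuity statements of Lemma \ref{lem:cont} -- proved under perpendicularity -- to non-perpendicular nearby clusters via Corollary \ref{cor:isolated}; everything else is a direct application of the listed results.
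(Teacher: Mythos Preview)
Your proof is correct and follows essentially the same approach as the paper's: use the Plateau lemma to get $\overline{\Omega_i}\cap\overline{\Omega_j}=\emptyset$ for empty interfaces and then Hausdorff continuity of the cell closures, while non-empty interfaces persist by continuity. Your extra care in the first step---extending the continuity of Lemma~\ref{lem:cont} to non-perpendicular nearby clusters via Corollary~\ref{cor:isolated}---is already built into the statement and proof of Lemma~\ref{lem:cont} (which asserts \emph{local} continuity in the parameters, not just along perpendicular perturbations), so that step need not be spelled out separately.
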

\begin{proof}
Recall from Lemma \ref{lem:cont} that perpendicularity guarantees that the closures of the non-empty cells and interfaces are locally continuous in the Hausdorff metric as a function of the parameters $\C,\k$. By the previous lemma, all pairs of cells which do not share an interface do not touch thanks to the Plateau property, and hence are at a positive distance from each other.  By continuity, these pairs will remain at a positive distance from each other for sufficiently small neighborhoods $U_\C$ and $U_\k$, and hence $\Sigma_{ij} = \emptyset \Rightarrow \Sigma'_{ij} = \emptyset$. The other direction is trivial by continuity regardless of being Plateau: $\Sigma_{ij} \neq \emptyset \Rightarrow \Sigma'_{ij} \neq \emptyset$. 
\end{proof}

Another important property of being Plateau is that it is an open condition:

\begin{proposition} \label{prop:Plateau-perturbation}
With the same assumptions and notation as in Corollary \ref{cor:Plateau-interfaces}, let $[0,1] \ni t \mapsto \C(t) : \R^{n+1} \rightarrow E^{(q-1)}$ and $[0,1] \ni t \mapsto \k(t) \in E^{(q-1)}$ be continuous deformations of $\C(0) = \C$ and $\k(0) = \k$. Assume that the affine Voronoi cluster $\Omega(t)$ generated by $\C(t)$ and $\k(t)$ is in fact spherical Voronoi. 
Then there exists $t_0 \in (0,1]$ so that $\Omega(t)$ remain Plateau for all $t \in [0,t_0]$. 
\end{proposition}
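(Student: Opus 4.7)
The plan is to argue by contradiction, combining Hausdorff-continuity of the closed cells under the deformation with the openness of affine independence for a finite family of vectors.

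First I would set the stage. By Corollary \ref{cor:Plateau-interfaces} there exists $t_1 \in (0,1]$ such that for every $t \in [0,t_1]$ the set of non-empty interfaces of $\Omega(t)$ coincides with that of $\Omega(0)$; in particular each cell $\Omega_i(t)$ remains non-empty. Since $\Omega(0)$ is perpendicular, Lemma \ref{lem:perp} gives $\overline{\Omega_i(0)} = \underline{\Omega_i(0)}$ for every $i$, and this equality is an open condition in the generating parameters by Corollary \ref{cor:isolated}. Therefore, shrinking $t_1$ if necessary, the argument of Lemma \ref{lem:cont} applies along the whole family $\{\Omega(t)\}_{t \in [0,t_1]}$, and the closures $\overline{\Omega_i(t)}$ and $\overline{\Sigma_{ij}(t)}$ vary continuously in the Hausdorff metric on $[0,t_1]$, even though $\Omega(t)$ need not be perpendicular for $t > 0$.

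Now assume for contradiction that no such $t_0$ exists; then there is a sequence $t_n \to 0^+$ in $(0,t_1]$ along which $\Omega(t_n)$ fails to be Plateau. By Lemma \ref{lem:full-Plateau} there must exist $p_n \in \S^n$ with $\arank(\N_{p_n}^{(t_n)}) < |I_{p_n}^{(t_n)}| - 1$, where the superscript indicates dependence on the parameters of $\Omega(t_n)$. Since $I_{p_n}^{(t_n)} \subseteq \{1,\ldots,q\}$ and $\S^n$ is compact, after passing to a subsequence we may assume $I_{p_n}^{(t_n)} = I$ is constant and $p_n \to p_0 \in \S^n$. Hausdorff continuity of each $\overline{\Omega_i(t)}$ at $t = 0$ then forces $p_0 \in \overline{\Omega_i(0)}$ for every $i \in I$; hence $I \subseteq I_{p_0}^{(0)}$.

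Finally I would invoke Plateauness at $p_0$ to close the argument. By Lemma \ref{lem:Plateau-at-p} the family $\{\tilde{\n}_i[p_0]\}_{i \in I_{p_0}^{(0)}}$ is affinely independent in $T_{p_0}\S^n$, which by the explicit formula from Lemma \ref{lem:blow-up-cones} is equivalent to affine independence of $\{\n_i^{(0)}(p_0)\}_{i \in I_{p_0}^{(0)}}$, where $\n_i^{(t)}(p) := \c_i(t) - \scalar{\c_i(t), p}\, p$. Restricting to the subset $I$ preserves affine independence. Since $(t,p) \mapsto \n_i^{(t)}(p)$ is continuous and affine independence of a finite family of vectors in $\R^{n+1}$ is an open condition, for $n$ sufficiently large $\{\n_i^{(t_n)}(p_n)\}_{i \in I}$ is affinely independent; subtracting its mean then shows that $\{\tilde{\n}_i[p_n]\}_{i \in I_{p_n}^{(t_n)}}$ is affinely independent as well, so $\arank(\N_{p_n}^{(t_n)}) = |I| - 1 = |I_{p_n}^{(t_n)}| - 1$, contradicting our choice of $p_n$. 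The main subtlety in the argument is the Hausdorff-continuity step, since $\Omega(t)$ is perpendicular only at $t = 0$; this is precisely what the openness statement of Corollary \ref{cor:isolated}, promoting the identity $\overline{\Omega_i} = \underline{\Omega_i}$ from $t=0$ to a whole neighborhood, is designed to handle.
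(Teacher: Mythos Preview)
Your proof is correct and uses essentially the same ingredients as the paper's: Hausdorff continuity of the closed cells (via Lemma \ref{lem:perp} and Corollary \ref{cor:isolated}), openness of affine independence of the normals $\n_i(p,t) = \c_i(t) - \scalar{\c_i(t),p} p$, and the characterization of Plateau from Lemma \ref{lem:full-Plateau}. The paper packages this as a local statement (Lemma \ref{lem:Plateau-field}, which it later reuses in Proposition \ref{prop:Plateau-field}) followed by a covering-compactness argument, whereas you argue by contradiction via sequential compactness; these are equivalent formulations of the same proof, and your explicit invocation of Corollary \ref{cor:isolated} to propagate the identity $\overline{\Omega_i} = \underline{\Omega_i}$ to $t > 0$ is exactly the content behind the paper's application of Lemma \ref{lem:cont}.
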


By Lemma \ref{lem:full-Plateau}, the claim is that:
\[ \forall p \in \S^n \;\;\; \forall t \in [0,t_0] \;\;\; \arank(\N_p(t)) = |I_p(t)|-1 ,
\] where we use $P(t)$ to denote the parameter $P \in \{ I_p, \N_p, \n_i(p)\}$ corresponding to $\Omega(t)$. The proof is then an immediate consequence of compactness of $\S^n$, once the following local claim is established:

\begin{lemma} \label{lem:Plateau-field}
With the same assumptions and notation as in the previous proposition, for all $p_0 \in \S^n$, there exists an open neighborhood $U_{p_0}$ of $p_0$ and $t_{p_0} \in (0,1]$ so that for all $p \in U_{p_0}$ and $t \in [0,t_{p_0}]$:
\begin{enumerate}[(i)]
\item $I_p(t) \subset I_{p_0}(0)$, i.e. $U_{p_0} \cap \cup_{i \notin I_{p_0}(0)} \overline{\Omega_i(t)} = \emptyset$. 
\item $\arank(\N_p(t)) = |I_p(t)|-1$, i.e. $\{\n_i(p,t)\}_{i \in I_p(t)}$ are affinely independent. 
\end{enumerate}
\end{lemma}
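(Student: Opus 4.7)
The plan is to prove both parts by continuity arguments, with the main preparatory step being to promote the Hausdorff-continuity of cells from the perpendicular assumption at $t=0$ to a small interval of times around $0$.

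\emph{Preliminaries.} Since $\Omega(0)$ is perpendicular with all cells non-empty, Lemma \ref{lem:perp} yields $\overline{\Omega_i(0)} = \underline{\Omega_i(0)}$ for every $i$. By Corollary \ref{cor:isolated} the condition $\overline{\Omega_i} = \underline{\Omega_i}$ is open in parameter space, hence continues to hold for $\Omega(t)$ and every $i$ for $t$ in some interval $[0, t_0]$. For such $t$, $\overline{\Omega_i(t)} = \underline{\Omega_i(t)} = \S^n \cap P_i(t)$ is the intersection of $\S^n$ with a closed convex polyhedron whose defining half-spaces depend continuously on $(\C(t), \k(t))$; therefore $t \mapsto \overline{\Omega_i(t)}$ is Hausdorff-continuous at $t=0$ (this is precisely the argument of Lemma \ref{lem:cont}, with perpendicularity replaced by the pointwise equality $\overline{\Omega_i} = \underline{\Omega_i}$ provided by Corollary \ref{cor:isolated}).

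\emph{Part (i).} For each $i \notin I_{p_0}(0)$, set $\delta_i := d(p_0, \overline{\Omega_i(0)}) > 0$. Choose $t_i \in (0, t_0]$ small enough that $d_H(\overline{\Omega_i(t)}, \overline{\Omega_i(0)}) < \delta_i/3$ for all $t \in [0, t_i]$. For $p \in B(p_0, \delta_i/3)$ and $t \in [0, t_i]$, a triangle inequality gives $d(p, \overline{\Omega_i(t)}) > \delta_i/3 > 0$, so $p \notin \overline{\Omega_i(t)}$ and $i \notin I_p(t)$. Since $\{1,\ldots,q\} \setminus I_{p_0}(0)$ is finite, setting $U_{p_0} := B(p_0, r)$ and $t_{p_0} := \min_i t_i$ with $r := \min_i \delta_i / 3$ establishes (i).

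\emph{Part (ii).} By the Plateau property at $p_0$ and Lemma \ref{lem:Plateau-at-p}, the vectors $\{\tilde\n_i[p_0](0)\}_{i \in I_{p_0}(0)}$ are affinely independent, equivalently (since $\tilde\n_i[p]$ differs from $\n_i(p)$ by an index-independent translation, which preserves affine independence), $\{\n_i(p_0, 0)\}_{i \in I_{p_0}(0)}$ are affinely independent in $T_{p_0}\S^n$. The assignment $(p, t) \mapsto \n_i(p, t) = \c_i(t) - \scalar{\c_i(t), p} p$ is jointly continuous, and affine independence is an open condition (nonvanishing of a suitable minor). Hence, after shrinking $U_{p_0}$ and $t_{p_0}$ if necessary, $\{\n_i(p, t)\}_{i \in I_{p_0}(0)}$ remain affinely independent for all $(p, t) \in U_{p_0} \times [0, t_{p_0}]$. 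Combined with part (i), which gives $I_p(t) \subset I_{p_0}(0)$, the vectors $\{\n_i(p, t)\}_{i \in I_p(t)}$ are a subset of an affinely independent family and are therefore themselves affinely independent, so $\arank(\N_p(t)) = |I_p(t)|-1$.

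The only genuine subtlety — rather than an obstacle — is the first step: Lemma \ref{lem:cont} as stated requires the cluster to be perpendicular, whereas $\Omega(t)$ for $t > 0$ need not remain so under an arbitrary deformation that preserves the spherical Voronoi property. The resolution is the open condition in Corollary \ref{cor:isolated}, which provides the pointwise equality $\overline{\Omega_i(t)} = \underline{\Omega_i(t)}$ that powers the Hausdorff-continuity argument. Once this is in place, both (i) and (ii) reduce to routine continuity and openness statements.
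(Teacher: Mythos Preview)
Your proof is correct and follows essentially the same approach as the paper. The paper's proof invokes Lemma~\ref{lem:cont} directly for the Hausdorff continuity of $\overline{\Omega_i(t)}$ near $t=0$; your preliminary step simply unpacks what the proof of that lemma already does (namely, combine Lemma~\ref{lem:perp} with Corollary~\ref{cor:isolated} to get $\overline{\Omega_i} = \underline{\Omega_i}$ in a parameter-neighborhood, then use that $\underline{\Omega_i} = \S^n \cap P_i$ varies continuously). Your concern about perpendicularity of $\Omega(t)$ for $t>0$ is well taken, but note that Lemma~\ref{lem:cont} is stated as a \emph{local} continuity result around a perpendicular cluster, so it already covers non-perpendicular perturbations; your explicit resolution via Corollary~\ref{cor:isolated} is exactly how that lemma's proof proceeds.
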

\begin{proof}
Note that both requirements hold by at $p=p_0$ and $t=0$; the first trivially and the second by the Plateau property of $\Omega = \Omega(0)$. 

Let $\hat U_{p_0}$ be an open neighborhood of $p_0$ whose closure is positive distance from $\cup_{i \notin I_{p_0}(0)} \overline{\Omega_i(0)}$. Since $[0,1] \ni t \mapsto \C(t),\k(t)$ are continuous by assumption and since $\overline{\Omega_i(t)}$ is locally continuous in $\C(t),\k(t)$ with respect to the Hausdorff metric by Lemma \ref{lem:cont}, it follows that $\hat U_{p_0}$ remains separated from $\cup_{i \notin I_{p_0}(0)} \overline{\Omega_i(t)}$ for all $t \in [0,\hat t_{p_0}]$ for some $\hat t_{p_0} > 0$, establishing the first assertion. 

To show the second assertion, it is therefore enough to show that, possibly on a smaller neighborhood $U_{p_0} \subset \hat U_{p_0}$ and for $t_{p_0} \in (0,\hat t_{p_0}]$,  $\{\n_i(p,t)\}_{i \in I_{p_0}(0)}$ are affinely independent for all $p \in U_{p_0}$ and $t \in [0,t_{p_0}]$. And indeed, since $\n_i(p,t) = \c_i(t) - \scalar{\c_i(t),p} p$, we see that these vectors are continuous in $(p,t) \in \S^n \times [0,1]$, and since they are affinely independent at $(p,t) = (p_0,0)$, there exists an open neighborhood where they remain so. 
\end{proof}

\subsection{Regular spherical Voronoi clusters}

The results collected in Section \ref{sec:prelim} imply that:

\begin{lemma}\label{lem:3-Plateau}
An isoperimetric minimizing spherical Voronoi cluster on $\S^n$ is $3$-Plateau (and in particular, non-degenerate). 
\end{lemma}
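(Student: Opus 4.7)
The plan is to combine the regularity theorem for isoperimetric minimizing clusters (Theorem \ref{thm:regularity}) with the classification of low-dimensional minimal cones. Since $\Omega$ is isoperimetric minimizing, Theorems \ref{thm:Almgren} and \ref{thm:regularity} imply it is regular, and Lemma \ref{lem:non-degenerate} then gives non-degeneracy. It remains to verify the Plateau property at every $p \in \Sigma$ with $\arank(\N_p) \leq 3$, which I will do by stratifying $\Sigma = \Sigma^1 \sqcup \Sigma^2 \sqcup \Sigma^3 \sqcup \Sigma^4$ according to Theorem \ref{thm:regularity} and checking each stratum separately.

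For $p \in \Sigma^k$ with $k \in \{1,2,3\}$, the local model prescribed by Theorem \ref{thm:regularity} is a hyperplane for $k=1$, $\Y \times \R^{n-2}$ for $k=2$, and $\T \times \R^{n-3}$ for $k=3$. In each case $|I_p| = k+1$, and the $k+1$ normal parameters $\{\tilde\n_i[p]\}_{i \in I_p}$ are precisely the vertices of a centered regular unit $k$-simplex (an antipodal pair, three coplanar vectors at $120^\circ$, or four tetrahedral directions, respectively). Plateau at $p$ then follows at once from Lemma \ref{lem:Plateau-at-p} \ref{it:Plateau-5}, and in addition $\arank(\N_p) = k \le 3$ is automatically attained.

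The step that requires a genuine input is treating $p \in \Sigma^4$: the claim is that necessarily $\arank(\N_p) \geq 4$, which removes any Plateau obligation on $\Sigma^4$. I would argue by contradiction. Suppose $d := \arank(\N_p) \leq 3$. By Lemma \ref{lem:blow-up-cones}, the blow-up $\tilde\Omega^p$ is isometric to $\tilde{\underline\Omega}^p \times \R^{n-d}$ for a full-dimensional conical Voronoi cluster $\tilde{\underline\Omega}^p$ in $\R^d$. Since blow-ups of isoperimetric minimizers are themselves $(\M,\eps,\delta)$-minimizing in the sense of Almgren, $\tilde{\underline\Omega}^p$ is a minimizing conical cluster in $\R^d$; for $d \leq 3$ such cones are fully classified (trivially for $d=1$; by the planar classification of Morgan for $d=2$; and by Taylor's theorem \cite{Taylor-SoapBubbleRegularityInR3} for $d=3$), and must be the cone over a regular $d$-simplex. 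Consequently the blow-up of $\Omega$ at $p$ coincides with the defining local model of $\Sigma^d$, so Theorem \ref{thm:regularity} places $p$ in $\Sigma^d \subset \Sigma^1 \cup \Sigma^2 \cup \Sigma^3$, contradicting $p \in \Sigma^4$. This is the one step that genuinely uses low-dimensional rigidity; everything else is direct bookkeeping of ranks and normal directions, and the failure of this rigidity in dimensions $d \geq 4$ (e.g.\ Brakke's hypercube cone) is precisely why we cannot push the argument past $3$-Plateau.
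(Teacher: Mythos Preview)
Your argument has the right ingredients, but the final step for $p \in \Sigma^4$ does not go through as written. You claim that once the blow-up at $p$ is identified as the cone over a regular $d$-simplex, ``Theorem~\ref{thm:regularity} places $p$ in $\Sigma^d$'', contradicting $p \in \Sigma^4$. But Theorem~\ref{thm:regularity} only asserts the \emph{existence} of a decomposition $\Sigma = \Sigma^1 \sqcup \cdots \sqcup \Sigma^4$ with the stated properties; it does not say the decomposition is canonical, nor that $\Sigma^4$ consists precisely of points whose blow-up fails to be one of the three model cones. Nothing in the statement prevents the chosen $\Sigma^4$ from containing an isolated point whose blow-up is in fact a $\Y$- or $\T$-cone. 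The Remark immediately following this Lemma in the paper makes exactly this point when discussing general regular clusters.

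The fix is easy and in fact streamlines your whole argument. Once your blow-up/Taylor step shows that $\tilde{\underline\Omega}^p$ is the cone over a regular $d$-simplex, you have $|\tilde I_p| = d+1$; by non-degeneracy $I_p = \tilde I_p$, hence $\arank(\N_p) = |I_p|-1$, which is precisely Plateau at $p$ by Lemma~\ref{lem:Plateau-at-p}~\ref{it:Plateau-1}. No contradiction is required. This is exactly the paper's proof: it skips the stratification altogether and applies the blow-up argument uniformly to every $p \in \Sigma$ with $\arank(\N_p) \leq 3$, which renders your separate treatment of $\Sigma^1,\Sigma^2,\Sigma^3$ redundant (though correct).
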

\begin{proof}
Any regular (and in particular minimizing) spherical Voronoi cluster on $\S^n$ is non-degenerate by Lemma \ref{lem:non-degenerate}. 
Now let $p \in \Sigma$ with $d = \arank(N_p) \leq 3$. Note that $d \geq 1$ because $|I_p| \geq 2$ and hence $p \in \Sigma = \overline{\Sigma^1}$. By Lemma~\ref{lem:blow-up-cones}, the blow-up cluster $\tilde \Omega^p$ at $p$ is isometric to ${\tilde {\underline \Omega}^p} \times \R^{n-d}$ for a conical Voronoi cluster ${\tilde {\underline \Omega}^p}$ in $\R^d$. By~\cite[Theorem 21.14]{MaggiBook}, the blow-up limit's boundary $\tilde \Sigma^p = \partial \tilde \Omega^p$ is an area-minimizing cone. Therefore (see e.g. \cite[Theorem 5.4.8]{FedererBook}) ${\tilde {\underline \Sigma}^p} =  \partial {\tilde {\underline \Omega}^p}$ is also an area-minimizing cone in $\R^d$,
 and so by Taylor's classification~\cite{Taylor-SoapBubbleRegularityInR3} it must be a $\T$ cone if $d=3$, a $\Y$ cone if $d=2$ and a hyperplane if $d=1$. Since $|I_p|$ is exactly the number of non-empty cells in $\tilde \Omega^p$, it follows that $|I_p| = d+1$, establishing that $\Omega$ is Plateau at $p$. 
\end{proof}

\begin{remark}
It is almost also true that a general regular spherical Voronoi cluster on $\S^n$ is $3$-Plateau, just by invoking Lemma \ref{lem:non-degenerate} as above and the property of regular clusters asserting that $\Sigma^2$ is locally diffeomorphic to a $\Y$-cone and that $\Sigma^3$ is locally diffeomorphic to a $\T$-cone. However, our definition of regularity allows for adding an arbitrary subset $B \subset \Sigma$ with  $\dim_{\H}(B) \leq n-4$ to the ``singular set" $\Sigma_4$, and so strictly speaking, we are not able to immediately guarantee the $2$-Plateau property on $B \cap \Sigma^2$, nor the $3$-Plateau property on $B \cap \Sigma^3$. While for the results of Sections \ref{sec:spectral} through \ref{sec:LSE} it is enough to consider regular spherical Voronoi clusters, we will prefer to state our results in those sections for $3$-Plateau spherical Voronoi clusters, since the Plateau definition is much more concrete for a spherical Voronoi cluster than that of regularity. 
\end{remark}

\begin{figure}
    \begin{center}
            \includegraphics[scale=0.2]{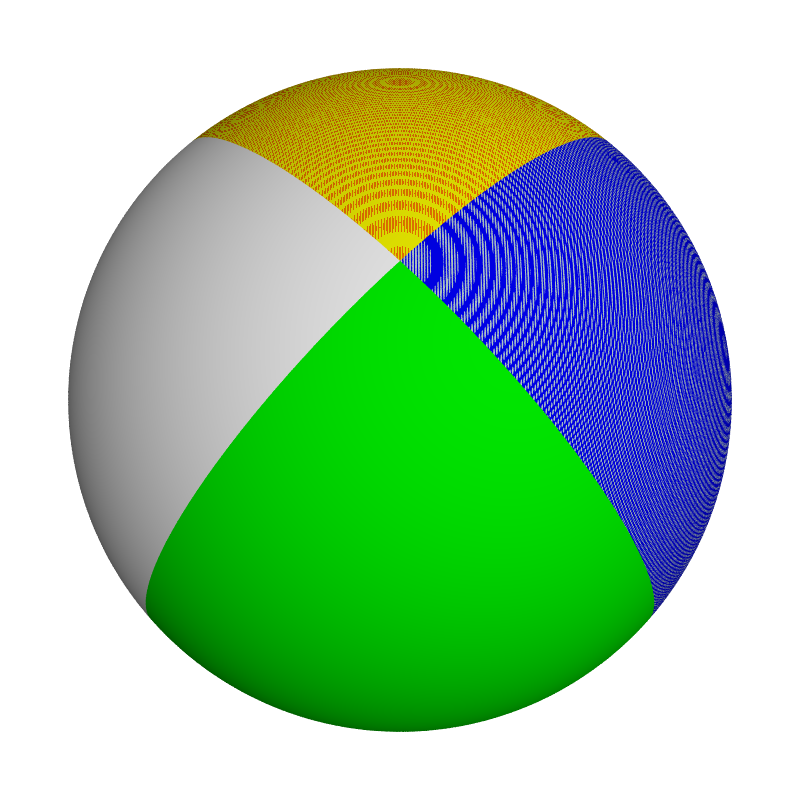}
     \end{center}
     \caption{
         \label{fig:quad}
        A non-degenerate spherical Voronoi $4$-cluster which is not $2$-Plateau. 
     }
\end{figure}

See Figure \ref{fig:intro-spherical-Voronoi} for an example of a $2$-Plateau spherical Voronoi cluster, and Figure \ref{fig:quad} for an example of a non-degenerate spherical Voronoi cluster which is not $2$-Plateau, and hence cannot be an isoperimetric minimizer. For a $2$-Plateau spherical Voronoi cluster, we define for distinct $i,j,k$
\[
\Sigma_{ijk} := \overline{\Omega_i} \cap \overline{\Omega_j} \cap \overline{\Omega_k} \setminus \cup_{\ell \neq i,j,k} \overline{\Omega_\ell},
\]
 and set $\Sigma^2 := \cup_{i<j<k} \Sigma_{ijk}$. For a $3$-Plateau spherical Voronoi cluster, we define for distinct $i,j,k,\ell$
 \[
 \Sigma_{ijk\ell} :=  \overline{\Omega_i} \cap \overline{\Omega_j} \cap \overline{\Omega_k}  \cap \overline{\Omega_\ell} \setminus \cup_{m \neq i,j,k,\ell} \overline{\Omega_m},
 \]
 set $\Sigma^3 := \cup_{i<j<k<\ell} \Sigma_{ijk\ell}$, and finally
 \[
 \Sigma^4 := \Sigma \setminus (\Sigma^1 \cup \Sigma^2 \cup \Sigma^3) = \cup_{i<j<k<\ell<m} \overline{\Omega_i} \cap \overline{\Omega_j} \cap \overline{\Omega_k} \cap \overline{\Omega_\ell}  \cap \overline{\Omega_m} .  
 \]
 In either case, we set $\Sigma^{\geq 3} = \Sigma \setminus (\Sigma^1 \cup \Sigma^2)$. 
 
\medskip
 
With these definitions, it is straightforward to verify that:
\begin{lemma} \label{lem:Voronoi-regular}
A $3$-Plateau spherical Voronoi cluster is regular. 
\end{lemma}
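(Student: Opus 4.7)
The plan is to verify each of the three requirements of Definition \ref{def:regular}: interface-regularity, the stratified local structure of Theorem \ref{thm:regularity}, and the two density estimates \eqref{eq:Almgren-density} and \eqref{eq:density}. Interface-regularity and the lower density bound \eqref{eq:density} for some $\eps(\Omega)>0$ are for free: a $3$-Plateau cluster is by definition non-degenerate, so Corollary \ref{cor:non-degenerate-regular} applies directly. The upper perimeter bound \eqref{eq:Almgren-density} follows from the inclusion $\Sigma\subset\bigcup_{i<j}S_{ij}$, since each summand is a smooth $(n-1)$-dimensional geodesic sphere in $\S^n$ whose intersection with a small ball has $\H^{n-1}$-measure bounded by $Cr^{n-1}$, with $C$ depending only on $n$. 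The disjoint decomposition $\Sigma=\Sigma^1\sqcup\Sigma^2\sqcup\Sigma^3\sqcup\Sigma^4$ follows from the defining cardinality of $I_p$ on each stratum, once one notes that $p\in\Sigma$ is equivalent to $|I_p|\ge 2$ (using $\bigcup_i\overline{\Omega_i}=\S^n$).

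For the local structure at $p\in\Sigma^2$ with $I_p=\{i,j,k\}$, the plan is to exploit the spherical Voronoi form directly. The $2$-Plateau property together with Lemma \ref{lem:Plateau-at-p} forces the tangential normals $\tilde\n_{ij}[p],\tilde\n_{jk}[p],\tilde\n_{ki}[p]$ to form a centered regular unit triangle spanning a $2$-plane in $T_p\S^n$. Since these are the tangential projections of the ambient normals $\c_{uv}+\k_{uv}p$ of the three geodesic spheres $S_{uv}$, any two of $S_{ij},S_{jk},S_{ki}$ meet transversally at $p$, and their common intersection is a smooth $(n-2)$-dimensional embedded submanifold of $\S^n$ coinciding with $\Sigma_{ijk}$ near $p$ (by non-degeneracy and Lemma \ref{lem:Sigmaij-char}). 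A chart built from the two independent ambient defining functions $f_1=\scalar{\c_{ij},\cdot}+\k_{ij}$ and $f_2=\scalar{\c_{jk},\cdot}+\k_{jk}$ (with $S_{ki}$ then corresponding to $f_1+f_2=0$), together with $n-2$ tangential coordinates along the intersection, delivers a $C^\infty$ diffeomorphism onto a neighborhood of the origin in $E^{(2)}\times\R^{n-2}$ sending $\Sigma$ to $\Y\times\R^{n-2}$, after a linear change of coordinates in the $(f_1,f_2)$-plane dictated by the $120^\circ$ Plateau angles. The argument at $p\in\Sigma^3$ is identical: $3$-Plateau produces three linearly independent ambient defining functions of the incident geodesic spheres and the regular tetrahedral configuration of the four normals, yielding a $C^\infty$ (and a fortiori $C^{1,\alpha}$) chart onto a neighborhood of the origin in $E^{(3)}\times\R^{n-3}$ sending $\Sigma$ to $\T\times\R^{n-3}$.

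The main obstacle is the Hausdorff bound $\dim_{\H}(\Sigma^4)\le n-4$, because $3$-Plateau provides no direct structural information at points with $|I_p|\ge 5$. Closedness of $\Sigma^4$ is immediate from the given finite-union representation as intersections of five closed cell-closures. For the dimension estimate, the plan converts the $3$-Plateau hypothesis into a purely linear-algebraic codimension bound. At any $p\in\Sigma^4$ the contrapositive of the $3$-Plateau condition forces $\arank(\N_p)\ge 4$, which via \eqref{eq:affine-ranks} also yields $\arank\{\c_u\}_{u\in I_p}\ge 4$. I would then choose a $5$-subset $S\subset I_p$ realising this rank and observe that $p$ lies in $H_S\cap\S^n$, where
\[
H_S := \bigcap_{u,v\in S}\{q\in\R^{n+1}:\scalar{\c_{uv},q}+\k_{uv}=0\}
\]
is the intersection of four independent affine hyperplanes, hence an affine subspace of $\R^{n+1}$ of codimension exactly $4$, so that $H_S\cap\S^n$ is a smooth submanifold of Hausdorff dimension at most $n-4$. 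Covering $\Sigma^4$ by the finite collection of such $H_S\cap\S^n$ as $S$ ranges over admissible $5$-element subsets of $\{1,\ldots,q\}$ gives $\dim_{\H}(\Sigma^4)\le n-4$, completing the verification of Definition \ref{def:regular}.
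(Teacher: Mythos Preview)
Your proof is correct and follows essentially the same route as the paper's: both invoke Corollary \ref{cor:non-degenerate-regular} for interface-regularity and the lower density bound, note that the upper perimeter bound is immediate for spherical Voronoi clusters, use the Plateau property via Lemma \ref{lem:Plateau-at-p} to obtain the local $\Y$- and $\T$-cone structure at $\Sigma^2$ and $\Sigma^3$, and deduce $\dim_{\H}(\Sigma^4)\le n-4$ from the contrapositive rank bound $\arank(\N_p)\ge 4$. Your global covering of $\Sigma^4$ by finitely many affine slices $H_S\cap\S^n$ is a slightly more explicit variant of the paper's local argument, but the content is the same.
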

\begin{proof}
By Corollary \ref{cor:non-degenerate-regular}, non-degeneracy implies that the cluster is interface-regular and satisfies the density estimate (\ref{eq:density}) for some $\eps >0$. The density estimate (\ref{eq:Almgren-density}) is trivial for a spherical Voronoi cluster. The definitions of $\Sigma^2$ and $\Sigma^3$ above and the $3$-Plateau property (Lemma \ref{lem:Plateau-at-p} \ref{it:Plateau-6}) imply that the boundaries of the blow-up clusters at $p \in \Sigma^2$ and $p \in \Sigma^3$ are $\Y$- and $\T$-cones, respectively. Arguing as in the proof of Lemma \ref{lem:Sigmaij-char}, $\Sigma_{ijk}$ is a relatively open subset of $\{ p \in \S^n \; ; \; \scalar{p,\c_i} + \k_i = \scalar{p,\c_j} + \k_j = \scalar{p,\c_k} + \k_k \}$, and it follows that $\Sigma$ is locally diffeomorphic in a neighborhood of $p \in \Sigma_{ijk}$ to the $\Y$-cone at $p$. Similarly, $\Sigma$ is locally diffeomorphic in a neighborhood of $p \in \Sigma_{ijk\ell}$ to the $\T$-cone at $p$. 

Lastly, we need to show that $\dim_\H(\Sigma^4) \leq n-4$. The $3$-Plateau property implies that $\arank(\N_p) \geq 4$ for all $p \in \Sigma^4$ since $|I_p| \geq 5$ at those points. Arguing as before, it follows that for every $p \in \Sigma^4$ there is a neighborhood of $p$ where $\Sigma^4$ is a smooth embedded manifold of codimension $\arank(N_p) \geq 4$, concluding the proof. 
\end{proof}

Consequently, all of the assertions for regular clusters from Section \ref{sec:prelim} hold for such clusters. In particular, the assertions of Lemma \ref{lem:boundary-normal-sum} and (\ref{eq:sqrt3}) on the $120^{\circ}$-degrees of incidence at triple-points $\Sigma^2$ are in force.  In fact, it is not hard to check the proofs in \cite{EMilmanNeeman-GaussianMultiBubble,EMilmanNeeman-TripleAndQuadruple} and verify that most of the properties from Section \ref{sec:prelim} remain valid for $2$-Plateau clusters, but we will not insist on this extraneous generality here, and formulate the results of Sections \ref{sec:conformal} through \ref{sec:LSE} for $3$-Plateau spherical Voronoi clusters. 

\begin{lemma} \label{lem:Voronoi-stationary}
A $3$-Plateau spherical Voronoi cluster is stationary (with Lagrange multiplier $\lambda = (n-1) \k$).
\end{lemma}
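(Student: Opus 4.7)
The plan is to invoke Lemma \ref{lem:equivalent-stationary}, which reduces stationarity (for regular clusters with locally bounded curvature) to two conditions: constancy of the weighted mean-curvatures $H_{\Sigma_{ij},\mu}$ equal to $\lambda_i - \lambda_j$, and the cyclic normal-sum condition at triple points. Preliminary observations: by Lemma \ref{lem:Voronoi-regular}, a $3$-Plateau spherical Voronoi cluster is regular. It also has locally bounded curvature, since each interface $\Sigma_{ij}$ lies in a geodesic sphere and hence has constant second fundamental form. Finally, since $\mu$ on $\S^n$ is a constant multiple of the Riemannian volume measure, the density $W$ is constant and $H_{\Sigma,\mu} = H_{\Sigma}$, so I can work with unweighted mean-curvatures throughout.

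For the first condition, by the definition of spherical Voronoi cluster (Definition \ref{def:spherical-Voronoi}), every non-empty interface $\Sigma_{ij}$ is a subset of a geodesic sphere with normalized mean-curvature $\k_{ij} = \k_i - \k_j$. Because the normalized mean-curvature $\k$ is defined by $\k = H_{\Sigma}/(n-1)$, it follows immediately that $H_{\Sigma_{ij}}$ is constant on each interface and equal to $(n-1)(\k_i - \k_j)$. Setting $\lambda := (n-1)\k \in E^{(q-1)}$ (using the convention $\sum_i \k_i = 0$), we obtain $H_{\Sigma_{ij},\mu} = \lambda_i - \lambda_j$, satisfying condition \ref{it:stationarity-k} of Lemma \ref{lem:equivalent-stationary}.

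For the second condition, I would use the explicit formula for the unit normals. From $\c = \n - \k p$ in (\ref{eq:cnk}), on $\Sigma_{ij}$ the outward unit normal is $\n_{ij}(p) = \c_{ij} + \k_{ij} p = (\c_i - \c_j) + (\k_i - \k_j)p$, which extends continuously to $\overline{\Sigma_{ij}}$. At any triple point $p \in \Sigma_{uvw} \subset \Sigma^2$, since $|I_p| = 3$ we have $\arank(\N_p) \leq 2$, so the $2$-Plateau property (which is implied by being $3$-Plateau) together with Lemma \ref{lem:Plateau-at-p}\ref{it:Plateau-3} guarantees $p \in \overline{\Sigma_{uv}} \cap \overline{\Sigma_{vw}} \cap \overline{\Sigma_{wu}}$, so all three limit normals are well-defined at $p$. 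Then
\[
\sum_{(i,j) \in \cyclic(u,v,w)} \n_{ij}(p) = \sum_{(i,j) \in \cyclic(u,v,w)} \bigl[(\c_i - \c_j) + (\k_i-\k_j)p\bigr] = 0
\]
by a trivial telescoping cancellation, establishing condition \ref{it:stationarity-n} of Lemma \ref{lem:equivalent-stationary}. Applying that lemma yields stationarity with Lagrange multiplier $\lambda = (n-1)\k$, as claimed. I do not anticipate any real obstacle here: once the reduction to Lemma \ref{lem:equivalent-stationary} is in place, both conditions are essentially built into the very definition of a spherical Voronoi cluster together with the mild regularity afforded by the $3$-Plateau assumption.
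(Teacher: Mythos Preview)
Your proof is correct and follows essentially the same approach as the paper: both reduce to Lemma \ref{lem:equivalent-stationary} (after noting regularity via Lemma \ref{lem:Voronoi-regular} and bounded curvature), and verify its two conditions directly from Definition \ref{def:spherical-Voronoi} and the explicit formula $\n_{ij} = \c_{ij} + \k_{ij} p$, with the telescoping sum giving the $120^\circ$ condition at triple points. The paper's proof is just a one-line summary of exactly what you spelled out.
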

\begin{proof}
Immediate from Definition \ref{def:spherical-Voronoi} and Lemma \ref{lem:equivalent-stationary} (as the cluster is of bounded curvature and regular by the previous comments), after recalling that $\n_{ij} = \c_{ij} + \k_{ij} p$ on each non-empty interface $\Sigma_{ij}$. 
\end{proof}

\subsection{Standard bubbles}

By Liouville's classical theorem \cite{Udo-MobiusDifferentialGeometry,Blair-InversionTheory}, all conformal automorphisms of $\bar \R^n$, the one-point-at-infinity compactification of $\R^n$, when $n \geq 3$, are given by M\"obius transformations, obtained by composing isometries (orthogonal linear transformations and translations) with scaling ($p \mapsto e^\lambda p$) and spherical inversion ($p \mapsto p / |p|^2$). Using stereographic projection, this also classifies all conformal automorphisms of $\S^n$.
Note the degrees of freedom when selecting the stereographic projection: once the North pole is chosen on $\S^n \subset \R^{n+1}$, one is allowed to isometrically embed $\R^n$ in $\R^{n+1}$ perpendicularly to the North-South axis arbitrarily (as long as it doesn't pass through the North pole).
Set $\bar \S^n = \S^n$ and let $\{ \MM_1 , \MM_2 \} = \{\R , \S\}$. 
The composition of two stereographic projections, the first from $\bar \MM_1^n$ to $\bar \MM^n_2$ and the second from $\bar \MM^n_2$ back to $\bar \MM_1^n$, is a M\"obius automorphism of $\bar \MM_1^n$, and in fact any M\"obius automorphism of $\bar \MM_1^n$ may be obtained in this manner.
The standard bubbles on $\R^n$ and $\S^n$ are thus generated from the equal-volume bubble by M\"obius automorphisms (in the case of $\R^n$, automorphisms which send a point in the interior of $\Omega_q$ to infinity, to ensure that $\Omega_q$ remains the unique unbounded cell). We refer to \cite[Section 10]{EMilmanNeeman-TripleAndQuadruple} and the references therein for proofs of the statements in this subsection and the ensuing one.

\begin{lemma}[M\"obius Automorphisms Preserve $\ell$-Plateau Spherical Voronoi Clusters] \label{lem:Mobius-preserves-Voronoi}
Let $\Omega$ be a spherical Voronoi cluster on $\S^n$. Then for any M\"obius automorphism $T$ of $\S^n$, $T(\Omega) = (T (\Omega_1),\ldots,T(\Omega_q))$ is a spherical Voronoi cluster. For any $\ell$, if $\Omega$ is $\ell$-Plateau, then so is $T(\Omega)$. 
\end{lemma}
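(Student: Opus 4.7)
The plan is to package the quasi-center and curvature parameters into a Lorentzian framework in which the M\"obius group of $\S^n$ acts linearly. Equip $\R^{n+2}$ with the Lorentzian bilinear form $\scalar{x,y}_L := \sum_{i=1}^{n+1} x_i y_i - x_{n+2} y_{n+2}$ and let $C_+ := \{x \in \R^{n+2} : \scalar{x,x}_L = 0,\ x_{n+2} > 0\}$ denote the future light cone. For $p \in \S^n$ set $\tilde p := (p,1) \in C_+$, and for the generators of $\Omega$ define $\hat \c_j := (\c_j,-\k_j) \in \R^{n+2}$. A direct computation yields $\scalar{\hat \c_j,\tilde p}_L = \scalar{\c_j,p} + \k_j$, so the Voronoi representation from Definition \ref{def:generated-Voronoi} becomes
\[
\Omega_i = \interior\bigl\{ p \in \S^n : \argmin_{j} \scalar{\hat \c_j,\tilde p}_L = i \bigr\},
\]
and the spherical Voronoi condition $|\c_{ij}|^2 = 1 + \k_{ij}^2$ from Remark \ref{rem:quasi-centers} translates to $\scalar{\hat \c_i - \hat \c_j,\hat \c_i - \hat \c_j}_L = 1$ for every non-empty $\Sigma_{ij}$.

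Next I would invoke the standard identification of the M\"obius group of $\S^n$ with the subgroup of $O(n+1,1)$ preserving $C_+$: any M\"obius automorphism $T$ lifts to $\hat T \in O(n+1,1)$ with $\hat T \tilde p = \lambda_T(p)\,\widetilde{T(p)}$ for a positive conformal factor $\lambda_T(p) > 0$. Setting $\hat \c'_j := \hat T \hat \c_j$ and decomposing $\hat \c'_j = (\c'_j,-\k'_j)$, the identity
\[
\scalar{\hat \c_j,\tilde p}_L = \scalar{\hat T \hat \c_j,\hat T \tilde p}_L = \lambda_T(p)\, \scalar{\hat \c'_j,\widetilde{T(p)}}_L
\]
together with $\lambda_T(p) > 0$ shows that the $\argmin_j$ structure is preserved under the bijection $q = T(p)$, so $T(\Omega)$ is the affine Voronoi cluster generated by $\{\c'_j,\k'_j\}$. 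Because $\hat T$ preserves $\scalar{\cdot,\cdot}_L$, the spherical Voronoi identity is inherited on every non-empty interface $T(\Sigma_{ij})$, and the centering conventions $\sum_j \c'_j = 0$, $\sum_j \k'_j = 0$ follow immediately from $\hat T(\sum_j \hat \c_j) = \hat T(0) = 0$.

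For the $\ell$-Plateau property I would use conformality of $T$ at each point: the differential $dT_p$ restricts to a scaled orthogonal map $T_p \S^n \to T_{T(p)}\S^n$, so the blow-up cone $\tilde \Omega^{T(p)}(T(\Omega))$ is the image of $\tilde \Omega^p(\Omega)$ under an orthogonal transformation composed with positive scaling. In particular, $I_p(\Omega) = I_{T(p)}(T(\Omega))$ under the bijection of cells, $\arank(\N_p)$ is preserved, non-degeneracy (Definition \ref{def:non-degenerate}) transports from $p$ to $T(p)$, and being a cone over a centered regular simplex (characterization \ref{it:Plateau-6} of Lemma \ref{lem:Plateau-at-p}) is invariant under scaled orthogonal transformations; hence Plateau at $p$ is equivalent to Plateau at $T(p)$, so $\ell$-Plateau is preserved for every $\ell$. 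The only potential obstacle is the initial bookkeeping around the lift $\hat T$ (classical, via the hyperboloid/light-cone model) and the verification that the decomposition $\hat \c'_j = (\c'_j,-\k'_j)$ consistently produces valid generators; both are routine once the Lorentzian packaging is in place.
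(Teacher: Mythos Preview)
Your proof is correct. The paper's own proof is a two-liner: it cites \cite[Lemma 10.2]{EMilmanNeeman-TripleAndQuadruple} for the first assertion and, for the $\ell$-Plateau part, simply observes that since $dT$ is non-singular, non-degeneracy and the Plateau property at a point are preserved.

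Your treatment of the second assertion is exactly the paper's argument, just spelled out in more detail (conformality of $T$ makes $dT_p$ a scaled orthogonal map, hence blow-up cones transfer isometrically up to scaling, and all the equivalent characterizations in Lemma~\ref{lem:Plateau-at-p} are preserved). For the first assertion, where the paper defers to its predecessor, you supply a self-contained argument via the Lorentzian (light-cone) model: packaging $(\c_j,-\k_j)$ as vectors in $(\R^{n+2},\scalar{\cdot,\cdot}_L)$ so that the Voronoi inequalities become $\scalar{\hat\c_j,\tilde p}_L$ and the spherical Voronoi constraint $|\c_{ij}|^2=1+\k_{ij}^2$ becomes $\scalar{\hat\c_{ij},\hat\c_{ij}}_L=1$, then using the classical identification of the M\"obius group with the orthochronous Lorentz group $O^+(n+1,1)$. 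This is the standard and cleanest way to see why M\"obius maps preserve the spherical Voronoi structure (it makes the invariance linear), and is very likely what the cited lemma does as well. The only point worth flagging is that one should note $T$ is a diffeomorphism, so the non-empty interfaces of $T(\Omega)$ are exactly the images $T(\Sigma_{ij})$ of non-empty interfaces of $\Omega$; you use this implicitly when transferring the spherical Voronoi identity, and it holds trivially.
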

\begin{proof}
The first assertion was established in \cite[Lemma 10.2]{EMilmanNeeman-TripleAndQuadruple}. To see the second part, which is local in nature, note that since the differential $dT$ is non-singular, $T$ preserves both non-degeneracy and the $\ell$-Plateau property at any given point.
\end{proof}

Since the equal-volume bubble on $\S^n$ is spherical Voronoi and Plateau (see \cite[Section 10]{EMilmanNeeman-TripleAndQuadruple}), we obtain:

\begin{corollary}[Standard Bubbles are Spherical Voronoi] \label{cor:standard-Voronoi}
For all $2 \leq q \leq n+2$, standard $(q-1)$-bubbles on $\S^n$ are spherical Voronoi Plateau clusters. If $2 \leq q \leq n+1$, they are in fact perpendicular spherical Voronoi Plateau clusters with $\S^{n+1-q}$-symmetry. \end{corollary}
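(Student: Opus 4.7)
The proof proceeds in three steps, essentially stringing together the structural results already established in this section.

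\textbf{Step 1 (Equal-volume case).} The plan is to first verify the claim for the equal-volume standard bubble $\Omega$ on $\S^n$, which is given as the intersection of $\S^n$ with the Voronoi cells of $q$ equidistant points $x_1,\ldots,x_q \in \S^n$. Since all $x_j$ lie on $\S^n$, one has $|p - x_j|^2 = 2 - 2\langle p,x_j\rangle$, so the defining condition $\argmin_j |p-x_j| = i$ can be rewritten as $\argmax_j \langle p,x_j\rangle = i$. Setting $\c_i := -x_i/d$ (where $d = |x_i - x_j|$ is the common distance, and noting that $\sum_i x_i = 0$ by the centered regular-simplex structure of equidistant points) and $\k_i := 0$, one recovers exactly the Voronoi representation~(\ref{eq:Voronoi-rep}). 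Each non-empty interface $\Sigma_{ij}$ then lies on the great sphere $\{p : \langle \c_{ij}, p\rangle = 0\}$, which has curvature $0 = \k_{ij}$ and quasi-center of norm $|\c_{ij}| = |x_j - x_i|/d = 1$, matching $|\c_{ij}|^2 = 1 + \k_{ij}^2$. So $\Omega$ is a spherical Voronoi cluster per Definition \ref{def:spherical-Voronoi}. The Plateau property at every boundary point follows from the full symmetry of the equal-volume configuration: at any $p \in \Sigma$, the set $\{x_i : i \in I_p\}$ consists of those $x_i$ that are equidistant from $p$ and closest to $p$, so by the symmetry of the regular simplex these form a regular sub-simplex, whence the tangential projections $\tilde\n_i[p]$ (given explicitly by Lemma~\ref{lem:blow-up-cones}) are the vertices of a centered regular simplex in $T_p \S^n$, which is condition~\ref{it:Plateau-5} of Lemma~\ref{lem:Plateau-at-p}. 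This detailed verification is carried out in \cite[Section 10]{EMilmanNeeman-TripleAndQuadruple}, which I would simply cite.

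\textbf{Step 2 (Transfer to general standard bubbles).} By the definition recalled in the Introduction, every standard $(q-1)$-bubble on $\S^n$ is obtained from the equal-volume bubble by composing two stereographic projections $\S^n \to \R^n \to \S^n$. By Liouville's theorem (see the discussion preceding Lemma~\ref{lem:Mobius-preserves-Voronoi}), such a composition is a M\"obius automorphism of $\S^n$. Applying Lemma~\ref{lem:Mobius-preserves-Voronoi} then immediately transfers both the spherical Voronoi property and the Plateau property from the equal-volume bubble to any standard $(q-1)$-bubble, valid for all $2 \le q \le n+2$. This establishes the first assertion.

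\textbf{Step 3 (Perpendicularity and $\S^{n+1-q}$-symmetry when $q \leq n+1$).} For any spherical Voronoi $q$-cluster, the quasi-center operator $\C : \R^{n+1} \to E^{(q-1)}$ has affine row rank at most $q-1$ (since its image lies in the $(q-1)$-dimensional space $E^{(q-1)}$). When $q \le n+1$, this yields $\arank(\C) \le q-1 \le n < n+1$. By the equivalence recorded just before Remark~\ref{rem:symmetry}, this is precisely the condition for $\Omega$ to be perpendicular, and by Remark~\ref{rem:symmetry} itself, $\Omega$ then has $\S^m$-symmetry with $m = n - \arank(\C) \ge n+1-q$; in particular it has $\S^{n+1-q}$-symmetry. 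This finishes the second assertion.

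No step is a genuine obstacle: Step 1 is the one requiring actual content, but it is entirely covered by \cite[Section 10]{EMilmanNeeman-TripleAndQuadruple}, while Steps 2 and 3 are clean invocations of Lemma~\ref{lem:Mobius-preserves-Voronoi} and Remark~\ref{rem:symmetry} respectively.
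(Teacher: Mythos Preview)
Your proposal is correct and follows essentially the same route as the paper: the paper's proof is just the one-line remark that the equal-volume bubble is spherical Voronoi and Plateau (citing \cite[Section 10]{EMilmanNeeman-TripleAndQuadruple}), followed by an implicit application of Lemma~\ref{lem:Mobius-preserves-Voronoi}, with the perpendicularity and $\S^{n+1-q}$-symmetry for $q\le n+1$ read off from Remark~\ref{rem:symmetry}. Your Steps~1--3 spell out these same ingredients more explicitly, which is fine.
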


\begin{proposition} \label{prop:standard-char}
The following are equivalent for a spherical Voronoi $q$-cluster $\Omega$ on $\S^n$ with $2 \leq q \leq n+2$:
\begin{enumerate}[(i)]
\item \label{it:standard-char-i} $\Omega$ is a standard-bubble. 
\item \label{it:standard-char-ii} All interfaces are non-empty: $\Sigma_{ij} \neq \emptyset$ for all $1 \leq i < j \leq q$.
\item \label{it:standard-char-iii} $\C \C^T  = \frac{1}{2} \Id_{E^{(q-1)}} + \k \k^T$. 
\end{enumerate}
\end{proposition}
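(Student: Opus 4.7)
The plan is to establish the cycle (i) $\Rightarrow$ (ii) $\Rightarrow$ (iii) $\Rightarrow$ (i). The first implication is almost immediate from the definition: the equal-volume standard $(q-1)$-bubble on $\S^n$ is the Voronoi partition of $q$ equidistant points in $\S^n$ (which span a $(q-1)$-dimensional affine subspace of $\R^{n+1}$, available since $q \leq n+2$), and by the isometry group acting transitively on pairs of such points every pair of cells is adjacent; a general standard bubble is obtained from this one by composing stereographic projections, which altogether give a M\"obius homeomorphism of $\S^n$, so combinatorial adjacency is preserved. For (ii) $\Rightarrow$ (iii), I would apply Lemma \ref{lem:CCT}, which says $\scalar{\C\C^T - \tfrac{1}{2}\Id_{E^{(q-1)}} - \k\k^T, e_{ij} \otimes e_{ij}} = 0$ whenever $\Sigma_{ij} \neq \emptyset$; under (ii) this holds for all $i<j$. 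The $\binom{q}{2}$ symmetric forms $\{e_{ij} \otimes e_{ij}\}_{i<j}$ form a basis of $\mathrm{Sym}^2(E^{(q-1)})$: if $\sum_{i<j} a_{ij}(x_i - x_j)^2$ vanishes on $E^{(q-1)}$, then as a quadratic form on $\R^q$ it vanishes on a hyperplane and so equals $c\cdot(\sum_i x_i)^2$; comparing coefficients forces $a_{ij} = -c$ for $i \neq j$ and $(q-1)(-c) = c$, so $c = 0$ and all $a_{ij} = 0$. By the dimension count $\binom{q}{2} = \dim\mathrm{Sym}^2(E^{(q-1)})$ these forms span, and (iii) follows.

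The bulk of the work is (iii) $\Rightarrow$ (i), where the plan is a Lorentzian reformulation. Introduce $\tilde \c_i := (\c_i, \k_i) \in \R^{n+1,1}$, Minkowski space with $\k$ as the time-coordinate. Condition (iii) unpacks to $\scalar{\tilde \c_i, \tilde \c_j}_{\mathrm{Min}} = -\tfrac{1}{2q}$ for $i \neq j$ and $|\tilde \c_i|^2_{\mathrm{Min}} = \tfrac{1}{2}(1 - 1/q)$; together with the convention $\sum_i \tilde \c_i = 0$ this identifies the Minkowski Gram matrix of $\{\tilde \c_i\}$ restricted to $E^{(q-1)}$ with the positive-definite $\tfrac{1}{2}\Id$, so $V := \mathrm{span}\{\tilde \c_i\}$ is a space-like $(q-1)$-dimensional subspace of $\R^{n+1,1}$. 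The classical dictionary identifies the M\"obius group of $\S^n$ with $O^+(n+1,1)$, acting linearly on the de Sitter hyperboloid $\{|\c|^2 - \k^2 = 1\}$ of geodesic-sphere parameters $(\c, \k)$; under this identification, condition (iii) is M\"obius-invariant (as is the spherical Voronoi property, by Lemma \ref{lem:Mobius-preserves-Voronoi}). Since $q - 1 \leq n + 1$, the Lorentz group acts transitively on space-like $(q-1)$-subspaces of $\R^{n+1,1}$, so I would produce a M\"obius transformation $T$ of $\S^n$ taking $V$ into a purely space-like coordinate subspace; after this transformation all curvature parameters $\k_i'$ vanish, and (iii) reduces to $\C'(\C')^T = \tfrac{1}{2}\Id_{E^{(q-1)}}$, forcing $\{\c_i'\}$ to be (up to a rotation of $\R^{n+1}$) scaled vertices of a centered regular $(q-1)$-simplex. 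This makes $T(\Omega)$ the Voronoi partition of $q$ equidistant points on $\S^n$, i.e.\ the equal-volume standard bubble, and so $\Omega$ is a standard bubble by definition.

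The main obstacle will be making the M\"obius-to-Lorentz dictionary rigorous in our chosen parametrization of geodesic spheres, together with the transitivity of $O^+(n+1,1)$ on space-like $(q-1)$-subspaces of $\R^{n+1,1}$. Both facts are entirely classical -- for instance via the hyperboloid model of hyperbolic $(n+1)$-space (whose boundary at infinity is $\S^n$), or by a direct computation on generators of the M\"obius group (isometries of $\S^n$, dilations, inversions) after stereographic projection -- but they may warrant a short preparatory lemma tracking the linear action on the parameters $(\c, \k)$. Similar machinery already underlies Lemma \ref{lem:Mobius-preserves-Voronoi} and Corollary \ref{cor:standard-Voronoi}, so the plan is to build on that and isolate the transitivity on space-like $(q-1)$-subspaces as a standalone input.
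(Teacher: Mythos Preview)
The paper does not supply its own proof of this proposition; it defers to \cite[Section 10]{EMilmanNeeman-TripleAndQuadruple}. Your cycle (i) $\Rightarrow$ (ii) $\Rightarrow$ (iii) $\Rightarrow$ (i) is correct and the Lorentzian reformulation for (iii) $\Rightarrow$ (i) is exactly the natural approach: the computation in Lemma~\ref{lem:kc-evolution} already exhibits the one-parameter M\"obius subgroups as hyperbolic boosts on the parameters $(\c_i,\k_i)$, and together with the rotations of $\S^n$ this generates $SO^+(n+1,1)$, which indeed acts transitively on space-like $(q-1)$-subspaces of $\R^{n+1,1}$; the paper's statement that standard bubbles on $\S^n$ are precisely the M\"obius orbit of the equal-volume bubble then closes the argument.

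One small slip in your (ii) $\Rightarrow$ (iii): a quadratic form on $\R^q$ vanishing on a hyperplane $\{\ell = 0\}$ is in general only of the form $\ell \cdot L'$ for some linear $L'$, not $c\,\ell^2$. Your coefficient comparison can be salvaged with $\ell L'$ in place of $c\,\ell^2$, but the clean argument is simply that $\sum_{i<j} a_{ij}(x_i-x_j)^2$ is invariant under translation by $\mathbf{1}$; since every point of $\R^q$ is a translate of a point in $E^{(q-1)}$, vanishing on $E^{(q-1)}$ forces vanishing everywhere, whence all $a_{ij}=0$. This gives the linear independence (and hence spanning, by your dimension count) of $\{e_{ij}\otimes e_{ij}\}_{i<j}$ in $\mathrm{Sym}^2(E^{(q-1)})$ in one line.
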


\begin{lem}[Standard Bubbles of Prescribed Curvature] \label{lem:standard-curvature}
For all $2 \leq q \leq n+2$ and $\k \in E^{(q-1)}$, there exists a standard bubble on $\S^n$ with $q$-cells and spherical Voronoi curvature parameters $\{\k_i\}_{i=1,\ldots,q}$, i.e. the curvature of the spherical interface $\Sigma_{ij}$ is $\k_{ij} = \k_i - \k_j$. Moreover, such a standard bubble (with the same ordering of its cells) is unique up to orthogonal transformations of $\R^{n+1}$. 
\end{lem}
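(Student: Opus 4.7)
The argument reduces almost entirely to the algebraic characterization in Proposition \ref{prop:standard-char}: a spherical Voronoi $q$-cluster on $\S^n$ with $2 \leq q \leq n+2$ is a standard bubble if and only if its generating parameters satisfy $\C \C^T = \tfrac{1}{2} \Id_{E^{(q-1)}} + \k \k^T$ as symmetric operators on $E^{(q-1)}$.

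For \emph{existence}, given $\k \in E^{(q-1)}$, the symmetric bilinear form $G := \tfrac{1}{2} \Id_{E^{(q-1)}} + \k \k^T$ is strictly positive-definite on $E^{(q-1)}$, and since $\dim E^{(q-1)} = q - 1 \leq n+1$ one can construct a linear operator $\C : \R^{n+1} \to E^{(q-1)}$ of rank $q - 1$ satisfying $\C \C^T = G$---for instance, $\C := G^{1/2} \circ P$ for any partial isometry $P : \R^{n+1} \to E^{(q-1)}$ (obtained, say, from picking an orthonormal basis of $E^{(q-1)}$ inside $\R^{n+1}$). Let $\Omega$ be the affine Voronoi cluster generated by $(\C, \k)$ via \eqref{eq:Voronoi-rep}. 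The identity $\C\C^T = G$ implies $\scalar{\C\C^T, e_{ij}\otimes e_{ij}} = \scalar{G, e_{ij}\otimes e_{ij}}$ for every $i < j$ (a fortiori whenever $\Sigma_{ij} \neq \emptyset$), so Lemma \ref{lem:CCT} identifies $\Omega$ as a spherical Voronoi cluster. Applying Proposition \ref{prop:standard-char} in the direction (iii) $\Rightarrow$ (i) then yields that $\Omega$ is a standard $(q-1)$-bubble on $\S^n$ with spherical Voronoi curvature parameters exactly $\{\k_i\}_{i=1,\ldots,q}$.

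For \emph{uniqueness}, let $\Omega, \Omega'$ be two standard $(q-1)$-bubbles on $\S^n$ with matching cell ordering and common curvature vector $\k$, and let $(\C, \k)$, $(\C', \k)$ be their generating parameters. Proposition \ref{prop:standard-char} in the direction (i) $\Rightarrow$ (iii) yields $\C\C^T = \C'(\C')^T = G$. Since $G$ is positive-definite on $E^{(q-1)}$, both $\tilde\C := G^{-1/2}\C$ and $\tilde\C' := G^{-1/2}\C'$ satisfy $\tilde\C\tilde\C^T = \tilde\C'(\tilde\C')^T = \Id_{E^{(q-1)}}$, so they are rank-$(q-1)$ co-isometries from $\R^{n+1}$ onto $E^{(q-1)}$. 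Two such co-isometries differ by an orthogonal transformation on their source: $\tilde\C' = \tilde\C \, U^T$ for some $U \in O(\R^{n+1})$, equivalently $\c'_i = U \c_i$ for every $i$. Since $\k' = \k$, the Voronoi representation \eqref{eq:Voronoi-rep} is equivariant under this orthogonal change of coordinates in $\R^{n+1}$, so $\Omega' = U(\Omega)$.

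The entire argument is linear-algebraic given Proposition \ref{prop:standard-char}, so no substantive obstacle arises at the level of this lemma---the existence of $G^{1/2}$ for a positive-definite $G$ and the rigidity of factorizations $\C\C^T = G$ up to $O(\R^{n+1})$-action are standard, and the only nontrivial content (the equivalence (iii) $\Leftrightarrow$ (i) of Proposition \ref{prop:standard-char}) is assumed established beforehand.
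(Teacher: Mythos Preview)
Your argument is correct. The paper itself does not prove this lemma here---it refers to \cite[Section 10]{EMilmanNeeman-TripleAndQuadruple} for all statements in this subsection, including both Lemma~\ref{lem:standard-curvature} and Proposition~\ref{prop:standard-char}. Your reduction of the lemma to Proposition~\ref{prop:standard-char} via the Gram factorization $\C\C^T = \tfrac{1}{2}\Id_{E^{(q-1)}} + \k\k^T$ is clean and essentially the natural argument once that characterization is in hand; the only caveat is that you should be aware both statements are imported from the same external source, so one must check (in that reference) that the proof of Proposition~\ref{prop:standard-char} does not itself rely on Lemma~\ref{lem:standard-curvature} to avoid circularity.
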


\begin{lem}[Standard Bubbles of Prescribed Volume]  \label{lem:standard-volume}
For all $2 \leq q \leq n+2$ and $v \in \interior \Delta^{(q-1)}$, there exists a standard bubble $\Omega$ on $\S^n$ with $q$-cells and $V(\Omega) = v$. Moreover, such a standard bubble (with the same ordering of its cells) is unique up to orthogonal transformations of $\R^{n+1}$. 
\end{lem}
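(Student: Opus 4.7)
The plan is to study the map $\Phi : E^{(q-1)} \to \interior \Delta^{(q-1)}$ defined by $\Phi(\k) := V(\Omega(\k))$, where $\Omega(\k)$ is the standard bubble with curvature parameters $\k$ furnished by Lemma \ref{lem:standard-curvature}. Since orthogonal transformations of $\R^{n+1}$ preserve volumes, $\Phi$ is well-defined, and it suffices to prove that $\Phi$ is a homeomorphism: surjectivity gives existence, and injectivity gives uniqueness up to orthogonal transformations. The domain and codomain are both contractible $(q-1)$-manifolds, so if $\Phi$ is continuous and proper with appropriate boundary behavior, a degree argument will give surjectivity, while injectivity can be extracted from the M\"obius parametrization of standard bubbles.

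First I would verify that $\Phi$ is continuous. By Proposition \ref{prop:standard-char}, the quasi-center operator $\C(\k)$ of $\Omega(\k)$ satisfies the algebraic relation $\C \C^T = \tfrac{1}{2}\Id_{E^{(q-1)}} + \k \k^T$, which determines $\C$ uniquely up to orthogonal conjugation on $\R^{n+1}$. One chooses a continuous selection $\k \mapsto \C(\k)$, for instance via the PSD square root on $E^{(q-1)}$ composed with an isometric embedding into $\R^{n+1}$. Each $\Omega(\k)$ is a non-degenerate Plateau spherical Voronoi cluster by Corollary \ref{cor:standard-Voronoi}, and the Plateau property together with the fact that all interfaces remain non-empty under small perturbation (Corollary \ref{cor:Plateau-interfaces} and Proposition \ref{prop:Plateau-perturbation}) allows one to invoke Lemma \ref{lem:cont} to deduce local Lipschitz continuity of the cell volumes in the generating parameters. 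For $q \leq n+1$ the cluster is perpendicular and Lemma \ref{lem:cont} applies directly; the case $q = n+2$ can be handled similarly since the Plateau hypothesis bypasses the need for perpendicularity in the continuity argument for cells whose closures remain equal to their closed pre-cells.

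Next I would establish the boundary behavior: if $|\k^{(m)}| \to \infty$ in $E^{(q-1)}$, then every accumulation point of $\Phi(\k^{(m)})$ lies in $\partial \Delta^{(q-1)}$. Because $|\c_{ij}|^2 = 1 + \k_{ij}^2$, diverging $\k_{ij}$ forces the supporting geodesic sphere of $\Sigma_{ij}$ to have vanishing radius, and the cell on its concave side is pinched into a region of arbitrarily small volume. A case analysis on which subset $S \subsetneq \{1,\dots,q\}$ carries the divergent pressures, combined with the connectivity of the adjacency graph (Lemma \ref{lem:LA-connected}) and the fact that \emph{all} interfaces of a standard bubble are non-empty (Proposition \ref{prop:standard-char}), confirms that some coordinate of $\Phi(\k^{(m)})$ tends to $0$. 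Continuity plus this properness lets a standard invariance-of-domain / topological degree argument on the $(q-1)$-dimensional contractible manifolds $E^{(q-1)}$ and $\interior\Delta^{(q-1)}$ conclude that $\Phi$ is surjective, proving existence.

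For uniqueness, which is the main obstacle, I would use M\"obius rigidity rather than a direct Jacobian computation. Let $\Omega^{(1)}$ and $\Omega^{(2)}$ be two standard $(q-1)$-bubbles on $\S^n$ with the same ordered volumes $v \in \interior \Delta^{(q-1)}$. By the very definition of standard bubble, each $\Omega^{(j)}$ is the image under a M\"obius automorphism $T_j$ of $\S^n$ of the equal-volume standard bubble on $\S^n$, and by Lemma \ref{lem:Mobius-preserves-Voronoi} both are spherical Voronoi Plateau clusters. Then $T := T_2 \circ T_1^{-1}$ is a M\"obius automorphism of $\S^n$ which maps $\Omega^{(1)}_i$ onto $\Omega^{(2)}_i$ for each $i$, and by assumption preserves the Haar measure cell by cell. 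Since a M\"obius automorphism of $\S^n$ whose conformal factor equals $1$ on an open set of positive measure is necessarily an isometry (the conformal factor is real-analytic, and measure-preservation on any $\Omega^{(1)}_i$ of positive measure forces it to equal $1$ identically), $T$ lies in $O(n+1)$. Hence $\Omega^{(1)}$ and $\Omega^{(2)}$ coincide up to an orthogonal transformation of $\R^{n+1}$, as required.
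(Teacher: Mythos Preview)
The paper does not give its own proof of this lemma: it is quoted from \cite[Section 10]{EMilmanNeeman-TripleAndQuadruple} (see the opening of Subsection~3.6 and the sentence in the Introduction referring to Lemma~\ref{lem:standard-volume} ``proved in \cite{EMilmanNeeman-TripleAndQuadruple}''). So there is no in-paper argument to compare with, and your proposal must stand on its own merits.

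Your existence outline via the curvature-to-volume map $\Phi$ is reasonable, though you do not actually verify that the degree is nonzero (you only sketch continuity and properness). This could presumably be checked at the equal-volume point, but it is not written.

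The real problem is your uniqueness argument. You assert that ``measure-preservation on any $\Omega^{(1)}_i$ of positive measure forces [the conformal factor] to equal $1$ identically''. This is false: volume-preservation of a set $U$ by a M\"obius map $T$ with conformal factor $\lambda$ only says $\int_U \lambda^n \, dV = V(U)$, i.e.\ that the \emph{average} of $\lambda^n$ over $U$ equals $1$, not that $\lambda \equiv 1$ on $U$. There is no pointwise conclusion to feed into real-analyticity. Concretely, take a perpendicular standard bubble $\Omega$ with North pole $N$ (available for $q \leq n+1$) and the M\"obius field $W_N$: on $\Sigma_{ij}$ one has $W_N^{\n_{ij}} = \langle N, \n_{ij} \rangle = \k_{ij}\langle N, p\rangle$ since $N \perp \c_{ij}$, which is odd under reflection about $N^\perp$, so $\delta^1_{W_N} V(\Omega) = 0$. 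Hence the non-isometric flow $\Phi_t = \exp(t W_N)$ preserves all cell volumes to first order, showing that cell-by-cell volume preservation imposes at most $q-1 \leq n$ constraints on the $(n+1)$-dimensional quotient of the M\"obius group by $O(n+1)$ and cannot by itself force $T \in O(n+1)$. Your M\"obius-rigidity step therefore has a genuine gap: you would need an additional argument (for instance, injectivity of $\k \mapsto V(\Omega(\k))$ established independently, together with Lemma~\ref{lem:standard-curvature}) to conclude uniqueness.
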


\begin{remark}
Analogues of Lemmas \ref{lem:standard-curvature} and \ref{lem:standard-volume} regarding standard bubbles in Euclidean space $\R^n$ and in Gauss space $\GG^n$ were proved by Montesinos Amilibia \cite{MontesinosStandardBubbleE!} and by the authors \cite{EMilmanNeeman-GaussianMultiBubble}, respectively. For standard double-bubbles in $\S^2$ and $\S^n$ and triple-bubbles in $\S^2$, these results are due to Masters \cite{Masters-DoubleBubbleInS2}, Cotton-Freeman \cite{CottonFreeman-DoubleBubbleInSandH} and Lawlor \cite{Lawlor-TripleBubbleInR2AndS2}, respectively. 
\end{remark}

\subsection{Pseudo conformally flat (PCF) clusters}

\begin{definition}[Conformally Flat]
A spherical Voronoi cluster $\Omega$ on $\S^n$ is called conformally flat if there exists a M\"obius automorphism $T$ of $\S^n$ so that the spherical Voronoi cluster $T(\Omega)$ is flat, i.e.~all of its non-empty interfaces lie on geodesic spheres with zero curvature. 
\end{definition}

\begin{lemma} \label{lem:conformally-flat}
A spherical Voronoi $q$-cluster on $\S^n$ is conformally flat if and only if:
\[
\exists \xi \in \R^{n+1} \;\;\; |\xi| < 1 \;\; \text{ so that } \;\; \scalar{\c_i , \xi} + \k_i = 0 \;\;\; \forall i=1,\ldots,q ,
\]
(where $\{\c_i\}_{i=1,\ldots,q}$ and $\{ \k_i \}_{i=1,\ldots,q}$ are the cluster's quasi-center and curvature parameters, respectively). 
\end{lemma}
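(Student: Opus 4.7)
My plan is to pass to the Lorentzian hyperboloid model for $\S^n$, which linearizes the action of the M\"obius group and reduces the equivalence to a statement about Lorentz-orthogonality. I endow $\R^{n+2}$ with the Lorentz form $\scalar{(x,t),(y,s)}_L := \scalar{x,y} - ts$, identify $p \in \S^n$ with the future null ray $\R_+ (p, 1)$, and observe (via Remark \ref{rem:quasi-center}) that a geodesic sphere with parameters $(\c, \k)$ corresponds to the unit spacelike vector $(\c, -\k) \in \R^{n+2}$ (as $|\c|^2 - \k^2 = 1$), with $p$ lying on the sphere iff $\scalar{(p,1), (\c, -\k)}_L = 0$. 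Classically, each M\"obius automorphism of $\S^n$ lifts to an orthochronous Lorentz transformation $T \in O^+(n+1,1)$, acting on a sphere via the linear action $T \cdot (\c, -\k)$ on its Lorentz vector; this is essentially the content underlying Lemma \ref{lem:Mobius-preserves-Voronoi}.

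Next, I translate flatness into this language. The spherical Voronoi cluster $\Omega$ is flat iff $\k_{ij} = 0$ on every non-empty $\Sigma_{ij}$; by connectedness of the adjacency graph (Lemma \ref{lem:LA-connected}) together with the normalization $\sum_i \k_i = 0$, this is equivalent to $\k_i = 0$ for all $i$, i.e.~each $(\c_i, -\k_i)$ being $L$-orthogonal to $(0, 1)$. Consequently, $T(\Omega)$ is flat iff each $T(\c_i, -\k_i)$ is $L$-orthogonal to $(0, 1)$; equivalently, by the invariance of $\scalar{\cdot,\cdot}_L$ under $T$, iff each $(\c_i, -\k_i)$ is $L$-orthogonal to $v := T^{-1}(0, 1)$. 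Since $T^{-1} \in O^+(n+1,1)$, the vector $v = (\eta, s)$ is a future-timelike unit vector, with $s > 0$ and $|\eta|^2 - s^2 = -1$; setting $\xi := \eta/s$ will then yield $|\xi| < 1$ and $s = 1/\sqrt{1 - |\xi|^2}$.

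For the forward direction, I will expand $\scalar{(\c_i, -\k_i), (\eta, s)}_L = \scalar{\c_i, \eta} + s\k_i = 0$ and divide by $s > 0$ to recover exactly $\scalar{\c_i, \xi} + \k_i = 0$ for all $i$. For the converse, I will take $\xi$ with $|\xi| < 1$ and the stated identities, set $s := 1/\sqrt{1-|\xi|^2} > 0$, $\eta := s\xi$, and $v := (\eta, s)$ -- a future-timelike unit vector -- and invoke the transitivity of $O^+(n+1,1)$ on the future unit hyperboloid (realized by an explicit Lorentz boost in the plane spanned by $v$ and $(0,1)$) to pick $T \in O^+(n+1,1)$ with $T(v) = (0, 1)$; the corresponding M\"obius automorphism will send every $(\c_i, -\k_i)$ to a vector $L$-orthogonal to $(0, 1)$, so $T(\Omega)$ is flat. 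The main technical work lies in the setup -- verifying that the action of M\"obius transformations on the quasi-center/curvature parameters of a spherical Voronoi cluster agrees with the ambient Lorentz action on $\R^{n+2}$ -- after which the equivalence becomes a direct calculation; a minor but pleasing point is that the orthochronous condition ($s > 0$) is precisely what guarantees the strict inequality $|\xi| < 1$.
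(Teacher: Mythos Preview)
Your proof is correct. The paper does not actually include a proof of this lemma in the text (it is stated and then used to motivate the PCF definition), so there is no direct comparison to make; your Lorentz-model approach is the classical and cleanest route, and it dovetails perfectly with the paper's own computations: Lemma~\ref{lem:kc-evolution} is precisely the statement that the one-parameter M\"obius group $\Phi_t = \exp(t W_N)$ acts on the parameters $(\c_i,-\k_i)$ as a Lorentz boost in the $(N,e_{n+2})$-plane, so the ``main technical work'' you flag in your setup is already done for you there.

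One small clarification worth making explicit when you write this up: you correctly invoke the Lorentz action on the \emph{individual} vectors $(\c_i,-\k_i)$, but these are not in general unit spacelike (only the differences $(\c_{ij},-\k_{ij})$ are, by Remark~\ref{rem:quasi-center}). Your argument does not actually need this---you only use that the action is linear and that flatness is equivalent to all last coordinates vanishing (which, as you note, follows from connectedness via Lemma~\ref{lem:LA-connected} and the normalization $\sum_i \k_i = 0$). It would be cleaner to separate the two roles: the unit-spacelike observation is what identifies the M\"obius group with $O^+(n+1,1)$ acting on geodesic spheres, while the linearity (preserving $\sum_i \c_i = 0$, $\sum_i \k_i = 0$) is what lets you pass from the interface parameters $(\c_{ij},\k_{ij})$ to the individual $(\c_i,\k_i)$.
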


In view of the previous lemma, we generalize the property of being conformally flat by removing the restriction that $|\xi| < 1$. 
\begin{definition}[Pseudo Conformally Flat Cluster] \label{def:PCF}
A spherical Voronoi cluster $\Omega$ on $\S^n$ is called pseudo conformally flat (PCF) if:
\[
\exists \xi \in \R^{n+1} \;\; \text{ so that } \;\; \scalar{\c_i , \xi} + \k_i = 0 \;\;\; \forall i=1,\ldots,q .
\]
In this case we shall say that $\Omega$ is PCF with compatibility parameter $\xi \in \R^{n+1}$. 
\end{definition}

\begin{remark} \label{rem:all-intersect}
In view of our convention that $\sum \c_i = 0$ and $\sum \k_i = 0$, this is the same as requiring that $\scalar{\c_{ij} , \xi} + \k_{ij} = 0$ for all $i,j =1 ,\ldots, q$. In view of the polyhedral cell representation $\underline \Omega_i = P_i \cap \S^n$ for $P_i := \bigcap_{j \neq i} \set{ x \in \R^{n+1} \; ; \; \scalar{\c_{ij} ,x} + \k_{ij} \leq 0 }$, it follows that $\Omega$ is PCF iff $P := \bigcap_{i=1}^q P_i$ is non-empty, i.e.~that all closed convex polyhedra $\{P_i\}$ have a common meeting point $\xi \in \R^{n+1}$.  Similarly, a spherical Voronoi cluster on $\S^n$ is conformally flat iff $P \cap \{ x \in \R^{n+1} \; ; \; |x| < 1\} \neq \emptyset$. For a perpendicular spherical Voronoi cluster (with North pole at $N$), this is the same as witnessing the common intersection point after orthogonally projecting onto $N^{\perp}$, yielding a clear geometric characterization. 
\end{remark}

\begin{remark}
Clearly, when $q-1 \leq n+1$, a full-dimensional cluster is pseudo conformally flat. 
\end{remark}

\subsection{A property of $(q-3)$-Plateau clusters}

The significance of the number $q-3$ in Theorem \ref{thm:intro-conditional} can already be explained in the following:
\begin{lemma} \label{lem:q-3-Plateau}
Let $\Omega$ be a spherical Voronoi $q$-cluster on $\S^n$ with $q \leq n+2$. Assume that $\Omega$ is $(q-3)$-Plateau. Then either $\Omega$ is fully Plateau, or else it is pseudo conformally flat. 
\end{lemma}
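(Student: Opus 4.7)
My plan is to prove the contrapositive: assume $\Omega$ is not fully Plateau, and deduce that it is PCF. The whole argument is a pigeonhole count on two integers attached to a bad point, followed by translating ``all cells meet at one point'' into the PCF compatibility condition via the Voronoi representation.

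The first step is to locate a single witness $p \in \Sigma$ where Plateau fails. By Lemma \ref{lem:full-Plateau} such a $p$ exists, and using the equivalence (a) in Lemma \ref{lem:Plateau-at-p} together with the general bound (\ref{eq:affine-ranks}), its failure of Plateau gives a \emph{strict} inequality, i.e.
\[
    \arank(\N_p) \leq |I_p| - 2 \leq q - 2 .
\]
The second step is to invoke the $(q-3)$-Plateau hypothesis in contrapositive form: if $\Omega$ fails Plateau at $p$ then $\arank(\N_p) \geq q-2$. Sandwiching the two estimates forces equality everywhere, so $\arank(\N_p) = q-2$ and, crucially, $|I_p| = q$. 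In other words, the witness point $p$ lies in the closure of \emph{every} cell $\Omega_i$, $i=1,\ldots,q$.

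The third step converts this common meeting point into a PCF compatibility parameter. Since each $\Omega_i$ is open with $\Omega_i = \interior \underline{\Omega_i}$ and $\underline{\Omega_i}$ is closed, one has $\overline{\Omega_i} \subset \underline{\Omega_i}$, and therefore $p \in \bigcap_i \underline{\Omega_i}$. By the defining formula (\ref{eq:Voronoi-rep}), membership in every $\underline{\Omega_i}$ means the quantity $\scalar{\c_i,p} + \k_i$ is independent of $i$; summing over $i$ and using the normalizations $\sum_i \c_i = 0$ and $\sum_i \k_i = 0$ pins this common value to $0$. Hence $\xi := p \in \R^{n+1}$ satisfies $\scalar{\c_i,\xi} + \k_i = 0$ for all $i$, which is precisely Definition \ref{def:PCF}; in fact $|\xi| = 1$, matching the boundary case of the conformally-flat characterization in Lemma \ref{lem:conformally-flat}.

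I do not anticipate a genuine obstacle. The only point requiring care is the passage from ``not Plateau at $p$'' to the strict bound $\arank(\N_p) \leq |I_p| - 2$, which uses the full equivalence (a)$\Leftrightarrow$(b) in Lemma \ref{lem:Plateau-at-p} rather than a naive inequality; given that, the conclusion is forced by integer counting and a direct read of the Voronoi definition.
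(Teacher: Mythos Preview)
Your proof is correct. The key steps---the pigeonhole on $\arank(\N_p)$ versus $|I_p|$ at a witness of non-Plateau-ness, and the translation of $|I_p|=q$ into the PCF compatibility condition via the Voronoi inequalities---are exactly the ingredients the paper uses, and your justification of each is sound (including the passage $\overline{\Omega_i}\subset\underline{\Omega_i}$ and the normalization argument forcing $\scalar{\c_i,\xi}+\k_i=0$).

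The organization differs from the paper's, and arguably in your favor. The paper performs a three-way case split on $d:=\arank(\C)$: the case $d=q-1$ gives full-dimensional hence PCF; the case $d\leq q-3$ gives fully Plateau directly from Lemma~\ref{lem:full-Plateau}; and the case $d=q-2$ splits further on whether $\bigcap_i\overline{\Omega_i}$ is empty (if nonempty, PCF as in your Step~3; if empty, the pigeonhole argument at every $p$ shows Plateau). Your contrapositive packaging collapses all of this: by going straight to a single witness $p$ of non-Plateau and sandwiching $q-2\leq\arank(\N_p)\leq|I_p|-2\leq q-2$, you force $|I_p|=q$ without ever introducing $d$. The paper's version makes the role of $\arank(\C)$ more visible (which matters elsewhere in the paper), while yours is more economical for this lemma in isolation.
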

\begin{proof}
Let $\C$ denote the cluster's quasi-center operator, and set $d := \arank(\C)$. If $d = q-1$ then the cluster is full-dimensional and hence PCF. If $d \leq q-3$ then by assumption and Lemma \ref{lem:full-Plateau} the cluster is Plateau. 
\smallskip

It remains to handle the case when $d=q-2$. Denote $Q := \cap_{i=1}^q \overline{\Omega_i}$ and consider two subcases:
\begin{enumerate}[(i)]
\item
$Q \neq \emptyset$. In view of Remark \ref{rem:all-intersect}, this means that any $\xi \in Q$ will satisfy $\scalar{\c_i,\xi} + \k_i = 0$ for all $i=1,\ldots,q$, and hence the cluster is PCF with compatibility parameter $\xi$. 
\item 
$Q = \emptyset$. This means that $|I_p| < q$ for all $p \in \S^n$. Given $p \in \Sigma$, either $\arank(\N_p) \leq q-3$ and so $\Omega$ is Plateau at $p$ by assumption, or else $\arank(N_p) \geq q-2$. In that case, since $\arank(N_p) \leq |I_p|-1$ by  (\ref{eq:affine-ranks}) and $|I_p| \leq q-1$, it follows that $\arank(N_p) = |I_p|-1 = q-2$, and hence $\Omega$ is again Plateau at $p$ by Lemma \ref{lem:Plateau-at-p} \ref{it:Plateau-1}. Consequently $\Omega$ is Plateau at $p$ for all $p \in \Sigma$, so it is Plateau. 
\end{enumerate}
This concludes the proof. 
\end{proof}

\section{Strategy of the proof - a PDE for the isoperimetric profile} \label{sec:profile}

Let $2 \leq q \leq n+2$, and recall the definitions (\ref{eq:intro-I}) and (\ref{eq:intro-Im}) of the spherical isoperimetric multi-bubble profile $\I = \I^{(q-1)} : \Delta^{(q-1)} \rightarrow \R_+$ and its model counterpart $\I_m = \I_m^{(q-1)} : \interior \Delta^{(q-1)} \rightarrow \R_+$ from the Introduction, as well as the natural extension of $\I_m$ to the boundary $\partial \Delta^{(q-1)}$. For consistency, the case $q=1$ is treated by setting $\Delta^{(0)} = \{1\}$ and $\I^{(0)}(1) = \I^{0}_m(1) = 0$. 
 Recall Proposition \ref{prop:intro-profile}, which asserts that the model profile satisfies the following PDE on $\Delta^{(q-1)}$:
\begin{equation} \label{eq:PDE}
\tr((-\nabla^2 \I_m)^{-1} (\Id+ \frac{2}{(n-1)^2} \nabla \I_m \otimes \nabla \I_m)) = \frac{2}{n-1} \I_m  ,
\end{equation}
and that moreover, $-\nabla^2 \I_m > 0$ on $T \Delta^{(q-1)} \simeq E^{(q-1)}$ (and hence this PDE is elliptic). 
While it is possible to derive this by direct computation, we will establish Proposition \ref{prop:intro-profile} as a by-product of our analysis in Section \ref{sec:conformal-Jacobi}; see Proposition \ref{prop:Im-PDE-proof} for a proof. 

\medskip

Since we use the convenient normalization of having $V(\S^n) = 1$, we define $\HH^{n-1} := \frac{1}{|\S^n|} \H^{n-1}$ to be the normalized $(n-1)$-dimensional Hausdorff measure, where $|\S^n|$ denotes the actual $n$-dimensional volume of $\S^n$. With these conventions, we have $\mu = \mu^n = V$, $\mu^{n-1} = \HH^{n-1}$, and $\per(U) = \HH^{n-1}(\partial^* U)$ for all Borel subsets $U \subset \S^n$ of finite perimeter.

\subsection{Comparison with one-dimensional and Gaussian cases}

The case $q=2$ of (\ref{eq:PDE}) agrees with the following easily derived ODE satisfied by the single-bubble isoperimetric profile $\I = \I_m$ on $\S^n$. Identifying $\Delta^{(1)}$ with $[0,1]$, we have $\I_m(v) = \varphi \circ \Phi^{-1}$ where $\varphi(t) = \frac{|\S^{n-1}|}{|\S^n|} \sin^{n-1}(t)$ and $\Phi(s) = \int_0^t \varphi(t) dt$, and as $\I_m' = (\log \varphi)' \circ \Phi^{-1} = (n-1) \cot \circ \Phi^{-1}$ we deduce:
\[
 -\I_m \I_m'' = -(\log \varphi)'' \circ \Phi^{-1} = \frac{n-1}{\sin^2 \circ \Phi^{-1}} = (n-1) (1 + \cot^2) \circ \Phi^{-1} = (n-1) \brac{1 + \frac{1}{(n-1)^2} (\I_m')^2 } . 
 \]
This is precisely (\ref{eq:PDE}) in the single-bubble case, after noting that the identification between $\Delta^{(1)}$ and $[0,1]$ stretches the domain metric by a factor of $\sqrt{2}$, yielding this factor with each derivation of $\I_m$.

\medskip

In addition, (\ref{eq:PDE}) should be compared with the simpler elliptic PDE satisfied by Gaussian Multi-Bubble isoperimetric profile $\I_{\GG^n} = \I_{\GG^n,m}$, as established in \cite{EMilmanNeeman-GaussianMultiBubble}:
\begin{equation} \label{eq:Gaussian-PDE}
\tr( (-\nabla^2 \I_{\GG^n,m})^{-1}) = 2 \I_{\GG^n,m} . 
\end{equation}
As a sanity check, note that by a simple rescaling, the PDE satisfied by the model profile $\I_m$ on $(\S^n , (n-1) g_{can},\lambda_{\S^n})$, i.e. on the rescaled sphere $\sqrt{n-1} \cdot \S^n$, is:
\[
\tr((-\nabla^2 \I_m)^{-1} (\Id+ \frac{2}{n-1} \nabla \I_m \otimes \nabla \I_m)) = 2 \I_m ,
\]
and so when $n \rightarrow \infty$, we formally recover (\ref{eq:Gaussian-PDE}) for the Gaussian measure, in accordance with the well-known procedure for approximating the Gaussian measure as the uniform measure on an infinite dimensional rescaled sphere (see e.g. \cite{Borell-GaussianIsoperimetry,CorneliCorwinEtAl-DoubleBubbleInSandG}). By following the argument of Borell from \cite{Borell-GaussianIsoperimetry}, it is not hard to rigorously deduce that $\I^{(q-1)}_{\GG^n} = \I^{(q-1)}_{\GG^n,m}$ from knowing that $\I^{(q-1)}_{\S^N} = \I^{(q-1)}_{\S^N,m}$ for all $N$ sufficiently large. However, the analysis of equality cases will break down in this approximation procedure, as in the derivation of Corollary \ref{cor:intro-quintuple-R} from Theorem \ref{thm:intro-quintuple}.

\subsection{The conjectured PDI for the actual isoperimetric profile} \label{subsec:viscosity} \label{subsec:max-principle}

Our goal in this work is to develop the tools for showing that the same PDI as (\ref{eq:PDE}) holds for the actual profile $\I$:
\begin{equation} \label{eq:PDI}
\nabla^2 \I  < 0 ~,~\tr \brac{(-\nabla^2 \I)^{-1} (\Id+ \frac{2}{(n-1)^2} \nabla \I \otimes \nabla \I)} \leq \frac{2}{n-1} \I . 
\end{equation}
More precisely, we will want to show that for all $v_0 \in \interior \Delta^{(q-1)}$, there exists a quadratic form $\F = \F(v_0)$ on $E^{(q-1)}$, so that at $v_0$:
\begin{equation} \label{eq:PDI-F}
\F > 0 ~,~ \F^T \nabla^2 \I \F \leq -(n-1) \F ~,~ \tr \brac{\F \; (\Id+ \frac{2}{(n-1)^2} \nabla \I \otimes \nabla \I)} \leq 2 \I 
\end{equation}
(where the first two inequalities are understood in the positive-definite sense). 
Of course, $\I$ may not be a-priori smooth, so (\ref{eq:PDI-F}) is meant in the viscosity sense. Let us define what we mean by this term in the context of this work. 

\begin{definition} \label{def:viscosity}
We say that (\ref{eq:PDI-F}) holds in the viscosity sense at $v_0 \in \interior \Delta^{(q-1)}$, if there exists a minimizer $\Omega$ with $V(\Omega) = v_0$ having Lagrange multiplier $\lambda \in E^{(q-1)}$, so that for all $a \in E^{(q-1)}$ and $\eps > 0$, there exists a $C_c^\infty$ vector-field $X = X_{\eps,a}$ on $\S^n$ so that:
\begin{equation} \label{eq:def-viscosity}
\delta^1_X V(\Omega) = \F a ~,~ Q_{\Omega}(X) \leq -(n-1) a^T \F a + \eps ,
\end{equation}
and:
\begin{equation} \label{eq:F-trace-inq}
 \F > 0 ~,~ \tr \brac{\F (\Id+ \frac{2}{(n-1)^2} \lambda \otimes \lambda)} \leq 2\I . 
 \end{equation}
\end{definition}

\medskip

The following was proved in \cite{EMilmanNeeman-GaussianMultiBubble} for the analogous elliptic PDE (\ref{eq:Gaussian-PDE}) satisfied by the Gaussian multi-bubble isoperimetric profile $\I_{\GG^n}$; for completeness, we repeat the proof in our setting.

\begin{proposition}[Maximum Principle] \label{prop:max-principle}
Let $2 \leq q \leq n+2$, and assume that (\ref{eq:PDI-F}) holds in the viscosity sense of Definition \ref{def:viscosity} and that $\I^{(q-1)} = \I^{(q-1)}_m$ on $\partial \Delta^{(q-1)}$. Then $\I^{(q-1)}  = \I^{(q-1)}_m$ on the entire $\Delta^{(q-1)}$. 
\end{proposition}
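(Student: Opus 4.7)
The plan is a viscosity-style comparison between the multi-bubble isoperimetric profile $\I = \I^{(q-1)}$ and the smooth, strictly concave model solution $\I_m = \I^{(q-1)}_m$ of the PDE (\ref{eq:PDE}). First I would note that $\I \leq \I_m$ pointwise on $\Delta^{(q-1)}$, since a standard $(q-1)$-bubble is always an admissible competitor; so it suffices to rule out strict inequality. Arguing by contradiction, suppose $h := \I_m - \I$ is strictly positive somewhere. Since $h$ is nonnegative on $\Delta^{(q-1)}$, vanishes on $\partial \Delta^{(q-1)}$ by hypothesis, and is upper semicontinuous on the compact simplex (using continuity of $\I_m$ together with lower semicontinuity of $\I$, which in turn follows from standard compactness and lower semicontinuity of weighted perimeter together with existence of minimizers, Theorem~\ref{thm:Almgren}), $h$ attains a strictly positive maximum at some interior point $v_0 \in \interior \Delta^{(q-1)}$.

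Next, the plan is to implement a touching argument at $v_0$. The smooth function $\phi := \I_m - h(v_0)$ satisfies $\phi \geq \I$ on $\Delta^{(q-1)}$ with equality at $v_0$. Invoking Definition \ref{def:viscosity} at $v_0$, one obtains a minimizing cluster $\Omega$ with $V(\Omega) = v_0$, Lagrange multiplier $\lambda$, and a symmetric positive-definite form $\F$ satisfying the trace bound in (\ref{eq:F-trace-inq}), together with $C_c^\infty$ vector fields $X_{\eps,a}$ realizing (\ref{eq:def-viscosity}) for each $a \in E^{(q-1)}$ and $\eps > 0$. For each such $X_{\eps,a}$ with flow $F_t$, the smooth function
\[
g(t) := \per(F_t(\Omega)) - \phi\bigl(V(F_t(\Omega))\bigr)
\]
satisfies $g(0)=0$ and $g(t) \geq 0$ for $t$ near $0$ (since $\per(F_t(\Omega)) \geq \I(V(F_t(\Omega)))$ by definition of $\I$ and $\I \leq \phi$), hence $g'(0) = 0$ and $g''(0) \geq 0$. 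Expanding using Lemma \ref{lem:Lagrange} and the definition of the index-form $Q_\Omega$, the first-order condition (applied with $\F a$ ranging over all of $E^{(q-1)}$ as $a$ varies) forces $\lambda = \nabla \I_m(v_0)$, while the second-order condition combined with $Q_\Omega(X_{\eps,a}) \leq -(n-1) a^T \F a + \eps$ yields, after letting $\eps \downarrow 0$,
\[
\F \, \nabla^2 \I_m(v_0) \, \F \leq -(n-1)\F \quad \text{as quadratic forms on } E^{(q-1)}.
\]

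Finally, I would close the argument by a trace comparison. Since both $\F$ and $-\nabla^2 \I_m(v_0)$ are positive-definite (Proposition \ref{prop:intro-profile}), the previous inequality is equivalent to $(-\nabla^2 \I_m(v_0))^{-1} \leq \tfrac{1}{n-1} \F$. Pairing with the positive semi-definite matrix $\Id + \tfrac{2}{(n-1)^2} \lambda \otimes \lambda$ preserves trace inequalities, so combining the PDE (\ref{eq:PDE}) for $\I_m$ on the left with the trace bound in (\ref{eq:F-trace-inq}) on the right (using $\lambda = \nabla \I_m(v_0)$),
\begin{align*}
\tfrac{2}{n-1} \I_m(v_0) &= \tr\!\left( (-\nabla^2 \I_m(v_0))^{-1} \left(\Id + \tfrac{2}{(n-1)^2}\nabla \I_m \otimes \nabla \I_m \right)\right) \\
&\leq \tfrac{1}{n-1} \tr\!\left( \F \left(\Id + \tfrac{2}{(n-1)^2}\lambda \otimes \lambda\right)\right) \leq \tfrac{2}{n-1}\I(v_0),
\end{align*}
which gives $\I_m(v_0) \leq \I(v_0)$, contradicting $h(v_0) > 0$. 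The steps I expect to be the most delicate are not conceptual but technical: verifying enough regularity of $\I$ so that $h$ genuinely attains an interior maximum, and verifying that $g(t) \geq 0$ on a two-sided neighborhood of $t=0$ (which requires the touching inequality $\phi \geq \I$ uniformly in a neighborhood of $v_0$, not merely pointwise at $v_0$). Both reduce to standard compactness, lower semicontinuity of perimeter, and continuity of volume along smooth flows of $C_c^\infty$ fields.
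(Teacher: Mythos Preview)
Your approach is essentially identical to the paper's proof: contradiction at an interior extremum of $\I_m - \I$, touching by the shifted model profile, first- and second-order comparison along flows to obtain $\lambda = \nabla \I_m(v_0)$ and $\F\,\nabla^2\I_m(v_0)\,\F \le -(n-1)\F$, then the trace comparison using the PDE for $\I_m$ and the assumed trace bound for $\F$.

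There is one sign slip you should fix: since $v_0$ maximizes $h = \I_m - \I$, one has $\I_m - \I \le h(v_0)$ everywhere, i.e.\ $\phi := \I_m - h(v_0) \le \I$ (touching from \emph{below}), not $\phi \ge \I$. With the corrected inequality, $g(t) = \per(F_t(\Omega)) - \phi(V(F_t(\Omega))) \ge \I(V(F_t(\Omega))) - \phi(V(F_t(\Omega))) \ge 0$ and the rest of your argument goes through verbatim; as written, your parenthetical justification ``$\I \le \phi$'' would not yield $g \ge 0$.
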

\begin{proof}
We abbreviate $\I = \I^{(q-1)}$, $\I_m = \I_m^{(q-1)}$ and $\Delta = \Delta^{(q-1)}$. 
 By definition $\I \leq \I_m$, so we need to show the converse inequality. As $\I_m$ satisfies the elliptic PDE (\ref{eq:PDE}), it is smooth by elliptic regularity on $\interior \Delta$. It is easy to check using compactness of $\S^n$ that $\I_m$ is also continuous up to the boundary $\partial \Delta$. As for $\I$, it is lower semi-continuous on $\Delta$ by lower semi-continuity of the perimeter (see e.g. \cite[Lemma 9.2]{EMilmanNeeman-GaussianMultiBubble}), and so $\I - \I_m$ must attain a global minimum at some $v_0 \in \Delta$. Assume in the contrapositive that $\I(v_0) - \I_m(v_0) < 0$; since we assume $\I^{(q-1)} = \I^{(q-1)}_m$ on $\partial \Delta$, necessarily $v_0 \in \interior \Delta$.

Let $\Omega$ be a minimizing $q$-cluster with $V(\Omega) = v_0$ as in Definition \ref{def:viscosity}.  
    By the results of Section \ref{sec:prelim}, $\Omega$ is regular and stationary with Lagrange multiplier $\lambda \in E^{(q-1)}$.
    Since $v_0$ is a minimum of $\I - \I_m$, for any smooth flow $F_t$ along a $C_c^\infty$ vector-field $X$ on $\S^n$,
    we have
    \begin{equation}\label{eq:I_m-and-I}
        \I_m(V(F_t(\Omega))) \leq \I(V(F_t(\Omega)) - (\I - \I_m)(v_0) \leq \per(F_t(\Omega)) - (\I - \I_m)(v_0).
    \end{equation}
        The two sides are equal when $t = 0$, and twice differentiable in $t$. By comparing first and second
    derivatives at $t=0$, it follows that:
    \begin{equation}
        \label{eq:compare-first}
        \scalar{\nabla \I_m(v_0),\delta_X V} = \delta_X A ,
    \end{equation}
    \begin{equation}
        \label{eq:compare-second}
        (\delta_X V)^T \nabla^2 \I_m(v_0) \delta_X V  + \scalar{\nabla \I_m(v_0),\delta_X^2 V} \leq \delta_X^2 A.
    \end{equation}
    
    Applying the assumption that (\ref{eq:PDI-F}) holds in the viscosity sense of Definition \ref{def:viscosity}, we may find a vector field $X$ with $\delta^1_X V = \F a$ where $\F > 0$ is full-rank on $E^{(q-1)}$ and $a \in E^{(q-1)}$ is arbitrary. Recalling 
   that $\delta_X A = \scalar{\lambda,\delta_X V}$ by stationarity (Lemma \ref{lem:Lagrange}) and comparing with~\eqref{eq:compare-first}, it follows that:
    \begin{equation} \label{eq:compare-fourth}
     \nabla \I_m(v_0) = \lambda .
     \end{equation}
    Plugging this into~\eqref{eq:compare-second}, we obtain:
    \begin{equation} \label{eq:compare-third}
        (\delta_X V)^T \nabla^2 \I_m(v_0) \delta_X V  \leq \delta_X^2 A - \scalar{\lambda,\delta_X^2 V} = Q(X).
    \end{equation}
    Applying (\ref{eq:def-viscosity}) again, it follows that for all $a \in E^{(q-1)}$ and $\eps > 0$:
    \[
    a^T \F \nabla^2 \I_m(v_0) \F a \leq -(n-1) a^T \F a + \eps .
    \]
    Taking the limit $\eps \rightarrow 0$, we deduce that as quadratic forms on $E^{(q-1)}$:
    \[
    \F \nabla^2 \I_m(v_0) \F \leq -(n-1) \F . 
    \]
    Since $\F>0$ was assumed positive-definite on $E^{(q-1)}$, it follows that $\nabla^2 \I_m(v_0) < 0$ and:
    \begin{equation} \label{eq:compare-fifth}
    0 < (-\nabla^2 \I_m(v_0))^{-1} \leq \frac{1}{n-1} \F . 
    \end{equation}
    
    Finally, recall that $A,B \geq 0$ implies that $\tr(A B) \geq 0$. Consequently, multiplying (\ref{eq:compare-fifth}) by the positive-definite $\Id/2 + \frac{1}{(n-1)^2} \nabla \I_m(v_0) \otimes \nabla \I_m(v_0) : E^{(q-1)} \rightarrow E^{(q-1)}$ and tracing, recalling (\ref{eq:compare-fourth}) and the PDE (\ref{eq:PDE}) satisfied by $\I_m$, it follows that:
    \begin{align*}
        \frac{1}{n-1} \I_m(v_0) & = \tr\brac{(-\nabla^2 \I_m(v_0))^{-1} \brac{\Id/2 + \frac{1}{(n-1)^2} \nabla \I_m(v_0) \otimes \nabla \I_m(v_0)} } \\
        & \leq  \frac{1}{n-1} \tr \brac{ \F \brac{\Id/2 + \frac{1}{(n-1)^2} \lambda \otimes \lambda} } \leq \frac{1}{n-1} \I(v_0) ,
        \end{align*}
        where the last inequality is due to our assumption (\ref{eq:F-trace-inq}). We deduce $\I_m(v_0) \leq \I(v_0)$, contradicting our original assumption that $\I(v_0) < \I_m(v_0)$ and concluding the proof. 
\end{proof}

Note that the assumption $\I^{(q-1)} = \I_m^{(q-1)}$ on $\partial \Delta^{(q-1)}$ is the same as the assumption that $\I^{(q-2)} = \I_m^{(q-2)}$ on the entire $\Delta^{(q-2)}$ (since at least one of the $v_i$'s vanishes for $v \in \partial \Delta^{(q-1)}$, thereby effectively reducing the number of cells). Consequently, we may use induction on $q$ in conjunction with Proposition \ref{prop:max-principle} to establish that $\I^{(q-1)}  = \I^{(q-1)}_m$ on $\Delta^{(q-1)}$. As the basis of induction, we can either start at $q=2$ and use the known single-bubble spherical isoperimetric inequality $\I^{(1)} = \I_m^{(1)}$, or start at $q=1$ (which holds trivially since $\I^{(0)}(1) = \I_m^{(0)}(1) = 0$) and derive an independent proof of the single-bubble case $q=2$ using Proposition \ref{prop:max-principle}. We thus obtain:

\begin{corollary}
Let $2 \leq p \leq n+2$,  and assume that for all $q=2,\ldots,p-1$, (\ref{eq:PDI-F}) holds for $\I = \I^{(q-1)}$ in the viscosity sense on $\Delta^{(q-1)}$. Then $\I^{(p-1)} = \I_m^{(p-1)}$ on the entire $\Delta^{(p-1)}$.
\end{corollary}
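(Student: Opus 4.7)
The plan is a straightforward induction on the number of cells $q$, with Proposition~\ref{prop:max-principle} serving as the inductive engine. For the base case, one takes $q=1$, where $\Delta^{(0)}=\{1\}$ and $\I^{(0)}(1)=\I^{(0)}_m(1)=0$ trivially; alternatively, one could start at $q=2$ via the classical single-bubble spherical isoperimetric inequality $\I^{(1)}=\I^{(1)}_m$ on $\Delta^{(1)}$.

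For the inductive step, suppose $q\ge 2$ and that $\I^{(q-2)}=\I^{(q-2)}_m$ on the entire $\Delta^{(q-2)}$. I would first verify the boundary identity $\I^{(q-1)}=\I^{(q-1)}_m$ on $\partial\Delta^{(q-1)}$, which is the input Proposition~\ref{prop:max-principle} demands. Any $v\in\partial\Delta^{(q-1)}$ has some coordinate $v_i=0$, and any $q$-cluster $\Omega$ realizing $V(\Omega)=v$ must have $V(\Omega_i)=0$; since a $\mu$-null set has empty reduced boundary, $\partial^*\Omega_i=\emptyset$, all interfaces $\Sigma_{ij}=\partial^*\Omega_i\cap\partial^*\Omega_j$ are empty, and the remaining cells form a legal $(q-1)$-cluster with identical total perimeter and volume vector $v_{-i}$. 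Conversely, any $(q-1)$-cluster extends to a $q$-cluster by appending an empty $i$-th cell at no perimeter cost. Hence $\I^{(q-1)}(v)=\I^{(q-2)}(v_{-i})$, and by the recursive definition of the model profile $\I^{(q-1)}_m(v)=\I^{(q-2)}_m(v_{-i})$; combining with the inductive hypothesis yields the desired boundary identity. Feeding this together with the hypothesized viscosity PDI (\ref{eq:PDI-F}) for $\I^{(q-1)}$ into Proposition~\ref{prop:max-principle} produces $\I^{(q-1)}=\I^{(q-1)}_m$ on all of $\Delta^{(q-1)}$. Iterating from $q=2$ up to the level $p-1$ appearing in the conclusion then delivers the asserted equality.

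The corollary is essentially organizational: all the analytic muscle is packaged inside Proposition~\ref{prop:max-principle}, and the only step requiring any thought is the empty-cell boundary reduction (which hinges only on the elementary fact that a $\mu$-null set has empty reduced boundary, so no interfaces with the vanishing cell contribute to total perimeter). Accordingly, I do not expect any real obstacle to the corollary itself; its content is the reduction of the spherical multi-bubble isoperimetric inequality to the purely local/variational task of verifying the viscosity PDI (\ref{eq:PDI-F}) at each level, which is where the spectral theory of the Jacobi operator and the trace identity for the operator $\F$ developed in the ensuing sections will enter the picture.
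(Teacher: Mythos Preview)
Your proposal is correct and mirrors the paper's own argument: induction on $q$ with Proposition~\ref{prop:max-principle} as the inductive step, base case at $q=1$ (trivial) or $q=2$ (classical single-bubble), and the boundary reduction $\I^{(q-1)}|_{\partial\Delta^{(q-1)}}=\I^{(q-2)}$ via deleting an empty cell. The paper presents exactly this in the paragraph immediately preceding the corollary rather than as a separate formal proof; your additional justification that a $\mu$-null cell has empty reduced boundary simply makes explicit what the paper summarizes as ``effectively reducing the number of cells.''
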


\subsection{What we can show}

We can show that for every $v_0 \in \interior \Delta^{(q-1)}$, 
\begin{equation} \label{eq:PDI-F1}
\F^T \nabla^2 \I \F \leq -(n-1) \F
\end{equation}
holds in the viscosity sense for a certain positive semi-definite $\F = \F(v_0) \geq 0$ (to be defined later on), and that:
\begin{equation} \label{eq:PDI-Feps}
\F^T_{\eps} \nabla^2 \I \F_{\eps} \leq -(n-1) \F_{\eps}
\end{equation}
 for the positive-definite $\F_{\eps} = \F + \eps \P_{\ker \F} > 0$ and all $\eps > 0$, where $\P_{\ker \F}$ denotes orthogonal projection onto the kernel $\ker \F$. 

\medskip

Unfortunately, we are not able to show in full generality the trace inequality (\ref{eq:F-trace-inq}), and thus conclude (\ref{eq:PDI-F}).  
Instead, we are able to show that a certain relaxation of $\F$, denoted $\F_0$, is positive-definite and satisfies 
the trace inequality (\ref{eq:F-trace-inq}) with equality; we conjecture that $\F = \F_0$, but we do not know how to establish this in general. Note that (\ref{eq:PDI-Feps}) is already enough to conclude that $\I$ is strictly concave in the viscosity sense, and hence in the usual sense, yielding Theorem \ref{thm:intro-profile-concave}.

\medskip

In this work, we are able to establish the trace inequality (\ref{eq:F-trace-inq}) (in fact, with equality) at $v_0 \in \interior \Delta^{(q-1)}$ if there exists a minimizer $\Omega$ with $V(\Omega) = v_0$ which satisfies either of the following two conditions:
\begin{enumerate}
\item $\Omega$ is pseudo conformally flat (PCF) -- in this case we show that $\F = \F_0$ (and in fact, we may also handle a generalization of the PCF condition described in Section \ref{sec:LSE}); or
\item $\Omega$ is $(q-3)$-Plateau -- in this case, by Lemma \ref{lem:q-3-Plateau}, the cluster is either PCF or fully Plateau. In the latter scenario, we will show that it is possible to deform the cluster into a full-dimensional one without changing its volume or total perimeter. In both scenarios, we will have found a minimizer of volume $v_0$ which is PCF,  thereby reducing to the first case. 
\end{enumerate}
This outlines our strategy for establishing Theorem \ref{thm:intro-conditional} from the Introduction. Recalling that every minimizing cluster is $3$-Plateau by Lemma \ref{lem:3-Plateau}, this establishes the quintuple-bubble case $q=6$ whenever $n \geq 5$ (Theorem \ref{thm:intro-quintuple}), as well as providing an alternative proof of the double, triple and quadruple cases from \cite{EMilmanNeeman-TripleAndQuadruple}.

\section{Spectral theory of $Q^0$ and $L_{Jac}$} \label{sec:spectral}

In this section we extend the definitions of the symmetric quadratic form $Q$ and symmetric operator $L_{Jac}$ defined for smooth vector- and scalar-fields, to a closed hermitian form and self-adjoint operator with compact resolvent (respectively) defined on appropriate dense domains of $L^2(\Sigma^1)$. In particular, we will see that the conformal boundary conditions arise naturally via this procedure, and we will make a noteworthy (yet tangential) connection to the quantum graphs formalism \cite{BerkolaikoKuchment-QuantumGraphs}. 

\subsection{Sobolev Spaces} 

We assume throughout this section that $\Omega$ is a bounded regular stationary cluster on $(M,g,\mu = \Psi \vol_g)$ with bounded curvature. 
Recall that $\mu^{n-1}(\Sigma^1) < \infty$ and that $\mu^{n-2}(\Sigma^2) < \infty$ by Lemma \ref{lem:Sigma2}; as the cluster is bounded and $\Psi$ is bounded away from $0$ and $\infty$ on compact sets, this is equivalent to $\H^{n-1}(\Sigma^1) < \infty$ and $\H^{n-2}(\Sigma^2)  < \infty$, respectively. We will implicitly assume that all integration in the Lebesgue and Sobolev spaces introduced below are with respect to $\mu^{n-1}$ on $\Sigma^1$ and $\mu^{n-2}$ on $\Sigma^2$ (instead of $\H^{n-1}$ and $\H^{n-2}$, respectively) -- as the measures $\mu^k$ and $\H^k$ are equivalent on compact sets, this makes absolutely no difference in the resulting spaces, qualitative estimates, etc...  

Let $H^1(\Sigma_{ij})$ denote the Sobolev space of $L^2$-integrable functions $u_{ij}$ on $\Sigma_{ij}$ having distributional tangential derivative $\nabla^\tang u_{ij}$ in $L^2(\Sigma_{ij})$, and set:
\[
\norm{u_{ij}}_{H^1(\Sigma_{ij})}^2 := \norm{u_{ij}}_{L^2(\Sigma_{ij})}^2 + \norm{\nabla^\tang u_{ij}}_{L^2(\Sigma_{ij})}^2 = \int_{\Sigma_{ij}} (|\nabla^\tang u_{ij}|^2  + |u_{ij}|^2) d\mu^{n-1} < \infty . 
\]
To obtain the desired spectral theory, we will initially consider complex-valued Sobolev functions below, and endow $L^2$ with its standard sesquilinear scalar product $\scalar{\cdot,\cdot}_{L^2}$. However, after the spectral theory is obtained, we will tacitly revert back to using real-valued functions in all of our function classes. 

Recall our notation $\partial \Sigma_{ij} = \cup_{k \neq i,j} \Sigma_{ijk}$  and that $\Sigma_{ij} \cup \partial \Sigma_{ij}$ is a (possibly incomplete) $C^\infty$ Riemannian manifold with $C^\infty$ boundary. For consistency, we denote $\partial \Sigma^1 := \Sigma^2$. 
 For simplicity, we will in addition assume that:
 
 \begin{enumerate}[(Lip)]
 \item \label{eq:Lip}
 For all $i,j,k$, $\Sigma_{ij}$ and $\Sigma_{ijk}$  have Lipschitz topological boundaries $\overline{\partial \Sigma_{ij}}$ and $\overline{\partial \Sigma_{ijk}}$.
 \end{enumerate}

\noindent This certainly holds for any $3$-Plateau spherical Voronoi cluster on $\S^n$, since each cell $\Omega_i$ is obtained as (the interior of) the intersection of a closed convex polyhedron $P_i$ with $\S^n$, and Lemma \ref{lem:isolated} and non-degeneracy ensure that the boundary of each non-empty cell remains Lipschitz after the intersection with $\S^n$, as the half-spaces whose intersection comprises $P_i$ locally around $p \in \partial \Omega_i$ will have normals with distinct projections to $T_p \S^n$.
See also \cite{RozanovaPierrat-Survey} for more general conditions (such as Ahlfors regularity) which ensure the properties we require. Under \ref{eq:Lip}, the trace space $H^{1/2}(\overline{\partial \Sigma_{ij}})$ of $H^1(\Sigma_{ij})$ is well-defined by using a partition of unity and pushing-forward $H^{1/2}(\R^{n-2})$ using the Lipschitz local boundary map \cite{Marschall-TraceOfSobolev, Grisvard-Book}.  Since $\overline{\partial \Sigma_{ij}} \setminus \partial \Sigma_{ij} \subset \Sigma^3 \cup \Sigma^4$, and in particular $\H^{n-2}(\overline{\partial \Sigma_{ij}} \setminus \partial \Sigma_{ij}) = 0$, we make no distinction between $L^2(\overline{\partial \Sigma_{ij}})$ and $L^2(\partial \Sigma_{ij})$ below. Moreover, as $C^\infty(\Sigma_{ijk})$ is dense in $H^{1/2}(\Sigma_{ijk})$ and $\Sigma_{ijk}$ is assumed to have Lipschitz topological boundary, a standard cut-off argument (cf.~\cite[Theorem 1.4.2.4]{Grisvard-Book}) ensures that actually $C_c^\infty(\Sigma_{ijk})$ is dense in $H^{1/2}(\Sigma_{ijk})$ (much in contrast to the situation with $H^s(\Sigma_{ijk})$ when $s > 1/2$). 
  It follows that $C_c^\infty(\partial \Sigma_{ij})$ is dense in $H^{1/2}(\partial \Sigma_{ij})$, and that $H^{1/2}(\overline{\partial \Sigma_{ij}}) \simeq \oplus_k H^{1/2}(\Sigma_{ijk}) \simeq H^{1/2}(\partial \Sigma_{ij})$, and so we will make no distinction between $H^{1/2}(\overline{\partial \Sigma_{ij}})$ and $H^{1/2}(\partial \Sigma_{ij})$ below as well.
 In addition, the following properties hold \cite{Grisvard-Book,LionsMagenes-Vol1}:
\begin{itemize}
\item $H^1(\Sigma_{ij})$ embeds compactly in $L^2(\Sigma_{ij})$ (Rellich--Kondrachov Theorem). 
\item There exists a well-defined, linear, continuous and surjective trace operator 
\[
H^1(\Sigma_{ij}) \ni u_{ij} \mapsto \Tr u_{ij} \in H^{1/2}(\partial \Sigma_{ij}) ,
\]
extending the usual restriction operator of $C^1(\overline{\Sigma_{ij}})$ functions onto $\partial \Sigma_{ij}$. Moreover, the trace has a linear and continuous right inverse $\mathcal{E} : H^{1/2}(\partial \Sigma_{ij}) \rightarrow H^1(\Sigma_{ij})$, so that $\Tr \circ \mathcal{E} = \Id$. 
\item There exists  $\alpha_0 > 0$ so that for all $i,j$:
\[
\forall 0 < \alpha < \alpha_0 \;\; \exists M_\alpha < \infty \;\;\;
 \int_{\partial \Sigma_{ij}} |\Tr u_{ij}|^2 d\mu^{n-2} \leq \alpha \int_{\Sigma_{ij}}|\nabla^\tang u_{ij}|^2 d\mu^{n-1} + M_\alpha \int_{\Sigma_{ij}} |u_{ij}|^2 d\mu^{n-1}  .
\]
\item $H_0^1(\Sigma_{ij})$, the closure of $C_c^\infty(\Sigma_{ij})$ in $H^1(\Sigma_{ij})$, coincides with $\{ u_{ij} \in H^1(\Sigma_{ij}) \; ; \; \Tr u_{ij} \equiv 0\}$ and is dense in $L^2(\Sigma_{ij})$. 
\item Given $u_{ij}\in H^1(\Sigma_{ij})$, $\Delta_{\Sigma,\mu} u_{ij} \in H^{-1}(\Sigma_{ij})$ denotes its distributional (weighted) surface Laplacian via duality with $H^1_0(\Sigma_{ij})$. In particular, $ \Delta_{\Sigma,\mu} u_{ij} \in L^2(\Sigma_{ij})$ iff there exists $f_{ij} \in L^2(\Sigma_{ij})$ so that:
\begin{equation} \label{eq:distributional-Delta}
\int_{\Sigma_{ij}} \scalar{\nabla^\tang u_{ij}, \nabla^\tang v_{ij}} d\mu^{n-1} = -\int_{\Sigma_{ij}} f_{ij} v_{ij} d\mu^{n-1} \;\;\; \forall v_{ij} \in H^1_0(\Sigma_{ij}) ,
\end{equation}
and in that case $\Delta_{\Sigma,\mu} u_{ij} = f_{ij}$ (uniquely defined by the previous bullet point). 
\item The divergence theorem holds in the following form for all $u,v \in H^1(\Sigma_{ij})$ so that $\Delta_{\Sigma,\mu} u \in L^2(\Sigma_{ij})$:
\begin{equation} \label{eq:divergence-Sobolev}
\int_{\Sigma_{ij}} \scalar{\nabla^\tang u_{ij}, \nabla^\tang v_{ij}} d\mu^{n-1} = -\int_{\Sigma_{ij}} (\Delta_{\Sigma,\mu} u_{ij}) v_{ij} d\mu^{n-1} + \scalar{\NN u_{ij} , \Tr v_{ij}}_{H^{-\frac{1}{2},\frac{1}{2}}(\partial \Sigma_{ij})}  , 
\end{equation}
where $\NN u_{ij} \in H^{-1/2}(\partial \Sigma_{ij})$ is a continuous linear functional acting on elements in $H^{1/2}(\partial \Sigma_{ij})$ (via the coupling $\scalar{\cdot,\cdot}_{H^{-\frac{1}{2},\frac{1}{2}}(\partial \Sigma_{ij})}$), which extends the normal partial derivative $\nabla_{\n_{\partial ij}} f_{ij}$ on $\partial \Sigma_{ij}$ of $C^1(\overline{\Sigma_{ij}})$ functions $f_{ij}$, acting by integration on $\partial \Sigma_{ij}$ with respect to $\mu^{n-2}$.
\end{itemize}

\medskip

 Denote $H^1(\Sigma^1) := \oplus_{i<j} H^1(\Sigma_{ij})$ and $H^1_0(\Sigma^1) := \oplus_{i<j} H^1_0(\Sigma_{ij})$ the direct sum of the respective Sobolev spaces on the individual interfaces $\Sigma_{ij}$, and similarly for $L^2(\Sigma^1)$. We force these spaces to be oriented by defining $u_{ij} := -u_{ji}$ when $i > j$. An element $u \in H^1(\Sigma^1)$ is thus comprised of oriented Sobolev functions $u = (u_{ij})_{i \neq j}$ with $u_{ij} = -u_{ji}\in H^1(\Sigma_{ij})$, so that:
\[
\norm{u}^2_{H^1(\Sigma^1)} := \sum_{i<j} \norm{u_{ij}}_{H^1(\Sigma_{ij})}^2 = \norm{u}_{L^2(\Sigma^1)}^2 + \norm{\nabla^\tang u}_{L^2(\Sigma^1)}^2 < \infty . 
\]
As usual, $\Delta_{\Sigma,\mu} u \in H^{-1}(\Sigma^1)$ denotes the distributional (weighted) surface Laplacian of $u \in H^1(\Sigma^1)$ via duality with $H^1_0(\Sigma^1)$; in particular $\Delta_{\Sigma,\mu} u \in L^2(\Sigma^1)$ iff $\exists f \in L^2(\Sigma^1)$ so that:
\[
\int_{\Sigma^1} \scalar{\nabla^\tang u, \nabla^\tang v} d\mu^{n-1} = -\int_{\Sigma^1} f v d\mu^{n-1} \;\;\; \forall v \in H^1_0(\Sigma^1) ,
\]
and in that case $\Delta_{\Sigma,\mu} u = f$. 

We will also use the notation:
\begin{align*}
& H^1(\Sigma^1) \ni u \mapsto \Tr u \in H^{1/2}(\partial \Sigma^1, \Complex^3) ~,~\\
& \Tr u := (\Tr u_{ij} , \Tr u_{jk} , \Tr u_{ki}) \text{ on } \Sigma_{ijk} ~,~  i < j < k ,
\end{align*}
and:
\begin{align*}
& H^1(\Sigma^1) \ni u \mapsto \NN u \in H^{-1/2}(\partial \Sigma^1, \Complex^3) ~,~ \\
& \NN u := (\NN u_{ij} , \NN u_{jk} , \NN u_{ki} ) \text{ on } \Sigma_{ijk} ~,~ i < j < k  . 
\end{align*}
We endow $\Complex^3$ with its standard sesquilinear scalar-product $\scalar{\cdot,\cdot}_{\Complex^3}$, and use it to sesquilinearly extend the action of $N \in H^{-1/2}(\partial \Sigma^1, \Complex^3)$ on $T \in H^{1/2}(\partial \Sigma^1, \Complex^3)$, namely:
\[
\scalar{N, T}_{H^{-\frac{1}{2},\frac{1}{2}}(\partial \Sigma^1)} = \scalar{N_1,\bar T_1}_{H^{-\frac{1}{2},\frac{1}{2}}(\partial \Sigma^1)} + \scalar{N_2,\bar T_2}_{H^{-\frac{1}{2},\frac{1}{2}}(\partial \Sigma^1)} + \scalar{N_3,\bar T_3}_{H^{-\frac{1}{2},\frac{1}{2}}(\partial \Sigma^1)} . 
\]

\subsection{Closed semi-bounded sesquilinear hermitian form}

Observe that the definition from (\ref{eq:Q1}) of the symmetric bilinear Index-Form $Q(X,Y)$ for $C_c^\infty$ vector fields $X,Y$ only depends on their normal components $X^\n$ and $Y^\n$. Consequently, we may define an analogous form $Q^0(f,g)$ for scalar-fields $f,g$ which do not necessarily arise as the normal component of a smooth (or for that matter, any) vector-field. This was done in \cite{EMilmanNeeman-TripleAndQuadruple} for Lipschitz scalar-fields, but for deriving a proper spectral-theory, which we shall require in this work, we will need a definition which applies to arbitrary Sobolev scalar-fields. 
Consequently, we define the following sesquilinear hermitian form for arbitrary $u,v \in H^1(\Sigma^1)$:

\begin{definition}[Index-Form $Q^0$ for $u,v \in H^1(\Sigma^1)$]
\[ Q^0(u,v) := \sum_{i<j} \Big [ \int_{\Sigma_{ij}} \brac{\scalar{\nabla^\tang u_{ij} , \nabla^\tang \bar v_{ij}} - (\Ric_{g,\mu}(\n,\n) + \|\II\|_2^2) u_{ij} \bar v_{ij}} d\mu^{n-1}  - \int_{\partial \Sigma_{ij}} \bar \II^{\partial ij} \Tr u_{ij} \Tr \bar v_{ij}  \, d\mu^{n-2} \Big ] . 
\] As usual, we set $Q^0(u) := Q^0(u,u)$.
\end{definition}
After developing the corresponding spectral theory for $Q^0$ over $\Complex$, we will tacitly revert back to treating $Q^0$ as a symmetric bilinear form on real-valued Sobolev functions. 

\medskip

Denoting $K := \sup \{\norm{\II(p)}_{op} \; ; \; p \in \Sigma^1 \}$, we have $|\bar \II^{\partial ij}| \leq \frac{2 K}{\sqrt{3}}$ and $\|\II\|_2^2 \leq (n-1) K^2$. Further denoting 
$R_+ := \sup \{ \Ric_{g,\mu}(\n,\n) ; p \in \Sigma^1 \}$ and $R_- := \inf \{ \Ric_{g,\mu}(\n,\n) ;  p \in \Sigma^1 \}$, it easily follows that:
\begin{equation} \label{eq:upper-bounded}
Q^0(u) \leq \brac{ \brac{ 1 + \frac{2 K}{\sqrt{3}} \frac{\alpha_0}{2}} \norm{\nabla^\tang u}_{L^2(\Sigma^1)}^2 + (M_{\alpha_0/2} \frac{2 K}{\sqrt{3}} - R_-) \norm{u}_{L^2(\Sigma^1)}^2 } \;\;\; \forall u \in H^1(\Sigma^1) ,
\end{equation}
and that moreover:
\begin{equation} \label{eq:semi-bounded}
Q^0(u) \geq \frac{1}{2} \norm{\nabla^\tang u}_{L^2(\Sigma^1)}^2 -C \norm{u}_{L^2(\Sigma^1)}^2 \geq -C \norm{u}_{L^2(\Sigma^1)}^2  \;\;\; \forall u \in H^1(\Sigma^1)  
\end{equation}
where $C = M_{\alpha_1} \frac{2 K}{\sqrt{3}} + (R_+ + (n-1) K^2)$ and $\alpha_1 := \frac{1}{2} \min(\alpha_0,  \frac{\sqrt{3}}{2 K})$. In particular, $Q^0$ is $L^2(\Sigma^1)$-bounded-from-below. Introducing the following notation:
\[
\norm{u}_{Q,C}^2 := (C+1) \norm{u}^2_{L^2(\Sigma^1)} + Q^0(u) ~,~ u \in H^1(\Sigma^1) ,
\]
it follows that $\norm{u}_{Q,C} \geq 0$ with $\norm{u}_{Q,C} = 0$ iff $u=0$, and we conclude that $\norm{\cdot}_{Q,C}$ is a norm on $H^1(\Sigma^1)$. 

 Note that (\ref{eq:upper-bounded}) and (\ref{eq:semi-bounded}) immediately imply that:
 \begin{equation} \label{eq:QC-equivalent-H1}
 \text{  $\norm{\cdot}_{Q,C}$ is equivalent to $\norm{\cdot}_{H^1(\Sigma^1)}$ on $H^1(\Sigma^1)$. }
 \end{equation}
 As simple consequences of this equivalence, we record:

\begin{lemma} \label{lem:Q0-bounds} \hfill
\begin{itemize}
\item There exists $M > 0$ so that for all $u,v \in H^1(\Sigma^1)$:
\[
\abs{Q^0(u,v)} \leq M \norm{u}_{H^1(\Sigma^1)} \norm{v}_{H^1(\Sigma^1)} . 
\]
\item
In particular:
\[
|Q^0(u) - Q^0(v)| \leq M (2 \norm{u-v}_{H^1(\Sigma^1)} \norm{v}_{H^1(\Sigma^1)} + \norm{u-v}^2_{H^1(\Sigma^1)}) ,
\]
and $H^1(\Sigma^1) \ni u \rightarrow Q^0(u)$ is continuous.
\end{itemize}
\end{lemma}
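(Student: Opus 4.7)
The first bullet is essentially three applications of Cauchy--Schwarz (one for each type of term appearing in the definition of $Q^0(u,v)$), combined with the trace inequality for $H^1(\Sigma_{ij})$; the second bullet then follows by polarization of a symmetric (sesquilinear hermitian) form. The plan is therefore as follows.

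First I would bound each of the three families of terms in $Q^0(u,v)$ by $\|u\|_{H^1(\Sigma^1)}\|v\|_{H^1(\Sigma^1)}$ up to a universal constant. The gradient contribution $\sum_{i<j}\int_{\Sigma_{ij}} \langle \nabla^\tang u_{ij},\nabla^\tang \bar v_{ij}\rangle \, d\mu^{n-1}$ is bounded by $\|\nabla^\tang u\|_{L^2(\Sigma^1)}\|\nabla^\tang v\|_{L^2(\Sigma^1)}$ directly. The bulk curvature contribution is bounded by $(R_+ + (n-1)K^2)\,\|u\|_{L^2(\Sigma^1)}\|v\|_{L^2(\Sigma^1)}$, using the assumption that $\Omega$ has (locally, hence on the bounded set $\Sigma^1$) bounded curvature, so that $\|\II\|_2^2 \leq (n-1)K^2$ and $\Ric_{g,\mu}(\n,\n)$ is bounded, combined with Cauchy--Schwarz in $L^2(\Sigma^1)$. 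The boundary term is bounded by $\tfrac{2K}{\sqrt 3}\sum_{i<j}\|\Tr u_{ij}\|_{L^2(\partial\Sigma_{ij})}\|\Tr v_{ij}\|_{L^2(\partial\Sigma_{ij})}$ using $|\bar\II^{\partial ij}|\leq \tfrac{2K}{\sqrt 3}$ and Cauchy--Schwarz in $L^2(\partial\Sigma_{ij})$; the trace inequality listed among the Sobolev properties then yields $\|\Tr u_{ij}\|_{L^2(\partial \Sigma_{ij})} \leq C'\,\|u_{ij}\|_{H^1(\Sigma_{ij})}$ for some constant $C'$ (picking any fixed $\alpha<\alpha_0$). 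Summing over $i<j$ and using Cauchy--Schwarz over the finite index set yields the desired $|Q^0(u,v)|\leq M\,\|u\|_{H^1(\Sigma^1)}\|v\|_{H^1(\Sigma^1)}$.

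For the second bullet, I would polarize: since $Q^0$ is a symmetric bilinear form (in the real setting to which we revert),
\[
Q^0(u) - Q^0(v) \;=\; Q^0(u-v,u-v) + 2\,Q^0(u-v,v),
\]
and applying the bound of the first bullet to each summand gives the stated inequality. Continuity of $u\mapsto Q^0(u)$ on $H^1(\Sigma^1)$ is then immediate: fixing $v$ and letting $u\to v$ in $H^1(\Sigma^1)$ sends the right-hand side to $0$.

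I do not anticipate any genuine obstacle: the only non-trivial ingredient is the continuity of the trace operator $H^1(\Sigma_{ij})\to L^2(\partial\Sigma_{ij})$, which is available thanks to the Lipschitz boundary assumption \ref{eq:Lip} and has already been recorded in the bullet list above. One small bookkeeping point is that the finite sum over interfaces needs Cauchy--Schwarz in $\R^{\binom{q}{2}}$ to convert $\sum_{i<j}\|u_{ij}\|_{H^1}\|v_{ij}\|_{H^1}$ into $\|u\|_{H^1(\Sigma^1)}\|v\|_{H^1(\Sigma^1)}$, but this costs only a factor depending on $q$, which can be absorbed in $M$.
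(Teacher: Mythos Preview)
Your proposal is correct and essentially matches the paper's reasoning. The paper records this lemma as a ``simple consequence'' of the norm equivalence \eqref{eq:QC-equivalent-H1} between $\norm{\cdot}_{Q,C}$ and $\norm{\cdot}_{H^1(\Sigma^1)}$ (which itself was proved via the same Cauchy--Schwarz and trace estimates you use), whereas you bound the bilinear form $Q^0(u,v)$ directly term-by-term; both routes rely on exactly the same ingredients (bounded curvature giving pointwise bounds on $\Ric_{g,\mu}(\n,\n)+\|\II\|_2^2$ and $\bar\II^{\partial ij}$, plus the trace inequality), so the difference is purely organizational.

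One small remark: your final bookkeeping step with Cauchy--Schwarz over the index set is fine, but you can avoid any $q$-dependent constant by applying Cauchy--Schwarz in $\ell^2$ over $i<j$ to $\sum_{i<j}\|\Tr u_{ij}\|_{L^2}\|\Tr v_{ij}\|_{L^2}$, since $\sum_{i<j}\|\Tr u_{ij}\|_{L^2(\partial\Sigma_{ij})}^2$ is itself controlled by $\|u\|_{H^1(\Sigma^1)}^2$ via the trace inequality summed over interfaces.
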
 

Recall (e.g \cite[Chapter 4]{Davies-SpectralTheoryBook}) that an $L^2(\Sigma^1)$-semi-bounded sesquilinear hermitian form $Q^0$ defined on a linear subspace $\D$ of $L^2(\Sigma^1)$ is called \emph{closed} if $\D$ is complete with respect to $\norm{\cdot}_{Q,C}$. As $\norm{\cdot}_{Q,C}$ and $\norm{\cdot}_{H^1(\Sigma^1)}$ are equivalent on $H^1(\Sigma^1)$, and since the latter linear space is complete with respect to $\norm{\cdot}_{H^1(\Sigma^1)}$, it follows that $Q^0$ is closed on any \emph{closed} linear subspace of $H^1(\Sigma^1)$.

\subsection{Boundary projectors, form domain and scalar-fields}

Introduce the following two orthogonal projections $P_D, P_R$ on $\Complex^3$ given by $P_D := \frac{1}{3} \mathbf{1} \otimes \mathbf{1}$ and $P_R := \Id - P_D$.  
The 'D' stands for Dirichlet and the 'R' for Robin, as will be apparent soon; as we have no Neumann boundary conditions in our setting, we refrain from introducing the unnecessary $P_N = 0$. Note that $P_D a = 0$ iff $a_1 + a_2 + a_3 = 0$.

We now finally introduce the $Q^0$ form-domain:
\[
\D_Q := \set{ u \in H^1(\Sigma^1) \; ; \; P_D \Tr u = 0 } ,
\]
which corresponds to elements satisfying the Kirchoff-Dirichlet boundary conditions. Since $H_0^1(\Sigma_{ij})$ is dense in $L^2(\Sigma_{ij})$, clearly $\D_Q$ is dense in $L^2(\Sigma^1)$. Furthermore, since the trace operator is continuous, $\D_Q$ is a closed linear subspace of $H^1(\Sigma^1)$, and so by the preceding subsection it follows that $Q^0$ is closed on $\D_Q$. 

\medskip

Elements of $\D_Q$ are called Sobolev scalar-fields. It will be useful to consider the following concrete sub-class of $\D_Q$:

\begin{definition}[Smooth scalar-fields]
A tuple $f = (f_{ij})_{i \neq j \in \{1,\ldots,q\}}$ of $C_c^\infty$-smooth functions defined on the closure $\overline{\Sigma_{ij}}$ is called a (compactly supported) smooth scalar-field on $\Sigma$, denoted $f \in C^\infty_c(\Sigma)$, if:
\begin{enumerate}
\item $f$ is oriented, i.e. $f_{ji} = -f_{ij}$; and
\item $f$ satisfies Dirichlet-Kirchoff boundary conditions: at every triple-point in $\Sigma_{ijk}$, we have $f_{ij} + f_{jk} + f_{ki} = 0$. 
\end{enumerate} 
\end{definition}

\subsection{Conformal boundary conditions}

It is interesting and useful at this point to invoke the quantum graphs formalism -- we refer to \cite[Section 1.4]{BerkolaikoKuchment-QuantumGraphs} for a general overview on the subject. Note that for $u \in \D_Q$, since $P_D \Tr u = 0$ then $P_R \Tr u = \Tr u$. Consequently, for $u,v \in \D_Q$ we may write:
\[
Q^0(u,v) := \int_{\Sigma^1} \brac{\scalar{\nabla^\tang u , \nabla^\tang \bar v} - (\Ric_{g,\mu}(\n,\n) + \|\II\|_2^2) u \bar v} d\mu^{n-1}  - \int_{\partial \Sigma^1} \scalar{\Lambda P_R \Tr u  , P_R \Tr v}_{\Complex^3} \, d\mu^{n-2} ,
\]
where $\Lambda \in C^\infty(\partial \Sigma^1 , \MM(\Im P_R))$ is a field of hermitian operators on $\Im P_R$ given by $\Lambda = P_R \Lambda_0 P_R$, where $\Lambda_0 := \text{diag}(\bar \II^{\partial ij} , \bar \II^{\partial jk} , \bar \II^{\partial ki})$ on $\Sigma_{ijk}$ for $i < j < k$. Note that by cluster regularity and boundedness of curvature, $\Lambda$ is indeed $C^\infty$ smooth and bounded on each $\Sigma_{ijk}$. 
 
\begin{definition}[Conformal boundary conditions] \label{def:conformal-BCs}
A scalar-field $u \in \D_Q$ is said to satisfy conformal boundary conditions (BCs) if $u \in \D_{con}$, where:
\begin{align*}
\D_{con} & :=  \{ u \in \D_Q \; ; \; \; \Delta_{\Sigma,\mu} u \in L^2(\Sigma^1) \;,\; P_R \NN u = \Lambda P_R \Tr u \}  \\
& = \{ u \in H^1(\Sigma^1) \; ; \;  \Delta_{\Sigma,\mu} u \in L^2(\Sigma^1) \;,\; P_R \NN u = \Lambda P_R \Tr u \; , \; P_D \Tr u = 0 \} .
\end{align*}
\end{definition}

\begin{remark}
In the quantum graphs setting $\D_{con}$ corresponds to Kirchoff-Dirichlet BCs ($P_D \Tr u = 0$) coupled with Robin BCs ($P_R \NN u = \Lambda P_R \Tr u$) on the triple-point set $\Sigma^2 = \partial \Sigma^1$. Note that we must assume that $\Delta_{\Sigma,\mu} u \in L^2(\Sigma^1)$ in order for $\NN u$ to even be defined. 
\end{remark}

\begin{remark} \label{rem:conformal-BCs}
The reason for the nomenclature ``conformal BCs" will be made clear in Lemma \ref{lem:conformal-boundary} in the next section. For now, let us just point out that 
for smooth scalar-fields, $f \in C_c^\infty(\Sigma)$ satisfies conformal BCs iff $\nabla_{\n_{\partial ij}} f_{ij} - \bar \II^{\partial ij} f_{ij}$ is independent of $(i,j) \in \cyclic(u,v,w)$ on $\Sigma_{uvw}$, for all $u < v < w$. Indeed, since $P_D \Tr f = 0$ then $P_R \Tr f = \Tr f$,  and so $P_R \NN f = \Lambda P_R \Tr f$ is the same as $P_R (\NN f - \Lambda_0 \Tr f) = 0$. Recalling that for $f \in C_c^\infty(\Sigma)$, $\NN f_{ij} = \nabla_{\n_{\partial ij}} f_{ij}$ and $\Tr f_{ij} = f_{ij}$ on $\partial \Sigma_{ij}$, the assertion follows. 
\end{remark}

\begin{remark}
By elliptic regularity, it certainly holds when $\Delta_{\Sigma,\mu} u \in L^2_{loc}(\Sigma^1)$ that $u \in H^2_{loc}(\Sigma^1)$. However, since we only assume that $\overline{\partial \Sigma_{ij}}$ is Lipschitz, it is known that even in the presence of vanishing Dirichlet BCs, this does not guarantee in general that $u_{ij} \in H^2(\Sigma_{ij})$ \cite[Theorem 1.4.5.3]{Grisvard-Book} (cf. \cite{Savare-RegularityInLipDomains}). It might be possible to use the fact that $\Omega_i$ is obtained as the intersection of a convex polyherdon with $\S^n$ to show global $H^2$ regularity afterall, but we do not pursue this here. 
\end{remark}

By invoking the divergence theorem for Sobolev functions (\ref{eq:divergence-Sobolev}), recalling the definition of $L_{Jac}$ from (\ref{eq:def-LJac}), and using that $P_R$ is an orthogonal projection on $\Complex^3$, we immediately obtain:
\begin{lemma} \label{lem:Q^0-by-parts}
 For all $u,v \in \D_Q$ so that $\Delta_{\Sigma,\mu} u \in L^2(\Sigma^1)$:
\begin{equation} \label{eq:Q^0-by-parts}
Q^0(u,v) = - \scalar{L_{Jac} u , v}_{L^2(\Sigma^1)}  + \scalar{P_R \NN u - \Lambda P_R \Tr u , P_R \Tr v}_{H^{-\frac{1}{2},\frac{1}{2}}(\partial \Sigma^1)} .
\end{equation}
In particular:
\begin{equation} \label{eq:Q^0-Dcon}
Q^0(u,v) = - \scalar{L_{Jac} u , v}_{L^2(\Sigma^1)} \;\;\;  \forall u \in \D_{con} ~,~ \forall v \in \D_Q . 
\end{equation}
\end{lemma}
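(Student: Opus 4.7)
The plan is to unpack the definition of $Q^0(u,v)$ interface-by-interface, apply the Sobolev divergence formula (\ref{eq:divergence-Sobolev}) on each $\Sigma_{ij}$, and then reorganize the boundary contributions using the two orthogonal projections $P_D, P_R$.

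First, for each pair $i<j$ I would apply (\ref{eq:divergence-Sobolev}) to the pair $u_{ij}, \bar v_{ij} \in H^1(\Sigma_{ij})$, which is legal precisely because the standing hypothesis $\Delta_{\Sigma,\mu} u \in L^2(\Sigma^1)$ ensures $\Delta_{\Sigma,\mu} u_{ij} \in L^2(\Sigma_{ij})$. This converts the interior gradient pairing into $-\int_{\Sigma_{ij}}(\Delta_{\Sigma,\mu} u_{ij})\bar v_{ij}\, d\mu^{n-1} + \scalar{\NN u_{ij},\Tr \bar v_{ij}}_{H^{-\frac{1}{2},\frac{1}{2}}(\partial \Sigma_{ij})}$. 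Combining with the remaining $-\int_{\Sigma_{ij}}(\Ric_{g,\mu}(\n,\n)+\|\II\|_2^2)\, u_{ij}\bar v_{ij}\, d\mu^{n-1}$ assembles exactly $-L_{Jac} u_{ij}$ under the integral, by the definition (\ref{eq:def-LJac}). Summing over $i<j$ yields the bulk term $-\scalar{L_{Jac} u, v}_{L^2(\Sigma^1)}$.

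Next I would collect the boundary pieces. The $\II$ integral on $\partial \Sigma_{ij}$ and the duality pairing from the divergence theorem both decompose as sums over the triple-point strata $\Sigma_{ijk}$. Regrouping by triple points, with the $\Complex^3$-valued conventions for $\Tr$ and $\NN$ defined on $\partial \Sigma^1$ using the ordering $(ij,jk,ki)$ on $\Sigma_{ijk}$ (with $i<j<k$), the boundary contribution becomes $\scalar{\NN u - \Lambda_0 \Tr u, \Tr v}_{H^{-\frac{1}{2},\frac{1}{2}}(\partial \Sigma^1)}$, where $\Lambda_0 = \mathrm{diag}(\bar \II^{\partial ij},\bar \II^{\partial jk},\bar \II^{\partial ki})$. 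Here a small bookkeeping check is needed to confirm that the oriented convention $u_{ji}=-u_{ij}$ produces no stray sign in this regrouping; this is routine.

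Finally, I would invoke the Kirchoff--Dirichlet condition. Both $u,v \in \D_Q$ satisfy $P_D \Tr u = P_D \Tr v = 0$, so $\Tr u = P_R \Tr u$ and $\Tr v = P_R \Tr v$. Since $P_R$ is an orthogonal projector on $\Complex^3$, it is self-adjoint and idempotent, which allows inserting $P_R$ on either side of the pairing and gives $\scalar{P_R \NN u - P_R \Lambda_0 P_R \Tr u, P_R \Tr v}_{H^{-\frac{1}{2},\frac{1}{2}}(\partial \Sigma^1)}$. By the definition $\Lambda = P_R \Lambda_0 P_R$, this is exactly the claimed boundary term $\scalar{P_R \NN u - \Lambda P_R \Tr u, P_R \Tr v}_{H^{-\frac{1}{2},\frac{1}{2}}(\partial \Sigma^1)}$, proving (\ref{eq:Q^0-by-parts}). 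The second identity (\ref{eq:Q^0-Dcon}) is then immediate: if $u \in \D_{con}$, the conformal boundary condition $P_R \NN u = \Lambda P_R \Tr u$ forces the boundary term to vanish for every $v \in \D_Q$. The only non-mechanical step is the triple-point regrouping and the sign/orientation check, which is the expected main (yet mild) obstacle.
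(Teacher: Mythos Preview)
Your proposal is correct and follows essentially the same approach as the paper: apply the Sobolev divergence theorem (\ref{eq:divergence-Sobolev}) on each $\Sigma_{ij}$ to produce $-L_{Jac}u$ plus the $\NN u$ boundary pairing, then use $P_D\Tr u = P_D\Tr v = 0$ together with the self-adjointness of the orthogonal projector $P_R$ to rewrite the boundary term in the stated form. The paper in fact records this lemma as an immediate consequence of precisely these three ingredients, so your more detailed unpacking (including the triple-point regrouping and orientation check) simply spells out what the paper leaves implicit.
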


The hermitianity of $Q^0$ immediately verifies that $L_{Jac}$ is symmetric on $\D_{con}$. It turns out that in fact $(L_{Jac},\D_{con})$ is self-adjoint, as we shall verify in the next subsection.

\subsection{The induced self-adjoint operator}

By Kato's representation theorem, the closed sesquilinear hermitian semi-bounded form $Q^0$, defined on the dense subset $\D_Q$ of Hilbert space $L^2(\Sigma^1)$, induces a self-adjoint operator $A$ on $L^2(\Sigma^1)$ with $\H^1(\Sigma^1)$-dense domain $\D_A := \text{Dom}(A)$ in $\D_Q$,
in the following manner \cite[Section 4.4]{Davies-SpectralTheoryBook}, \cite[Section 3.3]{Helffer-Book}:
\[
\D_{A} := \{ u \in \D_Q \; ; \; \exists f_u \in L^2(\Sigma^1) \;\; Q^0(u,v) = \scalar{f_u , v}_{L^2(\Sigma^1)} \;\; \forall v \in \D_Q \} ~,~ A u := f_u .
\]
The density of $\D_Q$ in $L^2(\Sigma^1)$ clearly implies that $A$ is well-defined. 
By semi-boundedness (\ref{eq:semi-bounded}), clearly $A \geq -C \; \Id$, and furthermore, it is known that $\D_Q = \text{Dom}( (A + C \Id)^{1/2})$. It follows by the Rellich-Kondrachov theorem that $A + C \Id$ has compact resolvent, and thus the spectrum of $A$ is real and discrete, consisting of a sequence of eigenvalues of finite multiplicity increasing to infinity. In particular, $A$ is Fredholm. 

Recalling Lemma \ref{lem:Q^0-by-parts}, it should now be clear that $(A,\D_A)$ coincides with $(-L_{Jac}, \D_{con})$, thus obtaining an intrinsic interpretation of our conformal BCs as the natural ones arising in the latter standard procedure for constructing a self-adjoint operator from a closed sesquilinear hermitian semi-bounded form. Let us quickly verify this:

\begin{proposition} \label{prop:enter-LJac}
$(A , \D_A) = (-L_{Jac} , \D_{con})$. 
\end{proposition}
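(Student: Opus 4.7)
The plan is to verify the two inclusions $\D_{con} \subseteq \D_A$ with $A = -L_{Jac}$ on $\D_{con}$, and $\D_A \subseteq \D_{con}$.

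\textbf{Easy direction.} The inclusion $\D_{con} \subseteq \D_A$ with $A|_{\D_{con}} = -L_{Jac}$ is immediate from \eqref{eq:Q^0-Dcon} in Lemma \ref{lem:Q^0-by-parts}: for every $u \in \D_{con}$, the function $f_u := -L_{Jac} u$ lies in $L^2(\Sigma^1)$ by definition of $\D_{con}$ and satisfies $Q^0(u,v) = \scalar{f_u,v}_{L^2(\Sigma^1)}$ for all $v \in \D_Q$, so $u \in \D_A$ and $Au = -L_{Jac} u$.

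\textbf{Hard direction.} For the reverse inclusion, fix $u \in \D_A$ and let $f_u = Au \in L^2(\Sigma^1)$. First, I would test against $v \in H^1_0(\Sigma^1) \subseteq \D_Q$ (noting $\Tr v = 0$ so the boundary term in $Q^0$ disappears). This yields
\[
\int_{\Sigma^1} \scalar{\nabla^\tang u,\nabla^\tang \bar v}\,d\mu^{n-1} = \scalar{f_u + (\Ric_{g,\mu}(\n,\n)+\|\II\|_2^2) u,\,v}_{L^2(\Sigma^1)}
\]
for every $v \in H^1_0(\Sigma^1)$. By the distributional characterization \eqref{eq:distributional-Delta}, this forces $\Delta_{\Sigma,\mu} u \in L^2(\Sigma^1)$ and in fact $L_{Jac} u = -f_u$. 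In particular $\NN u \in H^{-1/2}(\partial \Sigma^1,\Complex^3)$ is well-defined, so the integration by parts formula \eqref{eq:Q^0-by-parts} from Lemma \ref{lem:Q^0-by-parts} applies.

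\textbf{Extracting the Robin boundary condition.} Comparing the identity $Q^0(u,v) = \scalar{f_u,v}_{L^2} = -\scalar{L_{Jac} u,v}_{L^2}$ (valid for all $v \in \D_Q$) with \eqref{eq:Q^0-by-parts}, the bulk terms cancel and we are left with
\[
\scalar{P_R \NN u - \Lambda P_R \Tr u,\,P_R \Tr v}_{H^{-\frac12,\frac12}(\partial \Sigma^1)} = 0 \qquad \forall v \in \D_Q.
\]
Since $P_R \Tr v = \Tr v$ for $v \in \D_Q$, the final step is to show that $\{\Tr v : v \in \D_Q\}$ fills the entire space $\{T \in H^{1/2}(\partial \Sigma^1, \Complex^3) : P_D T = 0\} = H^{1/2}(\partial \Sigma^1, \Im P_R)$. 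This follows by applying the continuous right-inverse $\mathcal{E}$ of the trace operator (available thanks to our Lipschitz assumption \ref{eq:Lip}) componentwise: given $T$ with $P_D T = 0$, set $v_{ij} := \mathcal{E}(T_{ij})$ for $i<j$ and extend by $v_{ji} = -v_{ij}$; this $v$ lies in $\D_Q$ and has $\Tr v = T$. Consequently, by duality, the $\Im P_R$-valued distribution $P_R \NN u - \Lambda P_R \Tr u \in H^{-1/2}(\partial \Sigma^1, \Im P_R)$ must vanish, which is precisely the Robin condition defining $\D_{con}$.

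\textbf{Anticipated obstacle.} The only non-routine point is the surjectivity of $\D_Q \ni v \mapsto \Tr v$ onto $H^{1/2}(\partial \Sigma^1, \Im P_R)$, which relies crucially on the Lipschitz regularity assumption \ref{eq:Lip} so that the standard trace extension theorem applies; everything else is the textbook Kato representation argument combined with the integration-by-parts formula already established in Lemma \ref{lem:Q^0-by-parts}.
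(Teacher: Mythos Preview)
Your proposal is correct and follows essentially the same approach as the paper's proof: both establish the easy inclusion via \eqref{eq:Q^0-Dcon}, then for the hard direction test against $v \in H^1_0(\Sigma^1)$ to identify $L_{Jac} u = -Au \in L^2$, apply \eqref{eq:Q^0-by-parts} to isolate the boundary term, and finally use surjectivity of the trace (via the right inverse $\mathcal{E}$) onto $H^{1/2}(\partial\Sigma^1,\Im P_R)$ to conclude the Robin condition. Your write-up is slightly more explicit about how the surjectivity step works, but the argument is identical.
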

\begin{proof}
Clearly $\D_{con} \subset \D_{A}$ and $A|_{\D_{con}} = L_{Jac}$, since if $u \in \D_{con}$ then $\Delta_{\Sigma,\mu} u$ and hence $L_{Jac} u$ are in $L^2(\Sigma^1)$ and so Lemma \ref{lem:Q^0-by-parts} applies and the boundary term in (\ref{eq:Q^0-by-parts}) vanishes. 

Conversely, if $u \in \D_{A}$, then $Q^0(u,v) = \scalar{A u , v}_{L^2(\Sigma^1)}$ for some $A u \in L^2(\Sigma^1)$ and all $v \in \D_Q$. Testing this for $v \in H^1_0(\Sigma^1) \subset \D_Q$,  it follows from (\ref{eq:distributional-Delta}) that $-\Delta_{\Sigma,\mu} u  = A u + (\Ric_{g,\mu}(\n,\n) + \|\II\|_2^2) u \in L^2(\Sigma^1)$. Consequently, we may invoke Lemma \ref{lem:Q^0-by-parts} to deduce that:
\begin{equation} \label{eq:punch}
\scalar{A u + L_{Jac} u , v}_{L^2(\Sigma^1)} = \scalar{P_R \NN u - \Lambda P_R \Tr u , P_R \Tr v}_{H^{-\frac{1}{2},\frac{1}{2}}(\partial \Sigma^1)} \;\;\; \forall v \in \D_Q . 
\end{equation}
Note that we've already verified that $Au + L_{Jac} u = 0$, and so the left-hand-side vanishes. 
Given $T \in H^{1/2}(\partial \Sigma^1,\Im P_R)$, since $\Tr : H^1(\Sigma_{ij}) \mapsto H^{1/2}(\partial \Sigma_{ij})$ is surjective, we may find $v_T \in H^1(\Sigma^1)$ so that $\Tr v_T = T$; since $P_D T = 0$ in fact $v_T \in \D_Q$. Applying (\ref{eq:punch}) with $v = v_T$, we deduce that:
\[
0 = \scalar{P_R \NN u - \Lambda P_R \Tr u , T}_{H^{-\frac{1}{2},\frac{1}{2}}(\partial \Sigma^1)}  . 
\]
As $T \in H^{1/2}(\partial \Sigma^1,\text{Im} P_R)$ was arbitrary and as $P_R \NN u - \Lambda P_R \Tr u \in H^{-1/2}(\partial \Sigma^1,\text{Im} P_R)$, it follows that necessarily $P_R \NN u - \Lambda P_R \Tr u = 0$, i.e. that $u \in \D_{con}$.
\end{proof}

\subsection{Density of smooth scalar-fields in $\D_Q$}

\begin{definition}[Smooth scalar-fields compactly supported away from $\Sigma^{\geq 3}$]
A smooth scalar-field $f \in C_c^\infty(\Sigma)$ is said to be compactly supported away from $\Sigma^{\geq 3}$, denoted $f \in C^\infty_{c}(\Sigma \setminus \Sigma^{\geq 3})$, if the (compact) support of each $f_{ij}$ in $\overline{\Sigma_{ij}}$ is disjoint from $\Sigma^{\geq 3}$.
\end{definition}

\begin{proposition} \label{prop:dense}
$C_c^\infty(\Sigma \setminus \Sigma^{\geq 3})$ is $H^1(\Sigma^1)$-dense in $\D_Q$.
\end{proposition}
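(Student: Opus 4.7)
The plan has two stages: first reduce to approximating elements of $\D_Q$ whose trace vanishes near $\Sigma^{\geq 3}$ via a capacity-type cut-off, then smoothen such elements locally while preserving the Kirchoff--Dirichlet constraint by a trace-correction step.

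\textbf{Stage 1 (cut-off near $\Sigma^{\geq 3}$).} Since $\dim_{\H} \Sigma^{\geq 3} \leq n - 3$ while $\Sigma^1$ is $(n-1)$-dimensional, $\Sigma^{\geq 3}$ has codimension at least $2$ in $\Sigma^1$. I would construct a sequence of Lipschitz functions $\eta_k : \Sigma^1 \to [0,1]$ vanishing in open neighborhoods of $\Sigma^{\geq 3}$, converging pointwise a.e.~to $1$, so that $\eta_k u \to u$ in $H^1(\Sigma^1)$ for every $u \in H^1(\Sigma^1)$. These arise as smoothly truncated logarithmic (or power-law) cut-offs of $\mathrm{dist}(\cdot, \Sigma^{\geq 3})$; the co-area formula and the dimension bound give control of $\|\nabla \eta_k\|_{L^p(\Sigma^1)}$ for appropriate $p$, and Sobolev embedding on the $(n-1)$-dimensional $\Sigma^1$ controls the cross term $\|u \nabla \eta_k\|_{L^2}$, separating $\Sigma^3$ (codimension exactly $2$) from $\Sigma^4$ (codimension $\geq 3$) in high dimension. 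Crucially, multiplication by the scalar $\eta_k$ commutes with $P_D \Tr$, so $\eta_k u \in \D_Q$ whenever $u \in \D_Q$. This reduces the problem to approximating $u \in \D_Q$ with $\Tr u \equiv 0$ on a neighborhood $U$ of $\Sigma^{\geq 3}$.

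\textbf{Stage 2 (local smoothing with Kirchoff correction).} For such $u$, $\overline{\supp \Tr u}$ is compact in $\Sigma^2 \setminus \Sigma^{\geq 3}$, which by Theorem~\ref{thm:regularity} is a smooth embedded $(n-2)$-manifold around which $\Sigma$ is smoothly diffeomorphic to $\Y \times \R^{n-2}$. Cover $\Sigma^1 \setminus U$ by finitely many charts of two types: interior charts $V_\alpha \subset \interior \Sigma_{ij}$, and tubular charts $\Phi_\beta : \Y \times D^{n-2} \to \Sigma^1 \setminus \Sigma^{\geq 3}$. Using a smooth partition of unity, it suffices to handle $u$ supported in one chart. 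On an interior chart, the trace vanishes and standard Riemannian mollification yields a smooth compactly supported $v_{ij} \in C_c^\infty(\Sigma_{ij})$, which combined with zeros on the other interfaces and the orientation $v_{ji} = -v_{ij}$ lies in $C_c^\infty(\Sigma \setminus \Sigma^{\geq 3})$ with Kirchoff trivially satisfied. On a tubular chart, writing $u = (u_1, u_2, u_3)$ on the three half-planes with $u_1 + u_2 + u_3 = 0$ on the triple line $\{0\} \times D^{n-2}$, mollify each $u_i$ first in the $D^{n-2}$ direction (which preserves the pointwise Kirchoff constraint exactly), then in the radial direction via extension--mollification--restriction using the continuous $H^1$-extension operator. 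This yields smooth $v_i^{\eps} \to u_i$ in $H^1$ with traces converging in $H^{1/2}$, so the Kirchoff defect $S^{\eps} := v_1^{\eps} + v_2^{\eps} + v_3^{\eps}$ on the triple line tends to $0$ in $H^{1/2}$. Applying a smooth right inverse $\mathcal{E}$ of the trace, the corrections $W_i^{\eps} := \mathcal{E}(S^{\eps}/3)$ satisfy $\|W_i^{\eps}\|_{H^1} \lesssim \|S^{\eps}\|_{H^{1/2}} \to 0$, and the corrected fields $\tilde v_i^{\eps} := v_i^{\eps} - W_i^{\eps}$ are smooth, converge to $u_i$ in $H^1$, and satisfy $\sum_i \tilde v_i^{\eps} \equiv 0$ on the triple line. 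Patching via the partition of unity and antisymmetrizing gives the required approximation in $C_c^\infty(\Sigma \setminus \Sigma^{\geq 3})$.

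\textbf{Main obstacle.} The principal difficulty is preserving Kirchoff exactly during smoothing, as independent mollification of the three sheets breaks it; this is resolved by the correction step, which converts an $H^{1/2}$-small defect into an $H^1$-small smooth correction via the continuous right inverse of the trace. A secondary technical matter is the capacity argument of Stage 1, where the codimension-exactly-$2$ nature of $\Sigma^3$ for $n \geq 4$ forces the use of logarithmic cut-offs together with Sobolev interpolation to estimate $u \nabla \eta_k$, handled by splitting off the higher-codimension $\Sigma^4$ contribution separately.
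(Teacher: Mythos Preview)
Your two-stage plan is structurally quite different from the paper's argument, and Stage~1 contains a genuine gap.

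\medskip
\textbf{The paper's route.} The paper never performs a capacity cut-off near $\Sigma^{\geq 3}$. Instead it works entirely at the trace level: since each $\Sigma_{ijk}$ has Lipschitz topological boundary (the standing assumption \ref{eq:Lip}), one has that $C_c^\infty(\Sigma_{ijk})$ is dense in $H^{1/2}(\Sigma_{ijk})$. So given $u\in\D_Q$, the paper approximates each $\Tr u_{ij}|_{\Sigma_{ijk}}$ in $H^{1/2}$ by some $g_{ij,k}\in C_c^\infty(\Sigma_{ijk})$, repairs Kirchoff by redefining one of the three as minus the sum of the other two, extends the resulting $g_{ij}\in C_c^\infty(\partial\Sigma_{ij})$ to a smooth $h_{ij}$ on $\overline{\Sigma_{ij}}$, and then $u_{ij}-h_{ij}$ has small $H^{1/2}$-trace. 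Peeling this off via the continuous right-inverse $\mathcal E$ of the trace leaves an element of $H^1_0(\Sigma_{ij})$ up to an $O(\eps)$ error, which is approximated by $C_c^\infty(\Sigma_{ij})$ in the usual way. The Kirchoff correction and the right-inverse of the trace appear here too, so your Stage~2 shares its core idea with the paper; the difference is that the paper gets ``compactly supported away from $\Sigma^{\geq 3}$'' for free from $H^{1/2}$-density, with no cut-off in $H^1$.

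\medskip
\textbf{The gap in Stage~1.} You assert that for a log cut-off $\eta_k$ near $\Sigma^{\geq 3}$ one has $\eta_k u\to u$ in $H^1(\Sigma^1)$ for \emph{every} $u\in H^1(\Sigma^1)$, with the cross term $\|u\,\nabla\eta_k\|_{L^2}$ controlled via Sobolev embedding. This does not work at the critical codimension-$2$ piece $\Sigma^3$: on the $(n-1)$-dimensional $\Sigma^1$ the embedding $H^1\hookrightarrow L^{2(n-1)/(n-3)}$ forces, by H\"older, control of $\|\nabla\eta_k\|_{L^{n-1}}$, but for a log cut-off near a codimension-$2$ set one computes $\int|\nabla\eta_k|^{n-1}\sim |\log\epsilon_k|^{-(n-1)}\epsilon_k^{3-n}\to\infty$ when $n\geq 4$. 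The usual remedy --- first approximate $u$ by bounded functions, then cut off --- is unavailable here, because pointwise truncation of the components $u_{ij}$ destroys the Kirchoff constraint $u_{ij}+u_{jk}+u_{ki}=0$ on $\Sigma_{ijk}$, and producing a bounded Kirchoff-respecting approximant is essentially the problem you are trying to solve. This is a real circularity, not a detail. The paper's approach sidesteps it by moving the cut-off down to $H^{1/2}(\Sigma_{ijk})$, where $\Sigma^{\geq 3}$ has codimension $\geq 1$ in the $(n-2)$-dimensional $\Sigma_{ijk}$ and the required density is a standard consequence of the Lipschitz-boundary hypothesis.
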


\begin{proof}Let $u = (u_{ij}) \in \D_Q$, and fix $\eps > 0$. Denote $u_{ij,k} := \Tr u_{ij}|_{\Sigma_{ijk}} \in H^{1/2}(\Sigma_{ijk})$ for all $i,j,k$. As $C_c^\infty(\Sigma_{ijk})$ is dense in $H^{1/2}(\Sigma_{ijk})$, we may find $g_{ij,k} \in C_c^\infty(\Sigma_{ijk})$ with $\snorm{u_{ij,k} - g_{ij,k}}_{H^{1/2}(\Sigma_{ijk})} \leq \eps$. Moreover, for each $i<j<k$, as $u_{ij,k} + u_{jk,i} + u_{ki,j} = 0$, we may ensure that $g_{ij,k} + g_{jk,i} + g_{ki,j} = 0$ as well by redefining $g_{ki,j} :=  - (g_{ij,k} + g_{jk,i})$; clearly by the triangle inequality $\snorm{u_{ki,j} - g_{ki,j}}_{H^{1/2}(\Sigma_{ijk})} \leq 2\eps$. Setting $g_{ij} := g_{ij,k}$ on $\Sigma_{ijk}$ and zero on $\overline{\partial \Sigma_{ij}} \setminus \partial \Sigma_{ij}$, the regularity of the cluster asserted in Theorem \ref{thm:regularity} \ref{it:regularity-Sigma2} ensures that $g_{ij}$ is a smooth function on $\overline{\partial \Sigma_{ij}}$ (in the sense of \cite[Chapter 2]{Lee-SmoothManifolds}); consequently, $g_{ij}$ extends to a global smooth function $h_{ij}$ defined on $\overline{\Sigma_{ij}}$ by e.g. \cite[Lemma 2.26]{Lee-SmoothManifolds}. 
Define $v_{ij} := u_{ij} - h_{ij} \in H^1(\Sigma_{ij})$, and observe that $\snorm{\Tr v_{ij}|_{\Sigma_{ijk}}}_{H^{1/2}(\Sigma_{ijk})} \leq 2 \eps$ for all $k$. Recalling that $H^{1/2}(\partial \Sigma_{ij}) \simeq \oplus_k H^{1/2}(\Sigma_{ijk})$, it follows that  $\snorm{\Tr v_{ij}}_{H^{1/2}(\partial \Sigma_{ij})} \leq C_{ij} \eps$. As the trace operator has a continuous right inverse on $H^{1/2}(\partial \Sigma_{ij})$, it follows that there exists $v^0_{ij} \in H^1(\Sigma_{ij})$ so that $\Tr v^0_{ij} = \Tr v_{ij}$ and $\snorm{v^0_{ij}}_{H^1(\Sigma_{ij})} \leq C'_{ij} \eps$. Now $v_{ij} - v^0_{ij} \in H_0^1(\Sigma_{ij})$, and by density of $C_c^\infty(\Sigma_{ij})$ in $H_0^1(\Sigma_{ij})$, there exists $h^0_{ij} \in C_c^\infty(\Sigma_{ij})$ so that $\snorm{v_{ij} - v^0_{ij} - h^0_{ij}}_{H^1(\Sigma_{ij})} \leq \eps$. Setting $f_{ij} := h_{ij} + h^0_{ij} \in C_c^\infty(\overline{\Sigma_{ij}} \setminus \Sigma^{\geq 3})$, as $u_{ij} - f_{ij} = (v_{ij} - v^0_{ij}- h^0_{ij} ) + v^0_{ij}$, we conclude that $\snorm{u_{ij} - f_{ij}}_{H^1(\Sigma_{ij})} \leq (C'_{ij}+1) \eps \leq C'' \eps$ for $C'' := \max_{i,j} (C'_{ij}+1)$. Moreover, $f_{ij}|_{\Sigma_{ijk}} = g_{ij,k}$ and therefore $f = (f_{ij})$ satisfies the Dirichlet-Kirchoff BCs. We conclude that $f = (f_{ij})$ is a smooth scalar-field compactly supported away from $\Sigma^{\geq 3}$ which approximates $u = (u_{ij})$ arbitrarily well in $H^1(\Sigma^1)$. 
 \end{proof}

\subsection{Density of physical fields}

\begin{definition}[Physical Scalar-Field]
A scalar-field $f = \{f_{ij} \} \in C^\infty_c(\Sigma)$ is called physical if there exists a $C^\infty_c$ vector-field $X$ on $(M,g)$ so that $f_{ij} = X^{\n_{ij}}$ on $\Sigma_{ij}$ (we will say that $f$ is derived from the physical vector-field $X$). Otherwise, it is called non-physical. 
\end{definition}

\begin{definition}[Scalar-field first variation of volume $\delta^1_u V$]
Given a Sobolev scalar-field $u \in \D_Q$, the first variation of volume $\delta^1_u V = \delta^1_u V(\Omega) \in E^{(q-1)}$ is defined as:
\[
(\delta^1_u V)_i = \delta^1_u V(\Omega_i) := \int_{\partial \Omega_i} u_{ij} d\mu^{n-1} = \sum_{j \neq i} \int_{\Sigma_{ij}} u_{ij} d\mu^{n-1} ~,~ i=1,\ldots,q . 
\]
Equivalently:
\[
\delta^1_u V = \delta^1_u V(\Omega) := \sum_{i<j} \int_{\Sigma_{ij}} u_{ij} d\mu^{n-1} e_{ij}  \in E^{(q-1)} . 
\]
\end{definition}

Note that by Lemma \ref{lem:Lagrange},  Theorem \ref{thm:Q-Sigma4}, 
whenever $f$ is a physical scalar-field derived from a $C_c^\infty$ vector-field $X$, respectively, we have:
\[
\delta^1_f V = \delta^1_X V \text{ and } Q^0(f) = Q(X) . 
\]

\medskip

Working with scalar-fields is in practice much more convenient than with vector-fields. However, given a general scalar-field $f \in C_c^\infty(\Sigma)$ on a stationary regular cluster,  it is seldom the case that $f$ will be physical --
 see \cite[Section 4]{EMilmanNeeman-TripleAndQuadruple} for a discussion. This is a genuine issue even when the cluster is known to be completely regular, for instance when it is a spherical Voronoi cluster. The reason is that Taylor's classification of minimizing cones \cite{Taylor-SoapBubbleRegularityInR3} is only available in dimensions two and three, and does not extend to dimension four and higher, where additional non-simplicial minimizing cones are known to exist \cite{Brakke-MinimalConesOnCubes}. Consequently, $m \geq 6$ cells of a minimizing spherical cluster  could potentially meet in a strange cone of affine dimension strictly smaller than $m-1$ (but necessarily strictly larger than $3$). This would incur various linear dependencies between the normals $\n_{ij}$ at the meeting point, and prevent writing $f_{ij} = X^{\n_{ij}}$ for a well-defined vector-field $X$. So in order to extract information arising from a (necessarily physical) perturbation of the cluster, an approximation of $f$ by physical fields is necessary even in this simple case. Fortunately, we have:

\begin{proposition} \label{prop:physical}
Scalar-fields in $C_c^\infty(\Sigma \setminus \Sigma^{\geq 3})$ are physical. 
\end{proposition}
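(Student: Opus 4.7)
The plan is to construct $X$ by a partition-of-unity argument: if for each $p$ in the compact set $\supp(f) \subset \Sigma \setminus \Sigma^{\geq 3}$ we can build a smooth vector field $X_p$ on a neighborhood $U_p$ of $p$ with $\scalar{X_p,\n_{ij}} = f_{ij}$ on $\Sigma_{ij} \cap U_p$ for every relevant pair $(i,j)$, then extracting a finite subcover and patching via a subordinate partition of unity $\{\rho_\alpha\}$ produces $X := \sum_\alpha \rho_\alpha X_{p_\alpha}$. Because the required constraint on $X$ is \emph{pointwise linear} and $\sum_\alpha \rho_\alpha \equiv 1$ on $\supp(f)$, the patched $X$ automatically satisfies $\scalar{X,\n_{ij}} = f_{ij}$ on each $\Sigma_{ij}$, is smooth, and has compact support. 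For $p \in \Sigma_{ij} \setminus \Sigma^2$ the local construction is routine: a tubular neighborhood of $\Sigma_{ij}$ near $p$ lets us extend $f_{ij}\,\n_{ij}$ off the interface by, e.g., parallel transport multiplied by a normal cutoff.

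The substantive case is $p \in \Sigma^2 \setminus \Sigma^{\geq 3}$, say $p \in \Sigma_{ijk}$. Here Theorem~\ref{thm:regularity}\ref{it:regularity-Sigma2} furnishes a $C^\infty$ diffeomorphism $\Phi$ from a neighborhood $U$ of $p$ onto a neighborhood $V$ of the origin in $\R^n \simeq E^{(2)} \times \R^{n-2}$, sending $\Sigma \cap U$ to $\Y \times \R^{n-2}$. We work in the chart with pulled-back metric $\tilde g$ and data $\tilde f_{uv}$ on the three half-hyperplanes $\tilde H_{uv}$ meeting along the edge $L := \{0\}_{\R^2} \times \R^{n-2}$, and seek a smooth $\R^n$-valued $\tilde X$ on $V$ with $\tilde g(\tilde X, \tilde \n_{uv}) = \tilde f_{uv}$ on $\tilde H_{uv}$. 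Stationarity yields $\tilde \n_{ij} + \tilde \n_{jk} + \tilde \n_{ki} = 0$ on $L$, and the Dirichlet--Kirchoff hypothesis yields $\tilde f_{ij} + \tilde f_{jk} + \tilde f_{ki} = 0$ on $L$.

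The construction proceeds in three stages. First, on $L$ the three equations $\tilde g(\tilde X, \tilde \n_{uv}) = \tilde f_{uv}$ are compatible by Dirichlet--Kirchoff, and uniquely determine $\tilde X|_L$ in the $\tilde g$-orthogonal complement of $T L$ (a $2$-plane); we set the $T L$-component to zero. Second, on each half-hyperplane we write $\tilde X|_{\tilde H_{uv}} = \tilde f_{uv}\,\tilde \n_{uv} + T_{uv}$ with $T_{uv}$ tangent to $\tilde H_{uv}$, and extend $T_{uv}$ smoothly from its now-prescribed boundary value on $L$ into $\tilde H_{uv}$, leaving the transverse Taylor jet of $T_{uv}$ along $L$ as a free parameter. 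Third, extend $\tilde X$ from $\tilde H_{ij} \cup \tilde H_{jk} \cup \tilde H_{ki}$ to all of $V$, and multiply by a cutoff to arrange compact support.

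The third stage is the principal obstacle and uses exactly the freedom preserved in stage two. Smooth extendability to $\R^n$ demands that, at each order $N \geq 1$, the Taylor jets of $\tilde X|_{\tilde H_{uv}}$ at $L$ glue to a single formal $\R^n$-valued Taylor series in the transverse $(x_1, x_2)$ variables; since the three ray-directions $\hat\tau_{uv}$ in the $(x_1,x_2)$-plane satisfy the single linear relation $\hat\tau_{ij} + \hat\tau_{jk} + \hat\tau_{ki} = 0$, the compatibility at each order reduces to \emph{one} linear equation in $\R^n$ on the order-$N$ transverse jets of $\tilde f_{uv}\,\tilde \n_{uv} + T_{uv}$. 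The free tangential components $T_{uv}$ provide three independent $\R^{n-1}$-valued parameters per order, vastly exceeding the constraint, so the compatibility can be enforced inductively in $N$; Borel's theorem assembles these choices into smooth $T_{uv}$, and Whitney's extension theorem then produces the sought smooth $\tilde X$ on $V$. Pulling back through $\Phi$ yields $X_p$ on $U$ and completes the proof. The hard part is not the linear algebra of the Dirichlet--Kirchoff compatibility (which is already built into the pointwise solvability at each order), but the \emph{bookkeeping} of the free tangential Taylor coefficients along $L$ to simultaneously meet the compatibility condition at all orders--a straightforward but notationally heavy stratified Whitney extension argument.
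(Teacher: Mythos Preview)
Your approach is correct in outline but takes a genuinely different route from the paper, and your analysis of the compatibility constraints contains a minor (though harmless) inaccuracy.

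The paper avoids the stratified Whitney/Borel extension entirely by using two tools already in hand: the ``approximate inward fields'' $Z_1,\dots,Z_q$ of Lemma~\ref{lem:inward-fields}, which satisfy $\scalar{Z_k,\n_{ij}} = \delta^k_{ij}$ on $\Sigma_{ij}$, and the explicit scalar extension Lemma~\ref{lem:tripod-extension}. Near a triple point $p\in\Sigma_{ijk}$, the latter lemma extends the three scalars $f_{ij},f_{jk},f_{ki}$ from the $\Y$-cone to smooth functions $F_{ij},F_{jk},F_{ki}$ on the whole chart satisfying $F_{ij}+F_{jk}+F_{ki}\equiv 0$ (the extension formula is concrete, no jet-matching needed). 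One then writes $F_{ab}=F_a-F_b$ and sets $X=F_iZ_i+F_jZ_j+F_kZ_k$, which immediately gives $\scalar{X,\n_{ab}}=F_{ab}=f_{ab}$ on each $\Sigma_{ab}$. This reduces the vector-field extension problem to a scalar extension problem with an explicit solution---considerably lighter than your Borel/Whitney bookkeeping. Your approach, by contrast, is self-contained (it does not invoke the $Z_k$), at the cost of the jet-by-jet argument.

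On the inaccuracy: you assert that ``the compatibility at each order reduces to one linear equation in $\R^n$'' because $\hat\tau_{ij}+\hat\tau_{jk}+\hat\tau_{ki}=0$. This is correct only at order $N=1$. For $N\ge 2$ the symmetric tensors $\hat\tau_{uv}^{\otimes N}$ are linearly \emph{independent} (any three pairwise non-collinear directions in $\R^2$ yield independent $N$-th powers for $N\ge 2$), so the evaluation map $\mathrm{Sym}^N(\R^2)\to\R^3$ is surjective and there are \emph{no} constraints. Thus the only obstructions live at orders $0$ and $1$: equality of the three values on $L$ (your stage one) and the single $\R^n$-valued relation $\sum_{\mathrm{cyc}}\partial_{\hat\tau_{uv}}G_{uv}|_L=0$, which your tangential freedom $\partial_{\hat\tau_{uv}}T_{uv}|_L\in T_p\tilde H_{uv}$ can indeed absorb (since the three tangent hyperplanes together span $\R^n$). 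This over-counting of constraints does not break your argument, but it does mean the ``inductively in $N$'' phrasing is misleading---only one step is needed.
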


While it is a bit easier to show that physical fields are $H^1(\Sigma^1)$-dense in $C_c^\infty(\Sigma \setminus \Sigma^{\geq 3})$, we make the extra effort of showing the actual containment. To this end, we require the following:
\begin{lemma} \label{lem:tripod-extension}
Let $\Sigma := \Y \times \R^{n-2}$ be the standard $Y$-cone in $E^{(2)} \times \R^{n-2}$. As usual, denote by $\Sigma_{12}, \Sigma_{23}, \Sigma_{31}$ its three interfaces, and by $\Sigma_{123} = \overline{\Sigma_{12}} \cap \overline{\Sigma_{23}} \cap \overline{\Sigma_{31}} = \{ 0 \} \times \R^{n-2}$. Let $f_{ij} \in C^\infty(\overline{\Sigma_{ij}})$ for $(i,j) \in \cyclic(1,2,3)$ so that $f_{12} + f_{23} + f_{31} = 0$ on $\Sigma_{123}$. Then there exist smooth extensions $F_{ij}$ of $f_{ij}$ to $E^{(2)} \times \R^{n-2}$ so that $F_{12} + F_{23} + F_{31} = 0$ on the entire $E^{(2)} \times \R^{n-2}$.
\end{lemma}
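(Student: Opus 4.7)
My plan is to extend the three functions asymmetrically, using the sum condition to force the last extension. First, freely extend $f_{12}$ to some $F_{12}\in C^\infty(E^{(2)}\times\R^{n-2})$, using Seeley/Whitney extension past the boundary of the half-plane $\overline{\Sigma_{12}}$ into the full plane containing it, followed by a constant extension in the normal direction. Next, define $h:=-f_{31}-F_{12}|_{\overline{\Sigma_{31}}}\in C^\infty(\overline{\Sigma_{31}})$; by the hypothesis $f_{12}+f_{23}+f_{31}=0$ on $\Sigma_{123}$, we have $h|_{\Sigma_{123}}=f_{23}|_{\Sigma_{123}}$, so $f_{23}$ on $\overline{\Sigma_{23}}$ and $h$ on $\overline{\Sigma_{31}}$ are compatible on the common axis. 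I then construct a smooth $F_{23}$ on the ambient space with $F_{23}|_{\overline{\Sigma_{23}}}=f_{23}$ and $F_{23}|_{\overline{\Sigma_{31}}}=h$, and finally set $F_{31}:=-F_{12}-F_{23}$. The sum vanishes by definition, and $F_{31}|_{\overline{\Sigma_{31}}}=-F_{12}|_{\overline{\Sigma_{31}}}-h=f_{31}$ by the choice of $h$.

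The only delicate point is the joint extension of $f_{23}$ and $h$ across two half-planes meeting along the codimension-two axis $\Sigma_{123}$. To handle it, choose coordinates adapted to this configuration: let $v_{23},v_{31}\in E^{(2)}$ be vectors pointing along the half-lines $\Sigma_{23},\Sigma_{31}$ respectively; since these half-lines meet at $120^\circ$, $\{v_{23},v_{31}\}$ is a basis of $E^{(2)}$, yielding coordinates $(\xi,\eta,y)\in\R\times\R\times\R^{n-2}$ via $(\xi v_{23}+\eta v_{31},y)$. In these coordinates, $\overline{\Sigma_{23}}=\{\eta=0,\xi\geq 0\}$, $\overline{\Sigma_{31}}=\{\xi=0,\eta\geq 0\}$ and $\Sigma_{123}=\{\xi=\eta=0\}$. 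Apply Seeley's extension to obtain $\hat f_{23}(\xi,y)\in C^\infty(\R\times\R^{n-2})$ agreeing with $f_{23}$ on $\{\xi\geq 0\}$, and analogously $\hat h(\eta,y)\in C^\infty(\R\times\R^{n-2})$ agreeing with $h$ on $\{\eta\geq 0\}$. Then set
\[ F_{23}(\xi,\eta,y):=\hat f_{23}(\xi,y)+\hat h(\eta,y)-f_{23}(0,y), \]
which is smooth on all of $E^{(2)}\times\R^{n-2}$. On $\{\eta=0\}$ the last two terms cancel (using $\hat h(0,y)=h(0,y)=f_{23}(0,y)$), leaving $\hat f_{23}(\xi,y)$, which restricts to $f_{23}$ on $\{\xi\geq 0\}$; symmetrically on $\{\xi=0\}$ the expression reduces to $\hat h(\eta,y)$, which restricts to $h$ on $\{\eta\geq 0\}$.

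The conceptual obstacle I expect to be the main one is engineering the global cancellation $F_{12}+F_{23}+F_{31}=0$ without fighting against the geometry at the axis, where all three half-planes meet. A symmetric partition-of-unity approach along angular sectors would introduce singularities at the axis that require careful cutoff management. The asymmetric strategy above sidesteps this entirely: two of the extensions are chosen freely (or with a compatibility on one extra half-plane) and the third is defined algebraically, reducing the problem to a two-fold joint extension across two transversally meeting half-planes, which the inclusion-exclusion formula $\hat f_{23}+\hat h-f_{23}(0,\cdot)$ handles explicitly and smoothly.
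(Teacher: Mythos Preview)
Your proof is correct and follows essentially the same strategy as the paper: extend $f_{12}$ freely, build $F_{23}$ via an additive inclusion--exclusion formula over the two transversal half-planes $\overline{\Sigma_{23}}$ and $\overline{\Sigma_{31}}$, and set $F_{31}:=-(F_{12}+F_{23})$. The paper packages the same construction using non-orthogonal linear projections $P_{231},P_{132},P_0$ onto $\mathrm{span}(\Sigma_{23})$, $\mathrm{span}(\Sigma_{31})$, and $\Sigma_{123}$ (after first extending each $f_{ij}$ to the ambient space), writing $F_{23}(x)=f_{23}(P_{231}x)-(f_{12}+f_{31})(P_{132}x)-f_{23}(P_0 x)$; in your coordinates $(\xi,\eta,y)$ this is precisely $\hat f_{23}(\xi,y)+\hat h(\eta,y)-f_{23}(0,y)$ with $\hat h$ playing the role of $-(f_{12}+f_{31})\circ P_{132}$.
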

\begin{proof}
For all $(i,j) \in \cyclic(1,2,3)$, as $\overline{\Sigma_{ij}}$ is closed, one can extend $f_{ij}$ to a $C^\infty$-smooth function on the entire $E^{(2)} \times \R^{n-2}$ \cite[Lemma 2.26]{Lee-SmoothManifolds}, 
which we continue to denote by $f_{ij}$. Let $P_{ijk}$ denote a (non-orthogonal) linear projection on $E^{(2)} \times \R^{n-2}$ so that $\Im P_{ijk} = \text{span}(\Sigma_{ij})$, $P_{ijk}|_{\Sigma_{123}} = \Id_{\Sigma_{123}}$ and $\ker P_{ijk} = \text{span}(\Sigma_{jk}) \cap \Sigma_{123}^{\perp}$, and let $P_0$ denote the orthogonal projection onto $\Sigma_{123}$. Define:
\[
F_{12} = f_{12} ~,~ F_{23}(x) = f_{23}(P_{231} x) - (f_{12} + f_{31})(P_{132} x) - f_{23}(P_0 x) ~,~ F_{31} = -(F_{12} + F_{23}) . 
\]
Using that $f_{12} + f_{23} + f_{31} = 0$ on $\Sigma_{123}$, one easily checks that $F_{23}$ and $F_{31}$ coincide with $f_{23}$ and $f_{31}$ on $\overline{\Sigma_{23}}$ and $\overline{\Sigma_{31}}$, respectively, thereby concluding the proof. 
\end{proof}

\begin{proof}[Proof of Proposition \ref{prop:physical}]
Let $f = (f_{ij}) \in C_c^\infty(\Sigma \setminus \Sigma^{\geq 3})$. Let $K$ denote the union of the compact support of the $f_{ij}$'s, which by assumption is disjoint from $\Sigma^{\geq 3}$.
We shall construct a $C_c^\infty$ vector-field $X$ on $(M,g)$ so that $X^{\n_{ij}} = f_{ij}$ on $\Sigma_{ij}$ for all $i,j$. Using a partition of unity, it is enough to do this in some neighborhood $N_p$ of every given $p \in M$. We may always select a partition which is subordinate to an open cover consisting of $M \setminus K$ and $\{M_p\}_{p \in K}$, where $M_p$ is a neighborhood of $p$ in $(M,g)$ disjoint from $\Sigma^{\geq 3}$ so that, by assumption, for all $i,j$, $f_{ij}$ on $\overline{\Sigma_{ij}} \cap M_p = (\Sigma_{ij} \cup \partial \Sigma_{ij}) \cap M_p$ is the restriction of a smooth function $\Psi^p_{ij}$ defined on $M_p$. Consequently, for $p \in M \setminus (\Sigma_1 \cup \Sigma_2)$, we simply set $X \equiv 0$ in $N_p$. For $p \in \Sigma_1 \cup \Sigma_2$ we proceed as follows.

Recall from Lemma \ref{lem:inward-fields} that $Z_1,\ldots,Z_q$ are $C^\infty$ vector-fields constructed on $M \setminus \Sigma^{\geq 3}$ so that $Z_k^{\n_{ij}} = \delta^k_{ij}$ on $\Sigma_{ij}$ (and hence, by regularity of the cluster, on $\Sigma_{ij\ell}$ as well for all $\ell$). Consequently, if $p \in \Sigma_{ij}$ we simply set $X = \Psi^p_{ij} Z_i$ on $N_p$, noting that $X^{\n_{ij}} = f_{ij}$ on $\Sigma_{ij} \cap N_p$ as required. When $p \in \Sigma_{ijk}$, regularity of the cluster (as in Theorem \ref{thm:regularity} \ref{it:regularity-Sigma2}) implies that there exists a $C^\infty$-diffeomorphism $\varphi$ locally mapping $\Sigma$ in some neighborhood $N_p$ to a standard $\Y$-cone. Applying Lemma \ref{lem:tripod-extension} to the functions $\varphi_*(f_{ab})$, $(a,b) \in \cyclic(i,j,k)$, and denoting by $F_{ab}$ the pull back via $\varphi$ of the resulting smooth extensions, the $F_{ab}$'s extend the $f_{ab}$'s to the entire $N_p$ and satisfy $F_{ij} + F_{jk} + F_{ki} = 0$ there. Consequently, we may apply a linear transformation to obtain smooth $F_i, F_j , F_k$ on $N_p$ so that $F_a - F_b = F_{ab}$ for all $(a,b) \in \cyclic(i,j,k)$. It remains to set $X = F_i Z_i + F_j Z_j + F_k Z_k$ on $N_p$, noting that 
 $X^{\n_{ab}} = F_{ab} = f_{ab}$ on $\Sigma_{ab} \cap N_p$ for all $(a,b) \in \cyclic(i,j,k)$, as required. 
\end{proof}

Combining Propositions \ref{prop:physical} and \ref{prop:dense}, we obtain:
\begin{corollary} \label{cor:physical-dense}
Physical scalar-fields are $H^1(\Sigma^1)$-dense in $\D_Q$. 
\end{corollary}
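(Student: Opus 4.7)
The statement is an immediate combination of the two preceding propositions, so the ``plan'' is essentially a short bookkeeping argument rather than a genuine proof strategy. The approach is to observe that the class of physical scalar-fields contains $C_c^\infty(\Sigma \setminus \Sigma^{\geq 3})$, which is already known to be dense in $\D_Q$; hence the larger class of physical fields is dense as well.

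More precisely, I would argue as follows. Fix $u \in \D_Q$ and $\eps > 0$. By Proposition \ref{prop:dense}, there exists $f \in C_c^\infty(\Sigma \setminus \Sigma^{\geq 3})$ with $\norm{u - f}_{H^1(\Sigma^1)} < \eps$. By Proposition \ref{prop:physical}, this $f$ is physical, meaning there exists a $C_c^\infty$ vector-field $X$ on $(M,g)$ with $X^{\n_{ij}} = f_{ij}$ on each $\Sigma_{ij}$. Thus $f$ itself belongs to the class of physical scalar-fields, and $u$ has been approximated to within $\eps$ in the $H^1(\Sigma^1)$-norm by an element of that class. Since $u$ and $\eps$ were arbitrary, physical scalar-fields are $H^1(\Sigma^1)$-dense in $\D_Q$.

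No obstacle arises here because all the real work has already been absorbed into Propositions \ref{prop:dense} and \ref{prop:physical}. The former required a careful extension/trace argument handling the Dirichlet--Kirchoff condition at triple points together with cutoff approximation near $\Sigma^{\geq 3}$, and the latter required a partition of unity combined with the inward fields $Z_k$ from Lemma \ref{lem:inward-fields} and the local tripod-extension Lemma \ref{lem:tripod-extension}; the corollary itself is a one-line consequence.
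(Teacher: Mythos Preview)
Your proposal is correct and matches the paper's own treatment exactly: the corollary is stated as an immediate consequence of combining Propositions \ref{prop:dense} and \ref{prop:physical}, with no additional argument given.
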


\subsection{Approximation by physical fields}

From here on in this section, we add the assumption that $V_\mu(\Omega) \in \interior \Delta^{(q-1)}_{\mu(M)}$. In other words, our standing assumption is that $\Omega$ is a bounded stationary regular $q$-cluster with bounded curvature satisfying \ref{eq:Lip} on $(M,g,\mu)$ and $V_\mu(\Omega) \in \interior \Delta^{(q-1)}_{\mu(M)}$.

\medskip

Before stating our general approximation result, we shall require: 
\begin{lemma}[Volume Correction] \label{lem:volume-offset}
For any $C_c^\infty$ vector-field $Y$ and $v \in E^{(q-1)}$, there exists a $C_c^\infty$ vector-field $Z$ so that:
\begin{itemize}
\item $\delta^1_Z V = \delta^1_Y V + v$, and
\item $|Q(Z) - Q(Y)| \leq A \norm{Y^\n}_{H^1(\Sigma^1)}|v| + B |v|^2$,
\end{itemize}
where $A,B > 0$ depend on $\Omega$ but are independent of $Y,v$. 
\end{lemma}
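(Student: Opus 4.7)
The plan is to find a compactly supported smooth correction vector-field $W$ with prescribed first volume-variation $\delta^1_W V = v$ and with $\norm{W^\n}_{H^1(\Sigma^1)}$ controlled linearly by $|v|$; then $Z := Y + W$ is the desired field, and the estimate on $|Q(Z) - Q(Y)|$ will follow by bilinearly expanding $Q$ and invoking the $H^1$-continuity of $Q^0$.

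To construct $W$, I will fix, once and for all, a family of $C_c^\infty$ vector-fields $\{X^{ij}\}$ indexed by the non-empty interfaces, with $\delta^1_{X^{ij}} V = e_{ij}$. For each non-empty interface $\Sigma_{ij}$ pick a bump $\phi_{ij} \in C_c^\infty(\Sigma_{ij} \setminus \Sigma^{\geq 3})$ with $\int_{\Sigma_{ij}} \phi_{ij}\, d\mu^{n-1} = 1$ and extend by orientation to a smooth scalar-field $f^{ij}$ vanishing on every other interface. Since $f^{ij}$ is supported away from $\Sigma^{\geq 3}$ and trivially satisfies the Dirichlet--Kirchoff boundary conditions (its support avoids $\Sigma^2$), Proposition \ref{prop:physical} realizes it as the normal component of some $C_c^\infty$ vector-field $X^{ij}$ on $M$. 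The hypothesis $V_\mu(\Omega) \in \interior \Delta^{(q-1)}_{\mu(M)}$ combined with Lemma \ref{lem:LA-connected} ensures that the non-empty-interface graph is connected, so $\{e_{ij}\}_{i<j,\,\Sigma_{ij}\neq\emptyset}$ spans $E^{(q-1)}$. Consequently the linear map $\alpha \mapsto \sum \alpha_{ij} e_{ij}$ from $\R^K$ to $E^{(q-1)}$ is surjective and admits a bounded right inverse $R$ with operator-norm $C_0 = C_0(\Omega)$. For any $v \in E^{(q-1)}$, set $\alpha := Rv$ and $W := \sum_{i<j,\,\Sigma_{ij} \neq \emptyset} \alpha_{ij} X^{ij}$; this $W$ is $C_c^\infty$, satisfies $\delta^1_W V = v$, and obeys $\norm{W^\n}_{H^1(\Sigma^1)} \leq C_1 |v|$ where $C_1 := C_0 \max_{ij} \norm{(X^{ij})^\n}_{H^1(\Sigma^1)}$ depends only on $\Omega$. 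Setting $Z := Y + W$ yields the first conclusion $\delta^1_Z V = \delta^1_Y V + v$ by linearity. For the second, Theorem \ref{thm:Q-Sigma4} identifies $Q(X) = Q^0(X^\n)$ on any $C_c^\infty$ vector-field, so polarization and the continuity estimate of Lemma \ref{lem:Q0-bounds} give
\begin{align*}
|Q(Z) - Q(Y)|
&= |2 Q^0(Y^\n, W^\n) + Q^0(W^\n)| \\
&\leq 2M\,\norm{Y^\n}_{H^1(\Sigma^1)}\, \norm{W^\n}_{H^1(\Sigma^1)} + M\,\norm{W^\n}_{H^1(\Sigma^1)}^2 \\
&\leq (2M C_1)\,\norm{Y^\n}_{H^1(\Sigma^1)}\,|v| + (M C_1^2)\,|v|^2,
\end{align*}
which is the desired inequality with $A := 2 M C_1$ and $B := M C_1^2$.

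No serious obstacle is anticipated: the spanning property of $\{e_{ij}\}$ over non-empty interfaces (Lemma \ref{lem:LA-connected}), the promotion of smooth scalar-fields supported away from $\Sigma^{\geq 3}$ to $C_c^\infty$ vector-fields (Proposition \ref{prop:physical}), and the $H^1$-continuity of $Q^0$ (Lemma \ref{lem:Q0-bounds}) combine routinely. The only conceptual point worth flagging is that the approximate inward fields $Z_k$ of Lemma \ref{lem:inward-fields} are defined only on $M \setminus \Sigma^{\geq 3}$, which is exactly why Proposition \ref{prop:physical}, restricted to scalar-fields with support disjoint from $\Sigma^{\geq 3}$, is the correct tool to produce the fixed family $\{X^{ij}\}$ as genuine $C_c^\infty$ vector-fields on the whole of $M$.
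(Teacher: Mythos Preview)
Your proof is correct, and the overall strategy matches the paper's: construct once and for all a finite family of $C_c^\infty$ vector-fields whose first volume variations span $E^{(q-1)}$, take a suitable linear combination to realize the prescribed correction $v$, and then bound $|Q(Z)-Q(Y)|$ via bilinearity of $Q^0$ and Lemma~\ref{lem:Q0-bounds}. The one technical difference lies in how the correcting family is produced. The paper approximates the piecewise-constant scalar-fields $\delta^k \in \D_Q$ by physical fields $X_k$ via Corollary~\ref{cor:physical-dense}, so that the resulting volume-variation map $\tilde L_1 : a \mapsto \delta^1_{X_a} V$ is a small perturbation of the invertible $L_1$ and hence itself invertible for $\eps$ small. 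You instead place a bump function inside each non-empty $\Sigma_{ij}$ and realize it as a physical field via Proposition~\ref{prop:physical}, obtaining the volume variation $e_{ij}$ \emph{exactly} and relying only on the connectedness of the interface graph (Lemma~\ref{lem:LA-connected}) to span $E^{(q-1)}$. Your route is slightly more direct, sidestepping the perturbation argument for invertibility of $\tilde L_1$; the paper's route has the minor advantage that the fields $\delta^k$ are canonical and recur elsewhere in the analysis.
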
 
\begin{proof} 
Consider the Sobolev scalar-field $\delta^k = (\delta^k_{ij}) \in \D_Q$. By Corollary \ref{cor:physical-dense}, there exists a $C_c^\infty$ vector-field $X_k$ so that $\norm{X_k^\n - \delta^k}_{H^1(\Sigma^1)} \leq \eps$ for any fixed $\eps \in (0,1]$. 
Given $a \in \R^q$, set $X_a = \sum_{k=1}^q a_k X_k$, and let $a= (a_{ij})$ denote the piecewise constant scalar-field $a = \sum_{k=1}^q a_k \delta^k$. Consequently,
\begin{equation} \label{eq:A00}
 \norm{X_a^\n - a}_{H^1(\Sigma^1)} \leq \sqrt{q} |a| \eps,
\end{equation}
and since $\norm{a}_{H^1(\Sigma^1)} = \norm{a}_{L^2(\Sigma^1)} \leq \sqrt{R} |a|$, where $R := \mu^{n-1}(\Sigma^1) <\infty$, it follows that:
\begin{equation} \label{eq:A0}
\norm{X_a^\n}_{H^1(\Sigma^1)} \leq A_0 |a| ~,~ A_0 : = \sqrt{R} + \sqrt{q} . 
\end{equation}

 Denote the linear map $E^{(q-1)} \ni a \mapsto \delta^1_{X_a} V \in E^{(q-1)}$ by $\tilde L_1 a$, and note that
 \[
\delta^1_a V = \sum_{i<j} \int_{\Sigma_{ij}} d\mu^{n-1} a_{ij} e_{ij} = L_1 a, 
\]
where $L_1$ was introduced in Definition \ref{def:LA}. Since for $f \in \D_Q$ we have by Cauchy-Schwarz:
\begin{equation} \label{eq:sillyCS}
|\delta^1_f V|  \leq \sum_{k=1}^q |\int_{\partial \Omega_i} f_{ij} d\mu^{n-1}| \leq 2 \int_{\Sigma^1} |f_{ij}| d\mu^{n-1} \leq  2 \sqrt{R} \norm{f}_{L^2(\Sigma^1)} ,
\end{equation}
it follows by (\ref{eq:A00}) that:
\[
|\tilde L_1 a - L_1 a| = |\delta^1_{X^\n_a - a} V| \leq 2 \sqrt{R q} |a| \eps . 
\]
Since $V_\mu(\Omega) \in \interior \Delta^{(q-1)}_{\mu(M)}$, it follows by Lemmas \ref{lem:LA-connected} and \ref{lem:LA-positive} that $L_1$ is positive-definite and in particular invertible on $E^{(q-1)}$. Consequently, $\tilde L_1$ is also invertible for small enough $\eps > 0$. We proceed by fixing such an $\eps \in (0,1]$. 

Given $v \in E^{(q-1)}$, we define $a \in E^{(q-1)}$ to be such that $\delta^1_{X_a} V = \tilde L_1 a = v$. Given a $C_c^\infty$ vector-field $Y$, set $Z = Y + X_a$. Clearly $\delta^1_Z V = \delta^1_Y V + v$. By Lemma \ref{lem:Q0-bounds} we have:
\[
|Q(Z) - Q(Y)| = |Q^0(Z^\n) - Q^0(Y^\n)| \leq M \brac{2 \norm{Y^\n}_{H^1(\Sigma^1)} \norm{X_a^\n}_{H^1(\Sigma^1)} + \norm{X_a^\n}^2_{H^1(\Sigma^1)}} ,
\]
for some $M <\infty$. 
It follows from (\ref{eq:A0}) that:
\[
|Q(Z) - Q(Y)| \leq 2 M A_0 \norm{Y^\n}_{H^1(\Sigma^1)} |a| + M A_0^2 |a|^2 . 
\]
Setting $A = 2 M A_0 \snorm{(\tilde L_1)^{-1}}$ and $B = M A_0^2 \snorm{(\tilde L_1)^{-1}}^2$, the proof is complete. 
\end{proof}

We can now finally prove:

\begin{theorem}[Index-Form Approximation Theorem] \label{thm:approximation}
For any Sobolev scalar-field $u = \{u_{ij}\} \in \D_Q$ and $\eps > 0$, there exists a $C_c^\infty$ vector-field $Z$ on $(M,g)$ so that:
\begin{enumerate}
\item  $\delta^1_{Z} V(\Omega) = \delta^1_{u} V(\Omega)$; and
\item $|Q(Z) - Q^0(u)| \leq \eps$. 
\end{enumerate}
In particular, if $\Omega$ is in addition assumed to the stable, then for any $u \in \D_Q$ we have:
\begin{equation} \label{eq:Q0-non-negative}
\delta^1_u V(\Omega) = 0 \;\; \Rightarrow \;\;  Q^0(u)  \geq 0 .  
\end{equation}
\end{theorem}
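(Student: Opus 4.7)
The strategy is to combine three ingredients already in place: the density of physical scalar-fields in $\D_Q$ (Corollary \ref{cor:physical-dense}), the $H^1$-continuity of $Q^0$ (Lemma \ref{lem:Q0-bounds}), and the volume-correction device (Lemma \ref{lem:volume-offset}). The plan is to first approximate $u$ in $H^1(\Sigma^1)$ by the normal component of a physical vector field $Y$, and then adjust $Y$ to a new vector field $Z$ which exactly restores the desired first-order volume variation, paying only a controllably small price in the index form.

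First, fix a parameter $\delta>0$ to be chosen small depending on $\eps$ and on $u$. By Corollary \ref{cor:physical-dense}, there is a $C_c^\infty$ vector field $Y$ on $(M,g)$ whose normal component is a physical scalar-field $Y^\n \in C_c^\infty(\Sigma)$ with
\[
\norm{Y^\n - u}_{H^1(\Sigma^1)} \leq \delta.
\]
Since $Y$ is physical, $Q(Y) = Q^0(Y^\n)$, and Lemma \ref{lem:Q0-bounds} gives
\[
|Q^0(Y^\n) - Q^0(u)| \leq M\bigl(2\delta \,\norm{u}_{H^1(\Sigma^1)} + \delta^2\bigr).
\]
On the volume side, the Cauchy--Schwarz estimate \eqref{eq:sillyCS} applied to the Sobolev scalar-field $Y^\n - u \in \D_Q$ yields
\[
|\delta^1_Y V - \delta^1_u V| = |\delta^1_{Y^\n - u} V| \leq 2\sqrt{R}\,\delta,
\]
with $R = \mu^{n-1}(\Sigma^1) < \infty$.

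Next, set $v := \delta^1_u V - \delta^1_Y V \in E^{(q-1)}$. Applying Lemma \ref{lem:volume-offset} with input $Y$ and offset $v$, we obtain a $C_c^\infty$ vector-field $Z$ with
\[
\delta^1_Z V = \delta^1_Y V + v = \delta^1_u V,
\]
and
\[
|Q(Z) - Q(Y)| \leq A\, \norm{Y^\n}_{H^1(\Sigma^1)} |v| + B |v|^2 \leq A(\norm{u}_{H^1(\Sigma^1)} + \delta)\cdot 2\sqrt{R}\,\delta + 4 B R \delta^2.
\]
Combining with the previous bound on $|Q(Y) - Q^0(u)|$ and taking $\delta$ small enough (depending only on $\eps$, $u$, and the fixed constants $M, A, B, R$), we achieve $|Q(Z) - Q^0(u)| \leq \eps$, proving the main assertion.

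For the stability consequence \eqref{eq:Q0-non-negative}: if $\delta^1_u V(\Omega) = 0$, the constructed $Z$ satisfies $\delta^1_Z V(\Omega) = 0$ as well, so by the stability hypothesis (Lemma \ref{lem:unstable}) we have $Q(Z) \geq 0$, and hence $Q^0(u) \geq Q(Z) - \eps \geq -\eps$. Letting $\eps \to 0$ (with $u$ fixed) yields $Q^0(u) \geq 0$. There is no serious obstacle here: the proof is essentially a diagram-chase through the approximation machinery, and the only point requiring care is that the volume-correction step inflates $Q$ by an amount controlled linearly in the correction magnitude $|v|$, which is itself $O(\delta)$, so the total error remains $O(\delta)$ and can be absorbed.
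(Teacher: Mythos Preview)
Your proof is correct and follows essentially the same approach as the paper: approximate $u$ by a physical field $Y^\n$ via Corollary \ref{cor:physical-dense}, control $|Q(Y)-Q^0(u)|$ via Lemma \ref{lem:Q0-bounds} and $|\delta^1_Y V - \delta^1_u V|$ via \eqref{eq:sillyCS}, then restore the exact volume variation using Lemma \ref{lem:volume-offset}. The only cosmetic difference is that you introduce a separate approximation parameter $\delta$ (to be chosen small in terms of $\eps$), whereas the paper reuses $\eps$ throughout and ``adjusts constants'' at the end.
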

\begin{proof}
Fix $u \in \D_Q$ and $\eps > 0$. By Corollary \ref{cor:physical-dense}, there exists a $C_c^\infty$ field $Y$ so that $\norm{u - Y^\n}_{H^1(\Sigma^1)} \leq \eps$. Lemma \ref{lem:Q0-bounds} yields:
\[
|Q(Y) - Q^0(u)| \leq M (2 \eps \norm{u}_{H^1(\Sigma^1)} + \eps^2) ,
\]
for some $M <\infty$ depending on $\Omega$. 
Recalling that $R:=\mu^{n-1}(\Sigma^1) < \infty$, we also have by (\ref{eq:sillyCS}) that $|\delta^1_u V - \delta^1_Y V| \leq 2 \sqrt{R} \norm{u - Y^\n}_{L^2(\Sigma^1)} \leq  2 \sqrt{R} \eps$. By Lemma \ref{lem:volume-offset}, there exists a $C^\infty_c$ vector field $Z$ with $\delta^1_Z V = \delta^1_u V$ and:
\[
|Q(Z) - Q(Y)| \leq A (\norm{u}_{H^1(\Sigma^1)} + \eps) 2 \sqrt{R} \eps + 2 B  R \eps^2 , 
\]
for some $A,B < \infty$ depending on $\Omega$.  Applying the triangle inequality and adjusting constants, the proof is complete. 
\end{proof}

\subsection{At most $q-1$ positive eigenvalues}

The following corollary is immediate from stability:

\begin{corollary} \label{cor:at-most-positive-eigenvalues}
Assume in addition that $\Omega$ is stable. 
Then $(L_{Jac},\D_{con})$ has at most $q-1$ positive eigenvalues (counting multiplicity).  
\end{corollary}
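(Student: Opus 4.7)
The plan is to combine the spectral framework built up in this section with the Index-Form Approximation Theorem, via a dimension-count against the $(q-1)$-dimensional volume constraint. First I would use Proposition \ref{prop:enter-LJac} to identify $(L_{Jac},\D_{con})$ with $-A$, where $A$ is the self-adjoint operator on $L^2(\Sigma^1)$ associated to the closed, $L^2$-semibounded form $Q^0$ on $\D_Q$. Since $\D_Q$ is a closed subspace of $H^1(\Sigma^1)$, which embeds compactly in $L^2(\Sigma^1)$ by Rellich--Kondrachov, $A$ has compact resolvent and hence discrete spectrum bounded below by $-C$. Consequently $L_{Jac}$ has only finitely many positive eigenvalues, counted with multiplicity, and their eigenfunctions lie in $\D_{con}\subset\D_Q$.

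Let $V_+\subset\D_{con}$ denote the (finite-dimensional) direct sum of the eigenspaces of $L_{Jac}$ corresponding to strictly positive eigenvalues. The spectral theorem together with Lemma \ref{lem:Q^0-by-parts} yields, for any nonzero $u=\sum_k c_k u_k\in V_+$ with $L_{Jac} u_k=\mu_k u_k$, $\mu_k>0$,
\[
Q^0(u) \;=\; -\scalar{L_{Jac} u,u}_{L^2(\Sigma^1)} \;=\; -\sum_k \mu_k |c_k|^2 \;<\;0,
\]
so $Q^0$ is strictly negative definite on $V_+$.

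Suppose toward contradiction that $\dim V_+\geq q$. The linear map $V_+\ni u\mapsto \delta^1_u V\in E^{(q-1)}$ takes values in a space of dimension $q-1$, so its kernel has dimension at least $\dim V_+-(q-1)\geq 1$. Picking any nonzero $u$ in this kernel gives $u\in V_+$ with $\delta^1_u V=0$, forcing $Q^0(u)<0$. On the other hand, by the Index-Form Approximation Theorem \ref{thm:approximation} applied to the stable cluster $\Omega$, any $u\in\D_Q$ with $\delta^1_u V(\Omega)=0$ satisfies $Q^0(u)\geq 0$ (that is, (\ref{eq:Q0-non-negative})). This contradiction yields $\dim V_+\leq q-1$, which is the claim.

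There is no serious obstacle here: the two key inputs (identification of the self-adjoint realization in Proposition \ref{prop:enter-LJac}, and the stability-plus-approximation inequality in Theorem \ref{thm:approximation}) have already been established, and the proof reduces to a short linear-algebra argument exploiting that the volume functional takes values in the $(q-1)$-dimensional space $E^{(q-1)}$.
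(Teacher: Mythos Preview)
Your proof is correct and follows essentially the same approach as the paper: assume there are at least $q$ eigenfunctions with positive eigenvalues, use that $\delta^1_u V$ takes values in the $(q-1)$-dimensional space $E^{(q-1)}$ to find a nontrivial combination with $\delta^1_u V=0$, compute $Q^0(u)<0$ via Lemma~\ref{lem:Q^0-by-parts}, and contradict stability via (\ref{eq:Q0-non-negative}). The paper's version is just slightly terser, omitting the explicit invocation of the compact resolvent (already established in the preceding subsection).
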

\begin{proof}
Otherwise, there would be $q$ or more orthonormal (in $L^2(\Sigma^1)$) eigenfunctions  $u^{(k)} \in \D_{con}$ with positive eigenvalues $\lambda_k > 0$. We would thus be able to find a non-trivial linear combination $u = \sum_k c_k u^{(k)} \in \D_{con}$ with $E^{(q-1)} \ni \delta^1_u V = \vec 0$. But by Lemma \ref{lem:Q^0-by-parts}:
\[
Q(u) = -\scalar{L_{Jac} u , u}_{L^2(\Sigma^1)} = - \sum_k |c_k|^2 \lambda_k < 0 ,
\]
a contradiction to stability and (\ref{eq:Q0-non-negative}). 
\end{proof}

We will show in Theorem \ref{thm:q-1-positive} that for $3$-Plateau, stable, perpendicular spherical Voronoi clusters on $\S^n$ with equatorial cells, there are \emph{exactly} $q-1$ positive eigenvalues. In particular, by Theorem \ref{thm:intro-structure}, this applies to minimizing $q$-clusters $\Omega$ on $\S^n$ with $q \leq n+1$ and $V(\Omega) \in \interior \Delta^{(q-1)}$.

\subsection{Elliptic regularity}

Finally, we will require the following useful lemma proved in \cite[Lemma 3.8]{DoubleBubbleInR3} for physical scalar-fields when $\Sigma = \Sigma^1 \cup \Sigma^2$, $q=3$ and there are no singularities. \begin{lemma}[Elliptic regularity] \label{lem:elliptic-regularity}
Assume in addition that $\Omega$ is stable, and let $f \in \D_Q$ be a Sobolev scalar-field so that:
\begin{equation} \label{eq:zero-minimizer}
\delta^1_f V(\Omega) = 0 \text{ and } Q^0(f) = 0 . 
\end{equation}
Then there exists $a \in E^{(q-1)}$ so that for every $i<j$, $L_{Jac} f_{ij} \equiv a_{ij}$ on $\Sigma_{ij}$. In particular, $f_{ij}$ is $C^\infty$-smooth on (the relatively open) $\Sigma_{ij}$. 
\end{lemma}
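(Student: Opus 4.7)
The strategy is a standard Lagrange multiplier argument for the quadratic form $Q^0$ on the Hilbert subspace $\D_Q \subset H^1(\Sigma^1)$, combined with the distributional characterization (\ref{eq:distributional-Delta}) of $\Delta_{\Sigma,\mu}$ and interior elliptic regularity.

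First, I would observe that $f$ minimizes $Q^0$ on the closed affine subspace $\mathcal{K} := \{g \in \D_Q : \delta^1_g V = 0\}$. Indeed, stability together with the approximation Theorem \ref{thm:approximation} (in the form of (\ref{eq:Q0-non-negative})) ensures $Q^0(g) \geq 0$ for all $g \in \mathcal{K}$, while our hypothesis (\ref{eq:zero-minimizer}) places $f \in \mathcal{K}$ with $Q^0(f) = 0$. Fixing any $g \in \mathcal{K}$ (a linear subspace since $\delta^1_\cdot V$ is linear), the function $t \mapsto Q^0(f + tg) = 2t Q^0(f,g) + t^2 Q^0(g)$ is nonnegative and vanishes at $t=0$, so the first-order condition gives $Q^0(f, g) = 0$ for all $g \in \ker(\delta^1_\cdot V|_{\D_Q})$.

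Next, I would promote this to a global Lagrange-multiplier identity. The linear map $T : \D_Q \to E^{(q-1)}$, $T g := \delta^1_g V$, is continuous (by (\ref{eq:sillyCS})) and surjective: piecewise constant scalar-fields $a = \sum_k a_k \delta^k$ for $a \in E^{(q-1)}$ automatically belong to $\D_Q$ (the Kirchoff-Dirichlet condition $a_{ij} + a_{jk} + a_{ki} = 0$ holds tautologically), and by Lemma \ref{lem:LA-positive} (applied as in the proof of Lemma \ref{lem:volume-offset}) the map $a \mapsto T a = L_1 a$ is already invertible on $E^{(q-1)}$. Since the continuous linear functional $\D_Q \ni g \mapsto Q^0(f,g)$ vanishes on $\ker T$ and $T$ is surjective onto the finite-dimensional space $E^{(q-1)}$, standard linear algebra (factoring through $\D_Q / \ker T \cong E^{(q-1)}$) produces $\tilde a \in E^{(q-1)}$ such that
\[
Q^0(f,g) = \scalar{\tilde a, \delta^1_g V} \qquad \forall g \in \D_Q.
\]

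Finally, I would test this identity against oriented scalar-fields supported on a single interface. Fix $i < j$ and $\varphi \in C_c^\infty(\Sigma_{ij})$, and define $g \in \D_Q$ by $g_{ij} = \varphi$, $g_{ji} = -\varphi$, and $g_{k\ell} \equiv 0$ otherwise; since $\Tr g \equiv 0$, this $g$ lies in $\D_Q$ (in fact in $H^1_0(\Sigma^1)$), and $\delta^1_g V = (\int_{\Sigma_{ij}} \varphi\, d\mu^{n-1}) e_{ij}$. The identity reduces to
\[
\int_{\Sigma_{ij}} \scalar{\nabla^\tang f_{ij}, \nabla^\tang \varphi}\, d\mu^{n-1} - \int_{\Sigma_{ij}} (\Ric_{g,\mu}(\n,\n) + \|\II\|_2^2) f_{ij}\, \varphi\, d\mu^{n-1} = \tilde a_{ij} \int_{\Sigma_{ij}} \varphi\, d\mu^{n-1}.
\]
By the distributional characterization (\ref{eq:distributional-Delta}), this forces $\Delta_{\Sigma,\mu} f_{ij} \in L^2(\Sigma_{ij})$ with $-\Delta_{\Sigma,\mu} f_{ij} = \tilde a_{ij} + (\Ric_{g,\mu}(\n,\n) + \|\II\|_2^2) f_{ij}$, equivalently $L_{Jac} f_{ij} = -\tilde a_{ij}$ on $\Sigma_{ij}$. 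Setting $a := -\tilde a \in E^{(q-1)}$ yields the desired identity. Smoothness of $f_{ij}$ on the relatively open smooth manifold $\Sigma_{ij}$ then follows from standard interior elliptic regularity applied to the second-order linear elliptic equation $L_{Jac} f_{ij} = a_{ij}$ with $C^\infty$ coefficients (and constant right-hand side), starting from $f_{ij} \in H^1(\Sigma_{ij})$ and bootstrapping. There is no genuine obstacle here beyond verifying surjectivity of $T$ on $\D_Q$ (rather than on $C_c^\infty$) and being careful that the test fields lie in $\D_Q$; both are immediate from the definitions.
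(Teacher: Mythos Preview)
Your proposal is correct and follows essentially the same route as the paper. The paper defers to Lemma~\ref{lem:minimizer-is-Jacobi}, whose proof is precisely your Lagrange-multiplier argument: derive $Q^0(f,g_0)=0$ for $g_0\in\D_Q[0]$ from minimality, use invertibility of $L_1$ on piecewise-constant fields to obtain $Q^0(f,g)=\langle\tilde a,\delta^1_g V\rangle$ for all $g\in\D_Q$, then read off $L_{Jac}f_{ij}=a_{ij}$. The only difference is that the paper invokes Proposition~\ref{prop:enter-LJac} to additionally conclude $f\in\D_{con}$ (i.e.\ the conformal boundary conditions), whereas you test only against $C_c^\infty(\Sigma_{ij})$ and extract merely the interior equation---which is exactly what the stated lemma requires.
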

\begin{proof}
This was shown in \cite[Lemma 5.15]{EMilmanNeeman-TripleAndQuadruple} for Lipschitz scalar-fields $f$ of a particular form. The proof immediately extends to general Sobolev scalar-fields using the approximation Theorem \ref{thm:approximation}. For a self-contained proof see Lemma \ref{lem:minimizer-is-Jacobi}, which also establishes that $f \in \D_{con}$. 
\end{proof}

\section{Special fields} \label{sec:conformal}

In this section we recall the construction and properties of several families of vector and scalar fields from \cite[Sections 6-7]{EMilmanNeeman-TripleAndQuadruple}. 

\subsection{Conformal Killing fields}

 Recall that a vector-field $X$ on $(M,g)$ is called a Killing field if $X$ generates a one-parameter family of isometries, or equivalently, if $(\nabla X)^{\sym} = \mathcal{L}_X g = 0$, where $\mathcal{L}_X$ denotes the Lie derivative and $(\nabla X)^{\sym}$ the symmetrized Levi-Civita covariant derivative $(\nabla X)^{\sym}(a,b) := \frac{1}{2} \brac{\nabla X(a,b) + \nabla X(b,a)}$.
 More generally, $X$ is called a conformal Killing field if $X$ generates a one-parameter family of conformal mappings $F_t$, or equivalently, if $(\nabla X)^{\sym}= f_X \Id$ for some function $f_X$ called the conformal factor of $X$. 
  If these properties hold on a subset $\Omega \subset M$, $X$ is called a Killing (conformal Killing) field on $\Omega$, and $X$ is said to act isometrically (conformally) on $\Omega$. 
 
\medskip
 
The following was shown in \cite[Lemma 6.1]{EMilmanNeeman-TripleAndQuadruple}:

\begin{lemma} \label{lem:conformal-boundary}
Let $\Omega$ be a stationary regular cluster on $(M,g,\mu)$. Then for any $C^1$ vector-field $Y$ on $(M,g)$ so that $Y$ acts conformally at a point $p \in \Sigma_{uvw}$, $\nabla_{\n_{\partial ij}} Y^{\n_{ij}} - \bar \II^{\partial ij} Y^{\n_{ij}}$ is independent of $(i,j) \in \cyclic(u,v,w)$. \\
In particular, if $Y$ is a smooth conformal Killing field in a neighborhood of $\Sigma$ then its normal component satisfies the conformal BCs of Definition \ref{def:conformal-BCs} and Remark \ref{rem:conformal-BCs}: $Y^{\n} \in \D_{con}$. 
\end{lemma}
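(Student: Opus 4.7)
The approach is to compute $\nabla_{\n_{\partial ij}} Y^{\n_{ij}}$ at $p$ via the product rule, split the result into a $\nabla Y$-dependent piece (handled by conformality of $Y$) and a purely pointwise geometric piece (handled by stationarity), and verify cyclic independence of each.

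Decomposing $Y(p) = Y^{\n_{ij}} \n_{ij} + Y^{\n_{\partial ij}} \n_{\partial ij} + Y^\sigma$ with $Y^\sigma \in T_p\Sigma_{uvw}$, and unpacking $\nabla_{\n_{\partial ij}} \n_{ij}$ via the shape operator of $\Sigma_{ij}$ (whose output is a $\Sigma_{ij}$-tangent vector), the product rule yields
\[
\nabla_{\n_{\partial ij}} Y^{\n_{ij}} = \scalar{\nabla_{\n_{\partial ij}} Y, \n_{ij}} + Y^{\n_{\partial ij}} \II^{ij}_{\partial\partial} + \II^{ij}(\n_{\partial ij}, Y^\sigma).
\]
For the first term I would invoke the conformal condition $\scalar{\nabla_a Y, b} + \scalar{\nabla_b Y, a} = 2 f_Y \scalar{a,b}$ at $p$, applied to the orthogonal pair $(a,b) = (\n_{\partial ij}, \n_{ij})$, to obtain $\scalar{\nabla_{\n_{\partial ij}} Y, \n_{ij}} = \tfrac{1}{2} \omega_Y(\n_{\partial ij}, \n_{ij})$, where $\omega_Y$ denotes the antisymmetric part of $\nabla Y(p)$. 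The three pairs $\{(\n_{\partial ij},\n_{ij})\}_{(i,j)\in\cyclic(u,v,w)}$ are orthonormal frames of the $2$-plane $V := (T_p\Sigma_{uvw})^\perp$ related by $120^\circ$ rotations within $V$, hence all of the same orientation; thus $\omega_Y|_V$ evaluates identically on all three pairs, yielding cyclic-invariance of this contribution.

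The substantive step is to show that the purely pointwise quantity
\[
R_{ij} := Y^{\n_{\partial ij}} \II^{ij}_{\partial\partial} + \II^{ij}(\n_{\partial ij}, Y^\sigma) - \bar\II^{\partial ij} Y^{\n_{ij}}
\]
is also independent of $(i,j) \in \cyclic(u,v,w)$, using only stationarity. Differentiating the $120^\circ$-identity $\sum_{(i,j)\in\cyclic(u,v,w)} \n_{ij} = 0$ along $\tau \in T_p\Sigma_{uvw}$ gives $\sum_{(i,j)\in\cyclic} \nabla_\tau \n_{ij} = 0$; decomposing this in $V \oplus T_p\Sigma_{uvw}$ and invoking $\sum_{(i,j)\in\cyclic} \n_{\partial ij} = 0$ together with the linear independence of any two $\n_{\partial ij}$'s in $V$, one deduces both that $\II^{ij}(\n_{\partial ij}, \tau)$ is independent of $(i,j) \in \cyclic$ for each $\tau \in T_p\Sigma_{uvw}$ (handling the $Y^\sigma$-contribution) and that $\sum_{(i,j)\in\cyclic} \II^{ij}(\tau, \tau') = 0$ for all $\tau, \tau' \in T_p\Sigma_{uvw}$. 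Substituting the latter into the constant weighted mean-curvature relation $\II^{ij}_{\partial\partial} = (\lambda_i - \lambda_j) + \nabla_{\n_{ij}} W - \sum_l \II^{ij}(\tau_l, \tau_l)$ and summing over cyclic pairs (the pressure differences telescope, the $W$-terms sum to $\nabla_{\sum \n_{ij}} W = 0$, and the tangent trace vanishes by the above) produces the key identity $\sum_{(i,j)\in\cyclic} \II^{ij}_{\partial\partial}(p) = 0$. Combining this with the $2$D expansions $\n_{ik} = \tfrac{1}{2}\n_{ij} + \tfrac{\sqrt{3}}{2}\n_{\partial ij}$ and $\n_{jk} = -\tfrac{1}{2}\n_{ij} + \tfrac{\sqrt{3}}{2}\n_{\partial ij}$ in the orthonormal basis $(\n_{ij}, \n_{\partial ij})$ of $V$, substituted into the definition $\bar\II^{\partial ij} = (\II^{ik}_{\partial\partial} + \II^{jk}_{\partial\partial})/\sqrt{3}$, a direct linear-algebra verification in $V$ confirms $R_{uv} = R_{vw} = R_{wu}$.

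For the ``in particular'' clause, smoothness of the conformal Killing field $Y$ in a neighborhood of $\Sigma$ gives $Y^\n \in H^1(\Sigma^1)$ with $\Delta_{\Sigma,\mu} Y^\n \in L^2(\Sigma^1)$; the Kirchoff--Dirichlet condition $P_D \Tr Y^\n = 0$ reduces at each triple point to $\sum_{(i,j)\in\cyclic} Y^{\n_{ij}} = \scalar{Y, \sum_{(i,j)\in\cyclic}\n_{ij}} = 0$; and the Robin identity $P_R \NN Y^\n = \Lambda P_R \Tr Y^\n$ is precisely the main claim reformulated via Remark~\ref{rem:conformal-BCs}. The main obstacle is the $R_{ij}$ sub-claim, whose key algebraic input $\sum_{(i,j)\in\cyclic} \II^{ij}_{\partial\partial}(p) = 0$ is a somewhat subtle consequence of combining the $120^\circ$-angle condition with constant weighted mean curvature, and whose final linear-algebra cancellation in $V$ requires carefully unwinding the definition of $\bar\II^{\partial ij}$.
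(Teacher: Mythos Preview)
Your argument is correct. The paper does not give its own proof of this lemma; it simply cites \cite[Lemma 6.1]{EMilmanNeeman-TripleAndQuadruple}, so there is no method in the present paper to compare against. Your self-contained approach---splitting $\nabla_{\n_{\partial ij}} Y^{\n_{ij}}$ into a $\nabla Y$-contribution (handled via the antisymmetric $2$-form $\omega_Y|_V$ being rotation-invariant on the $2$-plane $V = (T_p\Sigma_{uvw})^\perp$) and a pointwise geometric remainder $R_{ij}$---is the standard route, and all the supporting facts you invoke (that $\II^{ij}(\n_{\partial ij},\tau)$ is cyclic-invariant for $\tau \in T_p\Sigma_{uvw}$, that $\sum_{\cyclic}\II^{ij}(\tau,\tau')=0$, and that $\sum_{\cyclic}\II^{ij}_{\partial\partial}=0$) are exactly those derived by differentiating $\sum_{\cyclic}\n_{ij}=0$ and combining with $H_{ij,\mu}=\lambda_i-\lambda_j$. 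The final algebraic check that $Y^{\n_{\partial ij}}\II^{ij}_{\partial\partial} - \bar\II^{\partial ij}Y^{\n_{ij}}$ is cyclic-invariant reduces, after expanding in the $(\n_{ij},\n_{\partial ij})$-basis, to the identity $\alpha Y_2 - \alpha Y_3 + \gamma Y_1 - \beta Y_1 = \beta Y_3 - \beta Y_1 + \alpha Y_2 - \gamma Y_2$ whenever $\alpha+\beta+\gamma=0$ and $Y_1+Y_2+Y_3=0$, which is immediate. Your verification of $Y^\n \in \D_{con}$ in the ``in particular'' clause is also complete.
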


The first statement below is completely classical; the second is a simple computation, cf. \cite[Lemmas 6.6-6.7]{EMilmanNeeman-TripleAndQuadruple}:
\begin{lemma}\label{lem:LJac-Killing}
Let $(M,g,\vol_g)$ be an \emph{unweighted} Riemannian manifold, and let $\Sigma$ denote a smooth hypersurface with unit-normal $\n$ and constant mean-curvature $H_{\Sigma}$. Then:
\begin{enumerate}
\item For any Killing field $W$ in a neighborhood of $p \in \Sigma$ in $M^n$, we have at $p$:
\[
L_{Jac} W^\n = 0 . 
\]
\item For any conformal Killing field $W$ in a neighborhood of $p \in \Sigma$ in $M^n$ with conformal factor $f_W$, we have at $p$:
\[
L_{Jac} W^\n = f_W H_{\Sigma} - (n-1) \scalar{\nabla f_W , \n} .
\]
\end{enumerate}
\end{lemma}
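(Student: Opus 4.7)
The plan is to combine the classical identity $\delta^1_X H_\Sigma = -L_{Jac} X^\n$ (valid at any point of a constant-mean-curvature hypersurface, as recorded in the Remark following Theorem \ref{thm:Q-LJac} in the unweighted case) with the observation that the first variation of mean curvature along a (conformal) Killing flow can be computed directly from the (conformal) invariance of mean curvature. Since $H_\Sigma$ is assumed constant, the tangential part of $X$ contributes nothing to $\delta^1_X H_\Sigma$ at $p$, so it suffices to track the behavior of the mean curvature of the image hypersurface $F_t(\Sigma)$ under the flow $F_t$ generated by $W$.

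For part (1), let $W$ be a smooth Killing field in a neighborhood $U$ of $p$. For all sufficiently small $t$, $F_t$ restricts to a local isometry of $(M,g)$ defined on a slightly smaller neighborhood $U' \ni p$, so $F_t$ carries $\Sigma \cap U'$ isometrically onto $F_t(\Sigma \cap U')$ as embedded hypersurfaces of $(M,g)$. Mean curvature is an intrinsic-plus-embedding invariant preserved by isometries of the ambient manifold, hence $H_{F_t(\Sigma)}(F_t(p)) = H_\Sigma(p)$ for all small $t$. Differentiating at $t=0$ yields $\delta^1_W H_\Sigma(p) = 0$, and hence $L_{Jac} W^\n = 0$ at $p$, as desired.

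For part (2), assume $W$ is conformal Killing in a neighborhood of $p$ with conformal factor $f_W$, i.e.~$\mathcal{L}_W g = 2 f_W g$. Then $F_t^* g = \lambda_t^2 g$ locally, where $\lambda_0 \equiv 1$ and $\dot\lambda_0 = f_W$. Viewing $F_t$ as an isometry from $(M,\lambda_t^2 g)$ to $(M,g)$ taking $\Sigma$ to $F_t(\Sigma)$, we have
\[
H_{F_t(\Sigma)}^{\, g}(F_t(p)) = H_{\Sigma}^{\,\lambda_t^2 g}(p) .
\]
The plan is now to invoke the standard conformal transformation law for mean curvature: under $\tilde g = e^{2u} g$, with unit normal $\n$ in $g$,
\[
H_\Sigma^{\tilde g} = e^{-u}\bigl(H_\Sigma^{g} + (n-1)\,\n(u)\bigr).
\]
Applying this with $u = \log \lambda_t$, expanding $\lambda_t = 1 + t f_W + O(t^2)$ (so $\n(\log\lambda_t)(p) = t\,\scalar{\nabla f_W,\n} + O(t^2)$) and differentiating at $t=0$, we will obtain
\[
\delta^1_W H_\Sigma(p) = -f_W\, H_\Sigma + (n-1)\scalar{\nabla f_W,\n}.
\]
Combining this with $\delta^1_W H_\Sigma = -L_{Jac} W^\n$ yields the stated formula.

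The only nontrivial step is the derivation (or citation) of the conformal transformation law for $H_\Sigma$, together with a careful check of orientation conventions so that the unit normal to $F_t(\Sigma)$ in $g$ corresponds to the unit normal to $\Sigma$ in $\lambda_t^2 g$ under $F_t$; I expect this to be the main obstacle, but it is a standard computation (easily verified against the test cases of a round sphere rescaled conformally, where $\lambda$ is constant, and the equality case $W$ Killing, where $f_W \equiv 0$ recovers part (1)).
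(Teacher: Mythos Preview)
Your proposal is correct. The paper does not actually prove this lemma: it declares part (1) ``completely classical'' and for part (2) says only that it is ``a simple computation, cf.\ [Lemmas 6.6--6.7]'' of the authors' earlier paper, giving no argument in the present text. Your route via the identity $\delta^1_W H_\Sigma = -L_{Jac} W^\n$ (from the Remark following Theorem~\ref{thm:Q-LJac}) together with the conformal transformation law $H_\Sigma^{e^{2u}g} = e^{-u}\bigl(H_\Sigma^g + (n-1)\nabla_{\n} u\bigr)$ is exactly the natural proof, and your expansion of $\lambda_t$ and verification of signs are accurate. The only remark is that your ``main obstacle'' --- the conformal transformation law for $H_\Sigma$ and the compatibility of normal orientations under $F_t$ --- is indeed routine, so the proof plan is complete as stated.
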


\subsection{M\"obius fields on $\S^n$} \label{subsec:Mobius}

By Liouville's classical theorem \cite{Udo-MobiusDifferentialGeometry,Blair-InversionTheory}, all conformal automorphisms of $\bar \R^n$, the one-point-at-infinity compactification of $\R^n$, when $n \geq 3$, are given by M\"obius transformations, obtained by composing isometries (orthogonal linear transformations and translations) with scaling ($p \mapsto e^\lambda p$) and spherical inversion ($p \mapsto p / |p|^2$). 
Using stereographic projection, this also classifies all conformal automorphisms of $\S^n$, as well as all global conformal Killing fields on both model spaces. The conformal Killing fields which are non-Killing (i.e.~do not generate isometries) constitute an $(n+1)$-dimensional linear space -- on $\S^n$, these are precisely given by the $(n+1)$-dimensional family of ``M\"obius fields", introduced below following \cite[Subsection 6.3]{EMilmanNeeman-TripleAndQuadruple}:

\begin{definition}[M\"obius Field $W_\theta$]  \label{def:dilation-field}
\hfill \\
The M\"obius field $W_\theta$ on $\S^n \subset \R^{n+1}$ in the direction of $\theta \in \R^{n+1}$ is the section of $T \S^n$ given by:
\[
W_\theta(p) := \theta - \scalar{\theta,p} p . 
\]
\end{definition}

\begin{remark} \label{rem:dilation-conformal} 
Note that $\nabla W_\theta(p) = -\scalar{p,\theta} \Id$, 
confirming that this is indeed a conformal Killing field with conformal factor $f_{W_\theta} = -\scalar{p,\theta}$. It is easy to check the geometric significance of the field $W_\theta$:  it is obtained by pulling back the scaling field $p \mapsto |\theta| p$ on $\R^n$ via the stereographic projection from $\S^n$ using $\theta / |\theta| \in \S^n$ as the North pole. 
\end{remark}

\subsection{$L_{Jac}$ of some special fields}

\begin{lemma}  \label{lem:LJac-dilation}
Let $\Sigma$ denote a smooth hypersurface of constant mean-curvature $H = (n-1) \k$ on $\S^n$. We denote by $\II_0 := \II - \k \Id$ the traceless part of its second fundamental form $\II$ with respect to the unit-normal $\n$. Then for any $\theta \in \R^{n+1}$:
\begin{itemize}
\item $L_{Jac} 1 = (n-1) (1 + \k^2) + \norm{\II_0}^2$. 
\item $L_{Jac} W_\theta^\n = L_{Jac} \scalar{\theta,\n} = (n-1) \scalar{\theta,\c}$. 
\item 
$L_{Jac} \scalar{\theta,p} = -(n-1) \k \scalar{\theta,\c} + \norm{\II_0}^2 \scalar{\theta,p}$. 
\item 
$L_{Jac} \scalar{\theta,\c} =  (n-1)(1+\k^2) \scalar{\theta,\c} - \k \norm{\II_0}^2 \scalar{\theta,p}$. 
\end{itemize}
If $\Sigma$ is a sphere in $\S^n$ of curvature $\k$ and quasi-center $\c$, and $N \in \S^n$ with $N \perp \c$, then:
\begin{itemize}
\item $1 - \frac{1}{2} \Delta_{\Sigma} \scalar{N,p}^2 = n (1 + \k^2) \scalar{N,p}^2$. 
\end{itemize}
\end{lemma}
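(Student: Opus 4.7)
The plan is to reduce the identity to a standard spherical-harmonic computation on the intrinsic sphere $\Sigma$, exploiting the assumption $N \perp \c$.

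First I would realize $\Sigma$ explicitly as a round sphere in $\R^{n+1}$. Since $\Sigma = \S^n \cap E(\c,\k)$ (recall Remark \ref{rem:quasi-center}) and $|\c|^2 = 1+\k^2$, the affine hyperplane $E(\c,\k)$ meets $\S^n$ in the $(n-1)$-sphere centered at $c_0 := -\k\c/(1+\k^2)$ with radius $r := 1/\sqrt{1+\k^2}$. I would parametrize $p = c_0 + r u$, where $u$ ranges over $\S^{n-1} := \S^n \cap E_0$ and $E_0 := \c^\perp$ is the translate of $E(\c,\k)$ through the origin.

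The hypothesis $N \perp \c$ enters in two clean ways. First, $\langle N,c_0\rangle = 0$, so $\langle N,p\rangle = r\langle N,u\rangle$ and hence $f := \langle N,p\rangle^2 = r^2 \langle N,u\rangle^2$. Second, $N \in E_0$, so the restriction $u \mapsto \langle N,u\rangle$ to $\S^{n-1} \subset E_0$ is simply a linear function on the Euclidean ambient space of $\S^{n-1}$, i.e.\ a spherical harmonic of degree one. The rescaling $p = c_0 + r u$ further gives $\Delta_\Sigma = r^{-2}\Delta_{\S^{n-1}}$, whence
\[
\Delta_\Sigma f \;=\; r^{-2}\Delta_{\S^{n-1}}\bigl(r^2 \langle N,u\rangle^2\bigr) \;=\; \Delta_{\S^{n-1}}\langle N,u\rangle^2.
\]

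Now I would apply the standard identities $\Delta_{\S^{n-1}}\langle N,u\rangle = -(n-1)\langle N,u\rangle$ (since $|N|=1$ forces $\langle N,\cdot\rangle$ to be a degree-one spherical harmonic) and $|\nabla^{\S^{n-1}}\langle N,u\rangle|^2 = |N|^2 - \langle N,u\rangle^2 = 1 - \langle N,u\rangle^2$. The product rule for the Laplacian yields
\[
\Delta_{\S^{n-1}}\langle N,u\rangle^2 \;=\; 2\langle N,u\rangle \cdot \bigl(-(n-1)\langle N,u\rangle\bigr) + 2\bigl(1-\langle N,u\rangle^2\bigr) \;=\; 2 - 2n\langle N,u\rangle^2.
\]
Substituting $\langle N,u\rangle^2 = \langle N,p\rangle^2/r^2 = (1+\k^2)\langle N,p\rangle^2$ gives $\Delta_\Sigma f = 2 - 2n(1+\k^2)\langle N,p\rangle^2$, and rearranging produces exactly $1 - \tfrac{1}{2}\Delta_\Sigma f = n(1+\k^2)\langle N,p\rangle^2$, as claimed.

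There is really no obstacle here: everything is forced by the geometric identification of $\Sigma$ as a concrete small sphere in $\R^{n+1}$ together with the alignment $N \in E_0$ provided by $N \perp \c$. The only care needed is in correctly computing $c_0$ and $r$ from $\c$ and $\k$ via $|\c|^2 = 1+\k^2$, and in tracking the scaling factor $r$ when passing between $p$ and $u$.
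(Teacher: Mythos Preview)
Your argument is correct. The parametrization $p = c_0 + r u$ with $c_0 = -\k\c/(1+\k^2)$ and $r = (1+\k^2)^{-1/2}$ is right, the identification $\Delta_\Sigma = r^{-2}\Delta_{\S^{n-1}}$ holds because the induced metric on $\Sigma \subset \S^n \subset \R^{n+1}$ is exactly that of a Euclidean $(n-1)$-sphere of radius $r$, and the spherical-harmonic computation on $\S^{n-1}$ is standard.

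The paper takes a closely related but slightly different route. Rather than parametrizing $\Sigma$ and pulling back to $\S^{n-1}$, it stays on $\Sigma$ as a hypersurface in $\S^n$ and reuses the formulas already derived earlier in the same lemma: $\nabla^\tang\langle N,p\rangle = N - \langle N,p\rangle p - \langle N,\n\rangle \n$ and $\Delta_\Sigma\langle N,p\rangle = -(n-1)\langle N,p\rangle - (n-1)\k\langle N,\n\rangle$. The perpendicularity $N\perp\c$ enters via $\langle N,\n\rangle = \k\langle N,p\rangle$ (from $\c = \n - \k p$), after which the same product rule $\tfrac12\Delta_\Sigma f^2 = f\Delta_\Sigma f + |\nabla^\tang f|^2$ finishes the computation. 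Your approach trades the extrinsic normal $\n$ for the explicit coordinate $u$, which makes the computation more self-contained (it does not rely on the earlier items of the lemma) at the cost of setting up the parametrization; the paper's version is shorter because it recycles those earlier formulas. Both are essentially the same product-rule computation in different clothes.
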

\begin{proof}
Recall that on $\S^n$ we have $L_{Jac} = \Delta_{\Sigma}  + (n-1) + \norm{\II}^2 = \Delta_{\Sigma} + (n-1) (1 + \k^2) + \norm{\II_0}^2$, and so the formula for $L_{Jac} 1$ immediately follows. To see the formula for $L_{Jac} W_\theta^\n$, 
recall from Remark \ref{rem:dilation-conformal} that the conformal factor of $W_\theta$ is $f_{W_\theta} = -\scalar{p,\theta}$. Consequently, by Lemma \ref{lem:LJac-Killing}, we have:
\[
L_{Jac} W_\theta^\n = - H_{\Sigma} \scalar{p,\theta} + (n-1) \scalar{\theta, \n} = (n-1) \scalar{\theta , \n - \k p} . 
\]
Next, we have:
\[
\nabla^\tang \scalar{\theta,p} = \theta - \scalar{\theta,p} p - \scalar{\theta,n} n ,
\]
and consequently:
\[
\Delta_{\Sigma} \scalar{\theta,p} = \div_{\Sigma} \; \nabla^\tang \scalar{\theta,p} = - (n-1) \scalar{\theta,p} - \scalar{\theta,n} H . 
\]
Recalling that $\c = \n - \k p$ with $\k$ constant, the formulas for $L_{Jac} \scalar{\theta,p}$ and $L_{Jac} \scalar{\theta,\c}$ are obtained. 

Finally, if $\Sigma$ is a sphere, since $0 = \scalar{N,\c} = \scalar{N,\n} - \k \scalar{N,p}$ on $\Sigma$, we see that:
\begin{align*}
& \frac{1}{2} \Delta_{\Sigma} \scalar{N,p}^2 = \scalar{N,p} \Delta_{\Sigma} \scalar{N,p} + |\nabla^\tang \scalar{N,p}|^2 \\
& = -(n-1) \scalar{N,p}^2 - (n-1) \k \scalar{N,\n} \scalar{N,p} + 1 - \scalar{N,p}^2 - \scalar{N,\n}^2  = 1- n (1 + \k^2) \scalar{N,p}^2 . 
\end{align*}
\end{proof}

\subsection{Piecewise-constant fields}

Given $a \in \R^q$, we will also use $a = (a_{ij})$ to denote the scalar-field in $\D_Q$ which is equal to the constant $a_{ij} = a_i - a_j$ on $\Sigma_{ij}$. Note that this definition is invariant on the fibers of the quotient map from $\R^q$ to $E^{(q-1)}$ parallel to $\textbf{1} = (1,\ldots,1)$, and so we will typically assume that $a \in E^{(q-1)}$. We will frequently use that for all $f \in \D_Q$:
\[
\scalar{a , f}_{L^2(\Sigma^1)} = \sum_{i<j} \int_{\Sigma_{ij}} a_{ij} f_{ij} d\HH^{n-1} = \scalar{a , \delta^1_f V}_{E^{(q-1)}} . 
\]
Note that $\delta^1_a V = \sum_{i<j} \int_{\Sigma_{ij}} a_{ij}  d\HH^{n-1} e_{ij}  = L_1 a$, where 
$e_{ij} := e_i - e_j$, $\{e_i\}_{i=1,\ldots,q}$ is the standard basis of $\R^q$, and
\[
L_1 := \sum_{i<j} \int_{\Sigma_{ij}} e_{ij} \otimes  e_{ij} d\HH^{n-1} 
\]
was introduced in Definition \ref{def:LA}. 

\subsection{Regular and truncated skew fields}

   Let $\Omega$ be a perpendicular $3$-Plateau spherical Voronoi cluster on $\S^n$ with North pole $N \in \S^n$. 
The following definition and properties were established in \cite[Sections 7,9]{EMilmanNeeman-TripleAndQuadruple}:

\begin{definition}[Regular and Truncated Skew Fields] 
Given $a \in E^{(q-1)}$: 
\begin{itemize}
\item 
The scalar-field $g^{(a)} \in C_c^\infty(\Sigma)$ given by $g^{(a)}_{ij} := a_{ij} \scalar{p,N}$ is called a skew-field. 
\item 
The scalar-field $h^{(a)} \in \D_Q$ given by $h^{(a)}_{ij} := a_{ij} \abs{\scalar{p,N}}$ is called a truncated skew-field. 
\end{itemize}
\end{definition}

\begin{proposition} \label{prop:skew-fields}
For all $a \in E^{(q-1)}$:
\begin{enumerate}
\item
$g^{(a)} \in \D_{con}$, $L_{Jac}\;  g^{(a)} = 0$ and $\delta^1_{g^{(a)}} V = 0$. 
\item 
$Q^0(h^{(a)}) = 0$ and $\delta^1_{h^{(a)}} V = L_{|\scalar{p,N}|} a$. 
\end{enumerate}
\end{proposition}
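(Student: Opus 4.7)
The plan hinges on a direct computation that exploits the specific geometry of perpendicular spherical Voronoi clusters on $\S^n$: every interface $\Sigma_{ij}$ is a piece of a geodesic sphere, so it is totally umbilical with $\II^{ij} = \k_{ij}\Id$ and $\II_0 = 0$, and the perpendicularity hypothesis forces $\c_i \perp N$ for every $i$, hence $\c_{ij} \perp N$ for every pair. I would begin by establishing $L_{Jac} g^{(a)} = 0$. Since $a_{ij}$ is constant on $\Sigma_{ij}$, it suffices to show that $\Psi := \scalar{p,N}$ is $L_{Jac}$-harmonic on each $\Sigma_{ij}$. Applying Lemma \ref{lem:LJac-dilation} with $\theta = N$, $\k = \k_{ij}$, $\c = \c_{ij}$, one reads off $L_{Jac} \Psi = -(n-1)\k_{ij}\scalar{N,\c_{ij}} + \snorm{\II_0}^2 \Psi$; both terms vanish, the first by perpendicularity and the second by umbilicality. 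The identity $\delta^1_{g^{(a)}} V = 0$ is then a consequence of the $R_N$-symmetry guaranteed by Remark \ref{rem:symmetry}: each $\Sigma_{ij}$ is invariant under reflection about $N^\perp$, while $\Psi$ is odd under this reflection, so $\int_{\Sigma_{ij}} \Psi\,d\H^{n-1} = 0$.

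To certify $g^{(a)} \in \D_{con}$ it remains to verify Dirichlet--Kirchhoff and conformal boundary conditions at every triple-point. The first is immediate since $\sum_{(i,j) \in \cyclic(u,v,w)} g^{(a)}_{ij} = \Psi(a_{uv}+a_{vw}+a_{wu}) = 0$. For the second I would prove the much stronger pointwise statement that $\nabla_{\n_{\partial ij}} g^{(a)}_{ij} - \bar\II^{\partial ij} g^{(a)}_{ij} \equiv 0$ on each $\Sigma_{ijk}$. Because $\Psi$ is the restriction of a linear functional on $\R^{n+1}$, its directional derivative along any $\tau \in T_p \S^n$ is just $\scalar{N,\tau}$. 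Combining (\ref{eq:sqrt3}) with $\n_{ab} = \c_{ab} + \k_{ab} p$ and perpendicularity gives $\scalar{N,\n_{\partial ij}} = \frac{1}{\sqrt 3}(\k_{ik}+\k_{jk})\Psi$, while (\ref{eq:def-II-partial}) and umbilicality give $\bar\II^{\partial ij} = \frac{1}{\sqrt 3}(\k_{ik}+\k_{jk})$. The two contributions cancel exactly after multiplication by $a_{ij}\Psi$, so the conformal BC holds with common value $0$.

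For the truncated field, the volume identity $\delta^1_{h^{(a)}} V = L_{\abs{\scalar{p,N}}} a$ is just the definition of $L_A$ applied to $A^{ij} = \int_{\Sigma_{ij}} \abs{\Psi}\,d\H^{n-1}$ and the oriented $(a_{ij})$. The substantive point is $Q^0(h^{(a)}) = 0$. I would split each interface according to the two hemispheres $H_\pm = \set{\pm\Psi > 0}$: on $H_+$ we have $h^{(a)} = g^{(a)}$ and on $H_-$ we have $h^{(a)} = -g^{(a)}$, so $(h^{(a)})^2 = (g^{(a)})^2$ and $\abs{\nabla^\tang h^{(a)}}^2 = \abs{\nabla^\tang g^{(a)}}^2$ almost everywhere, and the bulk part of $Q^0(h^{(a)})$ matches the bulk part of the corresponding expression for $g^{(a)}$ on each hemisphere. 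Applying (\ref{eq:divergence-Sobolev}) separately on $\Sigma_{ij}\cap H_+$ and $\Sigma_{ij}\cap H_-$ and using $L_{Jac} g^{(a)}=0$ to annihilate the bulk integrand, $Q^0(h^{(a)})$ collapses to $\sum_{i<j} \int_{\partial(\Sigma_{ij}\cap H_+)\cup \partial(\Sigma_{ij}\cap H_-)} g^{(a)}_{ij}\bigl(\nabla_{\n_\partial} g^{(a)}_{ij} - \bar\II^{\partial ij} g^{(a)}_{ij}\bigr) d\mu^{n-2}$ (the $\bar\II^\partial$-term is inserted from the explicit boundary contribution in $Q^0$).

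This boundary integral splits into two kinds of pieces: the equatorial slices $\Sigma_{ij}\cap\set{\Psi=0}$, on which $g^{(a)}$ vanishes and hence contribute nothing; and the triple-point portions $\partial\Sigma_{ij}\cap H_\pm$, which reassemble on each hemisphere to integration over $\partial \Sigma_{ij}$ of the same integrand $g^{(a)}_{ij}(\nabla_{\n_{\partial ij}} g^{(a)}_{ij} - \bar\II^{\partial ij} g^{(a)}_{ij})$ (the sign flip in $h^{(a)}$ combines with the sign flip of the outward boundary normal to give a matching contribution on each hemisphere). By the pointwise identity established in the conformal-BC step, each such integrand is zero, so $Q^0(h^{(a)}) = 0$. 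The main technical subtlety I foresee is purely bookkeeping: handling the Sobolev divergence theorem for the Lipschitz-but-not-smooth field $h^{(a)}$, and tracking signs on the two hemispheres near equatorial triple-points, where $g^{(a)}$ fortunately vanishes and so no delicate cancellation is required.
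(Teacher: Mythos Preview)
Your argument is correct. The paper does not prove this proposition here, deferring to \cite[Sections 7,9]{EMilmanNeeman-TripleAndQuadruple}; your computations for Part~1 (the $L_{Jac}$ formula from Lemma~\ref{lem:LJac-dilation} with $\theta=N$, perpendicularity killing $\scalar{N,\c_{ij}}$, umbilicality killing $\II_0$, and the pointwise verification $\nabla_{\n_{\partial ij}}\Psi = \bar\II^{\partial ij}\Psi$) are the natural ones.

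For Part~2 your hemisphere splitting works but is more elaborate than necessary. Observe directly from the definition that $Q^0(u)$ for real $u$ depends only on $u^2$, $|\nabla^\tang u|^2$ and $(\Tr u)^2$. Since $(h^{(a)})^2 = (g^{(a)})^2$ pointwise on $\Sigma^1 \cup \partial\Sigma^1$ and $|\nabla^\tang h^{(a)}|^2 = |\nabla^\tang g^{(a)}|^2$ almost everywhere (the weak gradient of $|\Psi|$ is $\mathrm{sgn}(\Psi)\nabla^\tang\Psi$), one gets $Q^0(h^{(a)}) = Q^0(g^{(a)})$ immediately. Then Part~1 together with (\ref{eq:Q^0-Dcon}) gives $Q^0(g^{(a)}) = -\scalar{L_{Jac} g^{(a)}, g^{(a)}}_{L^2(\Sigma^1)} = 0$. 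This bypasses the separate integration by parts on $\Sigma_{ij}\cap H_\pm$, the equatorial boundary terms, and the sign bookkeeping you flagged as the main subtlety.
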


\begin{remark} \label{rem:crazy-pos-def}
Every non-empty $\Sigma_{ij}$ is a (relatively open) subset of the geodesic sphere $S_{ij} = \{ p \in \S^n \; ; \; \scalar{\c_{ij} , p} + \k_{ij} = 0 \}$ with $\c_{ij} \perp N$. As $S_{ij}$ intersects the equator $\S^n \cap N^{\perp}$ perpendicularly, it follows that $\int_{\Sigma_{ij}} |\scalar{p,N}| dp > 0$ for all non-empty $\Sigma_{ij}$. Consequently, whenever $V(\Omega) \in \interior \Delta^{(q-1)}$, $L_{|\scalar{p,N}|}$ is a positive-definite quadratic form on $E^{(q-1)}$ by Lemmas \ref{lem:LA-connected} and \ref{lem:LA-positive}. 
\end{remark}

\section{Conformal Jacobi fields} \label{sec:conformal-Jacobi}

Throughout the rest of this section, we assume that $\Omega$ is a $3$-Plateau spherical Voronoi $q$-cluster with $q \leq n+2$. As usual, we denote its quasi-center and curvature parameters by $\{\c_i\}_{i=1,\ldots,q} \subset \R^{n+1}$ and $\{\k_i\}_{i=1,\ldots,q} \subset \R$, and use $\C : \R^{n+1} \rightarrow E^{(q-1)}$ and $\k \in E^{(q-1)}$ to denote its corresponding quasi-center operator and curvature vector, respectively. Note that such a cluster is automatically stationary by Lemma \ref{lem:Voronoi-stationary}. At times, we will also assume that:
\begin{itemize}
\item $V(\Omega) \in \interior \Delta^{(q-1)}$ (equivalently, all cells are non-empty); or
\item $q \leq n+1$; or
\item $\Omega$ is stable; or
\item $\Omega$ is perpendicular, i.e. that $\c_i \perp N$ for some North pole $N \in \S^n$ (so in particular $\Omega$ is symmetric with respect to reflection about $N^{\perp}$); or
\item the perpendicular $\Omega$ has equatorial cells, i.e. that the equator $\S^{n-1} := \S^n \cap N^{\perp}$ intersects all (open) cells $\Omega_i$ (so in particular they are all non-empty and connected);
\end{itemize}
in those cases, we will explicitly state these additional assumptions below. Recall by Theorem \ref{thm:intro-structure} that when $q \leq n+1$, a minimizing cluster $\Omega$ with $V(\Omega) \in \interior \Delta^{(q-1)}$ is necessarily a $3$-Plateau, stable, perpendicular spherical Voronoi cluster with equatorial cells.

\begin{definition}[Conformal Jacobi field]
A Sobolev scalar-field $f = (f_{ij}) \in \D_Q$ is called a conformal Jacobi field (with conformal parameter $a \in E^{(q-1)}$) if:
\begin{enumerate}
\item It satisfies conformal boundary conditions, i.e. $f \in \D_{con}$; and
\item It satisfies $L_{Jac} f_{ij} \equiv (n-1) a_{ij}$ on $\Sigma_{ij}$ for all $i,j$.  
\end{enumerate}
A conformal Jacobi field with conformal parameter $0$ is simply called a Jacobi field, corresponding to an element in the kernel of $(L_{Jac}, \D_{con})$. 
\end{definition}
\begin{remark} \label{rem:conformal-Jacobi-smooth}
Note that there is no requirement that $f$ be physical, nor that $f \in C_c^\infty(\Sigma)$. However, by local elliptic regularity, it follows that $f_{ij}$ is $C^\infty$-smooth on each $\Sigma_{ij}$ (but we make no such claim on $\overline{\Sigma_{ij}}$ nor even on $\overline{\Sigma_{ij}} \setminus \Sigma^{\geq 3}$). 
\end{remark}
\begin{remark}
By (\ref{eq:Q^0-Dcon}), we see that for any Sobolev scalar-field $g \in \D_Q$:
\begin{equation} \label{eq:Q0-Jacobi}
Q^0(f,g) = -(n-1) \scalar{a , g}_{L^2(\Sigma^1)} = -(n-1) \scalar{a , \delta^1_g V}_{E^{(q-1)}} . 
\end{equation}
\end{remark}

Recalling Lemmas \ref{lem:conformal-boundary} and \ref{lem:LJac-Killing}, we deduce the classical fact that the normal component of a Killing field is a Jacobi field. The next lemma establishes an analogous statement for M\"obius fields:

\begin{lemma} \label{lem:Mobius-Jacobi}
The normal component of a M\"obius field $W_\theta$ is a (physical) conformal Jacobi field with conformal parameter $a = \C \theta$. 
\end{lemma}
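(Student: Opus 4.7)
The plan is short: combine the conformal boundary Lemma \ref{lem:conformal-boundary} with the computation of $L_{Jac}$ on the normal of a M\"obius field from Lemma \ref{lem:LJac-dilation}.

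First, I would verify physicality, membership in $\D_Q$, and the conformal boundary conditions. Since $W_\theta$ is a globally defined smooth vector field on $\S^n$, its normal component $f_{ij} := W_\theta^{\n_{ij}}$ is manifestly a physical scalar-field, is $C^\infty$ on each (relatively open) $\Sigma_{ij}$, and is oriented in the correct sense since $\n_{ji} = -\n_{ij}$. At any triple point $p \in \Sigma_{uvw}$, the Dirichlet--Kirchhoff condition $\sum_{(i,j)\in\cyclic(u,v,w)} f_{ij}(p) = \langle W_\theta(p), \sum_{(i,j)\in\cyclic(u,v,w)} \n_{ij}(p)\rangle = 0$ holds by Lemma \ref{lem:boundary-normal-sum}, so $f \in \D_Q$. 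Because $W_\theta$ is a smooth conformal Killing field in a neighborhood of $\Sigma$ (Remark \ref{rem:dilation-conformal}), Lemma \ref{lem:conformal-boundary} yields that $f = W_\theta^\n$ satisfies the conformal BCs, i.e.~$f \in \D_{con}$.

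Next, I would compute $L_{Jac} f$ on each interface. Recall that $\Omega$ being a spherical Voronoi cluster means every non-empty interface $\Sigma_{ij}$ lies on a geodesic sphere with quasi-center $\c_{ij} = \c_i - \c_j$ and (constant) curvature $\k_{ij} = \k_i - \k_j$; in particular, it has constant mean curvature. Lemma \ref{lem:LJac-dilation} applied on $\Sigma_{ij}$ gives
\[
L_{Jac}\, f_{ij} \;=\; L_{Jac}\, W_\theta^{\n_{ij}} \;=\; (n-1)\scalar{\theta, \c_{ij}} \;=\; (n-1)\bigl(\scalar{\theta,\c_i} - \scalar{\theta,\c_j}\bigr)
\]
pointwise on $\Sigma_{ij}$. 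Setting $a_i := \scalar{\theta,\c_i} = (\C\theta)_i$ for $i=1,\ldots,q$, so that $a = \C\theta \in E^{(q-1)}$ by our convention $\sum_i \c_i = 0$, we obtain $L_{Jac}\, f_{ij} \equiv (n-1)(a_i - a_j) = (n-1) a_{ij}$ on each $\Sigma_{ij}$.

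Combining the two points, $f = W_\theta^\n$ lies in $\D_{con}$ and satisfies $L_{Jac}\, f_{ij} \equiv (n-1) a_{ij}$ with $a = \C\theta$, which is precisely the definition of a conformal Jacobi field with conformal parameter $\C\theta$. There is no real obstacle here; the only things to double-check are the sign/orientation conventions between $\n_{ij}$, $\c_{ij}$, and the oriented scalar-field structure, all of which are consistent by the standing convention $\c_{ij} = \c_i - \c_j = -\c_{ji}$ and $\n_{ji} = -\n_{ij}$.
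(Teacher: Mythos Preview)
Your proof is correct and follows essentially the same approach as the paper: invoke Lemma~\ref{lem:conformal-boundary} for the conformal boundary conditions and Lemma~\ref{lem:LJac-dilation} for the computation $L_{Jac}\,W_\theta^{\n_{ij}} = (n-1)\scalar{\theta,\c_{ij}}$, then set $a_i = \scalar{\theta,\c_i}$. You are slightly more explicit than the paper in verifying the Dirichlet--Kirchhoff condition at triple points via Lemma~\ref{lem:boundary-normal-sum}, but this is implicit in the paper's assertion that $W_\theta^\n \in C_c^\infty(\Sigma)$.
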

\begin{proof}
Lemma \ref{lem:conformal-boundary} shows that the physical field $W_\theta^{\n} \in C_c^\infty(\Sigma)$ satisfies conformal boundary conditions, so $W_\theta^\n \in \D_{con}$. It remains to invoke Lemma \ref{lem:LJac-dilation}, which verifies that:
\[
 L_{Jac} W_\theta^{\n_{ij}} = (n-1) \scalar{\c_{ij} ,  \theta} .
 \]
 As the cluster is spherical Voronoi, $\c_{ij} \equiv \c_i - \c_j$ is constant on $\Sigma_{ij}$; setting $a_i = \scalar{\c_i, \theta}$, the assertion follows. 
\end{proof}

\subsection{Conformally complete system of conformal Jacobi fields}

\begin{definition}[Conformally complete system of conformal Jacobi fields]
We shall say that a conformally complete system of conformal Jacobi fields exists for $\Omega$ if for all $a \in E^{(q-1)}$, there exists a conformal Jacobi field $f^{a}$ with conformal parameter $a$. 
\end{definition}

As an immediate corollary of Lemma \ref{lem:Mobius-Jacobi}, we have:
\begin{corollary} \label{cor:full-dim-F} 
If $q \leq n+2$ and $\Omega$ is full-dimensional, i.e. $\C$ is of full affine-rank, then a conformally complete system of conformal Jacobi fields exists.
\end{corollary}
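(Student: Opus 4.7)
The plan is to obtain the conformal Jacobi fields directly from the Möbius fields supplied by Lemma \ref{lem:Mobius-Jacobi}, using the full-dimensionality hypothesis to get the necessary surjectivity.

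First, I would unpack what full-dimensionality means for the quasi-center operator $\C : \R^{n+1} \to E^{(q-1)}$. By definition, full-dimensional means $\arank(\C) = \min(q-1, n+1)$. Under the assumption $q \leq n+2$, we have $q-1 \leq n+1$, so this reads $\arank(\C) = q-1$. Since by convention $\sum_{i=1}^q \c_i = 0$, the rows of $\C$ already affinely span through the origin, so the affine row rank coincides with the usual (linear) row rank. Thus $\C$ is surjective as a linear map from $\R^{n+1}$ onto $E^{(q-1)}$.

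Next, I would appeal to Lemma \ref{lem:Mobius-Jacobi}: for each $\theta \in \R^{n+1}$, the normal component $W_\theta^\n$ of the Möbius field is a (physical) conformal Jacobi field with conformal parameter $\C\theta \in E^{(q-1)}$. In particular, the map $\theta \mapsto W_\theta^\n$ feeds into the map on conformal parameters $\theta \mapsto \C\theta$, which by the previous step hits every $a \in E^{(q-1)}$. Thus, given any $a \in E^{(q-1)}$, one picks any preimage $\theta \in \C^{-1}(a)$ and sets $f^a := W_\theta^\n$, yielding the required conformally complete system.

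There is no real obstacle: the proof is one short deduction, since Lemma \ref{lem:Mobius-Jacobi} has already done the analytic work of verifying the conformal BCs and computing $L_{Jac} W_\theta^\n = (n-1)(\C\theta)_{ij}$ on each $\Sigma_{ij}$. The only thing to double-check is the bookkeeping in the first paragraph (that $\arank(\C) = q-1$ really does mean $\C$ is onto $E^{(q-1)}$), after which the conclusion follows immediately.
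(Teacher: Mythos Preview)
Your proposal is correct and matches the paper's own reasoning: the corollary is stated as an immediate consequence of Lemma~\ref{lem:Mobius-Jacobi}, and your argument spells out precisely the one-line deduction (surjectivity of $\C$ onto $E^{(q-1)}$ under the hypothesis, then pulling back any $a$ to some $\theta$ and taking $f^a = W_\theta^\n$). The bookkeeping on affine rank versus linear rank is also handled correctly.
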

However, for lower-dimensional clusters, the existence of a complete system as above is a-priori not clear. We can show:

\begin{theorem} \label{thm:conformally-complete} For any $3$-Plateau, stable, perpendicular spherical Voronoi cluster with equatorial cells, a conformally complete system of conformal Jacobi fields exists.
\end{theorem}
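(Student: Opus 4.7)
The plan is to use the Fredholm alternative for the self-adjoint operator $(L_{Jac},\D_{con})$, which has compact resolvent by the construction of Section \ref{sec:spectral}. For any $a \in E^{(q-1)}$, writing $a$ also for the piecewise-constant scalar-field with value $a_{ij} = a_i - a_j$ on $\Sigma_{ij}$, the equation $L_{Jac} f = (n-1) a$ admits a solution $f \in \D_{con}$ iff $a$ is $L^2(\Sigma^1)$-orthogonal to every Jacobi field $u \in \ker(L_{Jac},\D_{con})$. Since $\scalar{a,u}_{L^2(\Sigma^1)} = \scalar{a,\delta^1_u V}_{E^{(q-1)}}$ and $a$ is arbitrary, the theorem reduces to showing $\delta^1_u V = 0$ for every Jacobi field $u \in \D_{con}$. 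The perpendicularity of $\Omega$ provides the reflection $R_N$ about $N^\perp$ preserving each cell, inducing an involution on $\D_{con}$ commuting with $L_{Jac}$. Decomposing $u = u_s + u_a$ into symmetric and antisymmetric parts, both remain Jacobi fields in $\D_{con}$, and $\delta^1_{u_a} V = 0$ automatically by $R_N$-symmetry of each $\Sigma_{ij}$, so only symmetric Jacobi fields $u_s$ need be treated.

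The crux is to couple $u_s$ with a truncated skew field. By Remark \ref{rem:crazy-pos-def}, the symmetric operator $L_{|\scalar{p,N}|}$ is positive-definite on $E^{(q-1)}$, so one can pick $a \in E^{(q-1)}$ solving $L_{|\scalar{p,N}|} a = -\delta^1_{u_s} V$; set $f := u_s + h^{(a)} \in \D_Q$. Then $\delta^1_f V = 0$ by construction, and
\[
Q^0(f) = Q^0(u_s) + 2 Q^0(u_s,h^{(a)}) + Q^0(h^{(a)}) = 0
\]
term-by-term: the first two vanish via (\ref{eq:Q^0-Dcon}) applied to the Jacobi field $u_s \in \D_{con}$, and the third by Proposition \ref{prop:skew-fields}. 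The hypotheses of Lemma \ref{lem:elliptic-regularity} ($\Omega$ stable, $f \in \D_Q$, $\delta^1_f V = 0$, $Q^0(f) = 0$) are now met, yielding that each $f_{ij}$ is $C^\infty$-smooth on the relatively open $\Sigma_{ij}$.

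Since $u_s$ is itself smooth on each $\Sigma_{ij}$ by interior elliptic regularity for $L_{Jac} u_s = 0$, smoothness of $f$ forces $h^{(a)}_{ij}(p) = a_{ij} \abs{\scalar{p,N}}$ to be smooth on $\Sigma_{ij}$ as well. The perpendicularity $\c_{ij} \perp N$ makes the geodesic sphere $S_{ij} \supset \Sigma_{ij}$ orthogonal to the equator $\S^{n-1} := \S^n \cap N^\perp$, so whenever $\Sigma_{ij} \cap \S^{n-1}$ is non-empty it is a relatively open subset of the $(n-2)$-sphere $S_{ij} \cap \S^{n-1}$, and $\abs{\scalar{p,N}}$ carries a genuine Lipschitz kink across this transverse locus; smoothness forces $a_{ij} = 0$ there. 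To propagate, consider the equatorial cross-section cluster $(\Omega_1 \cap \S^{n-1},\ldots,\Omega_q \cap \S^{n-1})$ on $\S^{n-1}$: the equatorial cells hypothesis ensures all its cells are non-empty, so Lemma \ref{lem:LA-connected} applied on $\S^{n-1}$ renders the adjacency graph $G_{\mathrm{eq}} := \set{(i,j) : \Sigma_{ij} \cap \S^{n-1} \neq \emptyset}$ connected on $\set{1,\ldots,q}$. Combined with $\sum_i a_i = 0$, this forces $a = 0$, whence $\delta^1_{u_s} V = -L_{|\scalar{p,N}|} a = 0$.

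The main obstacle lies not in any single inequality but in the orchestration: the argument needs a test field $f$ that is simultaneously close to $u_s$ in its volume variation, annihilated by $Q^0$, and sufficiently non-smooth that smoothness under Lemma \ref{lem:elliptic-regularity} becomes informative. The truncated skew fields $h^{(a)}$ of Proposition \ref{prop:skew-fields} satisfy all three demands --- being $Q^0$-null yet carrying a Lipschitz kink across the equator --- and without them there would be no mechanism to convert stability-plus-regularity into a rigidity statement on the parameter $a$. The remaining verifications, such as showing that $R_N$ preserves both the Dirichlet--Kirchoff and the conformal boundary conditions (via $R_N$-invariance of $\Sigma^2$ and $\bar \II^{\partial ij}$), are routine.
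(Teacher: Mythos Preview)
Your proof is correct and follows essentially the same route as the paper: reduce via the Fredholm alternative (Proposition \ref{prop:Fredholm}) to volumetric non-degeneracy, then for any Jacobi field couple with a truncated skew-field to produce a $Q^0$-null, volume-preserving test field, invoke stability and Lemma \ref{lem:elliptic-regularity} for smoothness, and exploit the kink of $|\scalar{p,N}|$ along the equator together with connectedness of the equatorial adjacency graph to force the skew parameter to vanish --- exactly the argument of Theorem \ref{thm:volumetric non-degeneracy}. The only difference is your preliminary symmetric/antisymmetric decomposition under $R_N$, which is harmless but unnecessary: the paper runs the same argument directly on an arbitrary Jacobi field without decomposing.
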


The proof is based on volumetric non-degeneracy, defined next.

\subsection{Volumetric non-degeneracy}

Recall that in the context of geometric variational problems, a (say, closed) hypersurface $\Sigma$ is called \emph{non-degenerate} if $\ker L_{Jac} = \{0\}$, i.e. there are no (non-zero) Jacobi fields on $\Sigma$. Furthermore, it is called  \emph{equivariantly non-degenerate} \cite{BPS-DeformationsOfCMC} 
if \emph{every} Jacobi field on $\Sigma$ is trivial, arising as the normal component of a Killing field. It is a rather difficult problem to establish that a given hypersurface is (equivariantly) non-degenerate, even if it is known to be of constant mean-curvature and stable, 
and often in the literature (equivariant) non-degeneracy is taken as a postulate. Of course, symmetry is an expected source of degeneracy, and in our setting when $\Sigma^1$ possesses $\S^0$-symmetry, it follows by Proposition \ref{prop:skew-fields} that the scalar fields $f_{ij} = a_{ij} \scalar{p,N}$ are Jacobi fields for all $a \in E^{(q-1)}$; while for standard bubbles these will all be the normal component of a rotation, this will not necessarily be the case for general lower-dimensional spherical Voronoi clusters (as these fields may not even be physical). Determining all Jacobi fields given a hypersurface $\Sigma$ is a major challenge in differential geometry, and in some sense, the reason why we are not able to establish the trace inequality (\ref{eq:F-trace-inq}) in full-generality is because we lack such a characterization in our setting. While we cannot a-priori rule-out equivariant degeneracy, we can establish the following volumetric version of non-degeneracy. 

\begin{definition}[Volumetric Non-Degeneracy]
A cluster $\Omega$ is called volumetrically non-degenerate if $L_{Jac} u = 0$ implies $\delta^1_u V = 0$ for all $u \in \D_{con}$. 
\end{definition}
In other words, while a Jacobi field is not required to arise as the normal component of a Killing field, it is required to preserve the volume of each cell $\Omega_i$ to first order.

\begin{proposition} \label{prop:Fredholm}
A cluster $\Omega$ is volumetrically non-degenerate iff it has a conformally complete system of conformal Jacobi fields.
\end{proposition}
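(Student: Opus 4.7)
The plan is to deduce the equivalence as a direct application of the Fredholm alternative for the self-adjoint operator $(L_{Jac}, \D_{con})$ established in Section \ref{sec:spectral}, together with the key identity relating the $L^2(\Sigma^1)$-pairing with piecewise-constant fields to the volume-variation functional.

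First I would recall from Proposition \ref{prop:enter-LJac} and the discussion preceding it that $(-L_{Jac}, \D_{con})$ arises from the closed semi-bounded sesquilinear form $Q^0$, is self-adjoint on $L^2(\Sigma^1)$, and has compact resolvent. By standard functional analysis, this operator is Fredholm of index zero and satisfies the orthogonal-range identity
\[
\Im L_{Jac} = (\ker L_{Jac})^{\perp} \subset L^2(\Sigma^1),
\]
where $\ker L_{Jac} := \{u \in \D_{con} \; ; \; L_{Jac} u = 0\}$.

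Next, I would fix $a \in E^{(q-1)}$ and identify $a$ with the piecewise-constant scalar-field $(a_{ij})$ on $\Sigma^1$. Then, by definition, a conformal Jacobi field with conformal parameter $a$ is precisely an element $f \in \D_{con}$ solving $L_{Jac} f = (n-1) a$ in $L^2(\Sigma^1)$. By the Fredholm alternative just recalled, such an $f$ exists iff $(n-1)a \perp \ker L_{Jac}$ in $L^2(\Sigma^1)$. The crucial observation now is the identity
\[
\scalar{a, u}_{L^2(\Sigma^1)} = \sum_{i<j} \int_{\Sigma_{ij}} a_{ij} u_{ij} \, d\mu^{n-1} = \scalar{a , \delta^1_u V}_{E^{(q-1)}},
\]
valid for any $u \in \D_Q$ (and in particular for $u \in \ker L_{Jac} \subset \D_{con} \subset \D_Q$). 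Consequently, $(n-1)a \perp \ker L_{Jac}$ in $L^2(\Sigma^1)$ iff $\scalar{a, \delta^1_u V}_{E^{(q-1)}} = 0$ for every $u \in \ker L_{Jac}$.

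Putting these together, a conformally complete system exists iff for \emph{every} $a \in E^{(q-1)}$ and every $u \in \ker L_{Jac}$ we have $\scalar{a,\delta^1_u V} = 0$, which (since $\delta^1_u V \in E^{(q-1)}$ and $a$ ranges over all of $E^{(q-1)}$) is equivalent to $\delta^1_u V = 0$ for all $u \in \ker L_{Jac}$ --- i.e., to volumetric non-degeneracy. No step presents a serious obstacle: the only subtlety is confirming that the piecewise-constant scalar-field $a$ genuinely lies in $\D_Q$ (which is immediate from the definition since constants trivially satisfy the Kirchoff--Dirichlet condition on $E^{(q-1)}$-valued data, i.e., $a_{ij} + a_{jk} + a_{ki} = 0$), and invoking compact-resolvent self-adjointness to justify the Fredholm alternative with closed range.
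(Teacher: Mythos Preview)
Your proof is correct and follows essentially the same route as the paper's: apply the Fredholm alternative to the self-adjoint compact-resolvent operator $(L_{Jac},\D_{con})$, and translate the $L^2$-orthogonality condition $a \perp \ker L_{Jac}$ into the volume-variation condition via the identity $\scalar{a,u}_{L^2(\Sigma^1)} = \scalar{a,\delta^1_u V}_{E^{(q-1)}}$. One minor remark: your closing subtlety about $a \in \D_Q$ is not actually needed, since the Fredholm alternative only requires the right-hand side $(n-1)a$ to lie in $L^2(\Sigma^1)$, not in the form domain.
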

\begin{proof}
By the Fredholm alternative for $(L_{Jac},\D_{con})$ on $L^2(\Sigma^1)$, a solution $u \in \D_{con}$ to $L_{Jac} u  = (n-1) a$ exists for a given $a \in E^{(q-1)}$ iff for any $g \in \text{Dom}(L_{Jac}^*)$ with $L_{Jac}^* g = 0$ one has $0 = \scalar{a,g}_{L^2(\Sigma^1)} = \sum_{i=1}^q a_i \delta^1_g V_i$, where $L_{Jac}^*$ denotes the adjoint operator.  In other words, a solution exists for all $a \in E^{(q-1)}$  iff $L_{Jac}^* g = 0$ implies $\delta^1_g V = 0$. But as $(L_{Jac},\D_{con})$ is self-adjoint, this just reduces to the volumetric non-degeneracy. \end{proof}

Consequently, the proof of Theorem \ref{thm:conformally-complete} reduces to showing: 
\begin{theorem} \label{thm:volumetric non-degeneracy}
A $3$-Plateau, stable, perpendicular spherical Voronoi cluster with equatorial cells is volumetrically non-degenerate. 
\end{theorem}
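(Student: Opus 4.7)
Let $u \in \D_{con}$ satisfy $L_{Jac} u = 0$ and set $v := \delta^1_u V \in E^{(q-1)}$; the goal is to show $v = 0$. My strategy is to perturb $u$ by a suitably chosen truncated skew field $h^{(a)}$ so that $\tilde u := u + h^{(a)}$ lies in the joint null-space of $Q^0$ and $\delta^1 V$, and then to play off the interior smoothness of $\tilde u_{ij}$ forced by stability and Lemma \ref{lem:elliptic-regularity} against the genuine Lipschitz kink that $h^{(a)}$ carries along the equator. First, the equatorial-cell assumption together with Lemma \ref{lem:LA-connected} applied to the induced cluster on the equator $\S^{n-1} := \S^n \cap N^\perp$ forces the graph $G'$ on $\{1,\dots,q\}$ whose edges $\{i,j\}$ satisfy $\Sigma_{ij} \cap N^\perp \ne \emptyset$ to be connected. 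In particular, Remark \ref{rem:crazy-pos-def} shows $L_{|\sscalar{p,N}|}$ is positive-definite on $E^{(q-1)}$, so I pick the unique $a \in E^{(q-1)}$ with $L_{|\sscalar{p,N}|} a = -v$ and define $\tilde u := u + h^{(a)}$.

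By Proposition \ref{prop:skew-fields}(2), $\delta^1_{\tilde u} V = v + L_{|\sscalar{p,N}|} a = 0$. Since $u \in \D_{con}$ and $L_{Jac} u = 0$, Lemma \ref{lem:Q^0-by-parts} gives $Q^0(u,g) = -\sscalar{L_{Jac} u, g}_{L^2(\Sigma^1)} = 0$ for every $g \in \D_Q$; combined with $Q^0(h^{(a)}) = 0$ (Proposition \ref{prop:skew-fields}(2)), this yields $Q^0(\tilde u) = Q^0(u) + 2\, Q^0(u, h^{(a)}) + Q^0(h^{(a)}) = 0$. Stability now permits invoking Lemma \ref{lem:elliptic-regularity} on $\tilde u$, producing $b \in E^{(q-1)}$ with $L_{Jac} \tilde u_{ij} \equiv b_{ij}$ and, crucially, each $\tilde u_{ij}$ is $C^\infty$-smooth on the relatively open interface $\Sigma_{ij}$. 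On the other hand, on $\Sigma_{ij}$ (which is totally umbilic with constant normalized curvature $\k_{ij}$ on $\S^n$), the equation $L_{Jac} u_{ij} = 0$ reads $\Delta_{\Sigma,\mu} u_{ij} = -(n-1)(1+\k_{ij}^2)\, u_{ij}$, a linear elliptic PDE with constant coefficients, so interior elliptic regularity bootstraps the $H^1$-solution $u_{ij}$ to $u_{ij} \in C^\infty(\Sigma_{ij})$. Hence the difference $\tilde u_{ij} - u_{ij} = a_{ij}\, |\sscalar{p,N}|$ is also $C^\infty$-smooth on $\Sigma_{ij}$.

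To close, I show that $|\sscalar{p,N}|$ has a non-removable kink along $\Sigma_{ij} \cap N^\perp$: at any $p \in \Sigma_{ij} \cap N^\perp$, the $\Sigma_{ij}$-tangential gradient of $\sscalar{p,N}$ equals $N - \sscalar{N,\n_{ij}}\, \n_{ij}$, and since $\n_{ij} = \c_{ij} + \k_{ij} p$ with $\c_{ij} \perp N$ by perpendicularity, $\sscalar{N,\n_{ij}} = \k_{ij}\, \sscalar{N,p} = 0$, so this gradient reduces to $N$ of unit norm. Thus $\sscalar{p,N}$ vanishes transversally along the codimension-one submanifold $\Sigma_{ij} \cap N^\perp \subset \Sigma_{ij}$, and smoothness of $a_{ij}\, |\sscalar{p,N}|$ forces $a_{ij} = 0$ for every edge $\{i,j\} \in G'$. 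Connectedness of $G'$ on all $q$ vertices, together with the convention $\sum_i a_i = 0$, then yields $a = 0$, whence $v = -L_{|\sscalar{p,N}|} a = 0$ as desired. The main obstacle is the invocation of Lemma \ref{lem:elliptic-regularity}, which must upgrade the minimizer $\tilde u$ from $\D_Q$ to pointwise interior smoothness on each $\Sigma_{ij}$ despite $h^{(a)}$ only being Lipschitz across the equator; once that regularity is in hand, the remainder of the argument is the clean ``smooth plus Lipschitz kink is not smooth'' dichotomy, whose geometric input is nothing more than the perpendicularity identity $\c_{ij} \perp N$.
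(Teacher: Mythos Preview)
Your proof is correct and follows essentially the same approach as the paper's own argument: both subtract (or add) a truncated skew-field to the putative Jacobi field so as to land in the joint null-space of $\delta^1 V$ and $Q^0$, invoke Lemma~\ref{lem:elliptic-regularity} to force interior smoothness, and then derive a contradiction from the equatorial kink of $|\sscalar{p,N}|$ using connectedness of the equatorial adjacency graph. Your version is slightly more explicit in computing the tangential gradient of $\sscalar{p,N}$ to verify transversality, whereas the paper simply appeals to the perpendicular intersection of $S_{ij}$ with the equator; and you phrase the conclusion as a direct argument ($a_{ij}=0$ along all edges of $G'$, hence $a=0$) rather than by contradiction, but these are cosmetic differences.
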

\begin{proof}
Otherwise, there exists $f \in \D_{con}$ with $L_{Jac} f = 0$ and $E^{(q-1)} \ni v := \delta^1_f V \neq 0$.  

As $\Omega$ is perpendicular and all of its cells are non-empty, Remark \ref{rem:crazy-pos-def} implies that $L_{|\scalar{p,N}|} > 0$ is positive-definite and in particular invertible on $E^{(q-1)}$. Consequently, we may set $E^{(q-1)} \ni b := L^{-1}_{|\scalar{p,N}|} v \neq 0$ and consider the truncated skew-field $h = h^{(b)} \in \D_Q$ given by $h_{ij} := b_{ij} \abs{\scalar{p,N}}$. By Proposition \ref{prop:skew-fields}, $\delta^1_h V = v$ and $Q^0(h) = 0$. 

Now consider the scalar field $z := f - h \in \D_Q$. By Lemma \ref{lem:Q^0-by-parts} we have:
\[
Q^0(f,u) = - \scalar{L_{Jac} f , u}_{L^2(\Sigma^1)} = 0 \;\;\; \forall u \in \D_Q ,
\]
and hence $Q^0(f) = Q^0(f,h) = 0$. It follows from bilinearity and symmetry that:
\[
Q^0(z) = Q^0(f) - 2 Q^0(f,h) + Q^0(h)  = 0 . 
\]
As $\delta^1_z V = \delta^1_f V - \delta^1_h V = v -v = 0$ and $\Omega$ is stable, Lemma \ref{lem:elliptic-regularity} implies that $z_{ij}$ must be $C^\infty$-smooth on every (relatively open) non-empty $\Sigma_{ij}$. 

Now consider the cluster $\tilde \Omega$ on the equator $\S^{n-1}$ given by $\tilde \Omega_i = \Omega_i \cap \S^{n-1}$. Since all cells of $\Omega$ are equatorial, we have that $V(\tilde \Omega) \in \interior \Delta^{(q-1)}$. Denote the interfaces of $\tilde \Omega$ by $\tilde \Sigma_{ij}$; clearly, $\tilde \Omega$ is interface-regular. 
Consider the undirected graph $\tilde G$ on the set of vertices $\{1,\ldots,q\}$, having an edge between $i,j \in \{1,\ldots,q\}$ iff  $\tilde \Sigma_{ij} \neq \emptyset$ (i.e.~$\Omega_i$ and $\Omega_j$ are adjacent through an interface passing \emph{through the equator}). As $\S^{n-1}$ satisfies a single-bubble isoperimetric inequality $V(U) \in (0,\vol(\S^{n-1})) \Rightarrow \per(U) > 0$, it follows by Lemma \ref{lem:LA-connected} that $\tilde G$ must be connected. Consequently, there exist two adjacent vertices $i \neq j$ in $\tilde G$ so that $b_{ij} = b_i - b_j \neq 0$ (otherwise $b \in E^{(q-1)}$ would necessarily be $0$). As $\Sigma_{ij}$ is non-empty, relatively open, and intersects the equator $\S^{n-1}$ perpendicularly,  if follows that $h_{ij} = b_{ij} \abs{\scalar{p,N}}$ is \emph{not} $C^\infty$-smooth on $\Sigma_{ij}$. Since $f_{ij}$ is itself smooth by Remark \ref{rem:conformal-Jacobi-smooth}, it follows that $z_{ij} = f_{ij} - h_{ij}$ is non-smooth on $\Sigma_{ij}$, contradicting Lemma \ref{lem:elliptic-regularity} and concluding the proof. 
\end{proof}

\subsection{Conformal Jacobi fields as minimizers of $Q^0$}

Conformal Jacobi fields have the following important property. Given $V \in E^{(q-1)}$, we denote by $\D_Q[V]$ the following affine subspace of $\D_Q$ (of codimension $q-1$ if all cells are non-empty):
\[
\D_{Q}[V] := \{ f \in \D_Q \; ; \; \delta^1_f V = V \} . 
\]

\begin{proposition} \label{prop:conformal-Jacobi-prop}
Let $f$ be a conformal Jacobi field, and set $V = \delta^1_f V$. Then for any Sobolev scalar-field $g \in \D_Q[V]$:
\begin{enumerate}[(i)]
\item $Q^0(g-f) = Q^0(g) - Q^0(f)$. 
\item \label{it:minimizer}
If $\Omega$ is stable then $Q^0(g) \geq Q^0(f)$. In other words, on a stable $\Omega$, conformal Jacobi fields minimize $Q^0(g)$ among all $g \in \D_Q[V]$. 
\item \label{it:non-positive}
If $\Omega$ is stable and perpendicular then $Q^0(f) \leq 0$. 
\end{enumerate}
\end{proposition}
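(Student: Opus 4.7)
\smallskip

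\noindent\textbf{Proof sketch.} All three parts flow quickly from equation \eqref{eq:Q0-Jacobi}, the stability statement \eqref{eq:Q0-non-negative}, and the truncated skew-field from Proposition~\ref{prop:skew-fields}.

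For part (i), the plan is to expand
\[
Q^0(g-f) = Q^0(g) - 2 Q^0(f,g) + Q^0(f)
\]
using hermitian bilinearity, so the identity reduces to showing $Q^0(f,g) = Q^0(f,f)$, equivalently $Q^0(f,\,g-f) = 0$. Since $f$ is a conformal Jacobi field with some conformal parameter $a \in E^{(q-1)}$ and $g-f \in \D_Q$, equation \eqref{eq:Q0-Jacobi} applies and gives
\[
Q^0(f,\, g-f) = -(n-1)\,\langle a, \delta^1_{g-f} V\rangle_{E^{(q-1)}} = -(n-1)\,\langle a, V - V\rangle = 0,
\]
because $g \in \D_Q[V]$ and $\delta^1_f V = V$ by definition.

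For part (ii), the idea is to combine (i) with stability. The scalar field $g-f \in \D_Q$ satisfies $\delta^1_{g-f}V = 0$, so the extension of the stability inequality to $\D_Q$ given in \eqref{eq:Q0-non-negative} (a consequence of the Index-Form Approximation Theorem \ref{thm:approximation}) yields $Q^0(g-f) \geq 0$. Rearranging using (i), $Q^0(g) \geq Q^0(f)$, i.e.~$f$ minimizes $Q^0$ on $\D_Q[V]$.

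For part (iii), the plan is to exhibit a single competitor $g \in \D_Q[V]$ with $Q^0(g) \leq 0$ and invoke (ii). Under the perpendicularity assumption, and using our standing convention that $V(\Omega) \in \interior \Delta^{(q-1)}$, Remark~\ref{rem:crazy-pos-def} ensures that the discrete Laplacian $L_{|\langle p,N\rangle|}$ is positive-definite, hence invertible, on $E^{(q-1)}$. Setting $b := L_{|\langle p,N\rangle|}^{-1} V \in E^{(q-1)}$ and taking $g := h^{(b)}$ to be the associated truncated skew-field, Proposition~\ref{prop:skew-fields} gives $\delta^1_{h^{(b)}} V = L_{|\langle p,N\rangle|} b = V$ and $Q^0(h^{(b)}) = 0$. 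Applying (ii) with this competitor yields $Q^0(f) \leq 0$. There is no substantive obstacle; the only point worth flagging is the reliance on the invertibility of $L_{|\langle p,N\rangle|}$, which implicitly uses that all cells are non-empty, as is standard in this section.
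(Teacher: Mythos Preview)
Your proof is correct and follows essentially the same approach as the paper: part (i) via \eqref{eq:Q0-Jacobi} and bilinearity (the paper shows $Q^0(f,g)=Q^0(f,f)$, you show the equivalent $Q^0(f,g-f)=0$), part (ii) via stability and \eqref{eq:Q0-non-negative}, and part (iii) via a truncated skew-field competitor with $Q^0=0$. One small point: $V(\Omega)\in\interior\Delta^{(q-1)}$ is \emph{not} a standing assumption in this section --- the paper instead disposes of possible empty cells by removing them (which does not change $\delta^1_f V$ beyond truncating zero entries), and then invokes Remark~\ref{rem:crazy-pos-def} to get invertibility of $L_{|\langle p,N\rangle|}$.
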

\begin{proof}\hfill
\begin{enumerate}[(i)]
\item We have $L_{Jac} f = (n-1) a$ for some $a \in E^{(q-1)}$. By (\ref{eq:Q0-Jacobi}):
\[
Q^0(f,g) = -(n-1) \scalar{a, \delta^1_g V} = -(n-1) \scalar{a, \delta^1_f V} = Q^0(f,f) ,
\]
and the claim follows by bilinearity and symmetry of $Q^0$.
\item As $\delta^1_{g-f} V = 0$, we have $Q^0(g-f) \geq 0$ by stability and (\ref{eq:Q0-non-negative}), and so the second assertion follows immediately from the first. 
\item The third assertion follows from the second applied to $g = h^{(a)}$, a truncated skew-field. By Proposition \ref{prop:skew-fields}, $Q^0(h^{(a)}) = 0$, and so setting $a = L_{|\scalar{p,N}|}^{-1} \delta^1_f V$ ensures that $\delta^1_{h^{(a)}} V = \delta^1_f V$; note that by removing empty-cells from $\Omega$, which doesn't alter $\delta^1_f V$ apart from truncating some zero entries, we may assume that all cells are non-empty, and hence $L_{|\scalar{p,N}|}$ is invertible by Remark \ref{rem:crazy-pos-def}. 
\end{enumerate}
\end{proof}

The converse to Proposition \ref{prop:conformal-Jacobi-prop} \ref{it:minimizer} is equally true without any stability:

\begin{lemma} \label{lem:minimizer-is-Jacobi}
Assume that $V(\Omega) \in \interior \Delta^{(q-1)}$. 
If $f$ is a minimizer of $Q^0$ on $\D_Q[V]$ for some $V \in E^{(q-1)}$, then $f$ is a conformal Jacobi field (with some conformal parameter $a \in E^{(q-1)}$), i.e. $f \in \D_{con}$ and $L_{Jac} f_{ij} = (n-1) a_{ij}$ on $\Sigma_{ij}$. 
\end{lemma}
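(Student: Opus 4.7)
The plan is a standard Lagrange multiplier argument adapted to the Sobolev/hermitian-form setting, mirroring the proof of Proposition~\ref{prop:enter-LJac}. The two main ingredients are (a) surjectivity of the constraint map $T : \D_Q \to E^{(q-1)}$, $T(g) := \delta^1_g V$, and (b) the identification of the resulting Euler--Lagrange equation with the conformal Jacobi equation via integration by parts.

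First, I would verify that $T$ is surjective. Given any $b \in E^{(q-1)}$, the piecewise-constant scalar-field $b = (b_{ij})$ with $b_{ij} = b_i - b_j$ lies in $H^1(\Sigma^1)$, and satisfies the Dirichlet--Kirchhoff condition $b_{ij} + b_{jk} + b_{ki} = 0$, so $b \in \D_Q$. Moreover $T(b) = L_1 b$, where $L_1$ is the quadratic form of Definition~\ref{def:LA}. Since $V(\Omega) \in \interior \Delta^{(q-1)}$, all cells are non-empty, so by Lemma~\ref{lem:LA-connected} the adjacency graph is connected and by Lemma~\ref{lem:LA-positive} the operator $L_1$ is positive-definite, hence invertible on $E^{(q-1)}$. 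This establishes surjectivity of $T$.

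Next, because $f$ is a minimizer of $Q^0$ on the affine subspace $\D_Q[V] = f + \ker T$, and $Q^0$ is a continuous hermitian form on $\D_Q$ by Lemma~\ref{lem:Q0-bounds}, the first variation vanishes: $Q^0(f, g) = 0$ for all $g \in \ker T$. The continuous linear functional $\D_Q \ni g \mapsto Q^0(f, g)$ thus descends to a linear functional on $\D_Q / \ker T \cong E^{(q-1)}$, so there exists $a \in E^{(q-1)}$ with
\[
Q^0(f, g) = -(n-1) \scalar{a, \delta^1_g V}_{E^{(q-1)}} \qquad \forall g \in \D_Q.
\]
Testing this identity against $g \in H^1_0(\Sigma_{ij}) \subset \D_Q$ (extended by zero to the other interfaces), so that the boundary term in the definition of $Q^0$ vanishes, yields the distributional identity
\[
\int_{\Sigma_{ij}} \scalar{\nabla^\tang f_{ij}, \nabla^\tang g_{ij}} d\mu^{n-1} - \int_{\Sigma_{ij}} (\Ric_{g,\mu}(\n,\n) + \|\II\|^2) f_{ij} g_{ij} d\mu^{n-1} = -(n-1) a_{ij} \int_{\Sigma_{ij}} g_{ij} d\mu^{n-1}.
\]
By (\ref{eq:distributional-Delta}) this means $-\Delta_{\Sigma,\mu} f_{ij} - (\Ric_{g,\mu}(\n,\n) + \|\II\|^2) f_{ij} = -(n-1) a_{ij}$ distributionally, whose right-hand side is constant; hence $\Delta_{\Sigma,\mu} f_{ij} \in L^2(\Sigma_{ij})$ and $L_{Jac} f_{ij} \equiv (n-1) a_{ij}$ on $\Sigma_{ij}$. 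Elliptic regularity then gives the stated smoothness of $f_{ij}$ on the relatively open interface.

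Finally, with $\Delta_{\Sigma,\mu} f \in L^2(\Sigma^1)$ in hand, Lemma~\ref{lem:Q^0-by-parts} applies, and combined with $\scalar{L_{Jac} f, g}_{L^2} = (n-1)\scalar{a, \delta^1_g V}$ it gives
\[
\scalar{P_R \NN f - \Lambda P_R \Tr f, P_R \Tr g}_{H^{-\frac{1}{2},\frac{1}{2}}(\partial \Sigma^1)} = 0 \qquad \forall g \in \D_Q.
\]
To extract $P_R \NN f - \Lambda P_R \Tr f = 0$, I would argue as in Proposition~\ref{prop:enter-LJac}: any $T \in H^{1/2}(\partial \Sigma^1, \Im P_R)$ has the form $T = P_R T$ with $P_D T = 0$, so using surjectivity of the trace on each individual interface $\Sigma_{ij}$ we can lift $T$ component-wise to some $g \in H^1(\Sigma^1)$ with $\Tr g = T$; automatically $P_D \Tr g = 0$, so $g \in \D_Q$ with $P_R \Tr g = T$. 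The identity then forces $P_R \NN f = \Lambda P_R \Tr f$, i.e.~$f \in \D_{con}$, completing the proof. I do not anticipate a genuine obstacle here — the only subtlety is the clean descent-to-the-quotient step and the boundary-trace surjectivity, both of which are already exercised in the earlier proofs.
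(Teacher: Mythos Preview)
Your proof is correct and follows essentially the same route as the paper: both use the piecewise-constant fields and invertibility of $L_1$ (via Lemmas~\ref{lem:LA-connected} and~\ref{lem:LA-positive}) to obtain the Lagrange multiplier identity $Q^0(f,g) = -(n-1)\scalar{a,\delta^1_g V}$, and then invoke the machinery of Proposition~\ref{prop:enter-LJac} to conclude $f \in \D_{con}$ with $L_{Jac} f = (n-1)a$. The only cosmetic difference is that the paper constructs $a$ explicitly as $-\tfrac{1}{n-1} L_1^{-1} b$ with $b_k := Q^0(f,e^k)$, whereas you phrase it as descent to the quotient $\D_Q/\ker T$; and the paper cites Proposition~\ref{prop:enter-LJac} directly rather than unpacking its proof as you do.
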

\begin{proof}
Recall our convention given $b \in \R^{q}$ for using $b$ to also denote the piecewise-constant scalar field $(b_{ij})$; we may always assume that $b \in E^{(q-1)}$. Also recall that $\delta^1_b V = L_1 b$ and that $L_1 > 0$ is positive-definite by Lemmas \ref{lem:LA-connected} and \ref{lem:LA-positive}, since we assume $V(\Omega) \in \interior \Delta^{(q-1)}$. 

As a piecewise-constant field is in $L^2$, by Proposition \ref{prop:enter-LJac} and definitions of $\D_{Con}$ and $L_{Jac}$ we need to establish that:
\begin{equation} \label{eq:verify}
\exists a \in E^{(q-1)} \;\;\;  \forall g \in \D_Q \;\;\; Q^0(f , g) = -(n-1) \scalar{a , g}_{L^2} = -(n-1) \scalar{a , \delta^1_g V}_{E^{(q-1)}} . 
\end{equation}

Now, for any $g_0 \in \D_Q[0]$, $f + \eps g_0 \in \D_Q[V]$ and hence:
\[
Q^0(f) \leq Q^0(f  + \eps g_0) = Q^0(f) + 2 \eps Q^0(f , g_0) + \eps^2 Q^0(g_0) \;\;\; \forall \eps \in \R . 
\]
Consequently $Q^0(f,g_0) = 0$ for all $g_0 \in \D_Q[0]$.

Let $b \in \R^q$ be given by $b_k := Q^0(f,e_k)$, and note that $\sum_{k=1}^q b_k = Q^0(f , \mathbf{1}) = Q^0(f,0) = 0$, so that $b \in E^{(q-1)}$. Given $g \in \D_Q$, since $g_0 := g - L_1^{-1} \delta^1_g V \in \D_Q[0]$, it follows that:
\[
Q^0(f,g) = Q^0(f,L_1^{-1} \delta^1_g V) = \sum_{k=1}^q (L_1^{-1} \delta^1_g V)_k Q^0(f,e_k) = \scalar{L_1^{-1} \delta^1_g V , b}_{E^{(q-1)}} = \scalar{\delta^1_g V, L_1^{-1} b}_{E^{(q-1)}} . 
\]
Consequently, setting $a := - \frac{1}{n-1} L_1^{-1} b$, (\ref{eq:verify}) is verified.
\end{proof}

The question of whether a minimizer of $Q^0$ over $\D_Q[V]$ exists, and in particular, whether $Q^0$ is even bounded from below on $\D_Q[V]$, is much more delicate, and will be touched upon below. 

\subsection{The conformal-to-volume operator $\F$}

Assume in this subsection that $\Omega$ is a spherical Voronoi cluster having a conformally complete system of conformal Jacobi fields. 
For each $k=1,\ldots,q$, let $f^k$ be a conformal Jacobi field so that:
\[
L_{Jac} f^k_{ij} = (n-1) \delta^k_{ij} = (n-1) (\delta^k_i - \delta^k_j) \;\; \text{ on $\Sigma_{ij}$ }. 
\]
Given $a \in E^{(q-1)}$, we set $f^a := \sum_{k=1}^q a_k f^k$, which is a conformal Jacobi field with conformal parameter $a$. We shall  extend this definition for a brief moment to all $a \in \R^q$, after noting that the resulting conformal parameter of $f^a$ is in any case defined modulo the identification between $\R^q$ and its quotient space $E^{(q-1)}$. 

\begin{definition}
The conformal-to-volume linear operator $\F : \R^q \rightarrow E^{(q-1)}$ is defined as:
\[
 \F a = \delta^1_{f^a} V \;\; \text{ i.e. } \;\;  \F =  \sum_{i< j} \int_{\Sigma_{ij}} e_{ij} \otimes (f^1_{ij}, \ldots,f^q_{ij}) d\H^{n-1} . 
\]
\end{definition}

\begin{proposition} \label{prop:F-properties}
Let $\Omega$ be a $3$-Plateau spherical Voronoi cluster on $\S^n$, having a conformally complete system of conformal Jacobi fields.
\begin{enumerate}[(1)]
\item $\F$ is canonical -- it does not depend on the choice of conformal Jacobi fields $\{f^k\}_{k=1,\ldots,q}$. Consequently, $\F = \F_{\Omega}$ is called the conformal-to-volume operator associated to $\Omega$. 
\item $\F \textbf{1} = 0$. Consequently, we shall treat $\F$ as a linear operator $\F : E^{(q-1)} \rightarrow E^{(q-1)}$. 
\item For all $a,b \in E^{(q-1)}$, $Q^0(f^a,f^b) = -(n-1) a^T \F b$. 
\item $\F$ is symmetric: $\F = \F^T$. 
\item \label{it:pos-def}
If in addition $\Omega$ is stable and perpendicular then $\F$ is positive semi-definite: $\F \geq 0$.
\end{enumerate}
\end{proposition}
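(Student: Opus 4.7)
My plan is to obtain claims (3) and (4) directly from the identity~\eqref{eq:Q0-Jacobi}, and then use Proposition~\ref{prop:Fredholm} (which converts the standing hypothesis of conformal completeness into volumetric non-degeneracy) to handle the canonicity (1) and the vanishing on $\mathbf{1}$ (2). Claim (5) will follow by combining (3) with Proposition~\ref{prop:conformal-Jacobi-prop}\ref{it:non-positive}.

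First I would settle (3): since $f^a$ is a conformal Jacobi field with conformal parameter $a$, identity~\eqref{eq:Q0-Jacobi} applied with $g = f^b$ gives
\[
Q^0(f^a,f^b) \;=\; -(n-1)\,\scalar{a,\delta^1_{f^b}V}_{E^{(q-1)}} \;=\; -(n-1)\,a^T \F b.
\]
Since $Q^0$ is symmetric, the right-hand side must be symmetric in $(a,b)$, so $a^T\F b = b^T\F a$ for all $a,b$, proving (4). Note that at this stage $\F$ is not yet known to be well-defined independently of the choice of conformal Jacobi fields $\{f^k\}$, but the identity above is valid for any choice, so (3) and the consequent symmetry of the resulting bilinear form on $E^{(q-1)}$ hold uniformly.

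Next I would address (1) and (2) together. By Proposition~\ref{prop:Fredholm}, the assumed existence of a conformally complete system of conformal Jacobi fields is equivalent to $\Omega$ being volumetrically non-degenerate, i.e.~every Jacobi field $h \in \D_{con}$ satisfies $\delta^1_h V = 0$. For (1), if $\{\tilde f^k\}$ is any other choice of conformal Jacobi fields with $L_{Jac}\tilde f^k_{ij} = (n-1)\delta^k_{ij}$, then $f^k - \tilde f^k \in \D_{con}$ is in the kernel of $L_{Jac}$, hence $\delta^1_{f^k}V = \delta^1_{\tilde f^k}V$ by volumetric non-degeneracy, so $\F$ is intrinsic to $\Omega$. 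For (2), observe that $L_{Jac}f^{\mathbf{1}}_{ij} = (n-1)\sum_k(\delta^k_i-\delta^k_j) = 0$, so $f^{\mathbf{1}}$ is a Jacobi field; volumetric non-degeneracy again forces $\F\mathbf{1} = \delta^1_{f^{\mathbf{1}}}V = 0$, allowing us to descend $\F$ to an operator on the quotient $E^{(q-1)}$.

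Finally, for (5), assume $\Omega$ is stable and perpendicular and let $a \in E^{(q-1)}$. Then $f^a$ is a conformal Jacobi field with non-trivial conformal parameter $a$, and Proposition~\ref{prop:conformal-Jacobi-prop}\ref{it:non-positive} yields $Q^0(f^a) \le 0$. Combined with (3), $-(n-1)\, a^T \F a = Q^0(f^a,f^a) \le 0$, so $a^T \F a \ge 0$, giving $\F \ge 0$ on $E^{(q-1)}$. I do not anticipate any genuine obstacle in this proof: every ingredient has already been assembled in the preceding sections, and the only point that requires slight care is the bookkeeping between $\R^q$ and $E^{(q-1)}$ when identifying $f^{\mathbf{1}}$ as a Jacobi field in order to conclude $\F\mathbf{1}=0$.
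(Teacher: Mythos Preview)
Your proof is correct and essentially identical to the paper's: both use volumetric non-degeneracy (via Proposition~\ref{prop:Fredholm}) for (1) and (2), identity~\eqref{eq:Q0-Jacobi} for (3), the symmetry of $Q^0$ for (4), and Proposition~\ref{prop:conformal-Jacobi-prop}\ref{it:non-positive} for (5). The only difference is cosmetic ordering---you establish (3) and (4) before (1) and (2), whereas the paper proceeds in the stated order.
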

\begin{proof}
\hfill
\begin{enumerate}
\item Any other conformal Jacobi fields $\{\tilde f^k\}_{k=1,\ldots,q}$ satisfying $L_{Jac} \tilde f^k_{ij} = (n-1) \delta^k_{ij}$ would differ from $\{f^k\}_{k=1,\ldots,q}$ by elements in $\ker L_{Jac}$. By Proposition \ref{prop:Fredholm}, $\Omega$ is volumetrically non-degenerate, and hence $\delta^1_{\tilde f^k} V = \delta^1_{f^k} V$, resulting in the same operator $\F$ (as $\F a = \delta^1_{f^a} V = \delta^1_{\tilde f^a} V$). 
\item Since $L_{Jac} f^{\textbf{1}} = 0$, it follows by volume non-degeneracy that $\F \textbf{1} = \delta^1_{f^{\textbf{1}}} V = 0$. 
\item
By (\ref{eq:Q^0-Dcon}), 
\begin{align*}
Q^0(f^a,f^b) & = -\scalar{L_{Jac} f^a, f^b}_{L^2(\Sigma^1)} = -(n-1) \sscalar{a , f^b}_{L^2(\Sigma^1)} \\
& = -(n-1) \sscalar{a , \delta^1_{f^b} V}_{E^{(q-1)}} = -(n-1) a^T \F b . 
\end{align*}
\item The symmetry of $\F$ follows immediately from the symmetry of $Q^0$. 
\item When $\Omega$ is stable and perpendicular then $\F \geq 0$ since $-(n-1) a^T \F a = Q^0(f^a) \leq 0$ for all $a \in E^{(q-1)}$ by Proposition \ref{prop:conformal-Jacobi-prop} \ref{it:non-positive}. \end{enumerate} 
\end{proof}

\begin{definition}
The operator $\N : \R^{n+1} \rightarrow E^{(q-1)}$ is defined by:
\[
\N \theta = \delta^1_{W_\theta} V \;\; \text{ i.e. } \;\; \N := \sum_{i< j} \int_{\Sigma_{ij}} e_{ij} \otimes \n_{ij} d\HH^{n-1} . 
\]
\end{definition}
\noindent
Indeed, note that:
\[
\delta^1_{W_\theta} V = \sum_{i<j} \int_{\Sigma_{ij}} e_{ij} \scalar{W_\theta,\n_{ij}} d\HH^{n-1} = \sum_{i<j} \int_{\Sigma_{ij}} e_{ij} \scalar{\theta,\n_{ij}} d\HH^{n-1} . 
\]

\begin{proposition} \label{prop:FC=N}
$\F \C = \N$ and $\tr( \F \C \C^T) = \HH^{n-1}(\Sigma)$. 
\end{proposition}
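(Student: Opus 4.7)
The plan is to reduce both statements to the Möbius-field identity of Lemma~\ref{lem:Mobius-Jacobi} together with the defining spherical-Voronoi relation $|\c_{ij}|^2 = 1 + \k_{ij}^2$ recorded in Remark~\ref{rem:quasi-centers}.

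First I will prove $\F \C = \N$. Given $\theta \in \R^{n+1}$, our convention $\sum_i \c_i = 0$ guarantees $\C\theta \in E^{(q-1)}$. By Lemma~\ref{lem:Mobius-Jacobi}, the normal component $W_\theta^\n$ of the Möbius field $W_\theta$ is a (physical) conformal Jacobi field with conformal parameter $\C\theta$. By the canonicity of $\F$ established in Proposition~\ref{prop:F-properties}(1), the operator $\F$ may be evaluated on $\C\theta$ using this particular conformal Jacobi field; equivalently, any choice that differs by an element of $\ker L_{Jac}$ gives the same volume variation by volumetric non-degeneracy. Therefore
\[
\F \C \theta \;=\; \delta^1_{W_\theta^\n} V \;=\; \delta^1_{W_\theta} V \;=\; \N \theta,
\]
which is the desired identity.

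For the trace identity, combining the previous step with $\F = \F^T$ gives $\F \C \C^T = \N \C^T$ as operators on $E^{(q-1)}$, so $\tr(\F \C \C^T) = \tr(\N \C^T)$. Using the representation $\N = \sum_{i<j} \int_{\Sigma_{ij}} e_{ij} \otimes \n_{ij}\, d\HH^{n-1}$ and the identity $\scalar{e_{ij}, \C y}_{E^{(q-1)}} = \scalar{\c_{ij}, y}$ for $y \in \R^{n+1}$,
\[
\tr(\N \C^T) \;=\; \sum_{i<j} \int_{\Sigma_{ij}} \scalar{\c_{ij}, \n_{ij}} \, d\HH^{n-1}.
\]
On each non-empty $\Sigma_{ij}$ we have $\n_{ij} = \c_{ij} + \k_{ij} p$ (as recorded after Lemma~\ref{lem:non-degenerate}), and $p$ lies on $S_{ij}$, so $\scalar{\c_{ij}, p} = -\k_{ij}$. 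Hence
\[
\scalar{\c_{ij}, \n_{ij}} \;=\; |\c_{ij}|^2 + \k_{ij}\scalar{\c_{ij}, p} \;=\; |\c_{ij}|^2 - \k_{ij}^2 \;=\; 1,
\]
the last equality being the characterization of spherical Voronoi clusters from Remark~\ref{rem:quasi-centers}. Summing yields $\tr(\F\C\C^T) = \sum_{i<j} \HH^{n-1}(\Sigma_{ij}) = \HH^{n-1}(\Sigma^1)$, which equals $\HH^{n-1}(\Sigma)$ since $\Sigma \setminus \Sigma^1 \subset \Sigma^2 \cup \Sigma^{\geq 3}$ has $\HH^{n-1}$-measure zero.

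Neither step presents a genuine obstacle: both reduce to algebraic manipulations once one recognizes that the Möbius fields supply exactly the right family of conformal Jacobi fields and that the spherical-Voronoi condition forces the pointwise identity $\scalar{\c_{ij}, \n_{ij}} \equiv 1$ on each interface.
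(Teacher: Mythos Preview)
Your proof is correct and follows essentially the same approach as the paper: deduce $\F\C=\N$ from Lemma~\ref{lem:Mobius-Jacobi} (the paper does not bother spelling out the canonicity step), then compute $\tr(\F\C\C^T)=\tr(\N\C^T)=\sum_{i<j}\int_{\Sigma_{ij}}\scalar{\c_{ij},\n_{ij}}\,d\HH^{n-1}$ and verify $\scalar{\c_{ij},\n_{ij}}=1$ pointwise. The only cosmetic differences are that the paper computes $\scalar{\c_{ij},\n_{ij}}=1$ directly from $\c_{ij}=\n_{ij}-\k_{ij}p$, $|\n_{ij}|=1$, $\n_{ij}\perp p$, and that your invocation of $\F=\F^T$ is unnecessary since $\F\C\C^T=\N\C^T$ follows immediately from $\F\C=\N$.
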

\begin{proof}
Lemma \ref{lem:Mobius-Jacobi} directly implies that $\F \C = \N$. Consequently, since $\c_{ij} = \n_{ij} - \k_{ij} p$, we have:
\begin{align*}
\tr(\F \C \C^T) & = \tr(\C^T \N ) = \sum_{i<j} \int_{\Sigma_{ij}} \scalar{\C^T e_{ij} , \n_{ij}} d\HH^{n-1} \\
& =  \sum_{i<j} \int_{\Sigma_{ij}} \scalar{\c_{ij}, \n_{ij}} d\HH^{n-1} = \HH^{n-1}(\Sigma^1) = \HH^{n-1}(\Sigma). 
\end{align*}
\end{proof}

\begin{corollary}
For all $q \leq n+2$, the following trace identity holds on a standard bubble $q$-cluster $\Omega$:
\begin{equation} \label{eq:trace-id}
 \tr( \F \brac{\Id/2 + \k \otimes \k}) = \HH^{n-1}(\Sigma) . 
\end{equation}
\end{corollary}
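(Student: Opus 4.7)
The plan is to combine the two identities from Proposition \ref{prop:FC=N} with the characterization of standard bubbles from Proposition \ref{prop:standard-char}.

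First I would verify that the conformal-to-volume operator $\F$ is well-defined on a standard bubble $\Omega$, i.e.~that $\Omega$ admits a conformally complete system of conformal Jacobi fields. Standard bubbles are Plateau spherical Voronoi clusters by Corollary \ref{cor:standard-Voronoi}, and as they are (conjecturally, and known in the cases being treated here) isoperimetric minimizers they are in particular stable. For $q = n+2$ the standard bubble is full-dimensional (the quasi-centers span all of $\R^{n+1}$), so Corollary \ref{cor:full-dim-F} applies directly. For $2 \le q \le n+1$, the standard bubble is perpendicular with $\S^{n+1-q}$-symmetry and its cells are non-empty and connected, hence equatorial, so Theorem \ref{thm:conformally-complete} applies. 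Either way, $\F$ exists.

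Next, by Proposition \ref{prop:standard-char} the characterization of a standard bubble gives the matrix identity
\[
\C \C^T \;=\; \tfrac{1}{2} \Id_{E^{(q-1)}} + \k \k^T
\]
on $E^{(q-1)}$. Composing with $\F$ on the left and taking traces yields
\[
\tr\bigl(\F \bigl(\tfrac{1}{2} \Id + \k \otimes \k\bigr)\bigr) \;=\; \tr(\F \C \C^T).
\]
Finally, invoking the second assertion of Proposition \ref{prop:FC=N}, the right-hand side equals $\HH^{n-1}(\Sigma)$, which gives the claim.

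There is no real obstacle here; the identity is a direct algebraic consequence of the two propositions once one checks that $\F$ is defined on a standard bubble. The only mild subtlety is ensuring the applicability of Theorem \ref{thm:conformally-complete} (perpendicularity together with equatorial, i.e.~connected and non-empty, cells), which follows from the explicit construction of standard bubbles as stereographic images of the equal-volume standard bubble, together with the fact that their cells, being connected and meeting the symmetry hyperplane, intersect every equator perpendicular to the relevant axis.
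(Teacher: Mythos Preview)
Your approach is essentially the same as the paper's: combine the characterization $\C \C^T = \tfrac{1}{2}\Id + \k \k^T$ from Proposition~\ref{prop:standard-char} with the trace identity $\tr(\F \C \C^T) = \HH^{n-1}(\Sigma)$ from Proposition~\ref{prop:FC=N}.

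One simplification: your case split and the detour through Theorem~\ref{thm:conformally-complete} (which requires stability) are unnecessary. Since $\C \C^T = \tfrac{1}{2}\Id + \k \k^T$ is positive-definite on $E^{(q-1)}$, the operator $\C$ has full affine rank $q-1 = \min(q-1,n+1)$ for \emph{every} $2 \le q \le n+2$, so the standard bubble is always full-dimensional and Corollary~\ref{cor:full-dim-F} applies directly. This avoids the circularity you flag in parentheses: invoking stability of standard bubbles via minimality would indeed be problematic at this stage of the argument, whereas full-dimensionality needs only the algebraic characterization already in hand.
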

\begin{proof}
By Proposition \ref{prop:standard-char} a standard bubble cluster is characterized by $\C \C^T = \Id/2 + \k \otimes \k$, and so the assertion follows from the previous proposition. 
\end{proof}

\begin{conjecture} \label{conj:trace-id}
For any $3$-Plateau, stable, perpendicular spherical Voronoi $q$-cluster on $\S^n$ with equatorial cells and $q \leq n+1$,
the trace identity (\ref{eq:trace-id}) holds as an inequality:
\begin{equation} \label{eq:trace-inequality}
 \tr( \F \brac{\Id/2 + \k \otimes \k}) \leq \HH^{n-1}(\Sigma) .
\end{equation}
\end{conjecture}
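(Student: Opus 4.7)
The strategy I would pursue rewrites the desired inequality as a non-negativity statement. By Proposition~\ref{prop:FC=N} we have the exact identity $\tr(\F\C\C^T)=\HH^{n-1}(\Sigma)$, so setting $D:=\C\C^T-(\tfrac12\Id+\k\k^T)$ the conjectured inequality (\ref{eq:trace-inequality}) is equivalent to $\tr(\F D)\geq 0$. The spherical Voronoi constraint (Lemma~\ref{lem:CCT}) forces $\scalar{D,e_{ij}\otimes e_{ij}}=0$ for every pair $(i,j)$ with $\Sigma_{ij}\neq\emptyset$; consequently $D$ vanishes identically for standard bubbles by Proposition~\ref{prop:standard-char}, which is precisely why equality holds in that case. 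My task is therefore to show that the ``excess'' $D$, which a priori carries information only about \emph{absent} interfaces, couples non-negatively with the conformal-to-volume operator $\F$ of the \emph{actual} cluster.

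The natural route is a perturbation argument in the spirit of the relaxation $\F_0$ alluded to in the introduction (see Figure~\ref{fig:conformal-perturbation}). First I would embed $\Omega$ into a smooth one-parameter family of M\"obius (conformal) deformations $\{\Omega_t\}_{t\in[0,t_0]}$, obtained by applying to $\Omega$ the flow of a M\"obius field $W_\theta$ (Lemma~\ref{lem:Mobius-preserves-Voronoi}). For sufficiently small $t_0$ the clusters $\Omega_t$ remain $3$-Plateau with the same non-empty interface pattern (Corollary~\ref{cor:Plateau-interfaces}, Proposition~\ref{prop:Plateau-perturbation}), while the parameter $\theta$ can be chosen so that $\Omega_t$ acquires additional structure for $t>0$ (either becoming full-dimensional, hence admitting a conformally complete system via Corollary~\ref{cor:full-dim-F}, or becoming pseudo conformally flat). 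For each such $\Omega_t$ I would verify the trace identity with \emph{equality},
\[
\tr\bigl(\F_t(\tfrac12\Id+\k_t\k_t^T)\bigr)=\HH^{n-1}(\Sigma_t),
\]
directly from Proposition~\ref{prop:FC=N} and the PCF/full-dimensional structure. Combined with Lipschitz continuity of $t\mapsto \HH^{n-1}(\Sigma_t)$ and $t\mapsto \k_t$ (Lemma~\ref{lem:cont}), one would then define $\F_0:=\lim_{t\to 0^+}\F_t$, which automatically satisfies the trace identity with equality at $t=0$.

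The final step, and where the plan genuinely has to fight, is to prove the operator inequality $\F\leq \F_0$ on $E^{(q-1)}$. To attempt this I would work entirely inside the closed form $Q^0$ on $\D_Q$ developed in Section~\ref{sec:spectral}: given a conformal Jacobi field $f^a_t\in\D_{con}(\Omega_t)$ realizing $\F_t a$, I would produce an approximating family in $\D_Q(\Omega)$ by pulling back along the M\"obius diffeomorphism, and then use the variational characterization of Proposition~\ref{prop:conformal-Jacobi-prop}\ref{it:minimizer} (conformal Jacobi fields minimize $Q^0$ on $\D_Q[V]$) together with the fact that $(L_{Jac},\D_{con})$ has exactly $q-1$ positive eigenvalues (Theorem~\ref{thm:intro-minimizer-q-1-positive}) to pass to the limit. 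The quantum-graph perspective encoded in the conformal boundary conditions (Definition~\ref{def:conformal-BCs}) is a helpful organizing principle, because the Kirchoff–Dirichlet/Robin structure reduces the compatibility question at singularities $\Sigma^2$ to a finite-dimensional linear problem at each triple-point.

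The main obstacle is exactly this last comparison $\F\leq \F_0$, which is the content of Conjecture~\ref{conj:F=F0} and the Locality Conjecture~\ref{conj:locality} that the authors identify as open. The difficulty is that as $t\to 0$ the dimension of the space of conformal Jacobi fields can in principle jump upward (new Jacobi fields may appear at $\Omega$ that are not limits of Jacobi fields on $\Omega_t$), destroying the desired lower semi-continuity of $a\mapsto -Q^0(f^a_t)/(n-1)=a^T\F_t a$. This is precisely why the paper settles the conjecture only for PCF clusters (via direct computation in Section~\ref{sec:conformal-Jacobi}) and for fully Plateau clusters (via the Gram perturbation of Section~\ref{sec:Gram}, which bypasses the continuity issue by moving to a full-dimensional cluster while literally preserving volumes and perimeter), and why the unconditional result stops at the quintuple case $q\leq 6$, for which Lemma~\ref{lem:q-3-Plateau} guarantees that every $3$-Plateau minimizer falls into one of these two tractable regimes.
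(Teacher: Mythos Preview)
The statement you are attempting to prove is a \emph{conjecture}, not a theorem: the paper does not prove it and explicitly identifies it as ``precisely the key obstacle which we were not able to overcome or bypass in this work.'' There is therefore no paper proof to compare against.

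Your proposal is not a proof either, and you correctly acknowledge this. The strategy you outline --- rewrite the inequality as $\tr(\F D)\geq 0$ via Proposition~\ref{prop:FC=N}, conformally perturb $\Omega$ to a family $\Omega_t$ that is PCF for $t>0$, verify the trace identity with equality for each $\Omega_t$, define $\F_0=\lim_{t\to 0^+}\F_t$, and then attempt to compare $\F$ with $\F_0$ --- is exactly the machinery the paper builds in Sections~\ref{sec:perturbation} and~\ref{sec:conformal-Jacobi} (Corollary~\ref{cor:Omega_t-PCF}, Propositions~\ref{prop:F0} and~\ref{prop:trace-id-F0}). You are right that $\F\leq \F_0$ would suffice for the trace inequality, since $\tfrac12\Id+\k\k^T>0$; the paper actually conjectures the stronger $\F=\F_0$ (Conjecture~\ref{conj:F=F0}) but only proves $\F\C=\F_0\C$ (Proposition~\ref{prop:FC=F0C}), which is not enough when $\C$ is rank-deficient.

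Your diagnosis of the obstruction is accurate and matches the paper's: the possible jump in $\dim\ker L_{Jac}$ at $t=0$ prevents the semicontinuity argument for $a^T\F_t a$, and this is why the paper can only close the argument when $\Omega$ is PCF (Proposition~\ref{prop:PCF-F=F0}) or fully Plateau (Section~\ref{sec:Gram}, where Gram perturbation sidesteps the limit entirely). One minor correction: your citations of Corollary~\ref{cor:Plateau-interfaces} and Proposition~\ref{prop:Plateau-perturbation} for the perturbed family are misplaced, since those require the Plateau assumption; the relevant fact for the conformal perturbation is simply that $\Phi_t$ is a diffeomorphism preserving the spherical Voronoi structure (Lemma~\ref{lem:Mobius-preserves-Voronoi}).
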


Establishing the validity of this conjecture
is precisely the key obstacle which we were not able to overcome or bypass in this work, and confirming it would immediately extend all of our results in this work from the quintuple case to arbitrary $q \leq n+1$ -- see Remark \ref{rem:conjectures}.  
 In the sequel we shall establish that the trace identity (\ref{eq:trace-id}) does hold on general $3$-Plateau PCF clusters, and obtain several stronger variants of Conjecture \ref{conj:trace-id}.

\subsection{Volume complete system of conformal Jacobi fields} \label{subsec:volume-complete}

\begin{definition}[Volume complete system of conformal Jacobi fields]
We shall say that a volume complete system of conformal Jacobi fields exists for $\Omega$ if for all $V \in E^{(q-1)}$, there exists a conformal Jacobi field $f$ so that $\delta^1_f V = V$. 
\end{definition}

Throughout this subsection, we shall assume in addition that $V(\Omega) \in \interior \Delta^{(q-1)}$, since otherwise a volume complete system of conformal Jacobi fields can never exist; by removing empty cells from $\Omega$, one may always reduce to this case. 

\medskip

Assume that a \emph{conformally} complete system of conformal Jacobi fields exists for $\Omega$. It follows that the system is \emph{volume} complete if and only if the conformal-to-volume operator $\F : E^{(q-1)} \rightarrow E^{(q-1)}$ is full-rank. 
On a stable cluster, the question of whether a given $V \in E^{(q-1)}$ lies in the image $\Im \F$ turns out to depend on whether $Q^0$ is bounded from below on $\D_Q[V]$, or equivalently, on:
\[
\D_{con}[V] := \D_{con} \cap \D_Q[V] . 
\]
Note that $\D_{con}[V]$ is non-empty, since we assume $V(\Omega) \in \interior \Delta^{(q-1)}$ and we can simply take an appropriate linear combination of smooth scalar fields compactly supported in (the relatively open) $\Sigma_{ij}$ (i.e. vanishing on $\Sigma^2$ and hence satisfying the conformal boundary conditions), such as the ones constructed in \cite[Lemma C.2]{EMilmanNeeman-GaussianMultiBubble}.
Also note that as $\F$ is symmetric, it acts invariantly on $\Im \F = (\ker \F)^{\perp}$. 

\begin{lemma} \label{lem:bounded-below}
Assume that a conformally complete system of conformal Jacobi fields exists for $\Omega$, and that in addition $\Omega$ is stable and $V(\Omega) \in \interior \Delta^{(q-1)}$. 
Then for every $V \in E^{(q-1)}$, the following are equivalent:
\begin{enumerate}
\item $V \in \Im \F$. 
\item $Q^0$ is bounded below on $\D_Q[V]$. 
\item $Q^0$ is bounded below on $\D_{con}[V]$. 
\end{enumerate}
\end{lemma}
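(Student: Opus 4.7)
The plan is to prove the chain $(1) \Rightarrow (2) \Rightarrow (3) \Rightarrow (1)$, with only the last implication requiring any real work. Since $\F$ is symmetric, recall $\Im \F = (\ker \F)^\perp$, which will be the key to the contrapositive argument.

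For $(1) \Rightarrow (2)$, I would take $a \in E^{(q-1)}$ with $\F a = V$ and use the conformal Jacobi field $f^a \in \D_{con}[V]$ as a reference point. By Proposition \ref{prop:conformal-Jacobi-prop}\ref{it:minimizer}, which uses stability via Theorem \ref{thm:approximation} (equation \eqref{eq:Q0-non-negative}), one has $Q^0(g) \geq Q^0(f^a) = -(n-1) a^T \F a$ for all $g \in \D_Q[V]$, giving the desired lower bound. The implication $(2) \Rightarrow (3)$ is immediate from the inclusion $\D_{con}[V] \subset \D_Q[V]$.

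For $(3) \Rightarrow (1)$, I would argue by contrapositive: assume $V \notin \Im \F$ and exhibit a one-parameter family in $\D_{con}[V]$ along which $Q^0$ tends to $-\infty$. Since $V \notin (\ker \F)^\perp$, pick $a \in \ker \F$ with $\scalar{a,V} \neq 0$, and let $f^a$ be the corresponding conformal Jacobi field, which satisfies $\F a = \delta^1_{f^a} V = 0$ and $L_{Jac} f^a = (n-1) a$. Fix any $g_0 \in \D_{con}[V]$ (non-empty since $V(\Omega) \in \interior \Delta^{(q-1)}$ and compactly supported fields in the interfaces suffice) and consider $g_t := g_0 + t f^a$. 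Then $g_t \in \D_{con}$ and $\delta^1_{g_t} V = V + t \F a = V$, so $g_t \in \D_{con}[V]$ for every $t \in \R$. Expanding bilinearly,
\[
Q^0(g_t) = Q^0(g_0) + 2 t\, Q^0(g_0, f^a) + t^2 Q^0(f^a).
\]
Using $f^a \in \D_{con}$ and \eqref{eq:Q^0-Dcon}, together with $L_{Jac} f^a = (n-1) a$, one computes
\[
Q^0(f^a) = -(n-1)\, a^T \F a = 0, \qquad Q^0(g_0, f^a) = -(n-1) \scalar{a, \delta^1_{g_0} V} = -(n-1) \scalar{a, V} \neq 0,
\]
so $t \mapsto Q^0(g_t)$ is an affine function of $t$ with non-zero slope, hence unbounded below on $\D_{con}[V]$. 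This contradicts $(3)$ and completes the proof.

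No step looks difficult; the only thing to watch is that we genuinely have the freedom to shift by $f^a$, which in turn relies on the conformal completeness (so $f^a$ exists) and the fact that $f^a$ already satisfies the conformal BCs, so that adding a multiple of it preserves membership in $\D_{con}[V]$. Both are built into the hypothesis and the definition of a conformal Jacobi field.
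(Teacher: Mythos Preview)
Your proof is correct and follows essentially the same approach as the paper: the implication $(1)\Rightarrow(2)$ via Proposition~\ref{prop:conformal-Jacobi-prop}\ref{it:minimizer}, the trivial $(2)\Rightarrow(3)$, and the contrapositive $(3)\Rightarrow(1)$ by shifting a base element $g_0\in\D_{con}[V]$ along $f^a$ with $a\in\ker\F$, $\scalar{a,V}\neq 0$, to obtain an affine (hence unbounded) function of the parameter. The computations and justifications match the paper's proof line by line.
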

\begin{proof}
If $V = \F a$ then the conformal Jacobi field $f^a$ with conformal parameter $a \in E^{(q-1)}$ is a minimizer of $Q^0$ on $\D_Q[V]$ by Proposition \ref{prop:conformal-Jacobi-prop} and stability, and hence $Q^0$ is bounded below on $\D_Q[V]$, and in particular on $\D_{con}[V]$.  

On the other hand, if $V \notin \Im \F = (\ker \F)^{\perp}$, then there exists $a \in E^{(q-1)}$ so that $\F a = 0$ and $\scalar{a,V} \neq 0$. Let $f^a \in \D_{con}$ be a conformal Jacobi field with conformal parameter $a$, and note that $\delta^1_{f^a} V = \F a = 0$ and $Q^0(f^a) = -(n-1) a^T \F a = 0$.  If $g_V \in \D_{con}[V]$, then by (\ref{eq:Q0-Jacobi}):
\begin{align}
\nonumber
Q^0(g_V + \lambda f^a) & = Q^0(g_V) + 2 \lambda Q^0(f^a,g_V) = Q^0(g_V) - 2 \lambda (n-1) \scalar{a,\delta^1_{g_V} V}_{E^{(q-1)}} \\
\label{eq:unbounded-below} & = Q^0(g_V) - 2 \lambda (n-1) \scalar{a,V} . 
\end{align}
Taking $|\lambda| \rightarrow \infty$, since $g_V + \lambda f^a \in \D_{con}[V]$ and $\scalar{a,V} \neq 0$, we conclude that $Q^0$ is not bounded below on $\D_{con}[V]$. 
\end{proof}

\begin{proposition} \label{prop:volume-complete}
Let $\Omega$ be a $3$-Plateau, stable, perpendicular spherical Voronoi cluster with equatorial cells. In particular, a conformally complete system of conformal Jacobi fields exists for $\Omega$ by Theorem \ref{thm:conformally-complete}. Let $\F : E^{(q-1)} \rightarrow E^{(q-1)}$ denote its corresponding conformal-to-volume operator. Then the following are equivalent:
\begin{enumerate}[(1)]
\item \label{it:vc1} A volume complete system of conformal Jacobi fields exists for $\Omega$. 
\item \label{it:vc2} $\F$ is full-rank. 
\item \label{it:vc3} $\F > 0$ is positive-definite. 
\item \label{it:vc4} For all $V \in E^{(q-1)}$, $Q^0$ is bounded below on $\D_Q[V]$. 
\item \label{it:vc5} For all $V \in E^{(q-1)}$, $Q^0$ is bounded below on $\D_{con}[V]$. 
\end{enumerate}
\end{proposition}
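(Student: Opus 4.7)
The plan is to establish a short cycle of implications, using the results already assembled rather than proving much that is new. Since $\Omega$ is 3-Plateau, stable, perpendicular with equatorial cells, Theorem \ref{thm:conformally-complete} gives us a conformally complete system, so the conformal-to-volume operator $\F : E^{(q-1)} \to E^{(q-1)}$ is well-defined and canonical (Proposition \ref{prop:F-properties}). All cells are non-empty (they are equatorial), so $V(\Omega) \in \interior \Delta^{(q-1)}$, which is the standing hypothesis of Lemma \ref{lem:bounded-below}.

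First I would record the trivial equivalence \ref{it:vc1} $\Leftrightarrow$ \ref{it:vc2}: volume-completeness means that for every $V \in E^{(q-1)}$ there exists a conformal Jacobi field $f$ with $\delta^1_f V = V$, and since the conformally complete system already provides a conformal Jacobi field $f^a$ for every conformal parameter $a$ with $\delta^1_{f^a} V = \F a$, this is exactly the statement that $\F : E^{(q-1)} \to E^{(q-1)}$ is surjective, i.e.\ full-rank.

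Next, \ref{it:vc2} $\Leftrightarrow$ \ref{it:vc3} follows immediately from Proposition \ref{prop:F-properties}\ref{it:pos-def}: under stability and perpendicularity we have $\F \geq 0$, and a positive semi-definite symmetric operator on a finite-dimensional space is full-rank if and only if it is strictly positive-definite.

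Finally, \ref{it:vc2} $\Leftrightarrow$ \ref{it:vc4} $\Leftrightarrow$ \ref{it:vc5} is an immediate consequence of Lemma \ref{lem:bounded-below}, which gives the pointwise (in $V$) equivalence among $V \in \Im \F$, boundedness of $Q^0$ below on $\D_Q[V]$, and boundedness of $Q^0$ below on $\D_{con}[V]$. Quantifying over all $V \in E^{(q-1)}$ turns ``$V \in \Im \F$ for all $V$'' into the statement $\Im \F = E^{(q-1)}$, i.e.\ $\F$ full-rank, closing the cycle. There is essentially no obstacle here: the substantive content (the Fredholm alternative giving conformal completeness in Theorem \ref{thm:conformally-complete}, the sign $\F \geq 0$ from Proposition \ref{prop:conformal-Jacobi-prop}\ref{it:non-positive}, and the linear-algebra dichotomy in Lemma \ref{lem:bounded-below} based on the identity \eqref{eq:unbounded-below}) has all been done earlier; the proposition is a packaging of these into the five equivalent forms.
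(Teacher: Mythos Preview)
Your proposal is correct and follows exactly the same route as the paper's proof: the equivalence \ref{it:vc1}$\Leftrightarrow$\ref{it:vc2} by definition of $\F$ (plus its canonicity), \ref{it:vc2}$\Leftrightarrow$\ref{it:vc3} via $\F\ge 0$ from Proposition~\ref{prop:F-properties}\ref{it:pos-def}, and \ref{it:vc2}$\Leftrightarrow$\ref{it:vc4}$\Leftrightarrow$\ref{it:vc5} directly from Lemma~\ref{lem:bounded-below}. Your write-up is simply a more expanded version of the paper's three-line argument.
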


\begin{proof}
By definition of $\F$ we have \ref{it:vc1} iff \ref{it:vc2}. Since $\F \geq 0$ by Proposition \ref{prop:F-properties} \ref{it:pos-def} (using that $\Omega$ is perpendicular and stable), \ref{it:vc2} iff \ref{it:vc3}. Finally, Lemma \ref{lem:bounded-below} confirms the equivalence between \ref{it:vc2}, \ref{it:vc4} and \ref{it:vc5}. 
\end{proof}

Given a cluster $\Omega$ as in Proposition \ref{prop:volume-complete}, the question of whether a volume complete system of conformal Jacobi fields exists, or equivalently, whether $\F > 0$, is an interesting one, and we conjecture that this is always the case when $q \leq n+1$ (see the stronger Conjecture \ref{conj:F=F0}). The most tractable way to establish this seems to be by showing that $Q^0$ is bounded below on $\D_Q[V]$, but we do not pursue this here. The reason is that whenever $Q^0$ is \textbf{not} bounded below on $\D_Q[V]$, this actually works in our favor when attempting to establish the desired PDI (\ref{eq:PDI-F}) or for establishing that the isoperimetric profile $\I$ is concave (in the viscosity sense of Definition \ref{def:viscosity}). Indeed, if $\F$ is not full-rank, we can simply consider $\F^{\eps} := \F + \eps \P_{\ker \F}$ for an arbitrarily small $\eps > 0$, where $\P_{\ker \F}$ is the orthogonal projection onto the kernel $\ker \F$. Note that $\F^{\eps} > 0$ since $\F|_{(\ker \F)^{\perp}} > 0$. 
Then for all $a \notin \Im \F$, since $\F^{\eps} a \notin \Im \F$ then $Q^0$ is not bounded below on $\D_Q[\F^{\eps} a]$, and we can find a scalar field $g^a$ with $\delta^1_{g^a} V = \F^{\eps} a$ and with $Q^0(g^a) \leq -(n-1) a^T \F^{\eps} a$. 

\medskip

For our purposes, it will be enough to show that $\F > 0$ for general $3$-Plateau perpendicular PCF clusters (with $V(\Omega) \in \interior \Delta^{(q-1)}$) -- this will be established in Proposition \ref{prop:PCF-F=F0} in the sequel. 

\medskip
In the meantime, we can already derive the following consequence of Lemma \ref{lem:bounded-below} and Proposition \ref{prop:volume-complete}, which is of independent interest. 
\begin{theorem} \label{thm:q-1-positive}
Let $\Omega$ be a $3$-Plateau, stable, perpendicular spherical Voronoi cluster with equatorial cells. Then $(L_{Jac},\D_{con})$ has  exactly $q-1$ positive eigenvalues. 
\end{theorem}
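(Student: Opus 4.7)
The plan is to combine the upper bound $\leq q-1$ already provided by Corollary~\ref{cor:at-most-positive-eigenvalues} with a matching lower bound, proved by exhibiting an explicit $(q-1)$-dimensional subspace $S \subset \D_Q$ on which $Q^0$ is negative definite. Since $(L_{Jac},\D_{con})$ is the self-adjoint operator associated to the closed semi-bounded hermitian form $Q^0$ on $\D_Q$ (Proposition~\ref{prop:enter-LJac}), a standard min--max argument identifies the number of positive eigenvalues of $L_{Jac}$ with the maximal dimension of a subspace of $\D_Q$ on which $Q^0$ is negative definite.

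By Theorem~\ref{thm:conformally-complete} a conformally complete system of conformal Jacobi fields exists, so $\F : E^{(q-1)} \to E^{(q-1)}$ is defined, symmetric, and positive semi-definite (Proposition~\ref{prop:F-properties}). Decompose $E^{(q-1)} = K \oplus K^{\perp}$ with $K := \ker \F$, so that $\F|_{K^{\perp}} > 0$. On $K^{\perp}$ I use the conformal Jacobi fields directly: for $0 \neq b \in K^{\perp}$, $Q^0(f^b) = -(n-1)\, b^T \F b < 0$. On $K$, however, $Q^0(f^a) = 0$, so I must perturb $f^a$ to obtain a field with strictly negative Index-Form.

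The key device is to augment $f^a$ by a truncated skew field. Since $\Omega$ has equatorial cells, $L_{|\scalar{p,N}|}$ is positive-definite (Remark~\ref{rem:crazy-pos-def}), so $\ell := L_{|\scalar{p,N}|}^{-1}$ is well defined on $E^{(q-1)}$. For $(b,a) \in K^{\perp} \oplus K$, set
\[
g(b,a) \;:=\; f^{a+b} + h^{(\ell a)} \;\in\; \D_Q.
\]
Using Proposition~\ref{prop:skew-fields} (which gives $Q^0(h^{(\cdot)}) \equiv 0$, and hence $Q^0(h^{(\cdot)},h^{(\cdot)}) \equiv 0$ by polarisation), equation~\eqref{eq:Q0-Jacobi} applied to $f^{a+b}$, the identity $\delta^1_{h^{(\ell a)}}V = a$, together with $\F a = 0$ (as $a \in K$) and $\langle b,a\rangle = 0$, a direct bilinear computation yields
\[
Q^0\!\big(g(b_1,a_1),\,g(b_2,a_2)\big) \;=\; -(n-1)\, b_1^T \F b_2 \;-\; 2(n-1)\,\langle a_1, a_2\rangle,
\]
so the form is block-diagonal in the $K^{\perp}$- and $K$-parameters and the diagonal $Q^0(g(b,a)) = -(n-1)\, b^T \F b - 2(n-1)\,|a|^2$ is strictly negative unless $(b,a)=0$. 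The parametrisation $(b,a)\mapsto g(b,a)$ is injective, because $\delta^1_{g(b,a)}V = \F b + a$, and $\F b \in K^{\perp}$, $a \in K$ together force $\F b = 0$ and $a = 0$, whence $b=0$ by positive definiteness of $\F|_{K^{\perp}}$. Thus $S := \{g(b,a) : (b,a) \in K^{\perp}\oplus K\}$ has dimension $\dim K^{\perp} + \dim K = q-1$ and $Q^0|_S$ is negative definite, giving the desired lower bound.

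The main obstacle is the degenerate case $K \neq \{0\}$: the conformal Jacobi fields $\{f^a : a \in K\}$ are genuine elements of $\D_{con}$ but contribute zero to $Q^0$, so by themselves they do not witness any negative directions. The trick is that the Morse index is defined by the full form-domain $\D_Q$ (not just $\D_{con}$), which lets us mix in truncated skew fields $h^{(\ell a)} \in \D_Q \setminus \D_{con}$ that supply positive volume variation without any positive contribution to $Q^0$. The block-diagonal structure of $Q^0$ on $S$ is then forced by symmetry of $\F$ (which gives $\Im \F \perp \ker \F$) and by the identity $L_{|\scalar{p,N}|}\ell = \Id$ arising from our choice of $\ell$.
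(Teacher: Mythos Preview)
Your proof is correct and follows the same overall architecture as the paper's: the upper bound comes from Corollary~\ref{cor:at-most-positive-eigenvalues}, and for the lower bound one exhibits a $(q-1)$-dimensional subspace on which $Q^0$ is negative definite, splitting $E^{(q-1)} = \ker \F \oplus (\ker \F)^\perp$ and using conformal Jacobi fields directly on $(\ker \F)^\perp$.

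The difference lies in the treatment of the kernel directions. The paper stays inside the operator domain $\D_{con}$: for each basis vector $v^i \in \ker \F$ it picks an arbitrary $h^{v^i} \in \D_{con}[v^i]$ (whose existence is noted just before Lemma~\ref{lem:bounded-below}) and then adds a \emph{large multiple} $M f^{v^i}$ of the conformal Jacobi field to force $Q^0(g^b) \leq -M(n-1)|b|^2$; this yields only an inequality for $Q^0$ on the test subspace. You instead work in the form domain $\D_Q$, which allows you to use the truncated skew fields $h^{(\ell a)}$ (which lie in $\D_Q \setminus \D_{con}$). Because $Q^0(h^{(\cdot)}) \equiv 0$ identically, the cross terms are computed exactly via \eqref{eq:Q0-Jacobi}, and you obtain the clean block-diagonal formula $Q^0(g(b,a)) = -(n-1) b^T \F b - 2(n-1)|a|^2$ with no need for a large parameter. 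Your approach is slightly more elegant in this respect; the paper's approach has the minor advantage of producing a subspace of $\D_{con}$ itself (so one can speak directly of $L_{Jac}$ being positive-definite on it), whereas yours relies on the equally standard fact that the min--max count is determined at the level of the form domain $\D_Q = \mathrm{Dom}((L_{Jac}+C)^{1/2})$.
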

\begin{proof}
We've already seen in Corollary \ref{cor:at-most-positive-eigenvalues} that stability implies that $L_{Jac}$ has at most $q-1$ positive eigenvalues. On the other hand, by the minimax principle, it is enough to construct a $(q-1)$-dimensional linear subspace $G$ of $\D_{con}$ on which $L_{Jac}$ is positive-definite. 

To this end, recall that a conformally complete system of conformal Jacobi fields exists for $\Omega$ by Theorem \ref{thm:conformally-complete}, let $\F : E^{(q-1)} \rightarrow E^{(q-1)}$ denote its corresponding conformal-to-volume operator, and consider an orthonormal basis $\{ v^i \}_{i=1,\ldots,q-1}$ of $E^{(q-1)}$ so that the first $k$ vectors span $\ker \F$ and the other vectors span $\Im \F$. Given $d = \sum_{i=1}^{q-1} c_i v^i \in E^{(q-1)}$, set $f^d := \sum_{i=1}^{q-1} c_i f^{v^i}$ where $f^{v^i}$ is a conformal Jacobi field with conformal parameter $v^i$, so that $f^d$ is a conformal Jacobi field with conformal parameter $d$. For $i=1,\ldots,k$, select $h^{v^i} \in \D_{con}[v^i]$, and denote $h^b = \sum_{i=1}^k c_i h^{v^i}$ for $b = \sum_{i=1}^k c_i v^i \in \ker \F$, so that $\delta^1_{h^b} V = b$. Set 
\[
M := \max\brac{1 , \frac{1}{n-1} \max \{ Q^0(h^b) \; ; \; b \in \ker \F , |b|=1 \}} ,
\]
which exists and is finite by compactness. Now define $g^b = h^b + M f^b$, and note that by (\ref{eq:unbounded-below}):
\begin{align*}
Q^0(g^b) & = Q^0(h^b + M f^b) = Q^0(h^b) - 2 M (n-1) \scalar{b,\delta^1_{h^b} V}_{E^{(q-1)}} \\
& = Q^0(h^b) - 2 M (n-1) |b|^2 \leq - M (n-1) |b|^2 . 
\end{align*}
Furthermore, $\delta^1_{g^b} V = b + M \F b = b$. Define the following linear subspace of $\D_{con}$: 
\[
G := \{f^a + g^b \; ; \;  a \in \Im \F , b \in \ker \F\}.
\]
Note that $G$ is of dimension $q-1$ since $\delta^1_{f^a + g^b} V = \F a + b$, which spans the entire $E^{(q-1)}$ as $\F$ acts invariantly on $\Im \F$. On $G$ we have:
\begin{align*}
Q^0(f^a + g^b) & = Q^0(f^a) + 2 Q^0(f^a,g^b) + Q^0(g^b) \leq  -(n-1) a^T \F a + 2 \sscalar{a, \delta^1_{g^b} V}_{E^{(q-1)}} - (n-1) |b|^2\\
& = -(n-1) a^T \F a - (n-1) |b|^2 = -(n-1) (a+b)^T \F^1 (a+b) ,
\end{align*}
where $\F^1 = \F + \P_{\ker \F} > 0$. 
Since for all $f \in \D_{con}$, $Q^0(f) = - \scalar{L_{Jac} f , f}$, we confirm that $L_{Jac}$ is positive definite on $G$, thereby concluding the proof. 
\end{proof}

In particular, this applies to minimizing $q$-clusters $\Omega$ when $q \leq n+1$ and $V(\Omega) \in \interior \Delta^{(q-1)}$ by Theorem \ref{thm:intro-structure}, yielding Theorem \ref{thm:intro-minimizer-q-1-positive}. 

\subsection{On pseudo conformally flat clusters}

It will be important for us to know that a conformally complete system exists for PCF clusters, regardless of whether they are stable or perpendicular. 

\begin{proposition} \label{prop:PCF-F}
Let $\Omega$ be a $3$-Plateau PCF cluster with compatibility parameter $\xi \in \R^{n+1}$. On $\S^n$ define the function $\Psi_\xi$ given by:
\[
\Psi_\xi(p) := 1 - \scalar{p,\xi} .
\]
Then for all $a \in E^{(q-1)}$, $f^a = (f^a_{ij}) = (a_{ij} \Psi_\xi)$ is a conformal Jacobi field with conformal parameter $a$. In particular, a conformally complete system of conformal Jacobi fields exists for $\Omega$, and we have:
\[
\F = L_{\Psi_\xi} . 
\]
\end{proposition}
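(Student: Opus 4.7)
The plan is to verify the three defining properties of $f^a = (f^a_{ij}) = (a_{ij} \Psi_\xi)$ as a conformal Jacobi field with conformal parameter $a$: (i) $f^a \in H^1(\Sigma^1)$ with $\Delta_{\Sigma,\mu} f^a \in L^2$, (ii) $P_D \Tr f^a = 0$ and $P_R \NN f^a = \Lambda P_R \Tr f^a$ on $\Sigma^2$, and (iii) $L_{Jac} f^a_{ij} = (n-1) a_{ij}$ on each $\Sigma_{ij}$. Since $\Psi_\xi \in C^\infty(\S^n)$ is bounded and the cluster lies in the compact manifold $\S^n$, condition (i) will follow automatically once we have the pointwise computation of $L_{Jac} f^a_{ij}$ in (iii). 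The Dirichlet-Kirchoff part $P_D \Tr f^a = 0$ is immediate from $a_{ij} + a_{jk} + a_{ki} = 0$ and the continuity of $\Psi_\xi$ at any triple point.

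The central computation is (iii). Since every non-empty interface $\Sigma_{ij}$ of a spherical Voronoi cluster lies on a geodesic sphere of (constant) curvature $\k_{ij}$ and quasi-center $\c_{ij}$, its second fundamental form is $\II = \k_{ij} \Id$, so the traceless part $\II_0$ vanishes. Lemma \ref{lem:LJac-dilation} then gives, on $\Sigma_{ij}$,
\[
L_{Jac} 1 = (n-1)(1+\k_{ij}^2) ~,~ L_{Jac} \scalar{\xi,p} = -(n-1) \k_{ij} \scalar{\xi,\c_{ij}} .
\]
Subtracting, and using the PCF compatibility identity $\scalar{\xi,\c_{ij}} + \k_{ij} = 0$ (which follows from $\scalar{\xi,\c_i}+\k_i = 0$ for all $i$ by subtraction), we get
\[
L_{Jac} \Psi_\xi = (n-1)(1 + \k_{ij}^2) + (n-1) \k_{ij} \scalar{\xi,\c_{ij}} = (n-1)(1+\k_{ij}^2 - \k_{ij}^2) = n-1 ,
\]
so $L_{Jac} f^a_{ij} = (n-1) a_{ij}$ as required.

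For the conformal boundary condition, I will use the criterion of Remark \ref{rem:conformal-BCs}, verifying the even stronger statement that $\nabla_{\n_{\partial ij}} f^a_{ij} - \bar\II^{\partial ij} f^a_{ij}$ is identically zero on each triple-point stratum $\Sigma_{ijk}$ (not merely independent of $(i,j) \in \cyclic(u,v,w)$). Since $\Psi_\xi$ is the restriction of a smooth ambient function and $\n_{\partial ij}$ is tangent to $\S^n$, the chain rule yields $\nabla_{\n_{\partial ij}} \Psi_\xi = -\scalar{\n_{\partial ij},\xi}$. Combining (\ref{eq:sqrt3}) with the PCF-derived identity $\scalar{\xi,\n_{ab}} = \scalar{\xi,\c_{ab}} + \k_{ab}\scalar{\xi,p} = -\k_{ab}(1-\scalar{\xi,p}) = -\k_{ab}\Psi_\xi$ (valid on $\Sigma_{ab}$ and by continuity at $p \in \Sigma_{ijk}$), I will obtain
\[
\scalar{\n_{\partial ij},\xi} = \frac{\scalar{\n_{ik},\xi}+\scalar{\n_{jk},\xi}}{\sqrt 3} = -\frac{\k_{ik}+\k_{jk}}{\sqrt 3} \Psi_\xi(p) = -\bar\II^{\partial ij} \Psi_\xi(p) ,
\]
where in the last equality I used that $\II^{ik}_{\partial\partial} = \k_{ik}$ and $\II^{jk}_{\partial\partial} = \k_{jk}$ for spherical interfaces. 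Plugging into $\nabla_{\n_{\partial ij}} f^a_{ij} - \bar\II^{\partial ij} f^a_{ij} = a_{ij}(-\scalar{\n_{\partial ij},\xi} - \bar\II^{\partial ij}\Psi_\xi)$ gives $0$, establishing the conformal BC in the strong form $P_R \NN f^a = \Lambda P_R \Tr f^a$.

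Having verified that $f^a \in \D_{con}$ is a conformal Jacobi field with conformal parameter $a$ for every $a \in E^{(q-1)}$, the conformally complete system exists, and the formula
\[
\F a = \delta^1_{f^a} V = \sum_{i<j} \Bigl(\int_{\Sigma_{ij}} \Psi_\xi \, d\HH^{n-1}\Bigr) a_{ij}\, e_{ij}= \Bigl(\sum_{i<j} \bigl(\textstyle\int_{\Sigma_{ij}} \Psi_\xi d\HH^{n-1}\bigr) e_{ij}\otimes e_{ij}\Bigr) a
\]
matches Definition \ref{def:LA} of $\L_{\Psi_\xi}$, yielding $\F = L_{\Psi_\xi}$. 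I do not anticipate a major obstacle: the only thing to watch is signs and the consistent use of the PCF identity $\scalar{\xi,\c_{ij}} = -\k_{ij}$ in both the bulk computation and the boundary computation; all other ingredients are direct applications of previously recorded lemmas.
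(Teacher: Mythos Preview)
Your proof is correct and follows essentially the same approach as the paper: both compute $L_{Jac}\Psi_\xi = n-1$ via Lemma~\ref{lem:LJac-dilation} together with the PCF identity $\scalar{\xi,\c_{ij}} = -\k_{ij}$, and both verify the conformal boundary condition by showing $\nabla_{\n_{\partial ij}}\Psi_\xi = \bar\II^{\partial ij}\Psi_\xi$ (hence the Robin expression vanishes identically, not merely is cyclically constant). Your additional remarks on $H^1$ regularity and the Dirichlet--Kirchoff condition are correct and merely make explicit what the paper leaves implicit.
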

\begin{proof}
Applying Lemma \ref{lem:LJac-dilation} and using that $\II_0 = 0$ on a spherical cluster, we have on $\Sigma_{ij}$:
\begin{equation} \label{eq:LJac-Psi}
L_{Jac} \Psi_\xi = (n-1) ( 1+ \k_{ij}^2 + \k_{ij} \scalar{\c_{ij},\xi}) = n-1 , 
\end{equation}
where we have used that $\scalar{\c_{ij},\xi} = - \k_{ij}$ by definition of the PCF compatibility parameter $\xi \in \R^{n+1}$. 
Consequently $L_{Jac} f^a_{ij} = (n-1) a_{ij}$ on $\Sigma_{ij}$. 

It remains to check that $f^a$ satisfies conformal BCs (recall Remark \ref{rem:conformal-BCs}). Note that $\scalar{\n_{ij},\xi} = - \k_{ij} (1 - \scalar{p,\xi})) = -\k_{ij} \Psi_\xi$ on $\Sigma_{ij}$, and hence for $p \in \Sigma_{ijk}$:
\[
\nabla_{\n_{\partial ij}} \Psi_{\xi} = - \scalar{\n_{\partial ij},\xi} = - \scalar{ \frac{\n_{ik} + \n_{jk}}{\sqrt{3}} , \xi } = \frac{\k_{ik} + \k_{jk}}{\sqrt{3}} \Psi_\xi = \bar \II^{\partial ij} \Psi_{\xi} . 
\]
Consequently, we have $\nabla_{\n_{\partial ij}} f^a_{ij} = \bar \II^{\partial ij} f^a_{ij}$ on $\partial \Sigma_{ij}$, and 
we confirm that $\nabla_{\n_{\partial ij}} f^a_{ij} - \bar \II^{\partial ij} f^a_{ij} = 0$ is independent of $(i,j) \in \cyclic(u,v,w)$ on $\Sigma_{uvw}$.

Since $f^a_{ij} = \scalar{\Psi_\xi e_{ij} , a}$ on $\Sigma_{ij}$, it follows that:
\[
\F = \sum_{i<j} \int_{\Sigma_{ij}} e_{ij} \otimes \Psi_\xi e_{ij} d\HH^{n-1} = \sum_{i<j} \int_{\Sigma_{ij}} \Psi_\xi d\HH^{n-1} \; e_{ij} \otimes e_{ij}  = L_{\Psi_{\xi}} . 
\]
\end{proof}

\begin{proposition} \label{prop:PCF-trace}
On a $3$-Plateau PCF cluster $\Omega$, the trace-identity (\ref{eq:trace-id}) always holds. 
\end{proposition}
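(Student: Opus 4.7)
The plan is to combine Proposition \ref{prop:PCF-F}, which identifies $\F$ with the discrete Laplacian $L_{\Psi_\xi}$ on a PCF cluster, with the trace identity $\tr(\F \C \C^T) = \HH^{n-1}(\Sigma)$ from Proposition \ref{prop:FC=N}. The key point is that $\F = L_{\Psi_\xi}$ is a sum of rank-one operators $e_{ij} \otimes e_{ij}$ weighted by $\int_{\Sigma_{ij}} \Psi_\xi\, d\HH^{n-1}$, so terms with empty $\Sigma_{ij}$ contribute nothing, and it suffices to compare $\C \C^T$ with $\Id/2 + \k \otimes \k$ only when tested against $e_{ij} \otimes e_{ij}$ for non-empty interfaces.

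First I would recall from Lemma \ref{lem:CCT} (equivalently, Remark \ref{rem:quasi-centers}) the defining property of a spherical Voronoi cluster: for every $i<j$ with $\Sigma_{ij} \neq \emptyset$, one has $|\c_{ij}|^2 = 1 + \k_{ij}^2$. Rewriting, this gives
\[
\bigl\langle \C \C^T e_{ij},\, e_{ij}\bigr\rangle = |\c_{ij}|^2 = 1 + \k_{ij}^2 = \bigl\langle \bigl(\tfrac{1}{2}\Id_{E^{(q-1)}} + \k \otimes \k\bigr) e_{ij},\, e_{ij}\bigr\rangle,
\]
since $|e_{ij}|^2 = 2$ and $\langle \k, e_{ij}\rangle = \k_{ij}$. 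Thus the symmetric operator $A := \C\C^T - \bigl(\tfrac{1}{2}\Id + \k \otimes \k\bigr)$ on $E^{(q-1)}$ satisfies $\langle A e_{ij}, e_{ij}\rangle = 0$ whenever $\Sigma_{ij} \neq \emptyset$.

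Next I would invoke Proposition \ref{prop:PCF-F}, which gives $\F = L_{\Psi_\xi} = \sum_{i<j} \bigl(\int_{\Sigma_{ij}} \Psi_\xi\, d\HH^{n-1}\bigr)\, e_{ij}\otimes e_{ij}$. Consequently,
\[
\tr(\F \cdot A) = \sum_{i<j} \Bigl(\int_{\Sigma_{ij}} \Psi_\xi\, d\HH^{n-1}\Bigr) \langle A e_{ij}, e_{ij}\rangle = 0,
\]
because for $\Sigma_{ij} = \emptyset$ the integral vanishes and for $\Sigma_{ij} \neq \emptyset$ the inner product vanishes by the previous step. Rearranging yields $\tr\bigl(\F (\tfrac{1}{2}\Id + \k\otimes \k)\bigr) = \tr(\F \C\C^T)$, and the right-hand side equals $\HH^{n-1}(\Sigma)$ by Proposition \ref{prop:FC=N}, establishing the trace identity (\ref{eq:trace-id}).

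There is no real obstacle here: the proof is essentially a bookkeeping argument that leverages the fact that $\F$ lives on the combinatorial support of the non-empty interfaces, precisely where the spherical Voronoi condition forces $\C\C^T$ and $\tfrac{1}{2}\Id + \k\otimes \k$ to agree. The PCF hypothesis enters only to guarantee the existence of the global conformal Jacobi generator $\Psi_\xi$ and hence the identification $\F = L_{\Psi_\xi}$; beyond that, no further structural input is needed.
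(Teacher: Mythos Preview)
Your proof is correct and takes a genuinely different route from the paper's own argument. The paper establishes the identity analytically: it writes $\tr(\F(\Id/2+\k\otimes\k)) = \sum_{i<j}\int_{\Sigma_{ij}}(1+\k_{ij}^2)\Psi_\xi\,d\HH^{n-1}$, recognizes $(n-1)(1+\k_{ij}^2) = L_{Jac}(1)$ on $\Sigma_{ij}$, and then uses the self-adjointness of $L_{Jac}$ on non-oriented functions (Lemma~\ref{lem:LJac-non-oriented}) together with $L_{Jac}\Psi_\xi = n-1$ to swap $L_{Jac}$ from $1$ onto $\Psi_\xi$ and obtain $\int_{\Sigma^1}1\,d\HH^{n-1}$. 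Your argument is instead purely algebraic: since $\F = L_{\Psi_\xi}$ is supported on $\{e_{ij}\otimes e_{ij} : \Sigma_{ij}\neq\emptyset\}$, and since $\C\C^T$ agrees with $\Id/2+\k\otimes\k$ precisely when tested against such $e_{ij}\otimes e_{ij}$ (Lemma~\ref{lem:CCT}), one may replace one by the other inside the trace and then invoke $\tr(\F\C\C^T)=\HH^{n-1}(\Sigma)$ from Proposition~\ref{prop:FC=N}. This is exactly the mechanism the paper itself isolates later in Lemmas~\ref{lem:magic-C} and~\ref{lem:support-F0}, where it shows that any $\F$ satisfying the ``locality'' condition~\eqref{eq:locality} automatically verifies the trace identity; you have in effect anticipated that argument and applied it to the PCF case, where locality is manifest from $\F = L_{\Psi_\xi}$. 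Your approach is arguably more transparent about \emph{why} the identity holds (it is a support/locality phenomenon), while the paper's proof has the virtue of being self-contained at this point in the exposition without forward-referencing Proposition~\ref{prop:FC=N}.
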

\begin{proof}
Recall that $L_{Jac}(1) = (n-1)(1+\k_{ij}^2)$ on $\Sigma_{ij}$ by Lemma \ref{lem:LJac-dilation} and that $L_{Jac}(\Psi_{\xi}) = n-1$ by (\ref{eq:LJac-Psi}). Since $\F = L_{\Psi_\xi}$ and $\tr ((\Id/2 + \k \otimes \k)  (e_{ij} \otimes e_{ij})) = 1 + \k_{ij}^2$, we conclude from Lemma \ref{lem:LJac-non-oriented} that:
\begin{align*}
\tr(\F (\Id/2 + \k\otimes \k)) & = \sum_{i<j} \int_{\Sigma_{ij}} (1 + \k_{ij}^2) \Psi_\xi d\HH^{n-1} = \frac{1}{n-1} \int_{\Sigma^1} L_{Jac}(1) \Psi_\xi d\HH^{n-1} \\
&  = \frac{1}{n-1} \int_{\Sigma^1} 1 L_{Jac}(\Psi_\xi) d\HH^{n-1} = \int_{\Sigma^1} d\HH^{n-1}  . 
\end{align*}
\end{proof}

\subsection{PDE for model isoperimetric profile}

Note that when the cluster $\Omega$ is conformally flat, in particular PCF with $|\xi| < 1$ by Lemma \ref{lem:conformally-flat}, then $\Psi_{\xi} > 0$, and so when in addition $V(\Omega) \in \interior \Delta^{(q-1)}$, $\F = L_{\Psi_\xi} > 0$ is positive definite on $E^{(q-1)}$ by Lemma \ref{lem:LA-connected} and \ref{lem:LA-positive}. In particular, this holds for standard bubbles, and so we can easily deduce:
\begin{proposition} \label{prop:Im-PDE-proof}
For all $q \leq n+2$, the model isoperimetric profile $\I_m : \Delta^{(q-1)} \rightarrow \R_+$ satisfies the following (fully non-linear yet elliptic) PDE on $\interior(\Delta^{(q-1)})$:
\[
\nabla^2 \I_m < 0 ~,~ \tr \brac{(-\nabla^2 \I_m)^{-1} \; (\Id+ \frac{2}{(n-1)^2} \nabla \I_m \otimes \nabla \I_m)} = \frac{2}{n-1} \I_m . 
\]
\end{proposition}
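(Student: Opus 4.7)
The plan is to extract $\nabla^2 \I_m$ by computing the index-form $Q$ along a canonical family of perturbations that remains within the class of standard bubbles, and then to combine this with the trace identity of Proposition \ref{prop:PCF-trace}. Fix $v_0 \in \interior \Delta^{(q-1)}$ and let $\Omega^m$ be the standard $(q-1)$-bubble with $V(\Omega^m)=v_0$, with quasi-centers $\{\c_i\}$, curvatures $\{\k_i\}$, and associated operators $\C, \F, \N$. By uniqueness and smooth dependence of standard bubbles on their volume tuples (Lemma \ref{lem:standard-volume} together with the implicit function theorem), $\I_m$ is $C^\infty$ on $\interior \Delta^{(q-1)}$. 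Since $\Omega^m$ is stationary with Lagrange multiplier $\lambda=(n-1)\k$ (Lemma \ref{lem:Voronoi-stationary}), the interpretation of $\lambda$ as a Lagrange multiplier in Lemma \ref{lem:Lagrange} yields $\nabla \I_m(v_0)=(n-1)\k$, and in particular $\k\otimes\k=\tfrac{1}{(n-1)^2}\nabla\I_m\otimes\nabla\I_m$.

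To pin down $\nabla^2 \I_m(v_0)$ I will use the M\"obius group to move \emph{within} the family of standard bubbles. For $\theta \in \R^{n+1}$, let $F_t$ be the flow of the M\"obius field $W_\theta$ from Definition \ref{def:dilation-field}. Each $F_t$ is a conformal automorphism of $\S^n$, so $F_t(\Omega^m)$ is again a spherical Voronoi Plateau cluster (Lemma \ref{lem:Mobius-preserves-Voronoi}); being a diffeomorphism, $F_t$ also preserves the property that all interfaces are non-empty, so by Proposition \ref{prop:standard-char} each $F_t(\Omega^m)$ is again a standard bubble. Consequently $\per(F_t(\Omega^m))\equiv\I_m(V(F_t(\Omega^m)))$ identically in $t$. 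Expanding both sides to second order at $t=0$ and subtracting the first-order identity provided by stationarity yields
\[
Q(W_\theta) \;=\; (\N\theta)^T \, \nabla^2\I_m(v_0) \, (\N\theta).
\]
On the other hand, $W_\theta^\n$ is the conformal Jacobi field with parameter $\C\theta$ (Lemma \ref{lem:Mobius-Jacobi}), so Proposition \ref{prop:F-properties} gives $Q(W_\theta)=Q^0(W_\theta^\n)=-(n-1)(\C\theta)^T\F(\C\theta)$, and together with $\N=\F\C$ (Proposition \ref{prop:FC=N}) we obtain
\[
(\C\theta)^T \bigl(\F\,\nabla^2\I_m(v_0)\,\F + (n-1)\F\bigr)(\C\theta)=0 \quad \forall\, \theta\in\R^{n+1}.
\]

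Since $\C\C^T=\Id/2+\k\otimes\k>0$ on $E^{(q-1)}$ for a standard bubble (Proposition \ref{prop:standard-char}), $\C$ is surjective onto $E^{(q-1)}$, and the above forces $\F\nabla^2\I_m(v_0)\F=-(n-1)\F$ as quadratic forms on $E^{(q-1)}$. Standard bubbles are conformally flat, hence PCF with $|\xi|<1$ (Lemma \ref{lem:conformally-flat}), so by Proposition \ref{prop:PCF-F} one has $\F=L_{\Psi_\xi}$ with $\Psi_\xi>0$, and Lemmas \ref{lem:LA-connected}--\ref{lem:LA-positive} then give $\F>0$ on $E^{(q-1)}$. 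Cancelling $\F$ yields $\nabla^2\I_m(v_0)=-(n-1)\F^{-1}<0$. Substituting $\F=-(n-1)(\nabla^2\I_m)^{-1}$ and $\k=\tfrac{1}{n-1}\nabla\I_m$ into the trace identity $\tr(\F(\Id/2+\k\otimes\k))=\HH^{n-1}(\Sigma)=\I_m(v_0)$ of Proposition \ref{prop:PCF-trace}, and multiplying through by $\tfrac{2}{n-1}$, produces exactly the PDE in the statement.

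The step that will require most care is the verification that the M\"obius flow $F_t(\Omega^m)$ remains within the class of standard bubbles for \emph{all} $t$; this is where Proposition \ref{prop:standard-char} is crucial, reducing the issue to the open condition that every interface stays non-empty, which is transparent from $F_t$ being a diffeomorphism. Everything else is a symmetric-bilinear-algebra manipulation once the variational identity $Q(W_\theta)=(\N\theta)^T\nabla^2\I_m(\N\theta)$ is set against the intrinsic identity $Q(W_\theta)=-(n-1)(\C\theta)^T\F(\C\theta)$.
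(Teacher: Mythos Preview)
Your proof is correct and follows essentially the same route as the paper's: flow a standard bubble by a M\"obius field so that equality $\per(F_t(\Omega^m))=\I_m(V(F_t(\Omega^m)))$ persists for all $t$, differentiate twice to identify $\nabla\I_m=(n-1)\k$ and $\F\,\nabla^2\I_m\,\F=-(n-1)\F$, use $\F>0$ (conformal flatness) to invert, and feed the result into the PCF trace identity. Your justification that $F_t(\Omega^m)$ remains a standard bubble via Proposition~\ref{prop:standard-char} (all interfaces stay non-empty under a diffeomorphism) is in fact slightly more explicit than the paper's, which cites only Lemma~\ref{lem:Mobius-preserves-Voronoi}.
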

\begin{proof}
Let $\Omega$ be a standard-bubble $q$-cluster (with $q \leq n+2$) of volume $v_0 \in \interior \Delta^{(q-1)}$; it always exists by Lemma \ref{lem:standard-volume}. Let $\C : \R^{n+1} \rightarrow E^{(q-1)}$ be the associated quasi-center operator, which is necessarily full-rank. Recall from Corollary \ref{cor:full-dim-F} that a conformally complete system of conformal Jacobi fields $\{f^a\}_{a \in E^{(q-1)}}$ exists for $\Omega$, which may be chosen to be the normal component of any M\"obius field $W_\theta$ so that $a = C \theta$. 
Let $\F_m : E^{(q-1)} \rightarrow E^{(q-1)}$ denote the associated conformal-to-volume operator, so that:
\[
 \delta^1_{W_\theta} V = \delta^1_{f^a} V = \F_m a ~,~ Q(W_\theta) = Q^0(f^a) = -(n-1) a^T \F_m a
\]
 by Proposition \ref{prop:F-properties}. As explained above, we know that $\F_m > 0$ since $\Omega$ is conformally flat, and also that the trace identity (\ref{eq:trace-id}) holds for $\F = \F_m$ by Proposition \ref{prop:PCF-trace} (a standard-bubble is Plateau). 
 
Being a standard-bubble, $\Omega$ is clearly stationary with Lagrange multipler $\lambda = (n-1) \k \in E^{(q-1)}$ by Lemma  \ref{lem:equivalent-stationary}. Now it is crucial to observe that the flow $\Phi_t$ generated by a given M\"obius field $W_\theta$ is a conformal automorphism of $\S^n$ and hence $\Omega_t := \Phi_t(\Omega)$ remains a standard-bubble by Lemma \ref{lem:Mobius-preserves-Voronoi}, and we have equality $\per(\Omega_t) = \I_m(V(\Omega_t))$ for all $t \in \R$. Repeating the argument in Subsection \ref{subsec:viscosity}, we deduce after taking first and second derivatives at $t=0$:
\begin{itemize}
\item  $\nabla \I_m(v_0) = \lambda = (n-1) \k$, and hence the trace identity (\ref{eq:trace-id}) translates to:
\begin{equation} \label{eq:model-trace}
\tr\brac{\F_m \brac{\Id/2 + \frac{1}{(n-1)^2} \lambda \otimes \lambda}} = \HH^{n-1}(\Sigma) = \I_m(v_0) . 
\end{equation}
\item We have:
\[
(\delta^1_{W_\theta} V)^T  \nabla^2 \I_m  \delta^1_{W_\theta} V = Q(W_\theta) \;\;\; \forall \theta \in \R^{n+1} ,
\]
which translates to the following identity as symmetric quadratic forms:
\[
\F_m \nabla^2 \I_m \F_m = -(n-1) \F_m .
\]
As $\F_m > 0$, we deduce that:
\begin{equation} \label{eq:model-hessian}
 \nabla^2 \I_m = -(n-1) \F^{-1}_m  < 0 . 
 \end{equation}
\end{itemize}
Combining (\ref{eq:model-trace}) and (\ref{eq:model-hessian}), the asserted PDE for $\I_m$  is obtained. 
\end{proof}

On the other hand, for a general $3$-Plateau PCF cluster, even if one assumes that the cluster is stable, perpendicular and with equatorial cells, the positivity of $\F = L_{\Psi_{\xi}}$ is not clear at all, as we may not have $|\xi| < 1$. In the next section, we shall establish that $\F > 0$ for general $3$-Plateau \emph{perpendicular} PCF clusters (all of whose cells are non-empty).

\section{Conformal perturbation} \label{sec:perturbation}

It will be very useful for us to apply a conformal perturbation of an initial spherical Voronoi cluster $\Omega$. Throughout this section, we denote 
\begin{equation} \label{eq:Phit}
\Phi_t := \exp(t W_N) : \S^n \rightarrow \S^n ,
\end{equation}
the conformal (M\"obius) automorphism obtained by flowing along the M\"obius field $W_N = N - \scalar{N,p} p$ for time $t \in \R$. We shall always assume that $N \in \S^n$, and as our notation suggests, $N$ will be chosen to be the North pole of $\Omega$ whenever $\Omega$ is assumed perpendicular. 

\begin{lemma} \label{lem:conformal-factor}
The conformal factor of $\Phi_t$ is:
\[
\frac{1}{\cosh t +  \scalar{N,p} \sinh t } . 
\]
\end{lemma}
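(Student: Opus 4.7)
The plan is to derive a first-order ODE that the conformal factor must satisfy, and then verify that the proposed formula solves it.

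First I would set $\Phi_t^* g = \rho_t^2 g$, so that $\rho_t$ is by definition the conformal factor of $\Phi_t$. Since $\Phi_t$ is the flow generated by $W_N$, and since by Remark~\ref{rem:dilation-conformal} the M\"obius field $W_N$ is conformal Killing with $(\nabla W_N)^{\sym} = -\scalar{p,N}\,\Id$, one has $\mathcal{L}_{W_N} g = -2\scalar{p,N}\,g$. Differentiating $\Phi_t^* g = \rho_t^2 g$ in $t$ and using $\frac{d}{dt}\Phi_t^* = \Phi_t^*\mathcal{L}_{W_N}$ yields
\[
2\rho_t\dot\rho_t\,g \;=\; -2\scalar{\Phi_t(p),N}\,\rho_t^2\,g,
\]
so $\rho_t$ solves the linear ODE $\dot\rho_t = -\scalar{\Phi_t(p),N}\,\rho_t$ with $\rho_0 \equiv 1$.

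Next I would compute $u(t) := \scalar{\Phi_t(p),N}$ explicitly. The defining ODE $\frac{d}{dt}\Phi_t(p) = W_N(\Phi_t(p)) = N - \scalar{N,\Phi_t(p)}\,\Phi_t(p)$, paired against $N$, gives the Riccati equation $\dot u = 1 - u^2$ with $u(0) = \scalar{N,p}$. Its unique solution is
\[
u(t) \;=\; \tanh\bigl(t + \tanh^{-1}\scalar{N,p}\bigr) \;=\; \frac{\sinh t + \scalar{N,p}\cosh t}{\cosh t + \scalar{N,p}\sinh t}.
\]

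Finally I would plug the candidate $\rho_t(p) = \bigl(\cosh t + \scalar{N,p}\sinh t\bigr)^{-1}$ into the ODE from the first step and check directly that
\[
\dot\rho_t \;=\; -\frac{\sinh t + \scalar{N,p}\cosh t}{(\cosh t + \scalar{N,p}\sinh t)^2} \;=\; -u(t)\,\rho_t,
\]
and $\rho_0 \equiv 1$, so by uniqueness this is the conformal factor. The only thing to note for well-definedness is positivity of the denominator: since $|\scalar{N,p}| \leq 1$, one has $\cosh t + \scalar{N,p}\sinh t \geq \min(e^t, e^{-t}) > 0$ for all $t \in \R$, so $\Phi_t$ is globally defined and the formula is valid for all times. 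The argument is essentially bookkeeping once the Lie-derivative ODE and the Riccati equation are written down; there is no genuine obstacle.
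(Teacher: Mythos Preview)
Your proof is correct and takes a genuinely different route from the paper's. The paper simply writes down the explicit formula
\[
\Phi_t(p) = \frac{p - \scalar{p,N} N + (\scalar{p,N} \cosh t + \sinh t) N}{\cosh t + \scalar{p,N} \sinh t},
\]
verifies by direct computation that it satisfies the flow ODE $\frac{d}{dt}\Phi_t = W_N \circ \Phi_t$, and then reads off the conformal factor as the coefficient of $p$ (using that $\Phi_t$ is already known to be conformal). Your approach instead avoids computing $\Phi_t$ in full: you extract an ODE for $\rho_t$ from the conformal-Killing structure via the Lie derivative, reduce to computing only the scalar $\scalar{\Phi_t(p),N}$ through a Riccati equation, and then verify the candidate. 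The paper's argument is quicker if one already has the closed form for $\Phi_t$ in hand; yours is more self-contained in that it derives everything from the infinitesimal data $f_{W_N} = -\scalar{p,N}$ without needing to guess the global expression. One minor remark: the $\tanh^{-1}$ step is only literally valid for $|\scalar{N,p}| < 1$, but your final rational expression for $u(t)$ extends continuously to $\scalar{N,p} = \pm 1$ (where $u(t)$ is constant, as $\pm N$ are fixed points of the flow), so no harm is done.
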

\begin{proof}
Direct computation verifies that: \[
 \Phi_t(p) = \frac{p - \scalar{p,N} N + (\scalar{p,N} \cosh t   + \sinh t) N}{\cosh t + \scalar{p,N} \sinh t } ,
   \]
satisfies the (uniquely defining) ODE $\frac{d}{dt} \Phi_t = N - \scalar{N,\Phi_t} \Phi_t$. 
Its conformal factor is the coefficient of $p$, yielding the assertion. 
\end{proof}

\begin{lemma} \label{lem:kc-evolution}
For all $t \in \R$, $\Phi_t(\Omega)$ is a spherical Voronoi cluster whose curvature and quasi-center parameters are given by:
\begin{align*}
\k_i(t) & = \k_i(0) \cosh t - \scalar{\c_i(0), N} \sinh t ,\\
\c_i(t)  = \c_i(0) - \scalar{\c_i(0),N} N & + (\scalar{\c_i(0),N} \cosh t - \k_i(0) \sinh t ) N . 
\end{align*}
\end{lemma}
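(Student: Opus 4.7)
The first assertion, that $\Phi_t(\Omega)$ is a spherical Voronoi cluster for every $t \in \R$, is already contained in Lemma \ref{lem:Mobius-preserves-Voronoi}, since $\Phi_t$ is by construction a M\"obius automorphism of $\S^n$. It therefore only remains to identify its quasi-center and curvature parameters.

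I would proceed by direct computation. From Lemma \ref{lem:conformal-factor} (with $t$ replaced by $-t$) one has an explicit formula for $\Phi_{-t}(q) = \Phi_t^{-1}(q)$ with strictly positive denominator $\cosh t - \scalar{q,N}\sinh t$ (positivity follows since $|\scalar{q,N}| \le 1$, so there is no sign ambiguity). By the Voronoi representation (\ref{eq:Voronoi-rep}), a point $q = \Phi_t(p)$ lies in $\Phi_t(\Omega_i)$ iff $\argmin_j (\scalar{\c_j(0), \Phi_{-t}(q)} + \k_j(0)) = i$. Substitute the explicit formula for $\Phi_{-t}(q)$, multiply through by the common positive denominator (which is $j$-independent and hence preserves the argmin), and decompose $\c_j(0) = \c_j^\perp + \scalar{\c_j(0),N}N$ orthogonally with respect to $N$. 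Collecting the $q^\perp$-, $\scalar{q,N}$-, and constant-in-$q$ contributions, one reads off the resulting affine form in $q$ as $\scalar{\c_j(t), q} + \k_j(t)$ with $\c_j(t), \k_j(t)$ exactly the claimed expressions. This identifies $\Phi_t(\Omega)$ with the affine Voronoi cluster generated by these parameters, and preservation of the normalizations $\sum_j \c_j(t) = 0$ and $\sum_j \k_j(t) = 0$ is then immediate from linearity.

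To confirm that this affine Voronoi cluster is spherical Voronoi with the claimed parameters, I would finally verify the condition of Lemma \ref{lem:CCT}, i.e.\ $|\c_{ij}(t)|^2 = 1 + \k_{ij}(t)^2$. Writing $\a_i(t) := \scalar{\c_i(0),N}\cosh t - \k_i(0)\sinh t$, the formulas imply that $\c_i(t) - \a_i(t) N$ equals the $t$-independent vector $\c_i(0) - \scalar{\c_i(0),N}N$ perpendicular to $N$, while $\k_i(t) = \k_i(0)\cosh t - \scalar{\c_i(0),N}\sinh t$. A short hyperbolic computation using $\cosh^2 t - \sinh^2 t = 1$ shows that $|\c_{ij}(t)|^2 - \k_{ij}(t)^2$ is independent of $t$, so the $t=0$ hypothesis of $\Omega$ being spherical Voronoi transports to all $t$ (in fact, the identity then holds for every pair $i,j$, not only for non-empty interfaces). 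There is no real obstacle: conceptually the evolution is simply a hyperbolic rotation of angle $t$ in the two-dimensional $(\scalar{\c_i,N},\k_i)$-plane with the orthogonal component of $\c_i$ frozen, and everything else is elementary algebra.
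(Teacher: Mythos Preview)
Your proof is correct, but it takes a different route from the paper's. The paper derives the formulas by showing that $(\k_i,\c_i)$ satisfies the first-order ODE system $\frac{d}{dt}\k_i = -\scalar{\c_i,N}$, $\frac{d}{dt}\c_i = -\k_i N$, verifying these equations on each $\Sigma_{ij}$ via the first-variation-of-mean-curvature identity $\delta^1_{W_N}\k_{ij} = -\frac{1}{n-1}L_{Jac}W_N^{\n_{ij}}$ together with Lemma \ref{lem:LJac-dilation}, and then for $\c_{ij}$ via a direct differential-geometric computation of $\delta^1_{W_N}\n_{ij}$; the claimed hyperbolic expressions are then recognized as the unique solution of this linear ODE. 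Your approach is instead purely algebraic: you substitute the explicit formula for $\Phi_{-t}$ into the Voronoi representation and read off the parameters directly, then verify $|\c_{ij}(t)|^2 - \k_{ij}(t)^2$ is $t$-independent by a hyperbolic identity. Your argument is more self-contained and elementary (no variational formulas or Jacobi-operator input needed), while the paper's ODE derivation ties the result to the analytic machinery used elsewhere in the paper. Two minor remarks: the explicit formula for $\Phi_t$ you rely on appears in the \emph{proof} of Lemma \ref{lem:conformal-factor} rather than its statement, so strictly speaking you should either quote it from there or rederive it; and your parenthetical ``the identity then holds for every pair $i,j$'' should be read as the $t$-independence of $|\c_{ij}(t)|^2 - \k_{ij}(t)^2$, not as $|\c_{ij}(t)|^2 = 1 + \k_{ij}(t)^2$ for all pairs.
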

\begin{proof}
$\Phi_t(\Omega)$ remains a spherical Voronoi cluster by Lemma \ref{lem:Mobius-preserves-Voronoi}. To calculate its curvature and quasi-center parameters, it will be enough to observe that they satisfy the following system of first-order ODEs:
\[
\frac{d}{dt} {\k_{i} \choose \c_{i}} = {-\scalar{\c_{i},N} \choose -\k_{i} N} ,
\]
so that the unique solution to this system is given by the asserted equations. 

By linearity, connectedness of the interface adjacency graph, and our convention to have $\sum_{i=1}^q \k_i(t) = 0$ and $\sum_{i=1}^q \c_i(t) = 0$, it is enough to verify the above ODEs for $\k_{ij}$ and $\c_{ij}$ on $\Sigma_{ij}$. Recall that $\k_{ij} = \frac{H_{ij}}{n-1}$ and hence by (\ref{eq:LJac-deltaH}) and Lemma \ref{lem:LJac-dilation}:
\[
\delta^1_{W_N} \k_{ij} = - \frac{1}{n-1} L_{Jac} W_N^{\n_{ij}} =  - \frac{1}{n-1} L_{Jac} \scalar{\n_{ij},N} = - \scalar{\c_{ij},N} ,
\]
yielding the first ODE. For the second, we use the Euclidean flat connection $\nabla_{\R^{n+1}}$ for all computations. If $F_t : \R^{n+1} \rightarrow \R^{n+1}$ is the flow generated by $W_N$ in $\R^{n+1}$, then:
\[
\frac{d}{dt} d_{\R^{n+1}} F_t = \nabla_{\R^{n+1}} W_{N} = -p \otimes N - \scalar{N,p} \Id_{\R^{n+1}} . 
\]
It follows that $\Sigma_{ij}$ remains parallel to first order, and hence $P_{T \S^n} \delta^1_{W_N} \n_{ij} = 0$. However, $\delta^1_{W_N} \n_{ij}$ does have a component in the radial direction $p$, which is seen to be $-\scalar{\n_{ij},N} p$. 
Recalling that $\c_{ij} = \n_{ij} - \k_{ij} p$, we conclude that:
\[
\delta^1_{W_N} \c_{ij} = \delta^1_{W_N} \n_{ij} - (\delta^1_{W_N} \k_{ij}) p - \k_{ij} \delta^1_{W_N} p = 
-\scalar{\n_{ij},N} p + \scalar{\c_{ij},N} p - \k_{ij} W_N = - \k_{ij} N . 
\]
\end{proof}

\begin{corollary} \label{cor:Omega_t-PCF}
Let $\Omega$ be a perpendicular spherical Voronoi cluster with North pole $N \in \S^n$. Then $\Omega_t := \Phi_t(\Omega)$ is a PCF cluster with compatibility parameter $\xi_t := \coth (t) N$ for all $t \neq 0$. 
\end{corollary}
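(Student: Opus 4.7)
The plan is to directly verify the defining condition of being PCF (Definition \ref{def:PCF}) by plugging in $\xi_t := \coth(t) N$ and using the explicit ODE-based formulas for $\k_i(t)$ and $\c_i(t)$ given in Lemma \ref{lem:kc-evolution}. Since $\Omega$ is assumed perpendicular with North pole $N$, we have $\scalar{\c_i(0), N} = 0$ for all $i$, which will dramatically simplify those formulas.

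Specifically, substituting $\scalar{\c_i(0), N} = 0$ into Lemma \ref{lem:kc-evolution} yields
\[
\k_i(t) = \k_i(0)\cosh t, \qquad \c_i(t) = \c_i(0) - \k_i(0)\sinh t \cdot N.
\]
In particular $\scalar{\c_i(t), N} = -\k_i(0)\sinh t$, since $\c_i(0) \perp N$. It then follows immediately that
\[
\scalar{\c_i(t), \xi_t} + \k_i(t) = \coth(t) \cdot (-\k_i(0)\sinh t) + \k_i(0)\cosh t = -\k_i(0)\cosh t + \k_i(0)\cosh t = 0
\]
for every $i=1,\ldots,q$ and every $t \neq 0$.

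Finally, by Lemma \ref{lem:Mobius-preserves-Voronoi} (or equivalently the spherical-Voronoi statement inside Lemma \ref{lem:kc-evolution}), $\Omega_t = \Phi_t(\Omega)$ is itself a spherical Voronoi cluster with quasi-center and curvature parameters $\{\c_i(t)\}, \{\k_i(t)\}$, so the identity $\scalar{\c_i(t), \xi_t} + \k_i(t) = 0$ is precisely the requirement in Definition \ref{def:PCF} for $\Omega_t$ to be PCF with compatibility parameter $\xi_t = \coth(t) N$. There is no real obstacle here: the whole content is the one-line algebraic cancellation enabled by $\c_i(0) \perp N$; the only thing to keep track of is that $t \neq 0$ is needed so that $\coth(t)$ makes sense (consistent with the statement).
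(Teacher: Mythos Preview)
Your proof is correct and follows essentially the same approach as the paper: both specialize the formulas of Lemma~\ref{lem:kc-evolution} using $\scalar{\c_i(0),N}=0$ to obtain $\k_i(t)=\k_i(0)\cosh t$ and $\c_i(t)=\c_i(0)-\k_i(0)\sinh t\,N$, and then verify directly that $\scalar{\c_i(t),\xi_t}+\k_i(t)=0$.
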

\begin{proof}
Since $N \perp \c_{i}$ for all $i=1,\ldots,q$, we obtain from the previous lemma that $\Omega_t$ is a spherical Voronoi cluster with parameters:
\begin{equation} \label{eq:Mobius-kct}
\k_{i}(t) = \k_{i}(0) \cosh(t) ~,~ \c_i(t) = \c_{i}(0)  -  \sinh (t) \k_{i}(0)  N . 
\end{equation}
Therefore:
\[
\scalar{\c_{i}(t) , N} = - \sinh(t) \k_{i} = -\tanh(t) \k_{i}(t) , 
\]
and hence:
\[
\scalar{\c_{i}(t) , \xi_t} = -\k_{i}(t) . 
\]
\end{proof}

See Figure \ref{fig:conformal-perturbation} in the Introduction for a graphical depiction of the conformally perturbed $\{\Omega_t\}$. 

\subsection{The relaxed conformal-to-volume operator $\F_0$} \label{subsec:relaxed}

We henceforth only consider perpendicular spherical Voronoi clusters $\Omega$. By the previous corollary, we know that $\Omega_t$ is necessarily a PCF cluster with compatibility parameter $\xi_t$ for all $t \neq 0$. It follows by Proposition \ref{prop:PCF-F} that a conformally complete system of conformal Jacobi fields exists for $\Omega_t$ for all $t \neq 0$, and that its associated conformal-to-volume operator is given by:
\[
\F_t := L_{\Psi_{\xi_t}} = \sum_{i<j} \int_{\Sigma_{ij}} \Psi_{\xi_t} d\HH^{n-1} \;  e_{ij} \otimes e_{ij}  ,
\]
where:
\[
  \Psi_{\xi_t} := 1 - \scalar{p,\xi_t} = 1 - \coth(t) \scalar{p,N} . 
\]

Even though $\Psi_{\xi_t}$ is blowing up as $t \rightarrow 0$, let us verify that the limit $\lim_{t \rightarrow 0} \F_t$ nevertheless exists and compute it. As  these are finite-dimensional linear operators, convergence is understood in any norm (as they are all equivalent). 

\begin{proposition} \label{prop:F0}
Let $\Omega$ be a perpendicular spherical Voronoi cluster with North pole $N \in \S^n$. Then the limit $\lim_{t \rightarrow 0} F_t$ exists and is equal to:
\[
\lim_{t \rightarrow 0} \F_t = \F_0 := n L_{\scalar{p,N}^2} = n \sum_{i<j} \int_{\Sigma_{ij}} \scalar{p,N}^2 d\HH^{n-1}(p) \; 
 e_{ij} \otimes e_{ij}.
 \]
The operator $\F_0 : E^{(q-1)} \rightarrow E^{(q-1)}$ shall be called the \emph{relaxed} conformal-to-volume operator associated to $\Omega$. 
\end{proposition}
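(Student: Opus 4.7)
The plan is to compute $\F_t$ explicitly by pulling back integration from $\Sigma_{ij}(t)=\Phi_t(\Sigma_{ij}(0))$ to $\Sigma_{ij}(0)$, and to exploit perpendicularity to cancel the apparent $\coth(t)$ singularity as $t\to 0$.

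\textbf{Step 1: Change of variables.} Since $\Phi_t:\S^n\to\S^n$ is a conformal diffeomorphism with conformal factor $\lambda_t(q):=(\cosh t+\langle N,q\rangle\sinh t)^{-1}$ (Lemma \ref{lem:conformal-factor}), its restriction to the hypersurface $\Sigma_{ij}(0)$ scales area by $\lambda_t^{n-1}$. Hence for any integrable $f$,
\[
\int_{\Sigma_{ij}(t)} f(p)\,d\HH^{n-1}(p)=\int_{\Sigma_{ij}(0)} f(\Phi_t(q))\,\lambda_t(q)^{n-1}\,d\HH^{n-1}(q).
\]

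\textbf{Step 2: Simplify the integrand.} Using the explicit formula for $\Phi_t$ recorded in the proof of Lemma \ref{lem:conformal-factor}, a direct computation yields
\[
\langle \Phi_t(q),N\rangle=\frac{\langle q,N\rangle\cosh t+\sinh t}{\cosh t+\langle q,N\rangle\sinh t},
\]
and therefore, after clearing denominators and using $\cosh^2 t-\sinh^2 t=1$,
\[
1-\coth(t)\,\langle\Phi_t(q),N\rangle=-\frac{\langle q,N\rangle}{\sinh t\,(\cosh t+\langle q,N\rangle\sinh t)}=-\frac{\langle q,N\rangle\,\lambda_t(q)}{\sinh t}.
\]
Combining with Step 1, the $(i,j)$-coefficient of $\F_t$ equals
\[
\int_{\Sigma_{ij}(t)}\Psi_{\xi_t}\,d\HH^{n-1}=-\frac{1}{\sinh t}\int_{\Sigma_{ij}(0)}\langle q,N\rangle\,\lambda_t(q)^n\,d\HH^{n-1}(q).
\]

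\textbf{Step 3: Exploit perpendicularity.} Since $\Omega$ is perpendicular with North pole $N$, each interface $\Sigma_{ij}(0)$ is invariant under reflection about $N^\perp$, and the integrand $q\mapsto\langle q,N\rangle$ is odd under this reflection. Consequently
\[
\int_{\Sigma_{ij}(0)}\langle q,N\rangle\,d\HH^{n-1}(q)=0.
\]

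\textbf{Step 4: Pass to the limit.} Expanding $\lambda_t(q)^n=1-n\,t\,\langle q,N\rangle+O(t^2)$ uniformly in $q\in\Sigma_{ij}(0)$ (which is compact in $\S^n$, so all error terms are uniformly bounded), the vanishing in Step 3 kills the $1/\sinh t$ singularity:
\[
-\frac{1}{\sinh t}\int_{\Sigma_{ij}(0)}\langle q,N\rangle\bigl(1-n\,t\,\langle q,N\rangle+O(t^2)\bigr)\,d\HH^{n-1}(q)\xrightarrow[t\to 0]{}n\int_{\Sigma_{ij}(0)}\langle q,N\rangle^2\,d\HH^{n-1}(q).
\]
Summing against $e_{ij}\otimes e_{ij}$ over $i<j$ identifies $\lim_{t\to 0}\F_t$ with $n\,L_{\langle p,N\rangle^2}=\F_0$, as claimed.

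There is no real obstacle: the only thing that could go wrong is the blow-up of $\coth(t)$, and it is eliminated precisely by the perpendicularity-driven cancellation in Step 3. All the remaining work is routine change-of-variables and a first-order Taylor expansion.
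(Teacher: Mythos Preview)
Your proof is correct and follows essentially the same route as the paper: pull back the integral over $\Sigma_{ij}(t)$ to $\Sigma_{ij}(0)$ via the conformal Jacobian $\lambda_t^{n-1}$, and use perpendicularity to kill the singular $1/\sinh t$ term. Your exact identity $\Psi_{\xi_t}(\Phi_t(q))=-\langle q,N\rangle\lambda_t(q)/\sinh t$ is in fact slightly cleaner than the paper's separate Taylor expansions of $\Phi_t$ and $\xi_t$; one minor imprecision is that $\Sigma_{ij}(0)$ is relatively open rather than compact, but the uniform $O(t^2)$ bound still holds since $\lambda_t$ is smooth on all of $\S^n$.
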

\begin{proof}
As all operators are symmetric and act on $E^{(q-1)}$, it is enough to verify pointwise convergence of off-diagonal entries in a matrix representation with respect to the standard basis $e_1,\ldots,e_q$.

The conformal factor of the diffeomorphism $\Phi_t$ is $(\cosh(t) + \sinh(t) \scalar{N,p})^{-1} = 1- t \scalar{N,p} + o(t)$, and so its Jacobian is $\Jac \Phi_t(p) = 1 - (n-1) \scalar{N,p} t + o(t)$. In addition, $\xi_t = N \coth(t) = \frac{1}{t} N +  o(1)$ and $\Phi_t(p) = p + W_N(p) t + o(t)$. Consequently, denoting $\Sigma_{ij}(t) = \Phi_t(\Sigma_{ij})$ and applying the change of variables $p_t = \Phi_t(p)$, we obtain for any $i < j$:
\begin{align*}
-(\F_t)_{ij} & =  \int_{\Sigma_{ij}(t)} (1 - \scalar{p_t,\xi_t}) d\HH^{n-1}(p_t) = \int_{\Sigma_{ij}} (1 - \scalar{\Phi_t(p),\xi_t}) \Jac \Phi_t(p) dp \\
& =  \int_{\Sigma_{ij}} (1 - \sscalar{p + t W_N(p) , N/t} + o(1))(1 - (n-1) \scalar{p,N} t + o(t)) d\HH^{n-1}(p)  . 
\end{align*}
As $\scalar{W_N(p),N} = 1 - \scalar{p,N}^2$, we obtain:
\[
= \int_{\Sigma_{ij}} (\scalar{p,N}^2 - \sscalar{p,N/t}) (1 - (n-1) \scalar{p,N} t) dp + o(1) .
\]
But since $\Sigma_{ij}$ is symmetric with respect to reflection about $N^{\perp}$ and $\scalar{p,N}$ is odd, the term $\scalar{p,N/t}$ which appears to blow-up integrates to zero, and its only contribution is after being multiplied by the term $- (n-1) \scalar{p,N} t$, yielding a grand total of:
\[
= n \int_{\Sigma_{ij}} \scalar{p,N}^2 dp + o(1) = -(\F_0)_{ij} + o(1) . 
\]
\end{proof}

\begin{proposition} \label{prop:trace-id-F0}
Let $\Omega$ be a $3$-Plateau perpendicular spherical Voronoi cluster. Then:
\begin{enumerate}[(i)]
\item If  $V(\Omega) \in \interior \Delta^{(q-1)}$ then $\F_0 > 0$ is strictly positive-definite. 
\item The trace identity (\ref{eq:trace-id}) holds for $\F_0$ in place of $\F$:
\begin{equation} \label{eq:trace-id-F0}
\tr( \F_0 \brac{\Id/2 + \k \otimes \k}) = \HH^{n-1}(\Sigma) . 
\end{equation}
\end{enumerate}
\end{proposition}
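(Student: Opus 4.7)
\medskip

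\noindent\textbf{Proof plan.} I would treat the two parts essentially in parallel, each as a short computation invoking tools already developed in the paper.

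For part (i), the plan is to reduce positive-definiteness of $\F_0$ to the general criterion of Lemmas~\ref{lem:LA-connected} and~\ref{lem:LA-positive}. Since $V(\Omega)\in\interior\Delta^{(q-1)}$, the interface adjacency graph of $\Omega$ is connected by Lemma~\ref{lem:LA-connected}. It therefore suffices to check that the weight $\int_{\Sigma_{ij}}\scalar{p,N}^2\,d\HH^{n-1}$ is strictly positive for every non-empty interface $\Sigma_{ij}$. This is the same observation as in Remark~\ref{rem:crazy-pos-def}: perpendicularity forces $\c_{ij}\perp N$, so the sphere $S_{ij}$ intersects the equator $\S^n\cap N^\perp$ transversally, and hence the relatively open subset $\Sigma_{ij}\subset S_{ij}$ cannot be contained in $\{\scalar{p,N}=0\}$; consequently $\int_{\Sigma_{ij}}\scalar{p,N}^2\,d\HH^{n-1}>0$.

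For part (ii), the main idea is to exploit the last identity of Lemma~\ref{lem:LJac-dilation}, which says that on any geodesic sphere $S\subset\S^n$ of curvature $\k$ whose quasi-center is perpendicular to $N$,
\[
n(1+\k^2)\scalar{N,p}^2 \;=\; 1 \;-\; \tfrac{1}{2}\Delta_\Sigma\scalar{N,p}^2.
\]
Applied on each $\Sigma_{ij}$ (whose quasi-center $\c_{ij}$ is orthogonal to $N$ by perpendicularity), and after noting that $\tr((\Id/2+\k\otimes\k)(e_{ij}\otimes e_{ij}))=1+\k_{ij}^2$, this gives
\[
\tr\bigl(\F_0(\Id/2+\k\otimes\k)\bigr)
=n\sum_{i<j}\int_{\Sigma_{ij}}(1+\k_{ij}^2)\scalar{p,N}^2\,d\HH^{n-1}
=\sum_{i<j}\int_{\Sigma_{ij}}\bigl(1-\tfrac12\Delta_\Sigma\scalar{N,p}^2\bigr)d\HH^{n-1}.
\]
The first summand is exactly $\HH^{n-1}(\Sigma^1)=\HH^{n-1}(\Sigma)$, so the identity follows once the Laplacian piece vanishes.

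That vanishing is the only step requiring outside input, and is where I expect to apply Lemma~\ref{lem:LJac-non-oriented}: its second conclusion states that $\int_{\Sigma^1}\Delta_{\Sigma,\mu}\Phi\,d\mu^{n-1}=0$ for any smooth function $\Phi$ on $M$, on a stationary regular cluster of locally bounded curvature. The prerequisites are met here because a $3$-Plateau spherical Voronoi cluster is regular (Lemma~\ref{lem:Voronoi-regular}), stationary (Lemma~\ref{lem:Voronoi-stationary}), and trivially of bounded curvature; and on $\S^n$ the unweighted and weighted surface Laplacians coincide. Applying this with $\Phi(p)=\scalar{N,p}^2$ kills the Laplacian term and yields $\tr(\F_0(\Id/2+\k\otimes\k))=\HH^{n-1}(\Sigma)$, completing the proof. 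I do not anticipate a real obstacle: the substantive content is already packaged into Lemmas~\ref{lem:LJac-dilation} and~\ref{lem:LJac-non-oriented}, and the argument is essentially a one-line integration-by-parts after invoking the sphere identity.
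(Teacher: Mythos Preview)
Your proposal is correct and follows essentially the same approach as the paper: part (i) reduces to Lemmas~\ref{lem:LA-connected} and~\ref{lem:LA-positive} via the positivity of $\int_{\Sigma_{ij}}\scalar{p,N}^2\,d\HH^{n-1}$ (as in Remark~\ref{rem:crazy-pos-def}), and part (ii) uses the sphere identity from Lemma~\ref{lem:LJac-dilation} together with Lemma~\ref{lem:LJac-non-oriented} to eliminate the Laplacian term. The paper invokes exactly the same lemmas (including regularity and stationarity via Lemmas~\ref{lem:Voronoi-regular} and~\ref{lem:Voronoi-stationary}) and performs the identical computation.
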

\begin{proof} 
\hfill
\begin{enumerate}[(i)]
\item
Since $\Omega$ is perpendicular, we have $\int_{\Sigma_{ij}} \scalar{p,N}^2 d\HH^{n-1}(p) > 0$ for all non-empty interfaces $\Sigma_{ij}$, and since $V(\Omega) \in \interior \Delta^{(q-1)}$, we deduce that $F_0 = n L_{\scalar{p,N}^2}  > 0$ is strictly positive-definite by Lemmas \ref{lem:LA-connected} and \ref{lem:LA-positive}. 
\item 
Using that $\Omega$ is perpendicular again and recalling Lemmas \ref{lem:LJac-dilation}, as well as the regularity (Lemma \ref{lem:Voronoi-regular}), stationarity (Lemma \ref{lem:Voronoi-stationary})  and Lemma \ref{lem:LJac-non-oriented}, we confirm:
\begin{align*}
& \tr( \F_0 \brac{\Id/2 + \k \otimes \k}) = n \sum_{i<j} \int_{\Sigma_{ij}} (1+\k_{ij})^2 \scalar{p,N}^2 d\HH^{n-1}(p) \\
& = \int_{\Sigma^1} \brac{1 - \frac{1}{2} \Delta_{\Sigma} \scalar{p,N}^2} d\HH^{n-1}(p) = \int_{\Sigma^1} d\HH^{n-1} = \HH^{n-1}(\Sigma) . 
\end{align*}

\end{enumerate}
\end{proof}

\subsection{Relation to the actual conformal-to-volume operator $\F$}

We were unable to verify the following:
\begin{conjecture} \label{conj:F=F0}
Let $\Omega$ be a $3$-Plateau, stable, perpendicular spherical Voronoi $q$-cluster with equatorial cells and $q \leq n+1$. Then $\F = \F_0$, and in particular, $\F > 0$.
\end{conjecture}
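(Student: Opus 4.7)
The plan is to exploit the conformal perturbation family $\Omega_t = \Phi_t(\Omega)$ from Section~\ref{sec:perturbation}: since $\Omega_t$ is PCF for all $t\neq 0$ (Corollary~\ref{cor:Omega_t-PCF}), Proposition~\ref{prop:PCF-F} provides the explicit formula $\F_t = L_{\Psi_{\xi_t}}$, and Proposition~\ref{prop:F0} gives $\lim_{t\to 0}\F_t = \F_0$. The conjecture therefore reduces to showing that $\F$ is continuous along this family: $\F(\Omega) = \lim_{t\to 0}\F(\Omega_t)$. Once $\F = \F_0$ is established, positive-definiteness $\F > 0$ follows immediately from Proposition~\ref{prop:trace-id-F0}, so the issue is purely one of continuity.

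I would approach this by constructing conformal Jacobi fields on $\Omega$ as appropriately renormalized pullbacks of the explicit fields $f^a_t = a_{ij}\Psi_{\xi_t}$ on $\Omega_t$ under $\Phi_t$, and then invoking volumetric non-degeneracy (Theorem~\ref{thm:volumetric non-degeneracy}) to identify the limit with the unique conformal Jacobi field on $\Omega$ modulo $\ker L_{Jac}^\Omega$. Direct computation using Lemma~\ref{lem:conformal-factor} and perpendicularity ($\sscalar{\c_{ij},N}=0$) shows that the pullback $\tilde f^a_t(p) := f^a_t(\Phi_t(p))$ for $p\in \Sigma_{ij}$ equals
\[
\tilde f^a_t(p) \;=\; -\,a_{ij}\,\scalar{p,N}\,\frac{1-\tanh^2 t}{\tanh t\,(1+\tanh t\,\scalar{p,N})},
\]
which diverges like $-a_{ij}\scalar{p,N}/\tanh t$ when $t\to 0$. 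The leading divergent part is precisely $-g^{(a)}/\tanh t$, which by Proposition~\ref{prop:skew-fields} lies in $\ker L_{Jac}^\Omega$ and contributes zero to $\delta^1 V$; subtracting it off, the renormalized pullback converges pointwise to $a_{ij}\scalar{p,N}^2$ on each $\Sigma_{ij}$, matching the functional form appearing in $\F_0 = nL_{\scalar{p,N}^2}$.

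The principal obstacle is that the naive pointwise limit $a_{ij}\scalar{p,N}^2$ is \emph{not} itself a conformal Jacobi field on $\Omega$: Lemma~\ref{lem:LJac-dilation} yields $L_{Jac}^\Omega(\scalar{p,N}^2) = 2 - (n+1)(1+\k_{ij}^2)\scalar{p,N}^2$, which is $p$-dependent and depends on the interface, hence not of the required constant form $(n-1)\tilde a_{ij}$. The discrepancy reflects the fact that $L_{Jac}^{\Omega_t}$ and $L_{Jac}^\Omega$ are not simply conjugated under naive pullback by $\Phi_t$; the correct transformation involves an intertwining factor $\phi_t^\alpha$ stemming from the conformal covariance of the Yamabe-type operator on $(n-1)$-dimensional hypersurfaces. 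The plan is to derive the precise conformal covariance identity relating these two Jacobi operators -- tracking the transformation of $\II$ and of the boundary term $\bar\II^{\partial ij}$ using Lemma~\ref{lem:kc-evolution} -- and to replace the naive pullback by a conformally weighted version $\phi_t^\alpha\tilde f^a_t$, so that the renormalized limit becomes a genuine conformal Jacobi field on $\Omega$ with the appropriate conformal parameter.

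The hardest part will be enforcing compatibility across the triple-junction set $\Sigma^2$: the conformal boundary conditions of Definition~\ref{def:conformal-BCs} couple the values of the field on adjacent interfaces through $\bar\II^{\partial ij}$, and while Lemma~\ref{lem:conformal-boundary} guarantees these are preserved for \emph{infinitesimal} conformal Killing fields, there is no a priori reason the same holds for the weighted pullback of a non-Killing scalar field under a finite conformal transformation. This incompatibility at $\Sigma^2$ (and a fortiori at higher-codimension strata $\Sigma^{\geq 3}$) appears to be the fundamental reason the authors can establish $\F = \F_0$ unconditionally only when $\Omega$ itself is already PCF, where the closed form $f^a = a_{ij}\Psi_\xi$ sidesteps the need to track $L_{Jac}$ under deformation. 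A full resolution may therefore require a substantially new ingredient, such as the Locality Conjecture alluded to in the concluding discussion, or a globally defined family of test fields on $\Omega_t$ that avoids direct pullback altogether.
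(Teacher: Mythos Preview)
This statement is a \emph{conjecture} in the paper, not a theorem: the authors explicitly write ``We were unable to verify the following'' immediately before it, and the paper contains no proof. Your proposal is accordingly not a proof either, but rather an honest exploration of the most natural approach, in which you yourself locate the obstruction. So there is nothing to compare against---you are attempting something the authors could not do.

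Your diagnosis of the difficulty is accurate and aligns with the paper's own commentary. Remark~\ref{rem:F=F0} states the problem in exactly your terms: the conjecture is equivalent to the continuity $\F(\lim_{t\to 0}\Omega_t)=\lim_{t\to 0}\F(\Omega_t)$ along the conformal family. Your pullback computation is correct, and you correctly observe that after subtracting the divergent skew-field $-g^{(a)}/\tanh t$ (which lies in $\ker L_{Jac}$ and is volume-preserving by Proposition~\ref{prop:skew-fields}), the pointwise limit $a_{ij}\scalar{p,N}^2$ is \emph{not} a conformal Jacobi field on $\Omega$. This is precisely the gap: the operators $L_{Jac}^{\Omega_t}$ do not intertwine under pullback in a way that produces conformal Jacobi fields in the limit, and the conformal boundary conditions at $\Sigma^2$ do not obviously survive the limiting procedure either.

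Your proposed remedy---a conformally weighted pullback $\phi_t^\alpha \tilde f^a_t$---does not resolve this. The Jacobi operator $L_{Jac}=\Delta_\Sigma+(n-1)+\|\II\|^2$ is not conformally covariant in the Yamabe sense (the potential term $\|\II\|^2$ transforms inhomogeneously under conformal change of the ambient metric, and here the conformal map is of the ambient sphere, not of the interfaces themselves), so no single exponent $\alpha$ will convert $L_{Jac}^{\Omega_t}$-eigenfunctions into $L_{Jac}^{\Omega}$-eigenfunctions. The paper's own Conjecture~\ref{conj:bounded} records essentially the same obstacle from a different angle: one would need uniformly bounded representatives of the conformal Jacobi fields on $\Omega_t$ as $t\to 0$, and the explicit ones blow up. The authors circumvent this only in the PCF case (Proposition~\ref{prop:PCF-F=F0}), where an alternative non-divergent family of conformal Jacobi fields is available on $\Omega_t$ itself, avoiding pullback entirely. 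Absent that, the conjecture remains open.
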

\begin{remark} \label{rem:F=F0}
In other words, the conjecture is that $\F( \lim_{t \rightarrow 0} \Omega_t) = \lim_{t \rightarrow 0} \F(\Omega_t)$, namely that $\F$ is continuous on the one-parameter family of spherical Voronoi clusters $\Omega_t$ given by (\ref{eq:Mobius-kct}), obtained as a conformal deformation $\Omega_t = \Phi_t(\Omega)$ with $\Phi_t$ given by (\ref{eq:Phit}).  
\end{remark}

\begin{remark}
Note that in view of (\ref{eq:trace-id-F0}), Conjecture \ref{conj:F=F0} is stronger than Conjecture \ref{conj:trace-id}. Consequently, a confirmation of Conjecture \ref{conj:F=F0} would immediately extend all of our results in this work from the quintuple case to arbitrary $q \leq n+1$ -- see Remark \ref{rem:conjectures}. 
\end{remark}

What we can show is the following partial confirmation of Conjecture \ref{conj:F=F0}:

\begin{proposition} \label{prop:FC=F0C}
Let $\Omega$ be a perpendicular spherical Voronoi cluster having a conformally complete system of conformal Jacobi fields. Then $\F \C = \F_0 \C$. 
\end{proposition}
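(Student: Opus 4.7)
The strategy is to apply the identity $\F\C=\N$ (Proposition \ref{prop:FC=N}) to the one-parameter family of conformally perturbed clusters $\Omega_t := \Phi_t(\Omega)$ introduced in this section, and then pass to the limit $t\to 0$. The key observation is that although $\Omega$ itself is only assumed to admit a conformally complete system (so that $\F$ exists but we have no direct handle on it), for every $t\neq 0$ the cluster $\Omega_t$ is a \emph{PCF} spherical Voronoi cluster (Corollary \ref{cor:Omega_t-PCF}), and hence by Proposition \ref{prop:PCF-F} its conformal-to-volume operator $\F_t=L_{\Psi_{\xi_t}}$ is explicit and accessible. Since $\Omega$ is $3$-Plateau by the standing assumption of this section, Lemma \ref{lem:Mobius-preserves-Voronoi} ensures that $\Omega_t$ is also $3$-Plateau, so Proposition \ref{prop:FC=N} applies to yield
\[
\F_t\,\C_t \;=\; \N_t \qquad \forall t\neq 0,
\]
where $\C_t$ and $\N_t$ denote the quasi-center and normal-integral operators of $\Omega_t$.

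Next I would check the three limits as $t\to 0$. First, $\F_t\to\F_0$ is exactly the content of Proposition \ref{prop:F0}. Second, from the explicit evolution of the quasi-centers in Lemma \ref{lem:kc-evolution}, together with the perpendicularity hypothesis $\scalar{\c_i(0),N}=0$, one gets
\[
\c_i(t)=\c_i(0)-\sinh(t)\,\k_i(0)\,N,
\]
so $\C_t = \C - \sinh(t)\,\k\otimes N \to \C$ as $t\to 0$. Third, to obtain $\N_t\to\N$, pull the integrals defining $\N_t$ back to the fixed interfaces $\Sigma_{ij}$ via the change of variables $p_t=\Phi_t(p)$; using the explicit conformal factor from Lemma \ref{lem:conformal-factor} and the representation $\n_{ij}(t)(\Phi_t(p)) = \c_{ij}(t)+\k_{ij}(t)\,\Phi_t(p)$, the integrand converges pointwise to $\n_{ij}(p)$ and is uniformly bounded for $|t|\le 1$, so dominated convergence yields $\N_t\to\N$.

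Combining the three convergences in the identity $\F_t\C_t=\N_t$ yields $\F_0\C=\N$. On the other hand, since $\Omega$ itself is assumed to admit a conformally complete system, Proposition \ref{prop:FC=N} applied directly to $\Omega$ gives $\F\C=\N$. Equating the two gives the desired $\F\C=\F_0\C$.

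The argument is essentially mechanical once the conformal-perturbation framework of Section \ref{sec:perturbation} is available; the only (mildly) delicate point is the convergence $\N_t\to\N$, but this reduces to dominated convergence on each fixed interface $\Sigma_{ij}$. Note that it is crucial here that we do \emph{not} need to know $\lim_{t\to 0}\F_t=\F$ (which is the content of the unresolved Conjecture \ref{conj:F=F0}); rather, what saves us is that $\C$ annihilates the ``problematic'' direction: the only way $\F$ and $\F_0$ could differ is on $\ker\N=\ker(\F\C)$, and that is precisely the direction killed by $\C$ from the right.
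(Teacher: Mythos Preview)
Your proposal is correct and follows essentially the same approach as the paper's proof: apply $\F_t\C_t=\N_t$ for $t\neq 0$, use the limits $\F_t\to\F_0$, $\C_t\to\C$, $\N_t\to\N$ to obtain $\F_0\C=\N$, and combine with $\F\C=\N$ from Proposition~\ref{prop:FC=N} applied to $\Omega$ itself. The paper simply declares the convergences $\C_t\to\C$ and $\N_t\to\N$ ``trivial'' rather than spelling out the dominated-convergence argument you give, but the logic is identical.
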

\begin{proof}
Denote $\Omega_t = \Phi_t(\Omega)$, the spherical Voronoi cluster obtained after perturbing $\Omega$ using the conformal automorphism $\Phi_t$. Denote by $\F^t$, $\C^t$ and $\N^t$ the usual operators corresponding to $\Omega_t$; note that $\F^0 = \F$ is well-defined by assumption, and that $\F^t$ for $t \neq 0$ is well-defined by Proposition \ref{prop:PCF-F} since $\Omega_t$ is a PCF cluster by Corollary \ref{cor:Omega_t-PCF}. Recall by Proposition \ref{prop:FC=N} that $\F^t \C^t = \N^t$.  

Now, since $\C^t \rightarrow \C^0=\C$ and $\N^t \rightarrow \N^0 =\N$ trivially, it follows that:
\[
\F_0 \C = (\lim_{t \rightarrow 0} \F^t) \C =  \lim_{t \rightarrow 0} (\F^t \C^t) =  \lim_{t \rightarrow 0}  \N^t = \N = \F \C . 
\]
\end{proof}

We obtain the following corollary, which may be of independent interest:
\begin{corollary}
Let $\Omega$ be a perpendicular spherical Voronoi cluster having a conformally complete system of conformal Jacobi fields. Then $\R^{n+1} \ni \theta \mapsto Q(W_\theta)$ is negative semi-definite, and moreover:
\[
Q(W_\theta) = - (n-1) n \sum_{i<j} \int_{\Sigma_{ij}} \scalar{\theta,\c_{ij}}^2 \scalar{p,N}^2 d\HH^{n-1}(p) \;\;\; \forall \theta \in \R^{n+1}.
\]
\end{corollary}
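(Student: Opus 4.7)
\medskip

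\noindent\textbf{Proof plan.} The plan is to assemble the corollary from three facts already in place: the identification of $W_\theta^\n$ as a conformal Jacobi field, the quadratic form interpretation of $\F$ in terms of $Q^0$, and the identity $\F \C = \F_0 \C$ from Proposition~\ref{prop:FC=F0C}. The assumption of a conformally complete system guarantees that the operators $\F$ and $\F_0$ associated to $\Omega$ are both defined, and the perpendicularity is what makes Proposition~\ref{prop:FC=F0C} applicable.

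The first step is to invoke Lemma~\ref{lem:Mobius-Jacobi}, which identifies the physical scalar field $W_\theta^\n$ as a conformal Jacobi field with conformal parameter $a = \C \theta \in E^{(q-1)}$. Consequently one may compute $Q(W_\theta) = Q^0(W_\theta^\n)$ using the quadratic form identity from Proposition~\ref{prop:F-properties}, namely
\[
Q(W_\theta) \;=\; Q^0(f^{\C\theta}) \;=\; -(n-1)\,(\C\theta)^T \F (\C\theta) \;=\; -(n-1)\,\theta^T \C^T \F \C \,\theta .
\]

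The second step is to replace $\F \C$ by $\F_0 \C$, which is precisely the content of Proposition~\ref{prop:FC=F0C} under the perpendicularity hypothesis. This yields
\[
Q(W_\theta) \;=\; -(n-1)\,\theta^T \C^T \F_0 \C \,\theta .
\]
Now one simply unpacks the explicit formula for $\F_0$ from Proposition~\ref{prop:F0}, namely $\F_0 = n \sum_{i<j} \int_{\Sigma_{ij}} \scalar{p,N}^2 d\HH^{n-1}(p) \, e_{ij} \otimes e_{ij}$, and uses that $\scalar{e_{ij}, \C\theta} = \scalar{\c_i - \c_j, \theta} = \scalar{\c_{ij},\theta}$. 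Substituting into the above display produces
\[
Q(W_\theta) \;=\; -(n-1)\,n \sum_{i<j} \int_{\Sigma_{ij}} \scalar{\theta,\c_{ij}}^2 \scalar{p,N}^2 \,d\HH^{n-1}(p) ,
\]
which is the asserted formula. Negative semi-definiteness of $\theta \mapsto Q(W_\theta)$ is then immediate from the manifestly non-negative integrand.

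There is essentially no obstacle in this derivation: all the heavy lifting is already done in Lemma~\ref{lem:Mobius-Jacobi}, Proposition~\ref{prop:F-properties}, Proposition~\ref{prop:F0}, and Proposition~\ref{prop:FC=F0C}. The only thing worth double-checking is the legitimacy of chaining $\C^T \F \C = \C^T \F_0 \C$ from $\F \C = \F_0 \C$ (one simply transposes and uses symmetry of both operators, noting that both $\F$ and $\F_0$ act on $E^{(q-1)}$ and that $\Im \C \subset E^{(q-1)}$ by the convention $\sum_i \c_i = 0$).
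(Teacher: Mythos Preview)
Your proof is correct and follows essentially the same route as the paper: invoke Lemma~\ref{lem:Mobius-Jacobi} to identify $W_\theta^\n$ as a conformal Jacobi field with parameter $\C\theta$, use Proposition~\ref{prop:F-properties} to write $Q(W_\theta)=Q^0(W_\theta^\n)=-(n-1)\theta^T\C^T\F\C\theta$, apply Proposition~\ref{prop:FC=F0C} to replace $\F\C$ by $\F_0\C$, and then unpack $\F_0 = n L_{\scalar{p,N}^2}$ from Proposition~\ref{prop:F0}. Your additional remark about why $\C^T\F\C=\C^T\F_0\C$ follows from $\F\C=\F_0\C$ is a harmless elaboration of what the paper leaves implicit.
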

\begin{proof}
By Lemma \ref{lem:Mobius-Jacobi}, $W_\theta^\n$ is a conformal Jacobi field with conformal parameter $a = \C \theta$, and hence by Propositions \ref{prop:F-properties} and \ref{prop:FC=F0C}:
\[
Q(W_\theta) = Q^0(W_\theta^\n) = -(n-1) a^T \F a = -(n-1) \theta^T \C^T \F \C \theta = -(n-1) \theta^T \C^T \F_0 \C \theta .
\]
Recalling that $\F_0 = n L_{\scalar{p,N}^2}$, the assertion follows. 
\end{proof}
\begin{remark}
We don't know whether $Q(W_\theta)$ is negative semi-definite on a general spherical Voronoi cluster, since the general formula we have (applying Lemma \ref{lem:LJac-dilation}) is:
\[
Q(W_\theta) = Q^0(W_\theta^\n) = -(n-1) \scalar{L_{Jac} \scalar{\theta,\n} , \scalar{\theta,\n}}_{L^2(\Sigma^1)} = -(n-1) \sum_{i<j} \int_{\Sigma_{ij}} \scalar{\theta,\c_{ij}} \scalar{\n_{ij}, \theta} d\HH^{n-1} ,
\]
which does not have a clear sign. This is in contrast to the Gaussian setting, where $Q(T_\theta) \leq 0$ for translation fields $T_\theta$ \cite[Theorem 4.10]{EMilmanNeeman-GaussianMultiBubble}. 
\end{remark}

Another immediate corollary is:
\begin{corollary} \label{cor:F=F0-full-dim}
Let $\Omega$ be a full-dimensional perpendicular spherical Voronoi $q$-cluster with $q \leq n+2$ (in particular, this holds if $\Omega$ is a perpendicular standard bubble when $q \leq n+1$). Then $\F = \F_0$, and in particular, $\F > 0$ if $V(\Omega) \in \interior \Delta^{(q-1)}$. 
\end{corollary}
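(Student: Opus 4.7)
The plan is straightforward: the corollary follows by chaining together results already established in the section, so there is no genuine obstacle to overcome. I will simply assemble them in the right order.

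First, I would invoke Corollary \ref{cor:full-dim-F}, which guarantees that a conformally complete system of conformal Jacobi fields exists for $\Omega$ (this only requires $q \leq n+2$ and full-dimensionality). Consequently, the conformal-to-volume operator $\F : E^{(q-1)} \to E^{(q-1)}$ is well-defined for $\Omega$. On the other hand, since $\Omega$ is perpendicular by assumption, the relaxed operator $\F_0 = n L_{\scalar{p,N}^2}$ is also well-defined, and Proposition \ref{prop:FC=F0C} yields the key identity
\[
\F \, \C \;=\; \F_0 \, \C .
\]

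Next I would exploit full-dimensionality to cancel $\C$ on the right. Indeed, full-dimensionality means $\arank(\C) = \min(q-1,n+1) = q-1$ (since $q \leq n+2$), so that $\C : \R^{n+1} \to E^{(q-1)}$ is surjective. Given any $a \in E^{(q-1)}$, pick $\theta \in \R^{n+1}$ with $\C \theta = a$; then $\F a = \F \C \theta = \F_0 \C \theta = \F_0 a$. This gives $\F = \F_0$ as operators on $E^{(q-1)}$.

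For the positivity assertion, I would invoke Proposition \ref{prop:trace-id-F0}(i): under the standing assumption that $\Omega$ is $3$-Plateau and perpendicular, if $V(\Omega) \in \interior \Delta^{(q-1)}$ then $\F_0 > 0$ is strictly positive-definite. Combined with $\F = \F_0$ just established, this gives $\F > 0$. Finally, the parenthetical special case (perpendicular standard bubbles with $q \leq n+1$) follows from Corollary \ref{cor:standard-Voronoi}, which asserts that such bubbles are in particular perpendicular spherical Voronoi Plateau clusters with $\S^{n+1-q}$-symmetry; that they are full-dimensional when $q \leq n+1$ is built into the perpendicularity together with the fact that a standard bubble's quasi-center operator has full affine rank $q-1$ (e.g.~by Proposition \ref{prop:standard-char}\ref{it:standard-char-iii}, which forces $\C \C^T = \frac{1}{2}\Id_{E^{(q-1)}} + \k \k^T$ to be positive-definite on $E^{(q-1)}$).
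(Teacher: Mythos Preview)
Your proof is correct and follows essentially the same route as the paper: invoke Corollary~\ref{cor:full-dim-F} to get a conformally complete system, apply Proposition~\ref{prop:FC=F0C} to obtain $\F\C=\F_0\C$, and then use surjectivity of $\C$ (from full-dimensionality and $q-1\leq n+1$) to cancel and conclude $\F=\F_0$. The paper's proof is the same argument in two sentences; you have simply spelled out the surjectivity step and the positivity conclusion via Proposition~\ref{prop:trace-id-F0}(i) more explicitly.
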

\begin{proof}
$\Omega$ has a conformally complete system of conformal Jacobi fields by Corollary \ref{cor:full-dim-F}. As $\C$ is of full affine-rank and $q-1 \leq n+1$, $\F \C = \F_0 \C$ immediately implies $\F = \F_0$. 
\end{proof}

Let us now extend this to general $3$-Plateau perpendicular PCF clusters. 

\subsection{On perpendicular pseudo conformally flat clusters}

Recall that we still don't know that $\F = L_{\Psi_\xi} > 0$ for a perpendicular PCF cluster which is not full-dimensional, since $\Psi_\xi$ will change sign on $\S^n$ if $\abs{\xi} > 1$. We are now finally ready to establish this in the following:

\begin{proposition} \label{prop:PCF-F=F0}
Let $\Omega$ be a $3$-Plateau perpendicular PCF $q$-cluster. Then $\F = \F_0$, and in particular, $\F > 0$ if $V(\Omega) \in \interior \Delta^{(q-1)}$. 
\end{proposition}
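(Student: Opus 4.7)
\textbf{Proof Proposal for Proposition \ref{prop:PCF-F=F0}.} The key idea is to exploit the conformal perturbation $\Omega_t := \Phi_t(\Omega)$ (with $\Phi_t = \exp(t W_N)$) and the fact that, thanks to $\Omega$ being simultaneously perpendicular and PCF, the perturbed cluster $\Omega_t$ admits \emph{two distinct natural} compatibility parameters for each $t \neq 0$. Canonicity of the conformal-to-volume operator will then force the two corresponding expressions $L_{\Psi_{\xi_t^{(i)}}}|_{\Omega_t}$ to coincide as operators, and taking $t \to 0$ yields $\F = \F_0$ after one converges to $\F$ and the other to $\F_0$.

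Concretely, I first observe that by Proposition \ref{prop:PCF-F} we have $\F = L_{\Psi_\xi}$, and by Proposition \ref{prop:F0} we have $\F_0 = n L_{\scalar{p,N}^2}$. Using the evolution equations of Lemma \ref{lem:kc-evolution}, namely $\c_i(t) = \c_i - \sinh(t)\k_i N$ and $\k_i(t) = \k_i\cosh(t)$ (valid here because $\c_i \perp N$), a direct substitution shows that both
\[
\xi_t^{(1)} := \coth(t)\, N \quad \text{(provided by Corollary \ref{cor:Omega_t-PCF})}
\]
and
\[
\xi_t^{(2)} := \xi + \bigl[\tanh(t/2) - \scalar{N,\xi}\bigr] N
\]
satisfy $\scalar{\c_i(t), \xi_t^{(j)}} + \k_i(t) = 0$ for all $i$ and all $t \neq 0$ (the algebra collapses via $\scalar{\c_i,\xi} = -\k_i$ and $\scalar{\c_i, N} = 0$). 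By Proposition \ref{prop:PCF-F} each gives a conformally complete system, and by Proposition \ref{prop:F-properties}\ref{it:pos-def} (canonicity of $\F$) we have the identity of operators
\[
L_{\Psi_{\xi_t^{(1)}}}\big|_{\Omega_t} \;=\; \F^t \;=\; L_{\Psi_{\xi_t^{(2)}}}\big|_{\Omega_t} \qquad \forall t \neq 0,
\]
where each integral is taken over $\Sigma_{ij}(t) = \Phi_t(\Sigma_{ij})$.

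Now I take $t \to 0$ on both sides. The left-hand side is precisely the family whose limit defines $\F_0$, so by Proposition \ref{prop:F0} it converges to $\F_0 = n L_{\scalar{p,N}^2}$. For the right-hand side, $\xi_t^{(2)} \to \xi_0 := \xi - \scalar{N,\xi}N$ (a finite limit in $\R^{n+1}$), and since $\Omega$ is perpendicular, Lemma \ref{lem:cont} gives Hausdorff convergence of $\overline{\Sigma_{ij}(t)}$ to $\overline{\Sigma_{ij}}$ and Lipschitz continuity of the $(n-1)$-dimensional measures, which together with the smoothness of $p \mapsto \Psi_{\xi_0}(p)$ allow passage to the limit under the integral. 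Hence the right-hand side converges to $L_{\Psi_{\xi_0}}|_\Omega$. Writing $\Psi_{\xi_0} = \Psi_\xi + \scalar{N,\xi} \scalar{p,N}$ and using that $\int_{\Sigma_{ij}} \scalar{p,N} d\HH^{n-1} = 0$ for every $i<j$ (by reflection symmetry of $\Omega$ about $N^\perp$), we obtain $L_{\Psi_{\xi_0}}|_\Omega = L_{\Psi_\xi}|_\Omega = \F$. Equality of the two limits yields $\F = \F_0$. The positivity $\F > 0$ when $V(\Omega) \in \interior\Delta^{(q-1)}$ is then immediate from Proposition \ref{prop:trace-id-F0}(i).

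The two points that require care are (a) verifying the continuity of integrals of smooth functions over $\Sigma_{ij}(t)$ as $t \to 0$ — this is the main technical step, and it rests squarely on the perpendicularity of $\Omega$ so that Lemma \ref{lem:cont} applies in a neighborhood of the parameters $(\C,\k)$ — and (b) making sure that Proposition \ref{prop:PCF-F} genuinely applies to $\Omega_t$ for small $t$, i.e.~that $\Omega_t$ remains a $3$-Plateau (indeed spherical Voronoi) cluster, which follows because M\"obius automorphisms preserve the $\ell$-Plateau property (Lemma \ref{lem:Mobius-preserves-Voronoi}). Neither of these is a serious obstacle, so the proof outlined above should go through essentially as written.
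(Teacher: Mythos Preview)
Your proposal is correct and follows the same strategy as the paper: both exhibit a second, \emph{bounded}, PCF compatibility parameter for $\Omega_t$ (the paper uses $\cosh(t)\,\xi$ after first normalizing $\xi \perp N$; you instead use $\xi + [\tanh(t/2)-\scalar{N,\xi}]N$ and absorb the $N$-component at the very end via $\int_{\Sigma_{ij}}\scalar{p,N}=0$), and then invoke canonicity of $\F^t$ --- this is Proposition~\ref{prop:F-properties}~(1), not \ref{it:pos-def} --- to identify the bounded-parameter limit $\F$ with the unbounded-parameter limit $\F_0$. For the limit passage in (a), the cleanest route is not Lemma~\ref{lem:cont} (Hausdorff convergence plus continuity of total measure does not by itself give convergence of integrals) but rather the change of variables $p_t = \Phi_t(p)$, pulling everything back to the fixed $\Sigma_{ij}$; the integrand is then uniformly bounded and converges uniformly, which is exactly how the paper handles both sides.
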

\begin{proof}
Let $\xi \in \R^{n+1}$ denote compatibility parameter of $\Omega$, so that  $\scalar{\c_{ij} , \xi } = -\k_{ij}$ for all $i<j$. Since $\Omega$ is perpendicular we have $\scalar{\c_{ij} , N} = 0$, and so we may assume that $\xi \perp N$. 
Denote $\Omega_t = \Phi_t(\Omega)$, the $3$-Plateau spherical Voronoi cluster (by Lemma \ref{lem:Mobius-preserves-Voronoi}) obtained after perturbing $\Omega$ using the conformal automorphism $\Phi_t$, and let $\Sigma^t$ denote its corresponding boundary. Recall from Lemma \ref{lem:kc-evolution} and Corollary \ref{cor:Omega_t-PCF} that the corresponding quasi-centers and curvatures are given by $\c_{ij}(t) = \c_{ij} - \k_{ij} \sinh(t) N$ and $\k_{ij}(t) = \k_{ij} \cosh(t)$, and that $\scalar{\c_{ij}(t) , \xi_t} = -\k_{ij}(t)$ for $\xi_t = \coth(t) N$, so that $\Omega_t$ is a PCF cluster. 
It follows that:
\[
\scalar{\c_{ij}(t) , N - \sinh(t) \xi} = 0 . 
\]

Now, recall from Proposition \ref{prop:PCF-F} that $\Omega_t$ has a conformally complete system of conformal Jacobi fields; moreover, $a_{ij}(1-\scalar{p_t,z_t})$ is a conformal Jacobi field on $p_t \in \Sigma^t$ with conformal parameter $a \in E^{(q-1)}$ for any $z_t \in \R^{n+1}$ so that $\scalar{\c_{ij}(t),z_t} = -\k_{ij}(t)$ for all $i<j$, and so adding a multiple of $N - \sinh(t) \xi$ to $z_t$ does not alter this property. Since $z_t = \xi_t$ satisfies this property, we deduce that all of the following fields are all conformal Jacobi fields on $\Sigma^t$ corresponding to the same conformal parameter $a$:
\[
 a_{ij} (1 - \scalar{p_t, \xi_t - \beta ( N - \sinh(t) \xi )} ) \;\;\; \forall \beta \in \R . 
\]
Setting $\beta = \coth(t)$, we obtain:
\[
a_{ij} (1 - \cosh(t) \scalar{ p_t,  \xi }) . 
\]
The point is that contrary to the original conformal Jacobi fields $a_{ij} (1 - \scalar{p_t,\xi_t})$, these conformal Jacobi fields do not blow up as $t \rightarrow 0$. 

Let us now inspect the conformal-to-volume operator $\F_t$ of $\Omega_t$, which is canonical by Proposition \ref{prop:F-properties} and so does not depend on the particular conformal Jacobi fields one uses for its computation. Recall by Proposition \ref{prop:F0} that $F_t = n L_{\scalar{p,N}^2} + o(1)$, and so inspecting the off-diagonal entry $-e_i^T \F_t e_j$ for $i<j$, we obtain:
\[
\int_{\Sigma^t_{ij}} (1 -  \cosh(t) \scalar{p_t, \xi }) d\HH^{n-1}(p_t) = n \int_{\Sigma_{ij}} \scalar{p,N}^2 d\HH^{n-1}(p) + o(1) . 
\]
Taking the limit as $t \rightarrow 0$, the boundedness of the integrand on the left-hand-side implies:
\[
\int_{\Sigma_{ij}} (1 - \scalar{p, \xi}) d\HH^{n-1}(p) = n \int_{\Sigma_{ij}} \scalar{p,N}^2 d\HH^{n-1}(p) . 
\]
As the left-hand-side is precisely $-e_i^T \F e_j$ by Proposition \ref{prop:PCF-F}, we confirm that $\F = \F_0$. 
\end{proof}

\subsection{Establishing the PDI on PCF clusters}

We can finally conclude:
\begin{theorem} \label{thm:PDE-for-PCF}
Let $\Omega$ be a minimizing $q$-cluster with $q \leq n+1$ and $V(\Omega) = v_0 \in \interior \Delta^{(q-1)}$. Assume that $\Omega$ is pseudo-conformally flat (in particular, this holds if $\Omega$ is full-dimensional). Then there exists a quadratic form $\F$ on $E^{(q-1)}$ so that the PDI:
\[
\F > 0 ~,~ \F^T \nabla^2 \I \F \leq -(n-1) \F ~,~ \tr \brac{\F \; (\Id+ \frac{2}{(n-1)^2} \nabla \I \otimes \nabla \I)} \leq 2 \I 
\]
holds at $v_0$ in the viscosity sense of Definition \ref{def:viscosity}. 
\end{theorem}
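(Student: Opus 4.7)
The plan is to assemble several results already established in the paper and check that their combination matches the viscosity definition verbatim. First, since $\Omega$ is a minimizing $q$-cluster with $q\le n+1$ and $V(\Omega)=v_0\in\interior\Delta^{(q-1)}$, Theorem~\ref{thm:intro-structure} tells us that $\Omega$ is a regular, stable, perpendicular, $3$-Plateau spherical Voronoi cluster with equatorial (hence non-empty, connected) cells; by Lemma~\ref{lem:Voronoi-stationary} its Lagrange multiplier equals $\lambda=(n-1)\k$, where $\k\in E^{(q-1)}$ is the cluster's curvature vector. Together with the PCF assumption, this places us squarely in the hypotheses of Propositions~\ref{prop:PCF-F=F0} and~\ref{prop:PCF-trace}.

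Next, define $\F:=\F_\Omega$ as the conformal-to-volume operator from Section~\ref{sec:conformal-Jacobi}. Proposition~\ref{prop:PCF-F=F0} applies (using $V(\Omega)\in\interior\Delta^{(q-1)}$) and asserts both $\F=\F_0$ and $\F>0$. In particular the conformally complete system of conformal Jacobi fields $\{f^a\}_{a\in E^{(q-1)}}\subset \D_{con}$ exists, and by Proposition~\ref{prop:F-properties} we have $\delta^1_{f^a}V=\F a$ together with the identity $Q^0(f^a)=-(n-1)a^T\F a$.

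Now fix $a\in E^{(q-1)}$ and $\eps>0$. Since $f^a\in\D_{con}\subset\D_Q$ is a Sobolev scalar-field, the Index-Form Approximation Theorem~\ref{thm:approximation} produces a $C_c^\infty$ vector-field $X=X_{\eps,a}$ on $\S^n$ such that
\[
\delta^1_X V(\Omega)=\delta^1_{f^a}V(\Omega)=\F a \qquad\text{and}\qquad |Q(X)-Q^0(f^a)|\le\eps,
\]
hence $Q(X)\le -(n-1)a^T\F a+\eps$. This verifies the two volume/index conditions of Definition~\ref{def:viscosity}.

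Finally, for the trace inequality, Proposition~\ref{prop:PCF-trace} gives the exact identity
\[
\tr\bigl(\F(\tfrac{1}{2}\Id+\k\otimes\k)\bigr)=\HH^{n-1}(\Sigma)=\per(\Omega)=\I(v_0).
\]
Substituting $\k=\lambda/(n-1)$ and doubling yields
\[
\tr\bigl(\F(\Id+\tfrac{2}{(n-1)^2}\lambda\otimes\lambda)\bigr)=2\I(v_0),
\]
which is the required trace condition (with equality, a fortiori with $\le$). Combined with $\F>0$, this completes the verification that~(\ref{eq:PDI-F}) holds at $v_0$ in the viscosity sense of Definition~\ref{def:viscosity}. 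No genuine obstacle arises: the theorem is essentially a packaging of Proposition~\ref{prop:PCF-F=F0} (which provides $\F=\F_0>0$ despite the lack of $|\xi|<1$), Proposition~\ref{prop:PCF-trace} (the PCF trace identity), Theorem~\ref{thm:approximation} (for the physical approximation), and the structural Theorem~\ref{thm:intro-structure} (to activate all of the above hypotheses on a minimizer).
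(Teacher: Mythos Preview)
Your proof is correct and follows essentially the same approach as the paper's: invoke Theorem~\ref{thm:intro-structure} to place the minimizer in the structural setting, use Proposition~\ref{prop:PCF-F=F0} (together with the PCF hypothesis) to obtain $\F=\F_0>0$, apply Proposition~\ref{prop:F-properties} and Theorem~\ref{thm:approximation} for the volume/index conditions, and conclude via the trace identity from Proposition~\ref{prop:PCF-trace} after substituting $\lambda=(n-1)\k$. The only cosmetic difference is that the paper cites Theorem~\ref{thm:conformally-complete} for the existence of the conformally complete system, whereas you (equivalently) obtain it from the PCF-specific Proposition~\ref{prop:PCF-F} implicit in Proposition~\ref{prop:PCF-F=F0}.
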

\begin{proof}
By the results of Section \ref{sec:prelim} and Theorem \ref{thm:intro-structure}, a minimizing cluster is necessarily regular, stationary with Lagrange multiplier $\lambda \in E^{(q-1)}$ and stable. Moreover, it is perpendicular spherical Voronoi with equatorial cells and $3$-Plateau. By Theorem \ref{thm:conformally-complete}, there exists a conformally complete system of conformal Jacobi fields for $\Omega$, denoted $\{f^a\}_{a \in E^{(q-1)}}$. Let $\F : E^{(q-1)} \rightarrow E^{(q-1)}$ denote the associated conformal-to-volume operator, so that $\delta^1_{f^a} V(\Omega) = \F a$ with $Q^0(f^a) = -(n-1) a^T \F a$ by Proposition \ref{prop:F-properties}. By Theorem \ref{thm:approximation}, this means that for every $\eps > 0$ there exists a smooth vector-field $X_{a,\eps}$ on $\S^n$ so that:
\[
\delta^1_{X_{a,\eps}} V = \F a ~,~   Q(X_{a,\eps}) \leq -(n-1) a^T \F a + \eps .
\]

Now, using that $\Omega$ is in addition PCF, it follows by Proposition \ref{prop:PCF-F=F0} that $\F = \F_0 > 0$; in addition, the trace-identity (\ref{eq:trace-id}) holds by either Proposition \ref{prop:PCF-trace} or \ref{prop:trace-id-F0}. Recalling that $\lambda = (n-1) \k$, it follows that:
\[
\tr\brac{\F \brac{\Id/2 + \frac{1}{(n-1)^2} \lambda \otimes \lambda}} = \HH^{n-1}(\Sigma) = \I(v_0) . 
\]
This concludes the verification of the PDI in the viscosity sense of Definition \ref{def:viscosity}.
\end{proof}

\begin{remark} \label{rem:conjectures}
Note that in order to extend the above proof to arbitrary minimizing clusters, and thereby verify the Multi-Bubble Isoperimetric Conjecture on $\S^n$ for all $q \leq n+1$, what we are missing is precisely the confirmation of Conjecture \ref{conj:trace-id}, or better yet, the stronger Conjecture \ref{conj:F=F0}. A compelling intermediate conjecture will be put forth in Section \ref{sec:conclude}.
\end{remark}

\section{Linearized structure equations} \label{sec:LSE}

In order to be able to handle spherical Voronoi clusters which are not PCF, we now consider a more general condition. Assume that $\{\c_i(t)\}_{i=1,\ldots,q}$ and $\k(t) = \{\k_i(t)\}_{i =1,\ldots,q}$ are smoothly varying quasi-center and curvature parameters (respectively) of a one-parameter family of non-degenerate spherical Voronoi clusters $\Omega(t)$ for $t \in [0,\eps)$ and some $\eps > 0$. When $t=0$ we simply write $\Omega = \Omega(0)$, $\{\c_i\} = \{\c_i(0)\}$ and $\k = \k(0)$. 
The spherical Voronoi condition means that:
\[
|\c_{ij}(t)|^2 = 1 + \k_{ij}(t)^2 \;\;\; \forall i \neq j \text{ so that } \Sigma_{ij}(t) \neq \emptyset. 
\]
Since $\Sigma_{ij} \neq \emptyset$ implies $\Sigma_{ij}(t) \neq \emptyset$ for all $t \in [0,\eps)$ (by continuity, as $\Sigma_{ij}$ is relatively open in $S_{ij}$ thanks to non-degeneracy), it follows that:
\[
|\c_{ij}(t)|^2 = 1 + \k_{ij}(t)^2 \;\;\; \forall i \neq j \text{ so that } \Sigma_{ij} \neq \emptyset. 
\]
Differentiating at time $t=0$, and denoting $\delta \c_i = \frac{d}{dt}|_{t=0} \c_i$ and $\delta \k  = \frac{d}{dt}|_{t=0} \k$, we obtain:
\begin{equation} \label{eq:structure}
\scalar{\c_{ij} , \delta \c_{ij}} = \k_{ij} \delta \k_{ij} \;\;\; \forall i \neq j \text{ so that } \Sigma_{ij} \neq \emptyset ,
\end{equation}
which may be seen as a linearized ``structure equation" system for spherical Voronoi clusters. 
This serves as motivation for the following:

\begin{definition}[Solvability of linearized structure equations (LSE)]
Given a spherical Voronoi cluster $\Omega$ with quasi-centers $\{ \c_i \}_{i=1,\ldots,q} \subset \R^{n+1}$ and curvature vector $\k \in E^{(q-1)}$, respectively, we shall say that its linearized structure equation (LSE) system is solvable with $\delta \k = a \in E^{(q-1)}$, if there exists $\{ \delta \c_i \}_{i=1,\ldots,q}$ so that (\ref{eq:structure}) holds with $\delta \k = a$. \\
We shall say that a complete system of solutions to the LSE exists if the equation system (\ref{eq:structure}) is solvable with $\delta \k = a$ for any $a \in E^{(q-1)}$. 
\end{definition}

There are two scenarios when we can assert the solvability of the linearized structure equations:
\begin{lemma} \label{lem:PCF-LSE}
Let $\Omega$ be a PCF cluster. Then a complete system of solutions to the LSE exists.
\end{lemma}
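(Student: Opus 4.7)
The plan is to use the PCF compatibility parameter $\xi \in \R^{n+1}$ directly to produce explicit solutions to the linearized structure equations.

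Given $a \in E^{(q-1)}$, I would simply set $\delta \c_i := -a_i \xi$ for $i=1,\ldots,q$. First note that this assignment respects our convention $\sum_{i=1}^q \delta \c_i = 0$, since $\sum_{i=1}^q a_i = 0$ by virtue of $a \in E^{(q-1)}$. Then $\delta \c_{ij} = -a_{ij} \xi$, and recalling that $\scalar{\c_{ij}, \xi} = -\k_{ij}$ by Definition \ref{def:PCF} of PCF, I compute
\[
\scalar{\c_{ij}, \delta \c_{ij}} \;=\; -a_{ij} \scalar{\c_{ij}, \xi} \;=\; a_{ij} \k_{ij} \;=\; \k_{ij} \delta \k_{ij},
\]
which is precisely the linearized structure equation (\ref{eq:structure}) with $\delta \k_{ij} = a_{ij}$. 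Since this holds for every pair $i \neq j$ (not just those with $\Sigma_{ij} \neq \emptyset$), and $a \in E^{(q-1)}$ was arbitrary, a complete system of solutions to the LSE exists.

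There is no real obstacle here; the PCF condition is tailored precisely so that a single vector $\xi$ encodes the linearized compatibility between curvatures and quasi-centers. The only thing worth remarking is that this proof also makes transparent why the PCF hypothesis will later be useful for Gram-type perturbations: it supplies a canonical $1$-parameter linear family of infinitesimal quasi-center deformations realizing any prescribed infinitesimal curvature change $a \in E^{(q-1)}$.
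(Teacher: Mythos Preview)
Your proof is correct and takes essentially the same approach as the paper: both set $\delta \c_i = -a_i \xi$ and verify (\ref{eq:structure}) directly from the PCF relation $\scalar{\c_{ij},\xi} = -\k_{ij}$. Your version simply adds a couple of routine remarks (the convention $\sum_i \delta \c_i = 0$ and the explicit form of $\delta \c_{ij}$) that the paper leaves implicit.
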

\begin{proof}
By the PCF property, there exists $\xi \in \R^{n+1}$ so that $\scalar{\c_i , \xi} + \k_i = 0$ for all $i=1,\ldots,q$. Therefore, given $a \in E^{(q-1)}$, simply set $\delta \c_i = -a_i \xi$ which solves (\ref{eq:structure}) with $\delta \k = a$ (in fact, for all $i \neq j$). 
\end{proof}

\begin{lemma}
For a general spherical Voronoi cluster $\Omega$, the LSE is solvable with $\delta \k = a$ for all $a \in \Im \C$. 
\end{lemma}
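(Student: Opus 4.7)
The plan is to exhibit an explicit solution to the linearized structure equation~(\ref{eq:structure}) via an ansatz inspired by the M\"obius-flow analysis of Section~\ref{sec:perturbation}. Given $a \in \Im \C$, pick $\theta \in \R^{n+1}$ with $a = \C\theta$, i.e.\ $a_i = \scalar{\c_i, \theta}$ for each $i = 1,\ldots,q$. I propose setting
\[
\delta \c_i := \k_i \theta \;\;\text{(so } \delta \c_{ij} = \k_{ij}\,\theta\text{)} \qquad\text{and}\qquad \delta \k := a,
\]
which immediately yields for \emph{all} $i\neq j$ (independently of whether $\Sigma_{ij}=\emptyset$)
\[
\scalar{\c_{ij},\, \delta \c_{ij}} \;=\; \k_{ij}\,\scalar{\c_{ij}, \theta} \;=\; \k_{ij}(a_i - a_j) \;=\; \k_{ij}\, \delta \k_{ij},
\]
verifying (\ref{eq:structure}). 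This is the entire proof; everything else is commentary.

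The geometric meaning of this ansatz is that $(\delta \c, \delta \k)$ coincides with the derivative at $t=0$ of the conformal perturbation $\Omega(t) := \Phi_t(\Omega)$ with $\Phi_t = \exp(t W_\theta)$, under which $\Omega$ remains a spherical Voronoi cluster by Lemma~\ref{lem:Mobius-preserves-Voronoi}. Indeed, in the special case $\theta = N$ with $\c_i \perp N$ for all $i$, Lemma~\ref{lem:kc-evolution} gave the explicit evolution $\c_i(t) = \c_i - \scalar{\c_i,N}N + (\scalar{\c_i,N}\cosh t - \k_i \sinh t)N$ and $\k_i(t) = \k_i \cosh t - \scalar{\c_i,N}\sinh t$, whose derivatives at $t=0$ yield $\delta \c_i = -\k_i N$ and $\delta \k_i = -\scalar{\c_i,N}$, matching our ansatz (up to the sign flip $\theta \mapsto -\theta$) and producing a vector $\delta\k \in \Im \C$. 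The computation performed in the proof of Lemma~\ref{lem:kc-evolution} (invoking Lemma~\ref{lem:LJac-dilation} to obtain $\delta^1_{W_\theta} \k_{ij} = -\scalar{\c_{ij},\theta}$) goes through verbatim for arbitrary $\theta \in \R^{n+1}$, providing an independent derivation.

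Since there is no genuine obstacle, it may be worth remarking that this lemma is sharp in the sense that one cannot in general solve the LSE for $\delta \k \notin \Im \C$: when $\Omega$ is not PCF, the M\"obius fields exhaust (up to the rotational kernel) all infinitesimal conformal deformations preserving spherical Voronoi structure, so the ansatz above essentially captures all canonical solutions. Consequently, the two sufficient conditions for full solvability recorded in the previous lemma (namely, the PCF property) and in the present lemma (namely, $\Im \C = E^{(q-1)}$, i.e.\ full-dimensionality of the cluster) are precisely the two scenarios covered by Theorem~\ref{thm:intro-conditional}.
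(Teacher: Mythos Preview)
Your proof is correct and essentially identical to the paper's own argument: both pick $\theta$ with $a = \C\theta$, set $\delta \c_i = \k_i \theta$, and verify (\ref{eq:structure}) directly via $\scalar{\c_{ij}, \k_{ij}\theta} = \k_{ij}\scalar{\c_{ij},\theta} = \k_{ij}a_{ij}$. Your additional commentary on the geometric origin via M\"obius flow is apt, though your claim that the lemma is ``sharp'' and that one cannot in general solve the LSE for $\delta\k \notin \Im\C$ is not established---indeed, the paper explicitly leaves open whether a complete system of solutions always exists (see the Question following Lemma~\ref{lem:PCF-LSE}).
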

\begin{proof}
If $a = \C \theta$ then $a_{ij} = \scalar{\c_{ij},\theta}$. Consequently $\delta \c_i := \k_i \theta$ is a solution to (\ref{eq:structure}) with $\delta \k = a$ (in fact, for all $i \neq j$).
\end{proof}

Note that both scenarios above yield a rank-one solution to the LSE, so the general case may be thought of as a higher-rank extension of both scenarios.  Also note that both scenarios imply in particular that a complete system of solutions exists whenever the cluster is full-dimensional. 

\begin{remark}
One may attempt to solve (\ref{eq:structure}) by constructing a smooth one-parameter perturbation $\Omega(t)$ of $\Omega$ with $\frac{d}{dt}|_{t=0} \k(t) = \delta \k$ for a given $\delta \k \in E^{(q-1)}$. However, the smoothness at $t=0$ turns out to be a genuine issue due to the need to take a square-root of the Gram matrix $\C(t) \C(t)^T$ (see Section \ref{sec:Gram}), and it seems that we can only ensure the smoothness in the above two scenarios. 
\end{remark}

\subsection{Constructing conformal Jacobi fields}

As noted above, the existence of a complete system of solutions to the LSE should be thought of as a ``higher-rank" PCF condition, and it is therefore not surprising that we are able to extend all of our results from the previous sections regarding PCF clusters to the higher-rank case. In particular, we are able to show that if a complete system of solutions to the LSE exists for a given $3$-Plateau spherical Voronoi cluster, then a conformally complete system of conformal Jacobi fields exists as well, and that moreover $\F = \F_0$; in particular, $\F > 0$ if $V(\Omega) \in \interior \Delta^{(q-1)}$ and the trace identity (\ref{eq:trace-id}) holds by Proposition \ref{prop:trace-id-F0}. We therefore pose the following:
\begin{question}
Is it true that for any $3$-Plateau, stable, perpendicular spherical Voronoi $q$-cluster with equatorial cells and $q \leq n+1$, a complete system of solutions to the LSE (\ref{eq:structure}) always exists? 
\end{question}
In view of the previous discussion, a positive answer would confirm Conjecture \ref{conj:F=F0} and thereby extend the verification of the multi-bubble conjectures to all $q \leq n+1$. However, we do not have a clear sense of how reasonable it would be to conjecture that the answer is positive. There are actually fascinating connections between this question and the joint-and-bar framework and infinitesimal rigidity of polyhedra considered by Cauchy, Alexandrov, Dehn and Whiteley (see e.g. \cite{Whiteley-InfinitesimalRigidFrameworks,Connelly-Rigidity} and the references therein), and while we encourage the reader to explore this direction, we shall not expand on this here. 

\medskip

Consequently, we will not verify here that $\F = \F_0$, as this is obtained by repeating the argument of Proposition \ref{prop:PCF-F=F0} and will not be required to establish the main results of this work. But to establish the main result of the next section, we do require the following:

\begin{proposition} \label{prop:LSE}
Let $\Omega$ be a $3$-Plateau spherical Voronoi cluster, and assume that $\{\delta \c_i\}_{i=1,\ldots,q}$ is a solution to the LSE with $\delta \k = a \in E^{(q-1)}$. Then the scalar-field:
\[
f_{ij} := a_{ij} + \scalar{\delta \c_{ij},p} \text{ on $\Sigma_{ij}$ }
\]
is a conformal Jacobi field with conformal parameter $a$, namely $f = (f_{ij}) \in \D_{con}$ and $L_{Jac} f_{ij} = (n-1) a_{ij}$. 
\end{proposition}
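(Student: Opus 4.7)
\textbf{Proof plan for Proposition \ref{prop:LSE}.}

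The plan is to exploit the fact that $f_{ij}$ naturally factors as a difference of ``cell potentials'': defining $f_i(p) := a_i + \scalar{\delta \c_i, p}$ as a smooth affine function on $\R^{n+1}$ restricted to $\S^n$, we have $f_{ij} = f_i - f_j$ globally. This factorization makes three things essentially free: the $f_{ij}$ are $C^\infty$ in a neighborhood of each $\overline{\Sigma_{ij}}$ (so $f \in H^1(\Sigma^1)$ and $\Delta_{\Sigma,\mu} f \in L^2(\Sigma^1)$); the orientation $f_{ji} = -f_{ij}$ is built in; and the Kirchhoff--Dirichlet BC $f_{uv} + f_{vw} + f_{wu} = 0$ on $\Sigma_{uvw}$ is the cocycle identity. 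There remain two tasks: verifying the eigenvalue equation $L_{Jac} f_{ij} = (n-1) a_{ij}$ and the Robin half of the conformal BC.

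For the eigenvalue equation, I would invoke Lemma \ref{lem:LJac-dilation} on each $\Sigma_{ij}$. Since $\Sigma_{ij} \subset S_{ij}$ is spherical, the traceless second fundamental form $\II_0$ vanishes, leaving $L_{Jac} 1 = (n-1)(1+\k_{ij}^2)$ and $L_{Jac} \scalar{\delta\c_{ij}, p} = -(n-1)\k_{ij}\scalar{\delta \c_{ij}, \c_{ij}}$. Summing and applying the LSE (\ref{eq:structure}) in the form $\scalar{\c_{ij}, \delta\c_{ij}} = \k_{ij} a_{ij}$ gives $L_{Jac} f_{ij} = (n-1) a_{ij}(1+\k_{ij}^2) - (n-1)\k_{ij}^2 a_{ij} = (n-1)a_{ij}$ as required.

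For the Robin condition, recall from Remark \ref{rem:conformal-BCs} that I must show $g_{ij} := \nabla_{\n_{\partial ij}} f_{ij} - \bar\II^{\partial ij} f_{ij}$ is independent of $(i,j) \in \cyclic(u,v,w)$ on each $\Sigma_{uvw}$. Using (\ref{eq:sqrt3}) and the spherical identity $\bar\II^{\partial ij} = (\k_{ik}+\k_{jk})/\sqrt 3$, together with $\nabla f_i = \delta\c_i$ and $\n_{ij} = \c_{ij} + \k_{ij} p$, a direct computation yields
\[
\sqrt{3}\, g_{ij} \;=\; \scalar{\c_{ik}+\c_{jk},\,\delta\c_{ij}} - (\k_{ik}+\k_{jk})\, a_{ij}.
\]
Writing $\c_{ik} = \c_{ij}+\c_{jk}$ and $\k_{ik} = \k_{ij}+\k_{jk}$ and using LSE on $(i,j)$ to cancel the diagonal term, this simplifies to $\sqrt{3}\, g_{ij} = 2\bigl(\scalar{\c_{jk},\,\delta\c_{ij}} - \k_{jk}\, a_{ij}\bigr)$.

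The key (and main) step is to prove that this last expression is cyclically invariant in $(u,v,w)$. The plan is to extract a ``cross constraint'' by expanding the LSE for the pair $(i,k)$ via the cocycles $\c_{ik} = \c_{ij}+\c_{jk}$, $\k_{ik}=\k_{ij}+\k_{jk}$, $a_{ik}=a_{ij}+a_{jk}$ and substituting the LSE for $(i,j)$ and $(j,k)$; this yields
\[
\scalar{\c_{ij}, \delta\c_{jk}} + \scalar{\c_{jk}, \delta\c_{ij}} \;=\; \k_{ij}\, a_{jk} + \k_{jk}\, a_{ij}.
\]
Computing $g_{uv} - g_{vw}$ via the formula above, eliminating $\c_{wu}$ by the cocycle and using LSE on $(v,w)$, the difference reduces to exactly the left-hand-side minus the right-hand-side of this cross constraint, hence vanishes. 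Cyclic symmetry then gives $g_{uv} = g_{vw} = g_{wu}$, establishing the conformal BC and concluding the proof. The main obstacle I anticipate is organizing the bookkeeping so that the three LSE identities on $(u,v)$, $(v,w)$, $(u,w)$ combine cleanly via the cocycles; once the cross constraint is isolated, everything collapses.
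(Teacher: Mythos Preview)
Your proof is correct and follows essentially the same route as the paper: the computation of $L_{Jac} f_{ij}$ via Lemma~\ref{lem:LJac-dilation} and the reduction of the Robin condition to the cyclic invariance of $\scalar{\c_{ik}+\c_{jk},\delta\c_{ij}} - (\k_{ik}+\k_{jk})a_{ij}$ are identical to the paper's. The only organizational difference is that the paper packages the cyclic-invariance step into a standalone Lemma~\ref{lem:cyclic}/Corollary~\ref{cor:cyclic}, whereas you derive the equivalent ``cross constraint'' $\scalar{\c_{ij},\delta\c_{jk}}+\scalar{\c_{jk},\delta\c_{ij}}=\k_{ij}a_{jk}+\k_{jk}a_{ij}$ directly in-line; the underlying algebra is the same.
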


For the proof, we first observe:
\begin{lemma} \label{lem:cyclic}
For $a,b \in \R^3$ and $(i,j) \in \cyclic(u,v,w)$, the difference
\[
a_{ij} (b_{ik} + b_{jk}) - (a_{ik} b_{ik} - a_{jk} b_{jk}) 
\]
only depends on $a,b$ and does not depend on $(i,j)$ as long as they are in cyclic order. 
\end{lemma}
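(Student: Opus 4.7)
The claim is a purely algebraic identity about the ``cyclic 3-form'' attached to two triples of scalars, so the plan is simply to expand and collect terms, then recognize the answer as a manifestly cyclic expression.

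First, I would write out the statement using the oriented notation $a_{rs}=a_r-a_s$, $b_{rs}=b_r-b_s$. Fixing $(i,j)\in\cyclic(u,v,w)$ with $k$ the remaining index, direct expansion gives
\[
a_{ij}(b_{ik}+b_{jk}) = (a_i-a_j)(b_i+b_j-2b_k),
\]
while
\[
a_{ik}b_{ik}-a_{jk}b_{jk} = (a_i b_i - a_j b_j) - (a_i-a_j)b_k - a_k(b_i-b_j).
\]
Subtracting and simplifying, the diagonal terms $a_ib_i,a_jb_j$ cancel, and one is left with
\[
a_{ij}(b_{ik}+b_{jk}) - \bigl(a_{ik}b_{ik}-a_{jk}b_{jk}\bigr)
= (a_ib_j - a_jb_i) + (a_jb_k - a_kb_j) + (a_kb_i - a_ib_k).
\]

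The key observation is that the right-hand side is the $3\times 3$ determinant
\[
\det\begin{pmatrix} 1 & 1 & 1 \\ a_i & a_j & a_k \\ b_i & b_j & b_k \end{pmatrix},
\]
which is invariant under cyclic permutations of its columns $(i,j,k)\mapsto(j,k,i)\mapsto(k,i,j)$. Since cycling $(u,v,w)$ cycles the pair $(i,j)$ through $\cyclic(u,v,w)$ while simultaneously cycling the auxiliary index $k$, the expression is unchanged, which is exactly the claim.

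This step is elementary, with no obstacle — it is a one-line determinant identity after expansion — so no further subdivision of the argument is required.
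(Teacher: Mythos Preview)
Your proof is correct and follows the same direct-expansion approach as the paper, which simply verifies $f_{ijk}-g_{ijk}=f_{jki}-g_{jki}$ by brute force using $b_{ij}+b_{jk}+b_{ki}=0$ and antisymmetry. Your recognition of the expression as the determinant $\det\begin{pmatrix}1&1&1\\ a_i&a_j&a_k\\ b_i&b_j&b_k\end{pmatrix}$ is a tidy packaging that makes the cyclic invariance manifest rather than a case check.
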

\begin{proof}
Denoting by $f_{ijk}$ the left-hand term and by $g_{ijk}$ the right-hand one, it is enough to verify that $f_{ijk} - g_{ijk} = f_{jki} - g_{jki}$. This follows by explicit computation, utilizing that $b_{ij} + b_{jk} + b_{ki} = 0$ and $c_{ij} = -c_{ji}$, $c \in \{a,b\}$. 
\end{proof}
\begin{corollary} \label{cor:cyclic}
Assume that $\{\delta \c_i\}_{i=1,\ldots,q}$ is a solution to the LSE with $\delta \k = a \in E^{(q-1)}$. Then for any $\Sigma_{uvw} \neq \emptyset$, the difference:
\[
\scalar{\c_{ik} + \c_{jk}, \delta \c_{ij}} - (\k_{ik} + \k_{jk}) a_{ij} 
\]
is independent of $(i,j) \in \cyclic(u,v,w)$. 
\end{corollary}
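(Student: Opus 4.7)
The plan is to reduce to Lemma \ref{lem:cyclic} and exploit the LSE (\ref{eq:structure}) to convert the ``scalar'' part of the expression into inner products that can be manipulated by additivity.

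First I would apply Lemma \ref{lem:cyclic} with $\alpha_i = a_i$ and $\beta_i = \k_i$ (restricted to $i \in \{u,v,w\}$). The lemma directly says that $a_{ij}(\k_{ik} + \k_{jk}) - (a_{ik}\k_{ik} - a_{jk}\k_{jk})$ is independent of $(i,j) \in \cyclic(u,v,w)$. Consequently, showing that $\scalar{\c_{ik} + \c_{jk}, \delta\c_{ij}} - (\k_{ik}+\k_{jk})a_{ij}$ is invariant is equivalent to showing that
\[
\scalar{\c_{ik} + \c_{jk}, \delta\c_{ij}} - a_{ik}\k_{ik} + a_{jk}\k_{jk}
\]
is invariant. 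Now I would invoke the LSE (\ref{eq:structure}) on the pairs $(i,k)$ and $(j,k)$, both of which are non-empty interfaces when $\Sigma_{uvw} \neq \emptyset$, to rewrite $a_{ik}\k_{ik} = \scalar{\c_{ik},\delta\c_{ik}}$ and $a_{jk}\k_{jk} = \scalar{\c_{jk},\delta\c_{jk}}$. Thus it suffices to prove that
\[
G_{ij,k} := \scalar{\c_{ik} + \c_{jk}, \delta\c_{ij}} - \scalar{\c_{ik},\delta\c_{ik}} + \scalar{\c_{jk},\delta\c_{jk}}
\]
is independent of $(i,j) \in \cyclic(u,v,w)$.

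Next I would simplify $G_{ij,k}$ using the additivity $\delta\c_{ij} - \delta\c_{ik} = -\delta\c_{jk}$ and $\delta\c_{ij} + \delta\c_{jk} = \delta\c_{ik}$, which gives
\[
G_{ij,k} = \scalar{\c_{jk},\delta\c_{ik}} - \scalar{\c_{ik},\delta\c_{jk}}.
\]
At this point $G_{ij,k}$ has the pleasant shape of a skew bilinear form in $(\c_{ik},\c_{jk})$ paired with their variations. To check invariance under a cyclic shift $(i,j,k) \mapsto (j,k,i)$, I would substitute and apply the antisymmetry $\c_{ab} = -\c_{ba}$, $\delta\c_{ab} = -\delta\c_{ba}$, then use the further additive identities $\c_{uw} = \c_{uv} + \c_{vw}$ and $\delta\c_{uw} = \delta\c_{uv} + \delta\c_{vw}$ to reduce each rotation of $G$ to the common expression $\scalar{\c_{vw},\delta\c_{uv}} - \scalar{\c_{uv},\delta\c_{vw}}$; the third rotation follows identically.

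There is no genuine obstacle here beyond careful bookkeeping of signs: once the scalar part of the target expression has been disposed of via Lemma \ref{lem:cyclic} and the LSE, the remaining identity $G_{uv,w} = G_{vw,u} = G_{wu,v}$ is a purely algebraic consequence of the antisymmetry and additivity of $\c_{ab}$ and $\delta\c_{ab}$, and does not require any additional geometric input (in particular, the triple-point condition $\c_{uv} + \c_{vw} + \c_{wu} = 0$ is never needed, since $G_{ij,k}$ depends only on the differences $\c_{ab}$ themselves).
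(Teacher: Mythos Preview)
Your proof is correct and follows essentially the same route as the paper's. The paper applies Lemma~\ref{lem:cyclic} twice --- once coordinate-wise with the pair $(\delta\c,\c)$ to obtain $\scalar{\c_{ik}+\c_{jk},\delta\c_{ij}} = M + \scalar{\c_{ik},\delta\c_{ik}} - \scalar{\c_{jk},\delta\c_{jk}}$, then the LSE, then once more with the pair $(a,\k)$ --- whereas you apply the lemma once with $(a,\k)$, use the LSE, and then verify the remaining invariance of $G_{ij,k}=\scalar{\c_{jk},\delta\c_{ik}}-\scalar{\c_{ik},\delta\c_{jk}}$ by hand; but that direct verification is precisely the content of Lemma~\ref{lem:cyclic} applied coordinate-wise with $(\delta\c,\c)$, so the arguments are the same in reversed order.
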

\begin{proof}
If $\Sigma_{uvw} \neq \emptyset$ then necessarily $\Sigma_{uv}, \Sigma_{vw}, \Sigma_{wu} \neq \emptyset$. 
Applying Lemma \ref{lem:cyclic} twice (coordinate-wise) and the LSE (\ref{eq:structure}), we have:
\[
\scalar{\c_{ik} + \c_{jk}, \delta \c_{ij}} = M + \scalar{\c_{ik} , \delta \c_{ik}} -  \scalar{\c_{jk} , \delta \c_{jk}} = M + \k_{ij} a_{ik} - \k_{jk} a_{jk} = M' + (\k_{ik} + \k_{jk}) a_{ik} ,
\]
for some $M,M' \in \R$ independent of $(i,j) \in \cyclic(u,v,w)$. 
\end{proof}

\begin{proof}[Proof of Proposition \ref{prop:LSE}]
By Lemma \ref{lem:LJac-dilation} we have:
\[
L_{Jac} f_{ij} = (n-1) (1 + \k_{ij}^2) a_{ij} - (n-1) \k_{ij} \scalar{\delta \c_{ij} , \c_{ij}} = 
(n-1) (1 + \k_{ij}^2) a_{ij} - (n-1) \k_{ij}^2 a_{ij} = (n-1) a_{ij} . 
\]
To verify the conformal BCs (defined in the setting of Section \ref{sec:spectral}), we must show by Remark \ref{rem:conformal-BCs} that for all $\Sigma_{uvw} \neq \emptyset$, $\nabla_{\n_{\partial ij}} f_{ij} - \bar \II^{\partial ij} f_{ij}$ is independent of $(i,j) \in \cyclic(u,v,w)$. 
And indeed, recalling that $\c = \n - \k p$, (\ref{eq:sqrt3}) and (\ref{eq:def-II-partial}):
\[
\sqrt{3} (\nabla_{\n_{\partial ij}} f_{ij} - \II^{\partial ij} f_{ij}) = \scalar{ \n_{ik} + \n_{jk} - (\k_{ik} + \k_{jk}) p  , \delta \c_{ij}} - (\k_{ik} + \k_{jk}) a_{ij} = \scalar{\c_{ik} + \c_{jk} , \delta \c_{ij}} - (\k_{ik} + \k_{jk}) a_{ij} ,
\]
which is independent of $(i,j) \in \cyclic(u,v,w)$ by Corollary \ref{cor:cyclic}.
\end{proof}

\section{Gram perturbation of Plateau clusters} \label{sec:Gram}

So far, we've developed tools to handle spherical Voronoi clusters which are full-dimensional, or more generally, are pseudo conformally flat, or more generally, have a complete system of solutions to their linearized structure equations. Unfortunately, this is still not enough to establish Theorem \ref{thm:intro-quintuple} regarding quintuple bubbles. We require one last tool in the form of:

\begin{proposition} \label{prop:Gram-perturbation}
Let $\Omega$ be a spherical Voronoi $q$-cluster with $q \leq n+2$. Assume that all cells are non-empty and that $\Omega$ is perpendicular and \emph{Plateau}. Let $\k \in E^{(q-1)}$ denote its curvature vector and let $\C : \R^{n+1} \rightarrow E^{(q-1)}$ denote it quasi-center operator. Then for any neighborhood $U_\C$ of $\C$, there exists a \textbf{full-dimensional} Plateau spherical Voronoi cluster $\Omega'$ with quasi-center operator $\C' \in U_\C$ and curvature vector $\k' = \k$, so that $V(\Omega') = V(\Omega)$ and $\per(\Omega') = \per(\Omega)$. 
\end{proposition}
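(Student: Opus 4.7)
The strategy is to enlarge the affine rank $r_0 := \arank(\C)$ by a single dimension at a time, iterating until $\arank$ reaches $\min(q-1,n+1)$, while preserving at each step the curvature vector $\k$, the volumes $V(\Omega)$, the total perimeter $\per(\Omega)$, and the Plateau/spherical-Voronoi structure. Since $\Omega$ is perpendicular, the orthogonal complement of $\Im \C$ in $\R^{n+1}$ admits an orthonormal family $N_1,\dots,N_{n+1-r_0}$ of North poles, each perpendicular to every $\c_i$ and lying on $\S^n$. We may assume $r_0 < \min(q-1,n+1)$, for otherwise $\Omega$ itself is already full-dimensional and we take $\C' = \C$.

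For the first step, pick any $a \in E^{(q-1)} \setminus \Im \C$ (possible since $\Im \C \subsetneq E^{(q-1)}$) and consider the infinitesimal direction $\delta \c_i := a_i N_1$. Three favorable features coincide: (a) the LSE~\eqref{eq:structure} with $\delta \k = 0$ reads $\scalar{\c_{ij},\delta \c_{ij}} = a_{ij}\scalar{\c_{ij},N_1} = 0$, trivially holding by perpendicularity, so that by Proposition~\ref{prop:LSE} the scalar field $f_{ij} := \scalar{\delta \c_{ij}, p} = a_{ij}\scalar{p,N_1}$ is a Jacobi field; (b) this $f$ is precisely the skew field $g^{(a)}$ of Proposition~\ref{prop:skew-fields}, hence $\delta^1_f V = 0$, so the induced normal velocity $X^{\n_{ij}} = -\scalar{\delta \c_{ij}, p} = -f_{ij}$ preserves volumes to first order; (c) since $a \notin \Im \C$, the direction $(a_i N_1)_i$ is not the infinitesimal image of any element of $\mathrm{so}(n+1)$ acting on $\C$, so it is a genuine deformation modulo isometries, and the perturbed $\c_i + t a_i N_1 + O(t^2)$ acquire linearly independent $N_1$-components, raising $\arank$ by at least one.

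I would then integrate this infinitesimal direction to a finite deformation $\C(t)$ on $[0,\eps]$ by applying the implicit function theorem to the combined constraint map
\[
\C \;\longmapsto\; \Bigl(\{|\c_{ij}|^2 - (1+\k_{ij}^2)\}_{(i,j)\text{ adj}},\; V(\Omega_{\C,\k}) - V(\Omega),\; \{\scalar{\c_i, N_k}\}_{i,\,k\ge 2}\Bigr),
\]
whose linearization at $\C$ annihilates $(a_i N_1)_i$ by (a), (b), and orthogonality of the $N_k$'s, respectively. The resulting family satisfies $\Im \C(t) \subset \mathrm{span}(\Im \C, N_1)$, so $\Omega(t)$ remains perpendicular to $N_2,\dots,N_{n+1-r_0}$; by Proposition~\ref{prop:Plateau-perturbation} it stays Plateau for small $\eps$, by Corollary~\ref{cor:Plateau-interfaces} no new interfaces appear, and by Lemma~\ref{lem:cont} we have $\C(t) \in U_\C$. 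Perimeter preservation is then free: being spherical Voronoi with $\k$ fixed, $\Omega(t)$ is stationary with Lagrange multiplier $\lambda = (n-1)\k$ by Lemma~\ref{lem:Voronoi-stationary}, so $\tfrac{d}{dt}\per(\Omega(t)) = \scalar{\lambda, \tfrac{d}{dt} V(\Omega(t))} = 0$ along the constraint curve. Iterating the same construction with $N_2, N_3, \dots$ in turn (picking at each stage an admissible $a$ outside the current $\Im \C(t)$) raises $\arank$ by one per round, reaching $\min(q-1, n+1)$ after exactly $\min(q-1, n+1) - r_0$ iterations.

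\textbf{Main obstacle.} The delicate step is the implicit-function argument: although the infinitesimal direction lies in the linearized kernel of the combined constraints by construction, one must verify that the constraint map has constant rank so that IFT produces a smooth curve. Concretely, this reduces to solving the second-order correction $\scalar{\ddot \c_{ij}, \c_{ij}} = -a_{ij}^2$ at adjacent pairs, inside $\mathrm{span}(\Im \C, N_1)$, compatibly with the global volume equation. This is precisely where the Plateau hypothesis enters substantively: the regular-simplex structure at triple-points (and their higher-codimensional analogues) supplies enough ``flex'' --- reminiscent of classical infinitesimal-rigidity theory for polyhedral frameworks --- for the non-linear system to admit the desired smooth solution.
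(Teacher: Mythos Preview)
Your infinitesimal analysis is correct and elegant --- the direction $\delta\c_i = a_i N_1$ does solve the LSE with $\delta\k=0$ by perpendicularity, and the induced normal field is exactly the skew field $g^{(a)}$ with $\delta^1_{g^{(a)}} V = 0$ --- but the integration step via the implicit function theorem is a genuine gap, not a technicality. For IFT (or the constant-rank theorem) to produce a curve in $\Phi^{-1}(0)$ with prescribed tangent, you need $D\Phi(\C)$ to be surjective (or to have locally constant rank); knowing only that your direction lies in $\ker D\Phi(\C)$ is insufficient. The second-order system $\scalar{\ddot\c_{ij},\c_{ij}} = -a_{ij}^2$, taken over all edges simultaneously, within $\mathrm{span}(\Im\C,N_1)$, and compatible with the $(q-1)$ volume equations, is a global linear system on the $\{\c_i\}$ whose solvability you have not established. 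The Plateau hypothesis controls the \emph{local} cone structure of $\Sigma$ at meeting points; it says nothing direct about the rank of this algebraic system. Your appeal to framework rigidity is suggestive (and indeed the paper mentions this connection in Section~\ref{sec:LSE}) but is not a proof. You would moreover need to re-verify this at each of the $\min(q-1,n+1)-r_0$ iteration stages, on a different cluster each time.

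The paper avoids IFT altogether by interpolating the \emph{Gram matrix}: set $\G_t = (1-t)\,\C\C^T + t\,(\Id/2+\k\k^T)$ and $\C_t = \sqrt{\G_t}\,U$. Because the spherical-Voronoi condition is precisely $\scalar{\C\C^T,e_{ij}\otimes e_{ij}} = \scalar{\Id/2+\k\k^T,e_{ij}\otimes e_{ij}}$ on each edge (Lemma~\ref{lem:CCT}), the convex combination preserves it \emph{exactly} for all $t$ --- no corrections needed. And since $\Id/2+\k\k^T>0$, the perturbed $\G_t$ is strictly positive-definite for every $t>0$, so $\Omega(t)$ becomes full-dimensional instantaneously. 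This is the crux: full-dimensional clusters are volumetrically non-degenerate (Proposition~\ref{prop:Fredholm} and Corollary~\ref{cor:full-dim-F}), so once one checks that the normal component of the driving flow is a Jacobi field on $\Sigma(t)$ (again via the LSE with $\delta\k=0$, Lemma~\ref{lem:zero-field}), $\delta^1 V = 0$ follows automatically --- no need to bake the volume into a constraint. The only price is that $t\mapsto\sqrt{\G_t}$ is merely continuous at $t=0$; but Lipschitz continuity of $V$ and $\per$ in the generating parameters (Lemma~\ref{lem:cont}) handles that endpoint.
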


For the proof, we are going to construct $\Omega'$ from an appropriate perturbation of $\Omega$'s positive semi-definite Gram operator $\G := \C \C^T$ as follows:
\begin{lemma} \label{lem:Gram}
Let $\Omega$ be a spherical Voronoi $q$-cluster with $q \leq n+2$, quasi-center operator $\C$ and curvature vector $\k$. Denote $\G := \C \C^T : E^{(q-1)} \rightarrow E^{(q-1)}$ and set:
\[
\G_t := (1-t) \G + t (\Id/2 + \k \k^T) : E^{(q-1)} \rightarrow E^{(q-1)} . 
\]
Then:
\begin{enumerate}[(i)]
\item \label{it:Gram1} 
$\G_t$ is strictly positive-definite for all $t \in (0,1]$. 
\item \label{it:Gram2} 
For all $t \in [0,1]$, and $i \neq j$ so that $\Sigma_{ij} \neq \emptyset$, $\scalar{\G_t , e_{ij} \otimes e_{ij}} = \scalar{ \Id /2 + \k \k^T , e_{ij} \otimes e_{ij}}$. 
\item \label{it:Gram3}
There exists a map  $[0,1] \ni t \mapsto \C_t : \R^{n+1} \rightarrow E^{(q-1)}$, which is continuous on $[0,1]$ and smooth on $(0,1]$, so that $\C_0 = \C$ and $\C_t \C_t^T = \G_t$. 
\end{enumerate}
\end{lemma}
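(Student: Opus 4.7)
My plan is to dispatch (i) and (ii) by routine observations and then concentrate on part (iii), which is the substantive claim.

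For (i), note that $\G = \C \C^T \ge 0$ is positive semi-definite, while $\Id/2 + \k \k^T \ge \Id/2 > 0$ is strictly positive-definite on $E^{(q-1)}$. Hence for $t \in (0,1]$, $\G_t$ is a sum of a PSD operator and a positive multiple of a PD operator, so it is PD. For (ii), by Lemma \ref{lem:CCT} we have $\scalar{\G, e_{ij} \otimes e_{ij}} = \scalar{\Id/2 + \k \k^T , e_{ij} \otimes e_{ij}}$ for every $i \ne j$ with $\Sigma_{ij} \ne \emptyset$, so both operators contributing to $\G_t$ give the same pairing against $e_{ij} \otimes e_{ij}$, and the convex combination preserves the common value.

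The core of the argument is (iii). The strategy is to write $\C_t$ as a product of the PSD square root $\sqrt{\G_t}$ on $E^{(q-1)}$, a fixed coisometry $S : \R^{n+1} \to E^{(q-1)}$, and a fixed orthogonal correction $O_0$ on $\R^{n+1}$. Since $q-1 \le n+1$, I can take $S$ to be the adjoint of an isometric embedding $E^{(q-1)} \hookrightarrow \R^{n+1}$, so that $S S^T = \Id_{E^{(q-1)}}$, and set $B_t := \sqrt{\G_t} \, S$. Then
\[
B_t B_t^T = \sqrt{\G_t} \, S S^T \, \sqrt{\G_t} = \G_t .
\]
The map $t \mapsto \sqrt{\G_t}$ is continuous on the whole PSD cone and is $C^\infty$ on the open set of strictly positive operators, so $B_t$ is continuous on $[0,1]$ and smooth on $(0,1]$.

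To match the initial condition $\C_0 = \C$, I will absorb a fixed orthogonal transformation of $\R^{n+1}$ into $B_t$. Since $\C \C^T = \G = B_0 B_0^T$, a polar-decomposition argument produces an orthogonal $O_0$ on $\R^{n+1}$ with $\C = B_0 O_0$: write $\C = \sqrt{\G} \, W_\C$ and $B_0 = \sqrt{\G}\, W_{B_0}$ with partial isometries $W_\C, W_{B_0} : \R^{n+1} \to E^{(q-1)}$ both having image $\Im\sqrt{\G}$ and equidimensional kernels, and extend $W_{B_0}^T W_\C$ from $(\ker W_\C)^\perp$ to a full orthogonal map on $\R^{n+1}$ by matching the kernels isometrically. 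Setting $\C_t := B_t O_0$ then gives $\C_t \C_t^T = \G_t$, $\C_0 = \C$, and the regularity in $t$ is inherited from $B_t$. The main (and only mildly delicate) obstacle is continuity at $t=0$, where $\G$ may be singular and the PSD square root is merely $\tfrac{1}{2}$-Hölder rather than smooth; the standard fact that the PSD square root is norm-continuous on the PSD cone suffices for our purposes, and smoothness on $(0,1]$ follows from the smoothness of the square root on the open set of strictly positive operators.
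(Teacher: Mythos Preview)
Your proof is correct and follows essentially the same approach as the paper: continuity and smoothness of the positive semi-definite square root on the PSD and PD cones respectively, together with a polar-type factorization to match the initial condition. The paper is slightly more direct: since $q-1 \le n+1$, it invokes the standard fact that any $\C$ with $\C\C^T = \G$ can be written as $\C = \sqrt{\G}\,U$ for some $U : \R^{n+1} \to E^{(q-1)}$ with orthonormal rows, and then simply sets $\C_t := \sqrt{\G_t}\,U$; your detour through an arbitrary coisometry $S$ followed by an orthogonal correction $O_0$ ultimately produces the same $U = S O_0$, so the two arguments are equivalent.
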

\begin{proof}
The first assertion is trivial and the second follows from Lemma \ref{lem:CCT}. To show the third assertion, recall that a positive semi-definite $A : \R^m \rightarrow \R^m$ has a unique positive semi-definite square root, denoted by $\sqrt{A} : \R^m \rightarrow \R^m$. It is known (e.g. \cite[Theorem 6.2.37, Theorem 6.6.30]{HornJohnson-TopicsInMatrixAnalysis})
 that the square root operator is continuous on the closed convex cone of positive semi-definite matrices of fixed dimension and smooth on the open convex cone of positive definite ones. 
   In particular, $t \mapsto \sqrt{\G_t}$ is continuous on $[0, 1]$ and smooth on $(0,1]$. 
Now, $\G = \C \C^T : E^{(q-1)} \rightarrow E^{(q-1)}$, and since $q-1 \leq n+1$, there exists $U : \R^{n+1} \rightarrow E^{(q-1)}$ with orthonormal rows such that
$\C = \sqrt{\G} U$ (see e.g. \cite[Theorem 7.3.11]{HornJohnson-MatrixAnalysis}).
Therefore, setting $\C_t := \sqrt{\G_t} U$, the third assertion is established. 
\end{proof}

\begin{lemma} \label{lem:good-time}
Let $\Omega$ be a spherical Voronoi $q$-cluster with $q \leq n+2$.
Assume that all cells are non-empty and that $\Omega$ is perpendicular and Plateau.
 Let $[0,1] \ni t \mapsto \C_t$ and $\k$ be as in the previous lemma, and let $\Omega(t)$ be the affine Voronoi cluster generated by the rows of $\C_t$ and $\k$. Then there exists $t_e \in (0,1]$ so that for all $t \in (0,t_e]$, $\Omega(t)$ is a full-dimensional Plateau spherical Voronoi cluster with the same non-empty interfaces as $\Omega$ (i.e. $\Sigma_{ij}(t) \neq \emptyset$ iff $\Sigma_{ij} \neq \emptyset$). 
\end{lemma}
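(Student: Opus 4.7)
The plan is to obtain \(\Omega(t)\) by simply assembling the three results already developed in Section~\ref{sec:Voronoi}: full-dimensionality from the Gram perturbation being strictly positive-definite, the ``spherical Voronoi'' property from the entrywise compatibility recorded in Lemma~\ref{lem:Gram}\ref{it:Gram2}, and the Plateau property from the openness result of Proposition~\ref{prop:Plateau-perturbation}. The only genuine step is to choose \(t_e\) so small that the combinatorial structure (namely, which interfaces are non-empty) has not yet changed.

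First I would read off full-dimensionality. By Lemma~\ref{lem:Gram}\ref{it:Gram1}, \(\G_t = \C_t \C_t^T > 0\) on \(E^{(q-1)}\) for \(t \in (0,1]\), so \(\C_t^T \colon E^{(q-1)} \to \R^{n+1}\) is injective and hence \(\arank(\C_t) = q-1 = \min(q-1,n+1)\) (using \(q \leq n+2\)), i.e.\ \(\Omega(t)\) is full-dimensional.

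Second, I would secure the spherical Voronoi property together with the interface combinatorics. By hypothesis \(\Omega = \Omega(0)\) is a Plateau spherical Voronoi cluster with all cells non-empty, so Corollary~\ref{cor:Plateau-interfaces} yields a neighborhood of \((\C,\k)\) in parameter space on which any generated affine Voronoi cluster has exactly the same non-empty interfaces as \(\Omega\). Since \(t \mapsto \C_t\) is continuous at \(t=0\) by Lemma~\ref{lem:Gram}\ref{it:Gram3} and \(\k\) is held fixed, there exists \(t_1 > 0\) with this property for every \(t \in [0,t_1]\). For such \(t\) and each non-empty \(\Sigma_{ij}(t)\), we have \(\Sigma_{ij} \neq \emptyset\) in \(\Omega\), whence by Lemma~\ref{lem:Gram}\ref{it:Gram2}
\[
|\c_{ij}(t)|^2 = \scalar{\G_t,e_{ij}\otimes e_{ij}} = \scalar{\Id/2 + \k\k^T,e_{ij}\otimes e_{ij}} = 1 + \k_{ij}^2,
\]
so Remark~\ref{rem:quasi-centers} certifies that \(\Omega(t)\) is in fact a spherical Voronoi cluster with the declared parameters.

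Finally I would invoke openness of the Plateau condition. Since \(t \mapsto (\C_t,\k)\) is a continuous deformation of \((\C,\k)\) and each \(\Omega(t)\) (for \(t \in [0,t_1]\)) has just been shown to be spherical Voronoi, Proposition~\ref{prop:Plateau-perturbation} applies to the family \(\{\Omega(t)\}_{t \in [0,t_1]}\) and produces \(t_e \in (0,t_1]\) such that \(\Omega(t)\) remains Plateau throughout \([0,t_e]\). Combining the three conclusions, every \(t \in (0,t_e]\) witnesses a full-dimensional Plateau spherical Voronoi cluster with the same non-empty interfaces as \(\Omega\). I don't foresee a hard step here; the only subtlety is the logical order — one must establish the coincidence of non-empty interfaces (so that Lemma~\ref{lem:Gram}\ref{it:Gram2} promotes affine Voronoi to spherical Voronoi) \emph{before} appealing to Proposition~\ref{prop:Plateau-perturbation}, whose hypothesis presupposes the spherical Voronoi condition along the entire path.
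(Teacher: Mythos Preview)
Your proposal is correct and follows essentially the same approach as the paper's proof: invoke Corollary~\ref{cor:Plateau-interfaces} for the interface combinatorics, Lemma~\ref{lem:Gram}\ref{it:Gram2} (via Remark~\ref{rem:quasi-centers}, equivalently Lemma~\ref{lem:CCT}) to upgrade affine Voronoi to spherical Voronoi, Proposition~\ref{prop:Plateau-perturbation} for the Plateau property, and Lemma~\ref{lem:Gram}\ref{it:Gram1} for full-dimensionality. Your remark about the logical order (establishing spherical Voronoi before invoking Proposition~\ref{prop:Plateau-perturbation}) is exactly right and matches the paper's ordering.
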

\begin{proof}
Since all cells are non-empty and $\Omega$ is assumed to be perpendicular and Plateau, the continuity of the map $t \mapsto \C_t$ and Corollary \ref{cor:Plateau-interfaces} imply the existence of $t_e \in (0,1]$ so that $\Sigma_{ij}(t) \neq \emptyset$ iff $\Sigma_{ij} \neq \emptyset$ for all $t \in (0,t_e]$. Consequently, Lemma \ref{lem:Gram} \ref{it:Gram2} and Lemma \ref{lem:CCT} imply that $\Omega(t)$ is actually a spherical Voronoi cluster for all $t \in (0,t_e]$. Proposition \ref{prop:Plateau-perturbation} implies that by choosing $t_e \in (0,1]$ small-enough, $\Omega(t)$ will remain Plateau for all $t \in [0,t_e]$. Finally, since for all $t \in (0,1]$, $\G_t = \C_t \C_t^T$ is positive-definite and hence full-rank by Lemma \ref{lem:Gram} \ref{it:Gram1}, $\C_t$ is also full-rank and hence $\Omega(t)$ is full-dimensional for all $t \in (0,t_e]$. 
\end{proof}

The continuity of the map $t \mapsto \C_t$ implies that $\C_t \in U_{\C}$ for any neighborhood $U_{\C}$ of $\C$ and all sufficiently small $t >0$ (depending on $U_{\C}$). Consequently, to conclude the proof of Proposition \ref{prop:Gram-perturbation}, it remains to show that $V(\Omega(t)) = V(\Omega)$ and $\per(\Omega(t)) = \per(\Omega)$ for all $t \in [0,t_e]$. Since $V(\Omega(t))$ and $\per(\Omega(t))$ are Lipschitz continuous as a function of $\C_t$ by Lemma \ref{lem:cont}, we conclude in conjunction with Lemma \ref{lem:Gram} \ref{it:Gram3} that these functions are continuous on $[0,t_e]$ and locally Lipschitz continuous on $(0,t_e]$. Therefore, it is enough to show that:

\begin{lemma} \label{lem:vol-constant}
With the same assumptions and notation as in the previous lemma,
\[
\frac{d}{dt} V(\Omega(t)) = 0 ~,~ \frac{d}{dt} \per(\Omega(t)) = 0 \;\;\; \forall t \in (0,t_e) . 
\]
\end{lemma}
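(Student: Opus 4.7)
The plan is to exhibit the infinitesimal deformation $t \mapsto \Omega(t)$, at the level of interface motion, as being generated by a Jacobi field on each $\Omega(t)$, and then conclude via volumetric non-degeneracy of the full-dimensional cluster $\Omega(t)$ that the corresponding first variation of volume vanishes. Both rates in the lemma then drop out from routine first-variation computations.

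First, I would set up the Jacobi field. By Lemma \ref{lem:Gram} \ref{it:Gram3}, $\C_t$ is smooth on $(0, t_e]$, so $\delta \c_i(t) := \dot\c_i(t)$ is well defined. Lemma \ref{lem:good-time} guarantees $\Sigma_{ij}(t) \neq \emptyset$ iff $\Sigma_{ij} \neq \emptyset$, and hence $|\c_{ij}(t)|^2 = 1 + \k_{ij}^2$ (recall $\k_{ij}(t) \equiv \k_{ij}$) holds along the entire path on each non-empty $\Sigma_{ij}$. Differentiating in $t$ gives $\scalar{\c_{ij}(t), \delta\c_{ij}(t)} = 0$ on every non-empty $\Sigma_{ij}$, so $\{\delta\c_i(t)\}$ solves the LSE (\ref{eq:structure}) for $\Omega(t)$ with $\delta\k = 0$. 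Proposition \ref{prop:LSE} then tells us that the scalar field $f(t) = (f_{ij}(t))$ on $\Sigma^1(\Omega(t))$ given by $f_{ij}(t,p) := \scalar{\delta\c_{ij}(t), p}$ is a Jacobi field on $\Omega(t)$: $f(t) \in \D_{con}$ and $L_{Jac} f(t) = 0$.

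The hard part of the argument will be the identity $\delta^1_{f(t)} V(\Omega(t)) = 0$. This is where the Gram perturbation pays off: for every $t \in (0, t_e]$, $\Omega(t)$ is full-dimensional by Lemma \ref{lem:good-time}, so Corollary \ref{cor:full-dim-F} supplies a conformally complete system of conformal Jacobi fields (obtained from the M\"obius fields), and Proposition \ref{prop:Fredholm} promotes this to volumetric non-degeneracy of $\Omega(t)$. That is, every $u \in \D_{con}$ with $L_{Jac} u = 0$ satisfies $\delta^1_u V(\Omega(t)) = 0$, and applying this to our $f(t)$ gives the claimed identity. The whole reason for engineering the Gram deformation is precisely to lift the (possibly low-dimensional) initial cluster $\Omega$ into a family of full-dimensional clusters, for which the Fredholm theory of Section \ref{sec:conformal-Jacobi} can be applied off-the-shelf.

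Once this identity is in hand, I would convert it into the two asserted derivatives. Differentiating the sphere equation $\scalar{\c_{ij}(t), p(t)} + \k_{ij} = 0$ along a trajectory $p(t) \in \Sigma_{ij}(t)$, and using $\scalar{\c_{ij}, \n_{ij}} = 1$ together with $\scalar{\c_{ij}, v} = 0$ for all $v \in T_p S_{ij}$, identifies the normal velocity of $\Sigma_{ij}(t)$ in the $\n_{ij}$-direction as $v^\n_{ij} = -\scalar{\delta\c_{ij}(t), p} = -f_{ij}(t, p)$. Hence
\[
\frac{d}{dt} V(\Omega_i(t)) \;=\; \sum_{j} \int_{\Sigma_{ij}(t)} v^\n_{ij} \, d\HH^{n-1} \;=\; -\bigl(\delta^1_{f(t)} V(\Omega(t))\bigr)_i \;=\; 0.
\]
For the perimeter, summing the standard first-variation identity $\frac{d}{dt}\HH^{n-1}(\Sigma_{ij}(t)) = \int_{\Sigma_{ij}(t)} v^\n_{ij} H_{ij} \, d\HH^{n-1} + \int_{\partial \Sigma_{ij}(t)} \dot\gamma \cdot \n_{\partial ij} \, d\HH^{n-2}$ over $i < j$, the triple-junction contributions cancel at each $\Sigma_{ijk}(t)$ because the junction velocity $\dot\gamma$ is common to the three incident interfaces and $\sum_{(a,b) \in \cyclic(i,j,k)} \n_{\partial ab} = 0$ by the Plateau property. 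With $H_{ij}(t) = (n-1)\k_{ij}$ constant, this collapses to $\frac{d}{dt} \per(\Omega(t)) = -(n-1)\scalar{\k, \delta^1_{f(t)} V(\Omega(t))} = 0$.
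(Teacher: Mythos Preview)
Your argument is correct and shares the paper's key step: the scalar field $f_{ij}(t)=\scalar{\delta\c_{ij}(t),p}$ is a Jacobi field by Proposition~\ref{prop:LSE}, and volumetric non-degeneracy of the full-dimensional $\Omega(t)$ (via Corollary~\ref{cor:full-dim-F} and Proposition~\ref{prop:Fredholm}) forces $\delta^1_{f(t)}V(\Omega(t))=0$. This is exactly Lemma~\ref{lem:zero-field} in the paper.

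Where you diverge from the paper is in converting this into the two derivatives. The paper does \emph{not} compute $\frac{d}{dt}V(\Omega_i(t))$ and $\frac{d}{dt}\HH^{n-1}(\Sigma_{ij}(t))$ directly from normal and junction velocities. Instead it proves a separate Proposition~\ref{prop:Plateau-field}: the Gram deformation is generated by a global smooth vector field $X(t)$ on $\S^n$, i.e.\ $\Omega(t)=F_t(\Omega(t_e))$ for a genuine flow. The Plateau hypothesis is used there a second time, to guarantee that near every $p_0\in\S^n$ the normals $\{\tilde\n_i[p]\}_{i\in I_{p_0}}$ remain affinely independent along the deformation, so that the linear system $\scalar{\bar\n_i,X}=-\scalar{\delta\bar\c_i,p}$ can be solved smoothly and patched together. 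Once $X(t)$ exists, Lemma~\ref{lem:Lagrange} applies verbatim and gives $\frac{d}{dt}V=\delta^1_{X(t)}V=\delta^1_{f(t)}V=0$ and $\frac{d}{dt}\per=\scalar{\lambda,\delta^1_{X(t)}V}=0$.

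Your route is more direct: you appeal to the first-variation formula for moving hypersurfaces with boundary and the cancellation $\sum_{(a,b)\in\cyclic}\n_{\partial ab}=0$ at triple junctions, without ever needing a global generating field. This is legitimate here (the interfaces and junctions of $\Omega(t)$ move smoothly as intersections of smoothly varying affine hyperplanes with $\S^n$, and higher-codimension strata have zero $\H^{n-2}$ measure), and it sidesteps Proposition~\ref{prop:Plateau-field} entirely. The paper's approach, by contrast, stays within the framework it has already built (all variations via global $C^\infty_c$ vector fields and Lemma~\ref{lem:Lagrange}), at the cost of an extra proposition that makes explicit where the Plateau property is used again.
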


\noindent
We will show this in the next two subsections - first conditionally and then in full. 

\subsection{Volumes and total perimeter remain fixed -- conditional verification}

It may be initially quite surprising that the volume and perimeter remain constant throughout the Gram perturbation of the cluster, 
since all cells will change their global configuration, much in contrast to the procedure we employed in \cite{EMilmanNeeman-TripleAndQuadruple} where local isometries were used to rotate a single chosen bubble. Indeed, in the latter method the affine-rank of $\C_t$ will increase by at most $1$, whereas Gram perturbation instantaneously makes $\C_t$ of full affine-rank (hence it is easier to spot the difference for higher values of $q$) -- see Figure \ref{fig:Gram-perturbation} from the Introduction.  It is actually not too hard to see why volume and perimeter are preserved, at least conditioned on a certain seemingly technical assumption. 
\medskip

Assume that for $t \in (0,t_e]$, $\Omega(t) = F_t(\Omega(t_e))$ for some flow $F_t : \S^n \rightarrow \S^n$ driven by a time-dependent smooth vector-field $X(t)$ on $(0,t_e] \times \S^n$: 
\begin{equation} \label{eq:t-flow}
\frac{d}{dt} F_t = X(t) \circ F_t ~,~ F_{t_e} = \Id ~,~ t \in (0,t_e] . 
\end{equation}
Note that we start the flow at time $t=t_e$ and not $t=0$, since the deformation $t \mapsto \Omega(t)$ is not guaranteed to be smooth at $t=0$ in Lemma \ref{lem:Gram}, but this will not play any role below. 
 Given $p \in \Sigma_{ij}(t_0)$, denote $p(t) = F_t(p) \in \Sigma_{ij}(t)$ and observe that:
\[
\scalar{p(t) , \c_{ij}(t)} + \k_{ij}(t) = 0 . 
\]
For the sake of generality, we allow for the moment the curvatures $\k(t)$ to change with time.
Now differentiate this at a fixed time $t$, denoting as usual $\delta \c_i(t) = \frac{d}{dt} \c_i(t)$ and $\delta \k(t)  = \frac{d}{dt} \k(t)$. Since $\scalar{X(t)^{\tang} , \c_{ij}(t)} = 0$ we obtain (removing for brevity the time dependence):
\[
X^{\n_{ij}} + \scalar{p , \delta \c_{ij}} = \scalar{ X , \c_{ij}} + \scalar{p , \delta \c_{ij}} = - \delta k_{ij} ,
\]
so that:
\[
X^{\n_{ij}} = - \delta k_{ij} - \scalar{p , \delta \c_{ij}} . 
\]
This should not be surprising in view of Proposition \ref{prop:LSE}.
Back to our setting, $\k(t)$ remains constant and hence $\delta \k(t) = 0$. We would thus conclude that, assuming such a vector field $X(t)$ exists, it must satisfy for all $t \in (0,t_e]$:
\begin{equation} \label{eq:explicit-magic}
X(t)^{\n_{ij}(t)} =  - \scalar{p(t) , \delta \c_{ij}(t)} \text{ on $\Sigma_{ij}(t)$.}
\end{equation}

We now observe:
\begin{lemma} \label{lem:zero-field}
With the same assumptions and notation as in Lemma \ref{lem:good-time}, denote $f(t) = (f_{ij}(t))$ the scalar field on $\Sigma(t)$, $t \in (0,t_e]$, given by:
\begin{equation} \label{eq:explicit-Jacobi}
f_{ij}(t) :=  - \scalar{p , \frac{d}{dt} \c_{ij}(t)} \text{ on $\Sigma_{ij}(t)$.}
\end{equation}
Then for all $t \in (0,t_e]$, $f(t)$ is a Jacobi field (i.e. $f(t) = (f_{ij}(t)) \in \D_{con}(\Sigma(t))$ and $L_{Jac,\Sigma(t)} f(t) \equiv 0$), and $\delta^1_{f(t)} V(\Omega(t)) = 0$. 
\end{lemma}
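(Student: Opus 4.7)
My plan is to derive both assertions from machinery already established in the paper. The field $f(t)$ has been set up precisely so that Proposition~\ref{prop:LSE} applies at $\Omega(t)$, provided $\{\delta\c_i(t)\}_{i=1,\ldots,q}$ is a solution of the linearized structure equations~(\ref{eq:structure}) at $\Omega(t)$ with $\delta\k=0$. To check this, I first note that $t \mapsto \C_t$ is smooth on $(0,1]$ by Lemma~\ref{lem:Gram}~\ref{it:Gram3}, and that Lemma~\ref{lem:good-time} guarantees that the set of non-empty interfaces of $\Omega(t)$ coincides with that of $\Omega$ for every $t \in (0,t_e]$. Differentiating the spherical Voronoi relation $|\c_{ij}(t)|^2 = 1 + \k_{ij}(t)^2 = 1 + \k_{ij}^2$ at any such $t$ and any $(i,j)$ with $\Sigma_{ij} \neq \emptyset$, and using that the curvatures $\k_{ij}$ do not depend on $t$, gives $\scalar{\c_{ij}(t), \delta\c_{ij}(t)} = 0 = \k_{ij} \cdot 0$, which is exactly (\ref{eq:structure}) with $\delta\k = 0$. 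Since $\Omega(t)$ is Plateau (hence $3$-Plateau) by Lemma~\ref{lem:good-time}, Proposition~\ref{prop:LSE} then yields that $p \mapsto \scalar{p, \delta\c_{ij}(t)}$ lies in $\D_{con}$ and is a conformal Jacobi field on $\Omega(t)$ with conformal parameter $0$, i.e.\ a Jacobi field. Its negative $f_{ij}(t)$ is then a Jacobi field by linearity, settling the first assertion.

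For the volume identity $\delta^1_{f(t)} V(\Omega(t)) = 0$, I plan to invoke volumetric non-degeneracy of $\Omega(t)$, which I will bootstrap from full-dimensionality. By Lemma~\ref{lem:good-time}, $\Omega(t)$ is full-dimensional for every $t \in (0,t_e]$; since $q \leq n+2$, the operator $\C_t : \R^{n+1} \to E^{(q-1)}$ is then surjective, so one can solve $\C_t \xi_t = -\k$ for some $\xi_t \in \R^{n+1}$, showing that $\Omega(t)$ is PCF. Being in addition $3$-Plateau, $\Omega(t)$ admits a conformally complete system of conformal Jacobi fields by Proposition~\ref{prop:PCF-F}, which by Proposition~\ref{prop:Fredholm} is equivalent to $\Omega(t)$ being volumetrically non-degenerate. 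Applied to the Jacobi field $f(t) \in \D_{con}$ produced in the previous step, this forces $\delta^1_{f(t)} V(\Omega(t)) = 0$, as claimed.

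The main obstacle is conceptual rather than computational: at $t = 0$ the cluster $\Omega$ is only assumed perpendicular and Plateau, and we have no direct handle on its volumetric non-degeneracy. The key trick is that Gram perturbation instantaneously promotes $\Omega(t)$ to a full-dimensional, hence PCF, cluster for $t > 0$, where the spectral machinery of Section~\ref{sec:conformal-Jacobi} becomes available. A more naive route would be to construct an ambient flow $F_t$ realizing $\Omega(t) = F_t(\Omega(t_0))$ whose normal velocity on each $\Sigma_{ij}(t)$ equals $-\scalar{p, \delta\c_{ij}(t)}$, and then identify $\delta^1_{f(t)} V(\Omega(t))$ with $\frac{d}{dt} V(\Omega(t))$; but proving directly that the latter vanishes is exactly the volume half of the subsequent Lemma~\ref{lem:vol-constant}, so this route would circularly defer the question rather than answer it.
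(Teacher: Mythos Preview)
Your proof is correct and follows essentially the same route as the paper: verify the linearized structure equations with $\delta\k=0$ by differentiating the spherical Voronoi relation, invoke Proposition~\ref{prop:LSE} to obtain a Jacobi field, and then use full-dimensionality of $\Omega(t)$ for $t\in(0,t_e]$ to deduce volumetric non-degeneracy via Proposition~\ref{prop:Fredholm}. The only cosmetic difference is that you obtain the conformally complete system through the PCF route (full-dimensional $\Rightarrow$ PCF $\Rightarrow$ Proposition~\ref{prop:PCF-F}), whereas the paper cites Corollary~\ref{cor:full-dim-F} directly; both are immediate and equivalent.
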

\begin{proof}
Since $\Omega(t)$ remains a Plateau spherical Voronoi cluster with the same non-empty interfaces as $\Omega$ for all $t \in [0,t_e]$, we know that $\{\delta \c_{i}(t)\}$ and $\delta \k(t) = 0$ satisfy the linearized structure equations (\ref{eq:structure}). It follows by Proposition \ref{prop:LSE} that $f(t)$ is a Jacobi field. Since for $t \in (0,t_e]$ the Plateau spherical Voronoi clusters $\Omega(t)$ are full-dimensional and hence \emph{volumetrically non-degenerate} by Proposition \ref{prop:Fredholm} and Corollary \ref{cor:full-dim-F}, it follows that $\delta^1_{f(t)} V(\Omega(t)) = 0$ for all $t \in (0,t_e]$.
\end{proof}

Comparing the definition of $f_{ij}(t)$ from (\ref{eq:explicit-Jacobi}) with the requirement from $X(t)^{\n_{ij}(t)}$ in (\ref{eq:explicit-magic}), we can now conditionally establish Lemma \ref{lem:vol-constant} as follows:
\begin{lemma}
With the same assumptions and notation as in Lemma \ref{lem:good-time}, assume there exists a smooth vector field $X(t)$ on $(0,t_e] \times \S^n$ 
so that $\Omega(t) = F_t(\Omega(t_e))$ with $F_t$ given by (\ref{eq:t-flow}). 
Then the conclusion of Lemma \ref{lem:vol-constant} holds. 
\end{lemma}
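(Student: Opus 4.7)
The plan is to differentiate $V(\Omega(t))$ and $\per(\Omega(t))$ at a fixed time $t_0 \in (0,t_e)$ by reparametrizing the flow so that the initial time is $t_0$ rather than $t_e$, and then to exploit two facts: (a) the normal component of $X(t_0)$ along each interface is forced by the assumption (\ref{eq:explicit-magic}) to coincide with the scalar-field $f_{ij}(t_0)$ from (\ref{eq:explicit-Jacobi}), and (b) $\Omega(t_0)$ is itself a stationary $3$-Plateau spherical Voronoi cluster whose Lagrange multiplier is $(n-1)\k$ (independent of $t_0$, since $\k(t)\equiv\k$).

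First I would set $G_s := F_{t_0+s}\circ F_{t_0}^{-1}$, so that $G_0 = \Id$ and $\Omega(t_0+s) = G_s(\Omega(t_0))$, with $G_s$ generated at $s=0$ by the smooth vector field $X(t_0)$. By definition of the first variation of volume and by Lemma~\ref{lem:Lagrange} applied to $\Omega(t_0)$:
\[
\frac{d}{dt}\Big|_{t=t_0} V(\Omega_i(t)) \;=\; \delta^1_{X(t_0)} V(\Omega_i(t_0)) \;=\; \sum_{j\neq i}\int_{\Sigma_{ij}(t_0)} X(t_0)^{\n_{ij}(t_0)}\,d\HH^{n-1}.
\]
The identity (\ref{eq:explicit-magic}), which follows from the hypothesis that $X(t)$ generates the family $\{\Omega(t)\}$ via (\ref{eq:t-flow}), lets me replace the integrand by $f_{ij}(t_0) = -\scalar{p,\delta \c_{ij}(t_0)}$. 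Hence the above quantity equals $(\delta^1_{f(t_0)} V(\Omega(t_0)))_i$, which vanishes by Lemma~\ref{lem:zero-field}. Therefore $\frac{d}{dt} V(\Omega(t))=0$ for all $t\in(0,t_e)$.

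For perimeter, the key point is that by Lemma~\ref{lem:good-time} the cluster $\Omega(t_0)$ is $3$-Plateau spherical Voronoi, hence regular by Lemma~\ref{lem:Voronoi-regular} and stationary with Lagrange multiplier $\lambda(t_0) = (n-1)\k(t_0) = (n-1)\k$ by Lemma~\ref{lem:Voronoi-stationary}. The first variation of total perimeter from Lemma~\ref{lem:Lagrange} then yields
\[
\frac{d}{dt}\Big|_{t=t_0}\per(\Omega(t)) \;=\; \delta^1_{X(t_0)} A(\Omega(t_0)) \;=\; \scalar{(n-1)\k,\,\delta^1_{X(t_0)} V(\Omega(t_0))} \;=\; 0,
\]
using the volume-preservation just established. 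This completes the two assertions of Lemma~\ref{lem:vol-constant}.

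I do not anticipate any obstacle beyond assembling the pieces above; the proof is essentially a direct chain of invocations of Lemmas~\ref{lem:Lagrange}, \ref{lem:good-time}, \ref{lem:Voronoi-stationary}, and \ref{lem:zero-field}, tied together by (\ref{eq:explicit-magic}). The real substance sits entirely in Lemma~\ref{lem:zero-field} (the Jacobi-field identity together with volumetric non-degeneracy of the full-dimensional Plateau clusters $\Omega(t)$ for $t>0$) and in the existence of such a generating vector field $X(t)$, which is the remaining ingredient needed to upgrade this conditional statement to the unconditional Lemma~\ref{lem:vol-constant}.
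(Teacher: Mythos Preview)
Your proposal is correct and follows essentially the same route as the paper's proof: identify $X(t_0)^{\n_{ij}(t_0)}$ with $f_{ij}(t_0)$ via (\ref{eq:explicit-magic}), invoke Lemma~\ref{lem:zero-field} to kill the volume variation, and then use stationarity of $\Omega(t_0)$ with Lagrange multiplier $(n-1)\k$ to kill the perimeter variation. Your explicit reparametrization $G_s := F_{t_0+s}\circ F_{t_0}^{-1}$ is a slightly more careful way of saying what the paper dispatches with the remark that the first-variation formulas of Lemma~\ref{lem:Lagrange} ``clearly apply in the non-stationary setting as well.''
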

\begin{proof}
 If such a vector field $X(t)$ exists, we have seen in (\ref{eq:explicit-magic}) that necessarily $X(t)^{\n_{ij}(t)} = f_{ij}(t)$ on  $\Sigma_{ij}(t)$. 

Given $t_1 \in (0,t_e)$, set $X := X(t_1)$ and note that the formulas for the first variations of volume and perimeter from Lemma \ref{lem:Lagrange} (derived for a stationary flow along $X$) clearly apply in the non-stationary setting as well. 
Invoking Lemma \ref{lem:zero-field}, we therefore deduce:
\[
\frac{d}{dt}|_{t=t_1} V(\Omega(t)) = \delta^1_{X(t_1)} V(\Omega(t_1)) = \delta^1_{f(t_1)} V(\Omega(t_1)) = 0.
\]
Finally, since $\Omega(t)$ remains stationary for all $t \in (0,t_e]$ (with the same Lagrange multiplier $\lambda = (n-1) \k$), we have  by Lemma \ref{lem:Lagrange}:
\[
 \frac{d}{dt}|_{t=t_1} \per(\Omega(t)) = \delta^1_{X(t_1)} A(\Omega(t_1)) = \sscalar{\lambda , \delta^1_{X(t_1)} V(\Omega(t_1))} = 0 .
\]
\end{proof}

\subsection{Gram perturbation is a flow}

It remains to establish:
\begin{proposition} \label{prop:Plateau-field}
With the same assumptions and notation as in Lemma \ref{lem:good-time}, perhaps after further reducing $t_e \in (0,1]$, 
there exists a smooth vector field $X(t)$ on $(0,t_e] \times \S^n$ 
so that $\Omega(t) = F_t(\Omega(t_e))$ with $F_t$ given by (\ref{eq:t-flow}). 
\end{proposition}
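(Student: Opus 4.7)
The plan is to construct $X(t)$ locally from the smoothly deforming Plateau structure and patch with a partition of unity, then appeal to a stratified isotopy argument to verify that the resulting flow realizes $\Omega(t) = F_t(\Omega(t_e))$. After possibly shrinking $t_e \in (0,1]$, Lemma \ref{lem:good-time} and Proposition \ref{prop:Plateau-perturbation} ensure that the clusters $\Omega(t)$ are Plateau spherical Voronoi of fixed combinatorial type for all $t \in (0,t_e]$, and Corollary \ref{cor:cyclic} (with $\delta \k \equiv 0$) guarantees that the required normal data $f_{ij}(t,q) := -\scalar{q,\delta \c_{ij}(t)}$ is compatible at every multi-way interface, because $\delta \c_{ij}$ is skew in $(i,j)$.

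For the local construction, fix $t_1 \in (0,t_e]$ and $p \in \S^n$. By Lemma \ref{lem:Plateau-field}, choose a space-time neighborhood $I_1 \times U_p$ (with $t_1 \in I_1$) so that $I_q(t) \subset I_p(t_1)$ throughout and the affine-independence of $\{\tilde \n_i(t,q) := \c_i(t) - \scalar{\c_i(t),q}\, q : i \in I_p(t_1)\} \subset T_q\S^n$ persists. Fix $i_0 \in I_p(t_1)$; the Gram matrix of $\{\tilde \n_{i_0 j}(t,q)\}_{j \neq i_0}$ is invertible on $I_1 \times U_p$, and solving the linear system
\[
\scalar{X_p(t,q),\, \tilde \n_{i_0 j}(t,q)} = f_{i_0 j}(t,q), \qquad j \in I_p(t_1) \setminus \{i_0\},
\]
for $X_p(t,q)$ in the linear span $\sspan\{\tilde \n_{i_0 j}(t,q)\}_j \subset T_q\S^n$ yields a smooth field. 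Corollary \ref{cor:cyclic} then extends the identity to all pairs $k,l \in I_p(t_1)$, and since $\tilde \n_{kl} = \n_{kl}$ along $S_{kl}(t)$ while $\scalar{X_p,q}=0$, we obtain $X_p^{\n_{kl}(t)} = f_{kl}(t,q)$ on $\Sigma_{kl}(t) \cap U_p$.

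Next, cover $[t_*, t_e] \times \S^n$ (any $t_* \in (0,t_e)$) by finitely many such charts $I_\alpha \times U_\alpha$, take a subordinate smooth partition of unity $\{\phi_\alpha\}$, and set $X(t,q) := \sum_\alpha \phi_\alpha(t,q) X_\alpha(t,q)$. For any $q \in \overline{\Sigma_{ij}(t)}$, Lemma \ref{lem:Plateau-field} yields $\{i,j\} \subset I_q(t) \subset I_{p_\alpha}(t_{\alpha})$ for every $\alpha$ with $(t,q) \in \supp \phi_\alpha$, so each $X_\alpha^{\n_{ij}(t)}(q) = f_{ij}(t,q)$; summing against $\sum_\alpha \phi_\alpha = 1$, the same holds for $X$. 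Exhausting $(0,t_e]$ as $t_* \downarrow 0$ and patching in $t$ by a further partition of unity produces a smooth $X$ on all of $(0,t_e] \times \S^n$, and the flow $F_t$ with $F_{t_e} = \Id$ is then globally defined on $(0,t_e]$ by compactness of $\S^n$.

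Finally, to verify $\Omega(t) = F_t(\Omega(t_e))$, observe that the restriction of $X(t)$ to each moving stratum $\Sigma_I(t) \subset S_I(t)$ is, by construction, precisely the velocity field of the smoothly deforming submanifold $\Sigma_I(t)$: the $|I|-1$ conditions $X^{\n_{ij}(t)} = f_{ij}(t,q)$ for $i,j \in I$ are exactly the $|I|-1$ independent linearized equations (using Plateau affine-independence) that pin down how $\Sigma_I(t)$ moves in $\S^n$. By Thom's first isotopy lemma applied to the constant-combinatorial-type Whitney-stratified set $\bigsqcup_t \{t\} \times \Sigma(t) \subset (0,t_e] \times \S^n$ with the controlled vector field $(\partial_t, X(t))$ just built, $F_t$ is a stratified isotopy sending each $\Sigma_I(t_e)$ onto $\Sigma_I(t)$; since the open cells $\Omega_i(t)$ are the connected components of $\S^n \setminus \Sigma(t)$ and $F_{t_e} = \Id$, continuity in $t$ forces $F_t(\Omega_i(t_e)) = \Omega_i(t)$ for each $i$. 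The hard part of this argument is the last step --- upgrading ``preserves each stratum in its tangent direction'' to ``preserves each cell globally'' through the higher-codimension singular set $\Sigma^{\geq 3}$. It is precisely the constant Plateau combinatorial type from Proposition \ref{prop:Plateau-perturbation} and the multi-way compatibility from Corollary \ref{cor:cyclic} that make the stratified isotopy apparatus applicable, and without the \emph{fully} Plateau hypothesis one could not even solve the local linear system consistently at meeting loci of codimension larger than three.
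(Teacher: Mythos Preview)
Your local construction is essentially the paper's: solve, on each chart $U_{p_0}$, the linear system $\langle X,\tilde\n_{ij}\rangle=-\langle p,\delta\c_{ij}\rangle$ for $i,j$ in a fixed Plateau index set, using affine independence of the $\tilde\n_i$, and then patch by a partition of unity. Two points, one minor and one substantive.

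\textbf{Minor.} Your appeal to Corollary~\ref{cor:cyclic} is misplaced. That corollary concerns the conformal boundary quantity $\langle\c_{ik}+\c_{jk},\delta\c_{ij}\rangle$; it is irrelevant here. The ``compatibility'' you need---that solving $\langle X,\tilde\n_{i_0 j}\rangle=f_{i_0 j}$ for all $j\neq i_0$ automatically yields $\langle X,\tilde\n_{kl}\rangle=f_{kl}$ for every pair---is trivial linearity: both $\tilde\n_{kl}=\tilde\n_{i_0 l}-\tilde\n_{i_0 k}$ and $f_{kl}=f_{i_0 l}-f_{i_0 k}$ are differences.

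\textbf{Substantive.} Your last step is a genuine gap. Having $X^{\n_{ij}(t)}=f_{ij}(t)$ on each $\Sigma_{ij}(t)$ says only that the \emph{normal} component of $X$ matches the motion of each interface; it does not by itself make the flow $F_t$ carry $\Sigma_I(t_e)$ onto $\Sigma_I(t)$, let alone $\Omega_i(t_e)$ onto $\Omega_i(t)$. Invoking Thom's first isotopy lemma does not close this: you have not verified that $(\partial_t,X)$ is a \emph{controlled} vector field for a tube system on the stratification, and Thom's conclusion in any case produces \emph{some} stratum-preserving homeomorphism, not the flow of your specific $X$. The claim ``cells are the connected components of $\S^n\setminus\Sigma(t)$'' is also unjustified in general.

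The paper avoids all of this by tracking a scalar invariant rather than the strata. For each chart one considers the vector
\[
R_{p_0}(p,t)\;=\;\bigl(\langle\bar\c_i(t),p\rangle+\bar\k_i\bigr)_{i\in I_{p_0}(0)}\in E^{I_{p_0}(0)},
\]
whose $\argmin$ is exactly $I_p(t)$ (for $p\in U_{p_0}$). The same linear system you solved makes $\frac{d}{dt}R_{p_0}(F_t(p_1),t)=0$ locally, and after the partition of unity one checks that, along any flow line, the index set $I_{F_t(p_1)}(t)$ is both open and closed in $t$ (here one uses that the supports of the $\phi_\alpha$ are closed so that the ``relevant index set'' $\bigcap_{\alpha:\,p\in\mathrm{supp}\,\phi_\alpha} I_{p_0^\alpha}(0)$ is lower semi-continuous in $p$). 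Hence $I_{F_t(p_1)}(t)$ is constant on $(0,t_e]$, which immediately gives $F_t(\Omega_i(t_e))=\Omega_i(t)$ without any stratified-isotopy machinery. Replacing your Thom appeal with this direct $\argmin$-preservation argument would make your proof complete.
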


While this may seem like a mere technical issue, we will actually need to employ our assumption that $\Omega$ is Plateau once again.

\begin{proof}[Proof of Proposition \ref{prop:Plateau-field}]
 By Lemma \ref{lem:perp} and Corollary \ref{cor:isolated}, as $\Omega$ is perpendicular and without empty cells, then by decreasing $t_e \in (0,1]$ if necessary, we may assume that $\overline{\Omega_i}(t) = \underline \Omega_i(t)$ for all $t \in [0,t_e]$ and $i=1,\ldots,q$. Consequently, for all $p \in \S^n$ and $t \in [0,t_e]$, we have:
 \begin{align*}
  I_p(t) & = \{ i \in \{1,\ldots,q\} \; ; \; p \in \overline{\Omega_i}(t) \} = \{  i \in \{1,\ldots,q\} \; ; \; p \in \underline \Omega_i(t) \} \\
  & =  \{ i \in \{1,\ldots,q\} \; ; \; \scalar{\c_i(t),p} + \k_i = \min_{j = 1,\ldots,q} \scalar{\c_j(t),p} + \k_j \} .
 \end{align*}
 To construct the desired diffeomorphism $F_t : \S^n \rightarrow \S^n$ so that $F_t(\Omega(t_e)) = \Omega(t)$ for all $t \in (0,t_e]$, our task is to ensure that $I_{F_t(p_1)}(t)$ remains constant for all $p_1 \in \S^n$ and $t \in (0,t_e]$. 
 
Recall that $\Omega$ is in addition assumed Plateau. Consequently, given $p_0 \in \S^n$, we may invoke Lemma \ref{lem:Plateau-field}, which ensures the existence of a neighborhood $U_{p_0}$ of $p_0$ so that for all $p \in U_{p_0}$ and $t \in [0,t_{p_0}]$, $I_p(t) \subset I_{p_0}(0)$ and $\arank(\N_p(t)) = |I_p(t)|-1$. Recalling that $t_e \in (0,1]$ was obtained by invoking Proposition \ref{prop:Plateau-perturbation}, whose proof is based on compactness and Lemma \ref{lem:Plateau-field}, we may assume that $t_{p_0} \geq t_e$ without loss of generality. Now consider the map:
\[
U_{p_0} \times [0,t_e] \ni (p,t) \mapsto R_{p_0}(p,t) := ( \scalar{\bar \c_i(t),p} + \bar \k_i )_{i \in I_{p_0}(0)} \in E^{I_{p_0}(0)} ,
\]
where:
\[
\bar \c_i(t) := \c_i(t) - \frac{1}{|I_{p_0}(0)|} \sum_{j \in I_{p_0}(0)} \c_j(t) ~,~ \bar \k_i := \k_i - \frac{1}{|I_{p_0}(0)|} \sum_{j \in I_{p_0}(0)} \k_j . 
\]
Since $I_{p}(t) \subset I_{p_0}(0)$ on $U_{p_0} \times [0,t_e]$, we have for all $t \in [0,t_e]$:
\[
p \in U_{p_0} \;\; \Rightarrow \;\; I_{p}(t) = \{ i \in I_{p_0}(0)\; ;\; \scalar{\bar \c_i(t),p} + \bar \k_i = \min_{j \in I_{p_0}(0)} \scalar{\bar \c_j(t),p} + \bar \k_j \} . 
\]
Consequently, we first construct a smooth vector field $X(p,t)$ on $U_{p_0} \times (0,t_e]$ so that $R_{p_0}(F_t(p_1),t)$ remains locally constant by equating the time-derivative to zero:
\begin{align*}
\vec 0 = \partial_p R_{p_0}(p,t) X(p,t) + \partial_t R_{p_0}(p,t) & = (\scalar{\bar \c_i(t),X(p,t)} + \sscalar{\frac{d}{dt} \bar \c_i(t) , p})_{i \in I_{p_0}(0)} \\
& =  (\scalar{\bar \n_i(p,t),X(p,t)} + \sscalar{\frac{d}{dt} \bar \c_i(t) , p})_{i \in I_{p_0}(0)} ,
\end{align*}
where $\bar \n_i(p,t)  = \bar \c_i(t) - \scalar{\bar \c_i(t),p} p$. Indeed, since $\{ \bar \n_i(p,t) \}_{i \in I_{p_0}(0)}$ vary smoothly on $U_{p_0} \times (0,t_e]$ and remain affinely independent while summing to zero, and $(\sscalar{\frac{d}{dt} \bar \c_i(t) , p})_{i \in I_{p_0}(0)} \in E^{I_{p_0}(0)}$ varies smoothly on $U_{p_0} \times (0,t_e]$ as well, $X(p,t)$ may be chosen as some smooth section of the corresponding smooth vector subbundle of $T (U_{p_0} \times (0,t_e])$. 

It remains to construct $X(\cdot,t)$ globally on $\S^n$ using a partition of unity subordinate to the open cover by $\{U_{p_0}\}_{p_0 \in \S^n}$. This ensures that if $\frac{d}{dt} F_t = X(t) \circ F_t$ then $R_{p_0}(F_t(p_1),t)$ remains locally constant in $t$ for all $p_0$, and hence $I_{F_t(p_1)}(t)$ remains constant for all $t \in [0,t_e]$ and $p_1 \in \S^n$, concluding the proof. 
\end{proof}

\section{Proof of the conditional multi-bubble isoperimetric inequality} \label{sec:conditional}

We are now finally ready to provide a proof of the conditional Theorem \ref{thm:intro-conditional}, postponing the analysis of the cases of equality to the next section. In other words, we establish:

\begin{theorem} \label{thm:final}
Let $2 \leq q \leq n+1$, and assume that $\I^{(q-2)} = \I_m^{(q-2)}$ on the entire $\Delta^{(q-2)}$ (equivalently, 
that $\I^{(q-1)} = \I_m^{(q-1)}$ on $\partial \Delta^{(q-1)}$). 
Assume that for every $v_0 \in \interior \Delta^{(q-1)}$, there exists a minimizing $q$-cluster $\Omega$ on $\S^n$ with $V(\Omega) = v_0$ so that either:
\begin{itemize}
\item $\Omega$ is pseudo conformally flat; or
\item $\Omega$ is $(q-3)$-Plateau. 
\end{itemize}
Then $\I^{(q-1)} = \I_m^{(q-1)}$ on the entire $\Delta^{(q-1)}$.  
\end{theorem}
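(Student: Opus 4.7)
The plan is to reduce the theorem to an application of the maximum principle (Proposition \ref{prop:max-principle}), which requires verifying the PDI (\ref{eq:PDI-F}) in the viscosity sense of Definition \ref{def:viscosity} at every $v_0 \in \interior \Delta^{(q-1)}$, given the inductive hypothesis $\I^{(q-1)} = \I^{(q-1)}_m$ on $\partial \Delta^{(q-1)}$. So I would fix an arbitrary $v_0 \in \interior \Delta^{(q-1)}$, invoke the hypothesis to produce a minimizing $q$-cluster $\Omega$ with $V(\Omega) = v_0$ which is either PCF or $(q-3)$-Plateau, and aim to exhibit a symmetric $\F \geq 0$ on $E^{(q-1)}$, together with a family of approximate test vector-fields, verifying Definition \ref{def:viscosity} at $v_0$.

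The core of the argument is the PCF case, handled exactly as in Theorem \ref{thm:PDE-for-PCF}: by Theorem \ref{thm:intro-structure}, $\Omega$ is regular, stable, $3$-Plateau, perpendicular spherical Voronoi with equatorial cells and Lagrange multiplier $\lambda = (n-1)\k$; Theorem \ref{thm:conformally-complete} supplies a conformally complete system of conformal Jacobi fields and hence a conformal-to-volume operator $\F = \F_\Omega$, which under the extra PCF hypothesis coincides with the relaxed operator $\F_0$ by Proposition \ref{prop:PCF-F=F0}. Then $\F > 0$ (as all cells are non-empty since $v_0 \in \interior \Delta^{(q-1)}$), the trace identity (\ref{eq:trace-id}) with equality comes from Proposition \ref{prop:PCF-trace} (or \ref{prop:trace-id-F0}), and the approximate physical test fields $X_{\eps,a}$ with $\delta^1_{X_{\eps,a}} V = \F a$ and $Q(X_{\eps,a}) \leq -(n-1) a^T \F a + \eps$ are delivered by the Index-Form Approximation Theorem \ref{thm:approximation} applied to $f^a$. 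This checks both (\ref{eq:def-viscosity}) and (\ref{eq:F-trace-inq}) at $v_0$.

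The remaining task is to reduce the $(q-3)$-Plateau case to the PCF case at the same $v_0$. Here I would invoke Lemma \ref{lem:q-3-Plateau}: any $(q-3)$-Plateau spherical Voronoi $q$-cluster on $\S^n$ is either PCF (in which case we are done) or fully Plateau. In the fully Plateau subcase, $\Omega$ is perpendicular (as a minimizer by Theorem \ref{thm:intro-structure}) and Plateau, with all cells non-empty, so I would apply the Gram perturbation of Proposition \ref{prop:Gram-perturbation} to produce a \emph{full-dimensional} Plateau spherical Voronoi cluster $\Omega'$ with $V(\Omega') = V(\Omega) = v_0$ and $\per(\Omega') = \per(\Omega) = \I(v_0)$. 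Consequently $\Omega'$ is itself a minimizer of volume $v_0$, and being full-dimensional it is automatically PCF; thus the PCF case applied to $\Omega'$ verifies (\ref{eq:def-viscosity})--(\ref{eq:F-trace-inq}) at $v_0$. Since $v_0$ was arbitrary, Proposition \ref{prop:max-principle} concludes $\I^{(q-1)} = \I^{(q-1)}_m$ on all of $\Delta^{(q-1)}$.

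The main obstacle in this plan is not the PCF case itself, where the trace identity holds cleanly, but rather the passage from ``fully Plateau'' to ``PCF'' via Gram perturbation: the conclusion that $\Omega'$ remains a minimizer hinges crucially on the fact that the Gram perturbation simultaneously preserves all cell volumes and the total perimeter, despite not being an isometric deformation nor a conformal one, and despite instantaneously raising the affine rank of the quasi-center operator. This is precisely the content of Lemma \ref{lem:vol-constant}, which in turn rests on Proposition \ref{prop:Plateau-field} (showing that the Gram deformation integrates to a genuine flow of $\S^n$, using the Plateau property globally via the local compatibility Lemma \ref{lem:Plateau-field}) together with the volumetric non-degeneracy of full-dimensional clusters from Proposition \ref{prop:Fredholm} and Corollary \ref{cor:full-dim-F}, which forces the normal trace of the generating vector-field (explicitly $-\scalar{p,\tfrac{d}{dt}\c_{ij}(t)}$) to be a Jacobi field with vanishing volume variation.
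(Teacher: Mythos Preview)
Your proposal is correct and follows essentially the same route as the paper: reduce to Proposition \ref{prop:max-principle}, use Lemma \ref{lem:q-3-Plateau} to split the $(q-3)$-Plateau alternative into PCF or fully Plateau, apply the Gram perturbation of Proposition \ref{prop:Gram-perturbation} in the latter case to obtain a full-dimensional (hence PCF) minimizer at the same $v_0$, and then invoke Theorem \ref{thm:PDE-for-PCF}. Your additional commentary on the mechanism behind the Gram perturbation (Lemma \ref{lem:vol-constant}, Proposition \ref{prop:Plateau-field}, volumetric non-degeneracy) is accurate but goes beyond what the paper's proof records, since those are already packaged into Proposition \ref{prop:Gram-perturbation}.
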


To see the relation with the assumption used in Theorem \ref{thm:intro-conditional} from the Introduction, note that if the multi-bubble conjecture for $p$-clusters on $\S^n$ holds then $\I^{(p-1)} = \I_m^{(p-1)}$ on $\interior \Delta^{(p-1)}$; consequently, if the multi-bubble conjecture for $p$-clusters holds for all $2 \leq p \leq q-1$ then $\I^{(q-2)} = \I_m^{(q-2)}$ on the entire $\Delta^{(q-2)}$. As explained in Subsection \ref{subsec:max-principle}, this is the same as $\I^{(q-1)} = \I_m^{(q-1)}$ on $\partial \Delta^{(q-1)}$. 

\begin{proof}[Proof of Theorem \ref{thm:final}]
By Proposition \ref{prop:max-principle}, it is enough to show for every $v_0 \in \interior \Delta^{(q-1)}$  that (\ref{eq:PDI-F}) holds in the viscosity sense of Definition \ref{def:viscosity}. Given $v_0 \in \interior \Delta^{(q-1)}$, let $\Omega$ be a minimizing $q$-cluster on $\S^n$ with $V(\Omega) = v_0$. By Theorem \ref{thm:intro-structure}, $\Omega$ is a regular stable perpendicular spherical Voronoi cluster with equatorial cells, which is $3$-Plateau. 
Our assumption guarantees by Lemma \ref{lem:q-3-Plateau} that either $\Omega$ is pseudo conformally flat (PCF) or is fully Plateau. If it is Plateau, then by Proposition \ref{prop:Gram-perturbation} we can deform $\Omega$ into a full-dimensional (and so in particular, PCF) spherical Voronoi cluster $\Omega'$ without changing its cell volumes or total perimeter, and so $\Omega'$ is still an isoperimetric minimizer with $V(\Omega') = v_0$. In either case, we have found a minimizing PCF spherical Voronoi cluster of volume $v_0$. Theorem \ref{thm:PDE-for-PCF} then guarantees that (\ref{eq:PDI-F}) holds in the viscosity sense, thereby concluding the proof.
\end{proof}

\section{Uniqueness of minimizers} \label{sec:uniqueness}

Having established that $\I^{(q-1)} = \I_m^{(q-1)}$ on $\Delta^{(q-1)}$, we are now ready to establish the uniqueness of minimizers, as follows:

\begin{theorem} \label{thm:final-equality}
Let $2 \leq q \leq n+1$, and assume that $\I^{(q-1)} = \I_m^{(q-1)}$ on $\Delta^{(q-1)}$. 
Given $v_0 \in \interior \Delta^{(q-1)}$, assume that \emph{any} minimizing $q$-cluster $\Omega$ on $\S^n$ with $V(\Omega) = v_0$ satisfies that either:
\begin{itemize}
\item $\Omega$ is pseudo conformally flat; or
\item $\Omega$ is $(q-3)$-Plateau. 
\end{itemize}
Then $\Omega$ is necessarily a standard $(q-1)$-bubble (up to null-sets). 
\end{theorem}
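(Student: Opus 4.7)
The plan is to run the maximum-principle argument of Proposition~\ref{prop:max-principle} in its equality case, since the hypothesis $\I\equiv\I_m$ on $\Delta^{(q-1)}$ collapses every inequality in that proof to an equality at $v_0$. At the first-order level this forces the Lagrange multiplier of any minimizer $\Omega$ with $V(\Omega)=v_0$ to equal $\lambda=\nabla\I_m(v_0)$, so that the curvature vector $\k=\frac{1}{n-1}\nabla\I_m(v_0)$ is determined by $v_0$ alone and matches that of the standard bubble $\Omega^*$ of volume $v_0$ (unique up to isometry by Lemma~\ref{lem:standard-volume}). At the second-order level, whenever $\F$ is the operator furnished by Theorem~\ref{thm:PDE-for-PCF} for a PCF minimizer of volume $v_0$, the equality case of $0<(-\nabla^2\I_m(v_0))^{-1}\leq\frac{1}{n-1}\F$ yields $\F=-(n-1)(\nabla^2\I_m(v_0))^{-1}$, depending only on $v_0$.

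First I would reduce the $(q-3)$-Plateau case to the PCF case. If $\Omega$ is $(q-3)$-Plateau but not PCF, then by Lemma~\ref{lem:q-3-Plateau} it is fully Plateau; I would invoke Proposition~\ref{prop:Gram-perturbation} to produce a \emph{full-dimensional} (hence PCF) spherical Voronoi cluster $\Omega'$ with $\C'$ arbitrarily close to $\C$, $\k'=\k$, $V(\Omega')=v_0$ and $\per(\Omega')=\per(\Omega)=\I_m(v_0)$, so that $\Omega'$ is itself a minimizer. Using Theorem~\ref{thm:intro-structure} (ensuring $\Omega$ is perpendicular, Plateau and has no empty cells) and Corollary~\ref{cor:Plateau-interfaces}, I would choose $\C'$ close enough to $\C$ so that $\Omega$ and $\Omega'$ share \emph{exactly} the same set of non-empty interfaces. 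Once the PCF case below establishes that $\Omega'$ is a standard bubble, Proposition~\ref{prop:standard-char}\ref{it:standard-char-ii} forces every $\Sigma_{ij}(\Omega')$ to be non-empty, whence every $\Sigma_{ij}(\Omega)$ too, and Proposition~\ref{prop:standard-char}\ref{it:standard-char-ii}$\Rightarrow$\ref{it:standard-char-i}, combined with $\Omega$'s spherical-Voronoi structure, will identify $\Omega$ itself as a standard bubble.

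The core task is therefore PCF uniqueness. Let $\Omega$ be a PCF minimizer of volume $v_0$ with compatibility parameter $\xi$. By Proposition~\ref{prop:PCF-F}, $\F(\Omega)=L_\alpha$ with edge weights $\alpha_{ij}=\int_{\Sigma_{ij}}\Psi_\xi\,d\HH^{n-1}$. Applying the first paragraph's rigidity both to $\Omega$ and to the standard bubble $\Omega^*$ of the same volume (which is PCF, being conformally flat), I would obtain $\F(\Omega)=\F(\Omega^*)=-(n-1)(\nabla^2\I_m(v_0))^{-1}$, i.e.\ $L_\alpha=L_{\alpha^*}$. The linear map $\alpha\mapsto L_\alpha$ from edge weights $\{\alpha_{ij}\}_{i<j}$ to symmetric operators on $E^{(q-1)}$ is \emph{injective}: $L_\alpha$ is the restriction to $E^{(q-1)}$ of the $q\times q$ graph Laplacian (which annihilates $\textbf{1}$ and hence is determined by its restriction), and that $q\times q$ Laplacian has off-diagonal entries $-\alpha_{ij}$. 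Hence $\alpha_{ij}=\alpha^*_{ij}$ for every $i<j$. Since $\Omega^*$ is conformally flat, Lemma~\ref{lem:conformally-flat} gives $|\xi^*|<1$ and hence $\Psi_{\xi^*}>0$ on all of $\S^n$; combined with $\Sigma^*_{ij}\neq\emptyset$ for every pair (Proposition~\ref{prop:standard-char}\ref{it:standard-char-ii}), this forces $\alpha^*_{ij}>0$, so $\alpha_{ij}>0$, so $\Sigma_{ij}(\Omega)\neq\emptyset$ for every $i<j$. Proposition~\ref{prop:standard-char} then concludes that $\Omega$ is a standard bubble.

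I do not anticipate a major obstacle in carrying this out. The most delicate point is ensuring that the Gram-perturbed cluster $\Omega'$ in the reduction step genuinely inherits both minimality (immediate from $V(\Omega')=v_0$ and $\per(\Omega')=\I_m(v_0)$) and the exact non-empty interface pattern of $\Omega$, which requires choosing $\C'$ inside the neighborhood provided by Corollary~\ref{cor:Plateau-interfaces}, shrinking $U_\C$ in Proposition~\ref{prop:Gram-perturbation} as needed. The injectivity of $\alpha\mapsto L_\alpha$ is a standard fact about weighted graph Laplacians, and the extraction of $\F=-(n-1)(\nabla^2\I_m(v_0))^{-1}$ from the equality case of Proposition~\ref{prop:max-principle} is a direct reading of that proof.
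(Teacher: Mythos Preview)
Your proposal is correct. The PCF case follows the same equality-case analysis as the paper's Proposition~\ref{prop:equality}: from $\I=\I_m$ near $v_0$ one extracts $\nabla\I_m(v_0)=\lambda=(n-1)\k$ and $(-\nabla^2\I_m(v_0))^{-1}\le\frac{1}{n-1}\F$, and tracing against $\Id/2+\k\otimes\k$ with the trace identity forces equality, hence $\F=\F_m$. The only cosmetic difference is that the paper reads off this equality through the $\F_0=nL_{\scalar{p,N}^2}$ representation (Proposition~\ref{prop:PCF-F=F0}), whereas you read it off through $\F=L_{\Psi_\xi}$ (Proposition~\ref{prop:PCF-F}); in both cases one lands on a weighted graph Laplacian whose off-diagonal entries recover the edge weights, and the standard-bubble side makes all of them strictly positive, forcing every $\Sigma_{ij}\neq\emptyset$.

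Where you genuinely diverge is in the non-PCF (fully Plateau) reduction. The paper argues by contradiction: if $\Omega$ were not PCF it would not be full-dimensional, so $\det(\C\C^T)=0$; the Gram-perturbed minimizer $\Omega'$ is then shown to be a standard bubble, whence $\C'(\C')^T=\Id/2+\k\k^T$ has determinant bounded away from zero, contradicting $\C'\to\C$. Your route is more direct: you use Corollary~\ref{cor:Plateau-interfaces} to pin the interface pattern of $\Omega'$ to that of $\Omega$, apply the PCF case to the full-dimensional minimizer $\Omega'$ to conclude it is a standard bubble with all interfaces present, and then pull this back to $\Omega$ via Proposition~\ref{prop:standard-char}. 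This avoids the determinant step entirely and is a clean alternative. Note that $\Omega'$ remains perpendicular (since $q-1\le n$ forces $\ker\C_t\neq\{0\}$ in Lemma~\ref{lem:Gram}), so Theorem~\ref{thm:PDE-for-PCF} applies to it and yields $\F(\Omega')>0$ as you need.
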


In conjunction with Theorem \ref{thm:final}, this will conclude the proof of Theorem \ref{thm:intro-conditional}. 
Since every minimizing $q$-cluster is $3$-Plateau by Lemma \ref{lem:3-Plateau}, we can apply induction on $q$ as in Subsection \ref{subsec:max-principle} and verify the multi-bubble conjectures for all $q \leq \min(6,n+1)$, thus obtaining the new quintuple-bubble Theorem \ref{thm:intro-quintuple}, as well as a different proof of the double, triple and quadruple-bubble conjectures on $\S^n$ for $n\geq 2$, $n \geq 3$ and $n \geq 4$, respectively.

\medskip

The key step in the proof of Theorem \ref{thm:final-equality} is the following:

\begin{proposition} \label{prop:equality}
Let $2 \leq q \leq n+1$, and let $\Omega$ be a minimizing $q$-cluster on $\S^n$ of volume $v_0 \in \interior \Delta^{(q-1)}$. Let $\F,\F_0 : E^{(q-1)} \rightarrow E^{(q-1)}$ denote the associated conformal-to-volume operator and its relaxed counterpart, respectively. Assume that $\F = \F_0$, and that the spherical isoperimetric profile and its model counterpart coincide $\I = \I_m$ in a neighborhood of $v_0$. Then $\Omega$ must be a standard bubble (up to null-sets). 
\end{proposition}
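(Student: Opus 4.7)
The plan is to run the viscosity maximum-principle argument of Proposition \ref{prop:max-principle} in reverse: the hypothesis $\I = \I_m$ near $v_0$ forces every inequality in that proof to become an equality, and the explicit form of $\F_0$ from Proposition \ref{prop:F0} then lets us read off that every interface of $\Omega$ is non-empty, so that Proposition \ref{prop:standard-char} \ref{it:standard-char-ii} identifies $\Omega$ as a standard bubble. As preliminaries, I note that by Theorem \ref{thm:intro-structure} and Theorem \ref{thm:conformally-complete}, $\Omega$ is a regular, stable, $3$-Plateau perpendicular spherical Voronoi cluster with equatorial cells admitting a conformally complete system $\{f^a\}_{a\in E^{(q-1)}}$ of conformal Jacobi fields, and $\F = \F_0 > 0$ by Proposition \ref{prop:trace-id-F0}. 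The Lagrange multiplier $\lambda = (n-1)\k$ must coincide with $\nabla \I_m(v_0)$, so by Lemma \ref{lem:standard-curvature} the standard $(q-1)$-bubble $\Omega^m$ of volume $v_0$ shares the curvature vector $\k$; since $\Omega^m$ is perpendicular with some north pole $N^m$ and full-dimensional for $q \leq n+1$, Corollary \ref{cor:F=F0-full-dim} and Proposition \ref{prop:F0} give $\F_m = n L^m_{\scalar{p,N^m}^2}$, while (\ref{eq:model-hessian}) yields $\nabla^2 \I_m(v_0) = -(n-1)\F_m^{-1}$.

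Next I would derive the quadratic-form inequality $\F \geq \F_m$. For any $C_c^\infty$ flow $F_t$ of $\Omega$ along a field $X$, the hypothesis $\I = \I_m$ near $v_0$ yields $\per(F_t(\Omega)) \geq \I_m(V(F_t(\Omega)))$ with equality at $t=0$; comparing second derivatives at $t=0$, exactly as in the proof of Proposition \ref{prop:max-principle}, gives
\[ Q(X) \geq (\delta^1_X V)^T \nabla^2 \I_m(v_0)\, \delta^1_X V = -(n-1)(\delta^1_X V)^T \F_m^{-1}\, \delta^1_X V. \]
Applying the approximation Theorem \ref{thm:approximation} to the conformal Jacobi field $f^a$ (for which $\delta^1_{f^a} V = \F a$ and $Q^0(f^a) = -(n-1) a^T \F a$ by Proposition \ref{prop:F-properties}) produces vector-fields $X_{a,\eps}$ with $\delta^1_{X_{a,\eps}} V = \F a$ and $Q(X_{a,\eps}) \leq -(n-1) a^T \F a + \eps$; sending $\eps \to 0$ gives $a^T \F a \leq (\F a)^T \F_m^{-1}(\F a)$ for all $a \in E^{(q-1)}$, and using the invertibility of $\F > 0$ to let $b = \F a$ range over $E^{(q-1)}$ reduces this to $\F^{-1} \leq \F_m^{-1}$, equivalently $\F \geq \F_m$.

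Then I would upgrade this to equality using the trace-identities. Proposition \ref{prop:trace-id-F0} (applied to $\Omega$, using $\F = \F_0$) and Proposition \ref{prop:PCF-trace} (applied to the PCF cluster $\Omega^m$) both read
\[ \tr\!\bigl(\F(\Id/2 + \k \otimes \k)\bigr) = \HH^{n-1}(\Sigma) = \per(\Omega) = \I(v_0) = \I_m(v_0) = \per(\Omega^m) = \tr\!\bigl(\F_m(\Id/2 + \k \otimes \k)\bigr), \]
so $\tr\!\bigl((\F - \F_m)(\Id/2 + \k \otimes \k)\bigr) = 0$; since $\F - \F_m \geq 0$ and $\Id/2 + \k \otimes \k > 0$, writing $\Id/2 + \k \otimes \k = D^2$ with $D > 0$ makes $D(\F - \F_m) D$ a positive semi-definite matrix of trace zero, forcing $\F = \F_m$.

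Finally I would extract the combinatorial structure. The off-diagonal entries of $\F = n L_{\scalar{p,N}^2}$ and $\F_m = n L^m_{\scalar{p,N^m}^2}$ in the standard basis $\{e_i\}$ are $-n \int_{\Sigma_{ij}} \scalar{p,N}^2 \, d\HH^{n-1}$ and $-n \int_{\Sigma^m_{ij}} \scalar{p,N^m}^2 \, d\HH^{n-1}$, respectively, and on $\Omega^m$ every $\Sigma^m_{ij}$ is a non-empty relatively open subset of a geodesic sphere meeting the equator perpendicularly, so each of the latter integrals is strictly positive (cf.~Remark \ref{rem:crazy-pos-def}). The equality $\F = \F_m$ transports this strict positivity to $\Omega$, forcing $\Sigma_{ij} \neq \emptyset$ for all $i < j$, and Proposition \ref{prop:standard-char} \ref{it:standard-char-ii} then identifies $\Omega$ as a standard $(q-1)$-bubble up to null sets. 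The main subtlety lies in the second-variation comparison: it crucially uses that $\I = \I_m$ holds in an entire neighborhood of $v_0$ (not just at $v_0$), together with the strict positivity $\F > 0$ coming from the hypothesis $\F = \F_0$, in order to produce $\F \geq \F_m$ as a full-rank quadratic comparison rather than a merely partial one on $\Im \F$.
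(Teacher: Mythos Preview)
Your proof is correct and follows essentially the same strategy as the paper's own proof. The only difference is a minor reorganization: you pass directly from the second-variation comparison to $\F \geq \F_m$ (substituting $\nabla^2 \I_m(v_0) = -(n-1)\F_m^{-1}$ at the outset) and then force equality via the two trace-identities, whereas the paper first characterizes $(-\nabla^2 \I(v_0))^{-1} = \frac{1}{n-1}\F$ using the PDE \eqref{eq:PDE} and only then identifies this with $\F_m$; both routes are equivalent and the final combinatorial step (reading off $\Sigma_{ij}\neq\emptyset$ from the off-diagonal entries of $\F_0 = n L_{\scalar{p,N}^2}$) is identical.
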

\begin{proof}
Recall our convention of modifying $\Omega$ by null-sets as in Theorem \ref{thm:Almgren} \ref{it:Almgren-ii}. 
By the results of Section \ref{sec:prelim}, $\Omega$ is necessarily regular, stationary with Lagrange multiplier $\lambda \in E^{(q-1)}$ and stable. Moreover, by Theorem \ref{thm:intro-structure} it is a perpendicular spherical Voronoi $3$-Plateau cluster with equatorial cells, and we have $\lambda = (n-1) \k$. By Theorem \ref{thm:conformally-complete}, there exists a conformally complete system of conformal Jacobi fields $\{f^a\}_{a \in E^{(q-1)}}$ for $\Omega$, so $\F$ is well-defined. 

We first claim that for any smooth vector-field $X$ on $\S^n$:
\begin{equation} \label{eq:minimizer-variation-conc}
  (\delta_X V)^T \nabla^2 I(v_0) \delta_X V \leq Q(X) . 
\end{equation}
Indeed, if $F_t$ is the flow on $\S^n$ generated by $X$, we know that $\I(V(F_t(\Omega))) \leq \per(F_t(\Omega))$, with equality at $t=0$. Differentiating twice at $t=0$ (since we assume that $\I=\I_m$ in a neighborhood of $v_0$ and hence smooth there), we obtain:    
\[
        \scalar{\nabla \I(v_0),\delta_X V} = \delta_X A ,
\]
\[
        (\delta_X V)^T \nabla^2 \I(v_0) \delta_X V  + \scalar{\nabla \I(v_0),\delta_X^2 V} \leq \delta_X^2 A.
\]
On the other hand, recall that $\delta_X A = \scalar{\lambda,\delta_X V}$ by Lemma \ref{lem:Lagrange}. We may select a family of smooth vector fields $X$ so that $\delta_X V$ spans the entire $E^{(q-1)}$, e.g. by using all linear combinations of approximate piecewise-constant fields as in the proof of Lemma \ref{lem:volume-offset}.  
Consequently, it follows that $\nabla \I(v_0) = \lambda$, and recalling the definition of $Q(X)$, (\ref{eq:minimizer-variation-conc}) is established.

We now apply (\ref{eq:minimizer-variation-conc}) to the smooth vector fields constructed in Theorem \ref{thm:approximation} whose normal component approximates the conformal Jacobi field $f^a$; since $\delta^1_{f^a} V = \F a$ and $Q(f^a) = -(n-1) a^T \F a$ by Proposition \ref{prop:F-properties}, we conclude that:
\[
a^T \F \nabla^2 \I(v_0) \F a \leq  -(n-1) a^T \F a \;\;\; \forall a \in E^{(q-1)} . 
\]
By assumption $\F = \F_0$, and $\F_0 > 0$ by Proposition \ref{prop:trace-id-F0}. Consequently, we deduce that in the positive-definite sense:
\begin{equation} \label{eq:nabla2IleqF}
\nabla^2 \I(v_0) < 0 ~,~ (-\nabla^2 \I(v_0))^{-1} \leq \frac{1}{n-1} \F . 
\end{equation}

On the other hand, since $\nabla \I(v_0) = (n-1) \k$, $\I = \I_m$ in a neighborhood of $v_0$, $\I_m$ satisfies the PDE (\ref{eq:PDE}),  the trace-identity (\ref{eq:trace-id-F0}) holds for $\F_0$ by Proposition \ref{prop:trace-id-F0}, and finally $\F_0 = \F$ by assumption, we have:
\begin{align*}
& \tr( (-\nabla^2 \I(v_0))^{-1} (\Id/2 + \k \otimes \k) ) =  
\tr( (-\nabla^2 \I(v_0))^{-1} (\Id/2 + \frac{1}{(n-1)^2} \nabla \I(v_0) \otimes \nabla \I(v_0)) ) \\
& = \tr( (-\nabla^2 \I_m(v_0))^{-1} (\Id/2 + \frac{1}{(n-1)^2} \nabla \I_m(v_0) \otimes \nabla \I_m(v_0)) ) =  \frac{1}{n-1} \I_m(v_0) = \frac{1}{n-1} \I(v_0) \\
& = \frac{1}{n-1} \HH^{n-1}(\Sigma) = \frac{1}{n-1} \tr( \F_0 (\Id/2 + \k \otimes \k)) = \frac{1}{n-1} \tr( \F (\Id/2 + \k \otimes \k)) .
\end{align*}
As $\Id /2 + \k \otimes \k > 0$, together with (\ref{eq:nabla2IleqF}) we deduce that:
\[
(-\nabla^2 \I(v_0))^{-1} = \frac{1}{n-1} \F . 
\]

Now, let $\Omega^m$ denote a standard bubble with $V(\Omega^m) = v_0$, which exists by Lemma \ref{lem:standard-volume}, and let $\Sigma^m$ denote its boundary.  Let $\F_m$ denote its conformal-to-volume operator, which is well-defined by Corollary \ref{cor:full-dim-F}. Since by (\ref{eq:model-hessian}) we also have:
\[
(-\nabla^2 \I_m(v_0))^{-1} = \frac{1}{n-1} \F_m ,
\]
and since $\I = \I_m$ in a neighborhood of $v_0$, we conclude that:
\[
\F = \F_m . 
\]

Since $q \leq n+1$, the standard bubble $\Omega^m$ is itself a perpendicular spherical Voronoi cluster by Corollary \ref{cor:standard-Voronoi} with North pole $N^m$, and hence $\F_m = (\F_m)_0$ by Corollary \ref{cor:F=F0-full-dim}. Recalling Proposition \ref{prop:F0}, it follows that:
\[
n L_{\Sigma,\scalar{p,N}^2} = \F_0 = \F = \F_m = (\F_m)_0 = n L_{\Sigma^m, \scalar{p^m,N^m}^2} . 
\]
In other words (inspecting the off-diagonal elements):
\[
\int_{\Sigma_{ij}} \scalar{p,N}^2 d\HH^{n-1}(p) = \int_{\Sigma^m_{ij}} \scalar{p^m,N^m}^2 d\HH^{n-1}(p^m) \;\;\; \forall i < j . 
\]
But since a standard bubble's interfaces are all non-empty, i.e. $\Sigma^m_{ij} \neq \emptyset$ for all $i < j$, and since they are all perpendicular to $N^m$, it follows that the right-hand side is strictly positive for all $i < j$. In particular, we deduce that the interfaces of the minimizer $\Omega$ are also all non-empty: $\Sigma_{ij} \neq \emptyset$ for all $i < j$. Applying Proposition \ref{prop:standard-char}, we conclude that $\Omega$ must be a standard bubble. 
\end{proof}

\begin{proof}[Proof of Theorem \ref{thm:final-equality}]
Let $\Omega$ be a minimizing $q$-cluster with $V(\Omega) = v_0 \in \interior \Delta^{(q-1)}$. 
As usual, we modify $\Omega$ by null-sets as in Theorem \ref{thm:Almgren} \ref{it:Almgren-ii}. 
By Theorem \ref{thm:intro-structure}, $\Omega$ is a perpendicular spherical Voronoi cluster with equatorial cells. 
Our assumption and Lemma \ref{lem:q-3-Plateau} imply that $\Omega$ is either PCF or fully Plateau. 

We now claim that $\Omega$ must be PCF, in which case $\F = \F_0$ by Proposition \ref{prop:PCF-F=F0}, and hence $\Omega$ must be a standard-bubble by Proposition \ref{prop:equality}. 
Assume in the contrapositive that $\Omega$ were not PCF, and so in particular, it cannot be full-dimensional. Let $\k$ denote its curvature vector, let $\C$ denote its quasi-center operator, and let $\G = \C \C^T : E^{(q-1)} \rightarrow E^{(q-1)}$ denote the corresponding Gram operator, which therefore must be singular. Since $\Omega$ is not PCF, it must be Plateau, and we can use Proposition \ref{prop:Gram-perturbation} to deform it into another minimizer $\Omega'$ with $V(\Omega') = v_0$ so that $\Omega'$ is a full-dimensional (and in particular PCF) spherical Voronoi cluster with $\C' \in U_\C$ for any given neighborhood $U_\C$ of $\C$ and the same curvature vector $\k$. But according to the first part of the proof, it follows that necessarily $\Omega'$ must be a standard-bubble, regardless of how small we choose the neighborhood $U_\C$. This is of course impossible, since on one hand, denoting $\G' = \C' (\C')^T$, $\det(\G')$ can be made arbitrarily close to $\det(\G) = 0$ by continuity of the determinant; but on the other hand, $\G'$ coincides with the Gram matrix $\G^m$ of a standard-bubble of curvature $\k$, and since $\G^m = \Id / 2 + \k \k^T > 0$ by Proposition \ref{prop:standard-char}, $\det(G') > 0$ remains positively uniformly bounded below. 
This concludes the proof. 
\end{proof}

\section{Concluding Remarks} \label{sec:conclude}

\subsection{Locality conjecture}

Let $\Omega$ be spherical Voronoi $q$-cluster having a conformally complete system of conformal Jacobi fields.  
Let $\F : E^{(q-1)} \rightarrow E^{(q-1)}$ denote its associated conformal-to-volume operator, which we know is symmetric. We may therefore uniquely express $\F$ as $\sum_{i<j} \F_{ij}  \, e_{ij} \otimes e_{ij}$ for some coefficients $\F_{ij}$. We conjecture the following:

\begin{conjecture}[Locality Conjecture] \label{conj:locality}
Let $\Omega$ be a $3$-Plateau, stable, perpendicular spherical Voronoi $q$-cluster with equatorial cells and $q \leq n+1$ in $\S^n$. Then $\{\F_{ij}\}$ is supported on the set of non-empty interfaces:
\begin{equation}  \label{eq:locality}
\forall i<j \;\;\; \Sigma_{ij} = \emptyset \;\; \Rightarrow \;\; \F_{ij} = 0 . 
\end{equation}
\end{conjecture}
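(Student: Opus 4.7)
The plan is to exploit the explicit extraction formula for the coefficients $\F_{ij}$, combined with volumetric non-degeneracy and the structural identity $\F \C = \F_0 \C$ (Proposition \ref{prop:FC=F0C}). Since $\F$ is a symmetric operator on $E^{(q-1)}$ annihilating $\mathbf{1}$, writing it as a matrix in the standard basis of $\R^q$ one checks that the unique coefficients in the decomposition $\F = \sum_{k<\ell} \F_{k\ell}\, e_{k\ell} \otimes e_{k\ell}$ satisfy $\F_{ij} = -(\F)_{ij}$ for $i \neq j$. By volumetric non-degeneracy (Theorem \ref{thm:volumetric non-degeneracy}), this gives the canonical formula
\begin{equation*}
\F_{ij} \;=\; -\sum_{\ell \neq j} \int_{\Sigma_{j\ell}} f^{e^i}_{j\ell} \, d\HH^{n-1},
\end{equation*}
where $f^{e^i}$ is \emph{any} conformal Jacobi field with conformal parameter $e^i$, the right-hand side being independent of this choice. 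In the pseudo conformally flat case, Proposition \ref{prop:PCF-F} yields the explicit formula $f^{e^i}_{k\ell} = (\delta^i_k - \delta^i_\ell)\Psi_\xi$, which vanishes identically on every $\Sigma_{k\ell}$ with $i \notin \{k,\ell\}$, immediately reducing the right-hand side to $\int_{\Sigma_{ij}} \Psi_\xi\, d\HH^{n-1}$ and proving locality whenever $\Sigma_{ij} = \emptyset$.

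For the general non-PCF case, I would invoke Proposition \ref{prop:LSE}: any solution $\{\delta \c_k\}$ of the linearized structure equations with $\delta \k = e^i$ produces a conformal Jacobi field of the form $f^{e^i}_{k\ell} = (\delta^i_k - \delta^i_\ell) + \sscalar{\delta \c_k - \delta \c_\ell,\, p}$ on $\Sigma_{k\ell}$. For $j \neq i$ with $\Sigma_{ij} = \emptyset$, plugging this into the extraction formula, and observing that the contribution of $-\delta^i_\ell$ collapses to $-\HH^{n-1}(\Sigma_{ij}) = 0$ by hypothesis, gives
\begin{equation*}
\F_{ij} \;=\; -\sum_{\ell \neq j} \int_{\Sigma_{j\ell}} \sscalar{\delta \c_j - \delta \c_\ell,\, p} \, d\HH^{n-1}(p).
\end{equation*}
It then suffices to exhibit an LSE solution making this residual integral vanish. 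The PCF case corresponds to the ``rank-one'' choice $\delta \c_k = -\delta^i_k \xi$, which is supported only at the single index $k = i$. I would attempt to construct analogous ``adjacency-supported'' LSE solutions with $\delta \c_k = 0$ for cells $k$ far from cell $i$ in the interface-adjacency graph, so that only a localized set of interfaces $\Sigma_{j\ell}$ contribute; the first-order stationarity and $120^\circ$ Plateau angle identities would then be used to cancel the remaining terms.

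The main obstacle is establishing the existence of such a localized LSE solution. The LSE~(\ref{eq:structure}) is a scalar system indexed by non-empty interfaces, coupling the unknowns $\{\delta \c_k\} \subset \R^{n+1}$ via the quasi-center geometry; imposing both $\delta \k = e^i$ and a prescribed support pattern on $\{\delta \c_k\}$ is essentially an infinitesimal rigidity problem in the joint-and-bar framework of Cauchy, Alexandrov, Dehn, and Whiteley referenced in Section \ref{sec:LSE}, and its solvability is far from obvious for general quasi-center configurations of clusters with rank defect. A complementary route would be to attack the stronger Conjecture \ref{conj:F=F0} directly, using $\F \C = \F_0 \C$, the positive semi-definiteness of both $\F$ and $\F_0$, their common kernel containing $\mathbf{1}$, and the trace identity from Proposition \ref{prop:trace-id-F0}; however, the remaining degrees of freedom equal the rank defect $q-1-\arank(\C)$, and closing this gap appears to require a genuinely new structural input. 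This is precisely why we view the Locality Conjecture as a compelling open problem of independent PDE flavor, of roughly the same depth as $\F = \F_0$.
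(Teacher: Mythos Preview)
This statement is a \emph{conjecture} in the paper, not a theorem; the paper does not prove it and explicitly leaves it open. What the paper does establish is that the Locality Conjecture holds whenever $\F = L_\Psi$ for some integrable $\Psi$ (in particular in the PCF case via Proposition~\ref{prop:PCF-F}), that it is implied by $\F = \F_0$ (Conjecture~\ref{conj:F=F0}), and that it in turn implies the trace identity via Lemmas~\ref{lem:magic-C}--\ref{lem:support-F0}. Your proposal is consistent with this: your extraction formula $\F_{ij} = -(\F)_{ij}$ and the PCF argument are correct and match the paper's reasoning, and your outlined LSE approach for the general case is a reasonable strategy, but as you yourself acknowledge, the key step---constructing adjacency-supported LSE solutions---is not carried out and is essentially an infinitesimal-rigidity problem whose solvability is unknown. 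So your proposal is not a proof but an honest summary of where the difficulty lies, which agrees with the paper's own assessment that this conjecture remains open.
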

Note that the existence of $\F$ for $\Omega$ as above is ensured by Theorem \ref{thm:conformally-complete}. In other words, the conjecture states that a conformal Jacobi field $f^i$ with conformal parameter $e_i$ satisfies $\delta^1_{f^i} V(\Omega_j) = 0$ for all cells $j=1,\ldots,q$ which do not share an interface with cell $i$. So in some sense, the first order effect on the volume of cells induced by conformal Jacobi fields is ``local", depending only on the adjacency structure of the interface graph. 

Note that whenever it holds that for some (integrable) function $\Psi$:
\[
 \F = L_{\Psi} = \sum_{i<j} \int_{\Sigma_{ij}} \Psi d\HH^{n-1} \;  e_{ij} \otimes e_{ij},
 \]
 then (\ref{eq:locality}) indeed holds. In particular, the Locality Conjecture holds for any regular PCF cluster by Proposition \ref{prop:PCF-F}, and hence on any full-dimensional cluster and on standard-bubbles. 
 
 \bigskip
 
Interestingly, the Locality Conjecture lies in between Conjectures \ref{conj:trace-id} and \ref{conj:F=F0}, and so it would be enough to verify it in order to extend our main results from the quintuple-bubble case to the general case for all $q \leq n+1$. Indeed, if $\F = \F_0 = n L_{\scalar{p,N}^2}$ as per Conjecture \ref{conj:F=F0} then the Locality Conjecture holds by the previous comments. To see that the Locality Conjecture implies the validity of Conjecture \ref{conj:trace-id}, we  first observe:

\begin{lemma}\label{lem:magic-C}
For any spherical Voronoi cluster, 
\[
\C \C^T =  \Id/2 + \k \otimes \k  +  \SS , 
\] 
for some symmetric operator $\SS : E^{(q-1)} \rightarrow E^{(q-1)}$  which is perpendicular to $\text{span} \{ e_{ij} \otimes e_{ij}\}_{\Sigma_{ij} \neq \emptyset}$.
\end{lemma}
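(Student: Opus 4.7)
The plan is to define $\SS$ as the defect and then read off both asserted properties directly from Lemma \ref{lem:CCT}. Concretely, I would set
\[
\SS := \C\C^T - \tfrac{1}{2}\Id_{E^{(q-1)}} - \k\otimes\k,
\]
viewed as an endomorphism of $E^{(q-1)}$. This is the only candidate for $\SS$, so the content of the lemma is really (a)~symmetry and (b)~orthogonality to the distinguished subspace.

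For symmetry, each of the three summands is manifestly symmetric: $\C\C^T$ is a Gram-type product, $\Id_{E^{(q-1)}}$ is symmetric, and $\k\otimes\k$ with $\k\in E^{(q-1)}$ is a rank-one symmetric operator. Hence $\SS$ is symmetric as a map $E^{(q-1)}\to E^{(q-1)}$.

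For orthogonality (with respect to the Hilbert--Schmidt/Frobenius inner product $\scalar{A,B}:=\tr(AB)$ on symmetric operators, under which $\scalar{A,e_{ij}\otimes e_{ij}}=\scalar{Ae_{ij},e_{ij}}$), we invoke Lemma \ref{lem:CCT}: a spherical Voronoi cluster satisfies
\[
\scalar{\C\C^T,\,e_{ij}\otimes e_{ij}}=\scalar{\tfrac{1}{2}\Id_{E^{(q-1)}}+\k\k^T,\,e_{ij}\otimes e_{ij}}
\]
for every $i<j$ with $\Sigma_{ij}\neq\emptyset$. Subtracting gives $\scalar{\SS,e_{ij}\otimes e_{ij}}=0$ on each such generator, and by linearity $\SS\perp \operatorname{span}\{e_{ij}\otimes e_{ij}\}_{\Sigma_{ij}\neq\emptyset}$.

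There is no real obstacle here; the lemma is just a repackaging of the characterization of spherical Voronoi clusters from Lemma \ref{lem:CCT}, splitting $\C\C^T$ into its ``standard-bubble part'' $\tfrac{1}{2}\Id+\k\otimes\k$ (cf.\ Proposition \ref{prop:standard-char}) plus a remainder that, by the defining identity $|\c_{ij}|^2=1+\k_{ij}^2$ on non-empty interfaces, is forced to be invisible to the rank-one forms $e_{ij}\otimes e_{ij}$ associated to those interfaces.
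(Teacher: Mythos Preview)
Your proof is correct and essentially identical to the paper's own proof: both define $\SS$ as the difference $\C\C^T - (\Id/2 + \k\otimes\k)$ and then verify orthogonality to each $e_{ij}\otimes e_{ij}$ with $\Sigma_{ij}\neq\emptyset$ via the defining relation $|\c_{ij}|^2 = 1+\k_{ij}^2$. You cite Lemma~\ref{lem:CCT} for this relation while the paper writes it out directly, but the content is the same.
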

Here of course we emloy the scalar product $\scalar{A,B} := \tr(A^T B)$ on the space of linear operators. 
\begin{proof}
By definition of spherical Voronoi cluster, as $\c_{ij} = \n_{ij} - \k_{ij} p$ for any non-empty interface $\Sigma_{ij}$, we have:
\[
\scalar{ e_{ij} \otimes e_{ij} ,\C \C^T} = |\c_{ij}|^2 = 1+\k_{ij}^2 = \scalar{ e_{ij} \otimes e_{ij} , \Id/2 + \k \otimes \k} \;\;\; \forall \Sigma_{ij} \neq \emptyset . 
\]
Consequently, the difference $\SS := \C \C^T - (\Id/2 + \k \otimes \k)$ satisfies the assertion.  
\end{proof}

We can now verify that the Locality Conjecture implies Conjecture \ref{conj:trace-id}:
\begin{lemma} \label{lem:support-F0}
If $\F$ satisfies (\ref{eq:locality}) then the trace-identity (\ref{eq:trace-id}) holds for $\F$.
\end{lemma}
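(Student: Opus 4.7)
The plan is to combine Proposition \ref{prop:FC=N}, which gives the identity $\tr(\F\,\C\C^T) = \HH^{n-1}(\Sigma)$, with the decomposition of $\C\C^T$ provided by Lemma \ref{lem:magic-C}, namely
\[
\C\C^T = \tfrac{1}{2}\Id + \k\otimes\k + \SS ,
\]
where $\SS : E^{(q-1)} \to E^{(q-1)}$ is symmetric and perpendicular, with respect to the Hilbert--Schmidt inner product $\scalar{A,B} = \tr(A^T B)$, to the linear span $V_{\neq 0} := \text{span}\{e_{ij}\otimes e_{ij}\}_{\Sigma_{ij}\neq\emptyset}$.

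Tracing against $\F$ then yields
\[
\HH^{n-1}(\Sigma) \;=\; \tr(\F\,\C\C^T) \;=\; \tr\!\big(\F(\tfrac{1}{2}\Id + \k\otimes\k)\big) + \tr(\F\,\SS),
\]
so it suffices to prove that $\tr(\F\,\SS) = 0$. This is precisely where the locality hypothesis enters: by the assumed representation $\F = \sum_{i<j} \F_{ij}\, e_{ij}\otimes e_{ij}$ together with (\ref{eq:locality}), the sum is in fact restricted to the pairs with $\Sigma_{ij}\neq\emptyset$, so $\F \in V_{\neq 0}$. Since $\F$ is symmetric, $\tr(\F\,\SS) = \scalar{\F,\SS} = 0$ by the perpendicularity statement in Lemma \ref{lem:magic-C}, and (\ref{eq:trace-id}) follows immediately.

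There is no essential obstacle here; the argument is a one-line orthogonality computation. The only minor point worth checking is that the decomposition $\F = \sum_{i<j} \F_{ij}\,e_{ij}\otimes e_{ij}$ makes the locality hypothesis equivalent to $\F \in V_{\neq 0}$, which is immediate from the definition of the coefficients $\F_{ij}$ preceding Conjecture \ref{conj:locality}. All of the real content of the lemma has been packaged into Lemma \ref{lem:magic-C} (which identifies the ``missing'' part $\SS$ as orthogonal to $V_{\neq 0}$) and Proposition \ref{prop:FC=N} (which evaluates the full trace); the locality hypothesis serves exactly to kill the cross-term $\tr(\F\,\SS)$.
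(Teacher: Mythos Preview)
Your proof is correct and follows essentially the same approach as the paper's own argument: use Lemma \ref{lem:magic-C} to write $\C\C^T = \tfrac{1}{2}\Id + \k\otimes\k + \SS$, observe that the locality hypothesis forces $\F \in \text{span}\{e_{ij}\otimes e_{ij}\}_{\Sigma_{ij}\neq\emptyset}$ and hence $\tr(\F\,\SS)=0$, and then invoke Proposition \ref{prop:FC=N} to conclude. The paper presents this slightly more tersely but the logic is identical.
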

\begin{proof}
By the previous lemma, if $\F$ satisfies (\ref{eq:locality}) then $\tr(\F \SS) = 0$. Consequently, by Proposition \ref{prop:FC=N}:
\[
\tr( \F (\Id/2 + \k \otimes \k)) = \tr( \F (\C \C^T - \SS)) = \tr (\F \C \C^T) = \tr(\C^T \N) = \HH^{n-1}(\Sigma) ,
\]
thereby verifying the trace-identity (\ref{eq:trace-id}).
\end{proof}

\subsection{Bounded conformal Jacobi fields conjecture}

A conjecture of more apparent PDE flavor is the following:
\begin{conjecture} \label{conj:bounded}
Let $\Omega$ be a $3$-Plateau, stable, perpendicular spherical Voronoi $q$-cluster with equatorial cells and $q \leq n+1$ in $\S^n$. Let $\Omega_t = \Phi_t(\Omega)$ be the PCF cluster resulting from the conformal perturbation $\Phi_t$ from Section \ref{sec:perturbation}. Then there exist $C , t_0 > 0$ so that for all $t \in [0,t_0]$ and $a \in E^{(q-1)}$, there is a conformal Jacobi field $f^a_t$ on $\Sigma_t$ with conformal parameter $a$, so that $\norm{f^a_t} \leq C |a|$ uniformly in $t \in [0,t_0]$. 
\end{conjecture}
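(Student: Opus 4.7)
The plan is to take $f^a_t$ to be the minimum-$L^2$-norm conformal Jacobi field on $\Sigma_t$ with conformal parameter $a$, and to establish a uniform bound through a compactness-and-contradiction argument whose core is the spectral stability of the Jacobi operator $L_{Jac}^{(t)}$ along the conformal perturbation. Since $\Omega_t$ is $3$-Plateau and PCF for each $t\in(0,t_0]$ by Corollary \ref{cor:Omega_t-PCF}, Proposition \ref{prop:PCF-F} gives a conformally complete system, so $(n-1)a\in \Im L_{Jac}^{(t)}$; self-adjointness with compact resolvent of $(L_{Jac}^{(t)},\D_{con}^{(t)})$ then implies $(n-1)a\perp\ker L_{Jac}^{(t)}$ in $L^2(\Sigma_t^1)$, and one may define $f^a_t$ as the unique solution to $L_{Jac}^{(t)} f^a_t=(n-1)a$ lying in $(\ker L_{Jac}^{(t)})^{\perp}$. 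The standard spectral estimate yields
\[
\norm{f^a_t}_{L^2} \;\leq\; \frac{(n-1)|a|}{\mu_t}, \qquad \mu_t := \min\set{|\lambda|:\lambda\in \sigma(L_{Jac}^{(t)})\setminus\set{0}},
\]
so the conjecture reduces entirely to the uniform spectral-gap assertion $\inf_{t\in(0,t_0]}\mu_t > 0$.

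To analyze this, I would pull everything back to the fixed geometry on $\Sigma=\Sigma_0$ via the smooth family $\Phi_t$, obtaining pulled-back operators $\tilde L^{(t)}$ on the fixed domain $(\Sigma,\D_{con})$ with $\tilde L^{(0)}=L_{Jac}^{(0)}$. Smoothness of $t\mapsto\Phi_t$ together with (\ref{eq:Mobius-kct}) gives continuous and bounded control of all geometric quantities ($\II^{(t)}$, $\Ric_{g}^{(t)}$, boundary $\bar\II^{\partial,(t)}$), so the Sobolev equivalence (\ref{eq:QC-equivalent-H1}) holds \emph{uniformly} for $t\in[0,t_0]$ and the family $\tilde L^{(t)}$ is continuous in $t$ in the generalized-resolvent sense. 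If one supposes $\mu_{t_k}\to 0$ along some $t_k\to 0$, extract eigenfunctions $u_k\in\D_{con}$ with $\norm{u_k}_{L^2(\Sigma)}=1$, $\tilde L^{(t_k)} u_k=\nu_k u_k$, and $0\neq \nu_k\to 0$; testing against $u_k$ gives $\tilde Q^{(t_k)}(u_k)=-\nu_k\to 0$, which by the uniform version of semi-boundedness (\ref{eq:semi-bounded}) yields a uniform $H^1$-bound. By Rellich compactness and elliptic regularity, a subsequential $H^1$-weak, $L^2$-strong limit $u_\infty$ satisfies $\norm{u_\infty}_{L^2}=1$ and $L_{Jac}^{(0)}u_\infty=0$, so $u_\infty\in\ker L_{Jac}^{(0)}\setminus\set{0}$. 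The intended contradiction is that, since each $u_k$ is orthogonal to the exact kernel $\ker\tilde L^{(t_k)}$ (as $\nu_k\neq 0$), $u_\infty$ must be $L^2$-orthogonal to $\ker L_{Jac}^{(0)}$ in the limit, forcing $u_\infty=0$.

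The hard step, and the main obstacle, is precisely this limiting orthogonality: it requires \emph{lower} semi-continuity of the Jacobi kernel at $t=0$, i.e.\ that each $v_0\in\ker L_{Jac}^{(0)}$ be $L^2$-approximable by $v_k\in\ker\tilde L^{(t_k)}$. Upper semi-continuity is automatic from the standard compact-resolvent perturbation theory, but lower semi-continuity amounts to smoothly extending every Jacobi field of $\Omega$ --- notably the $(q-1)$-dimensional family of skew-fields $a_{ij}\scalar{p,N}$ provided by Proposition \ref{prop:skew-fields} --- to a Jacobi field of $\Omega_t$, preventing any zero eigenvalue from bifurcating away from zero. The natural device is the implicit function theorem applied to $\tilde L^{(t)}(v_0 + w)=0$ with $w\perp\ker L_{Jac}^{(0)}$, whose solvability reduces to a structural transversality condition on the image of $L_{Jac}^{(0)}$ restricted to functions of the relevant perturbative form. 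Verifying this condition appears, at present, to require precisely the same locality-type geometric input as the Locality Conjecture \ref{conj:locality}, which is why Conjecture \ref{conj:bounded} is stated as a conjecture rather than a theorem: it is intimately tied to the other PDE-flavored conjectures above, and a resolution of any one of them should substantially advance the program.
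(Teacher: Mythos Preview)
This statement is a \emph{conjecture} in the paper, not a theorem; the paper offers no proof and explicitly presents it as open, noting that its resolution would likely yield $\F=\F_0$ and hence the multi-bubble conjecture for all $q\leq n+1$.

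Your spectral-perturbation outline is the natural approach, and the gap you identify is genuine and matches the paper's own diagnosis. In the discussion immediately following the conjecture, the paper observes that ``the dimensionality of $\ker L_{Jac,\Sigma_t}$ can jump up at $t=0$ due to the extra symmetries that the cluster $\Omega=\Omega_0$ gains in the limit,'' and calls this ``the source of the obstacle.'' Concretely, the perpendicular $\Omega_0$ carries the $(q-1)$-dimensional family of skew Jacobi fields $a_{ij}\scalar{p,N}$ from Proposition~\ref{prop:skew-fields}, but for $t>0$ the cluster $\Omega_t$ is no longer perpendicular and there is no evident mechanism producing corresponding Jacobi fields there. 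So a zero eigenvalue may genuinely bifurcate away from zero as $t$ moves off $0$, your sequence $u_k$ could converge precisely to such a ``new'' kernel element of $\Omega_0$, and the intended contradiction dissolves. Your suggestion that this reduces to the Locality Conjecture is speculative; the paper treats the two conjectures as parallel routes toward the same goal (both implying Conjecture~\ref{conj:trace-id}) rather than as equivalent, and frames the present obstacle in terms of symmetry-breaking rather than locality.

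One small correction: in your spectral estimate the numerator should be $(n-1)\norm{a}_{L^2(\Sigma_t^1)}$ rather than $(n-1)|a|$, though these are uniformly comparable on $[0,t_0]$ since the interface measures vary continuously in $t$.
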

\noindent Some useful norms $\norm{\cdot}$ to use in the above conjecture would be $C^0(\Sigma^1_t)$ or even just $L^2(\Sigma^1_t)$. 

\medskip

Recall that the explicit conformal Jacobi fields we employed in Section \ref{sec:perturbation} for calculating $\F_t$ and hence $\F_0 = \lim_{t \rightarrow 0} \F_t$ were blowing up as $t \rightarrow 0$. If true, conjecture \ref{conj:bounded} would allow to use other conformal Jacobi field representatives which do not blow up, and hence would most likely allow us to conclude that $\F = \F_0$, confirming Conjecture \ref{conj:F=F0} and thus the multi-bubble conjecture on $\S^n$ for all $q \leq n+1$. 

\medskip

Another perspective regarding Conjecture \ref{conj:bounded} is as follows. It seems possible to show the $\Gamma$-convergence of the Index-Forms $Q^0_{\Sigma_t}$ to $Q^0_{\Sigma}$ as $t \rightarrow 0$. However, to show that the minimum of $Q^0_{\Sigma_t}$ on $\D_{Q_{\Sigma_t}}[V]$ converges to the minimum of $Q^0_{\Sigma}$ on $\D_{Q_{\Sigma}}[V]$ requires some type of equicoercivity of $Q^0_{\Sigma_t}$ modulo $\ker L_{Jac,\Sigma_t}$, and this where Conjecture \ref{conj:bounded} would come in handy. The problem is that the dimensionality of $\ker L_{Jac,\Sigma_t}$ can jump up at $t=0$ due to the extra symmetries that the cluster $\Omega = \Omega_0$ gains in the limit, and this is in some sense the source of the obstacle for establishing that $\F = \F_0$.

\subsection{Gram perturbation can only increase perimeter conjecture}

In Section \ref{sec:Gram} we've used Gram perturbation of a (perpendicular) spherical Voronoi cluster to deform it into a full-dimensional cluster. Under the assumption that \emph{the initial cluster is Plateau}, we were able to show that no new interfaces would be created in the process, and so as a consequence, that the resulting cluster would remain spherical Voronoi, and that the volumes of the cells and total perimeter would not be altered. 

If the initial cluster is not necessarily Plateau, it is certainly possible that new interfaces would instantaneously be created, the cluster would cease to be stationary (as we would get non $120^{\circ}$-angles at triple points), and in particular the spherical Voronoi property would instantaneously be ruined. The volumes of the cells would not remain fixed, and neither would the total perimeter. However, we conjecture that in that case, it would be possible to decrease the total perimeter in a rate which is strictly faster than (i.e. super linear in) the rate of change in the cells' volumes, and so after compensating for the latter, we would be able to construct a cluster having the same volume and yet strictly smaller total perimeter. But this would be impossible if the initial cluster were an isoperimetric minimizer. In other words:

\begin{conjecture}
Assume that $\Omega$ is a spherical Voronoi $q$-cluster on $\S^n$ with $q \leq n+1$ which is an isoperimetric minimizer. Then  the Gram perturbation $\{\Omega_t\}$ does not create new interfaces for all $t \in [0,t_e]$ and some $t_e > 0$. 
\end{conjecture}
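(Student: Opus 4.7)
The strategy is proof by contradiction. Suppose $\Omega$ is an isoperimetric minimizing $q$-cluster on $\S^n$ ($q \leq n+1$) and that the Gram perturbation $\{\Omega_t\}$ creates a new interface for arbitrarily small $t>0$; set
\[
t^* := \inf\{t \in (0, t_e] \; ; \; \Omega_t \text{ has a non-empty interface that was empty in } \Omega\} .
\]
By Lemma \ref{lem:Gram}\ref{it:Gram2} and the definition of $t^*$, the family $\{\Omega_t\}_{t \in [0,t^*)}$ remains a family of spherical Voronoi clusters with the same non-empty interface set as $\Omega$ and the same curvature vector $\k$. Theorem \ref{thm:intro-structure} ensures that $\Omega$ is regular, stable, perpendicular with equatorial cells, and $3$-Plateau.

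The first step I would carry out is to show that $V(\Omega_t) = V(\Omega)$ and $\per(\Omega_t) = \per(\Omega)$ for all $t \in [0, t^*)$. The key technical task is to extend the ``Gram perturbation is a flow'' argument of Proposition \ref{prop:Plateau-field} from the fully Plateau case to the $3$-Plateau case. Since the non-Plateau singular set has Hausdorff dimension at most $n-4$, one should be able to construct the generating vector-field on $\S^n$ minus this singular set via a variant of Lemma \ref{lem:Plateau-field}, and argue, via regularity of minimizers together with a capacity/cut-off argument, that the resulting flow still preserves volumes and total perimeter to first order; the conformal Jacobi field interpretation supplied by Proposition \ref{prop:LSE} and Lemma \ref{lem:zero-field} then closes the argument as in the fully Plateau case. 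Lipschitz continuity of $V$ and $\per$ under perpendicular perturbations (Lemma \ref{lem:cont}) extends these identities to $t = t^*$.

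Consequently $\Omega_{t^*}$ is itself an isoperimetric minimizer. Its distinguishing feature is that a new interface $\Sigma_{ij}(t^*)$ has \emph{just} come into existence: for $t < t^*$ the closures $\overline{\Omega_i}(t)$ and $\overline{\Omega_j}(t)$ were disjoint. By Hausdorff continuity and the definition of $t^*$, this forces the newly formed interface at $t^*$ to be degenerate---either of vanishing $(n-1)$-dimensional measure, a tangential touching of two cells along a lower-dimensional set, or a meeting configuration whose blow-up at the touching point is not a minimizing cone in the sense of Taylor. Any such degeneracy contradicts the interface-regularity asserted in Theorem \ref{thm:Almgren}\ref{it:Almgren-iii} and the blow-up cone structure required by Theorem \ref{thm:regularity} applied to the minimizer $\Omega_{t^*}$.

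The main obstacle is the first step, namely the extension of Proposition \ref{prop:Plateau-field} beyond the Plateau case. At points $p \in \Sigma$ where $\Omega$ fails to be Plateau (necessarily with $|I_p| \geq 5$ by the $3$-Plateau property), the local analysis underpinning Lemma \ref{lem:Plateau-field} breaks down, since the affine independence of $\{\tilde \n_i[p]\}_{i \in I_p}$ fails and the blow-up cone is a non-simplicial area-minimizing cone \`a la Brakke; it is precisely at such points that Gram perturbation threatens to instantaneously separate coincident cells and manufacture new interfaces. A more robust alternative strategy would be to establish the Locality Conjecture \ref{conj:locality} or the stronger Conjecture \ref{conj:F=F0} (which would yield the full multi-bubble result on $\S^n$ for $q \leq n+1$ and hence the present conjecture as well), or to carry out the second-order variational comparison envisioned in the discussion preceding the conjecture: construct from the new-interface creation a compactly supported vector-field $X$ with $\delta^1_X V(\Omega) = 0$ and $Q(X)<0$, directly contradicting the stability of $\Omega$ without the detour through $t^*$.
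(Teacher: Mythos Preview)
This statement is labeled a \emph{conjecture} in the paper; there is no proof provided, so there is nothing to compare your proposal to. The paper's own discussion preceding the conjecture sketches the intended mechanism: if new interfaces are created, the perimeter should decrease at a rate that is super-linear relative to the volume change, allowing a competitor with the same volume and strictly smaller perimeter. That second-order comparison---which you mention only as an ``alternative strategy'' at the very end---is the actual content the conjecture is pointing at.

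Your primary argument has a logical gap at the outset. The negation of the conjecture is: for every $t_e > 0$ there exists $t \in (0,t_e]$ with a new interface. This forces $t^* = 0$. (If $t^* > 0$, then $\Omega_t$ has no new interfaces on $(0,t^*)$, and the conjecture already holds with $t_e := t^*/2$.) Hence the entire first step---establishing $V(\Omega_t)=V(\Omega)$ and $\per(\Omega_t)=\per(\Omega)$ on $[0,t^*)$, then passing to a minimizer $\Omega_{t^*}$ with a degenerate nascent interface---collapses, because $[0,t^*)$ is trivial and $\Omega_{t^*}=\Omega$.

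Even ignoring that, you correctly identify that the heart of the matter is extending Proposition~\ref{prop:Plateau-field} beyond the Plateau case, and you concede you do not know how to do this. The capacity/cut-off suggestion is not a proof: at non-Plateau points the blow-up is a non-simplicial minimizing cone, the normals $\{\tilde \n_i[p]\}_{i \in I_p}$ are affinely dependent, and there is no evident local vector-field realizing the prescribed normal components---this is precisely why the paper restricts the Gram perturbation to Plateau clusters. Your proposal therefore reduces to a restatement of the difficulty rather than a resolution of it.
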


If true, this would allow us to always run the Gram perturbation argument of Section \ref{sec:Gram}, and we would be able to extend our results to all $q \leq n+1$ unconditionally.

\subsection{Upper bounds on $\F$}

In connection to our attempts to establish the trace-identity (\ref{eq:trace-id}), we were able to show the following theorem, which we state without proof: \begin{theorem}
Let $\Omega$ be a minimizing $q$-cluster on $\S^n$ ($q \leq n+1$) with $\S^m$ symmetry ($m \geq 1$). Then as positive semi-definite operators:
\[
\F \leq \frac{n-1}{n} \frac{m+1}{m} \F_0 . 
\]
In particular, by (\ref{eq:trace-id-F0}):
\[
\tr( \F (\Id/2 + \k \otimes \k)) \leq \frac{n-1}{n} \frac{m+1}{m} \HH^{n-1}(\Sigma),
\]
and the trace-inequality (\ref{eq:trace-inequality}) holds up to a factor of $\frac{n-1}{n} \frac{m+1}{m}$. 
\end{theorem}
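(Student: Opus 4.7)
\medskip\noindent\textbf{Proof proposal.} The plan is to establish the bound via the variational characterization of conformal Jacobi fields (Proposition~\ref{prop:conformal-Jacobi-prop}~\ref{it:minimizer}) applied to carefully chosen $O(V)$-invariant trial Sobolev scalar fields that exploit the $\S^m$-symmetry. Under the $\S^m$-symmetry assumption, there is an $(m+1)$-dimensional subspace $V\subset\R^{n+1}$ with $\c_i\perp V$ for all $i$, on which the symmetry group $O(V)\simeq O(m+1)$ acts by cell-preserving isometries. For any unit $N\in V$, Proposition~\ref{prop:F0} gives $\F_0=nL_{\scalar{p,N}^2}$; averaging over an orthonormal basis $\{N_0,\ldots,N_m\}$ of $V$ yields the manifestly $O(V)$-invariant representation $\F_0=\tfrac{n}{m+1}L_{|P_V p|^2}$, where $P_V$ denotes orthogonal projection onto $V$.

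\medskip\noindent The key preparatory identities are as follows. Since $\Omega$ is spherical Voronoi ($\II_0=0$) and $N\perp\c_{ij}$ for $N\in V$, Lemma~\ref{lem:LJac-dilation} gives $\Delta_\Sigma\scalar{N,p}^2=2-2n(1+\k_{ij}^2)\scalar{N,p}^2$, whose sum over the basis yields
\[
\Delta_\Sigma|P_V p|^2=2(m+1)-2n(1+\k_{ij}^2)|P_V p|^2.
\]
Moreover, from $\c_{ij}\perp V$ one checks that $P_V\n_{ij}=\k_{ij}P_V p$, and combining with (\ref{eq:sqrt3}) gives $P_V\n_{\partial ij}=\bar\II^{\partial ij}P_V p$, so that $\nabla_{\n_{\partial ij}}|P_V p|^2=2\bar\II^{\partial ij}|P_V p|^2$ on each triple-point set. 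As a side remark, for any $N\in V$ the Jacobi function $\scalar{N,p}\in\ker L_{Jac}\cap\D_{con}$ with $\delta^1 V=0$, giving an $(m{+}1)$-dimensional kernel subspace with vanishing volume variation.

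\medskip\noindent For a trial field, take $g^a_{ij}(p):=a_{ij}\,\omega(p)$ for an $O(V)$-invariant profile $\omega$ (the natural first choice being $\omega=|P_V p|^2$, modified as needed — see the obstacle below). The Kirchhoff-Dirichlet BC is trivial since $a_{ij}+a_{jk}+a_{ki}=0$, so $g^a\in\D_Q$, and with $\omega=|P_V p|^2$ one has $\delta^1_{g^a}V=\tfrac{m+1}{n}\F_0 a$. By Proposition~\ref{prop:conformal-Jacobi-prop}~\ref{it:minimizer}, applied with the conformal Jacobi field $f^b$ for $\F b=\delta^1_{g^a}V$ (using that a complete system exists by Theorem~\ref{thm:conformally-complete} and that $\F>0$ follows from Proposition~\ref{prop:volume-complete}), stability yields $-(n-1)b^T\F b=Q^0(f^b)\le Q^0(g^a)$. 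With $b=\tfrac{m+1}{n}\F^{-1}\F_0 a$, a short matrix manipulation shows that the desired bound $\F\le\tfrac{n-1}{n}\tfrac{m+1}{m}\F_0$ is equivalent, via the Loewner-order characterization $\F_0\F^{-1}\F_0\ge\tfrac{nm}{(n-1)(m+1)}\F_0$, to establishing
\[
Q^0(g^a)\;\le\;-\tfrac{m(m+1)}{n}\,a^T\F_0 a\;=\;-m\sum_{i<j}a_{ij}^2\int_{\Sigma_{ij}}|P_V p|^2\,d\HH^{n-1}.
\]

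\medskip\noindent\textbf{Main obstacle.} A direct computation using the identities above together with integration by parts produces
\[
Q^0(g^a)=\sum_{i<j}a_{ij}^2\Big[-2(m+1)\!\int_{\Sigma_{ij}}\!\!|P_V p|^2+(n+1)(1+\k_{ij}^2)\!\int_{\Sigma_{ij}}\!\!|P_V p|^4+\int_{\partial\Sigma_{ij}}\!\!\bar\II^{\partial ij}|P_V p|^4\Big],
\]
and the naive pointwise estimate $|P_V p|^4\le|P_V p|^2$ fails to deliver the desired inequality because the coefficient $(n+1)(1+\k_{ij}^2)$ generally exceeds $m+2$, and the sign of the boundary integral is not controlled. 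The crux of the proof is therefore to refine $\omega$ or $g^a$. I would pursue one (or a combination) of three routes: (i) replace $\omega$ by an $O(V)$-invariant polynomial $c_0+c_1|P_V p|^2+c_2|P_V p|^4$ with coefficients tuned so that $L_{Jac}\omega$ becomes nearly constant on each interface, reducing the problem to an ODE on the $\S^m$-orbits; (ii) add correction terms to $g^a$ drawn from the $(m{+}1)$-dimensional Jacobi kernel $\{\scalar{N,p}:N\in V\}$, which preserves $\delta^1_{g^a}V$ while allowing one to enforce conformal BCs and absorb the boundary contribution; (iii) iterate integration by parts exploiting the triple-point cyclic identity from Lemma~\ref{lem:cyclic} and the identity $\sum_{(i,j)\in\cyclic(u,v,w)}\n_{\partial ij}=0$, which converts the singular boundary integral into a bulk contribution that partially cancels the quartic term. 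The sharp constant $\tfrac{n-1}{n}\tfrac{m+1}{m}$ should emerge from a Poincar\'e-type inequality on $\S^m$ (whose first non-zero Laplace eigenvalue is $m$, accounting for the $m$ in the denominator) combined with the Ricci-curvature factor $n-1$ on $\S^n$; in particular, for $m=n-1$ the bound should degenerate to equality $\F\le\F_0$, consistent with the Locality Conjecture~\ref{conj:locality} for axially symmetric clusters.
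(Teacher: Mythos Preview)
The paper explicitly states this theorem without proof, so there is nothing to compare against. Your variational framework via Proposition~\ref{prop:conformal-Jacobi-prop}~\ref{it:minimizer} and the Loewner-order reduction to $\F_0\F^{-1}\F_0\ge\tfrac{mn}{(n-1)(m+1)}\F_0$ are both correct, and this is a natural line of attack.

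There are, however, two genuine gaps. First, the reduction through $\F^{-1}$ requires $\F>0$, and your appeal to Proposition~\ref{prop:volume-complete} is a misreading: that proposition lists $\F>0$ among several \emph{equivalent} conditions but establishes none of them unconditionally --- indeed, $\F>0$ for general minimizing clusters is part of the open Conjecture~\ref{conj:F=F0}. If $\ker\F\ne\{0\}$, your argument only constrains directions $a$ with $\F_0 a\in\operatorname{Im}\F$, a proper subspace, and the full operator inequality does not follow. Second, and more seriously, the key estimate $Q^0(g^a)\le -\tfrac{m(m+1)}{n}a^T\F_0 a$ is never established: you correctly diagnose that $\omega=|P_Vp|^2$ fails, but none of the three proposed remedies is carried out. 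Route~(ii) in particular is a dead end: by (\ref{eq:Q^0-Dcon}), any $k\in\D_{con}$ with $L_{Jac}k=0$ satisfies $Q^0(k,\cdot)\equiv0$, and by volumetric non-degeneracy (Theorem~\ref{thm:volumetric non-degeneracy}) also $\delta^1_kV=0$, so adding Jacobi-kernel elements to $g^a$ changes neither $Q^0$ nor the volume variation. The closing remark tying $m=n-1$ to the Locality Conjecture is also a non-sequitur: that conjecture constrains the support structure of $\F$, not its size relative to $\F_0$.
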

This means that we can obtain the PDE (\ref{eq:PDI}) up to a multiplicative factor of $\frac{n-1}{n} \frac{m+1}{m}$. 
In general, Theorem \ref{thm:intro-structure} and Remark \ref{rem:symmetry} only imply the $\S^m$-symmetry of a minimizing cluster for $m = n+1-q$, and so unless $q=2$ (the single-bubble case), this factor will be strictly larger than $1$. Nevertheless, the resulting non-sharp PDE will yield some non-trivial information on the isoperimetric profile $\I^{(q-1)}$. 

\medskip

For completeness, let us also mention some additional information we have on the spherical multi-bubble isoperimetric profile $\I^{(q-1)}_{\S^n} : \Delta^{(q-1)} \rightarrow \R_+$:
\begin{itemize}
\item $\I^{(q-1)}_{\S^n}$ is strictly concave by Theorem \ref{thm:intro-profile-concave}. 
\item As mentioned in the Introduction, the multi-bubble conjecture on $\S^n$ in the \emph{equal-volume case} holds true for all $q \leq n+2$ by the Gaussian multi-bubble theorem on $\GG^{n+1}$ from \cite{EMilmanNeeman-GaussianMultiBubble}:
\[
\I^{(q-1)}_{\S^n}(1/q,\ldots,1/q) = \I^{(q-1)}_{\S^n,m}(1/q,\ldots,1/q) \brac{ = I^{(q-1)}_{\GG^{n+1},m}(1/q,\ldots,1/q) }. 
\]
\item The same argument actually shows that: 
\[
\I^{(q-1)}_{\S^n}(v) \geq \I_{\GG^{n+1},m}^{(q-1)}(v) \;\; \; \forall v \in \Delta^{(q-1)} . 
\]
Indeed, the cone over a given cluster $\Omega$ on $\S^n$ with apex at the origin is a cluster on $\R^{n+1}$ having the same Gaussian volume and Gaussian total-perimeter as the volume and total-perimeter of $\Omega$ with respect to the Haar probability measure on $\S^n$, yielding the asserted inequality. In the equal volumes case we get an equality, since the Gaussian minimizer is a simplicial cluster centered at the origin, whose intersection with $\S^n$ is exactly an equal-volume standard-bubble. 
\end{itemize}

\subsection{PDE for the multi-bubble isoperimetric profile on $\R^n$} \label{subsec:degenerate-elliptic}

By scaling the metric on $\S^n$, it is immediate to check that the model $(q-1)$-bubble isoperimetric profile $\I_{K,m}$ on the rescaled sphere $\sqrt{\frac{n-1}{K}} \S^n$ of Ricci curvature $K > 0$ satisfies the following (strictly) elliptic PDE:
\[ \tr\brac{(-\nabla^2 \I_{K,m})^{-1} \brac{\frac{K}{2} \Id+ \frac{1}{n-1} \nabla \I_{K,m} \otimes \nabla \I_{K,m}}} = \I_{K,m}  .
\] Taking the limit as the Ricci curvature $K$ tends to $0$, and using that the model Euclidean $(q-1)$-bubble isoperimetric profile $\I_{0,m} := \I^{(q-1)}_{\R^n,m}$ on $\R^n$ is homogeneous under scaling, namely $\I_{0,m}(\lambda v) = \lambda^{\frac{n-1}{n}} \I_{0,m}(v)$ for all $\lambda > 0$, one may check that $\I_{0,m}$ must satisfy the following PDE on $\R^{q-1}_+$ (which indeed respects the latter homogeneity):
\begin{equation} \label{eq:degenerate-PDE}
\tr((-\nabla^2 \I_{0,m})^{-1} \nabla \I_{0,m} \otimes \nabla \I_{0,m})) = (n-1) \I_{0,m} . 
\end{equation}

Unfortunately, (\ref{eq:degenerate-PDE}) is only degenerate elliptic, and so there is no uniqueness for the corresponding Dirichlet problem. 
For this reason, we do not know how to obtain uniqueness of minimizers in the Euclidean case, and so we approach this case by taking a limit from the strictly elliptic spherical case (forfeiting uniqueness in the process).  

An identical scaling argument applies to the actual isoperimetric profiles $\I^{(q-1)}_{\R^n}$ and $\I^{(q-1)}_{\S^n}$, yielding for all $v \in \R^{q-1}_+$ that:
\[
\I^{(q-1)}_{\R^n}(v) = \lim_{\eps \rightarrow 0+} \eps^{\frac{1-n}{n}} \I_{\S^n}^{(q-1)}(\eps v , 1-\eps  (v_1+\ldots+v_{q-1})) 
\]
(where we pad the last coordinate to obtain a vector in $\Delta^{(q-1)}$). This explains why resolving the multi-bubble isoperimetric problem on $\S^n$, namely establishing $\I_{\S^n}^{(q-1)} \equiv \I_{\S^n,m}^{(q-1)}$, implies the corresponding resolution in $\R^n$, namely $\I^{(q-1)}_{\R^n} \equiv \I^{(q-1)}_{\R^n,m}$ (but without the equality cases), as well as why the concavity of the isoperimetric profile on $\S^n$ is inherited by the one on $\R^n$.

\bibliographystyle{plain}
\bibliography{../../../ConvexBib}

\def\cprime{$'$} \def\textasciitilde{$\sim$}
\begin{thebibliography}{10}

\bibitem{ABV-LensClusters}
S.~Alama, L.~Bronsard, and S.~Vriend.
\newblock The standard lens cluster in $\mathbb{R}^2$ uniquely minimizes
  relative perimeter.
\newblock Manuscript, arxiv.org/abs/2307.12200, 2023.

\bibitem{AlmgrenMemoirs}
F.~J. Almgren, Jr.
\newblock Existence and regularity almost everywhere of solutions to elliptic
  variational problems with constraints.
\newblock {\em Mem. Amer. Math. Soc.}, 4(165), 1976.

\bibitem{BerkolaikoKuchment-QuantumGraphs}
G.~Berkolaiko and P.~Kuchment.
\newblock {\em Introduction to quantum graphs}, volume 186 of {\em Mathematical
  Surveys and Monographs}.
\newblock American Mathematical Society, Providence, RI, 2013.

\bibitem{BPS-DeformationsOfCMC}
R.~G. Bettiol, P.~Piccione, and B.~Santoro.
\newblock Deformations of free boundary {CMC} hypersurfaces.
\newblock {\em J. Geom. Anal.}, 27(4):3254--3284, 2017.

\bibitem{Blair-InversionTheory}
D.~E. Blair.
\newblock {\em Inversion theory and conformal mapping}, volume~9 of {\em
  Student Mathematical Library}.
\newblock American Mathematical Society, Providence, RI, 2000.

\bibitem{Borell-GaussianIsoperimetry}
Ch. Borell.
\newblock The {B}runn--{M}inkowski inequality in {G}auss spaces.
\newblock {\em Invent. Math.}, 30:207--216, 1975.

\bibitem{Brakke-MinimalConesOnCubes}
K.~A. Brakke.
\newblock Minimal cones on hypercubes.
\newblock {\em J. Geom. Anal.}, 1(4):329--338, 1991.

\bibitem{BronsardNovak-DifferentTensions}
L.~Bronsard and M.~Novack.
\newblock An infinite double bubble theorem.
\newblock Manuscript, arxiv.org/abs/2401.08063, 2024.

\bibitem{BuragoZalgallerBook}
Yu.~D. Burago and V.~A. Zalgaller.
\newblock {\em Geometric inequalities}, volume 285 of {\em Grundlehren der
  Mathematischen Wissenschaften [Fundamental Principles of Mathematical
  Sciences]}.
\newblock Springer-Verlag, Berlin, 1988.

\bibitem{CarlenKerceEqualityInGaussianIsop}
E.~A. Carlen and C.~Kerce.
\newblock On the cases of equality in {B}obkov's inequality and {G}aussian
  rearrangement.
\newblock {\em Calc. Var. Partial Differential Equations}, 13(1):1--18, 2001.

\bibitem{CLM-StabilityInPlanarDoubleBubble}
M.~Cicalese, G.~P. Leonardi, and F.~Maggi.
\newblock Sharp stability inequalities for planar double bubbles.
\newblock {\em Interfaces Free Bound.}, 19(3):305--350, 2017.

\bibitem{CES-RegularityOfMinimalSurfacesNearCones}
M.~Colombo, N.~Edelen, and L.~Spolaor.
\newblock The singular set of minimal surfaces near polyhedral cones.
\newblock {\em J. Differential Geom.}, 120(3):411--503, 2022.

\bibitem{Connelly-Rigidity}
R.~Connelly.
\newblock The rigidity of certain cabled frameworks and the second-order
  rigidity of arbitrarily triangulated convex surfaces.
\newblock {\em Adv. in Math.}, 37(3):272--299, 1980.

\bibitem{CorneliCorwinEtAl-DoubleBubbleInSandG}
J.~Corneli, I.~Corwin, S.~Hurder, V.~Sesum, Y.~Xu, E.~Adams, D.~Davis, M.~Lee,
  R.~Visocchi, and N.~Hoffman.
\newblock Double bubbles in {G}auss space and spheres.
\newblock {\em Houston J. Math.}, 34(1):181--204, 2008.

\bibitem{CorneliHoffmanEtAl-DoubleBubbleIn3D}
J.~Corneli, N.~Hoffman, P.~Holt, G.~Lee, N.~Leger, S.~Moseley, and
  E.~Schoenfeld.
\newblock Double bubbles in {${\bf S}^3$} and {${\bf H}^3$}.
\newblock {\em J. Geom. Anal.}, 17(2):189--212, 2007.

\bibitem{CottonFreeman-DoubleBubbleInSandH}
A.~Cotton and D.~Freeman.
\newblock The double bubble problem in spherical space and hyperbolic space.
\newblock {\em Int. J. Math. Math. Sci.}, 32(11):641--699, 2002.

\bibitem{Davies-SpectralTheoryBook}
E.~B. Davies.
\newblock {\em Spectral theory and differential operators}, volume~42 of {\em
  Cambridge Studies in Advanced Mathematics}.
\newblock Cambridge University Press, Cambridge, 1995.

\bibitem{DeRosaTione-ConvexStationaryBubbles}
A.~De~Rosa and R.~Tione.
\newblock The double and triple bubble problem for stationary varifolds: the
  convex case.
\newblock Manuscript, arxiv.org/abs/2301.10705, 2023.

\bibitem{LawlorEtAl-PlanarDoubleBubbleViaMetacalibration}
R.~Dorff, G.~Lawlor, D.~Sampson, and B.~Wilson.
\newblock Proof of the planar double bubble conjecture using metacalibration
  methods.
\newblock {\em Involve}, 2(5):611--628, 2009.

\bibitem{EhrhardGaussianIsopEqualityCases}
A.~Ehrhard.
\newblock \'{E}l\'ements extr\'emaux pour les in\'egalit\'es de
  {B}runn-{M}inkowski gaussiennes.
\newblock {\em Ann. Inst. H. Poincar\'e Probab. Statist.}, 22(2):149--168,
  1986.

\bibitem{FedererBook}
H.~Federer.
\newblock {\em Geometric measure theory}.
\newblock Die Grundlehren der mathematischen Wissenschaften, Band 153.
  Springer-Verlag New York Inc., New York, 1969.

\bibitem{Foisy-UGThesis}
J.~Foisy.
\newblock Soap bubble clusters in $\mathbb{R}^2$ and $\mathbb{R}^3$.
\newblock Undergraduate thesis, {W}illiams {C}ollege, Williamstown, MA, 1991.

\bibitem{SMALL93}
J.~Foisy, M.~Alfaro, J.~Brock, N.~Hodges, and J.~Zimba.
\newblock The standard double soap bubble in {${\bf R}^2$} uniquely minimizes
  perimeter.
\newblock {\em Pacific J. Math.}, 159(1):47--59, 1993.

\bibitem{Grisvard-Book}
P.~Grisvard.
\newblock {\em Elliptic problems in nonsmooth domains}, volume~69 of {\em
  Classics in Applied Mathematics}.
\newblock Society for Industrial and Applied Mathematics (SIAM), Philadelphia,
  PA, 2011.
\newblock Reprint of the 1985 original [MR0775683], With a foreword by Susanne
  C. Brenner.

\bibitem{HHS95}
J.~Hass, M.~Hutchings, and R.~Schlafly.
\newblock The double bubble conjecture.
\newblock {\em Electron. Res. Announc. Amer. Math. Soc.}, 1(3):98--102, 1995.

\bibitem{HassSchlafly-EqualDoubleBubbles}
J.~Hass and R.~Schlafly.
\newblock Double bubbles minimize.
\newblock {\em Ann. of Math. (2)}, 151(2):459--515, 2000.

\bibitem{Helffer-Book}
B.~Helffer.
\newblock {\em Spectral theory and its applications}, volume 139 of {\em
  Cambridge Studies in Advanced Mathematics}.
\newblock Cambridge University Press, Cambridge, 2013.

\bibitem{Udo-MobiusDifferentialGeometry}
U.~Hertrich-Jeromin.
\newblock {\em Introduction to {M}\"{o}bius differential geometry}, volume 300
  of {\em London Mathematical Society Lecture Note Series}.
\newblock Cambridge University Press, Cambridge, 2003.

\bibitem{HornJohnson-TopicsInMatrixAnalysis}
R.~A. Horn and C.~R. Johnson.
\newblock {\em Topics in matrix analysis}.
\newblock Cambridge University Press, Cambridge, 1994.
\newblock Corrected reprint of the 1991 original.

\bibitem{HornJohnson-MatrixAnalysis}
R.~A. Horn and C.~R. Johnson.
\newblock {\em Matrix analysis}.
\newblock Cambridge University Press, Cambridge, second edition, 2013.

\bibitem{Hutchings-StructureOfDoubleBubbles}
M.~Hutchings.
\newblock The structure of area-minimizing double bubbles.
\newblock {\em J. Geom. Anal.}, 7(2):285--304, 1997.

\bibitem{DoubleBubbleInR3-Announcement}
M.~Hutchings, F.~Morgan, M.~Ritore, and A.~Ros.
\newblock Proof of the double bubble conjecture.
\newblock {\em Electron. Res. Announc. Amer. Math. Soc.}, 6:45--49, 2000.

\bibitem{DoubleBubbleInR3}
M.~Hutchings, F.~Morgan, M.~Ritor\'e, and A.~Ros.
\newblock Proof of the double bubble conjecture.
\newblock {\em Ann. of Math. (2)}, 155(2):459--489, 2002.

\bibitem{KNS}
D.~Kinderlehrer, L.~Nirenberg, and J.~Spruck.
\newblock Regularity in elliptic free boundary problems.
\newblock {\em J. Analyse Math.}, 34:86--119 (1979), 1978.

\bibitem{KMS-LimitOfCapillarity}
D.~King, F.~Maggi, and S.~Stuvard.
\newblock Plateau's problem as a singular limit of capillarity problems.
\newblock {\em Comm. Pure Appl. Math.}, 75(5):895--969, 2022.

\bibitem{Lawlor-DoubleBubbleInRn}
G.~R. Lawlor.
\newblock Double bubbles for immiscible fluids in {$\Bbb{R}^n$}.
\newblock {\em J. Geom. Anal.}, 24(1):190--204, 2014.

\bibitem{Lawlor-TripleBubbleInR2AndS2}
G.~R. Lawlor.
\newblock Perimeter-minimizing triple bubbles in the plane and the 2-sphere.
\newblock {\em Anal. Geom. Metr. Spaces}, 7(1):45--61, 2019.

\bibitem{Lee-SmoothManifolds}
J.~M. Lee.
\newblock {\em Introduction to smooth manifolds}, volume 218 of {\em Graduate
  Texts in Mathematics}.
\newblock Springer, New York, second edition, 2013.

\bibitem{Leonardi-Infiltration}
G.~P. Leonardi.
\newblock Infiltrations in immiscible fluids systems.
\newblock {\em Proc. Roy. Soc. Edinburgh Sect. A}, 131(2):425--436, 2001.

\bibitem{LionsMagenes-Vol1}
J.-L. Lions and E.~Magenes.
\newblock {\em Non-homogeneous boundary value problems and applications. {V}ol.
  {I}.}
\newblock Springer-Verlag, New York-Heidelberg,,, 1972.
\newblock Translated from the French by P. Kenneth.

\bibitem{MaggiBook}
F.~Maggi.
\newblock {\em Sets of finite perimeter and geometric variational problems: an
  introduction to {G}eometric {M}easure {T}heory}, volume 135 of {\em Cambridge
  Studies in Advanced Mathematics}.
\newblock Cambridge University Press, Cambridge, 2012.

\bibitem{Marschall-TraceOfSobolev}
J.~Marschall.
\newblock The trace of {S}obolev-{S}lobodeckij spaces on {L}ipschitz domains.
\newblock {\em Manuscripta Math.}, 58(1-2):47--65, 1987.

\bibitem{Masters-DoubleBubbleInS2}
J.~D. Masters.
\newblock The perimeter-minimizing enclosure of two areas in {$S^2$}.
\newblock {\em Real Anal. Exchange}, 22(2):645--654, 1996/97.

\bibitem{EMilmanNeeman-GaussianMultiBubble}
E.~Milman and J.~Neeman.
\newblock The {G}aussian double-bubble and multi-bubble conjectures.
\newblock {\em Ann. of Math. (2)}, 195(1):89--206, 2022.

\bibitem{EMilmanNeeman-TripleAndQuadruple}
E.~Milman and J.~Neeman.
\newblock The structure of isoperimetric bubbles on $\mathbb{R}^n$ and
  $\mathbb{S}^n$.
\newblock To appear in Acta Math., arxiv.org/abs/2205.09102, 2022.

\bibitem{MontesinosStandardBubbleE!}
A.~Montesinos~Amilibia.
\newblock Existence and uniqueness of standard bubble clusters of given volumes
  in {$\Bbb R^N$}.
\newblock {\em Asian J. Math.}, 5(1):25--31, 2001.

\bibitem{MorganSoapBubblesInR2}
F.~Morgan.
\newblock Soap bubbles in {${\bf R}^2$} and in surfaces.
\newblock {\em Pacific J. Math.}, 165(2):347--361, 1994.

\bibitem{MorganBook5Ed}
F.~Morgan.
\newblock {\em Geometric measure theory}.
\newblock Elsevier/Academic Press, Amsterdam, fifth edition, 2016.
\newblock A beginner's guide, Illustrated by James F. Bredt.

\bibitem{MorganWichiramala-StablePlanarDoubleBubble}
F.~Morgan and W.~Wichiramala.
\newblock The standard double bubble is the unique stable double bubble in
  {$\bold R^2$}.
\newblock {\em Proc. Amer. Math. Soc.}, 130(9):2745--2751, 2002.

\bibitem{NaberValtorta-MinimizingHarmonicMaps}
A.~Naber and D.~Valtorta.
\newblock Rectifiable-{R}eifenberg and the regularity of stationary and
  minimizing harmonic maps.
\newblock {\em Ann. of Math. (2)}, 185(1):131--227, 2017.

\bibitem{NPST-ClustersViaConcentrationCompactness}
M.~Novaga, E.~Paolini, E.~Stepanov, and V.~M. Tortorelli.
\newblock Isoperimetric clusters in homogeneous spaces via concentration
  compactness.
\newblock {\em J. Geom. Anal.}, 32(11):Paper No. 263, 23, 2022.

\bibitem{NPST-ClustersWithInfinitelyManyCells}
M.~Novaga, E.~Paolini, E.~Stepanov, and V.~M. Tortorelli.
\newblock Isoperimetric planar clusters with infinitely many regions.
\newblock {\em Netw. Heterog. Media}, 18(3):1226--1235, 2023.

\bibitem{NPT-LocallyIsoperimetricPartitions}
M.~Novaga, E.~Paolini, and V.~M. Tortorelli.
\newblock Locally isoperimetric partitions.
\newblock Manuscript, arxiv.org/abs/2312.13709, 2023.

\bibitem{PaoliniTamagnini-PlanarQuadraupleBubbleEqualAreas}
E.~Paolini and A.~Tamagnini.
\newblock Minimal clusters of four planar regions with the same area.
\newblock {\em ESAIM Control Optim. Calc. Var.}, 24(3):1303--1331, 2018.

\bibitem{PaoliniTortorelli-PlanarQuadrupleEqualAreas}
E.~Paolini and V.~M. Tortorelli.
\newblock The quadruple planar bubble enclosing equal areas is symmetric.
\newblock {\em Calc. Var. Partial Differential Equations}, 59(1):Paper No. 20,
  9, 2020.

\bibitem{Reichardt-DoubleBubbleInRn}
B.~W. Reichardt.
\newblock Proof of the double bubble conjecture in {$\bold R^n$}.
\newblock {\em J. Geom. Anal.}, 18(1):172--191, 2008.

\bibitem{SMALL03}
B.~W. Reichardt, C.~Heilmann, Y.~Y. Lai, and A.~Spielman.
\newblock Proof of the double bubble conjecture in {${\bf R}^4$} and certain
  higher dimensional cases.
\newblock {\em Pacific J. Math.}, 208(2):347--366, 2003.

\bibitem{RozanovaPierrat-Survey}
A.~Rozanova-Pierrat.
\newblock Generalization of {R}ellich-{K}ondrachov theorem and trace
  compactness for fractal boundaries.
\newblock In {\em Fractals in engineering: theoretical aspects and numerical
  approximations}, SEMA SIMAI Springer Ser., pages 155--173. Springer, Cham,
  [2021] \copyright 2021.

\bibitem{Savare-RegularityInLipDomains}
G.~Savar\'{e}.
\newblock Regularity results for elliptic equations in {L}ipschitz domains.
\newblock {\em J. Funct. Anal.}, 152(1):176--201, 1998.

\bibitem{Simon-Codimension2Regularity}
L.~Simon.
\newblock Cylindrical tangent cones and the singular set of minimal
  submanifolds.
\newblock {\em J. Differential Geom.}, 38(3):585--652, 1993.

\bibitem{SudakovTsirelson}
V.~N. Sudakov and B.~S. Cirel{\cprime}son~[Tsirelson].
\newblock Extremal properties of half-spaces for spherically invariant
  measures.
\newblock {\em Zap. Nau\v cn. Sem. Leningrad. Otdel. Mat. Inst. Steklov.
  (LOMI)}, 41:14--24, 165, 1974.
\newblock Problems in the theory of probability distributions, II.

\bibitem{OpenProblemsInSoapBubbles96}
J.~M. Sullivan and F.~Morgan.
\newblock Open problems in soap bubble geometry.
\newblock {\em Internat. J. Math.}, 7(6):833--842, 1996.

\bibitem{Taylor-SoapBubbleRegularityInR3}
J.~E. Taylor.
\newblock The structure of singularities in soap-bubble-like and soap-film-like
  minimal surfaces.
\newblock {\em Ann. of Math. (2)}, 103(3):489--539, 1976.

\bibitem{White-AusyAnnouncementOfClusterRegularity}
B.~White.
\newblock Regularity of the singular sets in immiscible fluid interfaces and
  solutions to other {P}lateau-type problems.
\newblock In {\em Miniconference on geometry and partial differential equations
  ({C}anberra, 1985)}, volume~10 of {\em Proc. Centre Math. Anal. Austral. Nat.
  Univ.}, pages 244--249. Austral. Nat. Univ., Canberra, 1986.

\bibitem{White-SoapBubbleRegularityInRn}
B.~White.
\newblock Regularity of singular sets for plateau-type problems.
\newblock Announced, 1990's.

\bibitem{Whiteley-InfinitesimalRigidFrameworks}
W.~Whiteley.
\newblock Infinitesimally rigid polyhedra. {I}. {S}tatics of frameworks.
\newblock {\em Trans. Amer. Math. Soc.}, 285(2):431--465, 1984.

\bibitem{Wichiramala-TripleBubbleInR2}
W.~Wichiramala.
\newblock Proof of the planar triple bubble conjecture.
\newblock {\em J. Reine Angew. Math.}, 567:1--49, 2004.

\end{thebibliography}

\end{document}